\DeclareSymbolFont{tipa}{T3}{cmr}{m}{n}
\DeclareMathAccent{\invbreve}{\mathalpha}{tipa}{16}
\definecolor{red}{rgb}{1,0,0}
\definecolor{green}{rgb}{0,1,0}
\definecolor{SeaGreen}{RGB}{46,139,87}
\definecolor{Maroon}{RGB}{128,0,0}
\definecolor{blush}{rgb}{0.87, 0.36, 0.51}
\newcommand{\C}{{\mathbb{C}}}
\newcommand{\R}{{\mathbb{R}}}
\newcommand{\A}{{\mathcal A}}
\newcommand{\B}{\mathcal B}
\newcommand{\CC}{\mathcal C}
\newcommand{\LL}{\mathcal L}
\def\mf {{\mathfrak m}}
\def\Mg {{\mathcal M}}
\newcommand{\OO}{\mathcal O}
\newcommand{\QQ}{\mathcal Q}
\def\Sg {{\mathcal S}}
\def\Tg {{\mathcal T}}
\def\Jf {{\mathfrak J}}
\def\Df {{\mathfrak D}}
\def\Nf {{\mathfrak N}}
\def\curl{\text{\rm curl}}
\def\curl{\text{\rm curl\,}}
\def\Div{\text{\rm div\,}}
\newcommand{\supp}{\rm Supp\,}
\def\Ai{\text{\rm Ai\,}}
\def\0{\mathbf  0}
\def\XXint#1#2#3{{\setbox0=\hbox{$#1{#2#3}{\int}$ }
\vcenter{\hbox{$#2#3$ }}\kern-.6\wd0}}
\numberwithin{equation}{section}
\theoremstyle{plain}
\newtheorem{theorem}{Theorem}[section]
\newtheorem{lemma}[theorem]{Lemma}
\newtheorem{proposition}[theorem]{Proposition}
\newtheorem{remark}[theorem]{Remark}
\newtheorem{corollary}[theorem]{Corollary}
\title{On the stability of symmetric flows in a two-dimensional channel}
 \author{ Y. Almog, Department of
  Mathematics, \\ Braude College of Engineering, \\ 
    Carmiel 2161002, Israel \\~\\
  and \\~\\
\noindent   B. Helffer, Laboratoire de Math\'ematiques Jean Leray, \\CNRS and  Nantes Universit\'e, \\
 44000 Nantes  Cedex France}
\begin{document}
 \maketitle
\begin{abstract}
We consider the stability of symmetric flows in a two-dimensional
  channel (including the Poiseuille flow). In 2015 Grenier, Guo, and
  Nguyen have established instability of these flows in a particular
  region of the parameter space, affirming formal asymptotics results
  from the 1940's. We prove that these flows are stable outside this
  region in parameter space. More precisely we show that the
  Orr-Sommerfeld operator
\end{abstract}



\newpage 
\tableofcontents
\newpage

{\bf Acknowledgements:} We thank the referees for their time, efforts, and comments.

\section{Introduction}
\label{sec:1}
\subsection{Main results}
   Consider the incompressible Navier-Stokes equations in the
   two-dimensional pipe  $D=\R\times (-1,1)$
   \begin{equation}
   \label{eq:1}
   \begin{cases}
   \partial_t {\mathbf v} - \epsilon \Delta {\mathbf v} + {\mathbf v} \cdot \nabla {\mathbf v} = -
   \nabla p & \text{in } \R_+\times D  \\
   {\mathbf v}=v_b\; \hat{i}_1  & \text{on } \R_+\times\partial D  \,,
   \end{cases} 
   \end{equation}
   where  $\hat{i}_1 = (1,0)$, ${\mathbf v}=(v_1,v_2)$ is the fluid
   velocity, and $p$ is the pressure. \\
   The parameter
    \begin{equation} \label{defreynolds}
    R :=\frac 1 \epsilon
    \end{equation}  is the Reynolds
    number of the  flow and 
    \begin{displaymath}
      v_b:\partial D\to\R
    \end{displaymath}
is the boundary velocity.\\ Since the
    flow is incompressible we must have
   \begin{displaymath}
     \Div {\mathbf v}=0\,.
   \end{displaymath}
   We linearize \eqref{eq:1} near the laminar flow (cf. \cite{almog2019stability}) 
   \begin{displaymath}
     {\mathbf v}=U(x_2)\hat{i}_1 \,,
   \end{displaymath}
   to obtain the linearized equation 
\begin{displaymath}
  {\bf u}_t-\mathcal T_0({\bf u},q)=0
\end{displaymath}
   where  ${\bf u}=(u_1,u_2)$ and $q$  are defined on $\mathbb R_+\times D$, and $\mathcal T_0$ is the map
     \begin{equation}
   \label{eq:2}
   ({\bf u},q) \mapsto  {\mathcal T}_0 ({\bf u} , q ):=  -  \epsilon \,\Delta {\mathbf u} + U\, \frac{\partial{\mathbf
       u}}{\partial x_1}+ \, u_2\, U^\prime\, \hat{i}_1 - \nabla q\,.
   \end{equation}
   We proceed with a formal derivation of the Orr-Sommerfeld equation,
   intentionally skipping the definitions of ${\bf v}$, $p$, ${\bf u}$
   and $q$. Interested readers can read the entire derivation in
   \cite{almog2019stability}. The associated resolvent equation for
 $\mathcal T_0$ assumes the form
   \begin{equation}
   \label{eq:3}
     \Tg_0({\bf u},q)-\Lambda{\bf u}={\bf f} \,,
   \end{equation}
   where $\Div {\bf u}=0$ and  $\Lambda\in\C$ is the spectral parameter.\\ 
    Hence, we may define a stream function
   \begin{displaymath}
     {\mathbf u}=\nabla_\perp\psi=(-\psi_{x_2},\psi_{x_1}) \,.
   \end{displaymath}
  Substituting the above into \eqref{eq:3} and then taking the curl of
  the ensuing equation for $\psi$ yields
  \begin{equation}
  \label{eq:4}
    \Big(-  \epsilon \Delta^2 + U\frac{\partial}{\partial x_1}\Delta -
    U^{\prime\prime}\frac{\partial}{\partial x_1}  - \Lambda \,  \Delta\Big)\psi=F \,,
  \end{equation}
where $F=\curl f$. 

  We consider $U\in C^2([-1,1])$ (we later restrict ourselves to
  $U\in C^4([-1,1])$) satisfying the following
  \begin{subequations}
  \label{eq:5}
    \begin{equation}
      U(\pm1)=0 \quad ; \quad \max_{x\in[-1,1]}U^{\prime\prime}(x)<0 \quad ; \quad
      U(-x)=U(x)\,. \tag{\ref{eq:5}a,b,c}
    \end{equation}
  We normalize $U$ so that
  \begin{equation}
  U^\prime(\pm1)=\mp1 \,.  \tag{\ref{eq:5}d}
  \end{equation}
  \end{subequations}
  Substituting $\psi(x_1,x_2)=\phi(x_2) \, e^{i\alpha x_1}$ into
  \eqref{eq:4} yields for $ \phi:(-1,1)\to\C$  the equation
  \begin{subequations}
  \begin{equation}
    \B_{\lambda,\alpha,\beta}^{\mathfrak D}\phi=f \,, 
  \end{equation}
  where (setting $x_2=x$)
   \begin{equation}
\label{eq:6}
     \B_{\lambda,\alpha,\beta}^{\mathfrak D} =(\LL_\beta -\beta\lambda)\Big(\frac{d^2}{dx^2}-\alpha^2\Big)  -i\beta U^{\prime\prime} \,,
   \end{equation}
   where
  \begin{equation}
  \label{eq:p29}
    \LL_\beta  = -\frac{d^2}{dx^2}+i\beta U\,.
  \end{equation}
  \end{subequations}
In the above 
\begin{equation}\label{defbeta}
\beta = \alpha \epsilon^{-1} =\alpha R
\end{equation}
 ($R$ being the Reynolds
     number introduced in \eqref{defreynolds}), and, for $\beta \neq 0$, 
     \begin{equation}
\label{eq:569}
     \lambda = \beta^{-1} \Big( \frac{\Lambda}{\epsilon} -\alpha^2\Big)\,.
     \end{equation}
     We refer to Section 3 in \cite{almog2019stability} for the
     details of the derivation.  We use the pair of parameters
     $(\alpha,\beta)$ instead of $(\alpha,R)$ since the asymptotic limit we
     consider in the sequel is
     $\beta\to\infty$. \\
     We define $ \B_{\lambda,\alpha,\beta}^{\mathfrak D}$ on
  \begin{equation}\label{eq:7}
  D(\B_{\lambda,\alpha,\beta}^{\mathfrak D})=\{u\in H^4(-1,1)\,,\, u(1)=u^\prime(1)= u(-1) =u^\prime (-1) =0 \},
  \end{equation}

  We focus our interest on the restriction
  $\B^{\mathfrak D,sym}_{\lambda,\alpha,\beta}$ of the operator $\B^{\mathfrak
    D}_{\lambda,\alpha,\beta}$ to functions that are symmetric with respect to
  the reflection $x\mapsto -x$. Hence we are led to consider the equivalent
  restricted operator $\B_{\lambda,\alpha,\beta}^{\mathfrak N, \mathfrak D}$ on
  $(0,1)$ whose domain is
  \begin{equation}\label{eq:p29ab}
 D(\B_{\lambda,\alpha,\beta}^{\mathfrak N, \mathfrak D})=\{u\in H^4(0,1)\,,\,  u^\prime(0)=u^{(3)}(0)=0  \mbox{ and }\,  u(1)=u^\prime(1)=0\}.
 \end{equation}
 We leave the discussion of anti-symmetric modes to future research.
 Note that since
 \begin{displaymath}
   \B_{\lambda,\alpha,\beta}^U =  \overline{\B_{\bar{\lambda},\alpha,\beta}^{-U}} \,,
 \end{displaymath}
(where $\bar{\B}$ denotes the complex conjugate of $\B$) 
 the analysis in the sequel applies to the case where
 $\min_{x\in[-1,1]}U^{\prime\prime}(x)>0$ as well. Clearly, the Poiseuille flow associated with 
 $U(x)=(1-x^2)/2$ meets all the criteria in \eqref{eq:5}.  
Another example mentioned in \cite{grenier2016spectral} is given by  $ U(x)=(2/\pi) \cos
\pi x/2$. Note that $U$ is decreasing on $(0,1)$. \\

In \cite[Theorem 1.1]{grenier2016spectral} it has been established by Grenier-Guo-Nguyen
that for sufficiently large $R$ and for each $\alpha$ satisfying
  \begin{displaymath}
      C_L\, R^{-1/7}\leq\alpha\leq C_R\, R^{-1/11}\,,
    \end{displaymath} 
or equivalently when $\beta$ is large and
\begin{displaymath}
   C_L^{7/6}\, \beta^{-1/6}\leq\alpha\leq C_R^{11/10}\, \beta^{-1/10} \,,
 \end{displaymath}
    there exists $\lambda\in\C$ with negative real part such that
    $\B_{\lambda,\alpha,\beta}^{\mathfrak N,\Df}$ is not invertible.  For the case
    of a Poiseuille flow, the positive constants $C_L$ and $C_R$ have
    been determined from well-known formal
    asymptotic calculations (cf.  the book \cite{drre04}  by P.~G. Drazin and W.~H. Reid).   \\
     
 In the present contribution we consider the converse problem, i.e.
 we attempt to show that, for any $\delta>0$, $(\B_{\lambda,\alpha,\beta}^{\Df,sym})^{-1}$
 is bounded for $\Re\lambda\leq0$ when
 \begin{displaymath}
   \alpha\geq \,\beta^{-1/10+\delta} \text{ or } 0\leq\alpha\leq \alpha_L\, \beta^{-1/6}\,.
 \end{displaymath}
 Note that unlike \cite{grenier2016spectral} we do not provide the
 precise estimate derived by formal asymptotics, i.e, $\alpha_L<C_L$ and
 $\beta^{-1/10+\delta} >\beta^{1/10}$. The determination of the precise curves
 is left to future research.

Recall from \eqref{defbeta} that $\beta=\alpha/\epsilon$. Our main results are
 \begin{theorem}
 \label{thm:small-alpha}
   Let $U\in C^4([-1,1])$ satisfy \eqref{eq:5}. Then there exist positive
   $\alpha_L$, $C$, $\Upsilon$, and $\beta_0>1$ such that for all $\beta>\beta_0$
   it holds that
   \begin{equation}
     \label{eq:8}
        \sup_{
          \begin{subarray}{c}
           0\leq \alpha \leq \alpha_L\beta^{-1/6}\\
       \Re \lambda <\Upsilon\beta^{-1/2} 
             \end{subarray}}
  \big\|(\B_{\lambda,\alpha,\beta}^{\Df,sym})^{-1}\big\|+
        \big\|\frac{d}{dx}\, (\B_{\lambda,\alpha,\beta}^{\Df,sym})^{-1}\big\|\leq
          C\, \beta^{-1/2} \log \beta \,,
   \end{equation}
 \end{theorem}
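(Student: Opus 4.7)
The plan is to reduce the fourth-order inversion of $\B_{\lambda,\alpha,\beta}^{\Df,sym}$ to a coupled second-order system by introducing the auxiliary unknown $\theta := (d^2/dx^2 - \alpha^2)\phi$, so that $\B\phi = f$ becomes
\begin{equation*}
(\LL_\beta - \beta\lambda)\theta = f + i\beta U^{\prime\prime}\phi, \qquad (d^2/dx^2 - \alpha^2)\phi = \theta.
\end{equation*}
The symmetry conditions $\phi^\prime(0)=\phi^{\prime\prime\prime}(0)=0$ imply $\theta^\prime(0)=0$, but there are only three effective conditions on $\phi$ (one at $0$, two at $1$) and no intrinsic condition on $\theta$ at $x=1$. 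My strategy is to solve an auxiliary system by choosing a convenient artificial condition for $\theta$ at $x=1$ (for instance $\theta(1)=0$) together with $\phi^\prime(0)=0$ and $\phi(1)=0$, and then to restore the true boundary condition $\phi^\prime(1)=0$ by adding a two-parameter correction built from the symmetric-at-$0$ solutions of the homogeneous equation $\B\phi=0$. The coefficients of the correction are determined by a $2\times 2$ matching system at $x=1$.

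The core analytic input is a resolvent estimate for the complex Airy-type operator $\LL_\beta -\beta\lambda$ on $(0,1)$ with Neumann at $0$ and Dirichlet at $1$. For $\Re\lambda<\Upsilon\beta^{-1/2}$ the standard complex Airy analysis is expected to yield an $L^2$ bound of order $\beta^{-1/3}$, together with sharper bounds in weighted spaces that localize near the boundary layer at $x=1$ of width $\beta^{-1/3}$ and, when $\Re\lambda$ approaches the upper threshold, near the critical layer $\{U(x)+i\lambda\approx 0\}$. To invert back from $\theta$ to $\phi$ I would use that, in the regime $\alpha\leq\alpha_L\beta^{-1/6}$, the Green's function of $d^2/dx^2 - \alpha^2$ with Neumann at $0$, Dirichlet at $1$ is comparable to that of $d^2/dx^2$ uniformly in $\alpha$, so the factor $\beta$ in front of $U^{\prime\prime}\phi$ can be absorbed by the gain from the Airy resolvent. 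The $\beta^{-1/2}$ in \eqref{eq:8} should arise from a weighted version of the Airy estimate combined with the $\beta^{-1/6}$ layer width, and the $\log\beta$ loss reflects the logarithmic growth of the non-decaying solution of the reduced second-order problem across that layer.

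The main obstacle will be the construction and lower bound on the $2\times 2$ matching matrix at $x=1$. Two solutions of $\B\phi=0$ matter here: a slow mode, essentially $\cosh\alpha x$ perturbed by $(\LL_\beta-\beta\lambda)^{-1}(i\beta U^{\prime\prime}\cdot)$, and a fast Airy-type mode localized near $x=1$ on scale $\beta^{-1/3}$. I have to prove that, even as the Dirichlet layer and the critical layer come close, the determinant of the matching system stays bounded below by a quantity compatible with \eqref{eq:8}; the threshold $\alpha_L\beta^{-1/6}$ should emerge precisely as the range within which the slow mode is controllably close to its $\alpha=0$ profile and does not degenerate against the fast one. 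Uniform non-degeneracy of this matching in the whole admissible $\lambda$-region is, in my view, the technical heart of the argument.
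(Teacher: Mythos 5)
There is a genuine gap in your proposal, and it is at the precise step you describe as unproblematic: ``the factor $\beta$ in front of $U^{\prime\prime}\phi$ can be absorbed by the gain from the Airy resolvent.''  In the admissible region $\Re\lambda\lesssim\beta^{-1/2}$ the best $L^2$-operator bound you can get for $(\LL_\beta^{\Nf,\Df}-\beta\lambda)^{-1}$ is of order $\beta^{-1/2}$ (near $\Im\lambda\approx U(0)$; away from there it is $\beta^{-2/3}$).  Composing with the uniformly bounded inverse of $d^2/dx^2-\alpha^2$ therefore gives a loop gain of order $\beta\cdot\beta^{-1/2}=\beta^{1/2}$, which blows up: a Neumann-series/fixed-point absorption of the term $i\beta U^{\prime\prime}\phi$ in the coupled system $(\LL_\beta-\beta\lambda)\theta=f+i\beta U^{\prime\prime}\phi$, $(d^2/dx^2-\alpha^2)\phi=\theta$, simply fails.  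Sharper $\LL(L^\infty,L^2)$ or $\LL(L^\infty,L^1)$ Airy bounds (e.g.\ $\beta^{-5/8}$, $\beta^{-3/4}$) do not close the loop either.  What the paper actually does is algebraically different: it works with the Rayleigh (inviscid) operator $\A_{\lambda,\alpha}=(U+i\lambda)(-d^2/dx^2+\alpha^2)+U^{\prime\prime}$ and the auxiliary quantity $v_\Df=\A_{\lambda,\alpha}\phi+(U+i\lambda)\phi^{\prime\prime}(1)\hat\psi$, which satisfies $(\LL_\beta^{\Nf,\Df}-\beta\lambda)v_\Df=g_\Df$ with a right-hand side $g_\Df$ that contains \emph{no} explicit factor of $\beta$; the factor $\beta$ has been absorbed into the Schr\"odinger operator itself.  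The estimate on $\phi$ is then recovered by inverting $\A_{\lambda,\alpha}$, and this is where a large amount of genuinely new analysis enters (all of Section~2): $\A_{0,0}$ has a one-dimensional kernel spanned by $U$ and becomes singular on the critical layer $\{U(x)+i\lambda\approx0\}$, so obtaining uniform bounds on $\A_{\lambda,\alpha}^{-1}$ requires a careful case analysis in $(\alpha,\lambda)$.  None of that is present, or even anticipated, in your outline.

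Two secondary remarks.  First, your proposed $2\times2$ boundary-matching system is not what the paper uses: the paper encodes $\phi^\prime(1)=0$ as a single integral constraint $\langle\mathfrak z_\alpha,\,\phi^{\prime\prime}-\alpha^2\phi\rangle=0$ (with $\mathfrak z_\alpha=\cosh(\alpha x)/\cosh\alpha$, obtained by integrating $d^2/dx^2-\alpha^2$ by parts), and this reduces the matching to determining a single scalar ($\phi^{\prime\prime}(1)$), i.e.\ a $1\times1$ problem; the non-degeneracy is controlled through estimates on $\langle\mathfrak z_\alpha,\psi_{\lambda,\beta}-\tilde v\rangle$ rather than through a $2\times2$ Wronskian.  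Second, the threshold $\alpha\lesssim\beta^{-1/6}$ in the theorem does not arise from a degeneracy of slow vs.\ fast modes, but rather as the range in which the $\alpha$-dependent terms in the Orr--Sommerfeld operator can be treated as a regular perturbation of the $\alpha=0$ case via Proposition~\ref{lem:zero-alpha}; for larger $\alpha$ the paper switches to entirely different arguments (Propositions~\ref{lem:small-lambda-intermediate-alpha}, \ref{lem:intermediate-alpha}, \ref{lem:large-alpha}).
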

The condition $\alpha\in[0,\alpha_L\beta^{-1/6}]$ can be rephrased in terms
  of the pair $(\alpha,R)$ as $\alpha\in[0,\alpha_L^{6/7}R^{-1/7}]$.

 \begin{theorem}
 \label{thm:large-alpha}
 Let $U\in C^4([-1,1])$ satisfy \eqref{eq:5}. {Let further
   $\hat{\mu}_m>0$ be given by \cite[Eq. (6.57)]{almog2019stability}.}
 Then for any $\delta >0$ { and any $\hat \Upsilon >0$,} there exist positive
 $C$, $\Upsilon$, and $\beta_0$ such that for all $\beta>\beta_0$ we have
   \begin{equation}
  \label{eq:9}
        \sup_{
          \begin{subarray}{c}
          \beta^{-1/10 +\delta}\leq \alpha \\
 { \Re\lambda\leq\min(\Upsilon\beta^{-1/2},\beta^{-1/3}[\hat{\mu}_m-\hat \Upsilon-\alpha^2\beta^{-2/3}/2])}
             \end{subarray}}
  \big\|(\B_{\lambda,\alpha,\beta}^{\Df,sym})^{-1}\big\|+
        \Big\|\frac{d}{dx}\, (\B_{\lambda,\alpha,\beta}^{
          \Df,sym})^{-1}\Big\|\leq  C\beta^{-1/2+\delta} \,.
    \end{equation}
 \end{theorem}
The condition $\alpha\geq\beta^{-1/10 +\delta}$ can be rephrased in terms
  of the pair $(\alpha,R)$ as $\alpha\geq R^{(-1+10\delta)/(11-10\delta)}$. {  Note that
    the condition $\Re\lambda\leq\beta^{-1/3}[\hat{\mu}_m-\hat \Upsilon-\alpha^2\beta^{-2/3}/2])$
  guarantees, by (\ref{eq:569}) that $\B_{\lambda,\alpha,\beta}^{\Df,sym}$ is
  invertible for $\Lambda\leq\alpha\beta^{-1/3}[\hat{\mu}_m+ \alpha^2\beta^{-2/3}/2-\hat \Upsilon]$ and hence the stability of
  the laminar flow even for $\alpha\gtrsim\beta^{1/3}$.}

In the above
  \begin{displaymath}
    \big\|(\B_{\lambda,\alpha,\beta}^{\Df,sym})^{-1}\big\|=
\sup_{
  \begin{subarray}{c}
 f\in L^2(0,1) \\
\|f\|_2=1   
  \end{subarray}}
\|(\B_{\lambda,\alpha,\beta}^{\Df,sym})^{-1}f\|_2
  \end{displaymath}
and 
\begin{displaymath}
   \Big\|\frac{d}{dx}\, (\B_{\lambda,\alpha,\beta}^{\Df,sym})^{-1}\Big\|
   =\sup_{
  \begin{subarray}{c}
 f\in L^2(0,1) \\
\|f\|_2=1   
  \end{subarray}}\Big\|\frac{d}{dx}\,
   (\B_{\lambda,\alpha,\beta}^{\Df,sym})^{-1}f\Big\|_2 \,,
\end{displaymath}
where $\|\cdot\|_2$ denotes the standard $L^2(0,1)$ norm. 

 In the recent years, hydrodynamic stability of shear flows has attracted
 significant attention. For the case of a Couette flow we mention only a
 partial list of rigorous analytical results
 \cite{BGM,Mas17,chen2018transition}.  In \cite{almog2019stability} we
 have established similar estimates for the Orr-Sommerfeld operator,
 together with semigroup estimates for the linearized Navier-Stokes
 operator in the case where $|U^\prime|>0$ in $[-1,1]$ (see also the works
 of Chen, Wei, and Zhang
 \cite{chen2022linear} and of Jia \cite{jia2023uniform} for recent
 generalizations). In contrast with the present case the
 Orr-Sommerfeld operator has, when $|U^{\prime\prime}|>0$, a bounded resolvent
 in the half-plane $\Re\lambda\leq0$.\\

The hydrodynamic stability of symmetric flows in a channel has been
considered extensively in the Physics literature (cf. for instance
\cite{drre04,orszag1971accurate,li44,to47}). These works, just like
that of  \cite{grenier2016spectral}, all attempt to determine as function of $\beta$ the
region in the $(\alpha,\Im\lambda)$ plane where the Orr-Sommerfeld is
unstable. In a recent work \cite{ding2022enhanced}, the stability
of Poiseuille flow has been established in the case of a Navier-slip boundary
condition. This means that the boundary condition $u^\prime(\pm1)=0$ in
\eqref{eq:p29ab} is replaced by  $u^{\prime\prime}(\pm1)=0$. The stability of a 
pipe Poiseuille flow has also been addressed in
\cite{chen2019linear}. 

\subsection{Proof strategy}
\label{sec:proof-strategy}
In the following informal discussion, we
  present the main ingredients of the proofs of Theorems
  \ref{thm:small-alpha} and \ref{thm:large-alpha}.  Some of the
  definitions appearing in the discussion will remain slightly vague and will
  be reformulated more precisely in the next sections. The reader may
  be interested in reviewing the relevant part of this
  presentation before diving into the technical details of any
  part of the analysis in the sequel.

   We begin with a
  rather heuristic discussion. Consider the equation
  \begin{equation}
\label{eq:924}
    \B_{\lambda,\alpha,\beta}\phi=f \,,
  \end{equation}
where $(\phi,f)\in D( \B_{\lambda,\alpha,\beta}^{\Nf,\Df})\times L^2(0,1)$ and $
\B_{\lambda,\alpha,\beta}= \B_{\lambda,\alpha,\beta}^{\Nf,\Df}$. \\
Our goal is to estimate  $  \B_{\lambda,\alpha,\beta}^{-1}$ 
 under specific conditions on the parameters $(\lambda,\alpha,\beta)\in \mathbb C \times \mathbb R_+\times \mathbb R_+$.
 We may rewrite the above
equation in the form
\begin{subequations}\label{eq:defphiv}
\begin{equation}
  \A_{\lambda,\alpha}\phi=v \,,
\end{equation}
where
\begin{equation}
  v=\beta^{-1}[f+ \phi^{(4)}-\alpha^2\phi^{\prime\prime}] \,,
\end{equation}
and
\begin{equation}
   \A_{\lambda,\alpha}=(U+i\lambda)\Big(-\frac{d^2}{dx^2}+ \alpha^2\Big)+U^{\prime\prime}\,,
\end{equation}
is the inviscid (Rayleigh) operator whose study will be the main object of Section~2. We define $ \A_{\lambda,\alpha}$  for $\Re\lambda\neq0$ or when $\Re\lambda=0$ and
$\Im\lambda\notin[0,U(0)]$, on 
\begin{equation}
  D(\A_{\lambda,\alpha}^{\mathfrak N,\Df})=\{\phi\in H^2(0,1)\,|\,\phi^\prime(0)=0 \mbox{ and }\phi(1)=0 \;\} \,.
\end{equation}
\end{subequations}
It intuitively appears that $v$ should tend to $0$ as $\beta\to\infty$, and thus,
we adopt the following strategy of proof
\begin{enumerate}
\item We prove that $v$ becomes small as $\beta\to\infty$.
\item We obtain a bound for $ \|\A_{\lambda,\alpha}^{-1}v\|_{1,2}$ (where
    $\|\cdot\|_{1,2}$ denotes the standard $H^1(0,1)$ norm).
\end{enumerate}
After successfully completing the above stages we expect to obtain an
inequality of the form
\begin{displaymath}
  \|\phi^\prime\|_2\leq \delta_1(\beta)\|f\|_2+\delta_2(\beta)\|\phi^\prime\|_2 \,.
\end{displaymath}
If for sufficiently large $\beta$ it holds that $\delta_2(\beta)<1/2$, we can
conclude from here  an estimate for
$\|\B_{\lambda,\alpha,\beta}^{-1}\|+\|\frac{d}{dx} \B_{\lambda,\alpha,\beta}^{-1}\|$\,. \\

\noindent {\em Estimation of $\A_{\lambda,\alpha}^{-1}$:}\\

We use, in Section 2, a similar procedure to the one used in
\cite{almog2019stability}. Let $\lambda=\mu+i\nu$. Given that $|U^{\prime\prime}|>0$
in $[0,1]$ and since 
\begin{equation}
\label{eq:920}
  \Im\Big\langle\phi,\frac{\A_{\lambda,\alpha}\phi}{U+i\lambda}\Big\rangle=-\mu
  \Big\|\frac{|U^{\prime\prime}|^{1/2}\phi}{U+i\lambda}\Big\|_2^2\,,
\end{equation}
we easily obtain that
\begin{equation}
\label{eq:923}
  \Big\|\frac{\phi}{U+i\lambda}\Big\|_2\leq \frac{C}{|\mu|}\|v\|_2\,.
\end{equation}
From the above (accompanied by a rather straightforward integration by
parts) it is not difficult to show that 
\begin{displaymath}
  \|\A_{\lambda,\alpha}^{-1}\|+\Big\|\frac{d}{dx}\A_{\lambda,\alpha}^{-1}\Big\|\leq\frac{C}{|\mu|}\,.
\end{displaymath}

The above estimate is unsatisfactory in the limit $\mu\to0$, and hence
finer estimates need to be established. We use the fact that
$\A_{i\nu,\alpha}$ is self-adjoint. Thus, for \break $\nu\not\in(0,U(0)]$
 (see Subsections \ref{sec:case--nu-negative} and
\ref{sec:case--large-nu}) we may write
\begin{displaymath}
  \Big\langle\frac{\phi}{U-\nu},\A_{i\nu,\alpha}\phi\Big\rangle=\Big\|(U-\nu)\Big(\frac{\phi}{U-\nu}\Big)^\prime\Big\|_2^2 + \alpha^2\|\phi\|_2^2\,,
\end{displaymath}
and obtain from it an estimate for $\|\phi^\prime\|_2$ in the case where
$|\mu|$ is small. \\
For $\nu\in (0,U(0)]$ we have to address the singularity
where $U=\nu$. Given the fact that $U$ is increasing in $[0,1]$ there
exists a unique $x_\nu\in[0,1)$ where $U(x_\nu)=\nu$. Let
$\chi\in C^\infty([0,1],[0,1])$ denote a cutoff function supported on
$[x_\nu/2,1]$. Setting $\varphi=\phi-\phi(x_\nu)\chi$ we may write
\begin{equation}
\label{eq:922}
 \Big\|(U-\nu)\Big(\frac{\varphi}{U-\nu}\Big)^\prime\Big\|_2^2 + \alpha^2\|\phi\|_2^2=
 \Big\langle\frac{\varphi}{U-\nu},\A_{i\nu,\alpha}\varphi\Big\rangle-\phi(x_\nu)\Big[\Big\langle
  \frac{\varphi}{U-\nu},U^{\prime\prime}\chi\Big\rangle
  -\langle\varphi,\chi^{\prime\prime}-\alpha^2\chi\rangle\Big]  \,. 
\end{equation}
For the above balance to become useful for the purpose of obtaining
estimates for $\|\phi^\prime\|_2$,  we need to obtain an estimate for
$\phi(x_\nu)$. To this end we use \eqref{eq:920} to obtain (for $\mu\neq0$)
\begin{displaymath}
  \mu|\phi(x_\nu)|^2 \Big\|\frac{1}{U+i\lambda}\|_2^2 \leq
  \|\phi\|_\infty\Big\|\frac{v}{U+i\lambda}\Big\|_1 + C|\mu|
  \Big\|\frac{\phi-\phi(x_\nu)}{U-\nu}\Big\|_2^2  \,.  
\end{displaymath}
Under the condition in \cite{almog2019stability} on $U$, which is
assumed to be strictly monotone, the above estimates leads to
\begin{displaymath}
  |\phi(x_\nu)|\leq  \|\phi\|_\infty\Big\|\frac{v}{U+i\lambda}\Big\|_1
  +C|\mu|\,\|\phi^\prime\|_2\,.
\end{displaymath}
Substituting the above into \eqref{eq:922} (properly amended to
account for small values of $|\mu|$) yields an estimate for
$\|\phi^\prime\|_2$.

To adapt the above method to the present context we need to overcome
several difficulties:
\begin{enumerate}
\item It holds that $\A_{0,0}U=0$ and since $U\in D(\A_{0,0})$,
  $(\A_{\lambda,\alpha})^{-1}$ becomes strongly singular in the limit
  $(\lambda,\alpha)\to(0,0)$.
\item The boundary condition at $x=0$ is a Neumann condition in
  contrast with the Dirichlet condition in
  \cite{almog2019stability}. Thus, we have to write \eqref{eq:922}
  separately on the intervals $(0,x_\nu)$ and $(x_\nu,1)$. On $(x_\nu,1)$
  we may use the same method used in  \cite{almog2019stability}. 
  However, on $(0,x_\nu)$, considering $\phi/(U-\nu)$, we obtain bounds
of this quotient  for small values of $\alpha$ that are significantly greater than
those obtained for larger values of $\alpha$.
\item The quadratic behavior of $U-U(0)$ as opposed to the linear
  behavior  considered in  \cite{almog2019stability}.
\end{enumerate}
The first pair of difficulties is addressed by the same techniques:
\begin{itemize}
\item 
For small values of $\alpha$ we use the fact that we can consider
$(\A_{0,\lambda})^{-1}$ as an integral operator to obtain satisfactory
estimates for its norm (see Subsection \ref{lem:lambda-zero}). 
\item For
larger values of $\alpha$ we use again \eqref{eq:922} (see Subsection
\ref{sec:case-lambda-small-alpha-large}).
\end{itemize}
 In Subsection \ref{ss2.6} we
present a different analysis, which is valid for any $\alpha\geq0$ with
stronger singularity in the limit $\lambda\to0$. In all cases, we use the
orthogonal decomposition $\phi=C_\parallel(U-\nu)+\phi_\perp$ to obtain separate
estimates for $C_\parallel$ and
$\phi_\perp$, estimates of the latter being significantly 
smaller than the former. To overcome the last difficulty we simply use
\eqref{eq:923} in Subsection \ref{sec:case--quadratic}. In Subsection
\ref{sec:case-near-quadratic} we consider the transition between the
quadratic behavior of $U$ near $x_\nu$ and the linear behavior
considered in Subsection \ref{ss2.6}. \\

{ \em Estimate of $\B_{\lambda,\alpha,\beta}^{-1}$}\\

To obtain an estimate of $v$ (see (\ref{eq:defphiv}b)) we set
\begin{equation}
\label{eq:929}
    v_\Df :=\A_{\lambda,\alpha}\phi + (U+i\lambda)\phi^{\prime\prime}(1)\hat{\psi} \,,
\end{equation}
where
\begin{displaymath}
  \hat{\psi}(x)= \frac{{\rm Ai}\big(
  \beta^{1/3}e^{ -i\pi/6}[(1- x)- i\lambda]\big)} 
{{\rm Ai}\big( e^{ -i2\pi/3}\beta^{1/3}\lambda\big)}\eta(x) \,.
\end{displaymath}
Here $\rm Ai$ is the Airy function and $\eta\in C^\infty([0,1],[0,1])$ is supported on $(1/4,1]$, and satisfies
$\eta\equiv1$ on $[1/2,1]$.  Note that $v_\Df(1)=v_\Df^\prime(0)=0$ and that
$\hat{\psi}$ is
a good approximation for the $L^2(-\infty,1)$ solution of
\begin{equation}
\label{eq:926}
  \begin{cases}
     (-\frac{d^2}{dx^2}+i\beta[(1-x)+i\lambda])u=0 & \text{in } (-\infty,1) \\
     u(1)=1 \,.
  \end{cases}
\end{equation}
We can now
rewrite \eqref{eq:924} in the form
\begin{displaymath}
  (\LL_\beta-\beta\lambda)v_\Df=g_\Df\,,
\end{displaymath}
where
\begin{displaymath}
  \LL_\beta=-\frac{d^2}{dx^2}+i\beta U
\end{displaymath}
is defined on
\begin{displaymath}
  D(\LL_\beta)=\{u\in H^2(0,1)\,|\,u(1)=u^\prime(0)=0\,\}\,.
\end{displaymath}
While the precise form of $g_\Df$ need not concern us in this brief
summary of the proof we still need to obtain an estimate of its
$L^2(0,1)$ norm. Thus, we get an estimate of  
$v$ by working through the following steps
\begin{enumerate}
\item Estimate of $\phi^{\prime\prime}(1)$.
\item Estimate of $g_\Df$.
\item Estimate of $v_\Df$.
\item Estimate of $\A_{\lambda,\alpha}^{-1}v_\Df$ and of $\phi^{\prime\prime}(1) \A_{\lambda,\alpha}^{-1} (U+i\lambda)\hat{\psi} $.
\end{enumerate}

We use two different methods for the estimation of $\phi^{\prime\prime}(1)$.\\
{\bf  For
$\alpha$ values that are not too small, }we rewrite \eqref{eq:924} in the form
\begin{displaymath}
   \Big(-\frac{d^2}{dx^2}+i\beta[U+i\lambda]\Big)(\phi^{\prime\prime}-\alpha^2\phi)=i\beta U^{\prime\prime}\phi+f\,.
\end{displaymath}
Given that $\phi(1)=\phi^\prime(1)=\phi^\prime(0)=0$, we may conclude that for
${\mathfrak z}(x)=\cosh(\alpha x)/\cosh \alpha$ it holds that
\begin{subequations}\label{eq:heuristiczeta}
\begin{equation}
  \langle{\mathfrak z},\phi^{\prime\prime}-\alpha^2\phi\rangle=0\,.
\end{equation}
Consequently, we define the Schr\"odinger operator $\LL_\beta^{\mathfrak
  z}$ with the same differential  operator as for  $\LL_\beta$ but with the following
domain
\begin{equation}
  D(\LL_\beta^{\mathfrak z})=\{
  u\in H^2(0,1)\,|\,u^\prime(0)=0\,,\;\langle{\mathfrak z},u\rangle=0 \}\,.
\end{equation}
\end{subequations}
Let $(v,g)\in D(\LL_\beta^{\mathfrak z})\times L^2(0,1)$ satisfy
\begin{equation}
\label{eq:927}
  (\LL_\beta^{\mathfrak z}-\beta\lambda)v=g \,.
\end{equation}
In Section~4 we obtain estimates for $v(1)$ that we later use in
Section 5 (except for Subsections \ref{sec:5.5} and
\ref{sec:resolv-estim-large-1}) to obtain an estimate for
$\phi^{\prime\prime}(1)$.  Again we have to distinguish between the quadratic
case (Subsection \ref{sec:resolvent-estimates-quadratic}) and the
linear case (Subsection \ref{sec:resolvent-estimates-linear}).

{\bf For smaller values of $\alpha$ and $|\lambda|$,} the estimate of $\phi^{\prime\prime}(1)$,
obtained by the above technique becomes deficient, given the
singularity of $\A_{0,0}$. We thus integrate \eqref{eq:924} for $\alpha=0$
to obtain
\begin{equation}
\label{eq:925}
    \phi^{(3)}(1)=-\int_0^1f(x)\,dx \,.
\end{equation}
Then we use the identity
\begin{multline*}
    \|(U^{\prime\prime})^{-1/2}\phi^{(3)}\|_2^2 = -  \Re\langle(U^{\prime\prime})^{-1} \phi^{\prime\prime},\B_{\lambda,0,\beta}\phi\rangle-
      \frac{1}{U^{\prime\prime}(1)}\Re (\bar{\phi}^{\prime\prime}(1)\phi^{(3)}(1))  \\
      - \Re\langle[(U^{\prime\prime})^{-1}]^\prime\phi^{\prime\prime},\phi^{(3)}\rangle +
      \mu \beta \|(U^{\prime\prime})^{-1/2}\phi^{\prime\prime}\|_2^2 \,.
\end{multline*}
to obtain a proper bound for $\|\phi^{(3)}\|_2$, which together with
Sobolev embedding (skipping, of course, some of the details) leads to
a satisfactory bound for $|\phi^{\prime\prime}(1)|$. Note that the effectiveness
of this technique is lost  when $\alpha$ is not small since \eqref{eq:925}
is no longer valid. We use it only in Subsection \ref{sec:5.5}. 

{\bf Finally, we note that for $\alpha\gtrsim\beta^{-1/3}$,}  ${\mathfrak z}$ undergoes
significant changes through an $\OO(\beta^{-1/3})$ boundary layer near
$x=1$.  Hence, we can no longer make any
good use of \eqref{eq:926} as an estimate for the behavior near $x=1$
of the solution of \eqref{eq:927}. Instead, we need to use the same
method developed in \cite{almog2019stability} for this case. Note
that, since ${\mathfrak z}$ is localized near $x=1$ for large values of
$\alpha$, the effect of the different boundary conditions at $x=0$ here and
in  \cite{almog2019stability} is exponentially small. Resolvent
estimates for $\LL_\beta^{\mathfrak z}$ in this case are brought in Subsection
\ref{sec:resolv-estim-large} whereas  estimates for the inverse of 
$\B_{\lambda,\alpha,\beta}$ are given in Subsection \ref{sec:resolv-estim-large-1}.

We skip  the rather technical stage of estimating $g_\Df$. Once
it is done,  we need to estimate $(\LL_\beta-\beta\lambda)^{-1}$ in $\LL(L^2,L^p)$
for $1\leq p\leq\infty$ in order to obtain
an appropriate estimate for $v_\Df$. These estimates are obtained in
Section 3 for various ranges of $\lambda$ values. Again we need to
distinguish between the linear behavior of $U-U(x_\nu)$ near $x=x_\nu$
(Subsection \ref{ss3.1}) and the quadratic behavior near $x=0$ for $\nu=U(0)$ (Subsection
\ref{sec:schrod-quad}). Special attention is also devoted to
$\LL(L^2,L^1)$ and $\LL(H^1,L^1)$ estimates
(Subsection \ref{sec:l1-estimates}). 

Next, we estimate $\A_{\lambda,\alpha}^{-1}v_\Df$ using the aforementioned
techniques of Section 2. To estimate $\A_{\lambda,\alpha}^{-1} (U+i\lambda)\hat{\psi}
$ we use the exponential decay of $\hat{\psi}$ away from $x=1$ to
obtain, in a rather straightforward manner, except in the case
$(\lambda,\alpha)\to(0,0)$. These estimates are addressed in Subsection
\ref{sec:5.1}.\\

 Finally, in
Section \ref{sec:6} we summarize the results of Section 5 and prove the main
theorems.

\section{The inviscid operator}
\label{sec:2}

\subsection{Preliminaries}
 We begin by presenting the notation, frequently used in the sequel
  \begin{displaymath}
    \langle f,g\rangle_{L^2(a,b)}=\int_a^b\bar{f}(x)g(x)\,dx\,.
  \end{displaymath}
Note that when $(a,b)=(0,1)$ the abbreviated notation $\langle f,g\rangle$ is
used instead of $\langle f,g\rangle_{L^2(0,1)}$.

Next, recall that the differential expression of the inviscid operator (also
called the Rayleigh operator) is given by
\begin{equation}
\label{eq:2.1aa}
  \A_{\lambda,\alpha} \overset{def}{=} (U+i\lambda)\Big(-\frac{d^2}{dx^2}+ \alpha^2\Big)+U^{\prime\prime}\,,
\end{equation}
where $\lambda \in \mathbb C$ and $\alpha \in \mathbb R$.\\ Note that,  for any
$\phi\in D(\B_{\lambda,\alpha,\beta}^\Df)$, 
$$  \A_{\lambda,\alpha}\phi=\lim_{\beta\to\infty}\beta^{-1}\B_{\lambda,\alpha,\beta}^\Df\phi\,.$$
We consider here only spaces of even functions in $(-1,1)$,  hence, as explained 
in the introduction,  we restrict the operator to $(0,1)$ and consider
the Neumann condition at $0$ and the Dirichlet condition at $1$. We
thus define $\A_{\lambda,\alpha}^{\mathfrak N,\mathfrak D}$
\begin{subequations}\label{eq:domND}
\begin{itemize}
\item   for $\Re\lambda\neq0$ or when $\Re\lambda=0$ and
$\Im\lambda\notin[0,U(0)]$, on 
\begin{equation}
  D(\A_{\lambda,\alpha}^{\mathfrak N,\Df})=\{\phi\in H^2(0,1)\,|\,\phi^\prime(0)=0 \mbox{ and }\phi(1)=0 \;\} \,,
\end{equation}
 \item for {$\Re\lambda=0$ and $\Im\lambda\in[0,U(0)]$}
\begin{equation}
  D(\A_{0,\alpha}^{\mathfrak N,\Df})=\{\phi\in H^2((0,1), (U-\Im\lambda)^2 dx)\,|\,\phi^\prime(0)=0 \mbox{ and }\phi(1)=0 \;\} \,.
\end{equation}
which is equipped  with the norm
  \begin{displaymath}
    \|u\|_{ D(\A_{0,\alpha}^{\mathfrak N,\Df})}=\int_0^1
    [|u^{\prime\prime}|^2+[|u^\prime|^2+|u|^2](U-\Im\lambda)^2 \,dx \,.
  \end{displaymath}
\end{itemize}
\end{subequations}
Hence, in the following, we restrict attention to the interval $[0,1]$
assuming  in this section that $U \in C^3([0,1])$  and satisfies the condition:  
  \begin{subequations}
  \label{eq:10}
    \begin{equation}
      U(1)=0 \quad ; \quad \max_{x\in[0,1]}U^{\prime\prime}(x)<0 \quad ; \quad
      U^\prime(0)=0 \,. \tag{\ref{eq:10}a,b,c}
    \end{equation}
  We normalize $U$ so that
  \begin{equation}
  U^\prime(1)=-1 \,.  \tag{\ref{eq:10}d}
  \end{equation}
  \end{subequations}
For convenience of notation we omit the superscripts $\mathfrak N$ and $\mathfrak
D$ in the sequel whenever there is not any fear of ambiguity.  Since
\begin{displaymath}
  \A_{\lambda,\alpha}^U =  \overline{\A_{\bar{\lambda},\alpha}^{-U}} \,,
\end{displaymath}
the analysis in this section applies to the case where
$\min_{x\in[-1,1]}U^{\prime\prime}(x)>0$ replaces (\ref{eq:10}b) as well.  

In this section we obtain a variety  of estimates for $\A_{\lambda,\alpha}^{-1}$ that are
  necessary in order to obtain bounds  in the same parameter regime
  for  $\B_{\lambda,\alpha}^{-1}$.  We note that since
  \begin{displaymath}
    \A_{\lambda,\alpha}=\overline{  \A_{-\bar{\lambda},\alpha}}
  \end{displaymath}
the results in this section do not depend on the sign of $\Re \lambda$. 

Let $\lambda=\mu+i\nu$.  We begin by summarizing the results of this section
in Figure 1. We map in this figure the regions in the $(|\mu|,\nu)$
plane where the various estimates of $\A_{\lambda,\alpha}^{-1}$ can be found.
We refer the reader to Subsection \ref{sec:proof-strategy} where the
consideration that led us to split the $\lambda$ plane to these regimes are
briefly explained.  We note that the results of Subsections 2.6--2.11 are
valid for any $\alpha\geq0$. Subsection 2.3 addresses the case $\alpha=\lambda=0$,
Subsection 2.4 addresses small $\alpha$ values, and Subsection 2.5 relatively
large values of $\alpha$.  The constants determining the boundaries of the
various regimes satisfy: $0<\nu_1<U(0)$, $\nu_0 <0$, $0<\kappa_0$ must be
sufficiently large, and $\mu_0$ and $\lambda_0$ must be sufficiently small.
\hoffset=0in


\begin{figure}
\includegraphics[width=200mm]{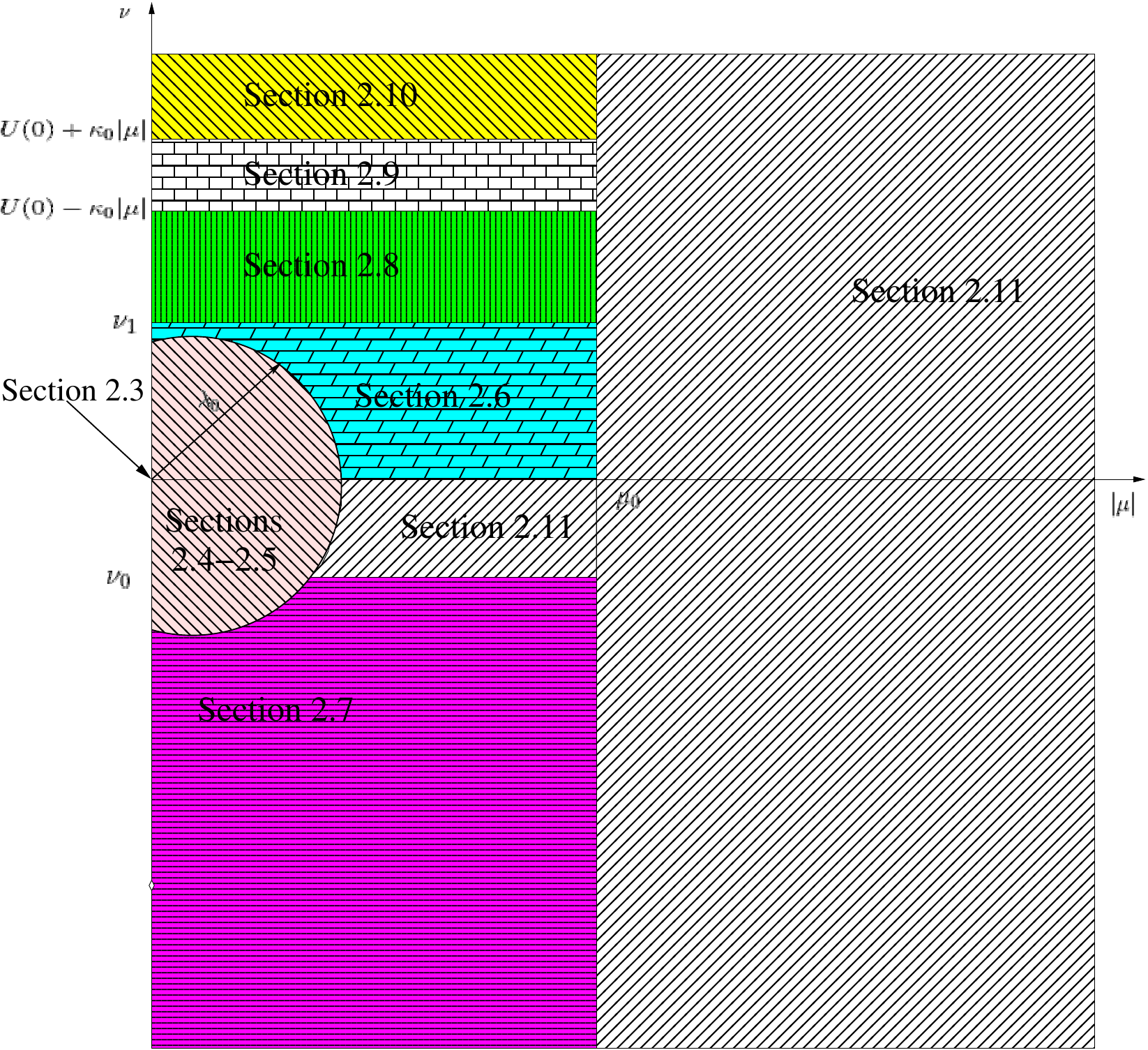}
  \caption{Summary of the results in Section \ref{sec:2}. For each
    zone in $\mathbb C$ for the parameter $\lambda$, we indicate the
    subsection where the basic inequality is proved. Recall that 
    $\mu=\Re\lambda$, and $\nu=\Im\lambda$, $\mu_0,\,\nu_0,\,\nu_1,\,\lambda_0,\,\kappa_0$ are
    positive constants which are introduced in the various subsections.}
  \label{fig:1}
\end{figure}
\hoffset=-1.1in

\clearpage
\hoffset=0in

\subsection{Some more preliminaries}\label{ss2.2}
We begin by defining the following (formally) self-adjoint unbounded operator  on

\begin{displaymath}
H^0_U (0,1):=L^2((0,1), U^2 dx)
\end{displaymath}
by  
\begin{subequations}
  \label{eq:11}
  \begin{equation}
\Mg_U= - U^{-2} \frac{d}{dx}U^2\frac{d}{dx} \,,
\end{equation}
which is naturally associated via the Lax-Migram Lemma with the
quadratic form $ \QQ_U$, defined on
\begin{equation}
  H^1_U(0,1)= \{u\in L^2_{loc} (0,1)\,|\, Uu\in L^2(0,1)  \mbox{ and }  Uu^\prime \in L^2(0,1)\,\} \,.
\end{equation}
by
\begin{equation}
  \QQ_U(w)=\|Uw^\prime\|_2^2\,.
\end{equation}
\end{subequations}
Recall that $U$ satisfies \eqref{eq:10}.
\begin{remark}
  When $U$ is replaced by $e^{-\phi}$, we arrive at a well-known problem
  considered in Statistical Mechanics and Morse theory (Witten
  Laplacians), see for example \cite{HebookWitten} and the references
  therein.  As observed in \cite{BenBen}, we arrive at  a singular
  case of this theory because $U$ is vanishing at $x=1$.
\end{remark}

We can now state
\begin{lemma} $
\mathcal M_U$ is a self-adjoint operator with compact
  resolvent on $L^2((0,1), U^2\, dx)$.  If $ \{\kappa_n\}_{n=1}^\infty$
  denotes the non decreasing sequence of which the spectrum
  $\sigma(\Mg_U)$ is consisted, then
$$\kappa_1=0\,,$$ 
and 
  \begin{equation}
\label{eq:12}
    \kappa_2\geq \lambda_2^N \, |U(0)|^2\,,
  \end{equation}
  where $\lambda_2^N$ denotes the second eigenvalue of the radially symmetric
Neumann-Laplacian (i.e, the Laplacian reduced to the radially symmetric functions satisfying the Neumann condition.)  in the unit ball in $\R^3$.  
\end{lemma}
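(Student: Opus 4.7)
The plan is to establish the three assertions in turn. The form $\QQ_U$ is manifestly non-negative and closed on $H^1_U(0,1)$, so a standard Friedrichs construction yields the self-adjoint $\Mg_U$. Compactness of the resolvent amounts to the compactness of the embedding $H^1_U \hookrightarrow L^2((0,1),U^2\,dx)$: on each compact subinterval $[0,1-\varepsilon]$ the weight $U^2$ is bounded below by a positive constant so Rellich-Kondrachov applies, while on $[1-\varepsilon,1]$ the pointwise bound $U^2(x)\leq (1-x)^2$ (from $|U'|\leq 1$ and $U(1)=0$) makes the tail contribution $\int_{1-\varepsilon}^1 w^2 U^2\,dx$ uniformly small along any $H^1_U$-bounded sequence, and a diagonal extraction produces the desired convergence in $L^2(U^2\,dx)$. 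The identity $\kappa_1=0$ is immediate: the constant function $1$ lies in the form domain and in the kernel of $\Mg_U$, and $\QQ_U\geq 0$ forces $\kappa_1\geq 0$.

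For the key bound $\kappa_2\geq \lambda_2^N |U(0)|^2$, the first move is to shift the singular endpoint to the origin via $y=1-x$. Setting $\tilde U(y)=U(1-y)$ gives a concave increasing function with $\tilde U(0)=0$, $\tilde U'(0)=1$, $\tilde U(1)=U(0)$, and from the chord inequality together with the tangent at $0$ one gets
\begin{equation*}
U(0)\,y \;\leq\; \tilde U(y)\;\leq\; y,\qquad y\in[0,1].
\end{equation*}
(Note $U(0)=\int_0^1|U'|\,ds\leq 1$, so both inequalities are consistent.) By min-max, $\kappa_2$ is the infimum of $\int_0^1(\tilde w')^2\tilde U^2\,dy$ over $\tilde w$ with $\int\tilde w^2\tilde U^2\,dy=1$ and $\int\tilde w\,\tilde U^2\,dy=0$. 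For such $\tilde w$, the lower bound $\tilde U^2\geq U(0)^2 y^2$ yields $\int(\tilde w')^2\tilde U^2\,dy\geq U(0)^2\int(\tilde w')^2 y^2\,dy$, and the three-dimensional radial Neumann Poincar\'e inequality on the unit ball gives
\begin{equation*}
\int_0^1 (\tilde w')^2 y^2\,dy \;\geq\; \lambda_2^N \int_0^1(\tilde w-\bar w)^2 y^2\,dy,\qquad \bar w := 3\int_0^1 \tilde w\, y^2\,dy.
\end{equation*}

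The remaining step converts the $y^2$-weighted variance back into $\int\tilde w^2\tilde U^2\,dy$. Since $\int\tilde w\,\tilde U^2\,dy=0$, the mean can be rewritten as $\bar w = 3\int\tilde w(y^2-\tilde U^2)\,dy$. Cauchy-Schwarz applied to the positive weight $y^2-\tilde U^2$, combined with $\int_0^1(y^2-\tilde U^2)\,dy \leq 1/3$, yields $\bar w^2 \leq 3\int\tilde w^2(y^2-\tilde U^2)\,dy$. Expanding the variance,
\begin{equation*}
\int_0^1(\tilde w-\bar w)^2 y^2\,dy \;=\; \int_0^1 \tilde w^2 y^2\,dy - \bar w^2/3 \;\geq\; \int_0^1 \tilde w^2 \tilde U^2\,dy,
\end{equation*}
and chaining the three inequalities produces $\int(\tilde w')^2\tilde U^2\,dy \geq \lambda_2^N\, U(0)^2 \int\tilde w^2\tilde U^2\,dy$, which is exactly $\kappa_2\geq \lambda_2^N |U(0)|^2$.

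The hardest part I expect is the compactness at the singular endpoint $x=1$, where the degeneracy of $U^2$ requires a careful control of admissible behavior in the form domain; by contrast, once the change of variable $y=1-x$ and the orthogonality correction via Cauchy-Schwarz are identified, the spectral lower bound is a direct variational computation.
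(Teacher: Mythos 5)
Your proof is correct, and for the central estimate \eqref{eq:12} it takes a genuinely different route from the paper. The paper works with the min-max characterization \eqref{eq:13} and observes that the substitution $\psi = (1-x)^2\phi/U^2$ makes the orthogonality constraint identical for the $U$-weighted and $(1-x)$-weighted Rayleigh quotients; the two-sided bound $U(0)(1-x)\leq U\leq (1-x)$ then lets it compare the quotients pointwise and apply a standard variational monotonicity theorem, yielding the two-sided comparison $|U(0)|^2\kappa_2^0\leq\kappa_2\leq|U(0)|^{-2}\kappa_2^0$. You instead stay with the constraint $\int \tilde w\,\tilde U^2=0$ inherited from the problem, lower the Dirichlet energy via $\tilde U^2\geq U(0)^2 y^2$, apply the radial Neumann Poincar\'e inequality on $B^3$, and then explicitly estimate the mismatch between the $\tilde U^2$-orthogonality and the $y^2$-weighted mean through the Cauchy--Schwarz bound $\bar w^2\leq 3\int\tilde w^2(y^2-\tilde U^2)\,dy$, which rescues the variance term. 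This is more elementary and self-contained (no appeal to \cite[Theorem 11.12]{Hebookthsp}), but only delivers the lower bound on $\kappa_2$, which is all that is used. Both arguments rest on the same two pillars: the concavity bound $U(0)(1-x)\leq U\leq (1-x)$ and the identification of $\Mg_{U_0}$ with $U_0=1-x$ as the radial Neumann Laplacian on the unit ball. One small caveat on the part the paper does not prove: your tail estimate $\int_{1-\varepsilon}^1 w^2 U^2\,dx$ ``uniformly small'' is not an immediate consequence of $U^2\leq(1-x)^2$ together with boundedness of $\|Uw\|_2$; one needs a Hardy-type argument (from $\|Uw'\|_2\leq 1$ one gets $|w(y)|\lesssim |w(\delta)|+y^{-1/2}$ near $y=0$, whence the tail is $O(\varepsilon^2)$), and it would be worth spelling this out.
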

\begin{proof}
  The proof that $\kappa_1=0$ is trivial (the associated eigenfunction is
  the non-vanishing constant function). To prove the lower bound for
  $\kappa_2$ we first observe that, by the variational max-min
  characterization of the second eigenvalue, 
  \begin{equation}
\label{eq:13}
     \kappa_2= \sup _{\psi \in H^1_U(0,1)} \inf_{
      \begin{subarray}\strut
        w\in H^1_U(0,1) \\
        \langle w,U^2\psi\rangle=0
      \end{subarray}}\frac{\|Uw^\prime\|_2^2}{\|Uw\|_2^2}    \,=\, 
 \sup _{\phi \in H^1_U(0,1)} 
\inf_{\begin{subarray}\strut
        w\in H^1_U(0,1) \\
        \langle w,(1-x)^2\phi\rangle=0
      \end{subarray}}
\frac{\|Uw^\prime\|_2^2}{\|Uw\|_2^2}\,.
  \end{equation}
The second equality can be proved by writing $\psi = \frac{(1-x)^2}{U^2}
\phi$. Then, we use the fact that by the concavity of $U$ (see
(\ref{eq:10}b)) 
\begin{equation}\label{eq:14}
  U(0)(1-x) \leq U(x)\leq (1-x)\,,
\end{equation}
to obtain
\begin{equation}
\label{eq:15}
  |U(0)|^{-2} \frac{\|(1-x)w^\prime\|_2^2}{\|(1-x)w\|_2^2} \geq
  \frac{\|Uw^\prime\|_2^2}{\|Uw\|_2^2}\geq
  |U(0)|^2\frac{\|(1-x)w^\prime\|_2^2}{\|(1-x)w\|_2^2}\,. 
\end{equation}
Let $U_0(x)=1-x$ and $\kappa_n^0$ denote the $n$'th eigenvalue of
$\Mg_{U_0}$. By \eqref{eq:15} and \cite[Theorem 11.12]{Hebookthsp})
we have that
\begin{equation}
\label{eq:16}
   |U(0)|^{2} \kappa_2^0 \leq \kappa_2 \leq   |U(0)|^{-2}  \kappa_2^0\,.
\end{equation}
Setting $\rho(x)=1-x$ we obtain that
\begin{displaymath}
  \Mg_{U_0}= - \rho^{-2} \frac{d}{d\rho}\rho^2\frac{d}{d\rho}\,,
\end{displaymath}
which is defined on (the Neumann condition is the natural boundary
condition associated with $Q_{U_0}$)
\begin{displaymath} 
  D(\Mg_{U_0})=\{u\in H^2([0,1];\rho^2d\rho)\,|\, u^\prime(1)=0\} \,.
\end{displaymath}
Hence,   $\Mg_{U_0}$ is the radially symmetric Neumann Laplacian and
we may conclude that 
\begin{displaymath}
   \kappa_2^0=\lambda_2^N \,.
\end{displaymath}
The above, together with \eqref{eq:16} yields \eqref{eq:12}.
\end{proof}
 We next recall Hardy's inequality on finite intervals (see for example 
\cite[Eq. (1.25)]{kupe03} )
\begin{lemma}
   Let $w\in H^1(a,b)$ satisfy $w(b)=0$. Then,  we have
\begin{equation}
\label{eq:17}
  \|([x-a]w)^\prime\|_2^2 =\|[x-a]w^\prime\|_2^2\geq \frac{1}{4}\|w\|_2^2 \,.
\end{equation}
\end{lemma}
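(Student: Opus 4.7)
The lemma is the standard weighted Hardy inequality on a finite interval, and I would attack it in two short steps: first verify the stated equality of the two norms, then prove the lower bound $\frac{1}{4}\|w\|_2^2$.

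For the equality $\|([x-a]w)^\prime\|_2^2 = \|[x-a]w^\prime\|_2^2$, I would expand $([x-a]w)^\prime = w + (x-a)w^\prime$ and compute
\begin{displaymath}
  \|([x-a]w)^\prime\|_2^2 = \|w\|_2^2 + 2\Re\langle w,(x-a)w^\prime\rangle + \|(x-a)w^\prime\|_2^2 .
\end{displaymath}
The cross term $2\Re\langle w,(x-a)w^\prime\rangle = \int_a^b (x-a)(|w|^2)^\prime\,dx$ integrates by parts to $[(x-a)|w|^2]_a^b - \int_a^b |w|^2\,dx$, and the boundary term vanishes since $w(b)=0$ at the right endpoint and the factor $(x-a)$ vanishes at the left. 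Hence the cross term equals $-\|w\|_2^2$, and the two $\|w\|_2^2$ contributions cancel, yielding the claimed identity.

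For the inequality, I would use essentially the same integration by parts directly on $\|w\|_2^2$. Writing $1=(x-a)^\prime$, one obtains
\begin{displaymath}
  \|w\|_2^2=\int_a^b (x-a)^\prime |w|^2\,dx = \bigl[(x-a)|w|^2\bigr]_a^b-2\int_a^b (x-a)\Re(\bar w w^\prime)\,dx ,
\end{displaymath}
and again the boundary term is zero thanks to $w(b)=0$ and the vanishing of $(x-a)$ at $a$. The Cauchy--Schwarz inequality applied to the remaining integral gives
\begin{displaymath}
  \|w\|_2^2 \leq 2\,\|w\|_2\,\|(x-a)w^\prime\|_2 ,
\end{displaymath}
from which $\|w\|_2^2 \leq 4\,\|(x-a)w^\prime\|_2^2$ follows after dividing by $\|w\|_2$ (handling the trivial case $w\equiv 0$ separately).

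There is no real obstacle here; the only point requiring care is the justification of the integration by parts for $w\in H^1(a,b)$, which is standard since $|w|^2\in W^{1,1}(a,b)$ with $(|w|^2)^\prime = 2\Re(\bar w w^\prime)$ and the trace $w(b)$ is well defined. The constant $\tfrac{1}{4}$ is sharp, though the lemma does not need that refinement.
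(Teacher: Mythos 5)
Your proof is correct. The equality $\|([x-a]w)^\prime\|_2^2 = \|[x-a]w^\prime\|_2^2$ is established by exactly the same integration-by-parts computation as in the paper. For the inequality, however, you take a genuinely different and more self-contained route: the paper extends $w$ by zero to a function $\tilde w$ on $(a,\infty)$ and then invokes the classical Hardy inequality on the half-line, whereas you prove the bound directly by writing $1=(x-a)^\prime$, integrating by parts, and applying Cauchy--Schwarz. Your argument is elementary and avoids any reliance on the half-line result, which is a small but real advantage in terms of self-containedness; the paper's extension argument is slightly shorter on the page but pushes the work into a cited inequality. Both are clean, and both handle the boundary terms and the $w\equiv 0$ degeneracy correctly.
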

\begin{proof}
  Let $\tilde{w}\in H^1(a,\infty)$ be given by
  \begin{displaymath}
    \tilde{w}(x)=
    \begin{cases}
      w(x) & x\in(a,b) \\
      0 & x\geq b \,.
    \end{cases}
  \end{displaymath}
  Then Hardy's inequality on $\R_+$ applied to $\tilde w$ implies that
\begin{displaymath}
 \|([x-a]w)^\prime\|_{L^2(a,b)}^2=  \|([x-a]\tilde{w})^\prime\|_{L^2(a,\infty)}^2\geq
   \frac{1}{4}\|\tilde{w}\|_{L^2(a,\infty)}^2= \frac{1}{4}\|w\|_{L^2(a,b)}^2 \,. 
\end{displaymath}
To complete the proof we first write
\begin{displaymath}
  \|([x-a]w)^\prime\|_2^2= \|[x-a]w^\prime\|_2^2+\|w\|_2^2+2\Re\langle w,[x-a]w^\prime\rangle\,.
\end{displaymath}
An integration by parts then yields
\begin{displaymath}
  2\Re\langle w,[x-a]w^\prime\rangle=\Re\langle[x-a],(|w|^2)^\prime\rangle=-\|w\|_2^2 \,.
\end{displaymath}
Hence 
\begin{displaymath}
    \|([x-a]w)^\prime\|_2^2= \|[x-a]w^\prime\|_2^2 \,,
\end{displaymath}
which completes the proof of the lemma. 
\end{proof}

If we drop the requirement that $w(b)=0$ we can still state the
following 
\begin{lemma}
   Let $w\in H^1(a,b)$. Then,  we have
\begin{equation}
\label{eq:18}
  \|([x-a]w)^\prime\|_2^2\geq \frac{1}{4}\|w\|_2^2 \,.
\end{equation}
\end{lemma}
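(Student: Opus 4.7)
The obstacle to imitating the previous lemma is plain: when $w(b)\neq0$, the zero extension $\tilde{w}$ no longer lies in $H^1(a,\infty)$, so Hardy's inequality on the half-line cannot be invoked directly, and the integration-by-parts identity used there now picks up a boundary term $(b-a)|w(b)|^2$ that has the wrong sign to close the argument. Nevertheless, the claim is essentially a Hardy inequality for a function vanishing at the \emph{left} endpoint alone, and I would prove it as such.

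My plan is as follows. Introduce $f=(x-a)w\in H^1(a,b)$; note that $f(a)=0$, while $f(b)=(b-a)w(b)$ need not vanish. The inequality to prove becomes
\begin{displaymath}
   \|f'\|_2^2 \;\geq\; \frac{1}{4}\int_a^b \frac{|f|^2}{(x-a)^2}\,dx\,,
\end{displaymath}
since the right-hand side equals $\frac{1}{4}\|w\|_2^2$. To establish this, I would start from the identity
\begin{displaymath}
   \frac{d}{dx}\!\left(\frac{|f|^2}{x-a}\right)=\frac{2\Re(\bar f'f)}{x-a}-\frac{|f|^2}{(x-a)^2}\,,
\end{displaymath}
integrate over $(a,b)$, and use that $|f(x)|^2/(x-a)\to 0$ as $x\to a^+$ (a consequence of $f(a)=0$ together with $f\in H^1$, via $|f(x)|^2\le(x-a)\int_a^x|f'|^2\,dt$). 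This yields
\begin{displaymath}
   \int_a^b \frac{|f|^2}{(x-a)^2}\,dx \;+\; \frac{|f(b)|^2}{b-a}
   \;=\; 2\,\Re\int_a^b \frac{\bar f'\,f}{x-a}\,dx\,.
\end{displaymath}

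Now I would apply Cauchy–Schwarz to the right-hand side. Writing $A=\big(\int_a^b|f|^2/(x-a)^2\,dx\big)^{1/2}$, $B=\|f'\|_2$, and $C=|f(b)|^2/(b-a)\ge 0$, the identity above becomes the quadratic inequality $A^2+C\le 2AB$, equivalently $(A-B)^2\le B^2-C$. This forces $B^2\ge C$ and $A\le B+\sqrt{B^2-C}\le 2B$, so $A^2\le 4B^2$, which is exactly the Hardy-type inequality sought. Substituting back $f=(x-a)w$ gives $\|([x-a]w)'\|_2^2\ge\tfrac14\|w\|_2^2$.

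The only delicate point is the vanishing of the boundary term at $x=a$, which has to be justified from $f(a)=0$ and $f\in H^1$; this is a standard, short computation. The interesting structural feature worth highlighting is that the boundary contribution $C=|f(b)|^2/(b-a)$ at the \emph{non-vanishing} endpoint appears with a favorable sign in the resulting quadratic, so it only strengthens the estimate rather than obstructing it — in fact the argument yields the sharper bound $\|([x-a]w)'\|_2^2\ge\tfrac14\|w\|_2^2+\tfrac{(b-a)|w(b)|^2}{4}$, but only the weaker statement is needed here.
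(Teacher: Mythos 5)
Your proof is correct, and it takes a genuinely different route from the paper's. The paper reuses the half-line Hardy inequality already established for the previous lemma: it extends $w$ to $(a,\infty)$ by setting $\tilde{w}(x)=w(b)(b-a)/(x-a)$ for $x\geq b$, so that $(x-a)\tilde{w}$ is constant (hence $([x-a]\tilde w)'=0$) on $(b,\infty)$, and then $\|([x-a]w)'\|_{L^2(a,b)}^2=\|([x-a]\tilde w)'\|_{L^2(a,\infty)}^2\geq\tfrac14\|\tilde w\|_{L^2(a,\infty)}^2\geq\tfrac14\|w\|_{L^2(a,b)}^2$. You instead run the Hardy integration-by-parts computation directly on the bounded interval, observe that the boundary term at $x=b$ carries a favorable sign, and close with Cauchy–Schwarz and a quadratic inequality. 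Both are sound; the paper's argument is shorter because it leverages the lemma it already proved, while yours is self-contained and makes the role of the non-vanishing endpoint transparent (and yields the strengthened bound $\|([x-a]w)'\|_2^2\geq\tfrac14\|w\|_2^2+\tfrac14(b-a)|w(b)|^2$ essentially for free). One small check worth spelling out in a final write-up is the justification that $|f(x)|^2/(x-a)\to 0$ as $x\to a^+$: your bound $|f(x)|^2\leq(x-a)\int_a^x|f'|^2\,dt$ indeed gives $|f(x)|^2/(x-a)\leq\int_a^x|f'|^2\,dt\to0$, so this is fine, but it is the one place where $f(a)=0$ and $f\in H^1$ are both used and deserves the sentence you gave it.
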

\begin{proof}
  We use Hardy's inequality in $\R_+$ for the extension
  \begin{displaymath}
    \tilde{w}(x)=
    \begin{cases}
      w(x) & x\in(a,b) \\
      w(b)\frac{b-a}{x-a} & x\geq b \,.
    \end{cases}
  \end{displaymath}
\end{proof} 
\subsection{ Estimates in the case $\alpha=\lambda=0$}

We begin with the simplest possible case, for which $\alpha=\lambda=0$. We
recall that
\begin{displaymath} 
\mathcal A_{0,0}:= - U \frac{d^2}{dx^2} + U^{\prime\prime}\,,
\end{displaymath}
is defined on
\begin{displaymath} 
  D(\A_{0,0})=\{u\in H^2((0,1), U^2 dx)\,|\,u(1)=u^\prime(0)=0\,\}\,,
\end{displaymath}
corresponding to a Dirichlet-Neumann problem on $(0,1)$.\\
Next, let $W^{1,p}(0,1)$ denote the normed space 
\begin{displaymath}
  W^{1,p} (0,1):=\{u\in L^p(0,1)\,|\, u^\prime \in L^p(0,1)\}\,,
\end{displaymath}
  with its natural norm denoted by $\| \cdot \|_{1,p}$. \\

 Observing that $U$ belongs to  the kernel of $\A_{0,0}$,   we set
\begin{subequations}
    \label{eq:19}
   \begin{equation}
    \phi= c_\parallel  U+\phi_\perp   \,,
  \end{equation}
where
\begin{equation}
 c_{\parallel}  = \frac{\langle U,\phi\rangle }{\|U\|_2^2}\,,
\end{equation}	
in which $\langle \cdot ,\cdot \rangle$ denotes the natural  $L^2(0,1)$ inner product.
\end{subequations}
\begin{lemma}
\label{lem:lambda-alpha-zero}
Let $U\in C^2([0,1])$ satisfy \eqref{eq:10}. There exists
$C>0$ such that for any $(\phi,v)\in D(\A_{0,0})\times  W^{1,2}(0,1)$
satisfying
\begin{subequations}\label{eq:20}
\begin{equation}
  \A_{0,0}\,\phi=v\,,
\end{equation}
\begin{equation}
\langle1,v\rangle=0 \mbox{ and } v(1)=0\,,
\end{equation}
and
 \begin {equation}
 \lim_{
   \begin{subarray}\strut
    x\to1 \\
    x<1
   \end{subarray}}\phi^\prime(x)=0\,,
 \end{equation}
 \end{subequations}
we have
\begin{subequations}
  \label{eq:21}
  \begin{equation}
\|\phi_\perp\|_{1,2}\leq C \, \|v\|_2\,,
\end{equation}
 and
\begin{equation}
  |c_\parallel|\leq C \, \|v\|_2^{1/2}\|v\|_{1,2}^{1/2}\,.
\end{equation}
\end{subequations}
\end{lemma}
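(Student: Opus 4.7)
The plan is to reduce everything to the weighted self-adjoint operator $\Mg_U$ of Section~\ref{ss2.2} via the substitution $\tilde\psi := \phi_\perp/U$. Since $\A_{0,0}(Uf) = -(U^2f')'$, the equation $\A_{0,0}\phi_\perp = v$ becomes $-(U^2\tilde\psi')'=v$; setting $W := U^2\tilde\psi'$ and integrating, one gets $W(x) = \int_x^1 v(y)\,dy$ with $W(0)=W(1)=0$, thanks to $\langle 1,v\rangle = 0$ and $v(1)=0$. The orthogonality $\langle U,\phi_\perp\rangle=0$ rewrites as $\tilde\psi\perp 1$ in $L^2((0,1),U^2\,dx)$, which activates the spectral gap~\eqref{eq:12}; and the condition $\phi'(1)=0$ combined with $\phi_\perp=U\tilde\psi$ produces the key identity $\tilde\psi(1)=-c_\parallel$.

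To establish (a), I first bound $\|U\tilde\psi'\|_2^2 = \int|W|^2/U^2\,dx$. The concavity bound $U(x)\geq U(0)(1-x)$ combined with Hardy's inequality~\eqref{eq:17} applied to $W$ (which vanishes at $x=1$) gives $\int|W|^2/U^2 \leq 4U(0)^{-2}\|W'\|_2^2 = 4U(0)^{-2}\|v\|_2^2$. The spectral gap of $\Mg_U$ then yields $\|\phi_\perp\|_2 = \|U\tilde\psi\|_2 \leq C\|U\tilde\psi'\|_2 \leq C\|v\|_2$. For $\|\phi_\perp'\|_2$, I integrate $\int |\phi_\perp'|^2 = -\int \phi_\perp''\bar{\phi_\perp}\,dx$ by parts (boundary terms vanish by $\phi_\perp'(0)=\phi_\perp(1)=0$), substitute $\phi_\perp'' = (U''\phi_\perp-v)/U$ from the equation, and recognize $\int v\bar{\tilde\psi} = \|U\tilde\psi'\|_2^2$ by a further integration by parts. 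The $U''$-term is bounded by $C\|\phi_\perp\|_2\|\phi_\perp'\|_2$ thanks to $|U''|/U\leq C/(1-x)$, Cauchy--Schwarz, and Hardy's inequality applied to $\phi_\perp$; Young's inequality then closes the estimate.

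For (b), starting from $c_\parallel = -\tilde\psi(1)$, I exploit the orthogonality $\int_0^1 U^2\tilde\psi\,dx = 0$. Writing $M(x):=\int_0^x U^2(y)\,dy$ and integrating by parts (boundary terms vanish: $M(0)=0$ and $\tilde\psi(1)$ is finite) produces
\begin{displaymath}
 M(1)\,\tilde\psi(1) \;=\; \int_0^1 M(y)\,\tilde\psi'(y)\,dy \;=\; \int_0^1 \frac{M(y)\,W(y)}{U(y)^2}\,dy\,.
\end{displaymath}
The right-hand side will be estimated by combining two complementary bounds on $W$: Cauchy--Schwarz gives $|W(x)|\leq\sqrt{1-x}\,\|v\|_2$, while the representation $W(x)=-\int_x^1 (y-x)v'(y)\,dy$ (obtained by integrating by parts and using $v(1)=0$) gives $|W(x)|\leq C(1-x)^{3/2}\|v\|_{1,2}$. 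Splitting the $y$-integration at $1-\delta$ with $\delta:=\|v\|_2/\|v\|_{1,2}\in(0,1]$, and using $M(y)/U(y)^2\leq C/(1-y)^2$, the two halves balance and together yield $|\tilde\psi(1)|\leq C\|v\|_2^{1/2}\|v\|_{1,2}^{1/2}$.

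The main difficulty is the degeneracy at $x=1$ (where $U$ vanishes), which rules out standard elliptic estimates for $\A_{0,0}$. Factoring $\phi_\perp = U\tilde\psi$ transfers the problem to the weighted operator $\Mg_U$, whose spectral theory is already available, but controlling the \emph{unweighted} $L^2$ norm of $\phi_\perp'$ still requires combining the weak form with a Hardy-type estimate near $x=1$; similarly, the sharp interpolation for $c_\parallel$ reflects that the boundary value $\tilde\psi(1)$ carries information intermediate between $\|v\|_2$ and $\|v\|_{1,2}$.
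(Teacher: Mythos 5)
Your proof is correct. Part (a) follows essentially the same route as the paper: pass to $w_\perp=\tilde\psi=\phi_\perp/U$, exploit the spectral gap \eqref{eq:12} and Hardy's inequality \eqref{eq:17}, and close with the integration-by-parts identity relating $\|\phi_\perp'\|_2$ to $\|Uw_\perp'\|_2$. (Your explicit integration $W=U^2\tilde\psi'=\int_x^1 v$ is a slightly more concrete route to the bound on $\|Uw_\perp'\|_2$ than the paper's bilinear-form argument, but the ingredients are the same.)

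Part (b) is where you genuinely diverge. The paper first proves $\|\phi_\perp''\|_2\leq C\|v\|_{1,2}$ by reading $\phi_\perp''=(U''\phi_\perp-v)/U$ off the equation, controlling $\|\phi_\perp/U\|_2$ and $\|v/U\|_2$ by Hardy (using $\phi_\perp(1)=v(1)=0$), and then invokes the Agmon/Gagliardo--Nirenberg interpolation $|\phi_\perp'(1)|\leq\|\phi_\perp''\|_2^{1/2}\|\phi_\perp'\|_2^{1/2}$ to land on $|c_\parallel|=|\phi_\perp'(1)|$. You instead avoid any second-derivative estimate: you use the orthogonality $\int_0^1 U^2\tilde\psi=0$, integrate by parts against $M(x)=\int_0^x U^2$ to obtain the exact representation $M(1)\tilde\psi(1)=\int_0^1 (M/U^2)W$, and then bound $W$ in two complementary ways ($|W|\lesssim(1-x)^{1/2}\|v\|_2$ and $|W|\lesssim(1-x)^{3/2}\|v\|_{1,2}$), balancing them at the scale $\delta=\|v\|_2/\|v\|_{1,2}$. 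Your route makes the interpolation between $\|v\|_2$ and $\|v\|_{1,2}$ completely explicit and is arguably more elementary, since it only needs the one-dimensional integral formula for $W$ and the concavity bound $U\geq U(0)(1-x)$; the paper's route is shorter and fits the standard elliptic-regularity-plus-trace pattern that it reuses elsewhere. Both correctly identify the same boundary quantity $c_\parallel=\phi_\perp'(1)=-\tilde\psi(1)$ as the object to control.
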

\begin{proof}~\\
Let $w=\phi/U$. Clearly, $w= c_\parallel  + w_\perp$  with $w_\perp=\phi_\perp/U$, and 
\begin{equation}\label{eq:22}
\mathcal A_{0,0} \, \phi= U^2\mathcal M_U\, w=  U^2\mathcal M_U \, w_\perp=v \,.
\end{equation}
{\em  Step 1: Estimate of $\|\phi_\perp^\prime\|_2$}.~\\
Taking the inner product with $w_\perp$ yields
\begin{equation}
\label{eq:23}
  \|Uw_\perp^\prime\|_2^2=\langle w_\perp,v\rangle \,.
\end{equation}

Since $\langle w_\perp,U^2\rangle =\langle \phi_\perp,U \rangle  =0$,  we now use \eqref{eq:12}, 
\eqref{eq:23}, and  Hardy's inequality \eqref{eq:17} 
to obtain that
\begin{equation}
\label{eq:24}
   \|\phi_\perp\|_2^2=\|Uw_\perp\|_2^2\leq \kappa_2  \|Uw_\perp^\prime\|_2^2 \leq  
   \kappa_2  \, \Big\|\frac{\phi_\perp}{ U}\Big\|_2\|v\|_2\leq C\, \|\phi_\perp^\prime\|_2\|v\|_2 \,.
\end{equation}
Note that by  \eqref{eq:10} and  \eqref{eq:17}  it holds that
\begin{multline}
  \label{eq:25}
\|w_\perp\|_2 \leq \Big\|\frac{\phi_\perp}{U(0)(1-x)}\Big\|_2\leq
\Big\|\frac{\tilde{\phi}_\perp}{U(0)(1-x)}\Big\|_{L^2(-\infty,1)}\\\leq
\frac{2}{U(0)}\|\tilde{\phi}_\perp^\prime\|_{L^2(-\infty,1)}=\frac{2}{U(0)}\|\phi_\perp^\prime\|_2\,,
\end{multline}
 where $\tilde{\phi}_\perp\in H^1_{loc}((-\infty, 1])$ is given by 
\begin{displaymath}
  \tilde{\phi}_\perp(x)=
  \begin{cases}
   \phi_\perp(x) & x\in[0,1] \\
   \phi_\perp(0) &x <0\,.
  \end{cases}
\end{displaymath}
Integration by parts yields
\begin{equation}
\label{eq:26}
  \|\phi_\perp^\prime\|_2^2 = \|(Uw_\perp)^\prime\|_2^2 = \|Uw_\perp^\prime\|_2^2
  -\langle Uw_\perp,U^{\prime\prime}w_\perp\rangle \leq \|Uw_\perp^\prime\|_2^2+ C\,\|\phi_\perp\|_2\|w_\perp\|_2 \,.
\end{equation}
Using \eqref{eq:23} and \eqref{eq:25}, we obtain
\begin{displaymath}
  \|\phi_\perp^\prime\|_2\leq C\, (\|v\|_2+ \|\phi_\perp\|_2)\,.
\end{displaymath}
By \eqref{eq:24} we then obtain
\begin{equation}
\label{eq:27}
  \|\phi_\perp^\prime\|_2\leq C\, \|v\|_2 \,.
\end{equation}
Note that to obtain \eqref{eq:27} we have used the mere fact that $v\in
L^2(0,1)$ .\\

{\em Step 2: Estimate of $c_\parallel$\,.\\}

By (\ref{eq:20}a) and the fact that $\mathcal A_{0,0}  \phi_\perp =v$, it holds that
\begin{displaymath}
  \|\phi_\perp^{\prime\prime}\|_2\leq C \, \Big(\Big\|\frac{\phi_\perp}{U}\Big\|_2+\Big\|\frac{v}{U}\Big\|_2\Big)\,.
\end{displaymath}
By Hardy's inequality~\eqref{eq:18} we then obtain that
\begin{displaymath}
   \|\phi_\perp^{\prime\prime}\|_2\leq C(\|\phi_\perp^\prime\|_2+ \|v^\prime\|_2)\,.
\end{displaymath}
Using \eqref{eq:27} we may conclude that
\begin{equation}
\label{eq:28}
  \|\phi_\perp^{\prime\prime}\|_2\leq C\|v\|_{1,2} \,. 
\end{equation}
Using Sobolev embedding,  (\ref{eq:10}d) and (\ref{eq:20}c) yields
\begin{displaymath}
  |c_\parallel|=|\phi_\perp^\prime(1)|\leq  \|\phi_\perp^\prime\|_\infty \leq
  \|\phi_\perp^{\prime\prime}\|_2^{1/2}\|\phi_\perp^\prime\|_2^{1/2}\,.
\end{displaymath}
We can now conclude  (\ref{eq:21}b) from \eqref{eq:27} and
\eqref{eq:28}. 
\end{proof}

\subsection{ Estimate of $(\A_{\lambda,\alpha})^{-1}$ for  $\Re \lambda \neq 0$ and
  $\alpha\ll|\lambda|^{1/2}$.}  
We continue with the following estimate of $(\A_{\lambda,0})^{-1}$ when
$\Re \lambda \neq 0$. From \eqref{eq:2.1aa}, we recall that 
\begin{equation*}
  \A_{\lambda,0} \overset{def}{=} - (U+i\lambda) \frac{d^2}{dx^2}+U^{\prime\prime}\,,
\end{equation*}
and that its domain is defined in (\ref{eq:domND}a):
\begin{displaymath} 
  D(\A_{\lambda,0})=\{u\in H^2((0,1), U^2 dx)\,|\,u(1)=u^\prime(0)=0\,\}\,.
\end{displaymath}
We shall then consider $\mathcal A_{\lambda,\alpha}^{-1}$ for $\alpha$ small enough.
\begin{proposition}
\label{lem:lambda-zero}
  Let $p>1$ and $U\in C^3([0,1]) $ 
satisfy \eqref{eq:10}. There exists $C>0$ such that, for  $\lambda\in\C$
  satisfying  $\Re\lambda\neq0$ and $|\lambda|< U(3/4)$, it holds for any $ (\phi,v)\in
  D(\A_{\lambda,0})\times  L^p(0,1)$ satisfying $\A_{\lambda,0} \, \phi=v$ that
\begin{equation}
  \label{eq:29}
\|\phi\|_{1,2}\leq C\Big(\frac{1}{|\lambda|}{\Big|\int_0^1v\,dx\Big|}+  \Big\|\frac{v}{U+i\lambda}\Big\|_1+\|v\|_p  \Big)\,.
\end{equation}
\end{proposition}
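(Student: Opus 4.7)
The approach leverages the structural fact that $U+i\lambda$ lies in the formal kernel of $\A_{\lambda,0}$: a direct computation gives
\[
\A_{\lambda,0}\bigl[(U+i\lambda)\psi\bigr]=-\frac{d}{dx}\!\bigl[(U+i\lambda)^2\psi'\bigr].
\]
With the substitution $\phi=(U+i\lambda)\psi$, the condition $\phi'(0)=0$ combined with $U'(0)=0$ forces $\psi'(0)=0$ (since $U(0)+i\lambda\ne 0$), and $\phi(1)=0$ with $\lambda\ne 0$ forces $\psi(1)=0$. Two integrations, with $V(x):=\int_0^x v(s)\,ds$, yield the explicit representations
\[
\phi(x)=(U(x)+i\lambda)\!\int_x^1\!\frac{V(t)}{(U(t)+i\lambda)^2}\,dt,\qquad \phi'(x)=U'(x)\psi(x)-\frac{V(x)}{U(x)+i\lambda}.
\]

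Next, I write $V(t)=V(1)-W(t)$ with $W(t):=\int_t^1 v(s)\,ds$. This splits $\phi$ as $-V(1)\phi_2(x)+\phi_{\mathrm{rem}}(x)$, where $\phi_2(x):=-(U(x)+i\lambda)\int_x^1 dt/(U+i\lambda)^2$ is the homogeneous solution with $\phi_2(1)=0$ and $\phi_2'(1)=1/(i\lambda)$. The change of variables $u=U(t)$ reveals that the principal piece of $(U+i\lambda)\int dt/(U+i\lambda)^2$ is $1-(U+i\lambda)/(i\lambda)=-iU/\lambda$, of size $C/|\lambda|$, while the Jacobian correction (a smooth Taylor remainder because $U'(1)=-1$) is subdominant; this yields $\|\phi_2\|_{1,2}\leq C/|\lambda|$, accounting for the $|\int v|/|\lambda|$ contribution to the estimate.

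For the remainder $\phi_{\mathrm{rem}}$ I split the domain at $x=1/2$. On $[0,1/2]$, the hypothesis $|\lambda|<U(3/4)$ together with the monotonicity of $U$ gives $|U+i\lambda|\geq U(1/2)-|\lambda|>0$, so every quantity is controlled by $\|V\|_\infty\leq\|v\|_1\leq\|v\|_p$. On $[1/2,1]$, where $|U'|\geq c>0$, I integrate $\int_x^1 W(t)/(U+i\lambda)^2\,dt$ by parts via $(U+i\lambda)^{-2}=-(U')^{-1}\frac{d}{dt}(U+i\lambda)^{-1}$: the boundary term at $t=1$ vanishes because $W(1)=0$, producing a $v$-integral bounded by $\|v/(U+i\lambda)\|_1$ and a $W$-integral, for which the pointwise Hölder bound $|W(t)|\leq(1-t)^{1/p'}\|v\|_p$ is combined with a direct critical-layer analysis of $\int_x^1 (1-t)^{1/p'}/|U+i\lambda|\,dt$. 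The analogous manipulations applied to the explicit formula for $\phi'$ yield the derivative part of the estimate.

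The main obstacle is securing uniform bounds as $\Re\lambda\to 0$: both $|U+i\lambda|^{-1}$ and $\int dt/|U+i\lambda|$ diverge at the critical point $U(x_c)=\Im\lambda$, and naive estimates produce spurious $\log(1/|\Re\lambda|)$ losses. The decomposition $V=V(1)-W$ is designed precisely to confine the worst singular behavior to the homogeneous solution $\phi_2$, where it is absorbed into the admissible $1/|\lambda|$ factor by the explicit cancellation $1-(U+i\lambda)/(i\lambda)=-iU/\lambda$. The Hölder decay $|W(t)|\leq(1-t)^{1/p'}\|v\|_p$, available precisely because of the hypothesis $p>1$ (which provides a strictly positive exponent $1/p'$), supplies the extra $(1-t)^{1/p'}$ factor needed to keep the remaining critical-layer integrals uniformly bounded in $\Re\lambda$ without any logarithmic loss.
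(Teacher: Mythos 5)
Your setup — the substitution $\phi=(U+i\lambda)\psi$, the two integrations giving $\psi(x)=\int_x^1 V(t)/(U+i\lambda)^2\,dt$, and the split $V=V(1)-W$ which isolates the homogeneous part $\phi_2$ — is sound and is in fact the same explicit representation the paper works from (compare its formula \eqref{eq:31} and the kernel $K_2$). The treatment of $\phi_2$ is also essentially right: $(U+i\lambda)K_2(x,\lambda)$ differs from $-iU/\lambda$ by bounded and $O(\log(1/|\lambda|))$ terms, so $\|\phi_2\|_{1,2}\lesssim 1/|\lambda|$, matching the paper's $\hat K_2$ decomposition.

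The gap is in the estimate for $\phi_{\mathrm{rem}}$. After your single integration by parts, the surviving integral (the one whose integrand still contains $W$) has the form $\int_x^1 \dfrac{W(t)\,U''(t)}{(U'(t))^2\,(U(t)+i\lambda)}\,dt$, and you propose to bound this by inserting $|W(t)|\le (1-t)^{1/p'}\|v\|_p$ and then controlling $\int_{1/2}^1 (1-t)^{1/p'}/|U(t)+i\lambda|\,dt$ directly. That last integral is \emph{not} uniformly bounded as $\Re\lambda\to 0$. The critical point $x_\nu$ (where $U(x_\nu)=\Im\lambda$) lies in the interior of $(1/2,1)$, not at $t=1$, so the weight $(1-t)^{1/p'}$ does not decay at the singularity: in the model $U=1-x$, with $\Im\lambda=\nu>0$ fixed, $\int_0^{1/2} s^{1/p'}/|s-\nu+i\mu|\,ds \sim \nu^{1/p'}\log(1/|\mu|)\to\infty$ as $\mu=\Re\lambda\to 0$. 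You have applied the pointwise bound on $W$ too early. The paper avoids this by keeping $v$ under the integral sign and integrating by parts once more, trading $1/(U+i\lambda)$ for $\log(U+i\lambda)$; the singularity $\log(U+i\lambda)$ is in $L^q(0,1)$ for every finite $q$ with norm uniform in $\lambda$ (its estimate \eqref{eq:logp}), and the pairing with $v\in L^p$ is then closed by H\"older, $\|\log(U+i\lambda)\|_q\|v\|_p$. It is precisely this extra IBP and the use of the $L^q$-integrability of the logarithmic singularity, rather than the pointwise H\"older decay of $W$, that eliminates the logarithmic loss.

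A secondary gap concerns the $\|\phi'\|_2$ part of the claim. From the explicit formula $\phi'=(U'\phi - V)/(U+i\lambda)$ (equivalently $\phi'(x)=\int_0^x (U''\phi - v)/(U+i\lambda)$), a naive bound produces $\|\phi\|_\infty\|1/(U+i\lambda)\|_1 \sim \log(1/|\mu|)\|\phi\|_\infty$, again divergent. Your phrase ``analogous manipulations applied to the explicit formula for $\phi'$'' does not say how this is avoided. The paper handles the derivative by a separate energy argument (its Step 2): it takes the $L^2$ inner product of $-\phi''=-U''\phi/(U+i\lambda)+v/(U+i\lambda)$ with $\phi$, integrates by parts with a cutoff localizing away from $x=1$, and again uses the uniform $L^q$ bound on $\log(U+i\lambda)$ together with Poincar\'e and Sobolev. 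Some argument of this type — not a direct substitution into the explicit $\phi'$ formula — is needed.
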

\begin{proof}~\\
  {\em Step 1:  We prove that
  \begin{equation}
    \label{eq:30}
    \|\phi\|_\infty\leq C\Big(\frac{1}{|\lambda|}{\Big|\int_0^1v\,dx\Big|}+ \Big\|\frac{v}{U+i\lambda}\Big\|_1+\|v\|_p 
       \Big)\,. 
  \end{equation}
  }
  
{\em Step  1.1: The estimate on $[1/2,1]$.}\\
As, for any $\phi\in D(\A_{\lambda,0})$,
\begin{displaymath}
  \A_{\lambda,0}\phi=-[(U+i\lambda)\phi^\prime -U^\prime\phi]^\prime \,,
\end{displaymath}
we may conclude that
\begin{equation}
\label{eq:31}  
  \frac{\phi(x)}{U(x)+i\lambda}=- \int_x^1[K_2(x,\lambda)-K_2(t,\lambda)]v(t)dt
  +K_2(x,\lambda)\int_0^1v(t)\,dt \,,
\end{equation}
where
\begin{displaymath}
  K_2(x,\lambda)=\int_x^1\frac{ds}{(U+i\lambda)^2(s)} \,.
\end{displaymath}
A first integration by parts yields
\begin{displaymath}
   K_2(x,\lambda)=\frac{1}{U^\prime(U+i\lambda)}+\frac{1}{i\lambda}-\int_x^1\frac{U^{\prime\prime}ds}{(U^\prime)^2(U+i\lambda)}\,.
\end{displaymath}
An additional integration by parts further gives
\begin{displaymath}
 - \int_x^1\frac{U^{\prime\prime}ds}{(U^\prime)^2(U+i\lambda)} =
  \frac{U^{\prime\prime}}{(U^\prime)^3}\log(U+i\lambda) - U^{\prime\prime}(1)\log(i\lambda)+
  \int_x^1\Big(\frac{U^{\prime\prime}}{(U^\prime)^3}\Big)^\prime\log(U+i\lambda) \,ds\,.
\end{displaymath}
For $|\lambda|<U(3/4)$  (a bounded set in $\mathbb C$), there
  exists $C>0$ such that for all \break $x\in[1/2,1]$  (where $U^\prime(x)\neq
  0$) 
  \begin{equation}
\label{eq:logpz}
    \Big|\int_x^1\Big(\frac{U^{\prime\prime}}{(U^\prime)^3}\Big)^\prime\log(U+i\lambda)\,ds\Big|\leq C\,.
  \end{equation}
  To prove \eqref{eq:logpz},  we introduce, for $\nu >0$, the real $x_\nu$, which is defined by
  \begin{equation}
\label{eq:defxnu}
  U(x_\nu)=\nu \mbox{ for } 0 < \nu < U(0)\,,\, x_\nu=1 \mbox{ if } \nu\leq 0 \mbox{ and } x_\nu =0 \mbox{ if } \nu > U(0)\,.
  \end{equation}
Note that, for $ 0< \nu\leq U(0)$, $x_\nu\in[0,1)$,   is indeed uniquely defined  by the
  assumed monotonicity of $U$ (see \eqref{eq:10}).\\
Then we use the fact that for $|\lambda|<U(3/4)$ (implying $|\nu| <
U(3/4)$ and $|U^\prime(x_\nu)| >0$),   there exists $C>0$ such that 
\begin{displaymath}
  |\log(U(x)+i\lambda)|\leq C\, [1+\log|x-x_\nu|^{-1}]\,.
\end{displaymath}
Hence
 \begin{equation}\label{eq:logp}
 \log (U+i \lambda) \mbox{  is uniformly bounded  in }  L^q (0,1) \,,\,  \forall\, 1 \leq  q < +\infty\,,
\end{equation}
readily verifying \eqref{eq:logpz} .\\
Consequently 
it holds that 
\begin{equation}
  \label{eq:32}
\Big|K_2(x,\lambda)-\frac{1}{U^\prime(U+i\lambda)}-\frac{1}{i\lambda}-\frac{U^{\prime\prime}}{(U^\prime)^3}\log(U+i\lambda)
+ U^{\prime\prime}(1)\log(i\lambda)\Big|\leq C\,.
\end{equation}
Let 
\begin{equation}\label{eq:33}
  \hat{K}_2(x,\lambda)=K_2(x,\lambda)-\frac{1}{i\lambda}+U^{\prime\prime}(1)\log(i\lambda)\,.
\end{equation}
By \eqref{eq:32}, we get the existence of $C >0$ such that, for
  any $x\in[1/2,1]$ and \break $|\lambda|<U(3/4)$ we have  
\begin{equation}
\label{eq:34}
  \Big|(U+i\lambda)(x)\hat{K}_2(x,\lambda)\Big|\leq C \,.
\end{equation}
 We now write, for
  any $x\in[1/2,1]$, 
\begin{displaymath}
  \Big|\int_x^1\frac{v}{U^\prime(U+i\lambda)}dt\Big|\leq C\, \Big\|\frac{v}{U+i\lambda}\Big\|_1
  \,,
\end{displaymath}
and hence, by \eqref{eq:logp} and \eqref{eq:32}, for all $x\in[1/2,1]$
and $|\lambda|\leq U(3/4)$ we have, for $(p,q)$ satisfying $\frac 1p+\frac 1q
=1$, 
\begin{multline}
\label{eq:35}
\Big|\int_x^1 \hat{K}_2(t,\lambda)v(t)dt\Big| \leq
   C\Big(\Big\|\frac{v}{U+i\lambda}\Big\|_1+ \| \log (U+i\lambda) \|_q \|v\|_p +
   \|v\|_1 \Big)\\ \leq \widehat C \Big(\Big\|\frac{v}{U+i\lambda}\Big\|_1+\|v\|_p
   \Big)\,. 
\end{multline}
Substituting \eqref{eq:33}, \eqref{eq:34} and \eqref{eq:35} into
\eqref{eq:31}, given that $K_2(x,\lambda)-K_2(t,\lambda)
=\hat{K}_2(x,\lambda)-\hat{K}_2(t,\lambda)$ yields
\begin{equation}
  \label{eq:36}
\|\phi\|_{L^\infty(1/2,1)} \leq
C\Big(\frac{1}{|\lambda|} \Big|\int_0^1v\,dx\Big| +
\Big\|\frac{v}{U+i\lambda}\Big\|_1+\|v\|_p\Big)  \,, 
\end{equation}
which completes the estimate of $\phi$ in $[\frac 12,1]$.\\[1.5ex]

{\em Step  1.2: Estimate of $\|\phi\|_{L^\infty(1/2,1)}$.}\\[1.5ex]

Next, we consider the case where $x\in[0,1/2)$. In this interval we
need to address the fact that $U^\prime(0)=0$, as is assumed in
(\ref{eq:10}a). Here, we use the assumption $|\lambda|<U(3/4)$ to obtain
that for $x\in (0,1/2)$
\begin{equation}
\label{eq:37} 
   \Big|K_2(x,\lambda) - K_2\Big(\frac 12,\lambda\Big) \Big| =
   \Big|\hat{K}_2(x,\lambda) - \hat{K}_2\Big(\frac 12,\lambda\Big) \Big|\leq C
   \,. 
\end{equation}
 
Since by \eqref{eq:34}
\begin{displaymath} 
 \Big|(U(1/2)+i\lambda) \hat K_2(1/2,\lambda)\Big|\leq C \,,
\end{displaymath}
we obtain that
\begin{equation}
\label{eq:38}
  \Big|\hat{K}_2(1/2,\lambda)\Big|\leq  C \,.
\end{equation}
 Combining \eqref{eq:38}  with \eqref{eq:37} and \eqref{eq:33} yields
 \begin{displaymath}
     \|K_2(\cdot,\lambda)\|_{L^\infty(0,1/2)}\leq \frac{C}{|\lambda|}\,.
 \end{displaymath}
 Hence, for all $x\in[0,1/2]$ it holds that
 \begin{equation}
 \label{eq:39}
    |K_2(x,\lambda)(U+i\lambda)(x)|\leq \frac{C}{|\lambda|} \,.
 \end{equation}
  
Furthermore, by   \eqref{eq:33}, \eqref{eq:37} and \eqref{eq:38}  we have that
\begin{multline*}
   \Big|\int_x^1\Big(K_2(t,\lambda)-K_2(x,\lambda)\Big)v(t)dt\Big|\\ \leq
   \Big|\int_x^{1/2}\Big(\hat{K}_2(t,\lambda)-\hat{K}_2(x,\lambda)\Big)v(t)dt\Big| + 
   \Big|\int_{1/2}^1\Big(\hat{K}_2(t,\lambda)-\hat{K}_2(x,\lambda)\Big)v(t)dt\Big| \\
   \leq
   \Big|\int_{1/2}^1\Big(\hat{K}_2(t,\lambda)-\hat{K}_2(x,\lambda)\Big)v(t)dt\Big| +C\|v\|_1 \\
   \leq\widehat  C \Big(\frac{1}{|\lambda|}
   \Big|\int_0^1v\,dx\Big|+\Big\|\frac{v}{U+i\lambda}\Big\|_1+\|v\|_p\Big)\,,
\end{multline*}
where, to obtain the last inequality, we used \eqref{eq:35}.\\
We can then conclude that
 \begin{multline*}
   \Big|(U+i\lambda)(x)\,\int_x^1\Big(K_2(t,\lambda)-K_2(x,\lambda)\Big)v(t)dt\Big|\\ \leq C \,  \Big(\frac{1}{|\lambda|}
   \Big|\int_0^1v\,dx\Big|+\Big\|\frac{v}{U+i\lambda}\Big\|_1+\|v\|_p\Big)
\end{multline*} 
Substituting the above, together with \eqref{eq:39} into
\eqref{eq:31} yields
\begin{displaymath}
\|\phi\|_{L^\infty(0,1/2)} \leq C \Big(\frac{1}{|\lambda|}
   \Big|\int_0^1v\,dx\Big|+\Big\|\frac{v}{U+i\lambda}\Big\|_1+\|v\|_p\Big)
\end{displaymath}
Combined with \eqref{eq:36} the above readily yields \eqref{eq:30}. \\

{\em Step 2: We prove \eqref{eq:29}. }\\

\noindent We begin by rewriting $\A_{\lambda,0} \, \phi=v$ in the form
\begin{displaymath}
- \phi^{\prime\prime} = - \frac{U^{\prime\prime}}{U+i \lambda} \phi + \frac{v}{U+i\lambda} \,.
\end{displaymath}
 Taking the inner product with $\phi$ in $L^2(0,1)$, integration by parts
yields, as $\phi^\prime(0)=\phi(1)=0$, 
\begin{equation}
\label{eq:40}
  \|\phi^\prime\|_2^2 = \Re \Big\langle\phi,\frac{U^{\prime\prime}\phi}{U+i\lambda}\Big\rangle + \Re \Big\langle\phi,\frac{v}{U+i\lambda}\Big\rangle \,.
\end{equation}
 Let $\hat{\chi}\in C^\infty(\R,[0,1])$ satisfy
 \begin{equation}
\label{eq:41}
   \hat{\chi}(x)=
   \begin{cases}
     0 & |x|<\frac{1}{4} \\
     1  & |x|>\frac{1}{2} \,.
   \end{cases}
 \end{equation}
Let further $\tilde{\chi}=1-\hat{\chi}$. The first 
 term on the right-hand-side  of \eqref{eq:37}  can be rewritten after an integration by part as 
\begin{equation}
\label{eq:42}
   -\Re \Big\langle\phi,\frac{U^{\prime\prime}\phi}{U+i\lambda}\Big\rangle =\Re
   \Big\langle\Big(\frac{U^{\prime\prime}|\phi|^2\hat{\chi}}{U^\prime}\Big)^\prime,\log(U+i\lambda)\Big\rangle -\Re \Big\langle\tilde{\chi}\phi,\frac{U^{\prime\prime}\phi}{U+i\lambda}\Big\rangle \,.
\end{equation}
For the first term on the right-hand-side  of \eqref{eq:42} we write, 
\begin{displaymath}
\Big|  \Re\Big\langle\Big(\frac{U^{\prime\prime}|\phi|^2\hat{\chi}}{U^\prime})^\prime,\log(U+i\lambda)\Big\rangle\Big | \leq
  C\|\phi\|_\infty\|[|\phi|+|\phi^\prime|]\log(U+i\lambda)\|_1\,.
\end{displaymath}
Using \eqref{eq:logp} together with 
Poincar\'e's inequality leads to
\begin{equation}\label{eq:43}
| \Re\Big\langle\Big(\frac{U^{\prime\prime}|\phi|^2\hat{\chi}}{U^\prime})^\prime,\log(U+i\lambda)\Big\rangle | \leq \widehat C\, \|\phi\|_\infty\|\phi^\prime\|_2\,.
\end{equation}
For the second term on the right-hand-side  of \eqref{eq:42}   we
have, observing that \break $U(x)+i \lambda$ does not vanish for $x$ in the
support of $\tilde \chi$ and $|\lambda| < U(3/4)$,  
\begin{equation}\label{eq:44} 
  \Big|\Re \Big\langle\tilde{\chi}\phi,\frac{U^{\prime\prime}\phi}{U+i\lambda}\Big\rangle\Big| \leq C\, \|\phi\|_2^2 \,.
\end{equation}
Substituting \eqref{eq:43} and \eqref{eq:44} into \eqref{eq:42}
yields, again with the aid of Poincar\'e's inequality,
\begin{equation}
\label{eq:45}
   \Big|\Re \Big\langle\phi,\frac{U^{\prime\prime}\phi}{U+i\lambda}\Big\rangle\Big|    \leq
   C\, \|\phi^\prime\|_2\|\phi\|_\infty \,.
\end{equation}
  For the second term on the right-hand-side  of \eqref{eq:40} we use
  Poincar\'e's inequality and Sobolev's embeddings to obtain   
\begin{equation}
\label{eq:46}
  | \Re \Big\langle\phi,\frac{v}{U+i\lambda}\Big\rangle| \leq  \|\phi\|_\infty
   \Big\|\frac{v}{U+i\lambda}\Big\|_1\leq \|\phi^\prime\|_2
   \Big\|\frac{v}{U+i\lambda}\Big\|_1
\end{equation}
Substituting \eqref{eq:46} together with \eqref{eq:45} into
\eqref{eq:40} yields
\begin{displaymath}
   \|\phi^\prime\|_2\leq C\,\Big(\Big\|\frac{v}{U+i\lambda}\Big\|_1+\|\phi\|_\infty\Big)\,.
\end{displaymath}
We can now conclude \eqref{eq:29} from \eqref{eq:30}.  
\end{proof}

We can now state as a corollary, an estimate for $\mathcal A_{\lambda,\alpha}^{-1}$ when $\alpha$ is relatively small.
  \begin{corollary}\label{cor:estimate-a_lambda-alpha-small}
    Under the assumptions of Proposition \ref{lem:lambda-zero}, there
    exist $C>0$ and $\delta_0>0$ such that, for all $\lambda\in\C$ satisfying
    $\Re\lambda\neq0$ and $|\lambda|< U(3/4)$, for all $\alpha$ such that
    $$  |\alpha|\leq \delta_0\min (|\lambda|^{1/2},\log^{-1/2}|\mu|^{-1})\,,$$ 
    and
    for any pair $(\phi,v)\in D(\A_{\lambda,\alpha})\times L^p(0,1)$ satisfying
\begin{equation}
  \label{eq:47}
\mathcal A_{\lambda,\alpha}\phi=v\,,
\end{equation}
 it holds that
\begin{displaymath} 
  \|\phi\|_{1,2}\leq C\Big(\frac{1}{|\lambda|}{\Big|\int_0^1v\,dx\Big|}+ 
    \Big\|\frac{v}{U+i\lambda}\Big\|_1+\|v\|_p  \Big)\,. 
\end{displaymath}
  \end{corollary}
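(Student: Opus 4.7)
The plan is to view $\A_{\lambda,\alpha}$ as a small perturbation of $\A_{\lambda,0}$ and reduce to Proposition~\ref{lem:lambda-zero} by absorption. From \eqref{eq:2.1aa} one reads off
\[ \A_{\lambda,\alpha} = \A_{\lambda,0} + \alpha^2(U + i\lambda),\]
and since the boundary conditions defining $D(\A_{\lambda,\alpha})$ do not depend on $\alpha$, any $\phi\in D(\A_{\lambda,\alpha})$ solving $\A_{\lambda,\alpha}\phi=v$ also lies in $D(\A_{\lambda,0})$ and satisfies $\A_{\lambda,0}\phi = \tilde v$, where $\tilde v := v - \alpha^2(U+i\lambda)\phi$. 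Because $\phi\in H^2(0,1)\hookrightarrow L^\infty(0,1)$ and $U+i\lambda$ is bounded on $[0,1]$, we have $\tilde v\in L^p(0,1)$, so Proposition~\ref{lem:lambda-zero} applies to the pair $(\phi,\tilde v)$.

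Applying it yields
\[\|\phi\|_{1,2} \leq C\Big(\tfrac{1}{|\lambda|}\Big|\int_0^1 \tilde v\,dx\Big| + \Big\|\tfrac{\tilde v}{U+i\lambda}\Big\|_1 + \|\tilde v\|_p\Big).\]
I would then split each term by the triangle inequality, producing the three quantities on the right-hand side of \eqref{eq:29} plus three perturbative contributions coming from $\alpha^2(U+i\lambda)\phi$. Using $|U+i\lambda|\leq U(0)+|\lambda|\leq C$, and rewriting $\int(U+i\lambda)\phi\,dx = \int U\phi\,dx + i\lambda\int\phi\,dx$ so that the $i\lambda$ piece cancels the $1/|\lambda|$ prefactor, those three perturbative contributions are bounded by $C\alpha^2|\lambda|^{-1}\|\phi\|_1$, $\alpha^2\|\phi\|_1$, and $C\alpha^2\|\phi\|_p$ respectively. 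The Sobolev embedding $H^1(0,1)\hookrightarrow L^\infty(0,1)\hookrightarrow L^p(0,1)$ then controls each by $C\alpha^2|\lambda|^{-1}\|\phi\|_{1,2}$.

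The proof concludes by absorption: under the hypothesis $|\alpha|\leq\delta_0|\lambda|^{1/2}$, the total perturbation coefficient is bounded by $C\delta_0^2$, which is smaller than $1/2$ for $\delta_0$ small enough, and the correction can be moved to the left-hand side to yield \eqref{eq:29}. The point I expect to require extra care is the secondary smallness condition $|\alpha|\leq\delta_0\log^{-1/2}|\lambda|^{-1}$, which plays no role in the crude absorption above; I suspect it is needed in order to keep the implicit constant of Proposition~\ref{lem:lambda-zero} under uniform control as $|\lambda|\to 0$, since the logarithmic terms $\log(U+i\lambda)$ and $\log(i\lambda)$ that appear in Substep~1.1 of that proposition could otherwise produce a spurious $\log|\lambda|^{-1}$ factor that degrades the absorption.
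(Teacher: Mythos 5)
Your proof is correct, and it is essentially the same strategy the paper uses: rewrite $\A_{\lambda,\alpha}\phi=v$ as an equation $\A_{\lambda,0}\phi=\tilde v$, apply Proposition~\ref{lem:lambda-zero} to the pair $(\phi,\tilde v)$, and absorb the $\alpha^2$-perturbation into $\|\phi\|_{1,2}$. The difference is that you got the perturbation right — $\A_{\lambda,\alpha}=\A_{\lambda,0}+\alpha^2(U+i\lambda)$ — whereas the paper's proof writes $\A_{\lambda,0}\phi=v+\alpha^2\phi$, i.e.\ it drops the factor $(U+i\lambda)$ (and a sign, which is harmless). That slip is what makes the paper's perturbative middle term $\alpha^2\big\|\tfrac{\phi}{U+i\lambda}\big\|_1$ rather than $\alpha^2\|\phi\|_1$; the paper then bounds $\big\|\tfrac{1}{U+i\lambda}\big\|_1\leq C\log|\mu|^{-1}$ and invokes the condition $|\alpha|\leq\delta_0\log^{-1/2}|\mu|^{-1}$ precisely to absorb this spurious logarithmic factor. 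With your correct algebra the $(U+i\lambda)$ cancels and only $|\alpha|\leq\delta_0|\lambda|^{1/2}$ is used, so the logarithmic constraint in the statement is in fact superfluous for this corollary.

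Your closing speculation is therefore off the mark: the constant in Proposition~\ref{lem:lambda-zero} is already asserted to be uniform over $\{|\lambda|<U(3/4),\,\Re\lambda\neq0\}$, so there is no hidden $\log|\lambda|^{-1}$ degradation of the constant to worry about. The condition $|\alpha|\leq\delta_0\log^{-1/2}|\mu|^{-1}$ appears only because of the dropped $(U+i\lambda)$ factor in the paper's own absorption step, not because of any non-uniformity upstream.
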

The proof is obtained by rewriting \eqref{eq:47} in the manner
\begin{displaymath}
  \mathcal A_{\lambda,0}\phi=v+\alpha^2\phi\,,
\end{displaymath}
 or
\begin{displaymath}
  \phi=\A_{\lambda,0}^{-1}v+\alpha^2\A_{\lambda,0}^{-1}\phi\,.
\end{displaymath}
 We now use \eqref{eq:29} to obtain
\begin{displaymath}
  \|\phi\|_{1,2}\leq C\Big(\frac{1}{|\lambda|}{\Big|\int_0^1v\,dx\Big|}+
    \Big\|\frac{v}{U+i\lambda}\Big\|_1+\|v\|_p  \Big)\,+ C 
  \alpha^2\Big(\frac{1}{|\lambda|}\|\phi\|_1+\Big\|\frac{\phi}{U+i\lambda}\Big\|_1+\|\phi\|_p\Big)\,. 
\end{displaymath}  
Since $|\nu| \leq U(3/4)<1$ \\
\begin{equation}
\label{eq:48}
   \Big\|\frac{1}{U+i\lambda}\Big\|_1\leq C\log |\mu|^{-1}\,,
\end{equation}
to obtain that
\begin{displaymath}
  \Big\|\frac{\phi}{U+i\lambda}\Big\|_1\leq
  \Big\|\frac{1}{U+i\lambda}\Big\|_1\,\|\phi\|_\infty\leq C \log|\mu|^{-1} \|\phi\|_\infty \,.
\end{displaymath}
Sobolev embeddings, given that $|\alpha|<\delta_0\, \min (|\lambda|^{1/2}, \log^{-1/2}|\mu|^{-1})$, complete the
proof of the corollary when $p<\infty$ for sufficiently small $\delta_0$.  

\subsection{Small $|\lambda|$ and $ \alpha>\|U\|_2^{-1}(\Im
  \lambda)_+^{1/2}$.}
\label{sec:case-lambda-small-alpha-large}
Set for any $\lambda\in\C$ and $\delta>0$
\begin{equation}
\label{eq:51} 
  \alpha_{\lambda,\delta}=\|U\|_2^{-1}\left( (\Im\lambda)_+(1+2\delta)\right)^{1/2}\,.
\end{equation}
In this subsection we attempt to prove the invertibility of $\mathcal
A_{\lambda,\alpha}$ as defined in \eqref{eq:domND} for sufficiently small  $|\lambda|$ 
and $\alpha \geq \alpha_{\lambda,\delta}$.

To be able to state the result of this subsection we define, for $p>1$
and $\Re \lambda \neq 0$, on $W^{1,p}(0,1)$ the maps 
\begin{subequations}
\label{eq:49}
  \begin{equation}
  v \mapsto N_0(v,\lambda) :=\min \Big(\Big\|(1-x)^{1/2}\frac{v}{U+i\lambda}\Big\|_1,\|v\|_{1,p}\Big)\,,
\end{equation}
and
\begin{equation} 
\label{eq:50}
  N_1(v,\lambda)=\Big| \Big\langle1,\frac{v}{U+i\lambda}\Big\rangle \Big| \,.
\end{equation}
\end{subequations}
We can now state and prove the following
\begin{proposition}
\label{prop:edge}
Let $r>1$, $p>1$, and $\delta>0$ and $U\in C^3([0,1]) $
satisfy \eqref{eq:10}. There exist $\lambda_0>0$ and $C>0$ such that for $0<|\lambda|<\lambda_0$, $\alpha\geq\alpha_{\lambda,\delta} $
and \break $(\phi,v)\in D(\A_{\lambda,\alpha})\times W^{1,p}(0,1)$ satisfying  $\mathcal A_{\lambda,\alpha}
\phi=v$,  we have, with $c_\parallel = \langle U,\phi\rangle /\|U\|_2^2$ and $\phi_\perp = \phi - c_\parallel U $,
\begin{subequations}
\label{eq:52}
\begin{equation}
    |c_\parallel |\leq \frac{1+C|\lambda|^2\log
 |\lambda|^{-1}}{|\alpha^2\|U\|_2^2+i\lambda|}\big[\|v\|_1+ C |\lambda| N_1(v,\lambda)\big] \,,
  \end{equation}
and
\begin{equation}
   \|\phi_\perp\|_{1,2}\leq
   C\Big[N_0(v,\lambda)+\frac{|\lambda|}{|\alpha^2\|U\|_2^2+i\lambda|} \Big(\|v\|_1+
   |\lambda|  N_1(v,\lambda) \Big)\Big] 
\end{equation} 
\end{subequations}
\end{proposition}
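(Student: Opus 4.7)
The plan is to decompose $\phi=c_\parallel U+\phi_\perp$ with $\langle U,\phi_\perp\rangle=0$, exploit the near-kernel fact $\A_{0,0}U=0$, and proceed along two parallel tracks: derive a scalar identity for $c_\parallel$ by integrating the equation against the constant $1$, and obtain an energy-type estimate for $\phi_\perp$ that uses the coercivity provided by the hypothesis $\alpha\geq\alpha_{\lambda,\delta}$. Because $\A_{0,0}U=0$, one has $\A_{\lambda,\alpha}U=\alpha^2U(U+i\lambda)-i\lambda U''$; integrating $\A_{\lambda,\alpha}\phi=v$ from $0$ to $1$ via successive integrations by parts---using $\phi'(0)=0$, $\phi(1)=0$, $U(1)=0$, $U'(0)=0$, $U'(1)=-1$ and the orthogonality $\langle U,\phi_\perp\rangle=0$---one obtains
\[c_\parallel\bigl[\alpha^2\|U\|_2^2+i\lambda+i\lambda\alpha^2\textstyle\int_0^1 U\,dx\bigr]=\int_0^1 v\,dx+i\lambda\phi_\perp'(1)-i\lambda\alpha^2\int_0^1\phi_\perp\,dx.\]
For $|\lambda|\leq\lambda_0$ the coefficient on the left differs from $\alpha^2\|U\|_2^2+i\lambda$ by a factor $1+O(|\lambda|)$, which reduces the proof of \eqref{eq:52}a to controlling $\phi_\perp'(1)$ and $\int_0^1\phi_\perp\,dx$ on the right.

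The boundary value $\phi_\perp'(1)$ is the delicate object that links the scalar identity to $N_1(v,\lambda)$. To extract it I would integrate the equivalent form $-\phi''+\alpha^2\phi+U''\phi/(U+i\lambda)=v/(U+i\lambda)$ from $0$ to $1$; the singular integral $\int_0^1 U''\phi/(U+i\lambda)\,dx$ is then handled by the same logarithmic integration-by-parts trick used in the proof of Proposition~\ref{lem:lambda-zero}, together with the uniform $L^q$-bound on $\log(U+i\lambda)$ from \eqref{eq:logp}. This isolates an $N_1(v,\lambda)$ contribution and leaves controllable remainders involving $\|\phi_\perp\|_{1,2}$ and $c_\parallel$; the logarithmic factor $1+C|\lambda|^2\log|\lambda|^{-1}$ appearing in \eqref{eq:52}a has exactly this origin.

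For the energy estimate on $\phi_\perp$ I would start from the equation $\A_{\lambda,\alpha}\phi_\perp=v-c_\parallel[\alpha^2U(U+i\lambda)-i\lambda U'']$, divide by $U+i\lambda$, and test against $\bar\phi_\perp$. Integration by parts turns the left-hand side into $\|\phi_\perp'\|_2^2+\alpha^2\|\phi_\perp\|_2^2+\int U''|\phi_\perp|^2/(U+i\lambda)\,dx$; on the right, the crucial simplification $U^2/(U+i\lambda)+i\lambda U/(U+i\lambda)=U$ combined with $\langle U,\bar\phi_\perp\rangle=0$ makes the main $c_\parallel$-forcing vanish, leaving only $\int\bar\phi_\perp v/(U+i\lambda)$ and a small singular remainder proportional to $c_\parallel i\lambda\int U''\bar\phi_\perp/(U+i\lambda)\,dx$. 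The condition $\alpha^2\|U\|_2^2\geq(\Im\lambda)_+(1+2\delta)$ is precisely what makes an appropriate linear combination of the real and imaginary parts of the tested identity coercive, dominating the sign-indefinite critical-layer integral by the $\alpha^2\|\phi_\perp\|_2^2$ term; Cauchy--Schwarz bounds the test integral by $N_0(v,\lambda)\cdot\|\phi_\perp\|_{1,2}$, yielding $\|\phi_\perp\|_{1,2}\lesssim N_0(v,\lambda)+|\lambda|\,|c_\parallel|$ up to bounded constants.

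The two bounds are coupled---the scalar identity contains $\phi_\perp$-terms of order $|\lambda|$ and the $\phi_\perp$-equation is forced by $c_\parallel$---but for $|\lambda|\leq\lambda_0$ sufficiently small these cross-terms are absorbable by a standard bootstrap, yielding \eqref{eq:52}a first and, by substitution back, \eqref{eq:52}b. The main obstacle is the energy step: choosing the precise linear combination of the real and imaginary parts of the tested identity so that the sign-indefinite critical-layer integral $\int U''|\phi_\perp|^2/(U+i\lambda)\,dx$ and the singular forcing term $c_\parallel i\lambda\int U''\bar\phi_\perp/(U+i\lambda)\,dx$ are both controlled by the coercive terms supplied by $\alpha\geq\alpha_{\lambda,\delta}$, with the denominator $|\alpha^2\|U\|_2^2+i\lambda|$ coming out sharply in the final bound as $|\lambda|\to 0$.
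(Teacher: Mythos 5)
Your derivation of the scalar identity for $c_\parallel$ is correct and, after eliminating $\phi_\perp'(1)$ between the two integrated identities, reduces to the same relation $(\alpha^2\|U\|_2^2+i\lambda)c_\parallel=\langle 1,\tfrac{Uv}{U+i\lambda}\rangle+i\lambda(\langle 1,\tfrac{U''\phi}{U+i\lambda}\rangle+c_\parallel)$ that the paper obtains directly by testing the weighted form $U^2(\Mg_U+\alpha^2)w=\cdots$ against the constant function (eq.~\eqref{eq:56}--\eqref{eq:62}). So Step~1 is essentially a restatement of the paper's argument with more intermediate bookkeeping; that is fine.

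The energy step, however, has a genuine gap, and the source of the gap is that you test the equation divided by $U+i\lambda$ against $\bar\phi_\perp$ rather than against $w_\perp=\phi_\perp/U$. Your choice produces the critical--layer integral
\[
\int_0^1\frac{U''|\phi_\perp|^2}{U+i\lambda}\,dx,
\]
which is an $O(1)$ quantity with sign-indefinite real part. At $\lambda=0$ it equals $\int U''|\phi_\perp|^2/U\,dx$, a strictly \emph{negative} contribution to the left-hand side of your tested identity, competing directly against $\|\phi_\perp'\|_2^2$; there is no small parameter available to absorb it. You propose to dominate it by the $\alpha^2\|\phi_\perp\|_2^2$ term, citing the hypothesis $\alpha\ge\alpha_{\lambda,\delta}$ as the source of coercivity. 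That cannot work: by \eqref{eq:51}, $\alpha_{\lambda,\delta}=0$ whenever $\Im\lambda\le 0$, so the hypothesis allows $\alpha=0$ and the coercive $\alpha^2\|\phi_\perp\|_2^2$ term vanishes identically. The hypothesis $\alpha\ge\alpha_{\lambda,\delta}$ is \emph{not} a coercivity condition at all; in the paper it is used only in Step~1, as a non-resonance condition guaranteeing $|\alpha^2\|U\|_2^2+i\lambda|\ge\delta|\lambda|$ so that the inversion of the $c_\parallel$ relation does not degenerate.

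What the paper's choice of test function $w_\perp=\phi_\perp/U$ buys you is a prefactor $i\lambda$ and an extra factor of $U$ in the denominator of the critical-layer term: it becomes
$i\lambda\int U''|\phi_\perp|^2/[U(U+i\lambda)]\,dx$, which, after writing $\tfrac{i\lambda}{U(U+i\lambda)}=\tfrac1U-\tfrac1{U+i\lambda}$ and the cutoff argument of Substep~2.2 (eqs.~\eqref{eq:74}--\eqref{eq:81}), is $\le\epsilon\|\phi_\perp'\|_2^2$ uniformly for $|\lambda|\le\lambda_0(\epsilon)$ and for \emph{all} $\alpha\ge0$. That smallness is intrinsic to the weighted test function and does not rely on $\alpha$. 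The left side then yields $\|Uw_\perp'\|_2^2$ rather than $\|\phi_\perp'\|_2^2$, and one must invoke the Hardy-type inequality \eqref{eq:25} and the spectral-gap estimate \eqref{eq:12} (together with \eqref{eq:26} and \eqref{eq:83}) to convert back. Without this weighting your energy identity does not close, because the best you can extract from interpolating the real and imaginary parts of $\int U''|\phi_\perp|^2/(U+i\lambda)\,dx$ is a bound of the form $C\log|\lambda|^{-1}\|\phi_\perp'\|_2^2$ or $C|\mu|^{-1/2}|\Im(\text{rhs})|^{1/2}\|\phi_\perp\|_2$, neither of which is absorbable uniformly in $\lambda$ near $0$.
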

\begin{proof}~\\
    {\em Step 1: We prove the existence of $\lambda_0 >0$ and $C>0$ such
    that,   for $|\lambda| \leq \lambda_0$ and $\alpha \geq \alpha_{\lambda,\delta}$ it holds that 
\begin{equation}
  \label{eq:53}
|c_\parallel |\leq \frac{\quad 1+C|\lambda|^2\log
 |\lambda|^{-1}}{|\alpha^2\|U\|_2^2+i\lambda|}\Big(\|v\|_1+C|\lambda|\|\phi_\perp\|_{1,2}+|\lambda| N_1(v,\lambda) \Big) \,.
\end{equation}
}

As
\begin{equation}\label{eq:54}
  U(-\phi^{\prime\prime}+\alpha^2\phi)-U^{\prime\prime}\phi= v- i\lambda(-\phi^{\prime\prime}+\alpha^2\phi)
\end{equation}
or equivalently, by \eqref{eq:20}  and \eqref{eq:22}
\begin{equation}\label{eq:55}
  U^2(\Mg_U+\alpha^2)w= \frac{Uv }{U+i\lambda}+
  i\lambda\frac{U^{\prime\prime}\phi}{U+i\lambda}\,,
\end{equation}
where $\Mg_U$ is given by \eqref{eq:11} and $w=U^{-1}\phi$. \\
Taking the inner product with $1$  and integrating by parts then
yields,
\begin{equation} \label{eq:56}
\alpha^2\|U\|_2^2\, c_\parallel  = \Big\langle1,\frac{Uv
}{U+i\lambda}\Big\rangle+i\lambda\Big\langle1,\frac{U^{\prime\prime}\phi }{U+i\lambda}\Big\rangle
\end{equation}
We now write
\begin{equation}
\label{eq:57}
  \Big\langle1,\frac{U^{\prime\prime}\phi
  }{U+i\lambda}\Big\rangle=c_\parallel \Big\langle1,\frac{U^{\prime\prime}U}{U+i\lambda}\Big\rangle+\Big\langle1,\frac{U^{\prime\prime}\phi_\perp}{U+i\lambda}\Big\rangle
\end{equation}
For the first term on the right-hand-side  we write, as
$U^\prime(0)=0$ and $U^\prime(1)=-1$,
\begin{equation}
\label{eq:58}
  c_\parallel \Big\langle1,\frac{U^{\prime\prime}U}{U+i\lambda}\Big\rangle=-c_\parallel \Big(1+i\lambda\Big\langle1,\frac{U^{\prime\prime}}{U+i\lambda}\Big\rangle\Big)
\end{equation}
Since,  for $\Im\lambda<U(1/2)$, 
\begin{multline*}
  \Big\langle1,\frac{U^{\prime\prime}}{U+i\lambda}\Big\rangle_{L^2(1/2,1)}=-U^{\prime\prime}(1)\log
  (i\lambda) -\frac{U^{\prime\prime}(1/2)}{U^\prime(1/2)}\log(U(1/2)+i\lambda)\\ - \Big\langle
\Big(\frac{U^{\prime\prime}}{U^\prime}\Big)^\prime,\log (U+i\lambda)\Big\rangle_{L^2(1/2,1)}\,,
\end{multline*}
from which we may conclude the existence of $C>0$ and $0 < \lambda_0 < U(1/2)$, such that, for $|\lambda|\leq \lambda_0$,
\begin{displaymath}
    \Big|\Big\langle1,\frac{U^{\prime\prime}}{U+i\lambda}\Big\rangle_{L^2(1/2,1)}\Big|\leq C\log |\lambda|^{-1}\,.
\end{displaymath}
As
\begin{displaymath}
    \Big|\Big\langle1,\frac{U^{\prime\prime}}{U+i\lambda}\Big\rangle_{L^2(0,1/2)}\Big|\leq C\,,
\end{displaymath}
we obtain 
\begin{equation}
\label{eq:59}
  \Big|\Big\langle1,\frac{U^{\prime\prime}}{U+i\lambda}\Big\rangle\Big|\leq C\log |\lambda|^{-1}\,.
\end{equation}
 
Next, we consider the second term on the right-hand-side  of
\eqref{eq:57} (note that $\phi_\perp(1)=0$)
\begin{displaymath} 
\begin{array}{ll}
  \Big|\Big\langle1,\frac{U^{\prime\prime}\phi_\perp}{U+i\lambda}\Big\rangle_{L^2(1/2,1)}\Big|&
  =\Big | -\frac{U^{\prime\prime}\phi_\perp}{U^\prime}\log (U(1/2)+i\lambda))_{|_{x=1/2}}\,
  + \, \Big\langle\Big(\frac{U^{\prime\prime}\phi_\perp}{U^\prime}\Big)^\prime,\log
  (U+i\lambda)\Big\rangle\Big|\\ &  \leq 
  C\, \|\phi_\perp\|_{1,2} \,.
  \end{array}
\end{displaymath}
For $\Im \lambda < U(1/2)$ we can write,
\begin{displaymath}
\Big|  \Big\langle1,\frac{U^{\prime\prime}\phi_\perp}{U+i\lambda}\Big\rangle_{L^2(0,1/2)}\Big|\leq
  C\|\phi_\perp\|_1\,,
\end{displaymath}
and hence we may conclude that
\begin{equation}
\label{eq:60}
 \Big|   \Big\langle1,\frac{U^{\prime\prime}\phi_\perp}{U+i\lambda}\Big\rangle\Big|\leq
  C\|\phi_\perp\|_{1,2}\,.
\end{equation}
Substituting the above, together with \eqref{eq:58}  and \eqref{eq:59} 
into \eqref{eq:57} yields
\begin{equation}
\label{eq:61}
 \Big| \Big\langle1,\frac{U^{\prime\prime}\phi  }{U+i\lambda}\Big\rangle+c_\parallel \Big|\leq C|\lambda|\log
 |\lambda|^{-1}c_\parallel +C\|\phi_\perp\|_{1,2} \,.
\end{equation}
We next rewrite \eqref{eq:56} in the form
\begin{equation}\label{eq:62}
(\alpha^2\|U\|_2^2 +i \lambda) \, c_\parallel  = \Big\langle1,\frac{Uv
}{U+i\lambda}\Big\rangle+i\lambda \left(\Big\langle1,\frac{U^{\prime\prime}\phi }{U+i\lambda}\Big\rangle+ c_\parallel \right)\,,
\end{equation}
and then observe that
\begin{displaymath}
  \Big\langle1,\frac{Uv}{U+i\lambda}\Big\rangle= \Big|\langle1,v\rangle
  -i\lambda\Big\langle1,\frac{v}{U+i\lambda}\Big\rangle\Big|\leq\|v\|_1+|\lambda|N_1(v,\lambda)\,.
\end{displaymath}
Substituting the above together with \eqref{eq:61} into \eqref{eq:62} yields
\begin{displaymath}
|c_\parallel |\leq \frac{1}{|\alpha^2\|U\|_2^2+i\lambda|}\Big(\|v\|_1+C|\lambda|^2\log
 |\lambda|^{-1}c_\parallel +C|\lambda|\|\phi_\perp\|_{1,2}+|\lambda| N_1(v,\lambda) \Big) \,.
\end{displaymath}
Given the fact that when $\Im \lambda >0$,  $\alpha\geq\alpha_{\lambda,\delta} $ we have 
\begin{displaymath}
    \frac{1}{|\alpha^2\|U\|_2^2+i\lambda|}\leq\frac{1}{\delta|\lambda|}\,,
  \end{displaymath} 
and that when $\Im\lambda\leq 0$ we have
\begin{displaymath}
    \frac{1}{|\alpha^2\|U\|_2^2+i\lambda|}\leq\frac{1}{|\lambda|}\,,
  \end{displaymath}
  we obtain  \eqref{eq:53} for
sufficiently small $\lambda_0>0$.

{\em Step 2: We prove \eqref{eq:52}.}

{\em Step  2.1:} {\em For $w_\perp=U^{-1}\phi_\perp$, we prove that
\begin{equation}
\label{eq:928}
   \|Uw_\perp^\prime\|_2^2 + \alpha^2\|\phi_\perp\|_2^2 \leq C\|\phi_\perp^\prime\|_2(N_0(v,\lambda)+ |\lambda|\, |c_\parallel|)+ \Big|
  i\lambda\Big\langle\frac{\phi_\perp}{U},\frac{U^{\prime\prime}\phi_\perp}{U+i\lambda}\Big\rangle\Big|\,.  
\end{equation}}

Taking the inner product in \eqref{eq:54} with $w_\perp$ yields (note that
$w^\prime=w_\perp^\prime$)
\begin{equation}
\label{eq:63}
  \|Uw_\perp^\prime\|_2^2 + \alpha^2\|\phi_\perp\|_2^2 
=  \Big\langle \phi_\perp,\frac{v}{U+i\lambda}\Big\rangle 
  +i\lambda\Big\langle w_\perp,\frac{U^{\prime\prime}\phi}{U+i\lambda}\Big\rangle \,. 
\end{equation}
We now turn to estimate the right-hand-side  of \eqref{eq:63}. For the
first term we have,  using the fact that $|\phi_\perp(x)|\leq (1-x)^{1/2}
\|\phi_\perp^\prime\|_2$, 
\begin{equation}
\label{eq:64}
  \Big| \Big\langle \phi_\perp,\frac{v}{U+i\lambda}\Big\rangle\Big | \leq  \|\phi_\perp^\prime\|_2\,
  \Big\|(1-x)^{1/2}\frac{v}{U+i\lambda}\Big\|_1\,.
\end{equation} 
Furthermore, splitting the domain of integration $(0,1)$ into two
intervals: $(0,1/4)$ and $(1/4,1)$ and then integrating by parts on
$(1/4,1)$ yield, for $0 < |\lambda| <\lambda_0\leq U(1/2)$,
\begin{multline}
\label{eq:65}
    \Big\langle \phi_\perp,\frac{v}{U+i\lambda}\Big\rangle =
    -\overline{\phi}_\perp v\log(U+i\lambda)\big|_{x=\frac{1}{4}} \\ -\int_{1/4}^1
    \log\,(U+i\lambda)\Big(\frac{\overline{\phi_\perp }\, v}{U^\prime}\Big)^\prime\,dx +\int_0^{1/4}
    \overline{\phi_\perp}\frac{v}{U+i\lambda} \,dx\,.
\end{multline}
 Using a Sobolev embedding and the Poincar\'e inequality yields for $|\lambda| \leq U(1/2)$
    \begin{equation}
\label{eq:66}
      \big|\overline{\phi}_\perp
      v\log(U+i\lambda)\big|_{x=\frac{1}{4}}\big|\leq C\|\phi_\perp\|_\infty\|v\|_\infty \leq
      C\|\phi_\perp^\prime\|_2\|v\|_{1,p}\,.
    \end{equation}
Furthermore, it holds that
\begin{equation}
\label{eq:67}
  \Big|\int_0^{1/4}   \overline{\phi_\perp}\frac{v}{U+i\lambda}
  \,dx\Big|\leq C \, \|v\|_2\|\phi_\perp\|_2 \,,
\end{equation}
and, as in the proof of \eqref{eq:45},
\begin{displaymath}
  \Big|\int_{1/4}^1
    \log\,(U+i\lambda)\Big(\frac{\overline{\phi_\perp }\, v}{U^\prime}\Big)^\prime\,dx
    \Big|\leq C\, (\|v\|_{1,p}\|\phi_\perp\|_\infty+\|v\|_\infty \|\phi^\prime_\perp\|_2)  
\end{displaymath}
 Substituting the above, together with \eqref{eq:67} and
\eqref{eq:66} into \eqref{eq:65} we can conclude, with the aid of Poincar\'e's inequality and
Sobolev's embeddings, that
\begin{equation} 
\label{eq:68}
   \Big| \Big\langle \phi_\perp,\frac{v}{U+i\lambda}\Big\rangle\Big | \leq  C\, \|\phi_\perp^\prime\|_2\|v\|_{1,p}\,.
\end{equation}
Combining \eqref{eq:68}  with \eqref{eq:64} and \eqref{eq:49})  yields
\begin{equation}
  \label{eq:69}
  \Big| \Big\langle \phi_\perp,\frac{v}{U+i\lambda}\Big\rangle\Big | \leq  C\|\phi_\perp^\prime\|_2N_0(v,\lambda)\,.  
\end{equation}
For the second term we  have, using the decomposition \eqref{eq:19}
\begin{equation}
\label{eq:70}
  i\lambda \Big\langle w_\perp,\frac{U^{\prime\prime}\phi}{U+i\lambda}\Big\rangle=ic_\parallel \lambda\Big\langle
  \phi_\perp,\frac{U^{\prime\prime}}{U+i\lambda}\Big\rangle + i\lambda\Big\langle
  w_\perp,\frac{U^{\prime\prime}\phi_\perp}{U+i\lambda}\Big\rangle \,.
\end{equation}
To estimate  the first term in \eqref{eq:70} we use  \eqref{eq:60} to obtain
\begin{equation} 
\label{eq:71}
\Big| c_\parallel \lambda\Big\langle
  \phi_\perp,\frac{U^{\prime\prime}}{U+i\lambda}\Big\rangle \Big| \leq  C \, |\lambda|\, |c_\parallel|  \|\phi_\perp\|_{1,2}\,.
\end{equation}
 Substituting the above together with \eqref{eq:70} and
  \eqref{eq:69} into \eqref{eq:63} yields \eqref{eq:928}.  The
estimate of  the last term in \eqref{eq:928} is the object of the  
next step. 

{\em Step  2.2: We prove that for every $\epsilon>0$ there exists $\lambda_0>0$
such that for all  $|\lambda|\leq \lambda_0$ it holds that
\begin{equation}
\label{eq:72}
  \Big|
  i\lambda\Big\langle\frac{\phi_\perp}{U},\frac{U^{\prime\prime}\phi_\perp}{U+i\lambda}\Big\rangle\Big|\leq\epsilon\, \|\phi^\prime_\perp\|_2^2 \,.  
\end{equation}
}
 
\noindent Clearly,
\begin{equation}
\label{eq:73}
  i\lambda\Big\langle\frac{\phi_\perp}{U},\frac{U^{\prime\prime}\phi_\perp}{U+i\lambda}\Big\rangle= i\lambda \int_0^1
    \frac{U^{\prime\prime}|\phi_\perp|^2}{U(U+i\lambda)}\,dx \,.
\end{equation}
Recall the definition of $x_\nu$ in \eqref{eq:defxnu}, and let
$1-x_\nu\leq d<1/2$.

\paragraph{The integral over $(1-2d,1)$.} ~\\
We attempt, see step 2 of the proof of
  \cite[Proposition 4.14]{almog2019stability},  to prove that for
  $d<\frac 12$ there exists $\hat C >0$ and $\lambda_0$ such that for $|\lambda|
  \leq \lambda_0$ 
\begin{equation}
\label{eq:74}
  \Big|i\lambda\int_{1-2d}^1
    \frac{U^{\prime\prime}|\phi_\perp|^2}{U(U+i\lambda)}\,dx\Big| \leq \hat C\, d\log
   |d|^{-1}\|\phi_\perp^\prime\|_2^2\,. 
\end{equation}
To estimate this  integral 
we use the identity
\begin{displaymath}
  \frac{1}{U(U+i\lambda)}=\frac{1}{i\lambda}\Big[\frac{1}{U}-\frac{1}{U+i\lambda}\Big] \,,
\end{displaymath}
to obtain that
\begin{equation}
\label{eq:75} 
   i\lambda\int_{1-2d}^1
    \frac{U^{\prime\prime}|\phi_\perp|^2}{U(U+i\lambda)}\,dx =
   \int_{1-2d}^1\frac{U^{\prime\prime}|\phi_\perp|^2}{U}\,dx- \int_{1-2d}^1\frac{U^{\prime\prime}|\phi_\perp|^2}{U+i\lambda}\,dx \,.
\end{equation}
Integration by parts yields
\begin{equation}
\label{eq:76}
  \int_{1-2d}^1\frac{U^{\prime\prime}|\phi_\perp|^2}{U+i\lambda}\,dx = \left(\frac{U^{\prime\prime}}{U^\prime}
  |\phi_\perp|^2\,|\log\,(U+i\lambda)\Big|\right)_{\Big | x= 1-2d } \\ -
\int_{1-2d}^1\Big(\frac{U^{\prime\prime}}{U^\prime}|\phi_\perp|^2\Big)^\prime \,\,|\log\,(U+i\lambda)|\,dx \,.
\end{equation}
Next we observe that
\begin{equation}
\label{eq:77}
   |\phi_\perp (x)|^2 \leq 2d \,\|\phi_\perp^\prime\|_2^2  \mbox{ and } 0 \leq U(x) \leq 2d\,, \, \forall x \in [1-2d,1]\,.
\end{equation}
We also note that, for $\lambda_0$ small enough and $|\lambda| \leq \lambda_0$, 
\begin{equation}
|\log\,(U (1-2d) +i\lambda)\Big| \leq C\log|d|^{-1}\,.
\end{equation}
Hence, the first term on the right hand side of \eqref{eq:76} can be
estimated as follows
\begin{equation}\label{eq:78}
\Big| \left(\frac{U^{\prime\prime}}{U^\prime}
  |\phi_\perp|^2\,|\log\,(U+i\lambda)\Big|\right)_{x= 1-2d } \Big| \leq C\, d \log|d|^{-1}\,  \|\phi_\perp^\prime\|_2^2\,.
  \end{equation}
For the second term, we write
\begin{equation}\label{eq:79}
\begin{array}{l}
  \Big|\int_{1-2d}^1\Big(\frac{U^{\prime\prime}}{U^\prime}|\phi_\perp|^2\Big)^\prime
  \,\,|\log\,(U+i\lambda)|\,dx \Big|\,\\ \qquad  \leq
  \Big\|\frac{U^{\prime\prime}}{U^\prime}\Big\|_{W^{1,\infty}(1-2d,1)}\int_{1-2d}^1(|\phi_\perp|^2+2|\phi_\perp|\,|\phi_\perp^\prime|)
  \,\,|\log\,(U+i\lambda)|\,dx\\ \qquad  \leq  C\int_{1-2d}^1(|\phi_\perp|^2+ 
  |\phi_\perp|\,|\phi^\prime_\perp| )
  \,\,|\log\,(U+i\lambda)|\,dx \,.
  \end{array}
\end{equation}
As
\begin{equation}
  \|\log (U+i\lambda)\|_{L^p(1-2d,1)}\leq Cd^{1/p}\log |d|^{-1}\quad\mbox{ for } 
  p\in\{1,2\}\,,
\end{equation}
we obtain,  using  \eqref{eq:77}, from \eqref{eq:79}  that
\begin{displaymath}
  \Big|\int_{1-2d}^1\Big(\frac{U^{\prime\prime}}{U^\prime}{|\phi_\perp |}^2\Big)^\prime
  \,\,|\log\,(U+i\lambda)|\,dx \Big|\,\leq C d\log |d|^{-1}\|\phi_\perp^\prime\|_2^2\,.
\end{displaymath}
Substituting the above, together with~\eqref{eq:77} into \eqref{eq:76}
then yields
\begin{equation}
\label{eq:80}
 \Big|\int_{1-2d}^1\frac{U^{\prime\prime}|\phi_\perp|^2}{U+i\lambda}\,dx\Big| \leq  C d\log
   |d|^{-1}\|\phi_\perp^\prime\|_2^2\,. 
\end{equation}
We now estimate the first term in the right hand side of
\eqref{eq:75}.  Employing \eqref{eq:25} and Poincar\'e's inequality
yields
\begin{displaymath} 
   \Big|\int_{1-2d}^1\frac{U^{\prime\prime}|\phi_\perp|^2}{U}\,dx\Big|\leq C  \| \phi_\perp\| _{L^2(1-2d,1)} \|\phi_\perp/U\|_{L^2(1-2d,1)} \leq 
   \tilde{C} \, d \,\|\phi_\perp^\prime\|_2^2\,.
\end{displaymath}
Substituting the above together with \eqref{eq:80} into \eqref{eq:75} yields \eqref{eq:74}.

\paragraph{The integral over $[0,1-2d]$.} ~\\
By \eqref{eq:10} there exists $C>0$ such that for all $1-x_\nu\leq d<1/2$
\begin{displaymath}
  \Big\|\frac{1}{U+i\lambda}\Big\|_{L^\infty(0,1-2d)}\leq \frac{1}{U(1-2d)-\nu}\leq
  \frac{C}{d} \,.
\end{displaymath}
Hence, given that $U^\prime(1) <0$, 
  \begin{displaymath}
    \Big| \lambda \int_{0}^{1-2d} \frac{U^{\prime\prime}|\phi_\perp|^2}{U(U+i\lambda)}\,dx\Big|
    \leq C \frac{|\lambda|}{ d^2} \| \phi_\perp \|^2_2\,.
  \end{displaymath}
Combining the above with \eqref{eq:74} yields that for all $1-x_\nu\leq
d$, with the aid of Poincar\'e's inequality 
\begin{equation}\label{eq:81}
      \Big| \lambda \int_0^1 \frac{U^{\prime\prime}|\phi_\perp|^2}{U(U+i\lambda)}\,dx\Big| \leq C
      \Big(\frac{|\lambda|}{ d^2}+d\log |d|^{-1}\Big) \| \phi_\perp^\prime \|^2_2\,.
\end{equation}
Let $\epsilon>0$. Clearly, there exists $d(\epsilon)>0$ such that for $d\in
(0,d(\epsilon))$ 
\begin{displaymath}
  d\log |d|^{-1}\leq\frac{\epsilon}{2C} \,.
\end{displaymath}
Furthermore, for $(\epsilon,d)$ as above we add the condition
\begin{displaymath}
  \lambda_0 \leq\min\Big(\frac{ \epsilon d^2}{2C}, { d\, U(0)} \Big)\,.
\end{displaymath}
As
\begin{displaymath}
  U(0)(1-x_\nu)\leq|\nu|<|\lambda|<\lambda_0\leq  d \, U(0) \,,
\end{displaymath}
we obtain that $d\geq1-x_\nu$, therefore, \eqref{eq:72} can now be
verified with the aid of \eqref{eq:81}.\\

{\em Step  2.3: We complete the proof of \eqref{eq:52}.}\\

\noindent  Substituting \eqref{eq:72} into \eqref{eq:928}
yields for any $\epsilon >0$,  the existence of $C>0$ and $\lambda_\epsilon >0$, such that for  $|\lambda| < \lambda_\epsilon\,$, 
it holds that 
\begin{equation}
\label{eq:82} 
\Big\|Uw_\perp^\prime\Big\|_2^2+\alpha^2\|\phi_\perp\|_2^2
\leq C\|\phi_\perp^\prime\|_2(N_0 (v,\lambda)+ |\lambda|
|c_\parallel |) + \epsilon\|\phi^\prime_\perp\|_2^2 \,.  
\end{equation}
We now attempt to bound $\|\phi_\perp^\prime\|_2$. As  
\begin{displaymath}
\phi^\prime_\perp = U w^\prime_\perp + U^\prime w_\perp\,,
\end{displaymath} 
and since $\|U^\prime\|_\infty \leq 1$ we may use \eqref{eq:25} to obtain
\begin{equation}\label{eq:83}
  \|\phi_\perp^\prime\|_2^2  \leq 2\|Uw_\perp^\prime\|_2^2+ C  \|\phi_\perp\|_2^2 \,.
\end{equation}
On the other hand, by \eqref{eq:12} and \eqref{eq:13} we have that
\begin{displaymath}
  \|\phi_\perp\|_2^2 \leq C\|Uw_\perp^\prime\|_2^2\,,
\end{displaymath}
and hence, combining with \eqref{eq:83}, we obtain
\begin{displaymath}
    \|\phi_\perp^\prime\|_2  \leq C\|Uw_\perp^\prime\|_2\,.
\end{displaymath}
Substituting the above into \eqref{eq:82} yields, with the aid of
Poincar\'e's inequality  and a suitable choice of $\epsilon$,  the existence
of  $\lambda_0$ and $C>0$  such that for  $|\lambda| \leq \lambda_0$,   
\begin{equation}
\label{eq:84}
  \|\phi_\perp^\prime\|_2\leq C(N_0(v,\lambda)+|\lambda|\,|c_\parallel | )\,.  
\end{equation}
We now combine \eqref{eq:84}  with \eqref{eq:53} to obtain (\ref{eq:52}a,b).\\
\end{proof}

 While a direct use of \eqref{eq:52} will be made in the proof of
\eqref{lem:other}, we shall also need to transform $N_1(v,\lambda)$ into a
more conventional bound, which is precisely what we achieve in the 
next lemma.
 \begin{lemma}
\label{rem:integral-over-0}
Let $U\in C^2([0,1])$ satisfy \eqref{eq:10}, $p >1$, $
0<\nu_0<U(1/2)$, and $\mu_0>0$.  There exist $C>0$ and $\beta_0>0$ such that
for all $\beta\geq \beta_0$, $|\mu|< \mu_0$ and $|\nu| \leq \nu_0$ it holds that
\begin{equation}
  \label{eq:85}
N_1(v,\lambda)\leq |v(1)|\log|\lambda|+C\|v\|_{1,p} \,,
\end{equation}
where $N_1$ is introduced in \eqref{eq:50}.
\end{lemma}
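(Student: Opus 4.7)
The plan is to split the integral defining $N_1(v,\lambda)$ in \eqref{eq:50} into pieces over $(0,1/2)$ and $(1/2,1)$, exploiting the fact that the singular set of $1/(U+i\lambda)$ sits inside $(1/2,1]$. Indeed, by \eqref{eq:10} the function $U$ is strictly decreasing on $[0,1]$ (since $U^{\prime\prime}<0$ and $U^\prime(0)=0$), so the point $x_\nu$ defined in \eqref{eq:defxnu} satisfies $x_\nu\in(1/2,1]$ as soon as $|\nu|\leq\nu_0<U(1/2)$.

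On $(0,1/2)$ the monotonicity of $U$ gives
\begin{equation*}
|U(x)+i\lambda|\geq U(1/2)-|\Im\lambda|\geq U(1/2)-\nu_0>0,
\end{equation*}
so $1/(U+i\lambda)$ is bounded uniformly in $\lambda$ on $[0,1/2]$, and consequently
\begin{equation*}
\Big|\int_0^{1/2}\frac{v}{U+i\lambda}\,dx\Big|\leq C\|v\|_1\leq C\|v\|_{1,p}.
\end{equation*}

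On $(1/2,1)$ the derivative $U^\prime$ is bounded away from zero, so I would use the identity $1/(U+i\lambda)=(1/U^\prime)\,(d/dx)\log(U+i\lambda)$ and integrate by parts to obtain
\begin{equation*}
\int_{1/2}^1\frac{v}{U+i\lambda}\,dx = \Big[\frac{v\log(U+i\lambda)}{U^\prime}\Big]_{1/2}^1 - \int_{1/2}^1\Big(\frac{v}{U^\prime}\Big)^\prime\log(U+i\lambda)\,dx.
\end{equation*}
The boundary term at $x=1$ equals $-v(1)\log(i\lambda)$ in view of $U^\prime(1)=-1$, and is therefore controlled by $C\,|v(1)|\,|\log|\lambda||$, which accounts for the explicit $|v(1)|\log|\lambda|$ contribution in \eqref{eq:85}. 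The boundary term at $x=1/2$ is harmless, since $\nu_0<U(1/2)$ and $|\mu|<\mu_0$ together give $|\log(U(1/2)+i\lambda)|\leq C$ uniformly in $\lambda$, and the Sobolev embedding $W^{1,p}(0,1)\hookrightarrow C([0,1])$ (valid for $p>1$) bounds $|v(1/2)|\leq C\|v\|_{1,p}$. For the remaining integral, $(v/U^\prime)^\prime$ is controlled in $L^p(1/2,1)$ by $C\|v\|_{1,p}$ (since $U^\prime$ and $U^{\prime\prime}$ are bounded and $|U^\prime|$ is bounded from below on $[1/2,1]$), and its pairing with $\log(U+i\lambda)$ is estimated by H\"older's inequality together with the uniform $L^{p^\prime}$-bound \eqref{eq:logp}.

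Adding the two contributions yields \eqref{eq:85}. There is no serious obstacle, the main structural ingredients being the integration by parts available on the subinterval where $U^\prime$ does not vanish, and the uniform $L^q$-integrability of $\log(U+i\lambda)$ provided by \eqref{eq:logp}. The only point requiring some care is ensuring that the boundary term at $x=1/2$ is uniform in $\lambda$, which is exactly the role of the standing hypothesis $\nu_0<U(1/2)$.
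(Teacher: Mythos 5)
Your proposal is correct and matches the paper's own argument essentially step for step: split $\langle 1, v/(U+i\lambda)\rangle$ at $x=1/2$; bound the $(0,1/2)$ piece directly since $|U(x)+i\lambda|$ is bounded below there (using $\nu_0<U(1/2)$); integrate by parts on $(1/2,1)$ using $1/(U+i\lambda)=(1/U')\,(\log(U+i\lambda))'$; extract $v(1)\log(i\lambda)$ from the boundary at $x=1$ via $U'(1)=-1$, $U(1)=0$; control the boundary term at $x=1/2$ and the bulk integral via \eqref{eq:logp} and H\"older. No further comparison is needed.
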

\begin{proof}
  Clearly,
    \begin{displaymath}
          \Big\langle1,\frac{v}{U+i\lambda}\Big\rangle=
    \Big\langle1,\frac{v}{U+i\lambda}\Big\rangle_{L^2(0,1/2)}+  \Big\langle1,\frac{v}{U+i\lambda}\Big\rangle_{L^2(1/2,1)}
    \end{displaymath}
  Integration by parts yields
  \begin{displaymath}
  \Big\langle1,\frac{v}{U+i\lambda}\Big\rangle_{L^2(1/2,1)}
    =v(1)\log(i\lambda)-\frac{v(1/2)}{U^\prime(1/2)}\log(U(1/2)+i\lambda)- \int_{1/2}^1\Big(\frac{v}{U^\prime}\Big)^\prime \log(U+i\lambda)\,dx \,.
  \end{displaymath}
 Furthermore, as $\nu<U(1/2)$ it holds that
\begin{displaymath}
  \Big|\Big\langle1,\frac{v}{U+i\lambda}\Big\rangle_{L^2(0,1/2)}\Big|\leq\frac{\|v\|_1}{|U(1/2)+i\lambda|}
\end{displaymath}
Consequently, by \eqref{eq:50}, we can conclude \eqref{eq:85} for any
$p>1$.
\end{proof}

\subsection{The case $0< \Im \lambda < U(0)$}\label{ss2.6}

Propositions \ref{lem:lambda-alpha-zero},  \ref{lem:lambda-zero} and
 \ref{prop:edge} address some of the cases where $|\lambda|$ is
small.  We now consider the case $ 0<\nu<\nu_1$ for some
$\nu_1<U(0)$, (recall that $\nu =\Im \lambda$). For later reference  (see Lemma
\ref{lem:inviscid-decay} and Proposition \ref{lem:right-curve}),  we also
consider the case where $\nu$ is small  using a different approach
  than that of the previous subsection.  We set, for $p>1$ and $v\in W^{1,p} (0,1)$,
\begin{multline}
\label{eq:86}
 N(v,\lambda) := \min
  \Big(\Big\|[(1-x)^{1/2}+\nu^{-1/2}(1-x)]\frac{v}{U+i\lambda}\Big\|_1, \\ |\nu|^{1/2}(\|v^\prime\|_p+
      |v(1)|\log|\nu|^{-1})+\|v\|_2+\nu^{-1/2}\|v\|_1\Big)\,.
\end{multline}
 For small values of $|\lambda|$, since $\A_{i\nu,0}(U-\nu)=0$  and since
$U(1)-\nu=\nu\ll1$, we expect $\A_{\lambda,0}$ to be almost singular, and that
the norm of $\phi=\A_{\lambda,0}^{-1}v$, would be much greater in the space
spanned by $U-\nu$. We  thus use the decomposition 
\begin{displaymath}
  \phi=c_\parallel^\nu(U-\nu)+ [\phi-c_\parallel^\nu(U-\nu)] \,,
\end{displaymath}
where $c_\parallel^\nu$ is defined by \eqref{eq:88}. 

The proof is somewhat similar to the proof of \cite[Proposition
4.15]{almog2019stability}. In addition to the above-mentioned
difference, resulting from the non-invertibility  of $\A_{0,0}$, we need to
address here the Neumann condition at $x=0$, which 
complicates the  estimate of $\|\phi^\prime\|_{L^2(0,x_\nu)}$, where $x_\nu$
is given by \eqref{eq:defxnu}. This estimate, which is addressed in
step 3 of the proof, significantly contributes to its length and
complexity.
\begin{proposition}
\label{prop:linear}
Let $p>1$, 
$U\in C^3([0,1])$  satisfy
\eqref{eq:10}, and $0<\nu_1<U(0)$. 
There exist $\mu_0>0$ and $C>0$ such that
for all $\lambda=\mu+i \nu$ such that $ 0<\nu<\nu_1$, and $0<|\mu|\leq \mu_0$ and $\alpha \geq 0$,  we have,  
for all  $(\phi,v)\in D(\A_{\lambda,\alpha})\times W^{1,p}(0,1)$ satisfying  $\mathcal
A_{\lambda,\alpha} \phi=v$, (where $\A_{\lambda,\alpha}$ is defined in \eqref{eq:2.1aa})
   \begin{equation}
\label{eq:87}
\|\phi-c_\parallel^\nu(U-\nu)\|_{1,2}  + \nu^{1/2}|c_\parallel^\nu|  \leq \frac{C}{\nu} \,  N (v,\lambda)   \,,
\end{equation}
where 
\begin{equation}
  \label{eq:88}
c_\parallel^\nu=\frac{\langle\phi-\phi(x_\nu),U-\nu\rangle_{L^2(0,x_\nu)}}{\|U-\nu\|_{L^2(0,x_\nu)}^2}\,,
\end{equation}
in which $x_\nu$ is defined by \eqref{eq:defxnu}.
\end{proposition}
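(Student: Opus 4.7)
The plan is to parallel the decomposition strategy used in Lemma~\ref{lem:lambda-alpha-zero} and Proposition~\ref{prop:edge}, with $(U-\nu)$ playing the role of the near-kernel element instead of $U$. Setting $\phi = c_\parallel^\nu(U-\nu) + \phi_\perp^\nu$, a direct computation gives $\A_{\lambda,\alpha}(U-\nu) = -i\mu U'' + \alpha^2(U+i\lambda)(U-\nu)$, so that
\begin{equation*}
\A_{\lambda,\alpha}\phi_\perp^\nu = v + c_\parallel^\nu\bigl[i\mu U'' - \alpha^2(U+i\lambda)(U-\nu)\bigr].
\end{equation*}
The orthogonality baked into \eqref{eq:88} (using $U(x_\nu)=\nu$, so $\phi(x_\nu)=\phi_\perp^\nu(x_\nu)$) is exactly that $\phi_\perp^\nu - \phi_\perp^\nu(x_\nu)\perp (U-\nu)$ in $L^2(0,x_\nu)$. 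This kills the zero-mode of the weighted Laplacian $\Mg_{U-\nu}$ on $(0,x_\nu)$ and unlocks a spectral gap of the type used in \eqref{eq:12}--\eqref{eq:13}.

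The first main step would be the energy estimate on $\phi_\perp^\nu$. Introduce $w_\perp = \phi_\perp^\nu/(U-\nu)$ separately on $(0,x_\nu)$ and $(x_\nu,1)$, and rewrite the equation in the divergence form analogous to \eqref{eq:55}, namely $(U-\nu)^2(\Mg_{U-\nu}+\alpha^2)w_\perp = \text{RHS}$. Pairing with $w_\perp$ yields $\|(U-\nu)w_\perp'\|_2^2 + \alpha^2\|\phi_\perp^\nu\|_2^2$ on the left, and on the right three contributions: a term involving $v/(U+i\lambda)$, already bounded by $N(v,\lambda)$ through the weight $(1-x)^{1/2}+\nu^{-1/2}(1-x)$ that is calibrated to match Hardy-type inequalities centered at both $x=1$ and the interior point $x_\nu$; a curvature term treated by integration by parts producing $\log(U+i\lambda)$, as in substep~2.2 of the proof of Proposition~\ref{prop:edge}; and an $i\mu U''$-correction proportional to $c_\parallel^\nu$ that is small in $\mu$. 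Combined with the Hardy-type bounds \eqref{eq:17}--\eqref{eq:18} and the spectral-gap lemma adapted to $\Mg_{U-\nu}$, this produces an estimate of the form $\|\phi_\perp^\nu\|_{1,2} \leq C(N(v,\lambda) + |\lambda||c_\parallel^\nu|)$.

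Next, I would obtain the bound on $c_\parallel^\nu$ by testing the equation $\A_{\lambda,\alpha}\phi=v$ against $\overline{U-\nu}$ on $(0,x_\nu)$. Since $\A_{\lambda,\alpha}$ applied to $U-\nu$ is of size $O(|\mu|+\alpha^2)$ and $U-\nu$ vanishes at the endpoint $x_\nu$, the boundary terms arising from integration by parts are controlled, and the leading behaviour isolates $c_\parallel^\nu\|U-\nu\|_{L^2(0,x_\nu)}^2$ (which remains of order one since $x_\nu \to 1$ as $\nu\to 0$). The right-hand side is then estimated using the $N_1$-type bound of Lemma~\ref{rem:integral-over-0}, together with the $\phi_\perp^\nu$-bound just derived. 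Solving the resulting coupled system for $(\|\phi_\perp^\nu\|_{1,2},\,\nu^{1/2}|c_\parallel^\nu|)$ and absorbing the cross-coupling for small $|\mu|\leq\mu_0$ then yields \eqref{eq:87}, with the $1/\nu$ factor entering from the residual of the $\lambda$-dependent coupling in the estimate for $c_\parallel^\nu$.

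The hard part will be the analysis near the interior critical layer $x_\nu$: unlike in Propositions~\ref{lem:lambda-alpha-zero}--\ref{prop:edge}, where the singularity of $1/U$ sits at the Dirichlet endpoint $x=1$, here $(U-\nu)^{-1}$ is singular at an interior point, and the logarithmic control $\log(U+i\lambda)\in L^q$ from \eqref{eq:logp} must be applied around $x_\nu$. One must split a neighbourhood $|x-x_\nu|\lesssim|\mu|$ of $x_\nu$, where bare size bounds suffice, from its complement, where the log-integration-by-parts trick of substep~2.2 of Proposition~\ref{prop:edge} applies; joining the two regimes and tracking the explicit $\nu$-dependence is where most of the delicate bookkeeping lies, and is the origin of the factor $1/\nu$ in \eqref{eq:87}.
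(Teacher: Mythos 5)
Your decomposition $\phi = c_\parallel^\nu(U-\nu) + \phi_\perp^\nu$ has a structural flaw that the paper's proof specifically works around: because $c_\parallel^\nu(U(x_\nu)-\nu)=0$, you correctly observe that $\phi_\perp^\nu(x_\nu) = \phi(x_\nu)$, which is generically nonzero. Consequently the quotient $w_\perp = \phi_\perp^\nu/(U-\nu)$ that you want to pair against in the energy identity is not in $L^2$ near the interior critical point $x_\nu$, so the estimate $\|(U-\nu)w_\perp'\|_2^2 + \alpha^2\|\phi_\perp^\nu\|_2^2 \lesssim \dots$ cannot be obtained by your stated pairing. The orthogonality condition \eqref{eq:88} involves $\phi-\phi(x_\nu)$, not $\phi$, and only on $(0,x_\nu)$; the constant $\phi(x_\nu)$ does not cancel from $\phi - c_\parallel^\nu(U-\nu)$ and must be estimated on its own. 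This is precisely why the paper introduces the shifted function $\varphi = \phi - \phi(x_\nu)\chi_d$ in \eqref{eq:104} before defining $w = (U-\nu)^{-1}\varphi$ in \eqref{defneww}.

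The missing ingredient is a standalone bound on $|\phi(x_\nu)|$, and this is where the real work is: the paper's Step~1 derives \eqref{eq:89}, namely $|\phi(x_\nu)|\le C |\langle\phi, v/(U+i\lambda)\rangle|^{1/2}$, from the imaginary-part identity \eqref{eq:90} combined with the quantitative lower bound \eqref{eq:93} on $\|\,|U''|^{1/2}/(U+i\lambda)\|_2$ — an estimate of size $|\mu|^{-1/2}$ that has no analogue in your outline. Your plan to split a $|\mu|$-neighbourhood of $x_\nu$ and use ``bare size bounds'' there would at best recover a bound depending badly on $|\mu|$ and could not yield the clean $\nu^{-1}$ rate of \eqref{eq:87}: the $\nu^{-1}$ actually arises from the chain \eqref{eq:89} $\to$ \eqref{eq:118} $\to$ \eqref{eq:131}--\eqref{eq:132}, where $|\phi(x_\nu)|/d^{1/2}$ with $d=1-x_\nu \sim \nu$ is the dominant contribution, not from a ``residual of the $\lambda$-dependent coupling in the estimate for $c_\parallel^\nu$'' as you suggest. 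Relatedly, your proposal to bound $c_\parallel^\nu$ by testing $\A_{\lambda,\alpha}\phi = v$ against $\overline{U-\nu}$ on $(0,x_\nu)$ is unnecessary: once $\|\phi'\|_{L^2(0,x_\nu)}$ is controlled, $|c_\parallel^\nu|$ follows directly from the orthogonality via \eqref{eq:134}. Finally, your argument does not engage with the two-sided structure of $N(v,\lambda)$ as a minimum in \eqref{eq:86}; the paper needs both representations of the inner product $\langle\phi, v/(U+i\lambda)\rangle$ (via the weighted $L^1$ norm in \eqref{eq:158} and via the $\|v'\|_p$/$|v(1)|\log$ combination in \eqref{eq:149}--\eqref{eq:153}) to close the argument.
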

\begin{proof}~\\
  {\em Step 1:
We prove that there exists $C>0$ such that,
for all $0<|\mu|\leq 1$ it holds that 
 \begin{equation}
\label{eq:89} 
   |\phi(x_\nu)| \leq C \Big|\Big\langle\phi,\frac{v}{U+i\lambda}\Big\rangle\Big|^{1/2}\,.
 \end{equation}
 for all pairs $(\phi,v)\in D(\A_{\lambda,\alpha})\times W^{1,p}(0,1)$ satisfying
 \eqref{eq:47}. \\ }

\noindent It can be easily  verified (since $\mathcal A_{\lambda,\alpha} \phi=v$) that
\begin{equation}
\label{eq:90}
  \Im\Big\langle\phi,\frac{v}{U-\nu+i\mu}\Big\rangle=-\mu\, \Big\langle\frac{U^{\prime\prime}\phi}{(U-\nu)^2+\mu^2},\phi\Big\rangle\,.
\end{equation}
As 
\begin{displaymath}
  |\phi (x)|^2\geq \frac{1}{2}|\phi(x_\nu)|^2 - |\phi(x)-\phi(x_\nu)|^2 \,,
\end{displaymath}
we may use \eqref{eq:90} to obtain, observing that $-U^{\prime\prime} >0$, 
\begin{equation} 
\label{eq:91}
 \Big|  \Im\Big\langle\phi,\frac{v}{U-\nu+i\mu}\Big\rangle\Big| \geq |\mu| \,
   \Big\langle\frac{|U^{\prime\prime}|}{(U-\nu)^2+\mu^2},\frac 12 |\phi(x_\nu)|^2 - |\phi(x)-\phi(x_\nu)|^2 \Big\rangle\,.
\end{equation}
Hence
 \begin{equation}  
\label{eq:92}
   \frac{|\mu| }{2} |\phi(x_\nu)|^2 \Big\|\frac{|U^{\prime\prime}|^{1/2}}{U+i\lambda}\Big\|_2^2 \leq
   |\mu| \sup |U^{\prime\prime}| \Big\|\frac{\phi-\phi(x_\nu)}{U+i\lambda}\Big\|_2^2 
  +\Big|\Big\langle\phi,\frac{v}{U-\nu+i\mu}\Big\rangle\Big| \,.
 \end{equation}
 Since  $|U(x)-\nu| \leq |x-x_\nu|$, and $|U^{\prime\prime}| >0$  we can conclude, that
 for some positive $C$
 \begin{displaymath}
    \Big\|\frac{|U^{\prime\prime}|^{1/2}}{U+i\lambda}\Big\|_2^2  \geq   \frac 1C
   \int_0^1  \frac{1}{(x-x_\nu)^2+\mu^2} dx\,.
 \end{displaymath}
 As $|\mu|\leq 1$ and $x_\nu \in [x_{\nu_1},1]$, we obtain after the change
 of variable $y= (x_\nu-x)/|\mu|$
\begin{displaymath}
 \int_0^1  \frac{1}{(x-x_\nu)^2+\mu^2} dx = \frac{1}{|\mu|}
 \int_{\frac{x_\nu-1}{|\mu|}}^{\frac{x_\nu}{|\mu|}} \frac{1}{1+y^2} dy
 \geq\frac{1} {|\mu|} 
 \int_0^{\frac{x_{\nu_1}}{|\mu|}} \frac{1}{1+y^2} dy\,. 
\end{displaymath}
 Consequently, there exists $\hat C >0$, such that for all $|\mu| \leq 1$ and $\nu \in (0,\nu_1)$, 
    \begin{equation}
\label{eq:93}
    \Big\|\frac{|U^{\prime\prime}|^{1/2}}{U+i\lambda}\Big\|_2^2 \geq   \frac {1}{\hat C} |\mu|^{-1}  \,.
 \end{equation}
A similar argument is employed in the proof of \cite[Proposition
4.14]{almog2019stability} (see between Eq. (4.59) and (4.60) there).  
By \eqref{eq:92} we then have 
 \begin{equation}
\label{eq:94}
    |\phi(x_\nu)|^2 \leq C\Big[ |\mu|
    \Big\|\frac{\phi-\phi(x_\nu)}{U+i\lambda}\Big\|_2^2 + \Big|\Big\langle\phi,\frac{v}{U-\nu+i\mu}\Big\rangle\Big| \Big]\,.
 \end{equation}
To estimate the first  term on the right-hand-side  of
\eqref{eq:94} we first observe that for some $C=C(\nu_1)>0$ we have, for all $\lambda=\mu+ i\nu$ such that $0 < \nu <\nu_1$
\begin{displaymath}
  \Big|\frac{1}{U(x)+i\lambda}\Big| \leq \frac{C}{| x -x_\nu|}\,,\,  \forall x \in (x_{\nu_1}/4  ,1) \,,
\end{displaymath}
where $x_{\nu_1}=x_\nu\big|_{\nu=\nu_1}$,
and 
\begin{displaymath}
  \Big|\frac{1}{U(x)+i\lambda}\Big| \leq  C  \,,\,  \forall x\in (0,x_{\nu_1}/4) \,.
\end{displaymath}
Consequently, we may write
\begin{equation}
\label{eq:95}
  \Big|\frac{1}{U(x)+i\lambda}\Big| \leq \frac{C}{|x-x_\nu|}\,,\, \forall x\in (0,1) \,.
\end{equation}
We now apply Hardy's inequality \eqref{eq:18} to
$w=(x-x_\nu)^{-1}(\phi-\phi(x_\nu))$ in $(x_\nu,1)$. It follows that
\begin{equation}\label{hardyw1}
  \Big\|\frac{\phi-\phi(x_\nu)}{x-x_\nu}\Big\|_{L^2(x_\nu,1)}^2 \leq
  \frac{1}{4}\|\phi^\prime\|_{L^2(x_\nu,1)}^2 \,.
\end{equation}
A similar bound can be the interval $(0,x_\nu)$:
\begin{equation}\label{hardyw1a}
  \Big\|\frac{\phi-\phi(x_\nu)}{x-x_\nu}\Big\|_{L^2(0,x_\nu)}^2 \leq
  \frac{1}{4}\|\phi^\prime\|_{L^2(0,x_\nu)}^2 \,.
\end{equation}
Consequently,
\begin{equation}
\label{eq:96}
  \Big\|\frac{\phi-\phi(x_\nu)}{x-x_\nu}\Big\|_{L^2(0,1)}^2 \leq
  \frac{1}{4}\|\phi^\prime\|_{L^2(0,1)}^2 \,.
\end{equation}
from which we easily conclude, in view of \eqref{eq:95}, 
\begin{equation}
\label{eq:97}
\Big\|\frac{\phi-\phi(x_\nu)}{U+i\lambda}\Big\|_2^2 \leq C \,  \| \phi^\prime\|_2^2\,.
\end{equation}
Substituting \eqref{eq:97}  into \eqref{eq:94} readily 
  yields 
   \begin{equation}
\label{eq:98}
       |\phi(x_\nu)|^2 \leq  C \left( \Big|\Big\langle\phi,\frac{v}{U+i\lambda}\Big\rangle\Big| +|\mu|\,\|\phi^\prime\|_2^2\right) \,.
  \end{equation}
Using the positivity  of $-U^{\prime\prime}$ on $[0,1]$ and \eqref{eq:90} for the second inequality we
  then obtain that
\begin{equation}
\label{eq:99}
  \Big\|\frac{\phi}{U+i\lambda}\Big\|_2^2\leq  C \int_0^1 \frac{ (-U^{\prime\prime})}{(U-\nu)^2+\mu^2}|
  \phi|^2  dx  \leq\frac{\widehat C}{|\mu|} \Big|\Big\langle\phi,\frac{v}{U-\nu+i\mu}\Big\rangle\Big| \,.  
\end{equation}
Since 
\begin{displaymath}
   \|\phi^\prime\|_2^2 + \alpha^2\|\phi\|_2^2=  \Re\Big\langle\phi,\frac{\A_{\lambda,\alpha}\phi}{U+i\lambda}\Big\rangle 
 -  \Re\Big\langle U^{\prime\prime}\phi,\frac{\phi}{U+i\lambda}\Big\rangle  \,,
\end{displaymath}
we may conclude from \eqref{eq:99}, Poincar\'e's inequality, and
\eqref{eq:47} that
\begin{multline}\label{eq:100}
  \|\phi^\prime\|_2^2 + \alpha^2\|\phi\|_2^2\leq    \Big|\Big\langle\phi,\frac{v}{U+i\lambda}\Big\rangle\Big|+ 
  C\Big\|\frac{\phi}{U+i\lambda}\Big\|_2\|\phi\|_2\\ \leq  \Big|\Big\langle\phi,\frac{v}{U+i\lambda}\Big\rangle\Big|+\frac{\widehat
    C}{|\mu|^{1/2}}\Big|\Big\langle\phi,\frac{v}{U+i\lambda}\Big\rangle\Big|^{1/2}\|\phi^\prime\|_2 \,,  
\end{multline}
from which we conclude, given that $|\mu|\leq1$, 
\begin{equation}
\label{eq:101}
  \|\phi^\prime\|_2^2 \leq \frac{C}{|\mu|}\Big|\Big\langle\phi,\frac{v}{U+i\lambda}\Big\rangle\Big| \,.
\end{equation}
Substituting \eqref{eq:101}  into \eqref{eq:98} yields \eqref{eq:89}. \\

{\em Step 2: We prove that for any $A
  >0$, there exists $C$ and $\mu_A$ such that, for $\alpha^2 \leq A$ and $\lambda$ such that  $|\mu|\leq \mu_A$ and $\nu \in (0,\nu_1)$ 
\begin{equation}
  \label{eq:102}
\|\phi\|_{H^1(x_\nu,1)}\leq  C
\Big[\nu^{-1/2}\Big|\Big\langle\phi,\frac{v}{U+i\lambda}\Big\rangle\Big|^{1/2} +N(v,\lambda)\Big] \,. 
\end{equation}
holds for any pair $(\phi,v)\in D(\A_{\lambda,\alpha})\times W^{1,p}(0,1)$ satisfying
\eqref{eq:47}. \\ }

 Let
$\chi\in C^\infty(\R,[0,1])$ be given by 
\begin{equation}
\label{eq:103}
  \chi(x)=
  \begin{cases}
    1 & x<1/2 \\
    0 & x>3/4 \,.
  \end{cases}
\end{equation}
With $d=1-x_\nu$, let $\chi_d(x) = \chi((x-x_\nu)/d)$ and set
\begin{equation}
\label{eq:104}
  \phi=\varphi + \phi(x_\nu)\chi_d \,.
\end{equation}
Note that by the choice of $d$, $\varphi$ satisfies also the boundary
conditions at $x\in\{0, 1\}\,$.\\
It can be  easily verified that
\begin{displaymath}
  \A_{\lambda,\alpha}\varphi =v + \phi(x_\nu)\big((U+i\lambda)(\chi_d^{\prime\prime}-\alpha^2\chi_d) -U^{\prime\prime}\chi_d\big)
  \,.
\end{displaymath}
 By \eqref{eq:104}  we have that 
 \begin{equation}\label{defneww}
  w:=(U-\nu)^{-1}\varphi\in H^2(0,1)\,,
  \end{equation}
and hence we can rewrite the above equality (using \eqref{eq:47}
twice)  in the form  
\begin{multline}
\label{eq:105}
  -\Big((U-\nu)^2\Big(\frac{\varphi}{U-\nu}\Big)^\prime\Big)^\prime+\alpha^2(U-\nu)\varphi 
\\ \quad = v  +  \phi(x_\nu)\big((U-\nu)(\chi_d^{\prime\prime}-\alpha^2\chi_d) -U^{\prime\prime}\chi_d\big) +
  i\mu(\phi^{\prime\prime}-\alpha^2\phi) \\ \quad = \frac{(U-\nu)v}{U+i\lambda}+
  \phi(x_\nu)\big((U-\nu)(\chi_d^{\prime\prime}-\alpha^2\chi_d) -U^{\prime\prime}\chi_d\big)
  +i\mu\frac{U^{\prime\prime}\phi}{U+i\lambda} \,.
\end{multline}
Taking the inner product with $w$ and integrating by
parts,  exploiting the fact that $\varphi(x_\nu)=0$, then yields
\begin{multline} 
\label{eq:106}
  \|(U-\nu)w^\prime\|_{L^2(x_\nu,1)}^2 + \alpha^2\|\varphi\|_{L^2(x_\nu,1)}^2 =\Big\langle
  \varphi, \frac{v}{U+i\lambda}\Big\rangle _{L^2(x_\nu,1)}-\langle
  w,\phi(x_\nu)U^{\prime\prime}\chi_d\rangle_{L^2(x_\nu,1)}\\
  +  \phi(x_\nu)\langle\varphi,\chi_d^{\prime\prime}-\alpha^2\chi_d\rangle_{L^2(x_\nu,1)}  +i\mu\Big\langle w,\frac{U^{\prime\prime}\phi}{U+i\lambda}\Big\rangle_{L^2(x_\nu,1)}\,.
\end{multline}

We now estimate the four terms appearing in the right-hand-side of
\eqref{eq:106}, using precisely the same procedure as in the proof of
\cite[Proposition 4.13]{almog2019stability}.  For the first term on
the right-hand-side of \eqref{eq:106} we obtain with the aid of \eqref{eq:104}
\begin{multline}\label{eq:107}
  \Big|\Big\langle \varphi, \frac{v}{U+i\lambda}\Big\rangle_{L^2(x_\nu,1)}\Big|\leq  \Big|\Big\langle
  \phi-\phi(x_\nu), \frac{v}{U+i\lambda}\Big\rangle_{L^2(x_\nu,1)}\Big| \\ + |\phi(x_\nu)|\Big|\Big\langle
  1-\chi_d, \frac{v}{U+i\lambda}\Big\rangle_{L^2(x_\nu,1)}\Big| 
\end{multline}
Since the integration is carried over $(x_\nu,1)$ we can estimate the
first term on the right-hand-side  of \eqref{eq:107} by using Hardy's
inequality \eqref{eq:18} and \eqref{eq:95}
\begin{multline}
   \Big|\Big\langle
  \phi-\phi(x_\nu), \frac{v}{U+i\lambda}\Big\rangle_{L^2(x_\nu,1)}\Big| \leq
  \Big\|\frac{\phi-\phi(x_\nu)}{x-x_\nu}\Big\|_{L^2(x_\nu,1)}
  \Big\|\frac{ (x-x_\nu) v }{U+i\lambda}\Big\|_{L^2(x_\nu,1)} \\ \leq C\,
  \|\phi^\prime\|_{L^2(x_\nu,1)}\|v\|_2 \,.
\end{multline}
For the second term  on
the right-hand-side of \eqref{eq:107}  we  first note that since 
\begin{displaymath}
\phi(x_\nu)=-\int_{x_\nu}^1\phi^\prime(x) \,dx
\end{displaymath}
  we may conclude that
  \begin{equation}
\label{eq:108}
   |\phi(x_\nu)|\leq
  d^{1/2}\|\phi^\prime\|_2\,.
  \end{equation} 
By the definition of $\chi_d$,
\begin{displaymath}
  \Big\|\frac{1-\chi_d}{U+i\lambda}\Big\|_\infty\leq \frac{C}{d} \,,
\end{displaymath}
and hence
\begin{displaymath}
   \Big\|\frac{1-\chi_d}{U+i\lambda}\Big\|_{L^2(x_\nu,1)}=
   \Big\|\frac{1-\chi_d}{U+i\lambda}\Big\|_2\leq\frac{C}{d^{1/2}}\,.
\end{displaymath}
Hence, by the above and \eqref{eq:108},
\begin{displaymath}
  |\phi(x_\nu)|\Big|\Big\langle
  1-\chi_d, \frac{v}{U+i\lambda}\Big\rangle_{L^2(x_\nu,1)}\Big| \leq
  d^{1/2}\|\phi^\prime\|_2\Big\|\frac{1-\chi_d}{U+i\lambda}\Big\|_2\|v\|_2\leq C\|\phi^\prime\|_2\|v\|_2\,.
\end{displaymath}
 Hence,
\begin{equation}\label{eq:2.101n}
  \Big|\Big\langle \varphi, \frac{v}{U+i\lambda}\Big\rangle_{L^2(x_\nu,1)} \Big|\leq C\, \|\phi^\prime\|_2\|v\|_2
\end{equation}
In addition, we may write, observing that $\varphi(1)=0$, 
 \begin{displaymath} 
    \Big|\Big\langle \varphi,
    \frac{v}{U+i\lambda}\Big\rangle_{L^2(x_\nu,1)}\Big|\leq\|\varphi^\prime\|_{L^2(x_\nu,1)}\,
    \Big\|(1-x)^{1/2}\frac{v}{U+i\lambda}\Big\|_1 
 \end{displaymath}
Using \eqref{eq:108},  yields
\begin{displaymath}
  \|\varphi^\prime\|_{L^2(x_\nu,1)}\leq \|\phi^\prime\|_{L^2(x_\nu,1)} + |\phi(x_\nu)|\,\|\chi_d^\prime\|_2 \leq C\|\phi^\prime\|_2\,.
\end{displaymath}
which leads to 
\begin{equation}\label{eq:2.102n} 
  \Big|\Big\langle \varphi, \frac{v}{U+i\lambda}\Big\rangle_{L^2(x_\nu,1)} \Big|\leq C
  \|\phi^\prime\|_{L^2(x_\nu,1)}\,  \Big\|(1-x)^{1/2}\frac{v}{U+i\lambda}\Big\|_1
\end{equation}
Combining \eqref{eq:2.101n} and \eqref{eq:2.102n}  yields the
existence of $C>0$ such that 
\begin{equation}
\label{eq:109}
\Big|\Big\langle \varphi, \frac{v}{U+i\lambda}\Big\rangle_{L^2(x_\nu,1)}\Big|\leq C\, \|\phi^\prime\|_{L^2(x_\nu,1)}N(v,\lambda)\,.
\end{equation}

To estimate the second term $\langle
  w,\phi(x_\nu)U^{\prime\prime}\chi_d\rangle_{L^2(x_\nu,1)}$  on the right-hand-side   of \eqref{eq:106}, we note that by
Hardy's inequality \eqref{eq:18} and \eqref{eq:108},  we have  
\begin{multline} 
\label{eq:110}
  \|w\|_{L^2(x_\nu,1)}\leq C \|\varphi^\prime\|_{L^2(x_\nu,1)}\\ \leq \widehat C \Big(\|\phi^\prime\|_{L^2(x_\nu,1)}+
  \frac{1}{d^{1/2}}|\phi(x_\nu)|\Big)\leq \widetilde C\, \|\phi^\prime\|_{L^2(x_\nu,1)} \,.
\end{multline}
From \eqref{eq:110} we then get
\begin{equation}
\label{eq:111}
| \langle w,\phi(x_\nu)U^{\prime\prime}\chi_d\rangle_{L^2(x_\nu,1)}| \leq C\,|\phi(x_\nu)|  \|\phi^\prime\|_{L^2(x_\nu,1)}\,.
\end{equation}

Next, we write for the third term
($\phi(x_\nu)\langle\varphi,\chi_d^{\prime\prime}-\alpha^2\chi_d\rangle_{L^2(x_\nu,1)})$ on the
right-hand-side  of \eqref{eq:106}, using integration by parts 
  (note that $\chi^\prime_d (x_\nu)=0=\chi^\prime_d(1)$) and the fact that $\alpha^2\leq A$
\begin{displaymath}
\begin{array}{ll}
  |\langle\varphi,\chi_d^{\prime\prime}-\alpha^2\chi_d\rangle_{L^2(x_\nu,1)}|&  \leq\|\varphi^\prime\|_{L^2(x_\nu,1)}\|\chi_d^\prime\|_2 + C_A\,  \|\varphi\|_{L^2(x_\nu,1)}\\ &  \leq \widehat C_A \left(
  \frac{1}{d^{1/2}}\|\varphi^\prime\|_{L^2(x_\nu,1)} +\|\varphi\|_{L^2(x_\nu,1)} \right)\,.
  \end{array}
\end{displaymath}
(For convenience we drop the notation referring to the dependence on
$A$ in the sequel.)  Consequently, by \eqref{eq:104}, 
\begin{multline*}
  |\phi(x_\nu)| \,|\langle\varphi,\chi_d^{\prime\prime}-\alpha^2\chi_d\rangle_{L^2(x_\nu,1)}|\leq C |\phi(x_\nu)| \Big(  \frac{1}{d^{1/2}}\Big(\|\phi^\prime\|_{L^2(x_\nu,1)}+
  \frac{1}{d^{1/2}}|\phi(x_\nu)|\Big) \\+ (\|\phi\|_{L^2(x_\nu,1)}+d^{1/2}
  |\phi(x_\nu)|)\Big)\,. 
\end{multline*}
Hence, using Poincar\'e's inequality and \eqref{eq:108}, yields 
\begin{equation} 
\label{eq:112}
  |\phi(x_\nu)| \,| \langle\varphi,\chi_d^{\prime\prime}-\alpha^2\chi_d\rangle_{L^2(x_\nu,1)} |\leq  C  \frac{ |\phi(x_\nu)|}{d^{1/2}}\|\phi^\prime\|_{L^2(x_\nu,1)}\,.
\end{equation}

To estimate the last term ($i\mu\Big\langle
w,\frac{U^{\prime\prime}\phi}{U+i\lambda}\Big\rangle_{(x_\nu,1)}$) on the right-hand-side 
of \eqref{eq:106}, we first write
\begin{displaymath}
    \Big|\Big\langle w,U^{\prime\prime}\frac{\phi}{U+i\lambda}\Big\rangle_{L^2(x_\nu,1)}\Big|\leq\Big|\Big\langle
    w,U^{\prime\prime}\frac{\phi-\phi(x_\nu)}{U+i\lambda}\Big\rangle_{L^2(x_\nu,1)}\Big| +\Big|\Big\langle
    w,U^{\prime\prime}\frac{\phi(x_\nu)}{U+i\lambda}\Big\rangle_{L^2(x_\nu,1)}\Big| \,.
  \end{displaymath}
We then use \eqref{eq:110}, \eqref{eq:108} and
    \cite[Eq. (4.38)]{almog2019stability}, which reads, for $\nu \in [0,\nu_1]$, with $\nu_1 < U(0)$, 
    \begin{equation} 
\label{eq:113}
         \Big\|\frac{1}{U+i\lambda}\Big\|_2 \leq \check C_{\nu_1}  |\mu|^{-1/2}\,,
    \end{equation}
    to obtain that 
\begin{equation}
\label{eq:114}
  \Big|\Big\langle w,U^{\prime\prime}\frac{\phi(x_\nu)}{U+i\lambda}\Big\rangle_{L^2(x_\nu,1)}\Big|\leq
  C\, \|w\|_{L^2(x_\nu,1)}|\phi(x_\nu)|\Big\|\frac{1}{U+i\lambda}\Big\|_2 \leq \hat C\, |\mu|^{-1/2}d^{1/2}\|\phi^\prime\|_{L^2(x_\nu,1)}^2\,.
\end{equation}
Furthermore, we have, by \eqref{eq:97} and \eqref{eq:110}, 
\begin{displaymath}
  \Big|\Big\langle
    w,U^{\prime\prime}\, \frac{\phi-\phi(x_\nu)}{U+i\lambda}\Big\rangle_{L^2(x_\nu,1)}\Big| \leq C\, \|\phi^\prime\|_{L^2(x_\nu,1)}^2 \,.
\end{displaymath}
Substituting the above and \eqref{eq:114} together with \eqref{eq:109}, \eqref{eq:111}, and
  \eqref{eq:112} into \eqref{eq:106} yields that there exists $C>0$
  such that 
\begin{multline}
  \label{eq:115} 
  \|(U-\nu)w^\prime\|_{L^2(x_\nu,1)}^2 + \alpha^2\|\varphi\|_{L^2(x_\nu,1)}^2 \\ \leq C \,
  \Big([|\mu|^{1/2}d^{1/2}+|\mu|] \|\phi^\prime\|_{L^2(x_\nu,1)}^2 + 
 \Big[ \frac{|\phi(x_\nu)|}{d^{1/2}}+  N(v,\lambda) \Big] \|\phi^\prime\|_{L^2(x_\nu,1)}\Big)\,.
\end{multline}
As $|U(x)-\nu|\geq \frac 1 C |x-x_\nu|$ for all $x\in(x_\nu,1)$, we can
apply Hardy's inequality \eqref{eq:17} to $(U-\nu)w^\prime$ on $(x_\nu,1)$
to obtain
\begin{multline}
\label{eq:116} 
  \|w\|_{L^2(x_\nu,1)}^2 \leq   \widehat C\, \|(U-\nu)w^\prime\|_{L^2(x_\nu,1)}^2\\  \leq
 \widetilde C \,
  \Big(\big [|\mu|^{1/2}d^{1/2}+|\mu|\big ] \|\phi^\prime\|_{L^2(x_\nu,1)}^2 + 
 \big[ \frac{|\phi(x_\nu)|}{d^{1/2}}+  N(v,\lambda) \big] \|\phi^\prime\|_{L^2(x_\nu,1)}\Big)\,.
\end{multline}
 Continuing as in step 2 of the proof of \cite[Proposition
4.14]{almog2019stability} we write, using the definition of $w$ and
$\varphi$, 
\begin{equation}
\label{eq:117}
  \|\phi^\prime\|_{L^2(x_\nu,1)}\leq \|(U-\nu)w^\prime\|_{L^2(x_\nu,1)} + \|U^\prime w\|_{L^2(x_\nu,1)}+ C \,  d^{-1/2}|\phi(x_\nu)|\,,
\end{equation}
from which  we conclude with the aid of \eqref{eq:115} 
and \eqref{eq:116} that, for sufficiently small $\mu_A$,
\begin{equation}
\label{eq:118}
  \|\phi^\prime\|_{L^2(x_\nu,1)}\leq  C\left(  \frac{|\phi(x_\nu)|}{d^{1/2}}+  N(v,\lambda)\right) \,.
\end{equation}
 From \eqref{eq:118}  we can conclude \eqref{eq:102} with the aid of Poincar\'e's
inequality, the fact that $d \geq \frac 1C \nu$,  and \eqref{eq:89}.\\

{\em Step 3:  We prove that for any $A
  >0$, there exist $C$ and $\mu_A$ such that, for $\alpha^2 \leq A$, $|\mu|\leq
  \mu_A$,  and $\nu \in [0,\nu_1)$
\begin{equation}
\label{eq:119}
  |c_\parallel^\nu| + \|\phi\|_{H^1(0,x_\nu)}\leq  C\Big(\nu^{-1} \Big|\Big\langle\phi,\frac{v}{U+i\lambda}\Big\rangle\Big|^{1/2} + \nu^{-1/2}N(v,\lambda)\Big) \,,
\end{equation}
holds for any pair $(\phi,v)\in D(\A_{\lambda,\alpha})\times W^{1,p}(0,1)$ satisfying
\eqref{eq:47}.}\\ 

Here we need to obtain an estimate for $\|w\|_{L^2(0,x_\nu)}$, where we
recall from \eqref{defneww} that $ w:=(U-\nu)^{-1}\varphi$.

To this end we need an estimate for $w(\hat x_0)$ for some $\hat
x_0\in(x_\nu/2,x_\nu)$, to be determined at later stage.  Clearly, there
exists $ \hat x_1\in((1+x_\nu)/2,1)$ such that
\begin{displaymath}
 |\phi^\prime(\hat x_1)| \leq \frac{ \sqrt{2} }{d^{1/2}}\|\phi^\prime\|_{L^2(x_\nu,1)}\,.
\end{displaymath}
and 
\begin{displaymath}
   |\phi(\hat x_1)| \leq d^{1/2}\|\phi^\prime\|_{L^2(x_\nu,1)}\,.
\end{displaymath}
Furthermore, it holds for all $x\in(x_\nu/2,x_\nu)$ that
\begin{displaymath} 
   |\phi^\prime(x)| \leq   |\phi^\prime(\hat x_1)|  +
   \Big|\int_x^{\hat x_1}\phi^{\prime\prime}(t)\,dt\Big| \,.
\end{displaymath}
Consequently,  as
\begin{displaymath}
  |w(\hat x_0)|=\Big|\frac{\phi(\hat x_0)-\phi(x_\nu)}{U(\hat x_0)-\nu}\Big|\leq
  \frac{1}{|U(\hat x_0)-\nu|}\int_{\hat x_0}^{x_\nu} |\phi^\prime(x)|\,dx
\end{displaymath}
we may conclude that
\begin{displaymath}
  |w(\hat x_0)|\leq \frac{1}{|U(\hat x_0)-\nu|}\int_{\hat x_0}^{x_\nu} \Big[|\phi^\prime(\hat x_1)|  +
   \Big|\int_x^{\hat x_1}\phi^{\prime\prime}(t)\,dt\Big|\Big]\,dx\,.
\end{displaymath}
With the aid of  \eqref{eq:47} we then have
\begin{equation}
\label{eq:120}
 |w(\hat x_0)|\leq \frac{1}{|U(\hat x_0)-\nu|}\int_{\hat x_0}^{x_\nu} \Big(|\phi^\prime(\hat x_1)|  +
   \Big|\int_x^{\hat x_1}\Big[\frac{U^{\prime\prime}\phi-v}{U+i\lambda}+\alpha^2\phi\Big]\,dt\Big|\Big)\,dx \,.
\end{equation}
We now write
\begin{equation}\label{eq:121}
  \int_x^{\hat x_1}\frac{U^{\prime\prime}\phi}{U+i\lambda}\,dt
  = \phi(x_\nu)\int_x^{\hat x_1}\frac{U^{\prime\prime}}{U+i\lambda}\,dt +
  \int_x^{\hat x_1}\frac{U^{\prime\prime}[\phi-\phi(x_\nu)]}{U+i\lambda}\,dt\,.
\end{equation}
To facilitate the estimate of the integral appearing in the first term
in the right-hand-side of \eqref{eq:121}   we use an integration by parts to obtain 
\begin{displaymath}
  \int_x^{\hat x_1}\frac{U^{\prime\prime}}{U+i\lambda}\,dt 
= \left( \frac{U^{\prime\prime}}{U^\prime} \log (U +i\lambda)\right) \Big|_{\hat x_1}^{x} 
- \int_x^{\hat x_1} \left( \frac{U^{\prime\prime}}{U^\prime}\right)^\prime \log (U+i\lambda)\,dt \,.
\end{displaymath}

As $ 0 <\nu < \nu_1$, $U^{\prime\prime}/U^\prime$ and
$\left(\frac{U^{\prime\prime}}{U^\prime}\right)^\prime$ are uniformly
bounded in $(x_\nu/2,1)$ and in view of the inequality
\begin{displaymath}
   |\log (U+i\lambda)|\leq\log|x-x_v|^{-1}+C\,,
\end{displaymath}
  we observe that  the  $L^1$-norm of
  $\log (U(x) +i\lambda)$ is bounded and that  
  \begin{equation} \label{estlog}
   |\log(U+i\lambda)(\hat x_1)|  \leq  C ( |\log (d^2+\mu^2)| +1)\,.
   \end{equation}  
Hence, we have 
\begin{displaymath}
   \Big|\phi(x_\nu)\int_x^{\hat x_1}\frac{U^{\prime\prime}(t)}{U (t) +i \lambda}\,dt\Big| \leq C
   |\phi(x_\nu)|[1 +   |\log (d^2+\mu^2)|    + |\log(U+i\lambda)(\hat x)|  ] \,.
\end{displaymath}

For the second term in the right-hand-side of \eqref{eq:121},   we have by
\eqref{eq:97} 
\begin{displaymath}
\begin{array}{ll}
  \Big|\int_{x_\nu}^{\hat
    x_1}\frac{U^{\prime\prime}(t) [\phi(t)-\phi(x_\nu)]}{U(t)+i\lambda}\,dt\Big|& \leq C|\hat
  x_1-x_\nu|^{1/2}\Big[\int_{x_\nu}^{\hat
    x_1}\Big|\frac{\phi(t)-\phi(x_\nu)}{U(t)+i\lambda}\Big|^2\,dt\Big]^{1/2}\\ & 
\leq \widetilde C \, d^{1/2}\|\phi^\prime\|_{L^2(x_\nu,\hat x_1)}\,.  
\end{array}
\end{displaymath}
In a similar manner we obtain that 
\begin{displaymath}
  \Big|\int_x^{x_\nu}\frac{U^{\prime\prime}[\phi(t)-\phi(x_\nu)]}{U(t)+i\lambda}\,dt\Big|\leq
  C(x_\nu-x)^{1/2}\|\phi^\prime\|_{L^2(x,x_\nu)} \,.
\end{displaymath}
Consequently,
\begin{displaymath}
  \Big|\int_x^{\hat x_1}\frac{U^{\prime\prime}[\phi(t)-\phi(x_\nu)]}{U(t)+i\lambda}\,dt\Big|
  \leq C\, \big(\|\phi^\prime\|_{L^2(x_\nu,\hat x_1)}+(x_\nu-x)^{1/2}\|\phi^\prime\|_{L^2(x,x_\nu)}\big)\,.
\end{displaymath}
Hence, 
\begin{multline}
  \label{eq:122}
  \Big|\int_x^{\hat x_1}\frac{U^{\prime\prime}\phi}{U+i\lambda}\,dt\Big| \leq
  C\, \Big(\|\phi^\prime\|_{L^2(x_\nu,\hat
    x_1)}+(x_\nu-x)^{1/2}\|\phi^\prime\|_{L^2(x,x_\nu)} \\ +
  |\phi(x_\nu)|[1+   |\log (d^2+\mu^2)|  +|\log(U+i\lambda)(x)|]\Big) \,. 
\end{multline}
Next, we write
\begin{displaymath}
   \Big|\int_x^{\hat x_1}\phi(t)\,dt\Big|\leq C\, d^{1/2}\, \|\phi\|_{L^2(x_\nu,1)}+
  \Big|\int_x^{x_\nu}\phi(t)\,dt\Big| \,.
\end{displaymath}
Then, with the aid of  Poincar\'e's inequality we obtain
\begin{equation*} 
  \Big|\int_x^{\hat x_1}\phi(t)\,dt\Big|  \leq  C\, d\, \|\phi^\prime\|_{L^2(x_\nu,1)}+
  |\phi(x_\nu)|(x_\nu-x)+C(x_\nu-x)^{3/2}\|\phi^\prime\|_{L^2(x,x_\nu)}\,. 
\end{equation*}
Substituting the above, together with \eqref{eq:122}, into
\eqref{eq:120} yields
\begin{multline*}
    |w(\hat x_0)|\leq \frac{C(1+\alpha^2)}{|U(\hat x_0)-\nu|}\int_{\hat x_0}^{x_\nu} \Big[d^{-1/2}
    \|\phi^\prime\|_{L^2(x_\nu,1)}+\\ + (x_\nu-x)^{1/2}\|\phi^\prime\|_{L^2(x,x_\nu)} +
   |\phi(x_\nu)|[1+|\log(U+i\lambda)(x)| +  |\log (d^2+\mu^2) | ] \Big]\,dx\,.
\end{multline*}
We now write,  using \eqref{estlog}, 
\begin{displaymath}
\begin{array}{ll}
  \frac{1}{|U(\hat x_0)-\nu|}\int_{\hat x_0}^{x_\nu}|\log(U+i\lambda)(x)|\,dx
 & \leq\frac{C}{x_\nu-\hat{x}_0}\int_{\hat
    x_0}^{x_\nu}[1+|\log(x_\nu-x)|]\,dx\\
    &  \leq \widehat C[1+\log \left( |\hat x_0-x_\nu|^{-1}\right) ] \,.
    \end{array}
\end{displaymath}
Consequently, since $|(\hat x_0 -x_\nu)(U(\hat x)-\nu)^{-1}|$ is uniformly
bounded for $\hat x_0\in(x_\nu,x_\nu/2)$ and $\alpha^2 \leq A$, 
\begin{multline}
\label{eq:123}
 |w(\hat x_0)|\leq C[|\phi(x_\nu)|(1+\log|\hat x_0-x_\nu|^{-1} + |\log (d^2 +\mu^2) | )\\+(x_\nu-\hat
 x_0)^{1/2}\|\phi^\prime\|_{L^2(\hat x_0,x_\nu)} +
 d^{-1/2}\|\phi^\prime\|_{L^2(x_\nu,1)}]\,. 
\end{multline}
 We can now apply Hardy's inequality \eqref{eq:17} to $w-w(\hat
  x_0)$ on the interval $(\hat x_0,x_\nu)$ to obtain 
\begin{displaymath}
  \|w-w(\hat x_0)\|_{L^2(\hat x_0,x_\nu)}^2 \leq
  4 \, \|([x-x_\nu][w-w(\hat x_0)])^\prime\|_{L^2(\hat x_0,x_\nu)}^2\leq C(\nu_1)\|(U-\nu)w^\prime\|_{L^2(\hat x_0,x_\nu)}^2
\end{displaymath}
By \eqref{eq:123} and \eqref{eq:89} we then have
\begin{displaymath}
\begin{array}{ll}
   \|w\|_{L^2(\hat x_0,x_\nu)}&  \leq
   C\, \big(\|(U-\nu)w^\prime\|_{L^2(\hat x_0,x_\nu)}+d^{-1/2}\|\phi^\prime\|_{L^2(x_\nu,1)} \\ &\quad \quad  +[\|\phi^\prime\|_2^{1/2} N(v,\lambda)^\frac 12  +
|\mu|^{1/2}\|\phi^\prime\|_2](1+\log \left( |\hat x_0-x_\nu|^{-1}\right) +  |\log (d^2+\mu^2)| )\\
&\quad \quad  \quad \quad +
(x_\nu-\hat x_0)^{1/2}\|\phi^\prime\|_{L^2(\hat x_0,x_\nu)}\big)\,. 
\end{array}
\end{displaymath}
Note that $C$ is independent of $\hat x_0 \in (x_\nu/2, x_\nu)$.\\
On the other hand by Poincar\'e's inequality we have,
\begin{displaymath}
\|w-w(\hat x_0)\|_{L^2(0,\hat x_0)}^2  \leq C  \|  \, w^\prime \|_{L^2(0,\hat x_0)}^2 \,.
\end{displaymath}
 Observing
  that $\frac 12 x_{\nu_1}  \leq \hat x_0 \leq x_\nu \leq 1$ we obtain for all $x\in (0,\hat x_0)$  
\begin{displaymath}
U(x) -\nu \geq U(\hat x_0) -U(x_\nu) \geq  |U^\prime(\frac 12 x_{\nu_1})|\, |\hat x_0-x_\nu|\,.
\end{displaymath}
Consequently,
\begin{equation} 
\label{eq:2.116new}
   \|w-w(\hat x_0)\|_{L^2(0,\hat x_0)}^2 \leq C  (x_\nu-\hat{x}_0)^{-2} \|(U-\nu)w^\prime\|_{L^2(0,\hat x_0)}^2 \,,
\end{equation}
and hence, as $\hat x_0 \leq  x_\nu$, 
\begin{multline*}
    \|w\|_{L^2(0,x_\nu)} \leq
   C \big(
   (x_\nu-\hat{x}_0)^{-1}  \|(U-\nu)w^\prime\|_{L^2(0,x_\nu)}+d^{-1/2}\|\phi^\prime\|_{L^2(x_\nu,1)}+
    \\
  \quad +  [\|\phi^\prime\|_2^{1/2}N(v,\lambda)^\frac 12 +
|\mu|^{1/2}\|\phi^\prime\|_2](1+\log|\hat x_0-x_\nu|^{-1}+   |\log (d^2+\mu^2)|)\\ +
   (x_\nu-\hat x_0)^{1/2}\|\phi^\prime\|_{L^2(\hat x_0,x_\nu)}\big)\,. 
\end{multline*}
Continuing as in Step 2 of the proof of \cite[Proposition
4.14]{almog2019stability} we establish that
\begin{multline}
\label{eq:124}
    \|\phi^\prime\|_{L^2(0,x_\nu)} \leq C\big(
     (x_\nu-\hat{x}_0)^{-1}  \|(U-\nu)w^\prime\|_{L^2(0,x_\nu)} +d^{-1/2}\|\phi^\prime\|_{L^2(x_\nu,1)}+
    \\ 
 \quad  +  [\|\phi^\prime\|_2^{1/2}N(v,\lambda)^\frac 12 +| \mu|^{1/2}\|\phi^\prime\|_2](1+ \log\left( |\hat x_0-x_\nu|^{-1}\right)\\+   |\log (d^2+\mu^2)|)+
   (x_\nu-\hat x_0)^{1/2}\|\phi^\prime\|_{L^2(\hat x_0,x_\nu)}\big)\,. 
\end{multline}

Taking the inner product in $L^2(0,x_\nu)$ of \eqref{eq:105} with $w$
yields, as in \eqref{eq:106} (note that $\chi_d\equiv1$ in $(0,x_\nu)$)
\begin{multline} 
\label{eq:125}
  \|(U-\nu)w^\prime\|_{L^2(0,x_\nu)}^2 + \alpha^2\|\varphi\|_{L^2(0,x_\nu)}^2 =\Big\langle
  \varphi, \frac{v}{U+i\lambda}\Big\rangle_{L^2(0,x_\nu)} -\langle
  w,\phi(x_\nu)U^{\prime\prime} \rangle_{L^2(0,x_\nu)}\\ 
  -\alpha^2 \phi(x_\nu)\langle\varphi,1\rangle_{L^2(0,x_\nu)}   +i\mu\Big\langle w,\frac{U^{\prime\prime}\phi}{U+i\lambda}\Big\rangle_{L^2(0,x_\nu)} \,.
\end{multline}

 As in the proof of  (\ref{eq:109})
we obtain below  for the first term in the right-hand-side of \eqref{eq:125} 
\begin{equation}\label{eq:126} 
 \Big| \Big\langle
  \varphi, \frac{v}{U+i\lambda}\Big\rangle_{L^2(0,x_\nu)}\Big| \leq C\,   \|\phi^\prime\|_{L^2(0,x_\nu)}\,N(v,\lambda) \,.
\end{equation}
Indeed, as $\varphi^\prime\equiv\phi^\prime$ in $(0,x_\nu)$  and $\varphi(x_\nu)=0$, we get for $x\in (0,x_\nu)$
  \begin{displaymath}
    |\varphi(x)|\leq\|\varphi^\prime\|_{L^2(0,x_\nu)}(x_\nu-x)^{1/2}\leq
\|\phi^\prime\|_{L^2(0,x_\nu)}(1-x)^{1/2}\,,
  \end{displaymath}
from which we conclude that
\begin{equation}\label{eq:127}
  \Big| \Big\langle
  \varphi, \frac{v}{U+i\lambda}\Big\rangle_{L^2(0,x_\nu)}\Big| \leq
  \|\phi^\prime\|_{L^2(0,x_\nu)}\,\Big\|(1-x)^{1/2}\frac{v}{U+i\lambda}\Big\|_1 \,.
\end{equation}
In addition, we can write
\begin{displaymath}
  \Big| \Big\langle
  \varphi, \frac{v}{U+i\lambda}\Big\rangle_{L^2(0,x_\nu)}\Big| =  \Big| \Big\langle
  \phi-\phi(x_\nu), \frac{v}{U+i\lambda}\Big\rangle_{L^2(0,x_\nu)}\Big| = \Big| \Big\langle
\frac{  (\phi-\phi(x_\nu))}{(x-x_\nu)}, \frac{x-x_\nu}{U+i\lambda}v \Big\rangle_{L^2(0,x_\nu)}\Big|
\end{displaymath}
 and then use \eqref{hardyw1a} and \eqref{eq:95}  to obtain 
\begin{equation}\label{eq:128}
  \Big| \Big\langle
  \varphi, \frac{v}{U+i\lambda}\Big\rangle_{L^2(0,x_\nu)}\Big| \leq C\, \|\phi^\prime\|_2 \|v\|_2\,.
\end{equation}
Using the definition of $N(v,\lambda)$ in \eqref{eq:86} we can now conclude
\eqref{eq:126} from \eqref{eq:127} and \eqref{eq:128}.

By \eqref{eq:18} which reads in this case
\begin{equation}\label{eq:129}
\| w\|_{L^2(0,x_\nu)} \leq C \|
\phi^\prime\|_{L^2(0,x_\nu)} \,,
\end{equation}
we have for the second term on the right-hand-side of  \eqref{eq:125} that
\begin{displaymath}
  |\langle  w,\phi(x_\nu)U^{\prime\prime} \rangle_{L^2(0,x_\nu)}|\leq C\,|\phi(x_\nu)|  \|\phi^\prime\|_{L^2(0,x_\nu)}\,.
\end{displaymath}
For the third term we have, using the fact that $\alpha^2 \leq A\,$, 
\eqref{eq:104} and Poincar\'e's inequality,  
\begin{displaymath}
  |\alpha^2 \phi(x_\nu)\langle\varphi,1\rangle_{L^2(0,x_\nu)}|\leq
   C\, |\phi(x_\nu)|\,\big(\|\phi\|_{L^2(0,x_\nu)}+|\phi(x_\nu)|\big)\leq \widehat C\, |\phi(x_\nu)|\,\|\phi^\prime\|_{L^2(0,x_\nu)} \,.
\end{displaymath}
Finally, we obtain for the last term  of \eqref{eq:125}   using  \eqref{eq:95}, 
\eqref{hardyw1a} (as in the proof of \eqref{eq:97} but on the interval
$(0,x_\nu)$),  and \eqref{eq:129}
\begin{displaymath}
  \Big|\mu\Big\langle w,\frac{U^{\prime\prime}(\phi-\phi(x_\nu)) }{U+i\lambda}\Big\rangle_{L^2(0,x_\nu)}\Big|\leq
  C\, |\mu| \, \|\phi^\prime\|_{L^2(0,x_\nu)}^2 \,.
\end{displaymath} 
Hence, with the aid of \eqref{eq:113} we conclude, as in the
proof of \eqref{eq:114},
\begin{displaymath}
\begin{array}{ll}
  \Big|\mu\Big\langle w,\frac{U^{\prime\prime} \phi(x_\nu)}{U+i\lambda}\Big\rangle_{L^2(0,x_\nu)}\Big|& \leq
  C|\mu| \,  \| (U+i\lambda)^{-1} \|_{L^2(0,x_\nu)} \, |\phi(x_\nu)|\,   \|\phi^\prime\|_{L^2(0,x_\nu)} \\
  & \leq   \widehat  C \, |\mu|^\frac 12  \, |\phi(x_\nu)|  \|\phi^\prime\|_{L^2(0,x_\nu)}  
 \\
 & \leq \widehat  C\, \big( |\mu| \|\phi^\prime\|_{L^2(0,x_\nu)}^2 +  |\phi(x_\nu)|^2\big)\\
 & \leq  \widetilde  C\, \big( |\mu| \|\phi^\prime\|_{L^2(0,x_\nu)} +  |\phi(x_\nu)|\big)\|\phi^\prime\|_{L^2(0,x_\nu)}   \,.
  \end{array}
\end{displaymath}  
Combining the above starting from \eqref{eq:125} yields 
\begin{equation}
\label{eq:130}
\begin{array}{l} 
   \|(U-\nu)w^\prime\|_{L^2(0,x_\nu)}^2 + \alpha^2\|\varphi\|_{L^2(0,x_\nu)}^2\\
   \qquad \leq C \,
    \big( |\mu|\,\|\phi^\prime\|_{L^2(0,x_\nu)}^2+ 
 [ |\phi(x_\nu)|+ N(v,\lambda)]\|\phi^\prime\|_{L^2(0,x_\nu)}\big) \,.
 \end{array}
\end{equation}
Hence, by \eqref{eq:124} and \eqref{eq:89}, there exists
  $C>0$ such that for any $\hat x_0 \in (x_\nu/2,x_\nu)$ we have, with
  $\varepsilon= x_\nu-\hat x_0 $,   
\begin{displaymath} 
    \|\phi^\prime\|_2 \leq  C\, (N(v,\lambda) +[1+|\mu|^{1/2}+d^{-1/2}]\|\phi^\prime\|_{L^2(x_\nu,1)})
      + C [|\mu|^{1/2}(\varepsilon^{-1}+   |\log (d^2+\mu^2)|) + \varepsilon^{1/2}]\|\phi^\prime\|_2
\end{displaymath}
We can now choose $\hat x_0$ such that $\varepsilon = \inf \left( (\frac{1}{4C}
  )^2, x_{\nu_1} /4 \right)$. Then under the condition $|\mu|^{1/2}
(\varepsilon^{-1} + |\log (d^2+\mu^2)|) \leq \frac{1}{4C}$, which is valid for
$\mu_A$ which is small enough, we obtain
 \begin{equation}
\label{eq:131}
\|\phi^\prime\|_2\leq  3C\, \big(N(v,\lambda) +d^{-1/2}\|\phi^\prime\|_{L^2(x_\nu,1)}\big)\leq
\hat{C}\, \big(N(v,\lambda) +\nu^{-1/2}\|\phi^\prime\|_{L^2(x_\nu,1)}\big)  .
\end{equation} 
 Combining \eqref{eq:131}  with \eqref{eq:118}  yields, 
\begin{equation}
  \label{eq:132}
\|\phi^\prime\|_2\leq  C \, \Big(\frac{|\phi(x_\nu)|}{d}+ d^{-1/2}N(v,\lambda)\Big) \,,
\end{equation}
from which we can conclude a bound on $\phi$ in $H^1(0,x_\nu)$ as stated
in \eqref{eq:119}, upon use of Poincar\'e's inequality and
\eqref{eq:89}.\\
To complete the proof of \eqref{eq:119}, we need to estimate $c_\parallel^\nu$,
which is defined in \eqref{eq:88} by
\begin{equation*}
c_\parallel^\nu=\frac{\langle\phi-\phi(x_\nu),U-\nu\rangle_{L^2(0,x_\nu)}}{\|U-\nu\|_{L^2(0,x_\nu)}^2}\,.
\end{equation*}
We note  that  for $0 < \nu < \nu_1$ the denominator in the definition of $c_\parallel^\nu$  satisfies
\begin{equation}\label{eq:133}
\| (U-\nu)  \|_{L^2 (0,x_\nu)}^2 \geq  \| (U-\nu)   \|_{L^2 (x_{\nu_1/4},x_{\nu_1/2})}^2 \geq \frac{1}{C} \,.
\end{equation}
Hence,
\begin{equation}
\label{eq:134}
  |c_\parallel^\nu|\leq C \, 
  \Big\|\frac{\phi-\phi(x_\nu)}{U-\nu}\Big\|_{L^2(0,x_\nu)}
  \leq \widehat C \,\|\phi^\prime\|_{L^2(0,x_\nu)} \,,
\end{equation}
which together with \eqref{eq:132} yields
\eqref{eq:119}. \\
 
{\em Step 4:} {\it We prove that for any $A
  >0$, there exists $C$ and $\mu_A$ such that, for $\alpha^2 \leq A$,
  $|\mu|\leq \mu_A$ and $\nu \in (0,\nu_1)$, such that
\begin{equation}
\label{eq:135}
\|\phi-c_\parallel^\nu(U-\nu)\|_{1,2}\leq  C\nu^{-1/2}\Big(
   \Big|\Big\langle\phi,\frac{v}{U+i\lambda}\Big\rangle\Big|^{1/2} + N(v,\lambda)\Big)\,.
\end{equation}
holds for any pair $(\phi,v)\in D(\A_{\lambda,\alpha})\times W^{1,p}(0,1)$ satisfying
\eqref{eq:47}.}\\

Since \eqref{eq:12} remains valid if we replace $U$ by $U-\nu$, $1-x$ by
$x_\nu-x$ and $(0,1)$ by $(0,x_\nu)$ we may conclude  from \eqref{eq:12}
that there exists $C>0$ such that
\begin{equation}
\label{eq:136}
  \|(U-\nu)w^\prime\|_{L^2(0,x_\nu)}^2 \geq \frac 1 C \|\varphi_\perp^\nu\|_{L^2(0,x_\nu)}^2 \,,
\end{equation}
where, in the interval  $ (0,x_\nu)$, $\varphi_\perp^\nu$ is defined by
\begin{equation}\label{eq:137}
\varphi_\perp^\nu =\phi-\phi(x_\nu)-c_\parallel^\nu(U-\nu)  = \varphi - c_\parallel^\nu (U-\nu)
\end{equation} 
and $c_\parallel^\nu$ is defined in \eqref{eq:88}.  Note
that by construction
\begin{equation}\label{eq:138}
\langle\varphi_\perp^\nu,U-\nu\rangle_{(0,x_\nu)}=0\,.
\end{equation}
Furthermore, by \eqref{eq:138} and \eqref{eq:133}
\begin{equation}\label{eq:139}
\frac 1C |c_\parallel^\nu|^2 \leq |c_\parallel^\nu|^2 \| (U-\nu)\|_{L^2(0,x_\nu)}^2 \leq \| \varphi \|^2_{L^2(0,x_\nu)} \,.
\end{equation}
Substituting \eqref{eq:139} and \eqref{eq:136} into \eqref{eq:130}
(recall again that $\varphi^\prime\equiv\phi^\prime$ in $(0,x_\nu)$) yields, with the aid
of \eqref{eq:101} and \eqref{eq:137}, for a new constant $C$
\begin{equation}\label{eq:140}
\begin{array}{ll}
  \| \varphi_\perp^\nu  \|_{L^2(0,x_\nu)}^2+  \alpha^2|c_\parallel^\nu|^2  & \leq 
  C\Big( \Big|\Big\langle\phi,\frac{v}{U+i\lambda}\Big\rangle\Big| \\ & \quad \quad\quad  +
 \big[ |\phi(x_\nu)|  +N(v,\lambda)\big]\big(\| (\varphi^\nu_\perp)^\prime\|_{L^2(0,x_\nu)}+|c_\parallel^\nu|\big)\Big)\,.
 \end{array}
\end{equation}
Let $w_\perp^\nu:=(U-\nu)^{-1}\varphi_\perp^\nu$. 
As in \eqref{eq:26} we obtain that
\begin{equation}\label{eq:141}  
   \|(\varphi_\perp^\nu)^\prime\|_{L^2(0,x_\nu)}^2\leq C\, \Big( \|(U-\nu)(w_\perp^\nu)^\prime\|_{L^2(0,x_\nu)}^2+ \|\varphi_\perp^\nu\|_{L^2(0,x_\nu)}\|w_\perp^\nu\|_{L^2(0,x_\nu)}\Big) \,.
\end{equation}
By Hardy's inequality \eqref{eq:18}  applied to $w_\perp^\nu$,  and since $(w_\perp^\nu)^\prime=w^\prime$,  we
may conclude from \eqref{eq:141}, with 
the aid of \eqref{eq:136}, that
\begin{multline}
\label{eq:142}
  \|(\varphi_\perp^\nu)^\prime\|_{L^2(0,x_\nu)}\leq C(\|(U-\nu)w^\prime\|_{L^2(0,x_\nu)}+
  \|\varphi_\perp^\nu\|_{L^2(0,x_\nu)}) \\ \leq  \widehat C\, \|(U-\nu)w^\prime\|_{L^2(0,x_\nu)}\,.
\end{multline}
 We now rewrite \eqref{eq:130} with the aid of \eqref{eq:101} in
  the form
  \begin{displaymath}
    \|(U-\nu)w^\prime\|_{L^2(0,x_\nu)}^2 \leq C \,
    \Big(  \Big|\Big\langle\phi,\frac{v}{U+i\lambda}\Big\rangle\Big| + 
 [ |\phi(x_\nu)|+ N(v,\lambda)]\|\phi^\prime\|_{L^2(0,x_\nu)}\Big) \,.
  \end{displaymath}
Combining the above with \eqref{eq:142} and \eqref{eq:137} yields
\begin{equation}
\label{eq:143}
\begin{array}{ll}
  \|(\varphi_\perp^\nu)^\prime\|_{L^2(0,x_\nu)}^2 &  \leq 
  C\, \Big( \Big|\Big\langle\phi,\frac{v}{U+i\lambda}\Big\rangle\Big| \\ & +
 [ |\phi(x_\nu)|+  N(v,\lambda) ](\| (\varphi^\nu_\perp)^\prime \|_{L^2(0,x_\nu)}+|c_\parallel^\nu|)\Big)\,.
 \end{array}
\end{equation}
This yields by \eqref{eq:119} and \eqref{eq:89} that
\begin{equation}
  \label{eq:144}
 \|(\varphi_\perp^\nu)^\prime\|_{L^2(0,x_\nu)}^2\leq C \, \nu^{-1} \Big(
   \Big|\Big\langle\phi,\frac{v}{U+i\lambda}\Big\rangle\Big| + N(v,\lambda)^2\Big)\,.
\end{equation}
Clearly, by Poincar\'e's inequality
\begin{equation*}
 \|\phi-c_\parallel^\nu(U-\nu)\|_{L^2(0,x_\nu)}^2 \leq 2 (\|\varphi_\perp^\nu\|_{L^2(0,x_\nu)}^2
 +|\phi(x_\nu)|^2)\leq C (\|(\varphi_\perp^\nu)^\prime\|_{L^2(0,x_\nu)}^2+|\phi(x_\nu)|^2)\,,
\end{equation*}
which implies 
\begin{equation}\label{eq:145}
 \|\phi-c_\parallel^\nu(U-\nu)\|_{H^1(0,x_\nu)}^2 \leq C (\|(\varphi_\perp^\nu)^\prime\|_{L^2(0,x_\nu)}^2+|\phi(x_\nu)|^2)\,.
\end{equation}
Combining \eqref{eq:145} with \eqref{eq:119},  \eqref{eq:89}, and 
\eqref{eq:144} then yields
\begin{equation}
\label{eq:146}
  \|\phi-c_\parallel^\nu(U-\nu)\|_{H^1(0,x_\nu)} \leq C\nu^{-1/2}\Big(
   \Big|\Big\langle\phi,\frac{v}{U+i\lambda}\Big\rangle\Big|^{1/2} +N(v,\lambda)]\Big)\,.
\end{equation}
Next, we write with the aid of  \eqref{eq:102} and \eqref{eq:119}
(the bound on $|c^\nu_\parallel|$),  
\begin{equation}\label{eq:147}
  \|\phi-c_\parallel^\nu(U-\nu)\|_{H^1(x_\nu,1)} \leq
  \|\phi\|_{H^1(x_\nu,1)}+d^{1/2} |c_\parallel^\nu|  \leq
  C\Big(\nu^{-1/2}\Big|\Big\langle\phi,\frac{v}{U+i\lambda}\Big\rangle\Big|^{1/2}+
  N(v,\lambda)\Big) \,. 
\end{equation}
Combining \eqref{eq:147} with \eqref{eq:146} and \eqref{eq:89} yields, 
\begin{displaymath}
   \|\phi-c_\parallel^\nu(U-\nu)\|_{1,2} \leq C\nu^{-1/2}\Big(
   \Big|\Big\langle\phi,\frac{v}{U+i\lambda}\Big\rangle\Big|^{1/2} + N(v,\lambda)\Big)\,,
\end{displaymath}
verifying \eqref{eq:135}. \\

{\em Step 5: We prove that for any $0<\nu_1<U(0)$ and  $A >0$, there exists $\mu_A
>0$ and $C_A >0$ such that \eqref{eq:87} holds for $\alpha^2\leq A$ 
  and $|\mu|\leq\mu_A$.} \\

For $\nu \in(0,\nu_1) $, let $\zeta_\nu \in
C^\infty(\R_+,[0,1])$ satisfy
 \begin{equation}\label{eq:zetanu}
   \zeta_\nu(x)=
   \begin{cases}
     0 & x<\frac{x_\nu}{4} \\
     1  & x>\frac{x_\nu}{2} 
   \end{cases}
 \end{equation}
 and 
\begin{displaymath}
  |\zeta_\nu^\prime(x)|\leq C(\nu_1)\,,\, \forall \nu \in (0,\nu_1)\,,\, \forall x\in (0,1)  \,.
\end{displaymath}
  Let further 
  \begin{equation}
\label{eq:tildezetanu}
  \tilde{\zeta_\nu}=1-\zeta_\nu\,.
  \end{equation}
  We may now write, omitting the reference to $\nu$ for $\zeta_\nu$ and $\tilde \zeta_\nu$,
 \begin{equation}
\label{eq:148}
   \Big|\Big\langle\zeta\phi,\frac{v}{U+i\lambda}\Big\rangle\Big| \leq|\phi(x_\nu)|
   \Big|\Big\langle\zeta,\frac{v}{U+i\lambda}\Big\rangle\Big| +
   \Big|\Big\langle\zeta\big(\phi-\phi(x_\nu)\big),\frac{v}{U+i\lambda}\Big\rangle\Big| \,. 
 \end{equation}

For the first term on the right-hand-side  of \eqref{eq:148} we
begin by  writing that 
\begin{displaymath}
    \Big\langle\zeta,\frac{v}{U+i\lambda}\Big\rangle  = -   \Big\langle \Big( \frac{\zeta \bar
      v}{U^\prime}\Big)^\prime  , \log (U+i\lambda)\Big\rangle  + \frac{ \bar  v(1) \log
      (U(1)+i\lambda)}{U^\prime(1)}  \,,
 \end{displaymath}
 Then we observe that 
\begin{displaymath}
|\log (U(1)+i\lambda)|= |\log (U(1)- U(x_\nu)+i\mu)| \leq C (\log (d^{-1}+1)\,,
\end{displaymath}
and since $d \leq 1-x_{\nu_1}$ we obtain
\begin{displaymath}
|\log (U(1)+i\lambda)|= |\log (U(1)- U(x_\nu)+i\mu)| \leq \widehat C \,  (\log (d^{-1})\,.
\end{displaymath} 
We can then conclude that
\begin{displaymath}
  \Big|\Big\langle\zeta,\frac{v}{U+i\lambda}\Big\rangle\Big| \leq   \Big|\Big\langle\Big(\frac{\zeta\bar{v}}{U^\prime}\Big)^\prime,\log\,(U+i\lambda)\Big\rangle\Big| + C|v(1)|\log|d|^{-1}\Big]\,.
\end{displaymath}
In view of \eqref{eq:logp} we can use H\"older inequality
 and the fact that 
\begin{displaymath}
 \|v\|_p \leq  \|v\|_\infty\leq |v(1)|+\|v^\prime\|_p
\end{displaymath}
 to obtain, with the aid of \eqref{eq:logp}, that for any $p>1$, we
 have 
  \begin{displaymath}
  \Big|\Big\langle\Big(\frac{\zeta\bar{v}}{U^\prime}\Big)^\prime,\log\,(U+i\lambda)\Big\rangle\Big|   \leq    C\, ( |v(1)| + \|v^\prime\|_p)\,.
\end{displaymath}
Consequently, there exist $C>0$  such that
 \begin{equation}\label{eq:149}
 \Big|\Big\langle\zeta,\frac{v}{U+i\lambda}\Big\rangle\Big| \leq C\, \big(\|v^\prime\|_p+ |v(1)|\log|d|^{-1}\big)\,.
 \end{equation}

   For the second term on the right-hand-side  of
 \eqref{eq:148} we have, using \eqref{eq:137} and Hardy's
   inequality \eqref{eq:96}
 \begin{multline}
\label{eq:150}
 \Big|\Big\langle\zeta(\phi-\phi(x_\nu)),\frac{v}{U+i\lambda}\Big\rangle\Big| \leq
  |c_\parallel^\nu|  \Big|\Big\langle\zeta(U-\nu),\frac{v}{U+i\lambda}\Big\rangle\Big| +
  \Big|\Big\langle\zeta\varphi_\perp^\nu,\frac{v}{U+i\lambda}\Big\rangle\Big| \leq\\ \leq C  |c_\parallel^\nu|
  \,\|v\|_1+ \Big\|\frac{\varphi_\perp^\nu}{x-x_\nu}\Big\|_2\Big\|\frac{(x-x_\nu) \, v\, \zeta}{U+i\lambda}\Big\|_2\leq
\widehat C \, \big(|c_\parallel^\nu|\,\|v\|_1+\|(\varphi_\perp^\nu)^\prime\|_2\|v\|_2\big)\,.
 \end{multline}
Substituting \eqref{eq:149}
and \eqref{eq:150} into \eqref{eq:148} yields
\begin{multline}
  \label{eq:151}
\Big|\Big\langle\zeta\phi,\frac{v}{U+i\lambda}\Big\rangle\Big|
\leq C\Big[|\phi(x_\nu)|(\|v^\prime\|_p+ |v(1)|\log|d|^{-1}) \\ + |c_\parallel^\nu|\,\|v\|_1+\|(\varphi_\perp^\nu)^\prime\|_2\|v\|_2\Big]\,.
\end{multline}
As  $0<\nu <\nu_1< U(0)$  and ${\rm supp\,} {\tilde \zeta} \subset  [0, x_\nu/4]\,$, it holds that  $|\tilde{\zeta}(U+i\lambda)^{-1}| \leq C$
  and hence, 
 \begin{equation} \label{eq:152}
    \Big|\Big\langle\tilde{\zeta}\phi,\frac{v}{U+i\lambda}\Big\rangle\Big| \leq
    C\|\phi\|_\infty\|v\|_1\leq C\, \|\phi^\prime\|_2\|v\|_1\,.
 \end{equation}

 Combining \eqref{eq:152}  with \eqref{eq:151} and \eqref{eq:137} then yields
 \begin{equation*}
      \Big|\Big\langle\phi,\frac{v}{U+i\lambda}\Big\rangle\Big| \leq   C\,\big [|\phi(x_\nu)|\big(\|v^\prime\|_p+
      |v(1)|\log|d|^{-1}\big)+|c_\parallel^\nu|\,\|v\|_1+\|(\varphi_\perp^\nu)^\prime\|_2\|v\|_2\big]\,.
 \end{equation*}
With the aid of  \eqref{eq:89} we then obtain that
 \begin{equation}
\label{eq:153}
      \Big|\Big\langle\phi,\frac{v}{U+i\lambda}\Big\rangle\Big|   \leq C\, \big[\|v^\prime\|_p^2+
      |v(1)|^2\log^2|d|^{-1}+|c_\parallel^\nu|\,\|v\|_1+\|(\varphi_\perp^\nu)^\prime\|_2\|v\|_2\big] 
    \end{equation}
Substituting \eqref{eq:153}  into \eqref{eq:119} leads to
\begin{multline}
\label{eq:154}
 |c_\parallel^\nu|\leq C\big(\nu^{-1}[\|v^\prime\|_p+
      |v(1)|\log|d|^{-1}+\|v\|_1+\|(\varphi_\perp^\nu)^\prime\|_2^{1/2}\|v\|_2^{1/2}]\\ + \nu^{-1/2}N(v,\lambda)\big).
\end{multline}
Next, we substitute \eqref{eq:153} into \eqref{eq:135} to obtain, in
view of \eqref{eq:137} 
\begin{multline}
\label{eq:155} 
  \|(\varphi_\perp^\nu)^\prime\|_2\leq C\big(\nu^{-1/2}[\|v^\prime\|_p+
      |v(1)| \log \nu
  ^{-1}+|c_\parallel^\nu|^{1/2}\,\|v\|_1^{1/2}+\nu^{-1/2}\|v\|_2]\\+ \nu^{-1/2} N(v,\lambda)\big).
\end{multline}
Combining \eqref{eq:155} with \eqref{eq:154} yields
\begin{multline}
\label{eq:156}
 \nu^{1/2}\|(\varphi_\perp^\nu)^\prime\|_2+ \nu|c_\parallel^\nu|\leq C\big(\|v^\prime\|_p+
      |v(1)|\log \nu^{-1}+\nu^{-1}\|v\|_1 \\ +\nu^{-1/2}\|v\|_2+ N(v,\lambda)\big)\,.
\end{multline}
Substituting \eqref{eq:156} and \eqref{eq:89}  into \eqref{eq:153} leads to
\begin{multline}
\label{eq:157}
   |\phi(x_\nu)|\leq   \Big|\Big\langle\phi,\frac{v}{U+i\lambda}\Big\rangle\Big|^{1/2} \leq  \\ \leq C\big(\|v^\prime\|_p+
      |v(1)|\log \nu^{-1}+\nu^{-1}\|v\|_1+\nu^{-1/2}\|v\|_2+  N(v,\lambda)\big)\,.
\end{multline}
 On the other hand, given the fact that $\phi(1)=0$ and in view of
 \eqref{eq:137}, it holds  that 
 \begin{displaymath}
   |\phi(x)|=\Big|\int_x^1\phi^\prime(t)\,dt\Big|\leq|c_\parallel^\nu|(1-x)+\|(\varphi_\perp^\nu)^\prime\|_2(1-x)^{1/2} \,.
 \end{displaymath}
Consequently,
 \begin{equation}\label{eq:158}
       \Big|\Big\langle\phi,\frac{v}{U+i\lambda}\Big\rangle\Big| \leq 
       |c_\parallel^\nu| \Big\|(1-x)\, \frac{v}{U+i\lambda}\Big\|_1+
      \|(\varphi_\perp^\nu)^\prime\|_2 \Big\|(1-x)^{1/2}\, \frac{v}{U+i\lambda}\Big\|_1 \,.
 \end{equation}

Substituting \eqref{eq:158}  into 
\eqref{eq:119} yields
\begin{multline}
\label{eq:159}
    |c_\parallel^\nu| \leq \nu^{-2}\Big\|(1-x)\, \frac{v}{U+i\lambda}\Big\|_1+\\
    + \nu^{-1} \|(\varphi_\perp^\nu)^\prime\|_2^{1/2}\Big\|(1-x)^{1/2}\,
      \frac{v}{U+i\lambda}\Big\|_1^{1/2} +\nu^{-1/2}N(v,\lambda)\,.
\end{multline} 
Then, substituting \eqref{eq:158} into \eqref{eq:135} leads to
\begin{multline}
\label{eq:160}
   \|(\varphi_\perp^\nu)^\prime\|_2\leq   \nu^{-1/2}|c_\parallel^\nu|^{1/2}\Big\|(1-x)\, \frac{v}{U+i\lambda}\Big\|_1^{1/2}+\\
     +  \nu^{-1}\Big\|(1-x)^{1/2}\,
      \frac{v}{U+i\lambda}\Big\|_1+ \nu^{-1/2}N(v,\lambda)\,.
\end{multline}
Combining \eqref{eq:159} and \eqref{eq:160} we may conclude 
\begin{multline}
\label{eq:161}
  \nu^{1/2}\|(\varphi_\perp^\nu)^\prime\|_2+ \nu|c_\parallel^\nu|\leq C\,  \Big( \nu^{-1}\Big\|(1-x)\, \frac{v}{U+i\lambda}\Big\|_1^{1/2}+\\
     +  \nu^{-1/2}\Big\|(1-x)^{1/2}\frac{v}{U+i\lambda}\Big\|_1+ N(v,\lambda)\Big)\,.
\end{multline}
Combining \eqref{eq:161} with \eqref{eq:157} and \eqref{eq:86} 
yields \eqref{eq:87} for $\alpha^2\leq A$. \\

{\em Step 6: There exists $A_0 \geq 0$ and $\widehat C$ such that if $\alpha^2 \geq A_0$, $|\mu| \leq 1$, $\nu \in (0,\nu_1)$, then 
\begin{equation}
  \label{eq:162}
\|\phi\|_{H^1(0,1)}\leq \widehat C  \, N(v,\lambda)\,.
\end{equation} for any pair $(\phi,v)\in D(\A_{\lambda,\alpha})\times W^{1,p}(0,1)$ satisfying \eqref{eq:47}.} \\

We preliminarily observe that 
  \begin{displaymath}
    \delta_2 = \sup_{\nu \in (0,\nu_1)}\Big\|\zeta_\nu \frac{U^{\prime\prime}}{U^\prime}\Big\|_{1,\infty} < 
    +\infty  \,,
  \end{displaymath}
\begin{equation*}
\label{eq:163}
  \hat C_0 = \sup_{
    \begin{subarray}{c}
      |\mu|\leq 1  \\
      \nu \in (0,\nu_1)
    \end{subarray}}\|\log\,(U+i\lambda)\|_2 < +\infty\,,
\end{equation*}
and
\begin{equation*}
  \label{eq:164}
\hat C_1=\sup_{
    \begin{subarray}{c}
      |\mu|\leq 1  \\
      \nu \in (0,\nu_1)
    \end{subarray}} \Big\|\tilde{\zeta_\nu }\frac{U^{\prime\prime}}{U+i\lambda}\Big\|_\infty < +\infty  \,,
\end{equation*}
where $\zeta_\nu$ and $\tilde \zeta_\nu$ are defined in \eqref{eq:zetanu}-\eqref{eq:tildezetanu}.
\vspace{2ex}

Taking  (as in \eqref{eq:40} for the
case when $\alpha=0$)  the inner product of \eqref{eq:20} with $\phi$ yields for the real part
\begin{equation}
\label{eq:165}
  \|\phi^\prime\|_2^2 + \alpha^2\|\phi\|_2^2=  \Re\Big\langle\phi,\frac{v}{U+i\lambda}\Big\rangle 
 -  \Re\Big\langle \zeta U^{\prime\prime}\phi,\frac{\phi}{U+i\lambda}\Big\rangle - \Re\Big\langle
  \tilde{\zeta}U^{\prime\prime}\phi,\frac{\phi}{U+i\lambda}\Big\rangle \,. 
\end{equation}
For the first term in the right hand side, we can use \eqref{eq:628} to
obtain
\begin{displaymath}
| \Re\Big\langle\phi,\frac{v}{U+i\lambda}\Big\rangle |\nobreakspace\leq  C\, \|\phi^\prime\|_2 N(v,\lambda) \,.
\end{displaymath}
For the second term we apply Poincar\'e's inequality, the finiteness of
$\hat C_0$ and $\delta_2$,  and Sobolev embedding   
\begin{displaymath}
\| \phi\|_\infty^2 \leq 2 \|\phi \|_2 \|\phi^\prime\|_2\,,
\end{displaymath}
to conclude that for some new constant $\widehat C$
 \begin{equation*}\begin{array}{ll}
  \Big|\Big\langle \zeta U^{\prime\prime}\phi,\frac{\phi}{U+i\lambda}\Big\rangle\Big| 
 &  \leq  \Big|\Big\langle
  \Big(\frac{U^{\prime\prime}}{U^\prime}\zeta|\phi|^2\Big)^\prime,\log\, (U+i\lambda)\Big\rangle\Big| \\ &\leq 
  \|\log\,(U+i\lambda)\|_2\|\phi\|_\infty\Big(2\Big\|\zeta\frac{U^{\prime\prime}}{U^\prime}\Big\|_\infty
  \|\phi^\prime\|_2+
  \Big\|\Big(\zeta\frac{U^{\prime\prime}}{U^\prime}\Big)^\prime\Big\|_\infty\|\phi\|_2\Big) \\ &\leq 
 \widehat C \, \|\phi^\prime\|_2^{3/2}\|\phi\|_2^{1/2} \,.
  \end{array}
\end{equation*}

For the last term on the right-hand-side  of \eqref{eq:165} we have, 
\begin{displaymath}
  \Big|\Big\langle
  \tilde{\zeta}U^{\prime\prime}\phi,\frac{\phi}{U+i\lambda}\Big\rangle\Big|\leq
  \|\phi\|_2^2\Big\|\tilde{\zeta}\frac{U^{\prime\prime}}{U+i\lambda}\Big\|_\infty
  \leq \widehat C_1 \, \|\phi\|_2^2
\end{displaymath}
Consequently, 
\begin{displaymath}
  \|\phi^\prime\|_2^2 +\alpha^2\|\phi\|_2^2\leq \hat C \left( 
  \|\phi^\prime\|_2^{3/2}\|\phi\|_2^{1/2} + \|\phi\|_2^2+
  \|\phi^\prime\|_2N(v,\lambda)\right) \,.
\end{displaymath}
Using Young's inequality we obtain, for some $A_0\geq 0$ and $\widehat C >0$
\begin{equation}
\label{eq:166} 
  \frac{1}{2}\|\phi^\prime\|_2^2\leq \Big( A_0 -\alpha^2\Big)\|\phi\|_2^2 + \widehat  C\, N(v,\lambda)^2
\end{equation}
Hence, for $\alpha^2\geq A_0$,   \eqref{eq:162} follows immediately from the above
inequality in conjunction with Poincar\'e's inequality.\\

{\em Conclusion:} Observing that $\frac 1C \nu \leq d \leq C \nu $, the
proof of \eqref{eq:87} follows from \eqref{eq:119}, \eqref{eq:135}, and
\eqref{eq:162}.
\end{proof}

\subsection{The case $\Im \lambda<0$.}
\label{sec:case--nu-negative}
We now consider the case where $\Im\lambda$ is negative. Due to the the
non-invertibility of $\A_{0,0}$ the estimates of $\mathcal
A_{\lambda,\alpha}^{-1}$ become challenging in the limit $\Im\lambda\to0$ and
the bounds necessarily include negative powers of $\nu=\Im \lambda$.
\begin{proposition}
  Let $U\in C^3([0,1]) $ satisfy \eqref{eq:10}. Then there exist $C>0$
  such that for any $\alpha \geq 0$, and $(\phi,v)\in D(\A_{\lambda,\alpha})\times
  W^{1,p}(0,1)$ (where $\A_{\lambda,\alpha}$ is defined in
    \eqref{eq:2.1aa}) satisfying \eqref{eq:47}, we have for all
  $\nu<0$,
\begin{subequations}
  \label{eq:170}
  \begin{equation}
 \|\phi\|_{1,2} \leq  C \,  (1+|\nu|^{-1})  \,  \Big\|(1-x)^{1/2}\frac{v}{U+i\lambda}\Big\|_1\,,
\end{equation}
and, for $-1/2<\nu<0$
\begin{equation}
  \|\phi^\prime\|_{L^2(1-|\nu|^{1/2},1)}\leq
  C\, |\nu|^{-3/4} \Big\|(1-x)^{1/2}\frac{v}{U+i\lambda}\Big\|_1\,.
\end{equation}
\end{subequations}
Furthermore, it holds that for all $\nu < 0$ 
\begin{equation}
  \label{eq:171}
|\phi(x)|\leq C\, (1-x)^{1/2}[1+|\nu|^{-1/2}(1-x)^{1/2}] \Big|\Big\langle\phi,\frac{v}{U+i\lambda}\Big\rangle\Big|^{1/2} \,.
\end{equation}
\end{proposition}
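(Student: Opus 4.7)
\bigskip
\noindent\textit{Proof proposal.}
The key structural observation is that for $\nu = \Im\lambda < 0$, one has $U(x) - \nu \geq |\nu| > 0$ uniformly on $[0,1]$, so $|U+i\lambda| \geq U-\nu \geq |\nu|$ and in particular $g:=v/(U+i\lambda)$ is well-defined. Moreover, the potential $U''/(U+i\lambda)$ satisfies $\Re\big(U''/(U+i\lambda)\big) = U''(U-\nu)/|U+i\lambda|^2 \leq 0$ since $U''<0$ and $U-\nu>0$; this favorable sign is what drives all three estimates. I would begin by rewriting $\mathcal A_{\lambda,\alpha}\phi=v$ in the form $-\phi''+\alpha^2\phi + \frac{U''}{U+i\lambda}\phi = g$ and, after taking the $L^2$-pairing with $\phi$ and integrating by parts using $\phi'(0)=\phi(1)=0$, extract the identity
\begin{equation*}
\|\phi'\|_2^2 + \alpha^2\|\phi\|_2^2 + \int_0^1 U''\frac{(U-\nu)|\phi|^2}{|U+i\lambda|^2}\,dx = \Re\langle\phi,g\rangle\,.
\end{equation*}
The integral term is non-positive, so $\|\phi'\|_2^2 \leq |\langle\phi,g\rangle| + (C/|\nu|)\|\phi\|_2^2$ using $(U-\nu)/|U+i\lambda|^2 \leq 1/|\nu|$. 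Combining with $|\phi(x)| \leq (1-x)^{1/2}\|\phi'\|_{L^2(x,1)}$ (from $\phi(1)=0$ and Cauchy--Schwarz) yields $\|\phi\|_2^2 \leq \tfrac12\|\phi'\|_2^2$ and then, via $|\langle\phi,g\rangle|\leq \|\phi'\|_2\,\|(1-x)^{1/2}g\|_1$, establishes \eqref{eq:171} after noting that the bracket $[1+|\nu|^{-1/2}(1-x)^{1/2}]$ is always $\geq 1$.

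To obtain \eqref{eq:170}a I would split the interval at $x_\star := 1-|\nu|^{1/2}$. On the \emph{interior} part $(0,x_\star)$ one has $U(x) \geq U(0)|\nu|^{1/2}$ by concavity, so $|U+i\lambda|\geq U(0)|\nu|^{1/2}$ and therefore the potential $U''/(U+i\lambda)$ is only of size $|\nu|^{-1/2}$. Standard elliptic estimates for $-\phi''+\alpha^2\phi = g - V\phi$ with bounded potential (combined with the boundary datum $\phi(x_\star)$ controlled by \eqref{eq:171}) give a clean $H^1$ bound on this piece in terms of $\|(1-x)^{1/2}g\|_1$ with no blow-up worse than $|\nu|^{-1/2}$. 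On the \emph{near-endpoint} piece $(x_\star,1)$, I would apply the identity above to $\chi\phi$ with a smooth cutoff $\chi$ supported there; the commutator terms $\chi'\phi'$ and $\chi''\phi$ contribute boundary fluxes at $x\sim x_\star$ controlled by \eqref{eq:171}, while the potential contribution is bounded by $|\nu|^{-1}\|\phi\|_{L^2(x_\star,1)}^2 \leq |\nu|^{-1}\cdot |\nu|^{1/2}\|\phi\|_{L^\infty(x_\star,1)}^2$. Since $\|\phi\|_{L^\infty(x_\star,1)} \leq |\nu|^{1/4}\|\phi'\|_{L^2(x_\star,1)}$, the bookkeeping produces precisely the factor $|\nu|^{-3/4}$ stated in \eqref{eq:170}b, and summing the two pieces delivers \eqref{eq:170}a with the factor $(1+|\nu|^{-1})$.

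The main obstacle is the closure argument for small $|\nu|$. The pairing $\|\phi'\|_2^2 \leq |\langle\phi,g\rangle| + (C/|\nu|)\|\phi\|_2^2$ cannot be absorbed by Poincaré alone once $|\nu|< C/2$, since the bad coefficient becomes $\geq 1$. This is exactly why one must localize: the interior piece forces $\phi$ to be small in $L^\infty$ on $(x_\star,1)$ (so the $|\nu|^{-1}\|\phi\|_2^2$ term remains tame there), while on $(0,x_\star)$ the potential shrinks to $O(|\nu|^{-1/2})$ and absorption becomes possible. Coordinating these two absorption arguments across the cutoff --- propagating the pointwise bound \eqref{eq:171} as a boundary value for the interior estimate, and using it to control the commutators in the near-endpoint estimate --- is the delicate step, and it is here that the precise exponents $|\nu|^{1/2}$, $|\nu|^{-3/4}$, $|\nu|^{-1}$ must be tracked carefully.
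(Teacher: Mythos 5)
Your starting identity is correct, but the pairing you chose is structurally the wrong one and it does not close, as you yourself acknowledge in the last paragraph. When you test the divided equation $-\phi''+\alpha^2\phi+\frac{U''}{U+i\lambda}\phi = g$ against $\phi$, the potential term ends up on the \emph{unhelpful} side of the identity: moving it to the right you get $\|\phi'\|_2^2 + \alpha^2\|\phi\|_2^2 = \Re\langle\phi,g\rangle + \int_0^1 |U''|\frac{(U-\nu)}{|U+i\lambda|^2}|\phi|^2\,dx$, and the second term is a genuine obstacle. Even sharpening your crude bound $(C/|\nu|)\|\phi\|_2^2$ via Hardy (using $|\phi|^2\le(1-x)\|\phi'\|_2^2$ and $U-\nu\gtrsim 1-x$) only gives $\int\frac{|U''||\phi|^2}{U-\nu}dx \le \frac{4\max|U''|}{U(0)}\|\phi'\|_2^2$, and the constant $\frac{4\max|U''|}{U(0)}$ is generically larger than $1$ (for Poiseuille it equals $8$), so absorption fails for \emph{all} $\nu<0$, not just small ones. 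The localization you sketch does not fix this: on the interior piece $(0,x_\star)$ the potential is still size $|\nu|^{-1/2}$ and Poincar\'e on an interval of length $\sim 1$ only gives an absorption constant of order $|\nu|^{-1/2}$, and passing the pointwise bound \eqref{eq:171} as a boundary datum is circular because you have not yet established \eqref{eq:171} (your derivation of it rests on the very absorption that fails).

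The ingredient you are missing is the choice of test function. The paper does not divide by $U+i\lambda$ and pair with $\phi$; it substitutes $w = \phi/(U-\nu)$, multiplies through by $(U-\nu)$, and writes the equation in divergence form $-((U-\nu)^2 w')' + \alpha^2(U-\nu)^2 w = \frac{(U-\nu)v + i\mu U''\phi}{U+i\lambda}$. Pairing this against $w$ (boundary terms vanish since $w(1)=0$ and $w'(0)=0$ by $U'(0)=0$), the real part of the $U''$-term becomes $|\mu|^2\langle\frac{\phi}{U-\nu},\frac{U''\phi}{|U+i\lambda|^2}\rangle$, which is \emph{strictly negative} because $U''<0$ and $U-\nu>0$. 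So instead of needing to absorb a remainder, one gets directly $\|(U-\nu)w'\|_2^2+\alpha^2\|\phi\|_2^2 \le \Re\langle\phi,\frac{v}{U+i\lambda}\rangle$, with no small-$|\nu|$ issue. The pointwise bound \eqref{eq:171} then falls out by writing $w(x)=-\int_x^1 w'$, Cauchy--Schwarz against the weight $(U-\nu)^{-2}$, and the lower bound $U(t)-\nu\gtrsim (1-t)+|\nu|$; and the $H^1$ estimates \eqref{eq:170} follow from $\phi'=(U-\nu)w'+U'w$ plus the bound $\|w\|_2\lesssim|\nu|^{-1/2}\|(U-\nu)w'\|_2$. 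So localization is never needed; the weighted (Witten-Laplacian-type) pairing replaces it.
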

\begin{proof}
  We begin by rewriting $\A_{\lambda,\alpha}\phi=v$ in the form
\begin{displaymath}
  -\big((U-\nu)^2w^\prime\big)^\prime+\alpha^2(U-\nu)^2w
    = v-i\mu\frac{v-U^{\prime\prime}\phi}{U+i\lambda}=\frac{(U-\nu)v+i\mu U^{\prime\prime}\phi}{U+i\lambda}\,,
\end{displaymath}
where $$w=\phi/(U-\nu)\,.
$$
Taking the inner product with $w$ on the left yields (see  \eqref{eq:106} and \eqref{eq:125})
\begin{displaymath}
   \|(U-\nu)w^\prime\|_2^2 + \alpha^2\|\phi\|_2^2 
=  \Big\langle\phi,\frac{v}{U+i\lambda}\Big\rangle +i\mu\Big\langle w,\frac{U^{\prime\prime}\phi}{U+i\lambda}\Big\rangle \,. 
\end{displaymath}
For the last term on the right-hand-side we have, since $U^{\prime\prime} <0$ and $U-\nu >0$,
\begin{displaymath}
  \Re\Big(i\mu\Big\langle w,\frac{U^{\prime\prime}\phi}{U+i\lambda}\Big\rangle\Big)
  =|\mu|^2\Big\langle\frac{\phi}{U-\nu},\frac{U^{\prime\prime}\phi}{|U+i\lambda|^2}\Big\rangle<0 \,. 
\end{displaymath}
Hence
\begin{equation}
  \label{eq:172}
   \|(U-\nu)w^\prime\|_2^2 + \alpha^2\|\phi\|_2^2 
\leq \Re   \Big\langle\phi,\frac{v}{U+i\lambda}\Big\rangle 
\end{equation}
We now write, using the fact that $U(x)-\nu  \geq C^{-1}(1-x-\nu)$,
\begin{multline}
\label{eq:173} 
  |w(x)|\leq
  \Big|\int_x^1 w^\prime(t) \,dt\Big|
  \leq \Big[\int_x^1\frac{1}{(U-\nu)^2}\,dt\Big]^{1/2}\,
\|(U-\nu)w^\prime\|_2 \\
  \leq C  \frac{(1-x)^{1/2}}{|\nu|^{1/2}(1-x-\nu)^{1/2}} \,\|(U-\nu)w^\prime\|_2 \,.
\end{multline}
Using this time the bound $U(x)-\nu \leq1-x-\nu$ together with
\eqref{eq:172} and \eqref{eq:173},  we obtain \eqref{eq:171}. 
Integrating \eqref{eq:173} squared over $[0,1]$ yields
\begin{equation}
\label{eq:174}
  \|w\|_2^2\leq C \, |\nu|^{-1}\, \|(U-\nu)w^\prime\|_2^2 \,.
\end{equation}
By writing $\phi= (U-\nu) \, w $, we obtain that
  \begin{equation*}
  \|\phi^\prime\|_2^2 \leq 2  \, \|(U-\nu)w^\prime\|_2^2 + 2\, 
  \|U^\prime w\|_2^2  \,,
\end{equation*}
which leads, together with \eqref{eq:174},  to
\begin{equation}
\label{eq:175}
\|\phi^\prime\|_2^2 \leq C (|\nu|^{-1} +1) \,  \|(U-\nu)w^\prime\|_2^2\,.
\end{equation}
We can now establish (\ref{eq:170}a) by combining \eqref{eq:175} with
\eqref{eq:172}  and the fact that $|\phi(x)|\leq\|\phi^\prime\|_2(1-x)^{1/2}$. 

To obtain (\ref{eq:170}b) we write  for $-1/2 < \nu < 0$, 
\begin{equation}
\label{eq:176}
  \|\phi^\prime\|_{L^2(1-|\nu|^{1/2},1)}^2
   \leq 2\, \|(U-\nu)w^\prime\|_{L^2(1-|\nu|^{1/2},1)}^2 +  
2\, \|U^\prime w\|_{L^2(1-|\nu|^{1/2},1)}^2
\end{equation}
By \eqref{eq:173} and the fact that for all $x\in[0,1]$
  \begin{displaymath}
    0\leq \frac{1-x}{|\nu|(1-x-\nu)} \leq \frac{1}{|\nu|}
  \end{displaymath}
 we obtain via integration over $(1-|\nu|^{1/2},1)$  that
\begin{displaymath}
  \|U^\prime w\|_{L^2(1-|\nu|^{1/2},1)}^2 \leq
  C\, \|w\|_{L^2(1-|\nu|^{1/2},1)}^2\leq \widehat C |\nu|^{-1/2}\|(U-\nu)w^\prime\|_2^2 \,.
\end{displaymath}
Substituting the above into 
\eqref{eq:176} yields
\begin{equation}
\label{eq:177} 
   \|\phi^\prime\|_{L^2(1-|\nu|^{1/2},1)}^2
  \leq C\, |\nu|^{-1/2}\|(U-\nu)w^\prime\|_2^2\,.
\end{equation}
By  \eqref{eq:172} and  (\ref{eq:170}a) we then obtain that for $-1/2 < \nu < 0$, 
\begin{displaymath}
   \|\phi^\prime\|_{L^2(1-|\nu|^{1/2},1)}^2
  \leq C\, |\nu|^{-1/2}\|\phi^\prime\|_2
  \,\Big\|(1-x)^{1/2}\frac{v}{U+i\lambda}\Big\|_1\leq \widehat C\, |\nu|^{-3/2}  \,\Big\|(1-x)^{1/2}\frac{v}{U+i\lambda}\Big\|_1^2\,,
\end{displaymath}
readily verifying  (\ref{eq:170}b).
\end{proof}

\subsection{The case  $\nu_1\leq\Im \lambda <U(0)- \kappa_0 | \Re \lambda| $, $|\Re \lambda|$
  small.}
\label{sec:case-near-quadratic}
In the following we establish estimates, similar to \eqref{eq:52}, for
$\nu_1\leq\nu<U(0)- \kappa_0 |\mu|$, for sufficiently small $U(0)-\nu_1$ and
$|\mu|$. 
 \begin{proposition}
   \label{prop:near-quadratic} 
 Let $U \in C^4([0,1])$ satisfy
   \eqref{eq:10} and $U^{\prime\prime\prime}(0)=0$.  Let further $p\geq2$.  There exist $
   0<\nu_1<U(0)$, $\mu_0>0$, $\kappa_0>0$, and $C>0$ such that for $\lambda $
   s.t.  $ 0<|\mu|\leq\mu_0$ and $ \nu_1<\nu<U(0)-\kappa_0|\mu|$, for all $\alpha \geq 0$
   we have, for all pair $(\phi,v)\in D(\A_{\lambda,\alpha})\times L^\infty(0,1)$ satisfying
   \eqref{eq:47},
\begin{subequations}
\label{eq:178}
    \begin{equation}
    \|\phi \|_{1,2} \leq \frac{C}{|\mu|^{\frac{1}{p}} \, x_\nu^{1/2-1/p}} \|v\|_p \,,
\end{equation}
and
  \begin{equation}
\|\phi \|_{1,2} \leq
C \frac{\log\frac{x_\nu}{|\mu|^{1/2}}}{x_\nu^{1/2}}\|v\|_\infty 
 \,.
\end{equation}
\end{subequations}
\end{proposition}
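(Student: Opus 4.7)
The plan is to adapt the energy-method estimates from Propositions \ref{lem:lambda-alpha-zero} and \ref{prop:linear}, but now making crucial use of the local expansion of $U$ at the endpoint $x=0$. Since $U\in C^4$, $U^{\prime\prime}(0)<0$ and $U^{\prime\prime\prime}(0)=0$, Taylor's formula gives
\begin{displaymath}
U(x)-\nu \;=\; \tfrac{1}{2}U^{\prime\prime}(0)(x^2-x_\nu^2)\,+\,O(x^4)\,,
\end{displaymath}
uniformly in $\nu$ close to $U(0)$, so that $|U(x)+i\lambda|$ is comparable to $\max(|x^2-x_\nu^2|,|\mu|)$ for $x$ in a neighbourhood of $0$ and uniformly bounded below for $x$ away from $0$. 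This quadratic model, valid precisely because $U^{\prime\prime\prime}(0)=0$ eliminates the would-be linear-in-$x_\nu$ perturbation, underlies every integral estimate below.

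I would take the inner product of $\A_{\lambda,\alpha}\phi=v$ with $\phi/(U+i\lambda)$. As in \eqref{eq:99}, the imaginary part combined with the positivity of $-U^{\prime\prime}$ yields
\begin{displaymath}
\Big\|\frac{\phi}{U+i\lambda}\Big\|_2^2 \;\leq\; \frac{C}{|\mu|}\Big|\Big\langle\phi,\frac{v}{U+i\lambda}\Big\rangle\Big|\,,
\end{displaymath}
while the real part, together with Poincar\'e's inequality (using $\phi(1)=0$), produces
\begin{displaymath}
\|\phi^\prime\|_2^2+\alpha^2\|\phi\|_2^2 \;\leq\; C\Big(\Big|\Big\langle\phi,\frac{v}{U+i\lambda}\Big\rangle\Big|+\Big\|\frac{\phi}{U+i\lambda}\Big\|_2\,\|\phi\|_2\Big)\,.
\end{displaymath}
Combining the two and absorbing via Young's inequality exactly as in \eqref{eq:101} reduces the problem to estimating $|\langle\phi,v/(U+i\lambda)\rangle|$ in terms of $\|v\|_p$ (or $\|v\|_\infty$) and $\|\phi^\prime\|_2$.

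To bound the pairing I would apply H\"older's inequality together with the refinement $|\phi(x)|\leq(1-x)^{1/2}\|\phi^\prime\|_2$ that follows from $\phi(1)=0$. The decisive computation then becomes an estimate of $\|(U+i\lambda)^{-1}\|_q$ for the conjugate exponent, split according to the local model into three zones: the core $|x-x_\nu|\lesssim |\mu|/x_\nu$ where $|U+i\lambda|\approx|\mu|$; the intermediate zone $|\mu|/x_\nu\lesssim |x-x_\nu|\lesssim x_\nu$ where $|U-\nu|\approx x_\nu|x-x_\nu|$; and the far zone $x\gtrsim x_\nu$ where $|U-\nu|\gtrsim x^2$. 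Summing the three contributions produces the weight that, once re-inserted, yields the $|\mu|^{-1/p}\,x_\nu^{-(1/2-1/p)}$ factor of \eqref{eq:178}a; in the $L^\infty$ case the intermediate zone contributes the logarithmic factor $\log(x_\nu/|\mu|^{1/2})$, giving \eqref{eq:178}b after combination with the $x_\nu^{-1/2}$ from the energy step.

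The principal obstacle will be the careful bookkeeping of these integrals in the generic regime $|\mu|\ll x_\nu^2\ll 1$, particularly the matching at the transition scales $|x-x_\nu|\sim |\mu|/x_\nu$ and $|x-x_\nu|\sim x_\nu$. The condition $\nu<U(0)-\kappa_0|\mu|$, for suitable $\kappa_0$, guarantees $x_\nu^2\gtrsim|\mu|$ and therefore a non-degenerate separation between the core and intermediate zones, which is what keeps the three-zone decomposition meaningful. Uniformity in $\alpha\geq 0$ is automatic since $\alpha^2\|\phi\|_2^2$ on the left-hand side of the energy identity is simply discarded, and the final $H^1$ bound follows from Poincar\'e's inequality applied to the closed estimate on $\|\phi^\prime\|_2$.
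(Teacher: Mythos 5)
Your proposal correctly identifies the role of $U^{\prime\prime\prime}(0)=0$ in making the quadratic model valid near $x=0$, and the three-zone estimate of $\|(U+i\lambda)^{-1}\|_q$ you sketch agrees with the paper's computations (cf.~\eqref{eq:225}, \eqref{eq:238}). However, the energy step as you set it up cannot produce the stated prefactors, and this is where the proof breaks down.

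The issue is that your reduction ``as in \eqref{eq:101}'' reproves precisely the Step-1 bound from Proposition~\ref{prop:linear}, namely $\|\phi^\prime\|_2^2\le C|\mu|^{-1}\bigl|\bigl\langle\phi,\tfrac{v}{U+i\lambda}\bigr\rangle\bigr|$. Feeding in H\"older, $|\phi(x)|\le(1-x)^{1/2}\|\phi^\prime\|_2$, and $\|(U+i\lambda)^{-1}\|_1\lesssim x_\nu^{-1}\log(x_\nu/|\mu|^{1/2})$ then yields $\|\phi^\prime\|_2\lesssim |\mu|^{-1}x_\nu^{-1}\log(x_\nu/|\mu|^{1/2})\|v\|_\infty$, which overshoots the target (\ref{eq:178}b) by the unbounded factor $(|\mu|x_\nu^{1/2})^{-1}$ in the regime $|\mu|\ll x_\nu^2\ll1$; the same loss appears in (\ref{eq:178}a). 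The $1/|\mu|$ prefactor in your energy estimate is simply too large, and no amount of sharpening the $L^q$ bound on $(U+i\lambda)^{-1}$ can repair it. The actual proof extracts a full power of $x_\nu$ from the imaginary-part identity by proving the sharper integral lower bound $\int_0^1\frac{|U^{\prime\prime}|}{(U-\nu)^2+\mu^2}\,dx\ge \frac{c}{|\mu|\,x_\nu}$ (not just $c/|\mu|$), leading to the pointwise control $|\phi(x_\nu)|^2\le Cx_\nu\bigl[\frac{|\mu|}{x_\nu^2}\|\phi^\prime\|_2^2 + \bigl|\bigl\langle\phi,\tfrac{v}{U+i\lambda}\bigr\rangle\bigr|\bigr]$ of \eqref{eq:179}. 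On the real-part side, it does not bound $\bigl|\Re\langle U^{\prime\prime}\phi,\frac{\phi}{U+i\lambda}\rangle\bigr|$ crudely by $\|\phi/(U+i\lambda)\|_2\|\phi\|_2$; instead it exploits $U^{\prime\prime}(U-\nu)>0$ on $(x_\nu,1)$ to discard that part with the right sign (see \eqref{eq:219}), retaining only the integral over $(0,x_\nu)$, which is then tamed using the pointwise estimate at $x_\nu$ together with $L^\infty$ control of $\phi^\prime$ on $(0,x_\nu)$ (Step~3). Finally there is a separate argument for $\alpha\gtrsim 1/x_\nu$ based on writing $\frac{1}{U+i\lambda}=-\frac{1}{2\kappa}\bigl[\frac{1}{\tilde U^{1/2}-\kappa}-\frac{1}{\tilde U^{1/2}+\kappa}\bigr]$ with $\kappa=\sqrt{i\lambda+U(0)}$; your discarding of $\alpha^2\|\phi\|_2^2$ treats $\alpha$ uniformly but then misses the cancellation the large-$\alpha$ step provides. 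None of these four ingredients appears in your outline, and the $\mu$-free step-by-step closure $\|\phi^\prime\|_2\le C\bigl(\bigl|\langle\phi,\tfrac{v}{U+i\lambda}\rangle\bigr|^{1/2}+x_\nu^{1/2}N_2(v,\lambda)\bigr)$ of \eqref{eq:218} is exactly what gets lost.
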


\begin{proof}
Since $0<\nu<U(0)$ and since by \eqref{eq:defxnu} $U(x_\nu)=\nu$, we must
have that $x_\nu\in(0,1)$.
 
{\em Step 1: We prove that there exist $C>0$, $\mu_0 >0$,  and 
$\nu_1 < U(0)$, such that, for all $\lambda =\mu + i \nu $ such that $
\nu_1<\nu<U(0)-|\mu|$ and $0<|\mu|\leq \mu_0$ it holds that
\begin{equation}
\label{eq:179}
   |\phi(x_\nu)|^2\leq Cx_\nu\Big[\frac{\mu}{x_\nu^2}\|\phi^\prime\|_2^2+ \Big|\Big\langle\phi,\frac{v}{U+i\lambda} \Big\rangle\Big|\Big] \,.
\end{equation}
for all pairs $(\phi,v)\in D(\A_{\lambda,\alpha})\times L^\infty(0,1)$ satisfying
\eqref{eq:47}.}\\ 
 
\noindent As in \eqref{eq:91} we write
\begin{equation}\label{eq:180}
\Big|  \Im\Big\langle\phi,\frac{v}{U-\nu+i\mu}\Big\rangle\Big| \geq |\mu| \,
   \Big\langle\frac{|U^{\prime\prime}|}{(U-\nu)^2+\mu^2},\frac 12 |\phi(x_\nu)|^2 - |\phi(x)-\phi(x_\nu)|^2 \Big\rangle\,.
\end{equation}
We begin by observing that for
some $C>0$ 
\begin{equation}
\label{eq:181}
  \Big|U(x)-\nu-\frac{1}{2}U^{\prime\prime}(0)[2x_\nu(x-x_\nu)+(x-x_\nu)^2]\Big|\leq
  C[(x-x_\nu)^4+x_\nu^3 | x-x_\nu| ]\,.
\end{equation}
 By \eqref{eq:181} we have 
\begin{equation}\label{eq:182}
\begin{array}{l}
\frac{1}{2}U^{\prime\prime}(0) |x^2-x_\nu^2| -C[(x-x_\nu)^4+x_\nu^3
  | x-x_\nu|]\\
\qquad   \qquad  \leq  |U-\nu| \leq \frac{1}{2}U^{\prime\prime}(0) |x^2-x_\nu^2| + C[(x-x_\nu)^4+x_\nu^3
  | x-x_\nu|] \,. 
  \end{array}
\end{equation}
From the right inequality  in  \eqref{eq:182}, as $(x-x_\nu)^3
+x_\nu^3 \leq   2 (x+x_\nu)$,  we conclude that there exists $C_1>0$ such that
\begin{equation}
\label{eq:183}
   |U-\nu| \leq C_1 |x^2-x_\nu^2|\,.
\end{equation}
From the left inequality in \eqref{eq:182}, there exist $a_0>0$ and
$\nu_1 < U(0)$ such that for all $\nu_1 < \nu < U(0)$ and $x +x_\nu \leq a_0
$ it holds that
\begin{equation}
\label{eq:184}
  \frac{1}{2}U^{\prime\prime}(0) |x^2-x_\nu^2| -C[(x-x_\nu)^4+x_\nu^3
  | x-x_\nu|] \geq  \frac{1}{4}U^{\prime\prime}(0) |x^2-x_\nu^2| \,,
\end{equation}
from which we can conclude that whenever $x +x_\nu \leq
a_0 $ we have  
\begin{equation}\label{eq:185}
|U-\nu| \geq \frac 1C  |x^2-x_\nu^2| \,.
\end{equation}
On the other hand we have that, for $\nu_1 < \nu < U(0)$ such that $x_{\nu_1} < \frac{a_0}{2}$,
  \begin{displaymath}
    \inf_{x\geq a_0-x_\nu}\frac{|U-\nu|}{x^2-x_\nu^2}\geq
    \frac{1}{a_0}\,\inf_{x\geq a_0-x_\nu}\frac{|U-\nu|}{x-x_\nu}\geq
    \frac{|U^\prime(a_0-x_{\nu_1})|}{a_0}>0 \,.
  \end{displaymath}
Combining the above with \eqref{eq:185} and \eqref{eq:183} yields
the existence of $0< \nu_1< U(0)$ and $C>0$ for which
\begin{equation}
\label{eq:186}
  \frac{1}{C}(x^2-x_\nu^2)^2\leq (U(x) -\nu)^2\leq C(x^2-x_\nu^2)^2\,,
\end{equation}
for all $x\in[0,1]$ and $\nu_1 < \nu < U(0)$. 

From \eqref{eq:186} we can get that
\begin{displaymath}
  \int_0^1\frac{|U^{\prime\prime}|}{(U-\nu)^2+\mu^2}\,dx \geq
  \frac 1C \int_0^1\frac{1}{(x-x_\nu)^2(x+x_\nu)^2+\mu^2}\,dx \,.
\end{displaymath}
As
\begin{displaymath}
 \sup_{x\in[1,\infty)} \frac{x^4}{(x-x_\nu)^2(x+x_\nu)^2+\mu^2}  \leq  \frac{1}{(1-x_{\nu_1}^2)^2}\,,
\end{displaymath}
we obtain that 
\begin{displaymath}
\begin{array}{ll}
  \int_0^1\frac{dx}{(x-x_\nu)^2(x+x_\nu)^2+\mu^2}& \geq
  \int_0^\infty\frac{dx}{(x-x_\nu)^2(x+x_\nu)^2+\mu^2}-  \frac{1}{(1-x_{\nu_1}^2)^2}\, \int_1^\infty\frac{dx}{x^4}\\
   &  =  \int_0^\infty\frac{dx}{(x-x_\nu)^2(x+x_\nu)^2+\mu^2} -C_2\,,
     \end{array}
\end{displaymath}
where $C_2 := \frac{1}{3 (1-x_{\nu_1}^2)^2}\  $.\\
Using the substitution $\xi=x/x_\nu$ it can be easily verified that
\begin{displaymath}
  \int_0^\infty\frac{dx}{(x-x_\nu)^2(x+x_\nu)^2+\mu^2}=\frac{1}{2x_\nu^3}\int_{-\infty}^\infty \frac{d\xi}{(\xi^2-1)^2+\hat a^2} \,,
\end{displaymath}
where $\hat a=|\mu|/x_\nu^2$. \\
Hence, there exists $C>0$ such that
\begin{equation}
\label{eq:187}
   \int_0^1\frac{|U^{\prime\prime}|}{(U-\nu)^2+\mu^2}\,dx \geq
   \frac 1 C \Big[\frac{1}{x_\nu^3}\int_{-\infty}^\infty\frac{dx}{(x^2-1)^2+\hat a^2} - 2 C_2\Big]\,, 
\end{equation}
 Making use of the residue theorem yields 
\begin{multline}
\label{eq:188}
  \int_{-\infty}^\infty\frac{dx}{(x^2-1)^2+\hat a^2}=2\pi i\Big[{\rm
    Res}\Big(\frac{1}{(z^2-1)^2+\hat a^2},\sqrt{1+i \hat a}\Big)\\\qquad \qquad
  \qquad  +  {\rm
    Res}\Big(\frac{1}{(z^2-1)^2+\hat a^2},-\sqrt{1-i\hat a}\Big)\Big]\\ 
    =\frac{\pi\Re(\sqrt{1+i\hat a})}{\hat a\sqrt{\hat a^2+1}} \,.
\end{multline}
Consequently, given that, by \eqref{eq:183} applied with $x=0$, it holds, for
sufficiently small $U(0)-\nu$, that $$\mu<U(0)-\nu\leq
C_1x_\nu^2\,,$$ hence $x_\nu^2\geq c_0 \mu$ with $c_0 = 1/C_1$ and finally 
that $\hat a\leq \frac{1}{c_0}$.\\
 Consequently, there exists $0<\nu_1<U(0)$
and $\mu_0>0$ such that for all $\nu_1 < \nu < U(0)-|\mu|$ and $0 < |\mu |\leq \mu_0$ we
have for some $\hat{C}>0$
\begin{equation}
  \label{eq:189} 
\int_0^1\frac{|U^{\prime\prime}|}{(U-\nu)^2+\mu^2}\,dx \geq \frac{1}{C}
\Big[\frac{1}{x_\nu^3} \frac{\pi\Re(\sqrt{1+i\hat a})}{\hat a\sqrt{\hat a^2+1}}
- 2 C_2 \Big]\geq  \frac{\hat{C}} { |\mu | x_\nu}\,.
\end{equation}
In a similar manner we can also show the existence of a positive $C$
such that
\begin{equation}
  \label{eq:190}
\int_0^1\frac{|U^{\prime\prime}|}{(U-\nu)^2+\mu^2}\,dx \leq \tilde{C}\int_0^\infty\frac{dx}{(x-x_\nu)^2(x+x_\nu)^2+\mu^2}\,dx \leq  \frac{C}{|\mu| x_\nu}\,.
\end{equation}

We proceed with the estimation of the right-hand-side of
\eqref{eq:180} by observing that, in view of \eqref{eq:186}
\begin{equation}
\label{eq:191}
     \Big\langle\frac{|U^{\prime\prime}|}{(U-\nu)^2+\mu^2},|\phi(x)-\phi(x_\nu)|^2
     \Big\rangle\leq C\Big\|\frac{\phi(x)-\phi(x_\nu)}{x^2-x_\nu^2}\Big\|_2^2
\end{equation}
Applying Hardy's inequality yields
\begin{equation}
\label{eq:192}
  \Big\|\frac{\phi(x)-\phi(x_\nu)}{x^2-x_\nu^2}\Big\|_2^2\leq
  C\Big\|\Big(\frac{\phi(x)-\phi(x_\nu)}{x+x_\nu}\Big)^\prime\Big\|_2^2 \,.
\end{equation}
 As
\begin{displaymath}
  \Big\|\Big(\frac{\phi(x)-\phi(x_\nu)}{x+x_\nu}\Big)^\prime\Big\|_2^2
  \leq 2\Big\|\frac{\phi^\prime}{x+x_\nu}\Big\|_2^2  
  + 2 \Big\|\frac{\phi-\phi(x_\nu)}{(x+x_\nu)^2}\Big\|_2^2 \,,
\end{displaymath}
and since by Hardy's inequality \eqref{eq:18}
\begin{displaymath}
   \Big\|\frac{\phi-\phi(x_\nu)}{(x+x_\nu)^2}\Big\|_2^2 \leq \frac{1}{x_\nu^2}
   \Big\|\frac{\phi-\phi(x_\nu)}{x-x_\nu}\Big\|_2^2 \leq
   \frac{C}{x_\nu^2}\|\phi^\prime\|_2^2 \,,
\end{displaymath}
we obtain that
\begin{displaymath}
   \Big\|\Big(\frac{\phi(x)-\phi(x_\nu)}{x+x_\nu}\Big)^\prime\Big\|_2^2\leq   \frac{C}{x_\nu^2}\|\phi^\prime\|_2^2\,.
\end{displaymath}
Substituting the above into \eqref{eq:192} and then into \eqref{eq:191} yields
\begin{equation}
\label{eq:193}
   \Big\langle\frac{|U^{\prime\prime}|}{(U-\nu)^2+\mu^2},|\phi(x)-\phi(x_\nu)|^2 \Big\rangle \leq
   \frac{C}{x_\nu^2}\|\phi^\prime\|_2^2\,.
\end{equation}
which, when substituted into \eqref{eq:180} together with
\eqref{eq:189}
leads to \eqref{eq:179}. \\

{\em Step 2: We prove that there exist $0<\nu_1<U(0)$, positive
$\mu_0$, $C$, and $C_0$, and $\kappa_0 \geq 1$ such that, for all $\alpha \geq
C_0/x_\nu$, $\nu_1<\nu<U(0)-\kappa_0|\mu|$, and $0<|\mu|\leq \mu_0$\,, the inequality
\begin{equation}
\label{eq:194}
\|\phi\|_{1,2}\leq  C\Big|\Big\langle\phi,\frac{v}{U+i\lambda}\Big\rangle\Big|^{1/2}\,,
\end{equation}
holds for any pair $(\phi,v)\in D(\A_{\lambda,\alpha})\times L^\infty(0,1)$ satisfying
\eqref{eq:47}.\\
}

Let
\begin{displaymath}
  \tilde{U}=U(0)-U \mbox{ and }  \kappa= \sqrt{(i\lambda+U(0))} \,.
\end{displaymath}
 Note that $|\kappa|>|\mu|^{1/2}>0\,$.  
Clearly,
\begin{displaymath}
  \frac{1}{U+i\lambda}=-\frac{1}{(\tilde{U}^{1/2}-\kappa)(\tilde{U}^{1/2}+\kappa)}=
  -\frac{1}{2\kappa}\Big[\frac{1}{\tilde{U}^{1/2}-\kappa} - \frac{1}{\tilde{U}^{1/2}+\kappa}\Big] \,.
\end{displaymath}
We now write
\begin{displaymath}
   \int_0^1\frac{U^{\prime\prime}}{U+i\lambda} \,dx
  =-\frac{1}{4\kappa}\int_0^1U^{\prime\prime}
  \Big[\frac{1}{\tilde{U}^{1/2}-\kappa} -
  \frac{1}{\tilde{U}^{1/2}+\kappa}\Big] \,dx    \,.
\end{displaymath}
 An integration by parts now yields
\begin{multline*}
  \int_0^1U^{\prime\prime}
  \Big[\frac{1}{\tilde{U}^{1/2}-\kappa} -
  \frac{1}{\tilde{U}^{1/2}+\kappa}\Big]  \,dx \\
=  \frac{1}{2}\frac{U^{\prime\prime}}{\tilde{U}^{-1/2}U^\prime}
  \log\frac{\tilde{U}^{1/2}-\kappa}{\tilde{U}^{1/2}+\kappa}\Big|_0^1  +
  \frac{1}{2}\int_0^1\Big(\frac{U^{\prime\prime}}{\tilde{U}^{-1/2}U^\prime}\Big)^\prime 
  \log\frac{\tilde{U}^{1/2}-\kappa}{\tilde{U}^{1/2}+\kappa}\,dx
\end{multline*}
Since $\max_{x\in[0,1]}(\tilde{U}^{-1/2}U^\prime)(x)<0$ and since
$\tilde{U}^{-1/2}U^\prime\in C^2([0,1])$ we can conclude that there
exist positive $C_1,C_2,C$ such that
\begin{displaymath}
  \Big|\int_0^1\frac{U^{\prime\prime}}{U+i\lambda} \,dx\Big| \leq C_1 +\frac{C_2}{\kappa}
\int_0^1[|\log\tilde{U}^{1/2}-\kappa |+ \log\tilde{U}^{1/2}+\kappa |]\,dx \leq
\frac{C}{\kappa} \,.
\end{displaymath}
Since by \eqref{eq:185} it holds that $\kappa\geq\sqrt{U(0)-\nu}\geq x_\nu/C$,
we conclude from the above that there exists $\widehat C>0$ such that
\begin{equation}
\label{eq:195}
  \Big|\int_0^1\frac{U^{\prime\prime}}{U+i\lambda} \,dx\Big| \leq \frac{\widehat C}{x_\nu} \,.
\end{equation}
Taking the inner product of \eqref{eq:47} with $\phi /(U+i\lambda)$ yields
for the real part
\begin{equation}
\label{eq:196}
  \Re\Big\langle\phi,\frac{v}{U+i\lambda}\Big\rangle= \|\phi^\prime\|_2^2 + \alpha^2\|\phi\|_2^2+
  \Re\Big\langle U^{\prime\prime}\phi,\frac{\phi}{U+i\lambda}\Big\rangle\,. 
\end{equation}
We then write
\begin{multline}
\label{eq:197}
  \|\phi^\prime\|_2^2 + \alpha^2\|\phi\|_2^2 \leq  \Re\Big\langle\phi,\frac{v}{U+i\lambda}\Big\rangle+
   \\[1.5ex] + \Big[\int_0^1\frac{
    |U^{\prime\prime}(x)|\, \big| \,|\phi(x)|^2-|\phi(x_\nu)|^2\,\big|}{|U+i\lambda|} \,dx +|\phi(x_\nu)|^2\,\bigg|\int_0^1\frac{
     U^{\prime\prime}(x)}{U+i\lambda} \,dx\bigg|  \,\Big]\,.
\end{multline}
By \eqref{eq:195} it holds that
\begin{equation}
\label{eq:198}
  |\phi(x_\nu)|^2\,\bigg|\int_0^1\frac{
     U^{\prime\prime}(x)}{U+i\lambda} \,dx\bigg| \leq
   \frac{\widehat C}{x_\nu}|\phi(x_\nu)|^2 \,.
\end{equation}
To estimate the first term on the right-hand-side of \eqref{eq:198} we
use the inequality
\begin{displaymath}
  \big| \,|\phi(x)|^2-|\phi(x_\nu)|^2\,\big| \leq [|\phi(x)|+|\phi(x_\nu)|]\,|\phi(x)-\phi(x_\nu)|
\end{displaymath}
together with Hardy's inequality and \eqref{eq:186} to obtain
that for some $C_0>0$ 
\begin{equation}
\label{eq:199} 
\begin{array}{ll}
  \int_0^1\frac{|U^{\prime\prime}(x)|\big|
    \,|\phi(x)|^2-|\phi(x_\nu)|^2\,\big|}{|U+i\lambda|} \,dx &  \leq \Big\| \frac{x^2
    -x_\nu^2} {U+i\lambda}\Big\|_\infty \,\Big\|\frac{|\phi|+|\phi(x_\nu)|}{x+x_\nu}\Big\|_2\Big\|\,  
   \Big\|\frac{\phi-\phi(x_\nu)}{x-x_\nu}\Big\|_2
    \\ & \leq C_0
   [ x_\nu^{-1}\|\phi\|_2 + x_\nu^{-1/2}|\phi(x_\nu)| ] \,\|\phi^\prime\|_2 \,. 
   \end{array}
\end{equation}
Note that to obtain the last inequality we need the estimate
  \begin{displaymath}
    \Big\|\frac{\phi(x_\nu)}{x+x_\nu}\Big\|_2\leq C\,  \frac{|\phi(x_\nu)|}{x_\nu^{1/2}}\,. 
  \end{displaymath}
Substituting  \eqref{eq:199} together with \eqref{eq:198} into
\eqref{eq:197} yields the existence of $\hat C_0 >0$ and $C>0$ such that, 
for $\alpha\geq \hat C_0/x_\nu\,$,  
\begin{displaymath}
  \|\phi^\prime\|_2^2 \leq  \frac{C}{x_\nu}|\phi(x_\nu)|^2 +2 \Big|\Big\langle\phi,\frac{v}{U+i\lambda}\Big\rangle\Big|\,.
\end{displaymath}
{Substituting  \eqref{eq:179} into the above yields
\begin{displaymath}
   \|\phi^\prime\|_2^2 \leq  \frac{|\mu|}{x_\nu^2}|\phi(x_\nu)|^2 +
   (2+Cx_\nu)\Big|\Big\langle\phi,\frac{v}{U+i\lambda}\Big\rangle\Big|\,. 
\end{displaymath}
For sufficiently large $\kappa_0$ (or equivalently for sufficiently small
$|\mu|/x_\nu^2$) we obtain \eqref{eq:194}. }

{\em Step 3: With  $N_2(v,\lambda)$  given by
\begin{equation}
  \label{eq:200}
N_2(v,\lambda)=\Big\|(1-x)^{1/2}\frac{v}{U+i\lambda}\Big\|_1\,,
\end{equation}
we prove that, 
for any $\hat C_0>0$, there exist $C>0$, $\mu_0>0$,  $\kappa_0>0$, and $\nu_1>0$
such that, for $\nu_1 < \nu < U(0)-\kappa_0|\mu|$, $|\alpha| \leq \hat C_0/x_\nu$, and $|\mu|\leq \mu_0$\,, we have 
\begin{equation}
\label{eq:201}
\|\phi^\prime\|_{L^2(0,x_\nu)}\leq
C\Big(\Big|\Big\langle\phi,\frac{v}{U+i\lambda}\Big\rangle\Big|^{1/2}+
x_\nu^{1/2}N_2(v,\lambda)\Big) \,,
\end{equation}
holds for any pair $(\phi,v)\in D(\A_{\lambda,\alpha})\times W^{1,p}(0,1)$ satisfying
\eqref{eq:47}. }\\

We seek an estimate for $ \|\phi^\prime\|_{L^2(0,x_\nu)}$, depending on $
\|\phi\|_{L^2(0,x_\nu)}$. We begin, to this end, by obtaining an $L^\infty$ estimate of $\phi^\prime$.
\paragraph{Estimate of $\|\phi^\prime\|_{L^\infty}$.}~\\
We separately consider  the subintervals $(0,x_\nu/2)$ and
$(x_\nu/2,x_\nu)$.
\paragraph{Estimate on $(0,x_\nu/2)$.}
To obtain an estimate for $\|\phi^\prime\|_{L^\infty(0,x_{\nu}/2)}$ we integrate
the relation $\mathcal A_{\lambda,\alpha}\phi=v$, to obtain for all $x\in
(0,x_\nu)$,
\begin{equation*}
  |\phi^\prime(x)|=\Big|\int_0^x\phi^{\prime\prime}(t)\,dt\Big| 
  \leq\Big|\int_0^x \left(\frac{U^{\prime\prime}\phi-v}{U+i\lambda}+\alpha^2\phi\right) \,dt\Big|\,,
\end{equation*}
 which leads to
\begin{equation}
\label{eq:202}
  |\phi^\prime(x) \leq \Big|\int_0^x \left(\frac{U^{\prime\prime}\phi}{U+i\lambda}\right)
  \,dt\Big| + \Big\| \frac{v}{U+i \lambda}\Big\|_{L^1(0,x)} +\alpha^2 \|\phi\|_{L^1(0,x)}
  \,. 
\end{equation}
We then use the following decomposition
\begin{equation}\label{eq:203}
  \int_0^x\frac{U^{\prime\prime}\phi}{U+i\lambda} \,dt=
  \int_0^x\frac{U^{\prime\prime}[\phi-\phi(x_\nu)]}{U+i\lambda} \,dt+  \phi(x_\nu)\int_0^x\frac{U^{\prime\prime}}{U+i\lambda} \,dt
\end{equation}
To estimate the first integral on the right-hand-side of
\eqref{eq:203} we need the following bound which follows from
\eqref{eq:181}
\begin{displaymath}
  \Big|\frac{U^{\prime\prime}}{U-\nu}-\frac{2}{x^2-x_\nu^2}\Big|  \leq
  \Big|\frac{U^{\prime\prime}(0)}{U-\nu}-\frac{2}{x^2-x_\nu^2}\Big| +
  \Big|\frac{U^{\prime\prime}(x)-U^{\prime\prime}(0) }{U-\nu}\Big| \leq
  C\frac{x_\nu}{x_\nu^2-x^2}\,. 
\end{displaymath}
Consequently
\begin{equation}\label{eq:204}
  \Big|\int_0^x\frac{U^{\prime\prime}[\phi-\phi(x_\nu)]}{U+i\lambda} \,dt\Big|  \leq  \int_0^x \frac{|U^{\prime\prime}|\, |\phi-\phi(x_\nu)|}{|U-\nu|} \,dt \leq 
  (2+Cx_\nu) \int_0^x\frac{|\phi(t)-\phi(x_\nu)|}{x_\nu^2-t^2} \,dt 
  \,,
\end{equation}
and hence, for sufficiently small $U(0)-\nu_1$, 
\begin{multline}
\label{eq:205}
  \Big|\int_0^x\frac{U^{\prime\prime}[\phi-\phi(x_\nu)]}{U+i\lambda}
  \,dt\Big|\leq(2+Cx_\nu) \,\log
  \frac{x_\nu+x}{x_\nu} \|\phi^\prime\|_{L^\infty(0,x)} \\ \leq (1+Cx_\nu)\log(9/4)\|\phi^\prime\|_{L^\infty(0,x)}\,.
\end{multline}
To obtain the last inequality we have used the fact that $x\in
(0,x_\nu/2)$.  Note that $\log(9/4)<1$. Hence the coefficient of
$|\phi^\prime\|_{ L^\infty(0,x)}$ is smaller than one for sufficiently small
$U(0)-\nu_1$.  Next, we write for the second integral in the
  right-hand-side of \eqref{eq:203}
\begin{equation*}
   \Big|\phi(x_\nu)\int_0^x\frac{U^{\prime\prime}}{U+i\lambda} \,dt\Big|
   \leq   |\phi(x_\nu)|  \Big| \int_0^x\frac{|U^{\prime\prime}|}{|U-\nu|} \,dt\Big| 
   \leq(2+Cx_\nu)|\phi(x_\nu)|\int_0^x\frac{dt}{x_\nu^2-t^2}\,,\end{equation*}
which leads to 
\begin{equation}
\label{eq:206}
   \Big|\phi(x_\nu)\int_0^x\frac{U^{\prime\prime}}{U+i\lambda} \,dt\Big|
   \leq   \hat C\,   \frac{|\phi(x_\nu)|}{x_\nu} \, \log \frac{x_\nu+x}{x_\nu-x} \,. 
\end{equation}
Substituting the above, together with \eqref{eq:206} and \eqref{eq:205}
into \eqref{eq:202} yields for all $t\in (0,x] \subset (0,x_\nu/2] $
\begin{displaymath}
   |\phi^\prime(t)|\leq \log \frac{9}{4} (1+ C x_\nu)  \|\phi^\prime\|_{L^\infty(0,t)}  + 
   C  \frac{|\phi(x_\nu)|}{x_\nu} \, \log \frac{x_\nu+t}{x_\nu-t} 
   +\Big\|\frac{v}{U+i\lambda}\Big\|_{L^1(0,x)} + \alpha^2\|\phi\|_{L^1(0,x)}\,.
\end{displaymath}
Taking the supremum over $t\in(0,x]$ yields 
\begin{multline*} 
   \|\phi^\prime(t)\|_{L^\infty(0,x)}  \leq \log \frac{9}{4} (1+ C x_\nu)  \|\phi^\prime\|_{L^\infty(0,x)}  + 
   C  \frac{|\phi(x_\nu)|}{x_\nu} \, \log \frac{x_\nu+x}{x_\nu-x} 
   \\+\Big\|\frac{v}{U+i\lambda}\Big\|_{L^1(0,x)} + \alpha^2\|\phi\|_{L^1(0,x)}\,.
\end{multline*}
Hence, for sufficiently small $U(0)-\nu_1$, we obtain for all $\nu_1 < \nu <
U(0)$ and all  $x\in[0,x_\nu/2]$,
\begin{equation}
  \label{eq:207} 
  \|\phi^\prime\|_{L^\infty(0,x)}\leq C\left(  \left(\log
  \frac{x_\nu+x}{x_\nu-x}\right)\, \frac{|\phi(x_\nu)|}{x_\nu} +
  \alpha^2\|\phi\|_{L^1(0,x)} +\Big\|\frac{v}{U+i\lambda}\Big\|_{L^1(0,x)}\right) \,.  
\end{equation}
\paragraph{Estimate on $(x_\nu/2,x_\nu)$.}
 To obtain a bound for $\|\phi^\prime\|_{L^\infty(x_\nu/2,x)}$ for
$x\in(x_\nu/2,x_\nu)$ we write
\begin{displaymath}
   |\phi^\prime(x)|\leq |\phi^\prime(x_\nu/2)| + \Big|\int_{x_\nu/2}^x\phi^{\prime\prime}(t)\,dt\Big|\,.
\end{displaymath}
The first term can be estimated by using \eqref{eq:209}. 
To obtain a bound for the second term, we follow the same path as in
\eqref{eq:204}. We get
\begin{equation}\label{eq:208}
  \Big|\int_{x_\nu/2}^x\frac{U^{\prime\prime}[\phi-\phi(x_\nu)]}{U+i\lambda} \,dt\Big|  \leq
  (2+Cx_\nu) \int_{x_\nu. }^x\frac{|\phi(t)-\phi(x_\nu)|}{x_\nu^2-t^2} \,dt  
  \,.
\end{equation}
As in \eqref{eq:205} we can conclude that
  \begin{displaymath}
  \begin{array}{ll}
     (2+Cx_\nu) \int_{x_\nu/2}^x\frac{|\phi(t)-\phi(x_\nu)|}{x_\nu^2-t^2} \,dt
    &  \leq  (2+Cx_\nu)\|\phi^\prime\|_{L^\infty(x_\nu/2,x)} \log
     \frac{x+x_\nu} {3x_\nu/2}\\ &  \leq   (2+Cx_\nu) \log (4/3) \|\phi^\prime\|_{L^\infty(x_\nu/2,x)}  \,.
     \end{array}
  \end{displaymath}
Hence the coefficient of $\|\phi^\prime\|_{L^\infty(x_\nu/2,x)}$ is again smaller
  than $1$ by a suitable choice of $\nu_1$. 
 Repeating the above other steps then yields, for $x\geq x_\nu/2$,  
 \begin{equation*}
\|\phi^\prime\|_{L^\infty(x_\nu/2,x)}\leq C \Big(  \frac{1}{x_\nu} |\phi(x_\nu)| \, \log
\frac{x_\nu+x}{x_\nu-x} +
\alpha^2\|\phi\|_{L^1(0,x)} +\Big\|\frac{v}{U+i\lambda}\Big\|_{L^1(0,x)}   + |\phi^\prime(x_\nu/2)|\Big)\,.  
\end{equation*}
which, combined with \eqref{eq:207} for $x=x_\nu/2$, finally gives
\begin{equation*}
\|\phi^\prime\|_{L^\infty(x_\nu/2,x)}\leq \hat C \Big(  \frac{1}{x_\nu} |\phi(x_\nu)| \, \log
\frac{x_\nu+x}{x_\nu-x} +
\alpha^2\|\phi\|_{L^1(0,x)} +\Big\|\frac{v}{U+i\lambda}\Big\|_{L^1(0,x)} \Big)\,.  
\end{equation*}
Combining the above and  \eqref{eq:207} lead to the existence of $C>0$ such that 
\begin{equation}
\label{eq:209}
\|\phi^\prime\|_{L^\infty(0,x)}\leq  C \Big(  \frac{1}{x_\nu} |\phi(x_\nu)| \, \log
\frac{x_\nu+x}{x_\nu-x} +
\alpha^2\|\phi\|_{L^1(0,x)} +\Big\|\frac{v}{U+i\lambda}\Big\|_{L^1(0,x)} \Big)\,,\,\forall x\in (0,x_\nu)\,.  
\end{equation}

\paragraph{ Estimate of $\|\phi^\prime\|_{L^2(0,x_\nu)}$.} \strut
Observing that
\begin{equation}
  \int_0^{x_\nu}\log^2\frac{x_\nu+x}{x_\nu-x}\,dx \leq 
  x_\nu\int_0^1\log^2\frac{1+t}{1-t}\,dt\leq Cx_\nu \,, 
\end{equation}
we may conclude from \eqref{eq:209} by integrating over $(0,x_\nu)$,  that
\begin{equation*} 
\|\phi^\prime\|_{L^2(0,x_\nu)}\leq C\Big( x_\nu^{-1/2} |\phi(x_\nu)|
+\alpha^2x_\nu\|\phi\|_{L^2(0,x_\nu)} + x_\nu^{1/2}\Big\|\frac{v}{U+i\lambda}\Big\|_{L^1(0,x_\nu)}\Big)\,. 
\end{equation*}
Note that
\begin{equation}\label{eq:210}
  \Big\|\frac{v}{U+i\lambda}\Big\|_{L^1(0,x_\nu)}\leq (1-x_\nu)^{-\frac 12} N_2(v,\lambda)\,,
\end{equation}
which leads to
\begin{equation}
\label{eq:211} 
\|\phi^\prime\|_{L^2(0,x_\nu)}\leq C\big( x_\nu^{-1/2} |\phi(x_\nu)|
+\alpha^2x_\nu\|\phi\|_{L^2(0,x_\nu)} + x_\nu^{1/2} N_2(v,\lambda) \big)\,. 
\end{equation}

\paragraph{ Estimate of $\|\phi\|_{L^2(0,x_\nu)}$.} \strut \\
Set
\begin{displaymath}
  \phi=\varphi + \phi(x_\nu)  \,,
\end{displaymath} 
and recall from \eqref{eq:105} that for all $x\in(0,x_\nu)$ 
\begin{multline*}
  -\Big((U-\nu)^2\Big(\frac{\varphi}{U-\nu}\Big)^\prime\Big)^\prime+\alpha^2(U-\nu)\varphi 
 \\ = \frac{(U-\nu)v}{U+i\lambda}
  -\alpha^2\phi(x_\nu)\big((U-\nu)-U^{\prime\prime}\big)
  +i\mu\frac{U^{\prime\prime}\phi}{U+i\lambda} \,.
\end{multline*}
Taking the inner product, in $L^2(0,x_\nu)$, with $w$ defined as in
Equation \eqref{defneww} by $ w:=(U-\nu)^{-1}\varphi$, yields as in
\eqref{eq:106}
\begin{multline} 
\label{eq:212}
  \|(U-\nu)w^\prime\|_{L^2(0,x_\nu)}^2 + \alpha^2\|\varphi\|_{L^2(0,x_\nu)}^2=\\ \qquad =\Big\langle
  \varphi, \frac{v}{U+i\lambda}\Big\rangle_{L^2(0,x_\nu)}   - \langle w,\phi(x_\nu)U^{\prime\prime}\rangle_{L^2(0,x_\nu)} 
  \\ -  \alpha^2\phi(x_\nu)\langle\varphi,1\rangle_{L^2(0,x_\nu)}   +i\mu\Big\langle w,\frac{U^{\prime\prime}\phi}{U+i\lambda}\Big\rangle_{L^2(0,x_\nu)} \,.
\end{multline}

We now turn to estimate the various terms on the right-hand-side  of
\eqref{eq:212}. \\
For the second term in \eqref{eq:212} we use the fact that by Hardy's
inequality and \eqref{eq:186}  it holds that
\begin{equation}\label{eq:213}
  \|w\|_{L^1(0,x_\nu)} \leq C \,  \Big\|\frac{\phi-\phi(x_\nu)}{x-x_\nu}\Big\|_{L^2(0,x_\nu)}
  \Big\|\frac{1}{x+x_\nu}\Big\|_{L^2(0,x_\nu)}\leq \frac{\hat C}{x_\nu^{1/2}}\, \|\phi^\prime\|_{L^2(0,x_\nu)}\,.
\end{equation}
Consequently,
\begin{equation}
\label{eq:214}
  |\langle w,\phi(x_\nu)U^{\prime\prime}\rangle_{L^2(0,x_\nu)}|\leq C\frac{|\phi(x_\nu)|}{x_\nu^{1/2}}\|\phi^\prime\| _{L^2(0,x_\nu)}\,.
\end{equation}
For the third term in \eqref{eq:212},  it follows from that 
\begin{equation}
\label{eq:215}
  \alpha^2|\phi(x_\nu)\langle\varphi,1\rangle | \leq  C\alpha^2x_\nu^{1/2}\left(\|\phi\|_{L^2(0,x_\nu)}
  +x_\nu^{1/2}|\phi(x_\nu)|\right)|\phi(x_\nu)|\,.
\end{equation}
Finally, for the last term in \eqref{eq:212}, proceeding as in the proof of
\eqref{eq:199},  we obtain by using
\eqref{eq:213} and  Hardy's inequality 
\begin{equation*}
\begin{array}{l}
 \Big| \Big\langle w,\frac{U^{\prime\prime}\phi}{U+i\lambda}\Big\rangle_{L^2(0,x_\nu)}\Big| \\\qquad  \leq
 |\phi(x_\nu)|\,\|w\| _{L^1(0,x_\nu)}\Big\|
 \frac{U^{\prime\prime}}{U+i\lambda}\Big\|_{L^\infty(0,x_\nu)}
  +  \|w\|_{L^2(0,x_\nu)}
 \Big\|\frac{U^{\prime\prime}(\phi-\phi(x_\nu))}{U+i\lambda}\Big\|_{L^2(0,x_\nu)} \\ \qquad \leq
 C\Big[\frac{|\phi(x_\nu)| }{|\mu|x_\nu^{1/2}}\|\phi^\prime\|_{L^2(0,x_\nu)}
 +\frac{\|\phi^\prime\|_{L^2(0,x_\nu)}^2}{x_\nu^2}\Big]\,.
 \end{array}
\end{equation*}
Substituting the above, together with \eqref{eq:215} and \eqref{eq:214}
into \eqref{eq:212} yields
\begin{multline*}
 \alpha^2\|\varphi\|_{L^2(0,x_\nu)}^2 \leq C\Big[\frac{|\phi(x_\nu)|}{x_\nu^{1/2}}\|\phi^\prime\|_{L^2(0,x_\nu)}
 +\frac{|\mu|}{x_\nu^2}\|\phi^\prime\|_{L^2(0,x_\nu)}^2\Big] \\ \qquad + C\alpha^2(x_\nu^{1/2}\|\phi\|_{L^2(0,x_\nu)}
  +x_\nu|\phi(x_\nu)|)|\phi(x_\nu)|\\ + \Big|\Big\langle
  \varphi, \frac{v}{U+i\lambda}\Big\rangle_{L^2(0,x_\nu)}\Big| \qquad \qquad \qquad\,,
\end{multline*}
from which we  easily  conclude that
\begin{multline}
\label{eq:216}
  \alpha^2\|\phi\|_{L^2(0,x_\nu)}^2 \leq C\Big[\frac{ |\phi(x_\nu)|}{x_\nu^{1/2}}\|\phi^\prime\|_{L^2(0,x_\nu)}
 +\frac{|\mu|}{x_\nu^2}\|\phi^\prime\|_{L^2(0,x_\nu)}^2+ \\
+  \alpha^2x_\nu|\phi(x_\nu)|^2\Big] + \Big|\Big\langle
  \varphi, \frac{v}{U+i\lambda}\Big\rangle_{L^2(0,x_\nu)}\Big| \,.    
\end{multline}
Substituting \eqref{eq:216} into \eqref{eq:211} we obtain
\begin{multline*}
  \|\phi^\prime\|_{L^2(0,x_\nu)}\leq C\Big(\frac{|\phi(x_\nu)|}{x_\nu^{1/2}}
+ \alpha^2 x_\nu^{3/2}|\phi(x_\nu)| + \alpha|\mu|^{1/2}\|\phi^\prime\|_{L^2(0,x_\nu)} \\
+ x_\nu^{1/2}N_2(v,\lambda)+\alpha x_\nu\Big|\Big\langle
  \varphi, \frac{v}{U+i\lambda}\Big\rangle_{L^2(0,x_\nu)}\Big|^{1/2}\Big) \,. 
\end{multline*}
As $|\alpha|\leq C_0/x_\nu\,$, we obtain for sufficiently large  $\kappa_0$ (implying
that both $|\mu|^{1/2} /x_\nu$ and $ \alpha|\mu|^{1/2}$ are sufficiently small) 
\begin{equation}
\label{eq:217}
  \|\phi^\prime\|_{L^2(0,x_\nu)}\leq \widehat C\Big(\frac{|\phi(x_\nu)|}{x_\nu^{1/2}}+
  x_\nu^{1/2}N_2(v,\lambda)+\Big|\Big\langle
  \varphi, \frac{v}{U+i\lambda}\Big\rangle_{L^2(0,x_\nu)}\Big|^{1/2}
  \Big)\,. 
\end{equation}
Note that, by \eqref{eq:210}, 
\begin{displaymath} 
  \Big|\Big\langle  \varphi, \frac{v}{U+i\lambda}\Big\rangle_{L^2(0,x_\nu)}\Big|\leq
  \|\phi-\phi(x_\nu)\|_{L^\infty(0,x_\nu)}\,\Big\|\frac{v}{U+i\lambda}\Big\|_{L^1(0,x_\nu)}\leq 2x_\nu^{1/2}\|\phi^\prime\|_{L^2(0,x_\nu)}N_2(v,\lambda)
\end{displaymath}
Combining the above with \eqref{eq:217} and \eqref{eq:179} yields
\begin{displaymath}
 \|\phi^\prime\|_{L^2(0,x_\nu)}\leq 
 C\Big[\frac{\mu^{1/2}}{x_\nu}\|\phi^\prime\|_2+x_\nu^{1/2}N_2(v,\lambda)+ \Big|\Big\langle
  \phi , \frac{v}{U+i\lambda}\Big\rangle\Big|^{1/2}
\Big] \,. 
\end{displaymath}
For sufficiently large $\kappa_0$ we easily obtain \eqref{eq:201}.\\

{\em Step 4:  We prove that, 
for any $C_0>0$, there exist $C>0$, $\mu_0>0$,  $\kappa_0>0$, and $\nu_1>0$
such that, for $\nu_1 < \nu < U(0)-\kappa_0|\mu|$, $|\alpha| \leq C_0/x_\nu$,   and $|\mu|\leq \mu_0$\,, we have 
\begin{equation}
\label{eq:218}
\|\phi^\prime\|_2\leq  
C\Big(\Big|\Big\langle\phi,\frac{v}{U+i\lambda}\Big\rangle\Big|^{1/2}+
x_\nu^{1/2}N_2(v,\lambda)\Big) \,,
\end{equation}
for any pair $(\phi,v)$ satisfying \eqref{eq:47}.}\\

Observing  that $U^{\prime\prime} (U-\nu)> 0$ on $(x_\nu,1))$ implies
\begin{displaymath}
   \Re\Big\langle U^{\prime\prime}\phi,\frac{\phi}{U+i\lambda}\Big\rangle_{L^2(x_\nu,1)}=
   \int_{x_\nu}^1\frac{|\phi|^2U^{\prime\prime}(U - \nu)}{|U+i\lambda|^2}\, dx  \geq 0\,,
\end{displaymath}
and hence we may conclude from \eqref{eq:196},   that
\begin{equation}
\label{eq:219}
   \|\phi^\prime\|_2^2 + \alpha^2\|\phi\|_2^2\leq    - \Re\Big\langle
   U^{\prime\prime}\phi,\frac{\phi}{U+i\lambda}\Big\rangle_{L^2(0,x_\nu)} +\Re\Big\langle\phi,\frac{v}{U+i\lambda}\Big\rangle\,. 
\end{equation}
To estimate $|\Re\Big\langle U^{\prime\prime}\phi,\frac{\phi}{U+i\lambda}\Big\rangle_{L^2(0,x_\nu)}
|$, we now proceed as in the proof of \eqref{eq:198}-\eqref{eq:199}
to obtain
   \begin{multline}\label{eq:220}
\Big|  \Re\Big\langle   U^{\prime\prime}\phi,\frac{\phi}{U+i\lambda}\Big\rangle_{L^2(0,x_\nu)}\Big|
  \leq  \frac{C}{x_\nu} |\phi(x_\nu)|^2 + \\C \left(  x_\nu^{-1}\|\phi\|_{L^2(0,x_\nu)}+ x_\nu^{-1/2}|\phi(x_\nu)| \right)  \,\|\phi^\prime\|_{L^2(0,x_\nu)}\,.
\end{multline}
Using Poincar\'e's inequality applied in $(0,x_\nu)$ to $(\phi-\phi(x_\nu))$ yields
 \begin{multline}\label{eq:221}
\Big|  \Re\Big\langle   U^{\prime\prime}\phi,\frac{\phi}{U+i\lambda}\Big\rangle_{L^2(0,x_\nu)}\Big|
  \leq  \frac{C}{x_\nu} |\phi(x_\nu)|^2 + \\ + \hat C \left(\|\phi^\prime\|_{L^2(0,x_\nu)}+ x_\nu^{-1/2}|\phi(x_\nu)| \right)  \,\|\phi^\prime\|_{L^2(0,x_\nu)}\,.
\end{multline}
Substituting \eqref{eq:221} together with  \ \eqref{eq:201} and \eqref{eq:179}
into \eqref{eq:219} yields
\begin{displaymath}
    \|\phi^\prime\|_2^2 + \alpha^2\|\phi\|_2^2\leq C\Big[
    \frac{|\mu|}{x_\nu^2}\|\phi^\prime\|_2^2+\Big|
    \Re\Big\langle\phi,\frac{v}{U+i\lambda}\Big\rangle| +x_\nu N_2(v,\lambda)^2\Big]\,.
\end{displaymath}
For sufficiently large $\kappa_0$ we readily obtain \eqref{eq:218}. \\

{\em Step 5: We prove \eqref{eq:178}. }\\

We begin by deriving two conclusions of  \eqref{eq:218} and \eqref{eq:194}
under the assumptions of the proposition.  Since  by \eqref{eq:158} 
\begin{displaymath}
     \Big|\Big\langle \phi,
  \frac{v}{U+i\lambda}\Big\rangle\Big|\leq \|\phi^\prime\|_2N_2(v,\lambda)\,,
\end{displaymath}
where $N_2$ is given by \eqref{eq:200}, we obtain by \eqref{eq:218} (for $|\alpha| \geq \frac{C_0}{x_\nu}$)
and \eqref{eq:194}   (for $|\alpha| \leq \frac{C_0}{x_\nu}$) that, under the assumption of the proposition,
\begin{equation}
\label{eq:222}
  \|\phi^\prime\|_2 \leq CN_2(v,\lambda)\,,
\end{equation}
which combined with \eqref{eq:179} yields, for bounded $|\mu|/x_\nu^2$,
\begin{equation}
  \label{eq:223}
 |\phi(x_\nu)|\leq Cx_\nu^{1/2}N_2(v,\lambda) \,.
\end{equation}

\paragraph{Proof of (\ref{eq:178}a).}

We begin by obtaining a bound on $\|(U+i\lambda)^{-1}\|_q$. 
By \eqref{eq:186}  we have  for $q>1$ and $\nu_1 < \nu < U(0)$
\begin{equation*}
   \Big\|\frac{1}{U+i\lambda}\Big\|_{L^q(0,1)}^q  \leq\frac{C}{|\mu|^{q-1/2}}\int_{\R_+} \frac{ds}{[(s^2-a^2)^2
     +1]^{q/2}}\,,
\end{equation*}
where $a=x_\nu|\mu|^{-1/2}$. \\
 We estimate the integral on the
right-hand-side in the following manner
\begin{multline}
\label{eq:224}
  \int_{\R_+} \frac{ds}{[(s^2-a^2)^2+1]^{q/2}}\leq \int_{\R_+}
  \frac{ds}{[a^2(s-a)^2+1]^{q/2}}\leq \\ \leq \int_\R \frac{d\tau}{[a^2\tau^2+1]^{q/2}}
  \leq \frac{1}{a}\int_\R \frac{dt}{[t^2+1]^{q/2}}\leq \frac{C}{a}\,.
\end{multline}
It follows that   for $q>1$ and $\nu_1 \leq \nu < U(0)$
\begin{equation}
\label{eq:225}
\Big\|\frac{1}{U+i\lambda}\Big\|_{L^q(0,1)}^q \leq\frac{C}{x_\nu |\mu|^{q-1}}\,.
    \end{equation}
Since for $\nu <\nu_1$ we may establish \eqref{eq:225}, using
\eqref{eq:95}, as in \cite{almog2019stability} we may conclude that
\eqref{eq:225} holds for any $0\leq \nu < U(0)$. 

We continue by estimating $\langle\phi,(U+i\lambda)^{-1}v\rangle$. To thus end we write
\begin{equation}
\label{eq:226}
   \Big|\Big\langle\phi,\frac{v}{U+i\lambda} \Big\rangle\Big| \leq
   |\phi(x_\nu)|\, \Big\|\frac{v}{U+i\lambda} \Big\|_1
   +\Big\|\frac{\phi-\phi(x_\nu)}{x-x_\nu} \Big\|_2 \Big\|\frac{(x-x_\nu)\,v }{U+i\lambda} \Big\|_2
\end{equation}
Suppose first that $v\in L^p(0,1)$ for some $p\in[2,\infty)$. Then,
\begin{displaymath}
   \Big\|\frac{v}{U+i\lambda}\Big\|_1\leq \|v\|_p \Big\|\frac{1}{U+i\lambda}\Big\|_q\,,
\end{displaymath}
where $q=p/(p-1)$. \\
Consequently, by \eqref{eq:225}
\begin{equation}
\label{eq:227}
  \Big\|\frac{v}{U+i\lambda}\Big\|_1\leq \frac{C }{|\mu|^{\frac{1}{p}} x_\nu^{1-\frac{1}{p} }} \|v\|_p \,.
\end{equation}
Next, we estimate the second term on the right-hand-side  of
\eqref{eq:226}. Consider first the case $p=2$. Here we write with the
aid of \eqref{eq:186}
\begin{equation}
\label{eq:228}
  \Big\|\frac{(x-x_\nu) v }{U+i\lambda} \Big\|_2\leq \|v\|_2
  \Big\|\frac{x-x_\nu}{U+i\lambda} \Big\|_\infty \leq \frac{C}{x_\nu} \|v\|_2\,.
\end{equation}
For $p>2$ we have
\begin{displaymath}
  \Big\|\frac{(x-x_\nu)v}{U+i\lambda} \Big\|_2\leq \|v\|_p\Big\|\frac{x-x_\nu}{U+i\lambda}\Big\|_{\tilde{q}}\,,
\end{displaymath}
where $\tilde{q}=2p/(p-2)$\,.\\
 As above we write 
\begin{equation}\label{eq:229}
  \Big\|\frac{x-x_\nu}{U+i\lambda}\Big\|_{\tilde{q}}^{\tilde{q}}\leq C 
  \int_0^1
  \frac{dx}{[x+x_\nu]^{\tilde{q}}} \leq \frac{\hat C}{x_\nu^{\tilde{q}-1}} \,.
\end{equation}
Consequently,
\begin{equation}
\label{eq:230}
 \Big\|\frac{(x-x_\nu)v }{U+i\lambda} \Big\|_2\leq
 \frac{C}{x_\nu^{\frac{1}{2}+\frac{1}{p}}}\|v\|_p \,,
\end{equation}
which is in accordance with \eqref{eq:228} for $p=2$. \\
Using \eqref{eq:226} once again, we deduce from \eqref{eq:227} and
\eqref{eq:230} that
\begin{displaymath}
   \Big|\Big\langle\phi,\frac{v}{U+i\lambda} \Big\rangle\Big| \leq C\, \left( \frac{1}{|\mu|^{\frac{1}{p}}x_\nu^{1-\frac{1}{p}}}  
   |\phi(x_\nu)|\, 
   +  \frac{1}{x_\nu^{\frac{1}{2}+\frac{1}{p}}} \Big\|\frac{\phi-\phi(x_\nu)}{x-x_\nu} \Big\|_2 \right)\,  \|v\|_p 
\end{displaymath}
 By Hardy's inequality we then have
\begin{equation}
  \label{eq:231}
   \Big|\Big\langle\phi,\frac{v}{U+i\lambda} \Big\rangle\Big| \leq C\, \left(  \frac{1}{|\mu|^{\frac{1}{p}}x_\nu^{1-\frac{1}{p}}}  
   |\phi(x_\nu)|\, 
   +  \frac{1}{x_\nu^{\frac{1}{2}+\frac{1}{p}}} \|\phi^\prime\|_2 \right)\,  \|v\|_p 
\end{equation}
By \eqref{eq:200} and \eqref{eq:227}, we have that
\begin{equation}\label{eq:232}
  N_2(v,\lambda)\leq\Big\|\frac{v}{U+i\lambda}\Big\|_1\leq \frac{C}{|\mu|^{\frac{1}{p}}x_\nu^{1-\frac{1}{p}}}   \|v\|_p \,.
\end{equation}
The above combined with \eqref{eq:222} yields
\begin{equation}
\label{eq:233}
  \|\phi^\prime\|_2 \leq  \frac{C}{|\mu|^{\frac{1}{p}}x_\nu^{1-\frac{1}{p}}}   \|v\|_p \,,
\end{equation}
which is weaker than (\ref{eq:178}a). \\
We obtain a better estimate in
the following manner.
By \eqref{eq:231}, \eqref{eq:232} and \eqref{eq:218} it holds that
\begin{displaymath}
  \|\phi^\prime\|_2\leq C\, \left( \bigg[\frac{1}{|\mu|^{\frac{1}{2p}}x_\nu^{\frac{1}{2}-\frac{1}{2p}}}  
   |\phi(x_\nu)|^{1/2}\, 
   +  \frac{1}{x_\nu^{\frac{1}{4}+\frac{1}{2p}}} \|\phi^\prime\|_2^{1/2}  \bigg]\|v\|_p^{1/2} + |\mu|^{-\frac{1}{p}}x_\nu^{-\frac{1}{2}+\frac{1}{p}} \|v\|_p \right)\,,
\end{displaymath}
from which we get
\begin{displaymath}
\begin{array}{ll}
  \|\phi^\prime\|_2 & \leq      C \frac{1}{x_\nu^{\frac{1}{4}+\frac{1}{2p}}
  }\,\|\phi^\prime\|_2^\frac 12  \|v\|_p^{\frac 12}   +   C\,
  \left(x_\nu^{-1/2} 
   |\phi(x_\nu)|\, 
   + |\mu|^{-\frac{1}{p}}x_\nu^{-\frac{1}{2}+\frac{1}{p}}  \|v\|_p \right)\\
   & \leq   \hat  C  |\mu|^{\frac{1}{p}} \, x_\nu^{-\frac 2p}  \|\phi^\prime\|_2 + \hat C   \left(x_\nu^{-1/2} 
   |\phi(x_\nu)|\, 
   + |\mu|^{-\frac{1}{p}}x_\nu^{-\frac{1}{2}+\frac{1}{p}}  \|v\|_p \right)\,.
   \end{array}
\end{displaymath}
 Using the fact that $|\mu|^{1/2}x_\nu^{-1}$
  can be assumed to be small for a suitable choice of $\kappa_0$, we can then conclude that
\begin{equation}
\label{eq:234}
  \|\phi^\prime\|_2\leq C\big(x_\nu^{-1/2}|\phi(x_\nu)|+  |\mu|^{-\frac{1}{p}}x_\nu^{-\frac{1}{2}+\frac{1}{p}}  \|v\|_p\big)\,.
\end{equation}
By \eqref{eq:179}  
and \eqref{eq:231} it holds that
\begin{displaymath}
  |\phi(x_\nu)|^2\leq Cx_\nu\bigg[\frac{\mu}{x_\nu^2}\|\phi^\prime\|_2^2+ \bigg(|\mu|^{-\frac{1}{p}}x_\nu^{-1+\frac{1}{p}} 
   |\phi(x_\nu)|\, 
   +  \frac{1}{x_\nu^{\frac{1}{2}+\frac{1}{p}}} \|\phi^\prime\|_2 \bigg)\,  \|v\|_p \bigg] \,,
\end{displaymath}
and hence we obtain that for any $\delta>0$ there exist $C_\delta>0$ and
$\kappa_0(\delta)$ such that, under the conditions of the proposition with
$\kappa_0=\kappa_0(\delta)$,
\begin{equation}\label{eq:235}
   |\phi(x_\nu)|\leq x_\nu^{1/2}\delta\|\phi^\prime\|_2+C_\delta\frac{x_\nu^{1/p}}{|\mu|^{1/p}}\|v\|_p \,.
\end{equation}
Substituting \eqref{eq:235} into \eqref{eq:234} yields
\begin{displaymath}
    \|\phi^\prime\|_2\leq C  |\mu|^{-\frac{1}{p}}x_\nu^{-\frac{1}{2}+\frac{1}{p}} \|v\|_p\,.
\end{displaymath}
As $\phi(1)=0$, we may use Poincar\'e's inequality to establish
(\ref{eq:178}a).

\paragraph{Proof of (\ref{eq:178}b).}

Suppose now that $v\in L^\infty(0,1)$. Then,
\begin{displaymath}
  \Big\|\frac{v}{U+i\lambda}\Big\|_1\leq \|v\|_\infty \, \Big\|\frac{1}{U+i\lambda}\Big\|_1\,.
\end{displaymath}
Then, we may use \eqref{eq:186} to obtain
\begin{equation}
\label{eq:236}
  \Big\|\frac{1}{U+i\lambda}\Big\|_1\leq C\int_0^1 \frac{dx}{[(x^2-x_\nu^2)^2
     +\mu^2]^{1/2}}\leq\frac{C}{|\mu|^{1/2}}\int_{\R_+} \frac{ds}{[(s^2-a^2)^2
     +1]^{1/2}}\,,
\end{equation}
Then, we write, using the fact that for sufficiently large $\kappa_0$ we
have $a \geq 2$, 
\begin{multline}
\label{eq:237}
  \int_{\R_+} \frac{ds}{[(s^2-a^2)^2
     +1]^{1/2}}\leq \int_0^{2a}\frac{ds}{[a^2(s-a)^2 
     +1]^{1/2}} \\ + \int_{2a}^\infty \frac{ds}{[(s-a)^4 
     +1]^{1/2}}\leq C \left(\frac{\log a }{a} \right ) \,.
\end{multline}
Combining the above yields for $\nu_1<\nu < U(0)$
\begin{equation}
\label{eq:238}
\Big\|\frac{1}{U+i\lambda}\Big\|_1 \leq C\frac{\log\frac{x_\nu}{|\mu|^{1/2}}}{x_\nu}\,.  
\end{equation} 
Note that by \eqref{eq:95} the above estimate holds for $\nu \leq \nu_1$ as
well (see \cite{almog2019stability}).  From \eqref{eq:238}, we deduce
immediately
\begin{equation}
\label{eq:239}
   x_\nu^{1/2}\Big\|\frac{v}{U+i\lambda}\Big\|_1\leq C\frac{\log\frac{x_\nu}{|\mu|^{1/2}}}{x_\nu^{1/2}}\|v\|_\infty\,.
\end{equation}

Next, we estimate the second term on the right-hand-side  of
\eqref{eq:226} in the case $p=\infty$. Using \eqref{eq:229} we obtain that
\begin{equation}\label{eq:240}
  \Big\|\frac{(x-x_\nu) v }{U+i\lambda} \Big\|_2\leq \|v\|_\infty 
  \Big\|\frac{x-x_\nu}{U+i\lambda} \Big\|_2 \leq \frac{C}{x_\nu^{1/2}} \|v\|_\infty
  \,.
\end{equation}
Combining \eqref{eq:240} with \eqref{eq:239}, \eqref{eq:194},
\eqref{eq:218}, and \eqref{eq:201} yields (\ref{eq:178}b).
Note that by
\eqref{eq:179} and \eqref{eq:226} we obtain that
\begin{equation}
  \label{eq:241}
|\phi(x_\nu)|\leq C\log\Big(\frac{x_\nu}{|\mu|^{1/2}}\Big)\|v\|_\infty
\end{equation}
\end{proof}
\subsection{The case $ U(0)-\kappa_0|\Re \lambda| \leq \Im \lambda \leq U(0)+ \kappa_0 |\Re \lambda|
  $.}
\label{sec:case--quadratic} 
 In the following, we consider the case where $\nu$ is very close to
$U(0)$. Here we need to address the quadratic behavior of $U-U(0)$
near $x=0$.  This case deserves special attention whenever $|\nu-U(0)|\lesssim|\mu|$.
  
\begin{proposition}
\label{prop:quadratic} 
 Let $p\in (2,+\infty]$, $\kappa_0>0$ and $U\in C^3([0,1])$ satisfy
\eqref{eq:10}. There exist $\mu_0>0$ and $C >0$ such that, for any
$\alpha\geq 0$ and any $\lambda$ for which $ 0<|\mu|\leq\mu_0 $ and $ U(0)-\kappa_0|\mu|\leq
\nu \leq U(0)+\kappa_0|\mu|$, we have, for every pair $(\phi,v)\in D(\A_{\lambda,\alpha})\times 
L^\infty(0,1)$ satisfying \eqref{eq:47} 
    \begin{equation}
\label{eq:242}
|\mu|^{\frac{1}{2p}+\frac{1}{4}}\|\phi\|_{1,2}
\leq C\|v\|_p  \,.
\end{equation}
\end{proposition}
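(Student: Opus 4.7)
The plan is to treat this transitional regime as the limit of Proposition~\ref{prop:near-quadratic} in which the critical point $x_\nu$ collides with the Neumann endpoint $x=0$, so that the natural localization scale is $|\mu|^{1/2}$ in place of $x_\nu$. The cornerstone is the pointwise bound
\begin{equation*}
|U(x)+i\lambda| \geq \frac{1}{C}\bigl(x^2+|\mu|\bigr), \qquad x\in[0,1],
\end{equation*}
which follows from $|U(x)-U(0)|\geq \frac{1}{C}x^2$ (since $U^\prime(0)=0$ and $U^{\prime\prime}(0)<0$), $|U(0)-\nu|\leq \kappa_0|\mu|$, and $|\Im(U+i\lambda)|=|\mu|$, after taking $\mu_0$ small. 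A change of variable $x=|\mu|^{1/2}t$ then yields $\|(U+i\lambda)^{-1}\|_q\leq C|\mu|^{\frac{1}{2q}-\frac{1}{2}}$ for any $q>1/2$. Since $p>2$, H\"older's inequality with $q=p/(p-1)$ and $\tilde q=2p/(p-2)$ gives the two main weighted bounds
\begin{equation*}
\Big\|\frac{v}{U+i\lambda}\Big\|_1 \leq C|\mu|^{-\frac{1}{2}-\frac{1}{2p}}\|v\|_p, \qquad \Big\|\frac{x\,v}{U+i\lambda}\Big\|_2 \leq C|\mu|^{-\frac{1}{4}-\frac{1}{2p}}\|v\|_p.
\end{equation*}

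The analog of \eqref{eq:179} at $x=0$ reads
\begin{equation*}
|\phi(0)|^2 \leq C\,|\mu|^{1/2}\Big[\,\|\phi^\prime\|_2^2 + \Big|\Big\langle\phi,\frac{v}{U+i\lambda}\Big\rangle\Big|\,\Big].
\end{equation*}
I would obtain it by taking the imaginary part of $\langle\phi,v/(U-\nu+i\mu)\rangle$ as in \eqref{eq:90}--\eqref{eq:91}, splitting $|\phi|^2\geq \tfrac{1}{2}|\phi(0)|^2-|\phi-\phi(0)|^2$, applying Hardy's inequality $\|(\phi-\phi(0))/x\|_2\leq C\|\phi^\prime\|_2$ (valid since $\phi^\prime(0)=0$), and using the lower bound $\int_0^1|U^{\prime\prime}|/|U+i\lambda|^2\,dx\geq \tfrac{1}{C}|\mu|^{-1/2}$ derived from the pointwise bound by the same change of variable as in \eqref{eq:187}--\eqref{eq:188}. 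Writing $\phi=\phi(0)+(\phi-\phi(0))$ inside $\langle\phi,v/(U+i\lambda)\rangle$ and combining Hardy with the two weighted bounds then gives
\begin{equation*}
\Big|\Big\langle\phi,\frac{v}{U+i\lambda}\Big\rangle\Big| \leq C|\phi(0)|\,|\mu|^{-\frac{1}{2}-\frac{1}{2p}}\|v\|_p + C\|\phi^\prime\|_2\,|\mu|^{-\frac{1}{4}-\frac{1}{2p}}\|v\|_p.
\end{equation*}

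The proof is then completed via the real-part energy identity obtained by pairing $\mathcal A_{\lambda,\alpha}\phi=v$ with $\phi/(U+i\lambda)$,
\begin{equation*}
\|\phi^\prime\|_2^2+\alpha^2\|\phi\|_2^2 = \Re\Big\langle\phi,\frac{v}{U+i\lambda}\Big\rangle - \Re\Big\langle U^{\prime\prime}\phi,\frac{\phi}{U+i\lambda}\Big\rangle.
\end{equation*}
For the $U^{\prime\prime}$-term I would peel off the constant mode $\phi(0)$ and invoke the integration-by-parts identity \eqref{eq:195} (whose derivation, resting on $|\kappa|=|i\lambda+U(0)|^{1/2}\geq|\mu|^{1/2}$, applies verbatim here) to extract $|\int_0^1 U^{\prime\prime}/(U+i\lambda)\,dx|\leq C|\mu|^{-1/2}$; combined with the Hardy-controlled remainder this yields $|\langle U^{\prime\prime}\phi,\phi/(U+i\lambda)\rangle|\leq C(|\mu|^{-1/2}|\phi(0)|^2+\|\phi^\prime\|_2^2/\kappa_0 + \|\phi^\prime\|_2|\phi(0)|)$. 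Substituting the inner-product bound from the previous paragraph into the right-hand side, converting the remaining $|\phi(0)|$-factors into $\|\phi^\prime\|_2$ via the square root of the $|\phi(0)|$ estimate (modulo absorbable contributions), and applying Young's inequality repeatedly to absorb the resulting $\|\phi^\prime\|_2^2$ terms produces
\begin{equation*}
\|\phi^\prime\|_2 \leq C|\mu|^{-\frac{1}{4}-\frac{1}{2p}}\|v\|_p,
\end{equation*}
from which \eqref{eq:242} follows by Poincar\'e. The main obstacle is the $U^{\prime\prime}$-term: its naive pointwise control is only of order $|\mu|^{-1}$ on $x\lesssim |\mu|^{1/2}$, so one must genuinely exploit the logarithmic cancellation \eqref{eq:195} \emph{and} the vanishing of $\phi^\prime$ at the origin through Hardy in order to recover the factor $|\mu|^{-1/2}$ that the rest of the argument requires.
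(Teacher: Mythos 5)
Your overall strategy — energy identity, $|\phi(0)|$-estimate via the imaginary part, Hardy, and H\"older with the scaled $L^q$ bounds on $(U+i\lambda)^{-1}$ — is close in spirit to the paper, and the weighted norm estimates you derive are correct (modulo a typo: $\|(U+i\lambda)^{-1}\|_q\lesssim|\mu|^{\frac{1}{2q}-1}$, not $|\mu|^{\frac{1}{2q}-\frac{1}{2}}$; the final exponents you state are nonetheless right). However, the treatment of the $U^{\prime\prime}$-term has a genuine gap that prevents the argument from closing.

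The issue is that your claimed bound
\begin{equation*}
\Big|\Big\langle U^{\prime\prime}\phi,\frac{\phi}{U+i\lambda}\Big\rangle\Big|
\leq C\Big(|\mu|^{-1/2}|\phi(0)|^2+\kappa_0^{-1}\|\phi^\prime\|_2^2 + \|\phi^\prime\|_2|\phi(0)|\Big)
\end{equation*}
cannot be substituted back into the energy identity. First, the $\kappa_0^{-1}$ factor in the middle term is unjustified; by your own pointwise bound the best one can say for the ``Hardy-controlled remainder'' $\int U^{\prime\prime}|\phi-\phi(0)|^2/(U+i\lambda)$ is $\|(\phi-\phi(0))/x\|_2^2\,\|x^2/(U+i\lambda)\|_\infty\lesssim\|\phi^\prime\|_2^2$, with an $O(1)$ constant independent of $\kappa_0$. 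Second, and more seriously, even the first term fails: feeding your own bound $|\phi(0)|^2\leq C|\mu|^{1/2}[\|\phi^\prime\|_2^2 + |\langle\phi,v/(U+i\lambda)\rangle|]$ into $|\mu|^{-1/2}|\phi(0)|^2$ again produces $C\|\phi^\prime\|_2^2$ with an $O(1)$ constant. So the energy identity becomes $\|\phi^\prime\|_2^2\leq C\|\phi^\prime\|_2^2 + (\text{data})$ with $C\geq 1$, and nothing can be absorbed.

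The missing ingredient is \eqref{eq:251}, i.e.\ the direct control
\begin{equation*}
\Big\|\frac{\phi}{U+i\lambda}\Big\|_2\leq C\,|\mu|^{-1/2}\Big|\Big\langle\phi,\frac{v}{U+i\lambda}\Big\rangle\Big|^{1/2},
\end{equation*}
which follows from the imaginary-part identity \eqref{eq:90} and the uniform positivity of $-U^{\prime\prime}$, \emph{without} passing through $\|\phi^\prime\|_2$. The paper uses this in an essential way: it decomposes only one factor of $\phi$ in the $U^{\prime\prime}$-term (as $\phi(x_\nu)+(\phi-\phi(x_\nu))$) and controls the remaining unsplit factor via \eqref{eq:251}, obtaining a bound that is at most \emph{linear} in $\|\phi^\prime\|_2$ and therefore absorbable by Young's inequality — this is the content of \eqref{eq:252}--\eqref{eq:254}. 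The paper also uses the sign of $U^{\prime\prime}(U-\nu)$ on $(x_\nu,1)$ to restrict the troublesome term to the tiny interval $(0,x_\nu)$ of length $\lesssim|\mu|^{1/2}$ (this is \eqref{eq:219}/\eqref{eq:250}), which renders the global integration-by-parts cancellation \eqref{eq:195} unnecessary: on an interval of length $x_\nu$ with $|U+i\lambda|\geq|\mu|$, the bound $\int_0^{x_\nu}|U+i\lambda|^{-1}\lesssim|\mu|^{-1/2}$ is immediate. Your proof replaces these two ingredients with the IBP cancellation and a full $|\phi(0)|^2$-decomposition, but the resulting inequality is not self-improving in $\|\phi^\prime\|_2$ and so does not establish the crucial intermediate step \eqref{eq:254}.
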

\begin{proof}
For $\nu\leq U(0)$ we choose $x_\nu\in[0,1)$ so that
$U(x_\nu)=\nu$. In the case $\nu>U(0)$ we set $x_\nu=0$ and proceed in a
similar manner. Obviously, the assumptions made on $U$ and $\lambda$ imply
that there exists $C>0$ such that  
\begin{equation}\label{eq:243}
x_\nu<C|\mu|^{1/2} \mbox{ for all }  0< |\mu|\leq 1\,.
\end{equation}
 
  {\em Step 1:}  {\em We prove that there exist $C>0$ and $\mu_0>0$  such that,
for all $\lambda$ such that  $0<|\mu|\leq \mu_0$ and   $ U(0)-\kappa_0|\mu|\leq \nu \leq  U(0)+\kappa_0|\mu|$ it holds that 
 \begin{equation}
\label{eq:244}
  |\phi(x_\nu)|^2\leq C|\mu|^{1/2}\Big[\|\phi^\prime\|_2^2+ \Big|\Big\langle\phi,\frac{v}{U+i\lambda}
  \Big\rangle\Big|\Big] \,. 
 \end{equation}
  for all pairs $(\phi,v)\in D(\A_{\lambda,\alpha})\times  L^\infty(0,1)$ satisfying \eqref{eq:47}.}\\

We note that  \eqref{eq:180} can be rewritten in the form   
\begin{equation}
\label{eq:245}
\begin{array}{ll}
\frac {|\mu|}{2} \,  |\phi(x_\nu)|^2 \,
   \int _0^1\frac{ |U^{\prime\prime}|}{(U-\nu)^2+\mu^2}\,dx &  \leq \Big|  \Im\Big\langle\phi,\frac{v}{U-\nu+i\mu}\Big\rangle\Big| \\ & \qquad +  |\mu| \,
   \Big\langle\frac{|U^{\prime\prime}|}{(U-\nu)^2+\mu^2} |\phi(x)-\phi(x_\nu)|^2 \Big\rangle\,.
   \end{array}
\end{equation}
 Since 
\begin{displaymath}
  (U-\nu)^2\leq C(x^2+|\nu-U(0)|)^2\leq C(x^2+\kappa_0|\mu|)^2\leq\hat{C}(x^4+|\mu|^2)\,,
\end{displaymath}
we obtain that 
\begin{displaymath}
  \int_0^1\frac{|U^{\prime\prime}|}{(U-\nu)^2+\mu^2}\,dx \geq
  \frac{1}{\hat C} \int_0^1\frac{dx}{x^4+\mu^2} \,.
\end{displaymath}
Using the substitution $x=|\mu|^{1/2}\xi$ yields
\begin{displaymath}
  \int_0^1\frac{dx}{x^4+\mu^2}=
  |\mu|^{-3/2}\int_0^{|\mu|^{-1/2}}\frac{d\xi}{\xi^4+1} =
  |\mu|^{-3/2}\Big[\int_0^\infty\frac{d\xi}{\xi^4+1}-\int_{|\mu|^{-1/2}}^\infty\frac{d\xi}{\xi^4+1}\Big]  \,.
\end{displaymath}
As
\begin{displaymath}
  \int_{|\mu|^{-1/2}}^\infty\frac{d\xi}{\xi^4+1}\leq C|\mu|^{3/2} \,,
\end{displaymath}
we obtain the existence of $\mu_0>0$ and $\hat{C}$ such that, under the conditions of this step
\begin{equation}
 \label{eq:246}
\int_0^1\frac{|U^{\prime\prime}|}{(U-\nu)^2+\mu^2}\,dx \geq \frac{1}{\hat C|\mu|^{3/2}} \,.
\end{equation}
 By \eqref{eq:193}
we have that
\begin{displaymath}
   \Big\langle\frac{|U^{\prime\prime}|}{(U-\nu)^2+\mu^2},|\phi(x)-\phi(x_\nu)|^2 \Big\rangle \leq
  \frac{C}{|\mu|}\Big\|\frac{\phi-\phi(x_\nu)}{|U-\nu|^{1/2}}\Big\|_2^2 
\end{displaymath}
By \eqref{eq:186} we have for all $\nu_1<\nu<U(0)$ for some positive
$\nu_1>0$ that (note for sufficiently small $\mu_0$ we clearly have $\nu>U(0)-\kappa_0|\mu|>\nu_1$)
\begin{equation}\label{eq:247}
  |U-\nu| \geq \frac{1}{C}(x^2-x_\nu^2) \geq   \frac{1}{C}(x-x_\nu)^2 \,, 
\end{equation}
which remains valid also for $\nu\geq U(0)$ given that 
\begin{equation}\label{eq:248}
|U-\nu|\geq|U-U(0)|\geq \frac{1}{C} x^2=\frac{1}{C} (x-x_\nu)^2 \,.
\end{equation}
Hence, by Hardy's inequality \eqref{eq:96},
\begin{displaymath}
   \Big\langle\frac{|U^{\prime\prime}|}{(U-\nu)^2+\mu^2},|\phi(x)-\phi(x_\nu)| \Big\rangle \leq
   \frac{C}{|\mu|}\Big\|\frac{\phi(x)-\phi(x_\nu)}{x-x_\nu}\Big\|_2^2  \leq
     \frac{C}{|\mu|}\|\phi^\prime\|_2^2
\end{displaymath}
Combining the above with \eqref{eq:246} and \eqref{eq:245} yields
\eqref{eq:244}. \\

{\em Step 2:} {\it We prove that for any $\kappa_1>0$ there exists positive $C$, and
  $\mu_0$ such that, for  all  $\nu>U(0)-\kappa_1|\mu|$  and $|\mu|\leq
  \mu_0$ it holds that
\begin{equation}
 \label{eq:249}
\|\phi^\prime\|_2\leq  C\Big[\mu^{1/4}\Big\|\frac{v}{U+i\lambda}\Big\|_1 +
  \Big\|\frac{(x-x_\nu)v}{U+i\lambda}\Big\|_2 
  \Big]\,.
\end{equation}
holds for any pair $(\phi,v)\in D(\A_{\lambda,\alpha})\times L^\infty(0,1) $ satisfying  \eqref{eq:47}. }\\

We begin by restating \eqref{eq:219}
\begin{equation}\label{eq:250}
   \|\phi^\prime\|_2^2 + \alpha^2\|\phi\|_2^2\leq    - \Re\Big\langle
   U^{\prime\prime}\phi,\frac{\phi}{U+i\lambda}\Big\rangle_{L^2(0,x_\nu)} +\Re\Big\langle\phi,\frac{v}{U+i\lambda}\Big\rangle\,. 
\end{equation}
Then, we write
\begin{displaymath}
  \Re \Big\langle
  \phi,\frac{U^{\prime\prime}\phi}{U+i\lambda}\Big\rangle_{L^2(0,x_\nu)}= \Re \Big\langle
  \phi(x_\nu) ,\frac{U^{\prime\prime}\phi}{U+i\lambda}\Big\rangle_{L^2(0,x_\nu)}+ \Re \Big\langle
  \phi-\phi(x_\nu) ,\frac{U^{\prime\prime}\phi}{U+i\lambda}\Big\rangle_{L^2(0,x_\nu)}\,.
\end{displaymath}
For the first term on the right-hand-side we use 
\eqref{eq:244}  to obtain 
\begin{displaymath}
  \Big|\Big\langle \phi(x_\nu)
  ,\frac{U^{\prime\prime}\phi}{U+i\lambda}\Big\rangle_{L^2(0,x_\nu)}\Big|\leq C|\mu|^{1/4}x_\nu^{1/2}\Big[\|\phi^\prime\|_2+ \Big|\Big\langle\phi,\frac{v}{U+i\lambda}
  \Big\rangle\Big|^{1/2}\Big]  \Big\|\frac{\phi}{U+i\lambda}\Big\|_2\,.
\end{displaymath}
Using \eqref{eq:90} and the fact that $U^{\prime\prime} < 0$, we obtain that
\begin{displaymath}
  \Big\|\frac{\phi}{U+i\lambda}\Big\|_2^2 = \int_0^1\frac{ |\phi|^2}{(U-\nu)^2
    +\mu^2 } dx \leq C \, \int_0^1\frac{ - U^{\prime\prime} |\phi|^2}{(U-\nu)^2 +\mu^2 } dx
  \leq \frac{C}{\mu}\, \Big|\Big\langle\phi,\frac{v}{U+i\lambda}   \Big\rangle\Big|\,. 
\end{displaymath}
Hence it holds that
\begin{equation}
\label{eq:251}
  \Big\|\frac{\phi}{U+i\lambda}\Big\|_2\leq C|\mu|^{-1/2}\Big|\Big\langle\phi,\frac{v}{U+i\lambda}
  \Big\rangle\Big|^{1/2} \,.
\end{equation}
and we can conclude that
\begin{equation}
\label{eq:252}
   \Big|\Big\langle \phi(x_\nu)
  ,\frac{U^{\prime\prime}\phi}{U+i\lambda}\Big\rangle_{L^2(0,x_\nu)}\Big| \leq C\Big[\|\phi^\prime\|_2+ \Big|\Big\langle\phi,\frac{v}{U+i\lambda}
  \Big\rangle\Big|^{1/2}\Big] \Big|\Big\langle\phi,\frac{v}{U+i\lambda}
  \Big\rangle\Big|^{1/2} \,. 
\end{equation}
For the second term on the right-hand-side we use \eqref{eq:251}, 
\eqref{hardyw1a}  and \eqref{eq:243} to obtain
\begin{displaymath}
\begin{array}{ll}
  \Big|\Big\langle  \phi-\phi(x_\nu)
  ,\frac{U^{\prime\prime}\phi}{U+i\lambda}\Big\rangle_{L^2(0,x_\nu)}\Big|&  =\Big|\Big\langle
  \frac{\phi-\phi(x_\nu)}{x-x_\nu}  , \frac{(x-x_\nu)
    U^{\prime\prime}\phi}{U+i\lambda}\Big\rangle_{L^2(0,x_\nu)}\Big|\\ 
&   \leq
  C \, x_\nu\|\phi^\prime\|_2  \Big\|\frac{\phi}{U+i\lambda}\Big\|_2\\
  & \leq \hat C\, \|\phi^\prime\|_2\,\Big|\Big\langle\phi,\frac{v}{U+i\lambda}
  \Big\rangle\Big|^{1/2}\,.
  \end{array}
\end{displaymath}
Hence,
\begin{equation}\label{eq:253}
  \Big|\Big\langle  \phi-\phi(x_\nu) ,\frac{U^{\prime\prime}\phi}{U+i\lambda}\Big\rangle_{L^2(0,x_\nu)}\Big| \leq \hat C\, \|\phi^\prime\|_2\,\Big|\Big\langle\phi,\frac{v}{U+i\lambda}
  \Big\rangle\Big|^{1/2}\,.
\end{equation}
Substituting \eqref{eq:253}  together with \eqref{eq:252} into
\eqref{eq:250}  yields
\begin{equation}
\label{eq:254}
\|\phi^\prime\|_2^2\leq  C\Big|\Big\langle\phi,\frac{v}{U+i\lambda}
  \Big\rangle\Big|\,.
\end{equation} 
Note that by combining
\eqref{eq:254} with \eqref{eq:244} we can also conclude that  
 \begin{equation}
\label{eq:255}
  |\phi(x_\nu)|^2\leq C|\mu|^{1/2}\Big|\Big\langle\phi,\frac{v}{U+i\lambda}
  \Big\rangle\Big| \,. 
 \end{equation}
To prove \eqref{eq:249} we now write, with the aid of Hardy's
inequality 
\begin{displaymath}
\begin{array}{ll}
  \Big|\Big\langle\phi,\frac{v}{U+i\lambda} \Big\rangle\Big|& 
   \leq   \Big|\Big\langle\phi(x_\nu),\frac{v}{U+i\lambda}
  \Big\rangle +  \Big|\Big\langle\phi-\phi(x_\nu),\frac{v}{U+i\lambda}
  \Big\rangle\Big|\\ & \leq |\phi(x_\nu)|\,  \Big\|\frac{v}{U+i\lambda}\Big\|_1 +
  \|\phi^\prime\|_2\,\Big\|\frac{(x-x_\nu)v}{U+i\lambda}\Big\|_2  \,.
  \end{array}
\end{displaymath}
Combining the above with \eqref{eq:255} gives
\begin{equation}\label{eq:256}
 \Big|\Big\langle\phi,\frac{v}{U+i\lambda} \Big\rangle\Big| \leq C \left(  |\mu|^\frac 12   \Big\|\frac{v}{U+i\lambda}\Big\|_1^2  + 
  \|\phi^\prime\|_2\,\Big\|\frac{(x-x_\nu)v}{U+i\lambda}\Big\|_2 \right)\,.
\end{equation}
Then,  \eqref{eq:256} 
and \eqref{eq:254} imply \eqref{eq:249}.\\

{\em Step 3: We estimate $ \Big\|\frac{1}{U+i\lambda}\Big\|_q$ for $q\geq1$ and
$\Big\|\frac{x-x_\nu}{U+i\lambda}\Big\|_q$  for $q\geq 2$ for  $\nu \in (U(0)-\kappa_0|\mu|, U(0) + \kappa_0
|\mu|) $.\\}

We consider two separate cases by splitting 
$(U(0)-\kappa_0|\mu|, U(0)+\kappa_0|\mu| )$ into two subintervals.

\paragraph{The case $\nu \in   (U(0)-\kappa_0|\mu|, U(0) - \delta |\mu|) $, $\delta\in (0,\kappa_0)$.}~\\
In this case, observing that $x_\nu \geq C^{-1} |\delta \mu|^\frac 12$\, we
use \eqref{eq:225} to establish that for any $\delta>0$ there exists $C$
such that for all $\delta|\mu|<U(0)-\nu<\kappa_0|\mu|$ it holds for all $q>1$
\begin{equation}
\label{eq:257}
   \Big\|\frac{1}{U+i\lambda}\Big\|_q^q\leq \frac{C}{|\delta|^{\frac{q}{2}}\, |\mu|^{q-1/2}}\,.
\end{equation}
In a similar manner we obtain from \eqref{eq:236} and \eqref{eq:237}
(note that $a=x_\nu|\mu|^{-1/2}\geq C^{-1} \delta^\frac 12$ in the present
regime of $\nu$ values) that there exists $C_\delta>0$ such that
\begin{equation}
\label{eq:258}
   \Big\|\frac{1}{U+i\lambda}\Big\|_1\leq \frac{C_\delta}{|\mu|^{1/2}}\,.
\end{equation}
Finally, we use \eqref{eq:229} and the fact that $x_\nu \geq C^{-1} |\delta
\mu|^\frac 12$  to obtain for all $q\geq2$
\begin{equation}
\label{eq:259}
 \Big\|\frac{x-x_\nu}{U+i\lambda} \Big\|_q^q\leq
 \frac{C_\delta}{|\mu|^{(q-1)/2}}  \,.
\end{equation}

\paragraph{The case $\nu \in   ( U(0) - \delta |\mu|, U(0)+\kappa_0|\mu|) $, $\delta\in (0,\kappa_0)$.}~\\

In this case  we use \eqref{eq:186} to
obtain that
\begin{displaymath}
  (U-\nu)^2\geq \frac 1C (x^2-x_\nu^2)^2\geq \frac{1}{C} \Big(\frac{x^4}{2}-x_\nu^4\Big)
  \geq \frac{1}{C_1} x^4-C_2\delta^2|\mu|^2 \,.
\end{displaymath}
The above inequality implies that there exist $C$ and $\delta_0 \leq \kappa_0$
such that, for $\delta \in (0,\delta_0)$ and $\nu \in ( U(0) - \delta |\mu|,
U(0)+\kappa_0|\mu|)$,
\begin{equation}
\label{eq:260}
  \frac{1}{(U(x)-\nu)^2+|\mu|^2} \leq \frac{C}{x^4+|\mu|^2} 
\end{equation} 
for all $x\in[0,1]$.\\
 Consequently, for all $q\geq1$,  using the substitution
 $x=|\mu|^{1/2}\xi$, we obtain
\begin{equation}\label{eq:261}
   \Big\|\frac{1}{U+i\lambda}\Big\|_q^q\leq\int_0^1\frac{C}{[x^2+|\mu|]^q}\,dx
   \leq\frac{C}{|\mu|^{q-1/2}}\int_0^\infty\frac{d\xi}{[\xi^2+1]^q}\leq
   \frac{C}{|\mu|^{q-1/2}} \,.
\end{equation}
Finally, we use  \eqref{eq:260} to obtain that
\begin{equation}
\label{eq:262}
   \Big\|\frac{x-x_\nu}{U+i\lambda} \Big\|_q^q\leq C\Big(|\mu|^{q/2}
   \Big\|\frac{1}{U+i\lambda}\Big\|_q^q+  \Big\|\frac{x}{x^2+|\mu|} \Big\|_q^q\Big)\,.
\end{equation}
We now observe that for all $q>1$
\begin{equation}\label{eq:263}
   \Big\|\frac{x}{x^2+|\mu|} \Big\|_q^q\leq
   \frac{1}{|\mu|^{(q-1)/2}}\int_0^\infty\frac{\xi^q}{[\xi^2+1]^q}\,d\xi\leq\frac{C} {|\mu|^{(q-1)/2}} \,.
\end{equation}
Together with \eqref{eq:263} and \eqref{eq:261}, 
\eqref{eq:262} yields the existence of $C>0$
\begin{equation}\label{eq:264}
 \Big\|\frac{x-x_\nu}{U+i\lambda} \Big\|_q^q\leq
 \frac{C}{|\mu|^{(q-1)/2}}  \,.
\end{equation}
\paragraph{The general case.}~\\
Combining \eqref{eq:257} and \eqref{eq:258}, \eqref{eq:261} yields,
for $q\geq 1$, the existence of $C>0$, such that for
$|U(0)-\nu|<\kappa_0|\mu|$
\begin{equation}
  \label{eq:265}
  \Big\|\frac{1}{U+i\lambda}\Big\|_q^q\leq \frac{C}{|\mu|^{q-1/2}}\,.
\end{equation}
By \eqref{eq:259} and \eqref{eq:264} we may conclude, for all  $q\geq1$,  that there exists $C>0$  such that, 
for $|U(0)-\nu|<\kappa_0|\mu|$
\begin{subequations}
\label{eq:266}
  \begin{equation}
 \Big\|\frac{x-x_\nu}{U+i\lambda} \Big\|_q^q\leq
 \frac{C}{|\mu|^{(q-1)/2}}  \,.
\end{equation}
 Note that
  \begin{multline*}
     \Big\|\frac{x-x_\nu}{U+i\lambda} \Big\|_\infty \leq C
     \Big\|\frac{x-x_\nu}{|x^2-x_\nu^2|+|\mu|} \Big\|_\infty \\ \leq C\Big(
     \Big\|{\mathbf 1}_{|x-x_\nu|<|\mu|^{1/2}}\frac{x-x_\nu}{|\mu|} \Big\|_\infty +
     \Big\|{\mathbf
       1}_{|x-x_\nu|\geq|\mu|^{1/2}}\frac{x-x_\nu}{|x^2-x_\nu^2|} \Big\|_\infty
     \Big)  \leq\frac{\widehat C}{|\mu|^{1/2}}\,,
  \end{multline*}
  hence 
  \begin{equation} 
     \Big\|\frac{x-x_\nu}{U+i\lambda} \Big\|_\infty \leq  \frac{\widehat C}{|\mu|^{1/2}}\,.
  \end{equation}
  
\end{subequations}

{\it Step 4: We prove \eqref{eq:242}. }\\

The proof is similar to Step 4 of the proof of Proposition
\ref{prop:near-quadratic}. 
We estimate the right-hand-side of \eqref{eq:249} separately for
$p\in[2,+\infty)$ and for $p=\infty$. 
Suppose first that $v\in L^p(0,1)$ for some $p\in[2,+\infty)$. \\
For the first term in the r.h.s., we deduce from \eqref{eq:265}
 \begin{equation}
\label{eq:267}
  |\mu|^{1/4} \Big\|\frac{v}{U+i\lambda}\Big\|_1\leq C\frac{\|v\|_p}{|\mu|^{\frac{1}{2p}+\frac{1}{4}}} \,.
\end{equation}
To estimate the second term we use \eqref{eq:266} to obtain  for all
$p\geq2$
\begin{equation}
\label{eq:268}
   \Big\|\frac{(x-x_\nu)v }{U+i\lambda} \Big\|_2\leq
\frac{C}{|\mu|^{\frac{1}{2p}+\frac{1}{4}}}\|v\|_p \,.
\end{equation}
Suppose now that $v\in L^\infty(0,1)$. Then, by \eqref{eq:265}
we may conclude for the first term on the r.h.s. of \eqref{eq:249} that
\begin{equation}
\label{eq:269}
   |\mu|^{1/4}\Big\|\frac{v}{U+i\lambda}\Big\|_1\leq \frac{C}{|\mu|^{1/4}}\|v\|_\infty\,.
\end{equation}
Next, we estimate the second term on the right-hand-side of
\eqref{eq:249}. Using \eqref{eq:266} we obtain that
\begin{displaymath}
  \Big\|\frac{(x-x_\nu)v}{U+i\lambda} \Big\|_2\leq \frac{C}{|\mu|^{1/4}} \|v\|_\infty
  \,.
\end{displaymath}
Together with \eqref{eq:269} the above yields (\ref{eq:242})  for
$p=+\infty$. 
\end{proof}
\begin{remark}
\label{rem:quadratic-bound}
Note that, under the assumptions of Proposition \ref{prop:quadratic}, 
by \eqref{eq:255}, \eqref{eq:256}, (\ref{eq:266}a)  and \eqref{eq:265}
it holds that 
  \begin{displaymath}
    |\phi(x_\nu)|^2\leq C\,\big(|\phi(x_\nu)|\,\|v\|_\infty +|\mu|^{1/4} \|\phi^\prime\|_2\|v\|_\infty\big)\,.
  \end{displaymath}
Using  (\ref{eq:242})  for
$p=+\infty$ then yields the existence of $C>0$ such that
\begin{equation}
  \label{eq:270}
 |\phi(x_\nu)|\leq C\, \|v\|_\infty \,.
\end{equation}
\end{remark}

\subsection{The case $\Im \lambda >U(0)$}
\label{sec:case--large-nu}
In the case where $\Im \lambda=\nu>U(0)$, we get a better estimate of
$\A_{\lambda,\alpha}^{-1}$, measured by
a negative power of $ |\mu| + (\nu -U(0))$. More precisely,
\begin{proposition}
\label{prop:non-factorable}
Let $p\in[2,+\infty]$. 
There exist $\mu_0>0$ and $C>0$ such that for all
$U\in C^3([0,1])$ satisfying \eqref{eq:10}, $\nu>U(0)$, $|\mu|<\mu_0$,
$\alpha\geq0$, and $(\phi,v)\in D(\A_{\lambda,\alpha})\times L^\infty(0,1)$ satisfying
\eqref{eq:47} it holds that
 \begin{equation}
\label{eq:271}
\|\phi\|_{1,2}
\leq \frac{C}{[|\mu|+|\nu-U(0)|]^{\frac{1}{2p}+\frac{1}{4}}}\|v\|_p  \,,
\end{equation}
\end{proposition}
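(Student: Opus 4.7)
The strategy will be to follow the scheme of Proposition \ref{prop:quadratic}, after setting $\tilde\mu:=|\mu|+|\nu-U(0)|=|\mu|+(\nu-U(0))$. Since $\nu>U(0)$, we have $x_\nu=0$ by \eqref{eq:defxnu}, and crucially $U-\nu<0$ throughout $[0,1]$, so the Rayleigh equation carries no turning point. This sign condition means that, unlike in the proof of Proposition \ref{prop:quadratic}, the pointwise bound at $x=0$ can be extracted from the \emph{real} part of the energy identity rather than from the imaginary part, and it is this observation which delivers the clean $\tilde\mu^{1/2}$ factor regardless of the ratio $|\mu|/(\nu-U(0))$.

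\textbf{Step 1 (weighted bounds).} The Taylor expansion $U(x)=U(0)+\tfrac12 U''(0)x^2+O(x^4)$, together with $U''(0)<0$ and the monotonicity of $U$, yields $|U+i\lambda|^2=(U-\nu)^2+\mu^2\gtrsim \tilde\mu^2+x^4$ on $[0,1]$. The scaling $x=\tilde\mu^{1/2}\xi$, precisely as in the proof of \eqref{eq:265}--\eqref{eq:266}, then gives
$$\|(U+i\lambda)^{-1}\|_q^q \leq C\tilde\mu^{1/2-q},\ q\geq 1,\qquad \|x/(U+i\lambda)\|_q^q\leq C\tilde\mu^{(1-q)/2},\ q\geq 2,$$
together with $\|x/(U+i\lambda)\|_\infty \leq C\tilde\mu^{-1/2}$.

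\textbf{Step 2 (energy identity).} Taking the inner product of $\mathcal A_{\lambda,\alpha}\phi=v$ with $\phi/(U+i\lambda)$ and extracting the real part produces, using the boundary conditions and the fact that $U''(U-\nu)>0$,
$$\|\phi'\|_2^2+\alpha^2\|\phi\|_2^2+I=\Re\langle\phi,v/(U+i\lambda)\rangle,\qquad I:=\int_0^1\frac{|U''|\,|U-\nu|\,|\phi|^2}{|U+i\lambda|^2}\,dx\geq 0.$$
Writing $A:=|\langle\phi,v/(U+i\lambda)\rangle|$, this yields both $\|\phi'\|_2^2\leq A$ and $I\leq A$.

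\textbf{Step 3 (bound at $x=0$).} Substituting $|\phi|^2\geq\tfrac12|\phi(0)|^2-|\phi-\phi(0)|^2$ into $I$, applying Hardy's inequality in the form $\int|\phi-\phi(0)|^2/x^2\,dx\leq 4\|\phi'\|_2^2$, and using the elementary bound $|U''||U-\nu|x^2/|U+i\lambda|^2\leq C$ on $[0,1]$ (which follows on $[0,1/2]$ from Step 1 and $|U-\nu|\lesssim \tilde\mu+x^2$, and trivially on $[1/2,1]$), we arrive at
$$\tfrac12|\phi(0)|^2\int_0^1\frac{|U''|\,|U-\nu|}{|U+i\lambda|^2}\,dx\leq I+C\|\phi'\|_2^2\leq CA.$$
A direct rescaling $x=\tilde\mu^{1/2}\xi$ on a neighborhood of $0$, combined with the trivial lower bound from $[1/2,1]$, gives the matching lower bound $\int_0^1|U''||U-\nu|/|U+i\lambda|^2\,dx\geq c\,\tilde\mu^{-1/2}$, uniformly as the ratio $|\mu|/(\nu-U(0))$ varies. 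Hence $|\phi(0)|^2\leq C\,\tilde\mu^{1/2} A$.

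\textbf{Step 4 (conclusion).} Splitting $\phi=\phi(0)+(\phi-\phi(0))$ and using Hardy's inequality once more,
$$A\leq |\phi(0)|\,\|v/(U+i\lambda)\|_1+2\|\phi'\|_2\,\|xv/(U+i\lambda)\|_2\leq C\tilde\mu^{1/4}A^{1/2}\|v/(U+i\lambda)\|_1+CA^{1/2}\|xv/(U+i\lambda)\|_2,$$
from which $\|\phi'\|_2\leq A^{1/2}\leq C[\tilde\mu^{1/4}\|v/(U+i\lambda)\|_1+\|xv/(U+i\lambda)\|_2]$. H\"older's inequality applied with conjugate exponent $p'=p/(p-1)$ for the first term and $r=2p/(p-2)$ (understood as $\infty$ for $p=2$ and as $2$ for $p=\infty$) for the second, together with the Step 1 estimates, bounds both contributions by $C\tilde\mu^{-(1/4+1/(2p))}\|v\|_p$. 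Poincar\'e's inequality ($\phi(1)=0$) then upgrades the estimate on $\|\phi'\|_2$ to the full norm $\|\phi\|_{1,2}$, yielding \eqref{eq:271}. The main obstacle is Step 3: proving the lower bound on $\int|U''||U-\nu|/|U+i\lambda|^2\,dx$ \emph{uniformly} in the regime $|\mu|\ll\nu-U(0)$ as well as in the regime $\nu-U(0)\ll|\mu|$; a careful treatment of the change of variables, together with the splitting of $[0,1]$, makes this uniform.
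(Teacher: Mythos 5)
Your overall scheme --- the energy identity to get $\|\phi'\|_2^2 \leq A$ and $I \leq A$, the pointwise bound at $x=0$ via a lower bound on $\int_0^1 |U''|\,|U-\nu|\,|U+i\lambda|^{-2}\,dx$, and then the split $\phi = \phi(0) + (\phi-\phi(0))$ together with Hardy and the weighted $L^q$ bounds on $(U+i\lambda)^{-1}$ and $x/(U+i\lambda)$ --- is exactly the proof in the paper, and your algebra in Steps 1, 2 and 4 checks out.

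There is, however, a genuine flaw in Step~3, and it is not the one you flag in your final paragraph. The claimed uniform lower bound
$$\int_0^1\frac{|U''|\,|U-\nu|}{|U+i\lambda|^2}\,dx \geq c\,\tilde\mu^{-1/2}$$
is \emph{false} for $\tilde\mu$ large. Since $|\mu|<\mu_0$ is bounded, $\tilde\mu$ large means $\nu-U(0)$ large, in which case the integrand is $\sim |U''|/|U-\nu| \sim 1/\nu$, so the integral is $\sim \nu^{-1}$, while $\tilde\mu^{-1/2}\sim\nu^{-1/2}$; the inequality fails by a factor of $\nu^{1/2}$. The splitting of $[0,1]$ does not save you: the contribution from $[1/2,1]$ is also $O(1/\nu)$ in this regime. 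So the difficulty is not, as you suggest, the competition between the regimes $|\mu|\ll\nu-U(0)$ and $\nu-U(0)\ll|\mu|$ (both are fine as long as $\tilde\mu$ is small); it is the magnitude of $\tilde\mu$ itself. The paper resolves this by a case split on $(U(0)-\nu)^2+\mu^2 \lessgtr \varepsilon$: for $\tilde\mu$ small the lower bound you want does hold (this is (\ref{eq:275}), which is proved with exactly your rescaling $x=\tilde\mu^{1/2}\xi$, but only after absorbing an $O(1)$ error term that requires $\tilde\mu$ small); for $\tilde\mu$ bounded away from zero the estimate is obtained directly from Poincar\'e and the crude bound $\|(U+i\lambda)^{-1}\|_\infty\leq C\tilde\mu^{-1}$, since there the target exponent $\tilde\mu^{-1/(2p)-1/4}$ is weaker than $\tilde\mu^{-1}$ up to an $\varepsilon$-dependent constant. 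Once you add that case split, your argument closes.
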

\begin{proof}
 We begin by restating \eqref{eq:196} 
\begin{equation}\label{eq:272}
  \|\phi^\prime\|_2^2+\alpha^2\|\phi\|_2^2+
  \Big\langle\frac{U^{\prime\prime}(U-\nu)\phi}{(U-\nu)^2+\mu^2},\phi\Big\rangle=
  \Re\Big\langle\phi,\frac{v}{U+i\lambda}\Big\rangle\,. 
\end{equation}
Since $\nu>U(0)$, it holds by \eqref{eq:10} that the third term on the left-hand-side  is
positive, and hence we can conclude that
\begin{equation}
  \label{eq:273}
  \|\phi^\prime\|_2^2\leq   \Re\Big\langle\phi,\frac{v}{U+i\lambda}\Big\rangle\,. 
\end{equation}
We split the proof into two separate parts in accordance with the
magnitude of $(U(0)-\nu)^2+\mu^2$.\\

{\em Step 1: The case  $(U(0)-\nu)^2+\mu^2$ small}.\\

We consider here the case where
\begin{displaymath}
  (U(0)-\nu)^2+\mu^2<\varepsilon 
\end{displaymath}
for some sufficiently small $\varepsilon>0$. 

  To properly bound the right-hand-side of \eqref{eq:273} in that case
  we need an estimate for $|\phi(0)|$. To this end we use
  \eqref{eq:272} once again to obtain
\begin{displaymath}
  \Big\langle\frac{U^{\prime\prime}(U-\nu)\phi}{(U-\nu)^2+\mu^2},\phi\Big\rangle\leq 
  \Re\Big\langle\phi,\frac{v}{U+i\lambda}\Big\rangle\,,
\end{displaymath}
from which we can conclude that
\begin{multline}
  \label{eq:274}
 \Big\langle\frac{U^{\prime\prime}(U-\nu)}{(U-\nu)^2+\mu^2},1\Big\rangle|\phi(0)|^2\leq \\
 2\Big\langle\frac{U^{\prime\prime}(U-\nu)(\phi-\phi(0))}{(U-\nu)^2+\mu^2},\phi-\phi(0)\Big\rangle + 2 \Re\Big\langle\phi,\frac{v}{U+i\lambda}\Big\rangle\,.
\end{multline}
We continue by bounding from below the left-hand-side of
\eqref{eq:274}. To this end we observe that since
\begin{displaymath}
  \min_{x\in[0,1]}|U^{\prime\prime}(x)|\geq \frac {1}{C_0} >0 \text{ and } U(0)-U(x) \geq
  \frac{1}{2C_0} x^2
\end{displaymath}
we can conclude that
\begin{equation*}
\begin{array}{ll}
   \Big\langle\frac{U^{\prime\prime}(U-\nu)}{(U-\nu)^2+\mu^2},1\Big\rangle|& \geq \frac 1C \int_0^1
   \frac{x^2+\nu-U(0)}{[x^2+\nu-U(0)]^2 + \mu^2}\,dx\\
   & \geq \frac 1C \int_0^1
   \frac{x^2}{[x^2+\nu-U(0)]^2 + \mu^2}\,dx\\
    & \geq \frac 1{2C}  \int_0^1
   \frac{x^2}{x^4+(\nu-U(0)^2 + \mu^2}\,dx\\
   &  \geq \frac 1{2C} \Big[\int_0^\infty 
   \frac{x^2}{x^4+[\nu-U(0)]^2 + \mu^2}\,dx -1\Big]\,. 
   \end{array}
\end{equation*}
Setting 
\begin{displaymath}
  x=([\nu-U(0)]^2 + \mu^2)^{1/4}s
\end{displaymath}
yields
\begin{multline*}
  \Big\langle\frac{U^{\prime\prime}(U-\nu)}{(U-\nu)^2+\mu^2},1\Big\rangle\geq \frac{1}{2C} \left( 
  \frac{1}{([\nu-U(0)]^2 + \mu^2)^{1/4}}\int_0^\infty
   \frac{s^2}{s^4+1}\,ds  -1\right) \\ \geq  \frac{1}{\hat C \{[\nu-U(0)]^2 +
     \mu^2\}^{1/4}}-\hat C\,.  
\end{multline*}
For sufficiently small $\varepsilon$ we obtain that 
\begin{equation}
\label{eq:275}
  \Big\langle\frac{U^{\prime\prime}(U-\nu)}{(U-\nu)^2+\mu^2},1\Big\rangle|\geq  \frac{1}{2\hat C \{[\nu-U(0)]^2 +
     \mu^2\}^{1/4}}\,.
\end{equation}

To estimate the first term on the right-hand-side  of \eqref{eq:274}
we first write 
\begin{displaymath}
  \Big\langle\frac{U^{\prime\prime}(U-\nu)(\phi-\phi(0))}{(U-\nu)^2+\mu^2},\phi-\phi(0)\Big\rangle \leq
  \Big\|\frac{U^{\prime\prime}(U-\nu)x^2}{(U-\nu)^2+\mu^2}\Big\|_\infty
  \Big\|\frac{\phi-\phi(0)}{x}\Big\|_2^2 \,. 
\end{displaymath}
As
 \begin{displaymath}
\Big\|\frac{U^{\prime\prime}(U-\nu)x^2}{(U-\nu)^2+\mu^2}\Big\|_\infty\leq C\Big\|\frac{x^4+[\nu-U(0)]x^2}{[x^2+\nu-U(0)]^2+\mu^2}\Big\|_\infty\leq
\widehat C\,,
\end{displaymath}
we may use Hardy's inequality to obtain that
\begin{equation}\label{eq:276}
  \Big\langle\frac{U^{\prime\prime}(U-\nu)(\phi-\phi(0))}{(U-\nu)^2+\mu^2},\phi-\phi(0)\Big\rangle \leq C \, \|\phi^\prime\|_2^2 \,.
\end{equation}
\eqref{eq:276}  together with \eqref{eq:273} and \eqref{eq:275} yields, when
substituted into \eqref{eq:274},
\begin{equation}
  \label{eq:277}
|\phi(0)|^2\leq
C(|\mu|^{1/2}+|\nu-U(0)|^{1/2})\Re\Big\langle\phi,\frac{v}{U+i\lambda}\Big\rangle \,.
\end{equation}

To complete the proof we now estimate the right-hand-side  of
\eqref{eq:273}. To this end we write
\begin{displaymath}
  \Re\Big\langle\phi,\frac{v}{U+i\lambda}\Big\rangle \leq |\phi(0)|
  \Big\|\frac{v}{U+i\lambda}\Big\|_1 + \Big\|\frac{\phi-\phi(0)}{x}\Big\|_2
  \Big\|\frac{xv}{U+i\lambda}\Big\|_2 \,. 
\end{displaymath}
Using Hardy's inequality together with \eqref{eq:277} yields
\begin{displaymath}
\label{eq:278}
    \Re\Big\langle\phi,\frac{v}{U+i\lambda}\Big\rangle \leq
    C\Big[(|\mu|^{1/2}+|\nu-U(0)|^{1/2})\Big\|\frac{v}{U+i\lambda}\Big\|_1^2 +
    \|\phi^\prime\|_2  \Big\|\frac{xv}{U+i\lambda}\Big\|_2\Big]  
\end{displaymath}
Using \eqref{eq:273} once again yields
\begin{equation}
\label{eq:279}
  \|\phi^\prime\|_2 \leq C\Big[(|\mu|^{1/4}+|\nu-U(0)|^{1/4})\Big\|\frac{v}{U+i\lambda}\Big\|_1+
    \Big\|\frac{xv}{U+i\lambda}\Big\|_2\Big]  \,.
\end{equation}

The proof of \eqref{eq:271} is now verified by following the same path 
as in the proof of Step 4 of Propositions \ref{prop:near-quadratic}
and \ref{prop:quadratic}. Thus, since
\begin{displaymath}
  (U-\nu)\geq C(x^2+\nu-U(0) )\,,
\end{displaymath}
we obtain as in \eqref{eq:261} that for all $q\geq1$
\begin{multline}
\label{eq:280}
   \Big\|\frac{1}{U+i\lambda}\Big\|_q^q\leq \int_0^1\frac{C}{[x^2+\nu-U(0)+|\mu|]^q}\,dx\\
   \leq\frac{C}{[|\mu+\nu-U(0)]|^{q-1/2}}\int_0^\infty\frac{d\xi}{[\xi^2+1]^q}\\\leq
   \frac{\hat C}{[|\mu|+{\nu -U(0)}]^{q-1/2}} \,.
\end{multline}
Hence, for all $p\in[2,+\infty]$,
\begin{equation}
  \label{eq:281}
\Big\|\frac{v}{U+i\lambda}\Big\|_1\leq
  C  [|\mu|+\nu-U(0)]^{-\frac{1}{2p}-\frac{1}{2}} \|v\|_p \,. 
\end{equation}

As in \eqref{eq:262} and \eqref{eq:263} we then write for all $q>1$
\begin{multline*}
  \Big\|\frac{x}{U+i\lambda} \Big\|_q^q  \leq C\, \Big\|\frac{x}{x^2+\nu-U(0)
    +|\mu|} \Big\|_q^q \\
  \leq   \frac{\widehat C }{[|\mu|+\nu-U(0)]^{(q-1)/2}}
  \int_0^\infty\frac{\xi^q}{[\xi^2+1]^q}\,d\xi  \\ \leq\frac{\widetilde C}
  {[|\mu|+\nu-U(0)]^{(q-1)/2}} \,. 
\end{multline*}
Note also that for $q=+\infty$ we have
\begin{displaymath}
  \Big\|\frac{x}{U+i\lambda} \Big\|_\infty\leq\frac{C}
  {[|\mu|+\nu-U(0)]^{1/2}}
\end{displaymath}
Consequently, for all $p\in[2,+\infty]$
\begin{equation}
\label{eq:282}
  \Big\|\frac{xv }{U+i\lambda} \Big\|_2\leq
  \frac{C}{[|\mu|+\nu-U(0)]^{\frac{1}{2p}+\frac{1}{4}}} {\|v\|_p }\,.
\end{equation}
Substituting the above, together with \eqref{eq:281}, into
\eqref{eq:279} yields  \eqref{eq:271} for sufficiently small $\varepsilon >0$.\\

{\em Step 2: The case where, for some $\varepsilon>0$,
\begin{displaymath}
  (U(0)-\nu)^2+\mu^2\geq \varepsilon \,.
\end{displaymath}
}

By Poincar\'e's inequality and \eqref{eq:281}, we obtain
\begin{multline*}
  |  \Re\Big\langle\phi,\frac{v}{U+i\lambda}\Big\rangle|\leq \|\phi\|_\infty
  \Big\|\frac{v}{U+i\lambda}\Big\|_1 \\\leq 
  \frac{C}{[(U(0)-\nu)^2+\mu^2]^{1/2}}\|\phi^\prime\|_2\|v\|_p\\ \leq
  \frac{C}{\varepsilon^{\frac{2-p}{4} }\, [(U(0)-\nu)^2+\mu^2]^{\frac{1}{2p}+\frac{1}{4}}}\|\phi^\prime\|_2\|v\|_p\,, 
\end{multline*}
which together with \eqref{eq:273} readily gives
\eqref{eq:271} in this case.
\end{proof}

{\begin{remark}
\label{rem:large-nu-bound}
As in Remark \ref{rem:quadratic-bound} we may use \eqref{eq:277},
\eqref{eq:278}, and \eqref{eq:279}, under the assumptions of
Proposition \ref{prop:non-factorable}, to obtain
\begin{equation*}
|\phi(0)|^2\leq
C(|\mu|+|\nu-U(0)|) \,
\Big\|\frac{v}{U+i\lambda}\Big\|_1^2+(|\mu|+|\nu-U(0)|)^{1/2}  \Big\|\frac{xv}{U+i\lambda}\Big\|_2^2\,.
\end{equation*}
From \eqref{eq:281} we then obtain
the existence of $C>0$ such that
\begin{equation}
 \label{eq:283}
 |\phi(0)|\leq C\, \|v\|_\infty \,.
\end{equation}
\end{remark}

 \subsection{The case $|\Re \lambda | \geq \mu_1 >0$. }
The results in the preceding subsections were all obtained under the
assumption that $|\mu| \leq \mu_0$ for some sufficiently small $\mu_0$.
Hence it remains to treat the case when $|\mu| \geq \mu_1$ where $\mu_1>0$ is arbitrary.
We complement Proposition \ref{prop:linear} by addressing the large
$|\mu|$ case.
  \begin{proposition}\label{rem:large-mu}
    For any $\mu_1>0$ and $p>1$ there exists $C>0$ such that for all
    $(\phi,v)\in D(\A_{\lambda,\alpha})\times W^{1,p}(0,1)$ satisfying $\mathcal A_{\lambda,\alpha}\phi=v$\,, 
    for all $|\mu|\geq\mu_1$, $\nu\in\R$ and $\alpha\geq0$, 
\begin{equation}
\label{eq:167}
      \|\phi\|_{H^1(0,1)}\leq C\|(1-x)^{1/2}v\|_1\,.
    \end{equation}
  \end{proposition}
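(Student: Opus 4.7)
Since $|U(x)+i\lambda|\geq |\mu|\geq \mu_1>0$ everywhere on $[0,1]$, the equation $\A_{\lambda,\alpha}\phi=v$ may safely be divided by $U+i\lambda$ to give
\begin{displaymath}
-\phi''+\alpha^2\phi+\frac{U''}{U+i\lambda}\phi=\frac{v}{U+i\lambda}.
\end{displaymath}
Taking the inner product with $\phi$ in $L^2(0,1)$ and integrating by parts (using $\phi'(0)=\phi(1)=0$) yields
\begin{displaymath}
\|\phi'\|_2^2+\alpha^2\|\phi\|_2^2+\left\langle\phi,\frac{U''\phi}{U+i\lambda}\right\rangle=\left\langle\phi,\frac{v}{U+i\lambda}\right\rangle.
\end{displaymath}
From $\phi(1)=0$ and Cauchy--Schwarz one has $|\phi(x)|\leq(1-x)^{1/2}\|\phi'\|_2$, so the right-hand side is bounded by $\mu_1^{-1}\|\phi'\|_2\|(1-x)^{1/2}v\|_1$. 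The real part of the perturbation term is controlled uniformly in $\nu$ via the AM--GM inequality $|U-\nu|/|U+i\lambda|^2\leq 1/(2|\mu|)$:
\begin{displaymath}
\Big|\Re\Big\langle\phi,\frac{U''\phi}{U+i\lambda}\Big\rangle\Big|\leq \frac{\|U''\|_\infty}{2\mu_1}\|\phi\|_2^2.
\end{displaymath}

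To close the estimate one needs a bound on $\|\phi\|_2^2$, and I would split into two regimes based on $|\lambda|$. In the regime $|\lambda|\geq M$, with $M$ depending only on $U$ and chosen so large that $M\geq 2U(0)$ and $16\|U''\|_\infty/(\pi^2 M)<1$, one has $|U+i\lambda|\geq|\lambda|/2\geq M/2$, so both terms on the right-hand side are small in the appropriate sense. Applying Poincar\'e's inequality $\|\phi\|_2\leq (2/\pi)\|\phi'\|_2$ (valid since $\phi(1)=0$) lets one absorb the perturbation into $\frac12\|\phi'\|_2^2$, and a direct manipulation yields $\|\phi'\|_2\leq (4/M)\|(1-x)^{1/2}v\|_1$. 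In the complementary regime $|\lambda|\leq M$, I would use the imaginary part of the identity: since the first two terms on the left are real and $U''<0$, one obtains
\begin{displaymath}
|\mu|\int_0^1\frac{|U''|\,|\phi|^2}{|U+i\lambda|^2}\,dx=\Big|\Im\Big\langle\phi,\frac{v}{U+i\lambda}\Big\rangle\Big|\leq \mu_1^{-1}\|\phi'\|_2\|(1-x)^{1/2}v\|_1.
\end{displaymath}
Using $-U''\geq c_0>0$ and the uniform upper bound $|U+i\lambda|^2\leq (U(0)+M)^2+M^2$, which holds in this regime, this gives $\|\phi\|_2^2\leq C(M,\mu_1)\,\|\phi'\|_2\|(1-x)^{1/2}v\|_1$. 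Substituting this bound into the real-part identity and applying Young's inequality then yields $\|\phi'\|_2\leq C(\mu_1)\,\|(1-x)^{1/2}v\|_1$.

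Combining the two regimes and bounding $\|\phi\|_2$ by $\|\phi'\|_2$ via Poincar\'e's inequality produces \eqref{eq:167}. The main subtlety is the lack of a $\nu$-uniform bound on $\|\phi\|_2$ from the real-part energy estimate alone: the coefficient $\|U''/(U+i\lambda)\|_\infty\leq \|U''\|_\infty/\mu_1$ is uniform in $\nu$ but may exceed what Poincar\'e can absorb when $\mu_1$ is small. The case split circumvents this by exploiting smallness of the perturbation coefficient in the large-$|\lambda|$ regime, and by invoking the sign of $\Im(U+i\lambda)^{-1}=-\mu/|U+i\lambda|^2$ combined with the strict negativity of $U''$ in the bounded-$|\lambda|$ regime, where both $|\nu|$ and hence $|U+i\lambda|$ are controlled.
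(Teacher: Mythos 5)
Your proof is correct, and it does reach the same conclusion, but the route is genuinely different from the paper's. The paper relies on its Eq.\ \eqref{eq:99} (itself a consequence of the imaginary-part identity \eqref{eq:90} and $\min|U''|>0$), which bounds the \emph{weighted} norm $\|\phi/(U+i\lambda)\|_2^2\leq (\widehat C/|\mu|)\,|\langle\phi,v/(U+i\lambda)\rangle|$. Feeding that into the real-part energy identity via Cauchy--Schwarz, $|\Re\langle U''\phi,\phi/(U+i\lambda)\rangle|\leq C\|\phi/(U+i\lambda)\|_2\|\phi\|_2$, gives an estimate that is uniform in $\nu$ with no case split, leading directly to \eqref{eq:168}--\eqref{eq:169} and then \eqref{eq:167}. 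You instead control the perturbation with the pointwise bound $|\Re(U+i\lambda)^{-1}|\leq 1/(2|\mu|)$, which discards the extra decay of $(U+i\lambda)^{-1}$ at large $|\lambda|$; this forces you to split into a large-$|\lambda|$ regime (where Poincar\'e absorbs the perturbation because $\|U''/(U+i\lambda)\|_\infty\leq 2\|U''\|_\infty/M$ is small) and a bounded-$|\lambda|$ regime (where you use the imaginary part to bound $\|\phi\|_2^2$ directly, exploiting the uniform upper bound on $|U+i\lambda|$). Both arguments are sound; the paper's has the advantage of a one-shot proof with no dichotomy, while yours is arguably more elementary in that it bounds $\|\phi\|_2$ itself rather than a weighted norm. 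The one thing to note is that in the second regime, once you have $\|\phi\|_2^2\lesssim\|\phi'\|_2\|(1-x)^{1/2}v\|_1$ and feed it into the real-part identity, you do not actually need Young's inequality -- both sides are linear in $\|\phi'\|_2$ and you can divide through directly.
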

  \begin{proof} 
    Since \eqref{eq:100} holds true for any $\mu\neq0$ we can conclude
    that
\begin{equation}
\label{eq:168}
   \|\phi^\prime\|_2 \leq\Big(1+
   \frac{C}{|\mu|^{1/2}}\Big)\Big|\Big\langle\phi,\frac{v}{U+i\lambda}\Big\rangle\Big|^{1/2} \,.
\end{equation}
Hence, there exists $C$ such that for $|\mu| \geq \mu_1$
\begin{equation}
\label{eq:169}
   \|\phi^\prime\|_2^2 \leq  C \, \Big|\Big\langle\phi,\frac{v}{U+i\lambda}\Big\rangle\Big| \,.
\end{equation}
Consequently, as $|\phi(x)|\leq \|\phi^\prime\|_2(1-x)^{1/2}$ and $|\mu|>\mu_1$,
\begin{displaymath}
   \|\phi^\prime\|_2^2 \leq
   C\|\phi^\prime\|_2\|(1-x)^{1/2}v\|_1\Big\|\frac{1}{U+i\lambda}\Big\|_\infty
   \leq \frac{C}{\mu_1} \|\phi^\prime\|_2\|(1-x)^{1/2}v\|_1 \,,
\end{displaymath}
from which \eqref{eq:167} follows with the aid of Poincar\'e's
inequality.  
  \end{proof}

\section{Neumann-Dirichlet Schr\"odinger operators}
\label{sec:3}
The first part of this  section is devoted to resolvent estimates for the operator
\begin{subequations} \label{eq:285}
  \begin{equation}
    \LL_\beta^{\Nf,\Df}=-\frac{d^2}{dx^2}+i\beta \, U \,,
  \end{equation}
  which is defined on
  \begin{equation}
    D(\LL_\beta^{\Nf,\Df})=\{ u\in H^2(0,1)\,|\, u(1)=u^\prime(0)=0\} \,.
  \end{equation}
\end{subequations}
We note that to estimate the inverse norm of the Orr-Sommerfeld
operator \eqref{eq:6} we need a bound of both $\A_{\lambda,\alpha}^{-1}$ and
$(\LL_\beta^{\Nf,\Df}-\lambda)^{-1}$, and in addition, to obtain resolvent
estimates for the special Schr\"odinger operators of the next section. 

For convenience of notation we omit in the sequel the reference  to the
Dirichlet condition at $x=1$ and use $\LL_\beta^{\Nf}$ instead of $
\LL_\beta^{\Nf,\Df}$. }

 Let $\lambda=\mu+i\nu$. Recall the definition of $x_\nu$ from
  \eqref{eq:defxnu}.  From \cite{AGH}, for instance, we know that the
  main contribution to the resolvent norm comes from a small region
  near $x=x_\nu$. We begin this section by estimating the
  resolvent norm of $\LL_\beta^{\Nf}$ in the case where $U(0)-\nu
  \gg\beta^{-1/2}$. In this case one may approximate $U-\nu$ by a linear
  potential of the form $U^\prime(x_\nu)(x-x_\nu)$. In Subsection
  \ref{sec:schrod-quad}, we consider the case $|U(0)-\nu|\lesssim\beta^{-1/2}$
  where $U-\nu$ will be approximated by the quadratic potential
  $U^{\prime\prime}(0)x^2/2+U(0)-\nu$. Finally, Subsection \ref{sec:l1-estimates} is
  devoted to some $L^1$ estimates that are necessary in the Section
  \ref{sec:4}.

\subsection{Resolvent estimates for $ U(0)-\Im \lambda \gg\beta^{-1/2}$.}\label{ss3.1}
With $x_\nu$ defined in \eqref{eq:defxnu}, we introduce 
\begin{equation}
\label{eq:286}
{\mathfrak J_\nu}=|U^\prime(x_\nu)| \,.
\end{equation}
We further define $\hat \kappa_1\in\C$  to be the  leftmost eigenvalue of
\begin{equation}\label{eq:defkappa1}
  \LL_+=-\frac{d^2}{dx^2} + ix 
\end{equation}
in $H^2(\R_+)\cap H^1_0(\R_+)$. 
The first proposition is similar to \cite[Proposition
5.2]{almog2019stability}.  Unlike \cite{almog2019stability} which
defines the problem on $(-1,+1)$ with Dirichlet conditions at $x=\pm1$,
we consider the operator on $(0,1)$ with a Neumann condition at $x=0$,
in accordance with a restriction to even function space on $(-1,1)$,
and a Dirichlet condition at $x=1$.  Furthermore, the velocity field
is not strictly monotone as in \cite{almog2019stability}.
Nevertheless, for $x_\nu\gg\beta^{-1/4}$ (or equivalently for $U(0) -\nu
\gg\beta^{-1/2}$) we can still make good use of the 
estimates in \cite{almog2019stability}.
\begin{proposition}
\label{prop5.3}  
Let $U\in C^2([0,1])$ satisfy \eqref{eq:10} 
and $ p \in (1,2]$. Then
there exist positive $\Upsilon$, $a$, $C$, $C_p$, and $\beta_0$ such that, for
all $\beta\geq \beta_0$, $U(0)-\nu>a\beta^{-1/2}$, and $f\in L^\infty(0,1)$,
   \begin{subequations}
\label{eq:287}
       \begin{multline} 
 \sup_{ \mu\leq\Upsilon  {\mathfrak J_\nu}^{2/3}\beta^{-1/3}}
\|(\LL_\beta^\Nf-\beta\lambda)^{-1}f\|_2  \leq C\min([{\mathfrak
J_\nu}\beta]^{-2/3}\|f\|_2,[{\mathfrak J_\nu}\beta]^{-5/6}\|f\|_\infty)\,.
  \end{multline}
Furthermore, for  $f\in L^2(0,1),$
\begin{equation}
 \sup_{ \mu \leq\Upsilon  {\mathfrak J_\nu}^{2/3}\beta^{-1/3} }
\Big\| \frac{d}{dx} (\LL_\beta^\Nf-\beta\lambda)^{-1}f\Big\|_p\leq \frac{C_p}{[{\mathfrak J}_\nu\beta]^{\frac{2+p}{6p}}}\|f\|_2\,.
\end{equation}   
\end{subequations}
\end{proposition}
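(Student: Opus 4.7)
The plan is to mimic the proof of \cite[Proposition 5.2]{almog2019stability} but take care of two extra features: the Neumann boundary at $x=0$, and the degeneracy $U^\prime(0)=0$. The assumption $U(0)-\nu\gg\beta^{-1/2}$ (equivalently $x_\nu\gg\beta^{-1/4}$, via the bound $U(0)-\nu\geq\tfrac12|U^{\prime\prime}(0)|x_\nu^2$ that follows from \eqref{eq:10}) is precisely what guarantees that the critical layer at $x_\nu$ is separated from the degenerate point $x=0$ on the natural scale $(\beta{\mathfrak J}_\nu)^{-1/3}$ of the Airy-type operator. The first step would be to prove that ${\mathfrak J}_\nu=|U^\prime(x_\nu)|\sim x_\nu$ in this regime, so that the separation condition $(\beta{\mathfrak J}_\nu)^{1/3} x_\nu\gg 1$ holds.

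First I would implement a dyadic partition of unity on $(0,1)$: one cutoff $\chi_{\rm in}$ supported in $(x_\nu-r,x_\nu+r)\cap(0,1)$ with $r$ of the order $\min(x_\nu/2,\Upsilon^{-1}{\mathfrak J}_\nu^{-1/3}\beta^{-1/3}\log\beta)$, and $\chi_{\rm out}=1-\chi_{\rm in}$. On $\mathrm{supp}\,\chi_{\rm out}$ the function $|U-\nu|+|\mu|/\beta$ is bounded from below by $c\,{\mathfrak J}_\nu r$, and the standard multiplication by $\bar u/(U-\nu+i\mu/\beta)$ yields $\|u\|_{L^2({\rm out})}+\beta^{-1/2}\|u^\prime\|_{L^2({\rm out})}\lesssim({\mathfrak J}_\nu r\beta)^{-1}\|f\|_2$, which is much smaller than the target bound.

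The heart of the matter is the inner piece $u_{\rm in}=\chi_{\rm in}u$. There I freeze coefficients and write $U(x)=\nu+U^\prime(x_\nu)(x-x_\nu)+R(x)$ with $|R|\leq C(x-x_\nu)^2$, then rescale $y=(\beta{\mathfrak J}_\nu)^{1/3}(x-x_\nu)$. The leading operator becomes $-\partial_y^2-iy-(\beta/\mathfrak J_\nu)\beta^{-2/3}\lambda+\tilde R$ with a perturbation $\tilde R$ of size $O((\beta{\mathfrak J}_\nu)^{-1/3})$ on the support of $\chi_{\rm in}$, and the left/right boundary of the window is pushed to $\pm(\beta{\mathfrak J}_\nu)^{1/3}r\gg1$. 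Thanks to $x_\nu\gg\beta^{-1/4}$, the Neumann endpoint $x=0$ sits at $y\leq-c(\beta{\mathfrak J}_\nu)^{1/3}x_\nu\to-\infty$, so its influence can be absorbed into an exponentially small error via the standard Agmon/Airy decay of the resolvent of $-\partial_y^2-iy-z$ on the half-line (or even the whole line), exactly as in the proof of \cite[Proposition 5.2]{almog2019stability}. The resolvent estimates for the model operator $-\partial_y^2-iy$ at spectral parameter $z$ with $\Re z\leq\Upsilon$ then yield $\|\cdot\|_{L^2\to L^2}=O(1)$, $\|\cdot\|_{L^\infty\to L^2}=O(1)$ after using the $L^\infty$-to-$L^2$-on-a-scale-$1$ bound, and $\|\partial_y\cdot\|_{L^2\to L^p}=O(1)$ for $p\in(1,2]$ by Airy-function/Sobolev arguments. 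Undoing the rescaling produces the factors $(\beta{\mathfrak J}_\nu)^{-2/3}$, $(\beta{\mathfrak J}_\nu)^{-2/3-1/6}=(\beta{\mathfrak J}_\nu)^{-5/6}$ (the extra $(\beta{\mathfrak J}_\nu)^{-1/6}$ coming from the $L^\infty\to L^2$ conversion through an interval of length $(\beta{\mathfrak J}_\nu)^{-1/3}$), and $(\beta{\mathfrak J}_\nu)^{-2/3}(\beta{\mathfrak J}_\nu)^{1/3}(\beta{\mathfrak J}_\nu)^{(1/p-1/2)/3}=(\beta{\mathfrak J}_\nu)^{-(2+p)/(6p)}$ respectively, which are the announced bounds \eqref{eq:287}.

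The remaining work is to close the commutator errors from $\chi_{\rm in}$, $\chi_{\rm out}$ and the quadratic remainder $R$. The commutator $[\chi_{\rm in},-\partial_x^2]$ produces terms supported in the overlap region of size $r$ where $|U-\nu|\sim {\mathfrak J}_\nu r$, and the choice of $r$ makes these absorbable; the contribution of $i\beta R$ on $\mathrm{supp}\,\chi_{\rm in}$ is bounded by $\beta r^2\|u_{\rm in}\|_2\lesssim (\beta{\mathfrak J}_\nu)^{-1/3}(\log\beta)^2\|u_{\rm in}\|_2$, which is a small perturbation of the model resolvent and can be removed by a Neumann series for $\beta\geq\beta_0$. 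I expect the main technical obstacle to be the uniform control of the perturbation $\tilde R$ together with the boundary-at-infinity estimates, since one has to check that the logarithmic enlargement of $r$ used to separate scales does not violate the exponent $2/3$ of the $L^2$ bound; this is carried out in \cite{almog2019stability} by exploiting the Agmon decay of $(-\partial_y^2-iy-z)^{-1}$ at rate $|y|^{3/2}$, and the same argument applies here once the endpoint $y=-(\beta{\mathfrak J}_\nu)^{1/3}x_\nu$ has been shown to lie deep in the decay region, which uses precisely the assumption $U(0)-\nu\geq a\beta^{-1/2}$.
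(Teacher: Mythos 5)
The broad structure of your argument — localize near the critical layer $x_\nu$, elementary energy estimates outside, rescale to the complex Airy model inside, and use $x_\nu\gg\beta^{-1/4}$ (equivalently ${\mathfrak J}_\nu\beta^{1/4}\gg1$) to push the Neumann endpoint $x=0$ deep into the decay region — is exactly the paper's strategy, and the exponent bookkeeping you perform at the end is correct. But there is a genuine gap in how you treat the inner piece, and it sits precisely in the regime the proposition is supposed to cover. You propose to \emph{linearize} $U$ at $x_\nu$, put the quadratic remainder $R$ into the potential, and remove $i\beta R$ by a Neumann series. This cannot work. Over a window of radius $r$ one has $\|i\beta R\|_\infty\sim\beta r^2$, and the model resolvent has norm $\sim(\beta{\mathfrak J}_\nu)^{-2/3}$, so the absorption requires $\beta r^2(\beta{\mathfrak J}_\nu)^{-2/3}=(\beta{\mathfrak J}_\nu^4)^{-1/3}\big((\beta{\mathfrak J}_\nu)^{1/3}r\big)^2\ll1$. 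With your logarithmic window the factor is $a^{-2/3}(\log\beta)^2$, unbounded in $\beta$; and in the borderline case $U(0)-\nu\sim a\beta^{-1/2}$ your $\min$ actually selects $r\sim x_\nu/2$, for which $\beta r^2(\beta{\mathfrak J}_\nu)^{-2/3}\sim(\beta x_\nu^4)^{1/3}$ — this \emph{diverges} as $\beta\to\infty$. (Your displayed bound $\beta r^2\lesssim(\beta{\mathfrak J}_\nu)^{-1/3}(\log\beta)^2$ is itself miscomputed: the correct power is $(\beta{\mathfrak J}_\nu^4)^{-1/3}$, and it is the borderline ${\mathfrak J}_\nu\sim\beta^{-1/4}$ that kills the smallness.) Linearization of $U$ is only valid on scales $|x-x_\nu|\ll x_\nu$, and on a window of width $x_\nu$ (which is what one ultimately needs) the quadratic term is of the same order as the linear one.

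The paper circumvents this by \emph{not} linearizing. In Step~1.1 it cuts off at the wider scale $x_\nu$ (i.e., $\tilde\chi_\nu$ supported in $(x_\nu/2,3x_\nu/2)$), replaces $U$ by a modification $\hat U_\nu$ that agrees with $U$ on this interval and is extended affinely outside, dilates by $x_\nu$, and lands on a Schr\"odinger operator $-\partial_y^2+i\tilde\beta\tilde U_\nu(y)$ on $\mathbb{R}$ with $\tilde\beta=\beta x_\nu^4\geq\hat\beta_0$ and $\tilde U_\nu$ a \emph{nonlinear} strictly monotone potential with $|\tilde U_\nu'|$ bounded above and below and $\|\tilde U_\nu''\|_\infty$ uniformly bounded. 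It then invokes \cite[Proposition 5.1]{almog2019stability} (for the $L^2\to L^2$ and $L^2\to W^{1,p}$ bounds) and \cite[Lemma 5.5]{almog2019stability} (for the $L^\infty\to L^2$ bound), which already treat the nonlinear potential. The $x_\nu$-scale outer region is then controlled by energy estimates that beat the target by a factor $a^{-2/3}$, leaving ample room for the commutator errors. Note also that \cite{almog2019stability} itself does not linearize, so your remark that the resolution ``applies exactly as in \cite{almog2019stability}'' conflates two different mechanisms: their Agmon decay controls commutator tails, but it does not repair a perturbation that is pointwise $O(1)$ or larger across the window, which is what $\beta R$ is.
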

\begin{proof}~\\
 {\em Step 1: For $\Upsilon>0$, we prove that there exist positive $\beta_0$, $a_0$,  and $C$
  such that for all $\beta \geq \beta_0$,  $U(0)-\nu \geq a \beta^{-\frac 12}$, $a\geq
  a_0$  and $  \mu  \leq\Upsilon  {\mathfrak J_\nu}^{2/3}\beta^{-1/3}$ 
  we have
\begin{equation}
\label{eq:288}
  \|(\LL_\beta^\Nf-\beta\lambda)^{-1}f\|_2+ [{\mathfrak
 J_\nu}\beta]^{-1/3}\|\frac{d}{dx}\, (\LL_\beta^\Nf-\beta\lambda)^{-1}f\|_2 \leq C \, [{\mathfrak
J_\nu}\beta]^{-2/3}\|f\|_2\,.
\end{equation}
}

As  $U(0) -\nu >a\beta^{-1/2}$, it holds that  
\begin{equation}\label{eq:289}
x_\nu \geq \frac{1}{C}\,
a^{1/2} \beta^{-\frac 14} \,.
\end{equation} 
For future reference we note, in addition,  that 
\begin{equation}\label{eq:290}
\frac 1C x_\nu \leq \mathfrak J_\nu \leq C x_\nu\,. 
\end{equation}
We split the proof of \eqref{eq:288} into two parts according to the sign of $\nu-U(\frac 12)$.\\[1.5ex]

{\em Step  1.1: The case when  $U(1/2)<\nu<U(0)-a\beta^{-1/2}$. \\}

We note that in this case $x_\nu \in (0,\frac 12)$.  Let $\hat{\chi}$ be given as in \eqref{eq:41} by 
 \begin{equation}
\label{eq:291}
   \hat{\chi}(x)=
   \begin{cases}
     0 & |x|<\frac{1}{4} \\
     1  & |x|>\frac{1}{2} \,.
   \end{cases}
 \end{equation}
  Set further for $x\in [0,1]$
  \begin{equation}
\label{eq:292}
    \chi_\nu^\pm(x)=\hat{\chi}(x/x_\nu-1){\mathbf 1}_{\R_+}(\pm
    (x-x_\nu)) \,.
  \end{equation}
  and $\hat \chi$ can be chosen such that 
  \begin{equation}\label{eq:293}
  \tilde{\chi}_\nu:=\sqrt{1-(\chi_\nu^+)^2-(\chi_\nu^-)^2}\in C^\infty([0,1])\,.
  \end{equation}
 Note that $\chi_\nu^-$ is supported on $(0,3x_\nu/4)$ whereas
  $\chi_\nu^+$ is supported on $(5x_\nu/4,1)$.  The complementary cutoff
  function $\tilde{\chi}_\nu$ is supported on $(x_\nu/2,3x_\nu/2)$. 
   The cutoff functions defined in \eqref{eq:292} allow us to obtain
  estimates for $v$ separately on the intervals $(0,x_\nu/2)$,
  $(3x_\nu/2,1)$ (via integration by parts), and $(x_\nu/2,3x_\nu/2)$.  
  
More precisely,  let $(v,f)\in D(\LL_\beta^\Nf)\times L^2(0,1)$ satisfy
   $(\LL_\beta^\Nf-\lambda)v=f$.
Set
 \begin{equation}
\label{eq:294}
    \hat{U}_\nu(x)=
    \begin{cases}
      U(x) & x\in(x_\nu/2,3x_\nu/2) \\
      U(x_\nu/2)+U^\prime(x_\nu/2)(x-x_\nu) & x\leq\frac{x_\nu}{2} \\
      U(3x_\nu/2)+U^\prime(3x_\nu/2)(x-x_\nu) & x\geq \frac{3x_\nu}{2}
    \end{cases}
    \,.
  \end{equation}
Let then 
\begin{subequations}
\label{eq:295}
  \begin{equation}
   \hat{\LL}_{\beta,\nu,\R} = -\frac{d^2} {dx^2} + i\beta \hat{U}_\nu  \,,
  \end{equation}
be defined on the domain
\begin{equation}
D( \hat{\LL}_{\beta,\nu,\R})=\{u\in H^2(\R)\,|\,xu\in L^2(\R)\}\,.
\end{equation}
\end{subequations}

We now apply the unitary dilation operator (corresponding to the
change of variable $y= x /x_\nu$)
\begin{equation}
\label{eq:296}
  {\Tg_\nu}u(x)=x_\nu^{-1/2}u( x/ x_\nu) \,,
\end{equation}
to obtain
  \begin{equation}
\label{eq:297}
  \Tg_\nu^{-1} \hat {\LL}_{\beta,\nu,\R} \Tg_\nu=
  x_\nu^{-2}\Big(-\frac{d^2}{dy^2}+ i \tilde \beta_\nu  \tilde {U}_\nu (y)\Big)\,,
\end{equation}
where
\begin{equation}\label{eq:298}
  \tilde {U}_\nu (y)=\frac{\hat{U}_\nu(x_\nu y)}{x_\nu^2} \mbox{ and }\tilde \beta_\nu = \beta x_\nu^4 \,.
\end{equation}
As there exist positive $m$, $M$ such that for $\nu$ satisfying the assumptions of our proposition 
\begin{equation}
\label{eq:299}
  0<m\leq|\tilde {U}_\nu^\prime(y)|\leq M\,,\;\forall y \in\R\,,
\end{equation}
and in view of the uniform bound 
\begin{equation}\label{eq:300}
\|\tilde{U}_\nu^{\prime\prime}\|_{L^\infty(\R)}\leq C\,,
\end{equation}
we may apply \cite[Proposition 5.1]{almog2019stability} to  the family of operators 
\begin{equation}\label{eq:301}
\tilde \LL_{\tilde \beta,\mathbb R} :=-\frac{d^2}{dy^2}+i \tilde  \beta \, \tilde {U}_\nu (y) 
\end{equation}
to obtain for $\hat \beta \geq \hat \beta_0$
\begin{equation*}
   \sup_{\Re \tilde \lambda\leq\Upsilon \tilde \beta^{-1/3}}\|( \tilde {\LL}_{\tilde \beta,\R}-\tilde \beta \tilde \lambda)^{-1}\| +
  \tilde \beta^{-1/3} \Big\|\frac{d}{dy}( \tilde {\LL}_{\tilde \beta,\R}- \tilde \beta \tilde \lambda)^{-1}\Big\|\leq
   \frac{C}{\tilde \beta^{2/3}}\,, 
\end{equation*}
where 
\begin{equation}
\tilde \beta =\tilde \beta_\nu= \beta x_\nu^4 \mbox{  and } \tilde \lambda
=x_\nu^{-2}\lambda\,.
\end{equation}
 We observe that, by \eqref{eq:289}, for any given
$\hat \beta_0>0$ there exists $a_0>0$ such that $\tilde \beta>\hat \beta_0$ is
satisfied for $a\geq a_0$. Taking the inverse dilation transformation,
we obtain
\begin{equation*}
   \sup_{\Re \lambda\leq\Upsilon  x_\nu^{2/3} \beta^{-1/3}}\|( \hat {\LL}_{\beta,\nu, \R}- \beta \lambda)^{-1}\| +
   (\beta  x_\nu)^{-1/3} \Big\|\frac{d}{dx}( \hat  {\LL}_{ \beta,\nu, \R}- \beta \lambda)^{-1}\Big\|\leq
   \frac{C}{(x_\nu  \beta)^{2/3}}\,.  
\end{equation*}

Given that \cite[Proposition 5.1]{almog2019stability} allows for an
arbitrary $\Upsilon$, we can replace $x_\nu$ by $\mathfrak J_\nu$ to obtain
\begin{equation}
\label{eq:302}
   \sup_{\Re \lambda\leq\Upsilon  \mathfrak J_\nu^{2/3} \beta^{-1/3}}\|( \hat {\LL}_{\beta,\nu, \R}- \beta \lambda)^{-1}\| +
   (\beta  \mathfrak J_\nu)^{-1/3} \Big\|\frac{d}{dx}( \hat  {\LL}_{ \beta,\nu, \R}- \beta \lambda)^{-1}\Big\|\leq
   \frac{C}{(\mathfrak J_\nu  \beta)^{2/3}}\,.  
\end{equation}
We now write 
\begin{equation}
\label{eq:303}
  ( \hat {\LL}_{\beta,\nu,\R} -\beta\lambda)(\tilde{\chi}_\nu v)=\tilde{\chi}_\nu f-2\tilde{\chi}_\nu^\prime v^\prime- \tilde{\chi}_\nu^{\prime\prime}v \,.
\end{equation}
We can then conclude from \eqref{eq:302} and \eqref{eq:303} that
\begin{equation}
\label{eq:304} 
  \|\tilde{\chi}_\nu v\|_2+[\beta\mathfrak J_\nu ]^{-1/3} \|(\tilde{\chi}_\nu
  v)^\prime\|_2\leq\frac{C}{[\beta\mathfrak J_\nu ]^{2/3}}(\|\tilde{\chi}_\nu f\|_2+2\|\tilde{\chi}_\nu^\prime v^\prime\|_2+
  \|\tilde{\chi}_\nu^{\prime\prime}v\|_2)\,. 
\end{equation}
  Note that
\eqref{eq:304} implies, by \eqref{eq:290},  that
\begin{equation}
\label{eq:305} 
  \|\tilde{\chi}_\nu v\|_2+[\beta x_\nu ]^{-1/3} \|(\tilde{\chi}_\nu
  v)^\prime\|_2\leq\frac{C}{[\beta x_\nu ]^{2/3}}(\| f\|_2+ x_\nu^{-1} \| v^\prime\|_2+ x_\nu^{-2} 
  \|v\|_2)\,. 
\end{equation}

To estimate $v$ on $(0,3x_\nu/4)$ and $(5x_\nu/4,1)$ we write 
\begin{equation}
\label{eq:306}
  (\LL_\beta^\Nf-\beta\lambda)(\chi^\pm_\nu v)=\chi^\pm_\nu f-2(\chi^\pm_\nu)^\prime v^\prime- (\chi^\pm_\nu)^{\prime\prime}v \,.
\end{equation}
The real part of the inner product with $ \chi^\pm_\nu v$, after
integration by parts, is given by
\begin{equation}
\label{eq:307}
  \|(\chi_\nu^\pm v)^\prime\|_2^2=\|(\chi_\nu^\pm)^\prime v\|_2^2+\mu\beta\|\chi_\nu^\pm v\|_2^2 +
  \Re\langle\chi_\nu^\pm v,\chi_\nu^\pm f\rangle \,, 
\end{equation}
whereas the imaginary part assumes the form
\begin{equation}
\label{eq:308}
  \mp \beta\| |U-\nu|^{1/2}\chi_\nu^\pm v\|_2^2+2\Im\langle(\chi_\nu^\pm)^\prime v,(\chi_\nu^\pm
  v)^\prime\rangle =  +\Im \langle\chi_\nu^\pm v,\chi_\nu^\pm f\rangle \,.
\end{equation}
As, by \eqref{eq:185},   $|U-\nu|^{1/2}\chi_\nu^\pm\geq \frac 1 C \,
x_\nu\chi_\nu^\pm$, 
\eqref{eq:308} yields first 
\begin{displaymath}
\frac 1C \beta x_\nu^2  \|\chi_\nu^\pm v\|_2^2 \leq \|\chi_\nu^\pm v\|_2\, \| \chi_\nu^\pm
f \|_2 + \frac{C}{x_\nu} \|v\|_2\,  \| (\chi_\nu^\pm v)^\prime\|_2\,. 
\end{displaymath}
Combining the above with \eqref{eq:307} we obtain
 \begin{multline*}
\frac 1C \beta x_\nu^2  \|\chi_\nu^\pm v\|_2^2 \leq \|\chi_\nu^\pm v\|_2\, \| \chi_\nu^\pm f \|_2\\ +
\frac{C}{x_\nu} \|v\|_2\, \Big( \frac{1}{x_\nu} \|v\|_2  +  \mu_{\beta,+}^\frac
  12  \|\chi_\nu^\pm v\|_2 + \sqrt{\|\chi_\nu^\pm v\|_2\, \| \chi_\nu^\pm f\|_2}  \Big) \,, 
\end{multline*}
where
\begin{equation}
\label{eq:284}
  \mu_{\beta,+}=\max(\mu\beta,0)\,.
\end{equation}
For  $\mu\leq\Upsilon  {\mathfrak J_\nu}^{2/3}\beta^{-1/3}$, we may conclude,
using  \eqref{eq:290}, that
 \begin{displaymath}
\frac 1C \beta x_\nu^2  \|\chi_\nu^\pm v\|_2^2 \leq \|\chi_\nu^\pm v\|_2\, \| \chi_\nu^\pm f \|_2 +
\frac{C}{x_\nu} \|v\|_2\, \left( \frac{1}{x_\nu} \|v\|_2 + (x_\nu \beta)^\frac
  13  \|\chi_\nu^\pm v\|_2 + \sqrt{\|\chi_\nu^\pm v\|_2\,  \|\chi_\nu^\pm  f\|_2}  \right) \,,
\end{displaymath}
which implies
\begin{displaymath}
 \|\chi_\nu^\pm v\|_2^2 \leq \frac{C}{(\beta x_\nu^2)^2} \| \chi_\nu^\pm f\|^2_2 + C
 \frac{1}{\beta x_\nu^2}\,  \left(\frac{1}{x_\nu^2} + \frac{1}{\beta^\frac 13
     x_\nu^{\frac{10}{3} }}\right) \|v\|^2_2 \,.
\end{displaymath}
By \eqref{eq:289},  there exists $C>0$ such that
\begin{displaymath}
 \frac{1}{\beta^\frac 13   x_\nu^{\frac{10}{3} }} \leq
 Ca^{-2/3} x_\nu^{-2}  \,.
\end{displaymath}
Hence we may conclude that there exists $C>0$ such that, if $a \geq1$
and \break $U(0) -\nu >a\beta^{-1/2}$,
\begin{equation}
\label{eq:309}
  \|\chi_\nu^\pm v\|_2 \leq \frac{C}{\beta x_\nu^2}[\|\chi_\nu^\pm f\|_2+\beta^{1/2} \| v\|_2]\,.
\end{equation}
Combining \eqref{eq:309} and \eqref{eq:305} leads to, with the aid
of \eqref{eq:289}, 
\begin{displaymath}
\begin{array}{ll}
  \|v\|_2& \leq
  C\big(([\beta x_\nu^2]^{-1}+[\beta x_\nu]^{-2/3})\|f\|_2+ ( [\beta
  x_\nu^4]^{-1/3} + [\beta x_\nu^4]^{-2/3}) \|v\|_2+[\beta^2x_\nu^5]^{-1/3}\|v^\prime\|_2\big)\\
  &\leq   \check C\big([\beta x_\nu]^{-2/3}\|f\|_2+[\beta^2x_\nu^5]^{-1/3}\|v^\prime\|_2\big)  + \widehat C a^{-\frac 23} \|v\|_2 \,.
  \end{array}
\end{displaymath}
Thus, there exists $a_0\geq 1$  and $C>0$ such that for $a \geq a_0$ we obtain
\begin{equation}
\label{eq:310}
   \|v\|_2\leq   C\big([\beta x_\nu]^{-2/3}\|f\|_2+[\beta^2x_\nu^5]^{-1/3}\|v^\prime\|_2\big)\,. 
\end{equation}
We now use \eqref{eq:305} together with  \eqref{eq:289} to
establish that
 \begin{equation} 
\label{eq:311}
\|(\tilde{\chi}_\nu  v)^\prime\|_2 \leq  C  ([\beta \, x_\nu ]^{- 1/3}  \|f\|_2+[\beta x_\nu^7]^{-1/3}
  \|v\|_2)  + \widehat C a^{-\frac 23}\,  \| v^\prime\|_2  \,. 
\end{equation}
By \eqref{eq:307}, as $ \mu\beta \leq C (x_\nu \beta)^\frac 23$,   it holds that
\begin{equation*}
  \|(\chi_\nu^\pm v)^\prime\|_2 \leq  C \left( \frac{1}{x_\nu} \| v\|_2 + x_\nu^\frac 13 \beta^\frac 13 \|\chi_\nu^\pm v\|_2 +
 | \Re\langle\chi_\nu^\pm v,\chi_\nu^\pm f\rangle |^\frac 12\right) 
\end{equation*}
which leads to
\begin{equation}
  \|(\chi_\nu^\pm v)^\prime\|_2 \leq   \widehat C \left( \frac{1}{x_\nu} \| v\|_2
    + (\beta x_\nu)^\frac 13  \|\chi_\nu^\pm v\|_2 + (\beta x_\nu)^{-\frac 13 } \|f\|_2\right)
 \,. 
\end{equation}
Then we use \eqref{eq:309}, to get first that 
\begin{equation*}
  \|(\chi_\nu^\pm v)^\prime\|_2   \leq C  \Big( \frac{1}{x_\nu} +\frac{\beta^\frac 12\,
    (\beta x_\nu)^\frac 13}{\beta x_\nu^2} \Big)\| v\|_2 + C  \left( (\beta
    x_\nu)^{-\frac 13 }+ \frac{ (\beta x_\nu)^\frac 13}{ \beta x_\nu^2} \right)
  \|f\|_2\,,
\end{equation*}
 and then conclude from \eqref{eq:289} that
 \begin{equation}
\label{eq:312}
  \|(\chi_\nu^\pm v)^\prime\|_2 \leq C (x_\nu^{-1} \| v\|_2 + (\beta x_\nu)^{-\frac 13 } \|f\|_2)\,.
\end{equation}
Combining \eqref{eq:311} and \eqref{eq:312}  yields
\begin{displaymath}
   \|v^\prime\|_2\leq C\big((x_\nu^{-1}+[\beta x_\nu^7]^{-1/3})\|v\|_2+[\beta
   x_\nu]^{-1/3}\|f\|_2\big) + \widehat C a^{-\frac 23}\,  \| v^\prime\|_2  \,. 
\end{displaymath}
Using again \eqref{eq:289} we obtain the existence of  $a_0$ that for $a\geq a_0$
\begin{displaymath}
   \|v^\prime\|_2\leq C(x_\nu^{-1}\|v\|_2+[\beta x_\nu]^{-1/3}\|f\|_2) \,.
\end{displaymath}
Substituting \eqref{eq:310} into the above yields the existence of
$a_0>0$ that for all $a\geq a_0$ 
\begin{equation}\label{eq:313}
  \|v^\prime\|_2\leq C[\beta x_\nu]^{-1/3}\|f\|_2 \,.
\end{equation}
 By \eqref{eq:310} and \eqref{eq:313} we then obtain
\begin{displaymath}
  \|v\|_2\leq   C\big([\beta x_\nu]^{-2/3}+[\beta^2x_\nu^5]^{-1/3}[\beta x_\nu]^{-1/3}\big)\|f\|_2\,, 
\end{displaymath}
which implies,  using \eqref{eq:289},
\begin{equation}\label{eq:314}
  \|v\|_2\leq   C\, [\beta x_\nu]^{-2/3} \|f\|_2\,. 
\end{equation}
Having in mind \eqref{eq:290} we finally obtain from
\eqref{eq:313} and \eqref{eq:314} 
\begin{equation}
  \label{eq:315}
\|v\|_2 + [\Jf_\nu \beta]^{-1/3}\|v^\prime\|_2 \leq C[{\mathfrak
J}_\nu \beta]^{-2/3}\|f\|_2\,,
\end{equation}
which is precisely  \eqref{eq:288}.\\[1.5ex]
{\em Step  1.2: The case when $\nu\leq U(1/2)$. \\}

We recall that $x_\nu =1$ for $\nu < 0$ and observe that $x_\nu \geq \frac
12$ in this Step . Hence, we need to define only a pair of cutoff
functions. We thus set
\begin{equation}\label{eq:defchi2}
  \chi_2(x)=\hat{\chi}(2x)\,,
\end{equation}
and $\tilde{\chi}_2=\sqrt{1- \chi_2^2}$, which is supported on $[0,1/4]$. \\
Then, we
may write as in \eqref{eq:309}
\begin{equation}
\label{eq:316}
  \|\tilde{\chi}_2v\|_2 \leq C \beta^{-1} [\|f\|_2+\beta^{1/2} \| v\|_2]\,.
\end{equation}
Similarly, we obtain as in \eqref{eq:312}
\begin{displaymath}
  \|(\tilde{\chi}_2v)^\prime\|_2 \leq \frac{C}{\beta^{1/2}}[\|f\|_2+\beta\| v\|_2]\,.
\end{displaymath}
Suppose that 
\begin{equation}
\label{eq:317}
 \Upsilon<[U^\prime(1/4)/|U^\prime(1)|]\Re\hat{\kappa}_1\,.
 \end{equation}
 For later reference we note that \eqref{eq:317} implies that
 $\Upsilon<[U^\prime(1/4)/\Jf_\nu]^{2/3}\Re\hat{\kappa}_1$.  As in \eqref{eq:304} we can
 also conclude, from \cite[Proposition 5.2]{almog2019stability}, that
\begin{displaymath}
    \|\chi_2v\|_2+\beta^{-1/3} \|(\chi_2v)^\prime\|_2\leq\frac{C}{\beta^{2/3}}(\|f\|_2+\|v^\prime\|_2+
  \|v\|_2)\,. 
\end{displaymath}
Combining the above we may proceed as in the step 1.1 to conclude
\eqref{eq:288} and the $L^2$-bound on the right hand side in
(\ref{eq:287}a).\\ 

To complete the proof of (\ref{eq:287}a) we need to establish an
$\LL(L^2,L^\infty)$ bound for $(\LL_\beta^\Nf-\beta\lambda)^{-1}$. \\

{\em Step 2: For
\begin{equation}
\label{eq:318}\Upsilon<[|U^\prime(1/16)|/|U^\prime(1)|]^{2/3}\Re\hat{\kappa}_1\,,
\end{equation}
we prove that there exist positive $\beta_0$, $a_0$, and $C$ such that
for all $\beta \geq \beta_0$, \break $U(0)-\nu \geq a \beta^{-\frac 12}$, $a\geq a_0$ and $\mu
\leq\Upsilon {\mathfrak J_\nu}^{2/3}\beta^{-1/3}$ we have
\begin{equation}
\label{eq:319}
  \|(\LL_\beta^\Nf-\beta\lambda)^{-1}f\|_2+ [{\mathfrak
 J_\nu}\beta]^{-1/3}\|\frac{d}{dx}\, (\LL_\beta^\Nf-\beta\lambda)^{-1}f\|_2 \leq C \, [{\mathfrak
J_\nu}\beta]^{-5/6}\|f\|_\infty\,.
\end{equation}}

{\em Step  2.1: We consider the case  $0 < x_\nu\leq1/2$.\\}

Considering a pair $(v,f)\in D(\LL_\beta^\Nf)\times L^\infty(0,1)$ satisfying
$(\LL_\beta^\Nf-\beta\lambda)v=f$,  we then 
write as in \eqref{eq:303} 
 \begin{equation}
\label{eq:320}
  (\hat {\LL}_{\beta,\nu,\R}-\beta\lambda)(\tilde{\chi}_\nu v)=\tilde{\chi}_\nu f-2\tilde{\chi}_\nu^\prime v^\prime- \tilde{\chi}_\nu^{\prime\prime}v \,.
\end{equation}
Let $w_1\in D(\hat{\LL}_{\beta,\nu,\R})$ satisfy 
\begin{equation}
\label{eq:321}
   (\hat {\LL}_{\beta,\nu, \R}-\beta\lambda)w_1=\tilde{\chi}_\nu f \,.
\end{equation}
Let $ \tilde {\LL}_{\tilde \beta,\R}$ be defined by \eqref{eq:301}.  We
now apply \cite[Lemma 5.5]{almog2019stability} to the operator \break $\tilde
{\LL}_{\tilde \beta,\R}-i\tilde{\beta}\tilde{U}_\nu(1)$. Note that, due to to
\eqref{eq:299} and \eqref{eq:300}, there
exists $r>1$ such that the potential 
$y \mapsto \tilde U_\nu(y)-\tilde U_\nu(1)$ belongs to $\Sg_r^2$ (see \cite[Eq.
(2.32)]{almog2019stability} for the definition of this class).  Note
further that \cite[Lemma 5.5]{almog2019stability} holds under the
assumption $\Upsilon >0$.  Hence, for any $\tilde a >0$ there exists of
$\tilde C$ such that
\begin{equation}
\label{eq:322}
   \sup_{\Re \tilde \lambda\leq\Upsilon \tilde \beta^{-1/3}}\|( \tilde {\LL}_{\tilde
     \beta,\R}-\tilde \beta \tilde \lambda)^{-1}\tilde g \|_{L^2(-\tilde a
     ,\tilde a) } \leq 
   \frac{\tilde C}{\tilde \beta^{5/6}} \|\tilde g\|_\infty \,.  
\end{equation}
We apply \eqref{eq:322} with $\tilde a =2$,  $\tilde g (y) =
x_\nu^{1/2} ( \chi_\nu  f) ( x_\nu y)$, $\tilde \lambda= x_\nu^{-2}  \lambda   $ and $\tilde
\beta = \beta x_\nu^4 $ 
 to establish after a change of variable that
\begin{equation}
\label{eq:323}  
  \|w_1\|_{L^2(0,2 x_\nu)} \leq\frac{C}{[\beta\Jf_\nu]^{5/6}}\|f\|_\infty\,. 
\end{equation}
Let further $w_2\in D(\hat{\LL}_{\beta,\nu, \R})$ satisfy
\begin{displaymath}
(\hat {\LL}_{\beta,\nu, \R}-\beta\lambda)w_2=-2\tilde{\chi}_\nu^\prime v^\prime- \tilde{\chi}_\nu^{\prime\prime}v \,.
\end{displaymath}
 From \eqref{eq:297}  we get that
\begin{equation}\label{eq:324} 
 \|w_2\|_2\leq  \frac{C}{[\beta\Jf_\nu]^{2/3}}(\|\tilde{\chi}_\nu^\prime v^\prime\|_2+
  \|\tilde{\chi}_\nu^{\prime\prime}v\|_2)\,. 
\end{equation}
Combining \eqref{eq:324}  with \eqref{eq:323} yields as $\tilde{\chi}_\nu
v=w_1+w_2$ and $\supp \tilde \chi_\nu \subset (0,2 x_\nu)$
 \begin{multline}
\label{eq:325} 
     \|\tilde{\chi}_\nu v\|_2 =   \|\tilde{\chi}_\nu v\|_{L^2(0,2x_\nu)}
     \leq C\,\Big([\beta x_\nu]^{-5/6}\|f\|_\infty \\ + 
     [\beta^2x_\nu^5]^{-1/3}[\|{\mathbf
       1}_{(\frac{x_\nu}{2},\frac{3x_\nu}{4})}v^\prime\|_2+\|{\mathbf
       1}_{(\frac{5x_\nu}{4},\frac{3x_\nu}{2}}v^\prime\|_2 
  + x_\nu^{-1}\|v\|_2]\Big)\,. 
 \end{multline}
 Given the support of $\chi_\nu^- $ it holds that
\begin{displaymath}
  \|\chi_\nu^- f\|_2\leq Cx_\nu^{1/2}\|f\|_\infty \,,
\end{displaymath} 
and hence we can conclude from \eqref{eq:309}   that 
\begin{equation}
\label{eq:326}
  \|\chi_\nu^- v\|_2 \leq \frac{C}{\beta
    x_\nu^2}\,\Big[x_\nu^{1/2}\|f\|_\infty+\beta^{1/2}\|v\|_2\Big]\,.
\end{equation}
To obtain a similar bound for $\chi_\nu^+ v$ we obtain
 with the aid \eqref{eq:186} 
\begin{displaymath}
   \|\chi_\nu^+|U-\nu|^{-1/2}\|_2^2 \leq
   C\int_{\frac{3x_\nu}{2}}^1\frac{dx}{x^2-x_\nu^2}\leq\frac{C}{x_\nu} \,.
\end{displaymath}
Consequently, we can conclude that
\begin{equation}
\label{eq:327}
  \|(\chi_\nu^+)^2 v\|_1 \leq   \|\chi_\nu^+|U-\nu|^{-1/2}\|_2
  \|\chi_\nu^+|U-\nu|^{1/2}v\|_2 \leq \frac{C}{x_\nu^{1/2}}   \|\chi_\nu^+|U-\nu|^{1/2}v\|_2\,.
\end{equation}

 We now use \eqref{eq:308} to obtain that
\begin{equation*}
  \|\chi_\nu^+|U-\nu|^{1/2}v\|_2^2 \leq C \beta^{-1} (x_\nu^{-1}\|v\|_2\|(\chi_\nu^+
  v)^\prime\|_2 + \|(\chi_\nu^+)^2 v\|_1\|f\|_\infty) \,,
\end{equation*}
which implies that for any $\eta >0$, we have
\begin{equation}
\label{eq:328}
  \|\chi_\nu^+|U-\nu|^{1/2}v\|_2^2 \leq C \beta^{-1} (\eta x_\nu^{-2}\|v\|_2^2 + \frac 1 \eta \|(\chi_\nu^+
  v)^\prime\|_2^2 + \|(\chi_\nu^+)^2 v\|_1\|f\|_\infty) \,.
\end{equation}
By \eqref{eq:307} and \eqref{eq:289} we have that
\begin{equation}\label{eq:329}
   \|(\chi_\nu^+v)^\prime\|_2^2\leq C[\beta x_\nu]^{2/3}\|v\|_2^2+ \|(\chi_\nu^+)^2
   v\|_1\|f\|_\infty \,.  
\end{equation}
 Note that by \eqref{eq:289} we can conclude that
$x_\nu^{-2}\leq[\beta x_\nu]^{2/3}$.
Substituting \eqref{eq:329} into \eqref{eq:328} yields for any $\eta >0$
\begin{displaymath}
  \|\chi_\nu^+|U-\nu|^{1/2}v\|_2^2 \leq  C \beta^{-1} \left((\eta x_\nu^{-2}+
    \frac 1 \eta (\beta x_\nu)^{2/3}) \|v\|_2^2 + (\frac 1 \eta+1)
    \|(\chi_\nu^+)^2 v\|_1\|f\|_\infty\right) \,.  
\end{displaymath}
Setting $\eta_\nu  = x_\nu (\beta x_\nu)^\frac 13$ we observe that $\eta_\nu \geq
\frac 1C$ by \eqref{eq:289}, and hence
\begin{equation}\label{eq:330}
    \|\chi_\nu^+|U-\nu|^{1/2}v\|_2^2 \leq C \beta^{-1} \left(\beta^{1/3}x_\nu^{-2/3}\|v\|_2^2+ \|(\chi_\nu^+)^2 v\|_1\|f\|_\infty\right ) \,. 
\end{equation}
We then obtain, for any $\rho >0$, 
\begin{displaymath}
   \|\chi_\nu^+|U-\nu|^{1/2}v\|_2^2 \leq    C\left([\beta
     x_\nu]^{-2/3}\|v\|_2^2+ \rho \beta^{-2} \|(\chi_\nu^+)^2 v\|_1 + \frac 1
     \rho \|f\|_\infty^2\right) 
\end{displaymath}
which leads with the aid of \eqref{eq:327} to
\begin{displaymath}
   \|\chi_\nu^+|U-\nu|^{1/2}v\|_2^2 \leq    \hat C\left([\beta
     x_\nu]^{-2/3}\|v\|_2^2+ \rho \beta^{-2} x_\nu^{-1}
     \|\chi_\nu^+|U-\nu|^{1/2}v\|_2^2 + \frac 1 \rho \|f\|_\infty^2\right)\,. 
\end{displaymath}
Setting $\rho = [2 \hat C]^{-1} \beta^{2} x_\nu$ finally leads to 
\begin{equation}
\label{eq:331}
   \|\chi_\nu^+|U-\nu|^{1/2}v\|_2^2 \leq
   C\left([\beta x_\nu]^{-2/3}\|v\|_2^2+[\beta^2x_\nu]^{-1}\|f\|_\infty^2\right)
\end{equation}
Since for some positive $C$ it holds by \eqref{eq:186} that
$\chi_\nu^+|U-\nu|^{1/2}\geq C^{-1}x_\nu\chi_\nu^+$ we can conclude that
\begin{equation}
\label{eq:332}
     \|\chi_\nu^+v\|_2\leq C(\beta^{-1/3}x_\nu^{-4/3}\|v\|_2+\beta^{-1}x_\nu^{-3/2}\|f\|_\infty)\,.
\end{equation}
Combining \eqref{eq:326}  with \eqref{eq:325} and \eqref{eq:332} then
yields with the aid of \eqref{eq:289}
\begin{displaymath}
\begin{array}{ll}
  \|v\|_2& \leq
  C\, \Big(([\beta x_\nu^{3/2}]^{-1}+[\beta x_\nu]^{-5/6})\|f\|_\infty + [\beta x_\nu^4]^{-1/3} \|v\|_2\\& \hspace{40mm} +[\beta^2x_\nu^5]^{-1/3}[ \|{\mathbf
       1}_{(\frac{x_\nu}{2},\frac{3x_\nu}{4})}v^\prime\|_2 + \|{\mathbf
       1}_{(\frac{5x_\nu}{4},\frac{3x_\nu}{2})}v^\prime\|_2] \Big)\\
       &\leq  
  \hat C\, \Big([\beta x_\nu]^{-5/6} \|f\|_\infty + a^{-1} \|v\|_2+[\beta^2x_\nu^5]^{-1/3}[ \|{\mathbf
       1}_{(\frac{x_\nu}{2},\frac{3x_\nu}{4})}v^\prime\|_2+ \|{\mathbf
       1}_{(\frac{5x_\nu}{4},\frac{3x_\nu}{2})}v^\prime\|_2] \Big) \,.
       \end{array}
\end{displaymath}
Hence, there exist $a_0 >0$ and $C>0$ such that for all $a\geq a_0$ 
\begin{equation}
\label{eq:333}
  \|v\|_2\leq  C\,\left([\beta x_\nu]^{-5/6}\|f\|_\infty + [\beta^2x_\nu^5]^{-1/3}[\|{\mathbf
       1}_{(\frac{x_\nu}{2},\frac{3x_\nu}{4})}v^\prime\|_2+ \|{\mathbf
       1}_{(\frac{5x_\nu}{4},\frac{3x_\nu}{2})}v^\prime\|_2] \right)\,.
\end{equation}
Set
\begin{displaymath}
  \check \chi_\nu^\pm(x)=\hat{\chi}(2(x/x_\nu-1)){\mathbf 1}_{\R_+}(\pm(x-x_\nu)) \,,
\end{displaymath}
where $\hat \chi$ is defined by \eqref{eq:41}.  We note that by its
definition $\check \chi_\nu^- =1$ on $[0,\frac{3x_\nu}{4}) $ and ${\rm
  supp\,} \check \chi_\nu^- \subset (-\infty, \frac{7 x_\nu}{8})$.  Similarly,
$\check \chi_\nu^+ =1$ on $[\frac{5x_\nu}{4},1]$ and ${\rm supp\,} \check
\chi_\nu^+ \subset (
\frac{9x_\nu}{8},+\infty)$.\\
Proceeding as in the proof of \eqref{eq:307} integration by parts
yields, since $v$ satisfies a Neumann condition at $x=0$ and Dirichlet
condition at $x=1$, and since we have $(\check \chi_\nu^-)^\prime(0)=0$,
 \begin{equation*}
  \|(\check \chi_\nu^\pm  v)^\prime\|_2^2=\|(\check \chi_\nu^\pm)^\prime
  v\|_2^2+\mu\beta\|\check \chi_\nu^\pm v\|_2^2 +
  \Re\langle \check \chi_\nu^\pm v,\check \chi_\nu^\pm f\rangle \,.
\end{equation*}
The above identity implies, as $\mu<\Upsilon \mathfrak J_\nu \beta^{-\frac 13} $,  
\begin{displaymath}
    \|(\check \chi_\nu^\pm v)^\prime\|_2^2\leq C([x_\nu\beta]^{2/3}+x_\nu^{-2})\|v\|_2^2+
  \||U-\nu|^{1/2}\check \chi_\nu^\pm v\|_2 \||U-\nu|^{-1/2}\check \chi_\nu^\pm\|_2 \|f\|_\infty  \,.
\end{displaymath}
 By \eqref{eq:186} there exists $0<\nu_1<U(0)$ such that for all
$\nu_1<\nu<U(0)-a\beta^{-1/2}$ it holds that
\begin{equation}
  \label{eq:334}
\||U-\nu|^{-1/2}\check \chi_\nu^-\|_2^2 \leq C\int_0^{7x_\nu/8}\frac{dx}{x_\nu^2-x^2}\leq\frac{C}{x_\nu}\,.
\end{equation}
 Similarly,
\begin{equation}
  \label{eq:335}
\||U-\nu|^{-1/2}\check \chi_\nu^+\|_2^2 \leq C\int_{9x_\nu/8}^1\frac{dx}{x_\nu^2-x^2}\leq\frac{C}{x_\nu}\,.
\end{equation}
For $0<\nu<\nu_1$ \eqref{eq:334} and \eqref{eq:335} still hold given the
support of $\check \chi_\nu^\pm$. Consequently, we may conclude that
\begin{equation}
\label{eq:336}
      \|(\check \chi_\nu^\pm  v)^\prime\|_2^2\leq C[([x_\nu\beta]^{2/3}+x_\nu^{-2})\|v\|_2^2+
  x_\nu^{-1/2}\||U-\nu|^{1/2}\check \chi_\nu^\pm v\|_2 \|f\|_\infty  \,. 
\end{equation}
As in \eqref{eq:308} it holds that
\begin{equation*}
  \mp\beta\| |U-\nu|^{1/2}\check \chi_\nu^\pm v\|_2^2+2\Im\langle(\check \chi_\nu^\pm)^\prime
  v,(\check \chi_\nu^\pm v)^\prime\rangle =  \Im \langle \check \chi_\nu^\pm v,\check \chi_\nu^\pm f\rangle \,,
\end{equation*}
which implies
\begin{displaymath}
   \beta\| |U-\nu|^{1/2}\check \chi_\nu^\pm v\|_2^2\leq
   \frac{C}{x_\nu}\|v\|_2\|(\check \chi_\nu^\pm
   v)^\prime\|_2 +  \||U-\nu|^{1/2}\check \chi_\nu^\pm v\|_2
   \||U-\nu|^{-1/2}\check \chi_\nu^\pm\|_2 \|f\|_\infty  \,.
\end{displaymath}
Consequently, by \eqref{eq:334}, 
\begin{equation}
\label{eq:337}
  \| |U-\nu|^{1/2}\check \chi_\nu^\pm v\|_2^2\leq C([\beta x_\nu]^{-1}\|v\|_2\|(\check \chi_\nu^\pm
   v)^\prime\|_2 +  [\beta^2x_\nu]^{-1}\|f\|_\infty^2 )\,.
\end{equation}
Substituting \eqref{eq:337} into \eqref{eq:336} then yields, with the
aid of \eqref{eq:289}, 
\begin{displaymath}
      \|(\check \chi_\nu^\pm v)^\prime\|_2^2\leq
      C\left([\beta x_\nu]^{2/3}\|v\|_2^2+[\beta x_\nu]^{-1}\|f\|_\infty^2+
       [\beta^{1/2}x_\nu]^{-1} \|v\|_2^{1/2}\|(\check \chi_\nu^\pm v)^\prime\|_2^{1/2}\|f\|_\infty\right)  \,.
\end{displaymath}
By the above inequality we may conclude, first, that 
\begin{displaymath}
      \|(\check \chi_\nu^\pm v)^\prime\|_2^2\leq
      C\left([\beta x_\nu]^{2/3}\|v\|_2^2+ 2 [\beta x_\nu]^{-1}\|f\|_\infty^2 + x_\nu^{-1}   ( \|(\check \chi_\nu^\pm v)^\prime\|_2  \|v\|_2) \right)  \,,
\end{displaymath}
and then, for any $\eta>0$, 
\begin{displaymath}
      \|(\check \chi_\nu^\pm v)^\prime\|_2^2\leq
      C\left([\beta x_\nu]^{2/3}\|v\|_2^2+ 2 [\beta x_\nu]^{-1}\|f\|_\infty^2+ \eta  \|(\check \chi_\nu^\pm v)^\prime\|_2^2 + \frac{1}{\eta} x_\nu^{-2} \|v\|_2^2)  \right) \,.
\end{displaymath}
Using again  \eqref{eq:289}, for  $\eta$ small enough, we finally obtain
\begin{displaymath}
      \|(\check \chi_\nu^\pm v)^\prime\|_2^2\leq
      C\left([\beta x_\nu]^{2/3}\|v\|_2^2+ 2 [\beta x_\nu]^{-1}\|f\|_\infty^2 \right) \,.
\end{displaymath}
From the above it can be easily verified that
\begin{multline}
  \label{eq:338}
  \|{\mathbf 1}_{(\frac{x_\nu}{2},\frac{3x_\nu}{4})}v^\prime\|_2  +
  \|{\mathbf 1}_{(\frac{5x_\nu}{4},\frac{3x_\nu}{2})}v^\prime\|_2     \\ \leq 
  \|(\check \chi_\nu^+ v)^\prime\|_2 + \|(\check \chi_\nu^- v)^\prime\|_2\\ \leq C([\beta x_\nu]^{1/3}\|v\|_2+[\beta x_\nu]^{-1/2}\|f\|_\infty)\,. 
\end{multline}
 Substituting \eqref{eq:338} into \eqref{eq:333} yields, using \eqref{eq:289}
\begin{displaymath}
   \|v\|_2\leq C \left([\beta x_\nu]^{-5/6}\|f\|_\infty+[\beta x_\nu^4]^{-1/3}\|v\|_2\right)\,.
\end{displaymath}
Hence,  there exists  $a_0>0$ such that we obtain  for $a\geq a_0$
\begin{equation}
\label{eq:339}
    \|v\|_2\leq C\, [\beta x_\nu]^{-5/6}\|f\|_\infty\,.
\end{equation}

{\em Step  2.2: The case  $x_\nu\geq 1/8\,$.}\\
Let 
 \begin{equation}\label{eq:340}
 \Upsilon<[|U^\prime(1/16)|/|U^\prime(1)|]^{2/3}\Re\hat{\kappa}_1\,.
 \end{equation}
 We set
\begin{equation}
\label{eq:341}
   \hat{\eta}_\nu =\chi(-(x-x_\nu)/x_\nu)\,,
\end{equation}
which is supported on $(x_\nu/4,1]$ and satisfies $ \hat{\eta}_\nu \equiv1$ on
$(x_\nu/2,1]$.  Then, as
\begin{equation}
\label{eq:342}
  (\LL_\beta^\Nf-\beta\lambda)(\hat{\eta}_\nu v)=\hat{\eta}_\nu f-2\hat{\eta}_\nu^\prime v^\prime- \hat{\eta}_\nu^{\prime\prime}v \,,
\end{equation}
we may use \cite[Proposition 5.2 and Proposition
5.4]{almog2019stability} (both  hold  for  $U\in \Sg_r^2$  though stated
for $U\in \Sg_r^4$), to obtain that
 \begin{equation}
\label{eq:343}
     \|\hat{\eta}_\nu v\|_2\leq\frac{C}{\beta^{2/3}}(\beta^{-1/6}\|f\|_\infty+\|\hat{\eta}_\nu^\prime v^\prime\|_2+
  \|\hat{\eta}_\nu^{\prime\prime}v\|_2)\,. 
 \end{equation}
We note that \eqref{eq:340} implies
$\Upsilon<[|U^\prime(x_\nu/4)|/\Jf_\nu]^{2/3}\Re\hat{\kappa}_1$ for all $1/8\leq x_\nu\leq1$.
Let
$\tilde{\eta}_\nu=\sqrt{1-\hat{\eta}_\nu^2}\in C^\infty(\R,[0,1])$. Note that
$\tilde{\eta}_\nu$ is supported on $[0,x_\nu/2]$. Consequently, we may
obtain, as in \eqref{eq:326} but for $x_\nu \geq \frac 18$, 
\begin{displaymath}
  \|\tilde{\eta}_\nu v\|_2 \leq C \beta^{-1} [\|f\|_2+\beta^{1/2}\|v\|_2]\,.
\end{displaymath}
Combining the above with \eqref{eq:343} yields 
\begin{displaymath}
  \|v\|_2\leq \frac{C}{\beta^{2/3}}\left(\beta^{-1/6}\|f\|_\infty+\|{\mathbf 1}_{(x_\nu/4,x_\nu/2)}v^\prime\|_2\right)\,.
\end{displaymath}
We now use a variant of \eqref{eq:338}, (which is valid also for
$x_\nu>1/8$) to bound $\|{\mathbf 1}_{(x_\nu/4,x_\nu/2)}v^\prime\|_2$ to
obtain, with the aid of \eqref{eq:289},
\begin{displaymath}
  \|v\|_2\leq \frac{C}{\beta^{5/6}}\, \|f\|_\infty\,.
\end{displaymath}
Together with \eqref{eq:288} the above inequality establishes
(\ref{eq:287}a). \\

{\em Step 3: We prove (\ref{eq:287}b), when
\begin{equation}
\label{eq:344}
\Upsilon < \inf_{x_\nu\in[0,1]}(|U^\prime(x_\nu/2)|/|U^\prime(x_\nu)|)^{2/3} \Re \hat
\kappa_1\,.
\end{equation}}

Note that for $p=2$ (\ref{eq:287}b) readily follows from
\eqref{eq:288}.  In the following we then assume $p\in(1,2)$.  Suppose
first that $x_\nu<1/2$. As above, we consider a pair $(v,f)$ in
$D(\mathcal L_\beta^\Nf)\times L^2(0,1)$ satisfying $(\mathcal L_\beta^{\Nf} - \lambda
\beta)v=f$. Let
\begin{displaymath}
  \hat{\LL}^\Df_\beta:H^2(x_\nu/2,3x_\nu/2)\cap
H^1_0(x_\nu/2,3x_\nu/2)\to L^2(x_\nu/2,3x_\nu/2)
\end{displaymath}
be associated with the
same differential operator as $\LL^\Nf_\beta$.  Let
\begin{displaymath}
  \tilde \LL_{\tilde
  \beta}^\Df:H^2(1/2,3/2)\cap H^1_0(1/2,3/2)\to L^2(1/2,3/2)
\end{displaymath} 
be associated with the same differential operator as $\tilde
\LL_{\tilde \beta,\mathbb R} $ in \eqref{eq:301}. We recall from
\cite[Proposition 5.2]{almog2019stability} that for any $g\in
L^2(1/2,3/2)$ it holds
\begin{equation}
\label{eq:345}
     \sup_{\Re \tilde \lambda\leq\Upsilon \tilde \beta^{-1/3}}
  \tilde \beta^{-1/3} \Big\|\frac{d}{dy}( \tilde {\LL}_{\tilde \beta}^\Df- \tilde \beta \tilde \lambda)^{-1}g\Big\|_p\leq
   \frac{C}{\tilde \beta^{\frac{2+p}{6p}}}\|g\|_2\,.
\end{equation}
As in \eqref{eq:303} it holds that
\begin{displaymath}
   ( \hat {\LL}_\beta^\Df -\beta\lambda)(\tilde{\chi}_\nu v)=\tilde{\chi}_\nu f-2\tilde{\chi}_\nu^\prime v^\prime- \tilde{\chi}_\nu^{\prime\prime}v \,,
\end{displaymath}
and hence by applying the inverse of the dilation \eqref{eq:296} to
\eqref{eq:345} we can conclude that
\begin{equation}
  \|(\tilde{\chi}_\nu v)^\prime\|_p\leq C [\beta
    x_\nu]^{-\frac{2+p}{6p}}\, (\|f\|_2+\|\tilde{\chi}_\nu^\prime v^\prime\|_2+
  \|\tilde{\chi}_\nu^{\prime\prime}v\|_2)\,.
\end{equation}
  By \eqref{eq:288} we then obtain
\begin{displaymath}
   \|(\tilde{\chi}_\nu v)^\prime\|_p \leq C [\beta
    x_\nu]^{-\frac{2+p}{6p}} \, (1+\beta^{-1/3}x_\nu^{-4/3}+\beta^{-2/3}x_\nu^{-8/3})\|f\|_2\,. 
\end{displaymath}
From \eqref{eq:289} we easily conclude that
\begin{equation}
\label{eq:346}
    \|(\tilde{\chi}_\nu v)^\prime\|_p \leq C [\beta
    x_\nu]^{-\frac{2+p}{6p}} \, \|f\|_2\,.
\end{equation}
We now seek an estimate for $\chi_\nu^\pm  v^\prime$. To this end, we use
integration by parts to obtain 
\begin{multline}\label{eq:347}
  \Re \langle(\chi_\nu^\pm)^2(U-\nu)v,(\LL_\beta^\Nf-\beta\lambda)v\rangle
  = \mp \|\chi_\nu^\pm |U-\nu|^{1/2}v^\prime\|_2^2 + \\
  \Re\langle \chi_\nu^\pm(U^\prime \chi_\nu^\pm +2(U-\nu)(\chi_\nu^\pm)^\prime)v ,v^\prime\rangle-\mu\beta\|\chi_\nu^\pm |U-\nu|^{1/2}v\|_2^2\,.
 \end{multline}
 Since
\begin{equation}
\label{eq:348}
  |U^\prime(x)|\leq C|U(x)-U(0)|^{1/2}\leq C\left(|U(x)-\nu|^{1/2}+|U(0)-\nu|^{1/2}\right) \,,
\end{equation}
we can conclude that 
\begin{displaymath}
   \|(\chi_\nu^\pm)^2U^\prime v\|_2\leq C\left(x_\nu\|v\|_2 + \|\chi_\nu^\pm |U-\nu|^{1/2}v\|_2\right)\,.
\end{displaymath}
 
Furthermore, given the support of $(\chi_\nu^\pm)^\prime$ we obtain by
\eqref{eq:183}
\begin{displaymath}
   \|\chi_\nu^\pm (U-\nu)(\chi_\nu^\pm)^\prime\|_\infty \leq C
   \|(x^2-x_\nu^2)(\chi_\nu^\pm)^\prime\|_\infty \leq Cx_\nu \,,
\end{displaymath} 
Combining the above  with \eqref{eq:347} yields  that
\begin{multline}
\label{eq:349}
   \|\chi_\nu^\pm |U-\nu|^{1/2}v^\prime\|_2^2\leq \|(U-\nu)v\|_2\|f\|_2  \\
 +  C\,\left([\beta x_\nu]^{2/3}\|\chi_\nu^\pm|U-\nu|^{1/2}v\|_2^2+ [x_\nu\|v\|_2+ \|\chi_\nu^\pm |U-\nu|^{1/2}v\|_2]\|v^\prime\|_2\right)\,.
\end{multline}

As
\begin{equation}
\label{eq:350}
     \Im \langle(U-\nu)v,(\LL_\beta^\Nf-\beta\lambda)v\rangle=
     \beta\|(U-\nu)v\|_2^2+ \Im\langle U^\prime v,v^\prime\rangle\,, 
\end{equation}
we obtain by \eqref{eq:348}  that
\begin{displaymath}
  \beta\|(U-\nu)v\|_2^2 \leq
  C\left(\beta^{-1}\|f\|_2^2+[\||U(x)-\nu|^{1/2}v\|_2+x_\nu\|v\|_2]\|v^\prime\|_2\right)\,.
\end{displaymath}
Furthermore, since
\begin{equation}
\label{eq:351}
  \||U-\nu|^{1/2}v\|_2^2\leq \frac{1}{2}\big[x_\nu^{-2}\|(U-\nu)v\|_2^2+ x_\nu^2\|v\|_2^2\big]\,,
\end{equation}
we can write
\begin{displaymath}
  \beta\|(U-\nu)v\|_2^2 \leq
  C\left(\beta^{-1}\|f\|_2^2+[x_\nu\|v\|_2+x_\nu^{-1}\|(U-\nu)v\|_2]\|v^\prime\|_2\right)\,.
\end{displaymath}
Hence,
\begin{equation}
\label{eq:352}
  \beta\|(U-\nu)v\|_2^2 \leq
  C\left( \beta^{-1} \|f\|_2^2+ x_\nu\|v\|_2 \|v^\prime\|_2 +  \beta^{-1} x_\nu^{-2} \|v^\prime\|_2^2\right)  \,.
\end{equation}
Using  \eqref{eq:288} gives the following estimates for $\|v\|_2$ and
$\|v^\prime\|_2$ 
\begin{equation}
\label{eq:353}
\| v^\prime\|_2 \leq C \, 
[x_\nu\beta]^{-1/3}\|f\|_2\,\mbox{ and } 
\| v\|_2 \leq C \, 
[x_\nu\beta]^{-2/3}\|f\|_2\,.
\end{equation}
Substituting the above  into \eqref{eq:352} yields
\begin{displaymath}
  \beta\|(U-\nu)v\|_2^2 \leq
  C( \beta^{-1} +  \beta^{-1} x_\nu^{-2} [x_\nu\beta]^{-2/3}     )\|f\|_2^2 \,.
\end{displaymath}
From the above, recalling that $\beta^\frac 14 x_\nu$ is bounded from
below, we conclude that
\begin{equation}
\label{eq:354}
  \|(U-\nu)v\|_2\leq C \beta^{-1} \|f\|_2\,.
\end{equation}
Next we write, using \eqref{eq:353} and  (\ref{eq:354}),
\begin{equation}\label{eq:355}
   \||U-\nu|^{1/2}v\|_2^2\leq
   \frac{1}{2}[\beta^{1/3}x_\nu^{-2/3}\|(U-\nu)v\|_2^2+\beta^{-1/3}x_\nu^{2/3}
   \|v\|_2^2]\leq\frac{C}{\beta^{5/3}x_\nu^{2/3}}\|f\|_2^2\,.
\end{equation}
Substituting \eqref{eq:355}, together with \eqref{eq:354} into
\eqref{eq:349} yields, with the aid of \eqref{eq:353} 
\begin{equation}
\label{eq:356}
   \|\chi_\nu^\pm |U-\nu|^{1/2}v^\prime\|_2^2\leq C \beta^{-1} \|f\|_2^2 \,.
\end{equation}
We now observe that
\begin{equation}\label{eq:357}
\||U-\nu|^{-1/2}\chi_\nu^-\|_q^q \leq C\int_0^{3x_\nu/4}\frac{dx}{[x_\nu^2-x^2]^{q/2}}\leq\frac{C}{x_\nu^{q-1}}\,.
\end{equation}
 Similarly,
\begin{equation}\label{eq:358}
\||U-\nu|^{-1/2}\chi_\nu^+\|_2^2 \leq C\int_{5x_\nu/4}^1\frac{dx}{[x_\nu^2-x^2]^{q/2}}\leq\frac{C}{x_\nu^{q-1}}\,,
\end{equation}
which is obtained with the aid of \eqref{eq:186} for $\nu >\nu_1$ (for
$\nu \leq \nu_1$ the above bounds are trivial).  Consequently, we obtain
that
\begin{multline}
\label{eq:359}
    \|(\chi_\nu^\pm) v^\prime\|_p\leq
    \|\chi_\nu^\pm |U-\nu|^{1/2}v^\prime\|_2\|[{\mathbf 1}_{[0,3x_\nu/4]}+ \\ {\mathbf
    1}_{[5x_\nu/4,1]}|U-\nu|^{-1/2}\|_{\frac{2p}{2-p}}\leq
    \frac{C}{\beta^{1/2}x_\nu^{\frac{3p-2}{2p}}}\|f\|_2 \,.
\end{multline}
 Similarly, we write that
\begin{displaymath}
  \|(\chi_\nu^\pm)^\prime v\|_p\leq
  \|(\chi_\nu^\pm)^\prime|U-\nu|^{1/2}v\|_2\|[{\mathbf 1}_{[0,3x_\nu/4]}+{\mathbf
    1}_{[5x_\nu/4,1]}|U-\nu|^{-1/2}\|_{\frac{2p}{2-p}}\leq 
  \frac{C}{x_\nu^{\frac{17}{6}-\frac{1}{p}}\beta^{5/6}}\|f\|_2\,.
\end{displaymath}
To obtain the second inequality we have used \eqref{eq:355}.
Together with \eqref{eq:359} and \eqref{eq:289} the above yields 
\begin{equation}
\label{eq:360}
   \|(\chi_\nu^\pm  v)^\prime\|_p\leq  C \beta^{-1/2}x_\nu^{-\frac{3p-2}{2p}}\, \|f\|_2 \,.
\end{equation}
Combining the above with \eqref{eq:346} yields the existence of $a_0$
such that  (\ref{eq:287}b) holds  for all $a \geq a_0$.

Consider next the case $x_\nu\geq1/2$ (in which no dilation
transformation is necessary). Let
  \begin{equation}
\label{eq:361} 
\Upsilon <(|U^\prime(1/4)|/|U^\prime(1)|)^{2/3} \Re\hat \kappa_1\,. 
\end{equation}
We now
set
\begin{displaymath}
  \iota_\nu =\sqrt{\tilde{\chi}_\nu^2+(\chi_\nu^+)^2}\,.
\end{displaymath}
Then, as
  \begin{displaymath}
   ( \hat {\LL}_\beta^\Df -\beta\lambda)(\iota_\nu v)=\iota_\nu f-2\iota_\nu^\prime v^\prime- \iota_\nu^{\prime\prime}v \,,
\end{displaymath}
We obtain using  \cite[Prop. 5.2]{almog2019stability} and
\eqref{eq:288} that
\begin{displaymath}
     \|(\iota_\nu v)^\prime\|_p \leq
   \frac{C}{\beta^{\frac{2+p}{6p}}}\|f\|_2\,.
\end{displaymath}
Since \eqref{eq:360} holds true for $\chi_\nu^-v$ in the case
$x_\nu\geq1/2\,$, we can combine it with the above to extend
the validity (\ref{eq:287}b)  to this case as well.\\

Given \eqref{eq:317}, \eqref{eq:318},
  \eqref{eq:340},  \eqref{eq:344} and \eqref{eq:361}
  it follows that there exists $\Upsilon >0$ for which Proposition  \ref{prop5.3}
holds true.
\end{proof}

\begin{remark}
  \label{rem:negative-mu}
As in \cite[Proposition 5.1]{almog2019stability} we can obtain better
estimates for the case where $\mu<0$. Thus setting $\chi_\nu^\pm\equiv1$ in
\eqref{eq:307} yields for $\mu<0$
\begin{equation}\label{eq:362}
  \|v^\prime\|_2^2+|\mu|\beta\|v\|_2^2 =
  \Re\langle v,f\rangle \,. 
\end{equation}
From this  we conclude that
\begin{equation}
  \label{eq:363}
\|v\|_2\leq [|\mu|\beta]^{-1} \|f\|_2 \,,
\end{equation}
which is stronger that \eqref{eq:287} when $|\mu|\gg\beta^{-1/3}$. \\
Note that for $\mu < 0$
\begin{equation}
  \label{eq:364}
\|v^\prime\|_2\leq  [|\mu| \beta]^{-1/2} \|f\|_2 \,.
\end{equation}
\end{remark}

\subsection{Resolvent estimates for $| U(0)-\Im \lambda| =\OO
  (\beta^{-1/2})$\,.}
  \label{sec:schrod-quad}
   In this case we will
  approximate $U-\nu$ by its quadratic potential  
  $$x \mapsto U^{\prime\prime}(0)x^2/2+U(0)-\nu$$ and then use a proper resolvent estimate
  established by R. Henry  in  \cite{Hen2}.\\
   More precisely, we prove.
\begin{proposition}
  \label{lem:schrod-quad} Let $U\in C^3([0,1])$ satisfy
  \eqref{eq:10},
  $a>0$ and  $\Upsilon<\sqrt{-U^{\prime\prime}(0)}/2$.  \\
  Then there exist $C>0$ and $\beta_0>0$ such that, for all
  $\beta\geq \beta_0$, 
  \begin{itemize}
  \item 
  if $f\in L^\infty(0,1)$,
   \begin{subequations}
\label{eq:365}
       \begin{multline}
\sup_{
  \begin{subarray}{c}
   \mu  \leq\Upsilon\beta^{-1/2} \\
U(0)-a\beta^{-1/2}<\nu <U(0)+a\beta^{-1/2}
  \end{subarray}} \Big(
\|(\LL_\beta^{\Nf,\Df}-\beta\lambda)^{-1}f\|_2  +
\beta^{-1/4}\| \frac{d}{dx} (\LL_\beta^{\Nf,\Df}-\beta\lambda)^{-1}f\|_2 \\
\qquad \qquad +\beta^{1/8}\|(\LL_\beta^{\Nf,\Df}-\beta\lambda)^{-1}f\|_1\Big) \\ \leq   C\min(\beta^{-1/2}\|f\|_2,\beta^{-5/8}\|f\|_\infty)\,, 
  \end{multline}
\item  if  $(x-x_\nu)^{-1}f\in L^2(0,1)$
\begin{multline}
\sup_{\begin{subarray}{c}
  \mu \leq\Upsilon\beta^{-1/2} \\
    U(0)-a\beta^{-1/2}<\nu <U(0)+a\beta^{-1/2}
  \end{subarray}}\Big(
\Big\|(\LL_\beta^{\Nf,\Df}-\beta\lambda)^{-1}f\Big\|_2 +
\beta^{-1/4}\Big\| \frac{d}{dx} (\LL_\beta^{\Nf,\Df}-\beta\lambda)^{-1}f\Big\|_2  \\ \qquad \qquad + \beta^{1/8}\Big\|(\LL_\beta^{\Nf,\Df} -\beta\lambda)^{-1}f\Big\|_1\Big) \\ \leq C\beta^{-3/4}\Big\|\frac{f}{x-x_\nu}\Big\|_2 \,.
\end{multline}   
\end{subequations}
\end{itemize}
\end{proposition}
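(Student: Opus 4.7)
My plan treats this as a turning-point regime at $x=0$, where the linear-in-$x$ approximation of $U$ underlying Proposition~\ref{prop5.3} no longer yields useful bounds: throughout the range $|\nu-U(0)| \leq a\beta^{-1/2}$ one has $\Jf_\nu = |U^\prime(x_\nu)| = O(\beta^{-1/4})$, so $[\beta\Jf_\nu]^{-2/3}$ is only of size $\beta^{-1/2}$ on a trivial scale. The correct model near $x=0$ is the quadratic expansion $U(x)-U(0) \approx \tfrac 12 U^{\prime\prime}(0)x^2$, and the natural rescaling is $x = \beta^{-1/4}y$: under it the operator $-d^2/dx^2 + i\beta(U-\nu+i\mu)$ transforms into
\begin{displaymath}
  \beta^{1/2}\bigl(\Hg_0 - \tilde\lambda_0 + \beta^{-1/4}\Rg_\beta\bigr),\qquad \Hg_0 := -\frac{d^2}{dy^2} - i\frac{|U^{\prime\prime}(0)|}{2}y^2,
\end{displaymath}
on $\R_+$ with Neumann condition at $y=0$, where $\tilde\lambda_0 := \beta^{1/2}\mu - i\beta^{1/2}(U(0)-\nu)$ is bounded under our hypotheses and $\Rg_\beta = i\bigl[\tfrac{U^{\prime\prime\prime}(0)}{6}y^3 + O(\beta^{-1/4}y^4)\bigr]$. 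An $e^{i\pi/8}$-dilation reduces $\Hg_0$ to the real half-line harmonic oscillator with Neumann condition at $0$, whose spectrum is $\{(4k+1)\omega\,e^{-i\pi/4}:k \geq 0\}$ with $\omega = \sqrt{|U^{\prime\prime}(0)|/2}$; the hypothesis $\Upsilon < \sqrt{-U^{\prime\prime}(0)}/2 = \omega/\sqrt{2}$ is precisely the statement that $\Re\tilde\lambda_0$ stays strictly below $\omega/\sqrt{2}$, so $\tilde\lambda_0$ remains in a fixed compact subset of $\rho(\Hg_0)$.

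I would split $v = (\LL_\beta^\Nf - \beta\lambda)^{-1}f$ via cutoffs $\chi_-^2 + \chi_+^2 = 1$ with $\chi_-$ supported on $[0,K\beta^{-1/4}]$ and $\chi_+$ on $[\tfrac K2\beta^{-1/4},1]$, for a large $K$ to be fixed at the end. On $\supp\chi_+$ one has $U(0)-U(x) \geq cK^2\beta^{-1/2}$ and $|U-\nu| \geq c^\prime K^2\beta^{-1/2}$, and
\begin{displaymath}
  (\LL_\beta^\Nf-\beta\lambda)(\chi_+ v) = \chi_+ f - 2\chi_+^\prime v^\prime - \chi_+^{\prime\prime} v
\end{displaymath}
falls into the regime of Proposition~\ref{prop5.3} with effective parameter $a_{\mathrm{eff}} = c^\prime K^2$ and $\Jf_\nu \sim K\beta^{-1/4}$; applying Substep~1.1 (respectively Substep~2.1) of that proposition then provides $\|\chi_+ v\|_2 + \beta^{-1/4}\|(\chi_+ v)^\prime\|_2 \leq C\beta^{-1/2}\|f\|_2$ and the $L^\infty$-input analogue $\leq C\beta^{-5/8}\|f\|_\infty$, up to commutator terms that are deferred. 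For $\chi_- v$ I would introduce $V(y) = v(\beta^{-1/4}y)$, $F(y) = f(\beta^{-1/4}y)$, $\tilde\chi_-(y) = \chi_-(\beta^{-1/4}y)$ (supported in $[0,K]$) and rewrite the cutoff equation as
\begin{displaymath}
  \bigl(\Hg_0 + \beta^{-1/4}\Rg_\beta - \tilde\lambda_0\bigr)(\tilde\chi_-\,V) = \beta^{-1/2}\tilde\chi_- F - 2\tilde\chi_-^\prime V^\prime - \tilde\chi_-^{\prime\prime} V.
\end{displaymath}

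Uniform boundedness of $(\Hg_0-\tilde\lambda_0)^{-1}$ from $L^2(\R_+)$ into $L^2 \cap H^1 \cap L^2(y^2\,dy)$, combined with a Neumann-series inversion of the small perturbation $\beta^{-1/4}\Rg_\beta$ (its operator norm is $O(K^4\beta^{-1/4})$ on functions supported in $[0,K]$), yields
\begin{displaymath}
  \|\tilde\chi_- V\|_{L^2} + \|(\tilde\chi_- V)^\prime\|_{L^2} \leq C\beta^{-1/2}\|\tilde\chi_- F\|_{L^2} + C(\|\tilde\chi_-^\prime V^\prime\|_{L^2} + \|\tilde\chi_-^{\prime\prime} V\|_{L^2}),
\end{displaymath}
together with the $L^\infty$-input variant, since $\|\tilde\chi_- F\|_{L^2(\R_+)} \leq K^{1/2}\|F\|_\infty$ by support confinement. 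For (\ref{eq:365}b), factor $f = (x-x_\nu)g$: the rescaled source is $\beta^{-1/4}(y-y_\nu)\tilde\chi_- G$ with $y_\nu := \beta^{1/4}x_\nu$ bounded, and the operator $(\Hg_0-\tilde\lambda_0)^{-1}(y-y_\nu)$ is bounded on $L^2(\R_+)$ via the commutator identity $[\Hg_0,y] = -2\,d/dy$ together with the $H^1$ bound just obtained, which supplies the extra factor $\beta^{-1/4}$. Undoing the dilation using the Jacobian identities $\|v\|_{L^q(0,1)} = \beta^{-1/(4q)}\|V\|_{L^q(\R_+)}$ for $q\in[1,\infty]$ (with $\|v\|_\infty = \|V\|_\infty$), $\|v^\prime\|_{L^2} = \beta^{1/8}\|V^\prime\|_{L^2}$, $\|F\|_{L^2(\R_+)} = \beta^{1/8}\|f\|_{L^2(0,1)}$ and $\|F\|_\infty = \|f\|_\infty$ converts the model bounds into exactly the rates $\|v\|_2 \leq C\beta^{-1/2}\|f\|_2$, $\|v^\prime\|_2 \leq C\beta^{-1/4}\|f\|_2$, $\|v\|_1 \leq C\beta^{-5/8}\|f\|_2$ together with their $L^\infty$-input analogues, and the additional $\beta^{-1/4}$ saving for (\ref{eq:365}b).

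The main obstacle will be absorbing the commutator terms $\chi_\pm^\prime v^\prime$, $\chi_\pm^{\prime\prime} v$, which live on the annulus $x \sim K\beta^{-1/4}$ and carry prefactors $O(\beta^{1/4}/K)$ and $O(\beta^{1/2}/K^2)$ respectively: the inner and outer estimates must be run as a coupled system in $(\|v\|_2, \beta^{-1/4}\|v^\prime\|_2)$, with $K$ chosen large enough that the commutator coefficients beat $1$ and $\beta_0$ then large enough that the perturbative remainder $O(K^4\beta^{-1/4})$ on the model side is small. A secondary but nontrivial ingredient is the uniform resolvent estimate for $\Hg_0$ with Neumann condition at $0$ on the compact set $\{\tilde\lambda_0 : \Re\tilde\lambda_0 \leq \Upsilon,\ |\Im\tilde\lambda_0| \leq a\}$, which relies on the explicit parabolic-cylinder representation of its Green's function and the spectral picture summarised above.
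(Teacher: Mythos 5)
Your proposal and the paper share the same geometric picture — a turning-point regime at $x=0$ with quadratic behaviour of $U$, natural scale $\beta^{-1/4}$, and the threshold $\Upsilon<\sqrt{-U^{\prime\prime}(0)}/2$ coming from the bottom of the rotated spectrum of the complex half-line harmonic oscillator — but the implementations diverge substantially. The paper gets the $\LL(L^2,L^2)$ bound in one stroke by citing the black-box estimate of Henry (\cite{Hen2}, Theorem~1.3), which is exactly the model-operator resolvent bound you propose to redo from scratch via the parabolic-cylinder Green's function; the paper then derives the $L^\infty\!\to\!L^2$, $L^1$ and weighted bounds by direct energy identities (real and imaginary parts of $\langle\chi_\gamma v,(\LL_\beta^{\Nf,\Df}-\beta\lambda)v\rangle$) with a cutoff $\chi_\gamma$ at scale $\gamma\beta^{-1/4}$ and $\gamma$ small, never revisiting a rescaled model. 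Your plan is arguably more self-contained and more transparently tied to the spectral picture, but it commits you to proving uniform resolvent estimates for $\Hg_0$ on $\R_+$ with Neumann condition (into $L^2\cap H^1\cap L^2(y^2\,dy)$) uniformly over a compact $\tilde\lambda_0$-set, which is a nontrivial side project the paper avoids.

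The one gap I would flag as more than a deferred detail is your treatment of the outer region. You propose to feed $\chi_+v$ into Proposition~\ref{prop5.3} with an ``effective'' large $a_{\mathrm{eff}}=c^\prime K^2$. But Proposition~\ref{prop5.3} is a statement about the global operator $\LL_\beta^{\Nf,\Df}-\beta\lambda$ under the hypothesis $U(0)-\nu>a\beta^{-1/2}$ on $\nu$ itself, and here $\nu$ is within $a\beta^{-1/2}$ of $U(0)$ (or even above $U(0)$, in which case $x_\nu$ does not exist). Cutting off to $\supp\chi_+$ does not change $\nu$, and Proposition~\ref{prop5.3} does not apply when the turning point $x_\nu$ (if any) sits inside the removed inner region rather than inside $\supp\chi_+$. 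What does survive is the \emph{mechanism} of Substeps~1.1 and~2.1 — the lower bound $|U-\nu|\gtrsim K^2\beta^{-1/2}$ on $\supp\chi_+$ plugged into the imaginary-part identity — and this is precisely what the paper does directly in Step~2 of its proof (equations \eqref{eq:373}--\eqref{eq:382}), without invoking the proposition. You should therefore re-derive the outer estimate from scratch using those elementary identities rather than cite Proposition~\ref{prop5.3}. A smaller omission: you assert the $L^1$ rates (and the additional $\beta^{-1/4}$ gain for (\ref{eq:365}b)) directly from ``the model bounds,'' but the paper obtains them through the weighted factorization $\|v\|_1\le\|(U-\nu+i\beta^{-1/2})^{-1/2}\|_2\,\|(U-\nu+i\beta^{-1/2})^{1/2}v\|_2$ together with \eqref{eq:265}; in your rescaled picture this corresponds to the $L^2((1+y)^2\,dy)$ control, which is available for $\tilde\chi_-V$ only by the crude support bound $\lesssim K^{1/2}$, and must still be matched on the outer region where the weight is unbounded.
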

\begin{proof}~\\
All the estimates established in this proof assume that
\begin{equation}\label{eq:366}
\mu\leq\Upsilon\beta^{-1/2} \mbox{ and }  U(0)-a\beta^{-1/2}<\nu <U(0)+a\beta^{-1/2}\,.
  \end{equation}
By the second condition it holds that
  \begin{equation} \label{eq:367} 
0 \leq x_\nu \leq C_a \beta^{ -\frac 14}\,.
  \end{equation}
Consequently, for any $\nu_1 < U(0)$,  there exists $\beta_0$ such that,
for all $\beta \geq \beta_0$, we have
   \begin{equation}\label{eq:368}
  \nu > \nu_1 \mbox{ and } x_\nu < 1/4\,.
  \end{equation}

{\it Step 1: $\LL(L^2,L^2)$ estimate.}\\[1.5ex]
  By \cite[Theorem 1.3]{Hen2} we immediately obtain that, under \eqref{eq:366}, 
\begin{equation}
\label{eq:369}
  \|(\LL_\beta^\Nf-\beta\lambda)^{-1}f\|_2\leq C\beta^{-1/2}\|f\|_2\,.
\end{equation}
To prove the second inequality  in (\ref{eq:365}a), 
let $f\in L^2(0,1)$ and
$v\in D(\LL_\beta^\Nf)$ satisfy $(\LL_\beta^\Nf-\beta\lambda)v=f$. Taking the scalar product with $v$, an integration by
parts  yields for the real part, with the aid of \eqref{eq:366}, 
\begin{displaymath}
   \|v^\prime\|_2^2= \mu\beta\|
   v\|_2^2+\Re\langle v,f\rangle\leq C(\beta^{1/2}\|v\|_2^2+\beta^{-1/2}\|f\|_2^2)\,.
\end{displaymath}
By \eqref{eq:369} we can then conclude that
\begin{equation}
  \label{eq:370}
 \Big\|\frac{d}{dx}(\LL_\beta^\Nf-\beta\lambda)^{-1}f\Big\|_2\leq C\beta^{-1/4}\|f\|_2\,,
\end{equation}
which together with \eqref{eq:369} establishes the $\LL(L^2,L^2)$
estimate in (\ref{eq:365}a). \\

{\it Step 2: $\LL(L^\infty,L^2)$ estimate.}\\[1.5ex]
Next, we obtain an $\LL(L^\infty,L^2)$ estimate for
$(\LL_\beta^\Nf-\beta\lambda)^{-1}$ under \eqref{eq:366}. 
Let $\hat{\chi}$ be given \eqref{eq:41}. As before, we set 
\begin{displaymath}
\chi_\gamma(x)=\hat{\chi}(\gamma\beta^{1/4}x)\,, 
\end{displaymath}
for some positive 
\begin{displaymath}
\gamma<[8 C_a+1]^{-1} < 1\,.
\end{displaymath} 
In particular $\gamma$ satisfies  
\begin{displaymath}
0 < \gamma < [8 \beta^{\frac  14} x_\nu +1]^{-1}\,.
\end{displaymath}
   We note that $\chi_\gamma$  satisfies
  \begin{equation}\label{eq:371}
   {\rm supp} \chi_\gamma \subset [2 x_\nu + \frac 14 \beta^{-\frac 14},1) \mbox{ and }  | \chi^\prime_\gamma| \leq C \gamma \beta^{\frac 14}\,.
   \end{equation}
 Set further
$\tilde{\chi}_\gamma=\sqrt{1-\chi_\gamma^2}\in C^\infty(\R)$ and  note that $\tilde \chi_\gamma$  satisfies
  \begin{equation}\label{eq:372}
   {\rm supp} \tilde \chi_\gamma \subset [0,  \frac 1{2\gamma}  \beta^{-\frac 14})  \mbox{ and }  |\tilde \chi^\prime_\gamma| \leq C \gamma \beta^{\frac 14}\, \,.
   \end{equation} 
Let $f\in L^2(0,1)$ and
$v\in D(\LL_\beta^\Nf)$ satisfy $(\LL_\beta^\Nf-\beta\lambda)v=f$.\\
 We begin by estimating $\chi_\gamma v$. An integration by
parts yields
\begin{equation}
\label{eq:373}
  \|(\chi_\gamma v)^\prime\|_2^2-\|\chi_\gamma^\prime v\|_2^2-\mu\beta\|\chi_\gamma v\|_2^2=\Re\langle\chi_\gamma v,\chi_\gamma f\rangle\,,
\end{equation}
from which we conclude, given that $\mu\beta^{1/2}$ and $\gamma$  are bounded
from above,
\begin{equation}
\label{eq:374}
    \|(\chi_\gamma v)^\prime\|_2^2\leq \|\chi_\gamma v\|_1\|f\|_\infty +C\beta^{1/2}\|v\|_2^2 \,.
\end{equation}
Furthermore, we have that 
\begin{equation}
\label{eq:375}
  -\beta\|\,|U-\nu|^{1/2}\chi_\gamma v\|_2^2+2\Im\langle\chi_\gamma^\prime v,(\chi_\gamma v)^\prime\rangle
  =\Im\langle\chi_\gamma v,\chi_\gamma f\rangle \,.
\end{equation}
Hence, with the aid of \eqref{eq:374}, we can conclude that 
\begin{equation}
\label{eq:376}
  \beta\||U-\nu|^{1/2}\chi_\gamma v\|_2^2\leq
  C \,\left(\gamma\beta^{1/2}\|v\|_2^2+\|\chi_\gamma v\|_1\|f\|_\infty\right) \,. 
\end{equation}
We now write (note that $\chi_\gamma \chi_{2\gamma}=\chi_\gamma$) 
\begin{equation}
\label{eq:377}
  \|\chi_\gamma v\|_1\leq
  \|\chi_{2\gamma}|U-\nu|^{-1/2}\|_2\||U-\nu|^{1/2}\chi_\gamma v\|_2\,.
\end{equation}
For  $x \in [ x_\nu + \frac 18 \beta^{-\frac
  14},1)$, it holds by \eqref{eq:247}, \eqref{eq:248} and \eqref{eq:368}
\begin{equation}
\label{eq:378}
  |U(x) -\nu| \geq \frac 1C (x-x_\nu)^2 \geq \frac{1}{4C}  \beta^{-\frac
  12}\,,
  \end{equation}
  which implies, for $\beta \geq \beta_0$, 
\begin{displaymath}
   \int_{x_\nu + \frac{1}{8} \beta^{-\frac
       {1}{4}}}^1|U-\nu|^{-1}\,dx\leq C \int_{\frac 14 \beta^{ -\frac 14}}^{1} y^{-2} \,dy \leq \hat C \beta^{1/4}\,.
\end{displaymath}
We can then  conclude, using \eqref{eq:371},  that 
\begin{equation}
\label{eq:379}
  \|\chi_{2\gamma}|U-\nu|^{-1/2}\|_2\leq C\beta^{1/8} \,.
\end{equation}
Hence, by \eqref{eq:376} and  \eqref{eq:377}  we obtain that
\begin{displaymath}
  \|\chi_\gamma v\|_1\leq C\beta^{-3/8} (\gamma^{\frac 12} \beta^{1/4}\|v\|_2+\|\chi_\gamma v\|_1^{1/2}\|f\|_\infty^{1/2}) \,,
\end{displaymath}
from which we conclude that
\begin{displaymath}
  \|\chi_\gamma v\|_1\leq C\beta^{-1/8}(\gamma^{1/2}\|v\|_2+ \beta^{-5/8}\|f\|_\infty) \,. 
\end{displaymath}
Substituting the above into \eqref{eq:376} then yields
\begin{equation}
\label{eq:380}
  \||U-\nu|^{1/2}\chi_\gamma v\|_2\leq
  C\beta^{-1/4}(\gamma^{1/2}\|v\|_2+\beta^{-5/8}\|f\|_\infty) \,.
\end{equation}
Since by \eqref{eq:378}, 
\begin{equation}
\label{eq:381}
|U-\nu|^{1/2}\chi_\gamma\geq \frac 1 C \beta^{-1/4}\chi_\gamma\,,
\end{equation} 
we may write
\begin{equation}
  \label{eq:382}
 \|\chi_\gamma v\|_2\leq  C\, (\gamma^{1/2}\|v\|_2+\beta^{-5/8}\|f\|_\infty) \,.
\end{equation}

We now attempt to estimate $\tilde{\chi}_\gamma v$.  As
\begin{equation}
\label{eq:383}
   (\LL_\beta^\Nf-\beta\lambda)(\tilde{\chi}_\gamma v)=\tilde{\chi}_\gamma f-2\tilde{\chi}_\gamma^\prime
   v^\prime- \tilde{\chi}_\gamma^{\prime\prime}v \,,
\end{equation}
we may conclude from \cite[Theorem 1.3]{Hen2},  (which can be used since  $U\in C^3([0,1])$ and 
$\Upsilon<[- U^{\prime\prime}(0)]^{1/2}/2$), that
\begin{displaymath}
  \|\tilde{\chi}_\gamma v\|_2 \leq C\beta^{-1/2}\left(\|\tilde{\chi}_\gamma f\|_2+
  \|\tilde{\chi}_\gamma^\prime v^\prime\|_2 +
  \|\tilde{\chi}_\gamma^{\prime\prime}v\|_2\right)\leq \hat C \left(\beta^{-5/8}\|f\|_\infty+\gamma\beta^{-1/4}\|v^\prime\|+\gamma^2\|v\|_2\right)\,.
\end{displaymath}
To obtain the second inequality we used the fact that ${\rm
  supp}\,\tilde \chi_\gamma\subseteq(0,\check C \beta^{-\frac 14})$.  Combining the
above with \eqref{eq:382} yields the existence of $\gamma_0>0$ such that
for all $\gamma \in (0,\gamma_0)$,
\begin{equation}
  \label{eq:384}
 \|v\|_2 \leq C\, (\beta^{-5/8}\|f\|_\infty+\gamma\beta^{-1/4}\|v^\prime\|_2)\,.
\end{equation}
As in \eqref{eq:374} (replacing $\chi_\gamma$ by $1$), we obtain that 
\begin{equation}
\label{eq:385}
    \|v^\prime\|_2^2\leq \|v\|_1\|f\|_\infty +C\beta^{1/2}\|v\|_2^2 \,.
\end{equation}
 By \eqref{eq:265}, applied with $q=1$ and $\mu=
 \beta^{-\frac 12}$, and \eqref{eq:366} it holds that
  \begin{displaymath}
   \|(U-\nu+i\beta^{-1/2})^{-1/2}\|_2^2=  \|(U-\nu+i\beta^{-1/2})^{-1}\|_1 \leq C\beta^{1/4}\,.
  \end{displaymath}
From the above we conclude that
\begin{multline*}
  \|v\|_1\leq \||U-\nu+i\beta^{-1/2}|^{-1/2}\|_2
  \||U-\nu+i\beta^{-1/2}|^{1/2}v\|_2  \\ \leq C\beta^{1/8} \left( \||U-\nu|^{1/2}\chi_\gamma v\|_2+
  \||U-\nu|^{1/2}\tilde{\chi}_\gamma v\|_2+\beta^{-1/4}\|v\|_2\right)\,.
\end{multline*}
By \eqref{eq:186} (which is valid by \eqref{eq:368})
\begin{displaymath}
  |U-\nu|^{1/2}\tilde{\chi}_\gamma\leq C\sup_{x\in(0,\check C \beta^{-\frac
      14})}|x^2-x_\nu^2|^{1/2}\leq C\beta^{-1/4}\,,
\end{displaymath}
we obtain, with the aid of
\eqref{eq:380} that
\begin{equation}
\label{eq:386}
  \|v\|_1\leq C\, (\beta^{-1/8}\|v\|_2 +\beta^{-3/4}\|f\|_\infty)\,. 
\end{equation}
Substituting \eqref{eq:386}  into \eqref{eq:385} yields
\begin{displaymath}
    \|v^\prime\|_2\leq C\, (\beta^{1/4}\|v\|_2+\beta^{-3/8}\|f\|_\infty )\,,
\end{displaymath}
which when substituted into \eqref{eq:384} yields for sufficiently
small $\gamma_0$ and $\gamma \in (0,\gamma_0)$
\begin{equation}\label{eq:387}
   \|v\|_2\leq C\beta^{-5/8}\|f\|_\infty \,, 
\end{equation}
and then
\begin{equation}\label{eq:388}
\|v^\prime\|_2\leq C\beta^{-3/8}\|f\|_\infty \,.
\end{equation}
Substituting \eqref{eq:387}    into \eqref{eq:386}  yields
\begin{equation}\label{eq:389}
   \|v\|_1\leq C\beta^{-3/4}\|f\|_\infty\,. 
\end{equation}
By \eqref{eq:375} it holds that
\begin{displaymath}
   \beta\||U-\nu|^{1/2}\chi_\gamma v\|_2^2\leq
  C(\gamma\beta^{1/2}\|v\|_2^2+\|\chi_\gamma v\|_2\|f\|_2) \,,
\end{displaymath}
from which we conclude by combining it with  \eqref{eq:369} 
\begin{displaymath}
  \||U-\nu|^{1/2}\chi_\gamma v\|_2\leq C\beta^{-3/4}\|f\|_2 \,.
\end{displaymath}
Consequently
\begin{displaymath}
  \|\chi_\gamma^2v\|_1\leq   \||U-\nu|^{-1/2}\chi_\gamma\|_2\||U-\nu|^{1/2}\chi_\gamma v\|_2\leq C\beta^{-5/8}\|f\|_2 \,.
\end{displaymath}
Use of \eqref{eq:379} has been made to obtain the second inequality.

Since by \eqref{eq:369}
\begin{equation}
\label{eq:390}
    \|\tilde{\chi}_\gamma^2v\|_1\leq C\beta^{-1/8}\|v\|_2\leq C\beta^{-5/8}\|f\|_2 \,,
\end{equation}
we may conclude that
\begin{equation}\label{eq:391}
    \|v\|_1\leq C\beta^{-5/8}\|f\|_2 \,,
\end{equation}
which together with \eqref{eq:389} completes the proof of
(\ref{eq:365}a).\\

{\it  Step 3: Proof of (\ref{eq:365}b) }\\
To prove (\ref{eq:365}b) we set
\begin{displaymath}
  f=(x-x_\nu)g \,,
\end{displaymath}
where $g\in L^2(0,1)$.\\
 Then, as in \eqref{eq:374}, we use \eqref{eq:373}
to obtain
\begin{displaymath}
   \|(\chi_\gamma v)^\prime\|_2^2\leq \|(x-x_\nu)\chi_\gamma v\|_2\|g\|_2 +C\beta^{1/2}\|v\|_2^2 \,.
\end{displaymath}
By \eqref{eq:371} and \eqref{eq:378} there exists $C>0$ such that, for all $x\in[0,1]$,
\begin{equation}
\label{eq:392} 
  0\leq  (x-x_\nu)\chi_\gamma(x)\leq C\, (\nu-U(x))^{1/2}\chi_\gamma(x)\,.
\end{equation}
Hence,
\begin{displaymath}
   \|(\chi_\gamma v)^\prime\|_2^2\leq C \left(  \||U-\nu|^{1/2}\chi_\gamma v\|_2\|g\|_2 +\beta^{1/2}\|v\|_2^2 \right)\,.
\end{displaymath}
Next we use \eqref{eq:375} to obtain, as in \eqref{eq:376}, with the
aid of the above and \eqref{eq:392}
\begin{displaymath}
   \beta\||U-\nu|^{1/2}\chi_\gamma v\|_2^2\leq C\, \left(\gamma\beta^{1/2}\|v\|_2^2+\|\chi_\gamma|U-\nu|^{1/2}v\|_2\|g\|_2\right) \,,
 \end{displaymath}
 from which  we conclude that
\begin{equation}
\label{eq:393}
  \||U-\nu|^{1/2}\chi_\gamma v\|_2\leq C\beta^{-1/4}(\gamma^{1/2}\|v\|_2+  \beta^{-3/4} \|g\|_2) \,. 
\end{equation}
Combining the above with  \eqref{eq:381} yields
\begin{equation}
\label{eq:394}
  \|\chi_\gamma v\|_2\leq C\, (\gamma^{1/2}\|v\|_2+\beta^{-3/4}\|g\|_2) \,. 
\end{equation}
Furthermore, with the aid of \eqref{eq:379} we can conclude that
\begin{multline}
\label{eq:395}
  \|\chi_\gamma^2 v\|_1\leq  \|\, |U-\nu|^{-1/2}\chi_{\gamma} \|_2 \,  \||U-\nu|^{1/2}\chi_\gamma
  v\|_2 \\ \leq C\,\left(\gamma^{1/2}\beta^{-1/8}\|v\|_2+\beta^{-7/8}\|g\|_2\right) \,.
\end{multline}
We now use \eqref{eq:383} to obtain, as in \eqref{eq:384}, 
\begin{equation}\label{eq:396}
   \|\tilde{\chi}_\gamma v\|_2 \leq C \left(\beta^{-3/4}\|g\|_2+\gamma\beta^{-1/4}\|v^\prime\| + \gamma^2 \|v\|_2 \right)\,.
\end{equation}
Combining \eqref{eq:396}  with \eqref{eq:394} yields for
sufficiently small $\gamma$
\begin{equation}
\label{eq:397}
     \|v\|_2 \leq C\, \left(\beta^{-3/4}\|g\|_2+\gamma\beta^{-1/4}\|v^\prime\|\right)\,.
\end{equation}
Then we write
\begin{equation}
\label{eq:398}
    \|v^\prime\|_2^2= \mu\beta\|v\|_2^2+\Re\langle v,f  \rangle =  \mu\beta\|v\|_2^2+\Re\langle(x-x_\nu)v,g\rangle\,.
\end{equation}
To estimate the second term in the r.h.s of \eqref{eq:398} we first
note that by \eqref{eq:392}, \eqref{eq:367}  and \eqref{eq:372}, 
\begin{displaymath}
  \|(x-x_\nu)v\|_2 \leq \|(x-x_\nu)\chi_\gamma v\|_2 +
  \|(x-x_\nu)\tilde{\chi}_\gamma v\|_2 \leq C(\||U-\nu|^{1/2}\chi_\gamma
  v\|_2+\beta^{-1/4}\|v\|_2) \,.
\end{displaymath}
With the aid of \eqref{eq:393} we then obtain
\begin{equation}
\label{eq:399}
  \|(x-x_\nu)v\|_2 \leq C\,\left(\beta^{-1/4}\|v\|_2+\beta^{-3/4}\|g\|_2\right) \,. 
\end{equation}
Hence, by \eqref{eq:398} and since $\mu<C\beta^{-1/2}$ we may conclude
that
\begin{equation}\label{eq:400}
     \|v^\prime\|_2\, \leq  \, C \, \left(\beta^{1/4}\|v\|_2+\beta^{-1/2}\|g\|_2\right)\,.
\end{equation}
Substituting  \eqref{eq:400} into \eqref{eq:397} yields for small
enough $\gamma$ , 
\begin{equation}\label{eq:401}
   \|v\|_2 \leq C\beta^{-3/4}\|g\|_2\,.
\end{equation}
 By \eqref{eq:401}, the first inequality of \eqref{eq:390}, and
 \eqref{eq:395} we obtain 
\begin{equation}
\label{eq:402}
     \|v\|_1\leq C\beta^{-7/8}\|g\|_2\,.
\end{equation}
Together with \eqref{eq:401} and \eqref{eq:400}, \eqref{eq:402}
verifies (\ref{eq:365}b).
\end{proof}

\subsection{$L^1$ estimates for $U(0)-\Im \lambda \gg\beta^{-1/2}$}
\label{sec:l1-estimates}
  It is not difficult to show that the resolvent of the operator $-d^2/dx^2+ix$ is
 not bounded in $\LL(L^\infty(\R),L^1(\R))$, a fact that can be easily
 established from the identity
 \begin{displaymath}
   \Big(-\frac{d^2}{dx^2}+ix\Big)\frac{1}{\sqrt{x^2+1}}=
   \frac{2x^2-1}{[x^2+1]^{5/2}}+i\frac{x}{x^2+1}\,.
 \end{displaymath}
For the resolvent of the operator $\LL_\beta^{\Nf,\Df}$ on $(0,1)$, this unboundedness manifests itself through a  logarithmic
dependence on $\beta$ as we can clearly see in the following proposition. 
\begin{proposition}
  \label{lem:Dirichlet-L1} Let $U\in C^2([0,1])$ satisfy
  \eqref{eq:10}.  There exist $\Upsilon>0$, $a>0$, $C>0$, and
  $\beta_0>1$ such that, for $\beta\geq \beta_0$, $\Im\lambda<U(0)-a\beta^{-1/2}$, $\Re\lambda
  \leq {\mathfrak J_\nu}^{2/3}\Upsilon\beta^{-1/3}$, and $f\in L^\infty(0,1)$ we have
\begin{equation}
\label{eq:403} 
  \| (\LL_\beta^{\Nf,\Df} -\beta\lambda)^{-1} f  \|_1 \leq C \min\left([{\mathfrak
      J_\nu}\beta]^{-5/6}\|f \|_2,[{\mathfrak J_\nu}\beta]^{-1}\, \log\beta\,
    \|f\|_\infty\right) \,. 
\end{equation}
\end{proposition}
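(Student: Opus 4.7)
My strategy is a cutoff decomposition at the natural Airy scale $w_\nu := (\beta\mathfrak J_\nu)^{-1/3}$ around $x_\nu$, applied to $v := (\LL_\beta^{\Nf,\Df}-\beta\lambda)^{-1}f$. Introduce $\eta_\nu \in C_c^\infty(\R,[0,1])$ with $\eta_\nu \equiv 1$ on $[x_\nu - w_\nu, x_\nu + w_\nu]$, supported in $[x_\nu - 2w_\nu, x_\nu + 2w_\nu]$, and $|\eta_\nu^\prime| \leq C w_\nu^{-1}$. Write $v = \eta_\nu v + (1-\eta_\nu)v$. On the inner region, apply Cauchy--Schwarz using the support width: $\|\eta_\nu v\|_1 \leq (4w_\nu)^{1/2}\|v\|_2$, and plug in the two versions of (\ref{eq:287}a) to obtain, respectively, $\|\eta_\nu v\|_1 \leq C w_\nu^{1/2}[\beta\Jf_\nu]^{-2/3}\|f\|_2 = C[\beta\Jf_\nu]^{-5/6}\|f\|_2$ and $\|\eta_\nu v\|_1 \leq C w_\nu^{1/2}[\beta\Jf_\nu]^{-5/6}\|f\|_\infty = C[\beta\Jf_\nu]^{-1}\|f\|_\infty$, which already match the target scales.

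On the outer region, apply the weighted Cauchy--Schwarz
\[
\|(1-\eta_\nu)v\|_1 \leq \|(1-\eta_\nu)|U-\nu|^{-1/2}\|_2\,\||U-\nu|^{1/2}(1-\eta_\nu)v\|_2\,.
\]
Using \eqref{eq:186}, one has $|U-\nu|\geq C^{-1}\Jf_\nu |x-x_\nu|$ on the support of $1-\eta_\nu$, and a direct computation gives $\|(1-\eta_\nu)|U-\nu|^{-1/2}\|_2 \leq C(\log\beta/\Jf_\nu)^{1/2}$, the logarithm coming from integrating $|U-\nu|^{-1}$ down to the scale $w_\nu$ of $x_\nu$. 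For the second factor, I will use the imaginary part of the identity for $(1-\eta_\nu)v$, as in the derivation of \eqref{eq:308}:
\[
\beta\||U-\nu|^{1/2}(1-\eta_\nu)v\|_2^2 \leq |\langle (1-\eta_\nu)v,(1-\eta_\nu)f\rangle| + 2|\langle \eta_\nu^\prime v, ((1-\eta_\nu)v)^\prime\rangle|\,.
\]

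In the $\|f\|_2$ setting, the first right-hand term is bounded by $\|v\|_2\|f\|_2 \leq C[\beta\Jf_\nu]^{-2/3}\|f\|_2^2$; the boundary term is handled by combining the trivial bound $\|\eta_\nu^\prime v\|_2 \leq C w_\nu^{-1}\|v\|_2$ with (\ref{eq:287}b) for $p=2$ (giving $\|v^\prime\|_2 \leq C[\beta\Jf_\nu]^{-1/3}\|f\|_2$), so that $\||U-\nu|^{1/2}(1-\eta_\nu)v\|_2 \leq C\beta^{-1/2}[\beta\Jf_\nu]^{-1/3}\|f\|_2$ and hence $\|(1-\eta_\nu)v\|_1 \leq C[\beta\Jf_\nu]^{-5/6}\|f\|_2$ (absorbing the $\log^{1/2}\beta$ factor into an $\epsilon$-power of $[\beta\Jf_\nu]$). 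In the $\|f\|_\infty$ setting, the same first term must be bootstrapped: denoting $X := \||U-\nu|^{1/2}(1-\eta_\nu)v\|_2$, one has $|\langle (1-\eta_\nu)v,(1-\eta_\nu)f\rangle| \leq \|(1-\eta_\nu)v\|_1 \|f\|_\infty \leq C(\log\beta/\Jf_\nu)^{1/2} X \|f\|_\infty$, so that the quadratic inequality $\beta X^2 \leq C(\log\beta/\Jf_\nu)^{1/2}X\|f\|_\infty + R$ yields $X \leq C(\log\beta/\Jf_\nu)^{1/2}\beta^{-1}\|f\|_\infty + \beta^{-1/2}\sqrt R$, from which $\|(1-\eta_\nu)v\|_1 \leq C[\beta\Jf_\nu]^{-1}\log\beta\,\|f\|_\infty$, as required.

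\textbf{Main obstacle.} The principal technical difficulty lies in controlling the boundary terms $|\langle \eta_\nu^\prime v, ((1-\eta_\nu)v)^\prime\rangle|$ at the sharp scale $w_\nu$: naive use of $\|\eta_\nu^\prime v\|_2 \leq C w_\nu^{-1}\|v\|_2$ and of (\ref{eq:287}b) produces contributions of order $[\beta\Jf_\nu]^{-7/8}$ or worse, which exceed the target $[\beta\Jf_\nu]^{-1}$. Overcoming this will require localizing the $L^\infty$ bound for $v$ on the transition annulus $w_\nu \leq |x-x_\nu| \leq 2w_\nu$ via the refined Airy-scale energy estimates underlying Proposition~\ref{prop5.3}, together with exploiting the explicit form of $\Im\langle\eta_\nu^\prime v,((1-\eta_\nu)v)^\prime\rangle$ (which, after integration by parts, reduces to $\int \eta_\nu^\prime(1-\eta_\nu)\Im(\bar v v^\prime)\,dx$ and gains an extra factor of $w_\nu$). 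The $\log\beta$ factor in the second bound is essentially unavoidable because of the logarithmic divergence of $\int_{|x-x_\nu|\geq w_\nu}|U-\nu|^{-1}dx$.
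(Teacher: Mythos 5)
Your decomposition at the Airy scale $w_\nu = (\beta\Jf_\nu)^{-1/3}$ and the weighted Cauchy--Schwarz on the outer region recognizably mirror the paper's treatment of the $\|f\|_\infty$ bound, and you have honestly flagged the boundary cross-term as the crux. However, there are three genuine gaps. First, for the $\|f\|_2$ bound your cutoff route necessarily produces the factor $\|(1-\eta_\nu)|U-\nu|^{-1/2}\|_2 \sim (\log\beta/\Jf_\nu)^{1/2}$, leaving a spurious $\log^{1/2}\beta$ in the final estimate. This cannot be absorbed into a power of $[\beta\Jf_\nu]$ without weakening the exponent (since $[\beta\Jf_\nu]$ is only a fixed power of $\beta$ in the worst case), and the proposition you are proving has no logarithm in the $\|f\|_2$ clause. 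The paper's route for this bound uses no cutoff at all: it writes $\|v\|_1 \leq \|(U-\nu+i\epsilon)^{-1}\|_2 \,\|(U-\nu+i\epsilon)v\|_2$ with $\epsilon = (\beta x_\nu^{-2})^{-1/3}$ and invokes the weighted estimate $\|(U-\nu)v\|_2 \leq C\beta^{-1}\|f\|_2$ from \eqref{eq:354}, an ingredient you have not used.

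Second, your imaginary-part identity is incorrect as written: $1-\eta_\nu$ is supported on both sides of $x_\nu$, where $U - \nu$ changes sign, so the identity produces $\beta\langle(U-\nu)(1-\eta_\nu)v,(1-\eta_\nu)v\rangle$ rather than $\pm\beta\||U-\nu|^{1/2}(1-\eta_\nu)v\|_2^2$. One must use sided cutoffs $\chi_\nu^\pm$ as in \eqref{eq:308}, which the paper does.

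Third, the boundary cross-term really does not close with the ingredients you invoke, confirming your own concern. The missing input is the real-part energy identity for the cutoff function itself: it yields $\|(\chi_s^\pm v)'\|_2^2 \leq \|\chi_s^\pm v\|_1\|f\|_\infty + Cs^2\|v\|_2^2$, which is substantially smaller than the global $\|v'\|_2 \lesssim [\beta\Jf_\nu]^{-1/3}\|f\|_2$ one obtains from (\ref{eq:287}b). Combined with AM--GM on the cross term, this gives a boundary contribution $s^2\|v\|_2^2 \lesssim [\beta x_\nu]^{-1}\|f\|_\infty^2$ (using (\ref{eq:287}a)), exactly what the bootstrap needs; your approach, using the global derivative estimate, gives $[\beta\Jf_\nu]^{-5/6}\|f\|_\infty^2$, which is too large. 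Your alternative suggestion --- that the integration by parts $\int\eta_\nu'(1-\eta_\nu)\Im(\bar v v')$ gains an extra factor of $w_\nu$ --- is not correct as stated and is not what the paper does.
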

\begin{proof}~\\
  We assume that $\Upsilon>0$ is sufficiently small so that Proposition
  \ref{prop5.3} holds true.  We begin by recalling that by
  \eqref{eq:289}, for any $N >0$, there exists $ a_0>0$ such that for
  all $a\geq a_0$, we have under the conditions of the proposition,
\begin{displaymath}
\beta x_\nu \geq  \beta x_\nu^4 \geq N\,,
\end{displaymath}
where $x_\nu$ is defined by \eqref{eq:defxnu}.
Let $(v,f)\in D(\LL_\beta^\Nf)\times L^\infty(0,1)$ satisfy $(\LL_\beta^\Nf-\beta\lambda)v=f$.
 By \eqref{eq:225} 
  applied with $\mu = \beta^{-1/3} x_\nu^{2/3} $ and $q=2$, it holds that
\begin{equation*} 
   \|(U-\nu+i [\beta x_\nu^{-2}]^{-1/3})^{-1}\|_2^2 \leq
   Cx_\nu^{-5/3}\beta^{1/3} \,. 
\end{equation*}
 We may then conclude that
  \begin{equation}
\label{eq:404}
  \begin{array}{ll}
    \|v\|_1 &\leq \|(U-\nu+i[\beta x_\nu^{-2}]^{-1/3})^{-1}\|_2
    \|(U-\nu+i[\beta x_\nu^{-2}]^{-1/3})v\|_2\\ & \leq C\beta^{1/6}x_\nu^{-5/6}\, [\|(U-\nu)v\|_2+[\beta x_\nu^{-2}]^{-1/3}\|v\|_2]\,.
    \end{array}
  \end{equation}
By \eqref{eq:354} and (\ref{eq:287}a) 
\begin{equation}
  \label{eq:405}
\|v\|_1 \leq C [\beta x_\nu]^{-5/6}\, \|f \|_2 \,.
\end{equation}
By \eqref{eq:238} 
we can conclude that
\begin{equation*}
   \|(U-\nu+i [\beta x_\nu^{-2}]^{-1/3})^{-1/2}\|_2^2= \|(U-\nu+i [\beta
   x_\nu^{-2}]^{-1/3})^{-1}\|_1\leq\frac{C}{x_\nu}\log (\beta x_\nu^4)\,.
\end{equation*}
Hence, we  can complete the proof of \eqref{eq:403} by writing 
\begin{equation*}
\begin{array}{ll}
    \|v\|_1&  \leq \|(U-\nu+i[\beta x_\nu^{-2}]^{-1/3})^{-1/2}\|_2
    \|(U-\nu+i[\beta x_\nu^{-2}]^{-1/3})^{1/2}v\|_2\\ &
    \leq\frac{C}{x_\nu^{1/2}}[\log (\beta x_\nu^4)]^{1/2} [\||U-\nu|^{1/2}v\|_2+[\beta x_\nu^{-2}]^{-1/6}\|v\|_2]\,,
    \end{array}
\end{equation*}
which implies
\begin{equation} \label{eq:406}
  \|v\|_1 \leq \frac{C}{x_\nu^{1/2}}[\log (\beta)]^{1/2} [\||U-\nu|^{1/2}v\|_2+[\beta x_\nu^{-2}]^{-1/6}\|v\|_2]\,.
  \end{equation}
We note that by (\ref{eq:287}a) (which holds for $a\geq a_0$
    with $a_0$ large enough)  and \eqref{eq:289} \\ 
we have 
\begin{displaymath}
[\beta x_\nu^{-2}]^{-1/6}\|v\|_2\leq C \beta^{-1} x_\nu^{-\frac 12} \|f\|_\infty   \,.
\end{displaymath}
Hence, we obtain from \eqref{eq:406}
\begin{equation} 
\label{eq:407}
  \|v\|_1 \leq \frac{C}{x_\nu^{1/2}}[\log (\beta)]^{1/2} \big[\||U-\nu|^{1/2}v\|_2+ \beta^{-1} x_\nu^{-1/2} \|f\|_\infty\big ]\,.
  \end{equation}
  To complete the proof we need an estimate for $\||U-\nu|^{1/2}v\|_2$.
  In a similar manner to \eqref{eq:292} we let
\begin{displaymath}
\chi_s^\pm(x)=\hat{\chi}(s(x-x_\nu)){\mathbf   1}_{\R_+}(\pm (x-x_\nu)) \mbox{  with } s= [\beta x_\nu]^{1/3}\,,
\end{displaymath}
where $\hat{\chi}$ is defined by \eqref{eq:291}.
 An integration by parts yields, as in
\eqref{eq:307},
\begin{equation}
  \|(\chi_s^\pm v)^\prime\|_2^2-\|(\chi_s^\pm)^\prime v\|_2^2-\mu\beta\|\chi_s^\pm v\|_2^2=\Re\langle\chi_s^\pm v,\chi_s^\pm f\rangle\,,
\end{equation}
from which we conclude, given that $\mu\beta \leq C s^2 $ 
\begin{equation}
    \|(\chi_s^\pm v)^\prime\|_2^2\leq \|\chi_s^\pm v\|_1\|f\|_\infty +\hat C  s^2 \|v\|_2^2 \,.
\end{equation}
Furthermore, (see \eqref{eq:308}), we have that 
\begin{equation}
\label{eq:408}
  \mp \beta\|\,|U-\nu|^{1/2}\chi_s^\pm v\|_2^2+2\Im\langle(\chi_s^\pm )^\prime v,(\chi_s^\pm v)^\prime\rangle
  =\Im\langle\chi_s^\pm v,\chi_s^\pm f\rangle \,,
\end{equation}
and hence, with the aid of above, we obtain that
\begin{displaymath}
  \||U-\nu|^{1/2}\hat \chi_s  v\|_2^2\leq
  C \beta^{-1} ([\beta x_\nu]^{2/3}\|v\|_2^2+\|\chi_s^\pm v\|_1\|f\|_\infty) \,. 
\end{displaymath}
By (\ref{eq:287}a), we then have
\begin{equation}
\label{eq:409}
   \||U-\nu|^{1/2}\chi_s^\pm v\|_2\leq
   C(x_\nu^{-1/2}\beta^{-1}\|f\|_\infty+ \beta^{-1/2}\|v\|_1^{1/2}\|f\|_\infty^{1/2} ) \,.
\end{equation}
Let $\tilde{\chi}_s=\sqrt{1-(\chi_s^+)^2-(\chi_s^-)^2}$\,. Since $ s= [\beta
x_\nu]^{1/3}$ it holds that   
\begin{displaymath}
{\rm supp } \tilde \chi_s \subset [x_\nu-\frac 12 (\beta x_\nu)^{-1/3},x_\nu+\frac 12 (\beta x_\nu)^{-1/3})\,.
\end{displaymath}
   As
\begin{displaymath}
  \||U-\nu|^{1/2}\tilde{\chi}_s v\|_2\leq C\,[\beta x_\nu]^{-1/6}x_\nu^{1/2}  \|\tilde{\chi}_s v\|_2\leq C\beta^{-1/6}x_\nu^{1/3}  \|v\|_2\,,
\end{displaymath}
we may use (\ref{eq:287}a) once again to obtain 
\begin{displaymath}
  \||U-\nu|^{1/2}\tilde{\chi}_s v\|_2\leq C x_\nu^{-\frac 12} \beta^{-1}  \|f\|_\infty  \,.
\end{displaymath}

Combining the above with \eqref{eq:409} yields
\begin{equation}\label{eq:410}
   \||U-\nu|^{1/2}v\|_2\leq
   C\left(x_\nu^{-1/2}\beta^{-1}\|f\|_\infty+ \beta^{-1/2}\|v\|_1^{1/2}\|f\|_\infty^{1/2} \right) \,.
\end{equation}
Substituting \eqref{eq:410}  into \eqref{eq:407} yields, for any $\eta >0$, 
\begin{displaymath}
  \begin{array}{ll}
\|v\|_1 & \leq  \frac{C}{x_\nu^{1/2}}[\log (\beta)]^{1/2} \left( \beta^{-1} x_\nu^{-1/2} \|f\|_\infty + \beta^{-1/2} \| v\|_1^{1/2} \|f\|_\infty^{1/2}\right)\\
&\leq \frac{C}{x_\nu^{1/2}}[\log (\beta)]^{1/2} \left( \beta^{-1} x_\nu^{-1/2} \|f\|_\infty + \beta^{-1/2} (\frac{1}{\eta}  \|f\|_\infty + \eta   \| v\|_1) \right)  \,.
\end{array}
\end{displaymath}
Choosing  $\eta >0$ such that 
\begin{displaymath}
C\,  \eta  \,\beta^{-1/2} x_\nu^{- 1/2}[\log (\beta)]^{1/2}  = \frac 12
\end{displaymath}
we obtain
\begin{equation}
\label{eq:411}
\|v\|_1 \leq \hat C  \,  \frac{1}{\beta x_\nu}  \log (\beta)   \,   \|f\|_\infty \,.
\end{equation}
Combining the above  with
\eqref{eq:405},  completes the proof of the proposition. 
\end{proof}
   If $U-\nu\neq0$ in $[0,1]$ it can be easily verified (see
  \eqref{eq:354}) that
$$\|(-d^2/dx^2+i\beta[U-\nu])^{-1}\|\lesssim\beta^{-1}\,.
$$
In contrast, when
$U(x)=\nu$ for some $x\in(0,1)$ the best estimate we can obtain
(see \eqref{eq:287}) is $$\|\big(-d^2/dx^2+i\beta[U-\nu]\big)^{-1}\|\lesssim\beta^{-2/3}\,.$$
The zero of $U-\nu$ at $x=x_\nu$, thus, has a significant effect on the
resolvent norm. Nevertheless, if $f(x_\nu)=0$ and $f$ is small in the
neighborhood of $x_\nu$ one may expect that
$\|(-d^2/dx^2+i\beta[U-\nu])^{-1}f\|_2$ would be smaller in that case.\\
This heuristic argument is manifested more precisely in the next
proposition.
 \begin{proposition}
\label{H1-estimate-0}  Let  $U\in C^2([0,1])$
satisfy \eqref{eq:10}. There
exist $\Upsilon>0$, $C>0$, $a>0$, and $\beta_0>0$ such that, for $\beta\geq \beta_0$,
  $\nu\leq U(0)-a\beta^{-1/2}$, $ \mu <\Upsilon{\mathfrak
  J_\nu}^{2/3}\beta^{-1/3}\,,$ and $f\in L^2(0,1)$ such that
$(x-x_\nu)^{-1}f\in L^2(0,1)$, we have
  \begin{equation}
\label{eq:412}
\|(\LL_\beta^\Nf-\beta\lambda)^{-1}f\|_2\leq C\,  (\mathfrak J_\nu \beta)^{-1}\, \Big\|\frac{f}{x-x_\nu}\Big\|_2\,,
  \end{equation}
and
  \begin{equation}
    \label{eq:413}
\Big\|\frac{d}{dx}(\LL_\beta^\Nf-\beta\lambda)^{-1}f\Big\|_2\leq C\beta^{-1/2} \, \Big\|\frac{f}{x-x_\nu}\Big\|_2\,.
  \end{equation}
\end{proposition}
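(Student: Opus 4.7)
The plan is to prove both bounds by combining a weighted energy identity with a duality argument, then upgrade the $L^2$ estimate to the $H^1$ estimate by localization at the Airy scale. I write $v=(\LL_\beta^\Nf-\beta\lambda)^{-1}f$ and $g=f/(x-x_\nu)$, so $f=(x-x_\nu)g$. The central geometric input is the lower bound $|U(x)-\nu|\geq c\,\mathfrak J_\nu\,|x-x_\nu|$ on all of $[0,1]$: for $\nu>\nu_1$ it follows from \eqref{eq:185} together with $\mathfrak J_\nu\leq Cx_\nu$, while for $\nu\leq\nu_1$ it comes from splitting $[0,1]$ at $x_\nu/2$ and using $|U'(x)|\sim x$ near $0$ (a consequence of $U'(0)=0$ and $U''<0$).

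For \eqref{eq:412} I pair the equation $(\LL_\beta^\Nf-\beta\lambda)v=f$ with $(x-x_\nu)\bar v$. Integration by parts makes the boundary terms vanish thanks to $v'(0)=v(1)=0$, and since $\int\bar v v'=-\tfrac12|v(0)|^2$ is real, the imaginary part of the identity reduces cleanly to
\[
\beta\int_0^1(U-\nu)(x-x_\nu)|v|^2\,dx=\Im\int_0^1(x-x_\nu)\bar v f\,dx.
\]
Monotonicity of $U$ makes the left-hand integrand non-positive, and the geometric bound together with Cauchy-Schwarz then yield
\[
\|(x-x_\nu)v\|_2\leq \frac{C}{\beta\mathfrak J_\nu}\|f\|_2,
\]
valid for any right-hand side $f\in L^2$. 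To pass from $\|(x-x_\nu)v\|_2$ to $\|v\|_2$ I use duality: the boundary conditions of $\LL_\beta^\Nf$ are self-adjoint for $-d^2/dx^2$, so $(\LL_\beta^\Nf-\beta\lambda)^*=\LL_{-\beta}^\Nf-\beta\bar\lambda$, and if $(\LL_\beta^\Nf-\beta\lambda)^*\psi=\phi$ then $\tilde\psi=\bar\psi$ satisfies $(\LL_\beta^\Nf-\beta\lambda)\tilde\psi=\bar\phi$. Applying the preceding estimate with $\bar\phi$ in place of $f$ gives $\|(x-x_\nu)\tilde\psi\|_2\leq C(\beta\mathfrak J_\nu)^{-1}\|\phi\|_2$, and then
\[
|\langle\phi,v\rangle|=|\langle\tilde\psi,f\rangle|=|\langle(x-x_\nu)\tilde\psi,g\rangle|\leq \|(x-x_\nu)\tilde\psi\|_2\,\|g\|_2.
\]
Taking the supremum over unit $\phi\in L^2$ yields \eqref{eq:412}.

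For \eqref{eq:413}, the direct energy identity $\|v'\|_2^2=\beta\mu\|v\|_2^2+\Re\langle(x-x_\nu)v,g\rangle$ together with the bounds from step 1 and $|\mu|\leq\Upsilon\mathfrak J_\nu^{2/3}\beta^{-1/3}$ only produces $\|v'\|_2\leq C(\beta\mathfrak J_\nu)^{-1/2}\|g\|_2$, which reaches $C\beta^{-1/2}$ only when $\mathfrak J_\nu\gtrsim 1$. To obtain the sharp Airy-type scaling $(\beta\mathfrak J_\nu)^{-2/3}\|g\|_2$, I localize at scale $\ell=(\beta\mathfrak J_\nu)^{-1/3}$: let $\tilde\chi_\nu\in C_c^\infty$ equal $1$ on $|x-x_\nu|\leq\ell$ and be supported in $|x-x_\nu|\leq 2\ell$, and set $\chi_\nu^\pm=1-\tilde\chi_\nu$ on either side of $x_\nu$. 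The key observation is that on the support of $\tilde\chi_\nu$ one has $\|\tilde\chi_\nu f\|_2\leq 2\ell\|g\|_2$, so the dilated Airy estimate (\ref{eq:287}a) applied to $(\hat{\LL}_{\beta,\nu,\R}-\beta\lambda)(\tilde\chi_\nu v)=\tilde\chi_\nu f-2\tilde\chi_\nu'v'-\tilde\chi_\nu''v$ produces
\[
\|(\tilde\chi_\nu v)'\|_2\leq C\bigl[(\beta\mathfrak J_\nu)^{-2/3}\|g\|_2+\|v'\|_{L^2(\ell\leq|x-x_\nu|\leq 2\ell)}+\ell^{-1}\|v\|_2\bigr],
\]
with the last term handled by \eqref{eq:412}. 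On the exterior region $|x-x_\nu|\geq\ell$ I pair the equation with $(\chi_\nu^\pm)^2\bar v$ and argue as in \eqref{eq:307}-\eqref{eq:308} of the proof of Proposition \ref{prop5.3} to control $\|\chi_\nu^\pm|U-\nu|^{1/2}v\|_2$ and $\|(\chi_\nu^\pm v)'\|_2$ in terms of $\|v\|_2$ and $\|(x-x_\nu)v\|_2$, both bounded via step 1. A short bootstrap then absorbs the overlap contribution $\|v'\|_{L^2(\ell\leq|x-x_\nu|\leq 2\ell)}$, yielding $\|v'\|_2\leq C(\beta\mathfrak J_\nu)^{-2/3}\|g\|_2$. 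Since $\nu\leq U(0)-a\beta^{-1/2}$ combined with the quadratic behaviour of $U$ near $x=0$ forces $\mathfrak J_\nu\geq c\sqrt a\,\beta^{-1/4}$, one has $(\beta\mathfrak J_\nu)^{-2/3}\leq C\beta^{-1/2}$ for $a$ large enough, which gives \eqref{eq:413}. The main obstacle will be precisely this cutoff surgery: the commutator $\tilde\chi_\nu'v'$ makes the interior estimate a priori circular in $\|v'\|_2$, and closing the bootstrap requires the sharp $L^2$-bound \eqref{eq:412} on the overlap annulus $\ell\leq|x-x_\nu|\leq 2\ell$.
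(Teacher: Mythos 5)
The weighted energy identity in your Step 1 is wrong, and this is a genuine gap. You claim that $\int_0^1\bar v v'\,dx=-\tfrac12|v(0)|^2$ is real, but only the \emph{real} part of $\int\bar v v'$ equals $-\tfrac12|v(0)|^2$; the imaginary part $\Im\int\bar v v'$ is a nontrivial quantity (writing $v=Re^{i\theta}$ it equals $\int R^2\theta'\,dx$) that does not vanish from the boundary conditions alone. Consequently, the imaginary part of the pairing produces
\begin{displaymath}
  \beta\int_0^1 (U-\nu)(x-x_\nu)|v|^2\,dx \;=\; \Im\int_0^1(x-x_\nu)\bar v f\,dx \;-\; \Im\int_0^1\bar v v'\,dx\,,
\end{displaymath}
and your argument silently drops the second term on the right. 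Since $\|(x-x_\nu)v\|_2\leq C(\beta\mathfrak J_\nu)^{-1}\|f\|_2$ is the load-bearing input to both the duality step for \eqref{eq:412} and the bootstrap in Step 2, nothing downstream is established as written. The gap is \emph{patchable}: one can bound $|\Im\int\bar v v'|\leq\|v\|_2\|v'\|_2$ and then import the $\LL(L^2,L^2)$ and $\LL(L^2,H^1)$ Airy estimates of (\ref{eq:287}a,b), which give $\|v\|_2\|v'\|_2\leq C(\beta\mathfrak J_\nu)^{-1}\|f\|_2^2$; the resulting quadratic inequality in $\|(x-x_\nu)v\|_2$ still yields the claimed bound. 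But you did not do this, and the claim as stated is incorrect.

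It is worth noting that the paper avoids this issue entirely by not using a weighted energy identity at all for \eqref{eq:412}. Instead it writes $f=(U-\nu)g$, sets $u=(\LL_\beta^\Nf-\beta\lambda)^{-1}g$ and $w=(U-\nu)u$, and exploits the commutator identity $v=w+(\LL_\beta^\Nf-\beta\lambda)^{-1}(2U'u'+U''u)$; the two pieces are then estimated using \eqref{eq:354}, (\ref{eq:287}a,b), and \eqref{eq:356}, together with the bound $|U'|\leq C(|U-\nu|^{1/2}+x_\nu)$ from \eqref{eq:348}. Your duality reduction of \eqref{eq:412} to a weighted $L^2$ bound on the conjugate problem is genuinely different and elegant if the weighted bound is secured, so I would keep it after fixing the energy identity. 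For \eqref{eq:413}, the Airy-scale localization you propose is considerably more machinery than the paper uses: the paper simply takes the real part of $\langle v,(\LL_\beta^\Nf-\beta\lambda)v\rangle$, bounds $\|(x-x_\nu)v\|_2\leq C\||U-\nu|^{1/2}v\|_2$ via the pointwise inequality $|x-x_\nu|\leq C|U-\nu|^{1/2}$, and then controls $\||U-\nu|^{1/2}v\|_2^2\leq C(\beta^{-1}\|v'\|_2^2+\beta^{-2}\|(x-x_\nu)^{-1}f\|_2^2)$ using the same $\chi_\nu^\pm$ cutoff argument as in \eqref{eq:308} plus \eqref{eq:412}; this closes the estimate without any Airy-scale surgery or bootstrap on the overlap annulus. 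Your sharper target $(\beta\mathfrak J_\nu)^{-2/3}\|g\|_2$ is not needed and only complicates the proof.
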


\begin{proof}~\\

Let $(v,f,\lambda)\in D(\LL_\beta^{\Nf})\times L^2(0,1)\times\C$  satisfy $(\mathcal L_\beta^\Nf-\beta \lambda)v=f$.\\

{\em Step 1: We prove \eqref{eq:412}.\\}

 \noindent  Set 
\begin{displaymath}
   f=(U-\nu)g \,. 
\end{displaymath}
 Given that $ \mu <\Upsilon{\mathfrak   J_\nu}^{2/3}\beta^{-1/3}\,,$
  it follows from \eqref{eq:287} that there exists $u\in D(\mathcal L_\beta^\Nf)$ satisfying 
\begin{equation}
\label{eq:414}
  (\mathcal L_\beta^\Nf-\beta \lambda)u=g\,.
\end{equation}
Let
\begin{displaymath}
  w=(U-\nu)u\,.
\end{displaymath}
 Then, it holds that
\begin{displaymath}
  (\mathcal L_\beta^\Nf-\beta \lambda)w=f-2U^\prime u^\prime-U^{\prime\prime}u \,.
\end{displaymath}
Consequently,
\begin{equation}
\label{eq:415}
  v=w+(\mathcal L_\beta^\Nf-\beta \lambda)^{-1}(2U^\prime u^\prime+U^{\prime\prime}u)\,.
\end{equation}

We now recall \eqref{eq:348}
\begin{displaymath}
  |U^\prime|\leq C|U-U(0)|^{1/2}\leq C(|U-\nu|^{1/2}+x_\nu)\,.
\end{displaymath}
By the above and (\ref{eq:287}a) it holds that
\begin{multline}\label{eq:949abc}
  \|(\mathcal L_\beta^\Nf-\beta \lambda)^{-1}(2U^\prime u^\prime+U^{\prime\prime}u)\|_2\leq \\ C\, [{\mathfrak
J_\nu}\beta]^{-2/3}\,\big(x_\nu\|u^\prime\|_2+\||U-\nu|^{1/2}u^\prime\|_2+\|u\|_2 \big) \,.
\end{multline}
 Let $\chi_\nu^\pm$ be defined by \eqref{eq:292}. Clearly, in view of
 \eqref{eq:293} and the fact that $U'(0)=0$,  
\begin{equation}\label{eq:949aa}
 \||U-\nu|^{1/2}u^\prime\|_2\leq  \|\chi_\nu^+
 |U-\nu|^{1/2}u^\prime\|_2+\|\chi_\nu^-|U-\nu|^{1/2}u^\prime\|_2+Cx_\nu\|u^\prime\|_2\,.
\end{equation}
By \eqref{eq:356}, applied to the pair $(u, g )$, \eqref{eq:949aa},
and (\ref{eq:287}b)  we then have 
\begin{equation}\label{eq:949bbb}
x_\nu \|u'\|_2  +  \||U-\nu|^{1/2}u^\prime\|_2\leq  C(\beta^{-1/2}+ x_\nu^{2/3}\beta^{-1/3})\|g\|_2\,.
\end{equation}
Using (\ref{eq:287}a) together with \eqref{eq:949abc} and
\eqref{eq:414} then yields, as  $x_\nu\geq\beta^{-1/4}$,
\begin{equation}
\label{eq:416}
   \|(\mathcal L_\beta^\Nf-\beta \lambda)^{-1}(2U^\prime u^\prime+U^{\prime\prime}u)\|_2\leq C\beta^{-1}\|g\|_2 \,.
\end{equation}
By \eqref{eq:354}, applied to the pair $(u, g )$,  it holds that 
\begin{equation}\label{eq:417}
  \|w\|_2\leq C \beta^{-1} \,\|g\|_2\,.
\end{equation}
Substituting the above together with \eqref{eq:416} into
\eqref{eq:415} yields
\begin{displaymath}
  \|v\|_2\leq C\beta^{-1}\|g\|_2\,.
\end{displaymath}
 Since 
\begin{displaymath}
  |U(x)-\nu|\geq \frac 1C x_\nu |x- x_\nu| \,,
\end{displaymath}
it holds that
\begin{displaymath}
\|g\|_2 \leq C x_\nu^{-1}  \|(x-x_\nu)^{-1}f\|_2\,, 
\end{displaymath}
and hence, we can now conclude \eqref{eq:412} from the above and
\eqref{eq:417}. \\

{\em Step 2: We prove \eqref{eq:413}.\\}

Taking the real part of the scalar product with $\langle v,(\mathcal
L_\beta^\Nf-\beta \lambda)v\rangle$ we write
\begin{equation*}
    \|v^\prime\|_2^2= \mu\beta\|v\|_2^2+\Re\langle(x-x_\nu)v,(x-x_\nu)^{-1}f\rangle\,,
\end{equation*}
From this we deduce that 
\begin{equation}
\label{eq:418}
    \|v^\prime\|_2^2\leq \mu_+ \beta\|v\|_2^2+| \langle(x-x_\nu)v,(x-x_\nu)^{-1}f\rangle|\,,
\end{equation}
where 
  \begin{equation}
\label{eq:914}
 \mu_+=\max(\mu,0)\,.   
  \end{equation}

Since $|x-x_\nu|\leq C\,|U-\nu|^{1/2}$ we obtain by \eqref{eq:412},  as
$x_\nu\geq\beta^{-1/4}$ and \break $\mu_+\leq C \, \mathfrak J_\nu^{2/3}  \beta^{-1/3} $,
\begin{equation}
\label{eq:419} 
   \|v^\prime\|_2^2\leq C \Big(\beta^{-1} \|(x-x_\nu)^{-1}f\|_2 + \||U-\nu|^{1/2}v\|_2
   \|(x-x_\nu)^{-1}f\|_2\Big)\,.
\end{equation} 
By \eqref{eq:308} it holds that
\begin{displaymath}
  \beta\| |U-\nu|^{1/2}\chi_\nu^\pm v\|_2^2\leq C x_\nu^{-1} \|v\|_2\|v^\prime\|_2 +
  \|(x-x_\nu)v\|_2\|(x-x_\nu)^{-1}f\|_2 \,.
\end{displaymath}
Hence,  in view of \eqref{eq:293}, 
\begin{displaymath}
   \beta\| |U-\nu|^{1/2}v\|_2^2\leq C (x_\nu^{-1} \|v\|_2\|v^\prime\|_2 +
    \beta x_\nu^2 \|v\|_2^2) +
  \|(x-x_\nu)v\|_2\|(x-x_\nu)^{-1}f\|_2 \,.
\end{displaymath}
 Since by \eqref{eq:412} it holds that   
 \begin{displaymath}
  \beta x_\nu^2\|v\|_2^2\leq C\beta^{-1}\|(x-x_\nu)^{-1}f\|_2^2\,,
 \end{displaymath}
we may obtain that
\begin{multline*}
   \beta\| |U-\nu|^{1/2}v\|_2^2\leq \\ \quad  \leq  C \Big( x_\nu^{-2} \beta^{-1} \|(x-x_\nu)^{-1}f\|_2\|v^\prime\|_2 
  +\|(U-\nu)^{1/2}v\|_2\|(x-x_\nu)^{-1}f\|_2 + \beta^{-1}\|(x-x_\nu)^{-1}f\|_2^2 \Big) \,.
\end{multline*}
From the above we conclude, as $x_\nu\geq\beta^{-1/4}$,
\begin{equation}
\label{eq:420}
  \| |U-\nu|^{1/2}v\|_2^2\leq C(\beta^{-1}\|v^\prime\|_2^2 
  + \beta^{-2}\|(x-x_\nu)^{-1}f\|_2^2) \,.
\end{equation}
Substituting the above into \eqref{eq:419} yields 
\begin{displaymath}
  \|v^\prime\|_2\leq C\beta^{-1/2}\|(x-x_\nu)^{-1}f\|_2\,,
\end{displaymath}
verifying \eqref{eq:413}.
\end{proof}

 We now  seek an estimate for $( \mathcal L_\beta -\beta\lambda)^{-1}$ in
  $\LL(H^1,L^1)$. To this end we write $f=f-f(x_\nu)+f(x_\nu)$,
    and estimate first $({\mathcal L_\beta}-\beta\lambda)^{-1}(f-f(x_\nu))$  using
    \eqref{eq:412}. Then, we estimate $({\mathcal
      L_\beta}-\beta\lambda)^{-1}f(x_\nu)$ by observing first that the leading order
    term is  $-if(x_\nu)[\beta(U+i\lambda)]^{-1}$  for $|\mu|
    >x_\nu^{2/3}\beta^{-1/3}$. 

\begin{proposition}
  \label{Dirichlet-L1-H1-0} 
  Let $U\in C^2([0,1])$ satisfy \eqref{eq:10}.
  Then there exist $\Upsilon>0$, $C>0$, $a>0$, and $\beta_0>0$ such that, for $\beta\geq
  \beta_0$, $ U(0)-\nu>a\beta^{-1/2}$, and $ \mu \leq {\mathfrak
    J_\nu}^{2/3}\Upsilon\beta^{-1/3}$ and $f\in H^1(0,1)$ we have
\begin{equation}
\label{eq:421}
  \Big\| (\LL_\beta^\Nf-\beta\lambda)^{-1} f+i\frac{f(x_\nu)}{\beta[U-\nu- i \max(-\mu,x_\nu^{2/3}\beta^{-1/3})]} \Big\|_1
  \leq C\, [{\mathfrak J_\nu}\beta]^{-1}\|f\|_{1,2}\,.
\end{equation}
\end{proposition}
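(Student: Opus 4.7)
Following the indication in the paragraph preceding the statement, the plan is to split $f=g+c$ with $g:=f-f(x_\nu)$ and $c:=f(x_\nu)$, and to estimate the two resolvents $v_1:=(\LL_\beta^{\Nf}-\beta\lambda)^{-1}g$ and $v_2:=(\LL_\beta^{\Nf}-\beta\lambda)^{-1}c$ separately. For $v_1$, since $g(x_\nu)=0$ and $g\in H^1(0,1)$, Hardy's inequalities \eqref{hardyw1}--\eqref{hardyw1a} applied on $(0,x_\nu)$ and $(x_\nu,1)$ yield $\|g/(x-x_\nu)\|_2\leq C\|g'\|_2\leq C\|f\|_{1,2}$, so \eqref{eq:412} gives $\|v_1\|_2\leq C(\mathfrak{J}_\nu\beta)^{-1}\|f\|_{1,2}$, and $\|v_1\|_1\leq\|v_1\|_2$ by Cauchy--Schwarz on the bounded interval.

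For $v_2$, I would use the explicit ansatz $v_0:=-ic/[\beta(U-\nu-i\tilde\mu)]$ with $\tilde\mu:=\max(-\mu,x_\nu^{2/3}\beta^{-1/3})>0$. Writing $\lambda=\mu+i\nu$ and using $(U-\nu)+i\mu=(U-\nu-i\tilde\mu)+i(\mu+\tilde\mu)$, a direct computation gives
\[
(\LL_\beta^{\Nf}-\beta\lambda)v_0 \;=\; c\;-\;v_0''\;+\;\frac{i(\mu+\tilde\mu)\,c}{U-\nu-i\tilde\mu},
\]
with the last term identically vanishing when $\tilde\mu=-\mu$ and otherwise of size $\tilde\mu\,c/(U-\nu-i\tilde\mu)$. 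The Neumann condition $v_0'(0)=0$ is automatic from \eqref{eq:10}c; the Dirichlet defect $v_0(1)\neq 0$ would be removed by subtracting a boundary corrector $K$ in the null space of $\LL_\beta^{\Nf}-\beta\lambda$ (viewed as a differential operator) satisfying $K(1)=1$ and $K'(0)=0$. Via the rescaling $y=\beta^{1/3}(1-x)$ the equation becomes a shifted Airy equation near $x=1$, so $K$ is concentrated in an Airy boundary layer of width $\beta^{-1/3}$ with $\|K\|_1=\OO(\beta^{-1/3})$. Setting $z:=v_2-v_0+v_0(1)K\in D(\LL_\beta^{\Nf})$, one then has $(\LL_\beta^{\Nf}-\beta\lambda)z=v_0''-i(\mu+\tilde\mu)c/(U-\nu-i\tilde\mu)$ and $\|v_2-v_0\|_1\leq\|z\|_1+|v_0(1)|\,\|K\|_1$.

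To bound $\|z\|_1$, I would invoke Proposition~\ref{lem:Dirichlet-L1} in its $L^2\to L^1$ form, which delivers the prefactor $(\mathfrak{J}_\nu\beta)^{-5/6}$ without logarithmic loss. Using $|U-\nu-i\tilde\mu|^2\gtrsim\mathfrak{J}_\nu^2(x-x_\nu)^2+\tilde\mu^2$ and the change of variable $y=\mathfrak{J}_\nu(x-x_\nu)/\tilde\mu$ across the critical layer of width $\tilde\mu/\mathfrak{J}_\nu$, a routine calculation, recognizing that the $(U')^2F^3$ piece of $v_0''=(ic/\beta)(U''F^2-2(U')^2F^3)$ with $F=1/(U-\nu-i\tilde\mu)$ is dominant, gives $\|v_0''\|_2\leq C|c|\mathfrak{J}_\nu^{3/2}/(\beta\tilde\mu^{5/2})$ and analogously $\|(\mu+\tilde\mu)c/(U-\nu-i\tilde\mu)\|_2\leq C|c|(\tilde\mu/\mathfrak{J}_\nu)^{1/2}$; substituting $\tilde\mu\geq\mathfrak{J}_\nu^{2/3}\beta^{-1/3}$, both reduce to $C|c|/(\mathfrak{J}_\nu^{1/6}\beta^{1/6})$, so that multiplication by $(\mathfrak{J}_\nu\beta)^{-5/6}$ produces exactly $C|c|/(\mathfrak{J}_\nu\beta)$. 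The boundary contribution obeys $|v_0(1)|\,\|K\|_1\leq C|c|\beta^{-1/3}/(\beta\sqrt{\nu^2+\tilde\mu^2})\leq C|c|/(\mathfrak{J}_\nu^{2/3}\beta)\leq C|c|/(\mathfrak{J}_\nu\beta)$, using $\sqrt{\nu^2+\tilde\mu^2}\geq\tilde\mu\geq\mathfrak{J}_\nu^{2/3}\beta^{-1/3}$ and $\mathfrak{J}_\nu\leq 1$; Sobolev $|c|\leq C\|f\|_{1,2}$ then closes the estimate.

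The main obstacle is the rigorous construction of the boundary corrector $K$: the naive choice $K=\psi$ for a smooth cutoff fails because applying the $L^2\to L^1$ resolvent bound to $(\LL_\beta^{\Nf}-\beta\lambda)\psi$ (which is of $L^2$-size $\sim\beta$) only produces the rate $(\mathfrak{J}_\nu\beta)^{-5/6}\beta|v_0(1)|$, which exceeds the target in the small-$\nu$ regime where $|v_0(1)|\sim|c|\beta^{-2/3}$. A genuine Airy-type kernel element must therefore be used, whose existence and decay follow from the local analysis developed in the proof of Proposition~\ref{prop5.3} adapted to the Neumann--Dirichlet setting. A secondary obstacle is that the main residual bound is saturated with no slack: extracting the sharper $\mathfrak{J}_\nu^{3/2}/\tilde\mu^{5/2}$ scaling from the $(U')^2F^3$ piece of $v_0''$ is essential, since the crude $\tilde\mu^{-3/2}\mathfrak{J}_\nu^{-1/2}$ obtained by bounding $|U''|$ uniformly would break the budget against $(\mathfrak{J}_\nu\beta)^{-5/6}$.
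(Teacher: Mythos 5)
Your decomposition $f=g+c$ with $g=f-f(x_\nu)$, the singular ansatz $v_0=-ic/[\beta(U-\nu-i\tilde\mu)]$, and the intermediate $L^2$ estimates are exactly what the paper does: the paper sets $v=u+if(x_\nu)/[\beta(U-\nu-i\tilde\mu)]$, computes $(\LL_\beta-\beta\lambda)v=f-f(x_\nu)+f(x_\nu)h$ where $ch = v_0'' - ic(\mu+\tilde\mu)/(U-\nu-i\tilde\mu)$ is precisely your residual, bounds $\|h\|_2\leq C(\beta x_\nu)^{-1/6}$ (matching your $\mathfrak J_\nu^{3/2}/\tilde\mu^{5/2}$ scaling via \eqref{eq:426}), and invokes \eqref{eq:403} on $h$ together with \eqref{eq:412} plus Hardy on $g$; your two cases for $-\mu$ above or below $x_\nu^{2/3}\beta^{-1/3}$ are the paper's Steps 1 and 2. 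Your additional observation — that $v(1)=if(x_\nu)/[\beta(-\nu-i\tilde\mu)]\neq 0$, so that $v\notin D(\LL_\beta^\Nf)$ and therefore is \emph{not} literally the resolvent image of $f-f(x_\nu)+f(x_\nu)h$ — is correct, and you are right that a homogeneous correction $v(1)K$ with $K'(0)=0$, $K(1)=1$ must be accounted for; the paper's written proof passes over this point. Your budget calculation correctly shows the correction respects the target bound once $\|K\|_1\lesssim\beta^{-1/3}$.

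The open step is exactly the one you flag: $\|K\|_1=\mathcal O(\beta^{-1/3})$ is not self-evident, since $K$ solves the full ODE on $[0,1]$ with a turning point at $x_\nu$ in the interior and a Neumann condition at $x=0$, and is not merely a local Airy layer at $x=1$. The paper's Section~\ref{sec:4} machinery furnishes the object you need: with $\psi=\psi_{\lambda,\beta}$ from \eqref{eq:452} and $\tilde v=(\LL_\beta^\Nf-\beta\lambda)^{-1}h$ where $h=(\LL_\beta-\beta\lambda)\psi$ as in \eqref{eq:453}--\eqref{eq:454}, the function $K=(\psi-\tilde v)/\psi(1)$ is a homogeneous solution with $K(1)=1$, $K'(0)=0$, and combining \eqref{eq:614} for $s=0$, \eqref{eq:451}, and \eqref{eq:455} yields $\|K\|_1\leq C(\lambda_\beta^{-1/2}+\lambda_\beta^{-5/4}\beta^{-1/3})\beta^{-1/3}\leq C\beta^{-1/3}$. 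Either import that construction or re-derive the $L^1$ bound on $K$; as written, your proposal leaves this essential step as an unproved assertion.
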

\begin{proof}
Let $u=(\LL_\beta^\Nf-\beta\lambda)^{-1} f$.~\\

{\it Step 1: We prove \eqref{eq:421} in the case $-\mu\leq
x_\nu^{2/3}\beta^{-1/3}$.\\}

\noindent We apply the decomposition
\begin{equation}
\label{eq:422} 
    v =u + i  \frac{f(x_\nu)}{\beta(U-\nu - ix_\nu^{2/3}\beta^{-1/3})} \,.
   \end{equation}
Then, 
\begin{displaymath}
  (\LL_\beta^\Nf -\beta\lambda)v=f+i \beta^{-1} f(x_\nu)\left(  (\LL_\beta^\Nf -\beta\lambda)(
    U-\nu-ix_\nu^{2/3}\beta^{-1/3} )^{-1}\right) \,.
\end{displaymath}
We next observe that
\begin{multline*}
   (\LL_\beta^\Nf -\beta\lambda)( U-\nu-ix_\nu^{2/3}\beta^{-1/3} )^{-1}  =\\ =\beta \frac{(-\lambda + i
     U)}{ ( U-\nu-ix_\nu^{2/3}\beta^{-1/3})}   -
  \frac{2 |U^\prime|^2}{(U-\nu-ix_\nu^{2/3} \beta^{-1/3})^3} +\frac{U^{\prime\prime}}{(U-\nu-ix_\nu^{2/3}\beta^{-1/3})^2}    \,,
\end{multline*}
and that
\begin{multline*}
-i(\lambda-i U) \, {( U-\nu-i x_\nu^{2/3} \beta^{-1/3} )}^{-1} +1\\ = ( U-\nu-ix_\nu^{2/3}\beta^{-1/3}
)^{-1} \left( -i[(\mu + i\nu) - U] +  U-\nu-ix_\nu^{2/3}\beta^{-1/3} \right).
\end{multline*}  
Consequently, it holds that
\begin{equation}
\label{eq:423}
  (\LL_\beta^\Nf -\beta\lambda)v=f-f(x_\nu) + f(x_\nu)h \,,
\end{equation}
where
\begin{multline}
\label{eq:424}
  h= i \frac{U^{\prime\prime}}{\beta(U-\nu-ix_\nu^{2/3}\beta^{-1/3})^2} - 2 i 
  \frac{ |U^\prime|^2}{\beta(U-\nu-ix_\nu^{2/3}\beta^{-1/3})^3} \\- i 
  \frac{\mu +x_\nu^{2/3}\beta^{-1/3}}{U-\nu-ix_\nu^{2/3}\beta^{-1/3}} \,. 
\end{multline} 
Since for  $U\in C^2([0,1])$ satisfying \eqref{eq:10}, we have
 \begin{equation}
\label{eq:425} 
 |U(x) - \nu| \geq \frac{1}{C} x_\nu |x-x_\nu|\,,
 \end{equation}
we may conclude that for $k>1$, 
\begin{equation} 
\label{eq:426}
  \int_0^1 \frac{dx}{|U-\nu-ix_\nu^{2/3}\beta^{-1/3}|^k}\leq C  \int_0^1
  \frac{dx}{x_\nu^k|x-x_\nu|^k+x_\nu^{2k/3}\beta^{-k/3}}\leq  \widehat C  x_\nu^{-k} [\beta x_\nu]^{\frac{k-1}{3}} \,.
\end{equation}
For later reference we mention that for $k=1$
\begin{multline} 
\label{eq:427}
  \int_0^1\frac{dx}{|U-\nu+ix_\nu^{2/3}\beta^{-1/3}|}\leq\nobreakspace\nobreakspace C\Big[
  \frac{1}{x_\nu}\int_0^{2x_\nu} \frac{dx}{|x-x_\nu|+
    x_\nu^{-1/3}\beta^{-1/3}}+\int_{2x_\nu}^1 \frac{dx}{|x-x_\nu|^2}\Big] 
\\ \leq \frac{\widehat C}{x_\nu}\big[\log(x_\nu^{4/3}\beta^{1/3})+1\big]\,.
\end{multline}

Using \eqref{eq:426} and the fact that $x_\nu \geq \frac 1C \beta^{1/4}$,
together with \eqref{eq:348}, it can be verified that there exist
positive $C$ and $\beta_0$ such that for $\beta \geq \beta_0$ and $|\mu|\leq
x_\nu^{2/3}\beta^{-1/3}$,
\begin{equation}
\label{eq:428}
  \|h\|_2 \leq C\, [\beta x_\nu]^{-1/6} \,.
\end{equation}
Consequently, by \eqref{eq:403}
\begin{equation}
\label{eq:429}
  \|(\LL_\beta^\Nf -\beta\lambda)^{-1}h\|_1\leq C \, (\beta x_\nu)^{-1} \,. \end{equation}
 By \eqref{eq:412} and Hardy's inequality \eqref{eq:18} it holds that
\begin{multline*}
  \|(\LL_\beta^\Nf
  -\beta\lambda)^{-1}(f-f(x_\nu))\|_1\leq \|(\LL_\beta^\Df
  -\beta\lambda)^{-1}(f-f(x_\nu))\|_2 \\
  \leq  C \, (\beta x_\nu)^{-1} \, \Big\|\frac{f-f(x_\nu)}{x-x_\nu}\Big\|_2  \leq  \widehat C\,  (\beta x_\nu)^{-1} \, \|f^\prime\|_2 \,. 
\end{multline*}
 Substituting the above, together with \eqref{eq:429} into
\eqref{eq:423} yields
\begin{displaymath}
  \|v\|_1\leq C \,  (\beta x_\nu)^{-1} (\|f^\prime\|_2+| f(x_\nu)|)\leq  \widehat C\,  (\beta x_\nu)^{-1}  \, \|f\|_{1,2}\,.
\end{displaymath}

{\it Step 2: We prove \eqref{eq:421} in the case $\mu\leq -
x_\nu^{2/3}\beta^{-1/3}$. \\}

\noindent In this case we consider instead  the decomposition
   \begin{displaymath}
     v=u + i  \frac{f(x_\nu)}{\beta(U+i\lambda)} \,.
   \end{displaymath}
Then we obtain  
\begin{displaymath}
  (\LL_\beta^\Nf -\beta\lambda)v=f +i   \beta^{-1} f(x_\nu)\left(  (\LL_\beta^\Nf -\beta\lambda)( U + i \lambda)^{-1}\right) \,.
\end{displaymath}
As
\begin{displaymath}
   (\LL_\beta^\Nf -\beta\lambda)( U + i \lambda  )^{-1}  = i  \beta    -
  \frac{2 |U^\prime|^2}{(U + i \lambda )^3} +\frac{U^{\prime\prime}}{(U+ i \lambda)^2 }    \,,
\end{displaymath}
we obtain that
\begin{equation}
  (\LL_\beta^\Nf -\beta\lambda)v=f-f(x_\nu) + f(x_\nu)\tilde h \,,
\end{equation}
where
\begin{equation}
\label{eq:430}
 \tilde  h=i \frac{U^{\prime\prime}}{\beta(U+i\lambda)^2} -2 i 
  \frac{ |U^\prime|^2}{(U+i\lambda)^3} \,. 
\end{equation} 
and proceed in a similar manner using the lower bound for $|\mu|$. 
\end{proof}  
An immediate consequence of   Proposition \ref{Dirichlet-L1-H1-0}  now follows
 by using \eqref{eq:427}
\begin{corollary}
\label{cor:l1}
  Under the conditions of Proposition \ref{Dirichlet-L1-H1-0},  it holds
(with sufficiently large $a$)
  that
  \begin{equation}
    \label{eq:431} 
  \| (\LL_\beta^\Nf-\beta\lambda)^{-1} f\|_1
  \leq C\,[{\mathfrak J_\nu}\beta]^{-1}\big( \|f\|_{1,2}+|f(x_\nu)|\log(x_\nu^{4/3}\beta^{1/3})\big)\,.
  \end{equation}
\end{corollary}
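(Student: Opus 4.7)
The plan is a direct triangle-inequality argument based on Proposition~\ref{Dirichlet-L1-H1-0}. Set
\[
  \sigma = \max(-\mu,\,x_\nu^{2/3}\beta^{-1/3})\,,
\]
so that $\sigma\geq x_\nu^{2/3}\beta^{-1/3}$ under the hypotheses. From Proposition~\ref{Dirichlet-L1-H1-0} and the triangle inequality,
\[
  \|(\LL_\beta^\Nf-\beta\lambda)^{-1}f\|_1 \leq C\,[{\mathfrak J_\nu}\beta]^{-1}\|f\|_{1,2} + \frac{|f(x_\nu)|}{\beta}\,\Big\|\frac{1}{U-\nu-i\sigma}\Big\|_1\,.
\]
Thus everything reduces to estimating $\|(U-\nu-i\sigma)^{-1}\|_1$ by $C\,x_\nu^{-1}\log(x_\nu^{4/3}\beta^{1/3})$, upon which the stated inequality follows via \eqref{eq:290}, which gives $\mathfrak J_\nu\sim x_\nu$.

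First, I would handle the case $\sigma=x_\nu^{2/3}\beta^{-1/3}$, i.e.\ $-\mu\leq x_\nu^{2/3}\beta^{-1/3}$. This is precisely the content of the computation \eqref{eq:427} in the proof of Proposition~\ref{Dirichlet-L1-H1-0}: splitting $(0,1)$ into $(0,2x_\nu)$ and $(2x_\nu,1)$ and using \eqref{eq:425} yields
\[
  \Big\|\frac{1}{U-\nu-ix_\nu^{2/3}\beta^{-1/3}}\Big\|_1 \leq \frac{C}{x_\nu}\bigl[\log(x_\nu^{4/3}\beta^{1/3})+1\bigr]\,,
\]
which is of the required form.

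Next, for $\sigma=-\mu>x_\nu^{2/3}\beta^{-1/3}$, I would simply observe that $|U-\nu-i\sigma|\geq|U-\nu-ix_\nu^{2/3}\beta^{-1/3}|/\sqrt{2}$ pointwise (since enlarging the imaginary part only enlarges the modulus), hence
\[
  \Big\|\frac{1}{U-\nu-i\sigma}\Big\|_1 \leq \sqrt{2}\,\Big\|\frac{1}{U-\nu-ix_\nu^{2/3}\beta^{-1/3}}\Big\|_1\,,
\]
and the bound of the previous paragraph applies uniformly.

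Combining the two cases, one gets
\[
  \frac{1}{\beta}\Big\|\frac{1}{U-\nu-i\sigma}\Big\|_1 \leq \frac{C}{x_\nu\beta}\,\log(x_\nu^{4/3}\beta^{1/3})\leq C\,[\mathfrak J_\nu\beta]^{-1}\log(x_\nu^{4/3}\beta^{1/3})\,,
\]
once $a$ is large enough so that $x_\nu^{4/3}\beta^{1/3}\geq e$ and the $+1$ in \eqref{eq:427} can be absorbed. There is no real obstacle here beyond bookkeeping; the only mildly delicate point is verifying that the inequality $\sigma\geq x_\nu^{2/3}\beta^{-1/3}$ is enough to guarantee that the integral $\|(U-\nu-i\sigma)^{-1}\|_1$ does not blow up faster than the estimate from \eqref{eq:427}, which is immediate from monotonicity in $\sigma$.
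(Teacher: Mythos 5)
Your proof is correct and matches the paper's (implicit) argument: the corollary is obtained from Proposition~\ref{Dirichlet-L1-H1-0} by the triangle inequality, and the remaining $L^1$ norm of $(U-\nu-i\mf)^{-1}$ is estimated via \eqref{eq:427} together with the monotonicity of $|U-\nu-i\sigma|$ in $\sigma$ and the observation that $a$ large forces $x_\nu^{4/3}\beta^{1/3}$ to be bounded away from $1$ so the $+1$ in \eqref{eq:427} can be absorbed. (One small cosmetic point: your $\sqrt{2}$ is unnecessary --- the pointwise monotonicity you invoke already gives $|U-\nu-i\sigma|\geq|U-\nu-ix_\nu^{2/3}\beta^{-1/3}|$ without any constant.)
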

We conclude this section by another auxiliary estimate which will
be useful in Subsections \ref{sec:5.6} and \ref{sec:5.7}.
\begin{proposition}  Let $U\in C^3([0,1])$ satisfying
  \eqref{eq:10},
  $a>0$ and  $\Upsilon<\sqrt{-U^{\prime\prime}(0)}/2$. Then  there
  exist $C>0$, $\beta_0>0$ such that, for all $\beta \geq \beta_0$,
      \begin{multline} \label{eq:432}
\sup_{
  \begin{subarray}{c}
   \mu  \leq\Upsilon\beta^{-1/2} \\
\nu <U(0)+a\beta^{-1/2}
  \end{subarray}} \Big(
\|(\LL_\beta^{\Nf,\Df}-\beta\lambda)^{-1}(U-\nu) f\|_2  +
\beta^{-1/2}\| \frac{d}{dx} (\LL_\beta^{\Nf,\Df}-\beta\lambda)^{-1} (U-\nu) f \|_2 \Big) \\ \leq   C \beta^{-1}\|f\|_2\,. 
  \end{multline}
\end{proposition}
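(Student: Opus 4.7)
The plan is to reduce the $L^2$ bound to the dual estimate
\begin{equation*}
\|(U-\nu)h\|_2\leq C\beta^{-1}\|g\|_2,\qquad h=(\LL_\beta^{\Nf,\Df}-\beta\lambda)^{-1}g,
\end{equation*}
which, thanks to the realness of $U-\nu$ and the conjugation identity $((\LL_\beta-\beta\lambda)^{-1})^{*}g=\overline{(\LL_\beta-\beta\lambda)^{-1}\bar g}$, is equivalent to the desired operator bound. The derivative estimate will then follow from a real-part energy identity applied directly to $v=(\LL_\beta^{\Nf,\Df}-\beta\lambda)^{-1}(U-\nu)f$.

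My starting point for the dual bound will be the energy identity obtained by multiplying $(\LL_\beta-\beta\lambda)h=g$ by $(U-\nu)\bar h$ and integrating by parts. Using the boundary conditions $h(1)=0$, $h'(0)=0$ together with $U'(0)=0$, the boundary contributions vanish and the imaginary part of the resulting identity reads
\begin{equation*}
\beta\,\|(U-\nu)h\|_2^2=\Im\int_0^1(U-\nu)\bar h\,g\,dx-\Im\int_0^1 U'\,\bar h\,h'\,dx.
\end{equation*}
The first right-hand side term is controlled by $\|(U-\nu)h\|_2\|g\|_2$, which after Young's inequality gets absorbed into the left-hand side and contributes $C\beta^{-2}\|g\|_2^2$ to the resulting bound on $\|(U-\nu)h\|_2^2$. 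The main task is therefore to show
\begin{equation*}
\Big|\Im\int_0^1 U'\,\bar h\,h'\,dx\Big|\leq C\beta^{-1}\|g\|_2^2.
\end{equation*}

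For this I would invoke the pointwise bound $|U'(x)|\leq C|x|$ (a consequence of $U'(0)=0$ and the boundedness of $U''$) to get $|\Im\int U'\bar h h'\,dx|\leq C\|xh\|_2\|h'\|_2$. Since Proposition~\ref{lem:schrod-quad} supplies $\|h'\|_2\leq C\beta^{-1/4}\|g\|_2$ in the near-critical regime $|\nu-U(0)|\leq a\beta^{-1/2}$ (and Proposition~\ref{prop5.3} yields the sharper $\|h'\|_2\leq C({\mathfrak J_\nu}\beta)^{-1/3}\|g\|_2$ for $\nu<U(0)-a\beta^{-1/2}$), everything reduces to the localization estimate
\begin{equation*}
\|xh\|_2\leq C\beta^{-3/4}\|g\|_2,
\end{equation*}
which reflects the concentration of $h$ on the critical layer $\{|x-x_\nu|\lesssim(\beta{\mathfrak J_\nu})^{-1/3}\}$ combined with $x_\nu\leq C\beta^{-1/4}$ in the near-critical regime. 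My strategy for this bound is the splitting $h=\chi h+(1-\chi)h$ with $\chi$ a smooth cutoff equal to $1$ on $\{|x|\leq C\beta^{-1/4}\}$: on the cutoff support the pointwise bound $|x\chi|\leq C\beta^{-1/4}$ combined with $\|h\|_2\leq C\beta^{-1/2}\|g\|_2$ gives $\|x\chi h\|_2\leq C\beta^{-3/4}\|g\|_2$, while the tail is handled by noting that $|U-\nu|\geq c\beta^{-1/2}$ on the support of $1-\chi$, so the imaginary part of $(\LL_\beta-\beta\lambda)$ is large there and a weighted energy estimate applied to $(\LL_\beta-\beta\lambda)((1-\chi)h)=(1-\chi)g+2\chi'h'+\chi''h$, in the spirit of the arguments from Propositions~\ref{prop5.3} and~\ref{H1-estimate-0}, produces $\|x(1-\chi)h\|_2\leq C\beta^{-3/4}\|g\|_2$.

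Finally, for the derivative bound I would take the real part of $\langle v,(\LL_\beta-\beta\lambda)v\rangle=\langle v,(U-\nu)f\rangle$ and integrate by parts to get
\begin{equation*}
\|v'\|_2^2\leq \beta|\mu|\|v\|_2^2+\|v\|_2\|(U-\nu)f\|_2.
\end{equation*}
Substituting $|\mu|\leq\Upsilon\beta^{-1/2}$, $\|(U-\nu)f\|_2\leq C\|f\|_2$, and the $L^2$ bound already established yields $\|v'\|_2^2\leq C\beta^{-1}\|f\|_2^2$, hence $\beta^{-1/2}\|v'\|_2\leq C\beta^{-1}\|f\|_2$ as required. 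The hard part will be the localization estimate $\|xh\|_2\leq C\beta^{-3/4}\|g\|_2$, whose proof demands a careful stitching of the estimates from Propositions~\ref{prop5.3}, \ref{lem:schrod-quad}, and~\ref{H1-estimate-0} across the several subregimes of $\nu$ both near and far from $U(0)$.
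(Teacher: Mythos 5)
Your duality observation is a genuine and elegant shortcut not taken in the paper: since $U-\nu$ is real and the Neumann--Dirichlet boundary conditions make $-d^2/dx^2$ self-adjoint, one has $((\LL_\beta^{\Nf,\Df}-\beta\lambda)^{-1})^*g=\overline{(\LL_\beta^{\Nf,\Df}-\beta\lambda)^{-1}\bar g}$, and the $L^2\to L^2$ norm of $f\mapsto(\LL_\beta^{\Nf,\Df}-\beta\lambda)^{-1}((U-\nu)f)$ equals that of $g\mapsto(U-\nu)(\LL_\beta^{\Nf,\Df}-\beta\lambda)^{-1}g$. The latter is exactly what the paper already estimates in \eqref{eq:354} and \eqref{eq:435}. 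The paper instead introduces the auxiliary function $w=(\LL_\beta^{\Nf,\Df}-\beta\lambda)^{-1}f$, writes $v=(U-\nu)w+(\LL_\beta^{\Nf,\Df}-\beta\lambda)^{-1}(2U^\prime w^\prime+U^{\prime\prime}w)$ (i.e., eq.~\eqref{eq:434}), and must then separately bound the commutator terms $(\LL_\beta^{\Nf,\Df}-\beta\lambda)^{-1}(U^{\prime\prime}w)$ and $(\LL_\beta^{\Nf,\Df}-\beta\lambda)^{-1}(U^\prime w^\prime)$ in \eqref{eq:436}--\eqref{eq:437}, which requires invoking additional resolvent estimates (with the $U^\prime w^\prime$ term needing the Hardy-type inputs \eqref{eq:412} or (\ref{eq:365}b)). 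Your approach bypasses that decomposition entirely. The imaginary-part energy identity you then derive is precisely the paper's \eqref{eq:350}, and the final derivative step is identical to the paper's last line (modulo a small slip: you should write $\mu_{+}$, not $|\mu|$, since $\mu$ may be arbitrarily negative under the hypothesis $\mu\le\Upsilon\beta^{-1/2}$; the conclusion is unchanged since $-\mu\beta\|v\|_2^2$ only helps).

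There is, however, one concrete error in your closing argument. You claim that "everything reduces to the localization estimate $\|xh\|_2\le C\beta^{-3/4}\|g\|_2$" valid across the several subregimes of $\nu$. This bound is \emph{false} for $\nu$ bounded away from $U(0)$. When $x_\nu$ is of order one (say $x_\nu\sim 1/2$), the resolvent estimate (\ref{eq:287}a) gives only $\|h\|_2\le C(\beta\mathfrak J_\nu)^{-2/3}\|g\|_2\sim\beta^{-2/3}\|g\|_2$, and because $h$ concentrates near $x_\nu$ one has $\|xh\|_2\gtrsim x_\nu\|h\|_2\sim\beta^{-2/3}\|g\|_2$, which is much larger than $\beta^{-3/4}\|g\|_2$. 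What actually suffices there is the weaker, regime-dependent bound $\|xh\|_2\lesssim x_\nu^{1/3}\beta^{-2/3}\|g\|_2$, which when multiplied by $\|h^\prime\|_2\lesssim(\beta x_\nu)^{-1/3}\|g\|_2$ does give the required $\beta^{-1}\|g\|_2^2$, so the overall argument is salvageable; but as stated the single estimate is incorrect. The paper sidesteps this altogether in the far regime by using the sharper pointwise bound $|U^\prime(x)|\le C(|U(x)-\nu|^{1/2}+x_\nu)$ (eq.~\eqref{eq:348}) instead of $|U^\prime|\le Cx$; the $|U-\nu|^{1/2}$ factor produces $\||U-\nu|^{1/2}h\|_2$, which by the AM--GM split \eqref{eq:351} can be partially absorbed back into $\|(U-\nu)h\|_2$ via Young's inequality, eliminating the need for a weighted $\|xh\|_2$ estimate there. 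You should adopt that splitting (or state and prove the correct regime-dependent bound on $\|xh\|_2$) to make the argument complete.
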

 Note that if we apply \eqref{eq:412}  (for $\nu<U(0)-a_0 \beta^{-1/2}$ with
some sufficiently large $a_0$) or (\ref{eq:365}b) (in the case $|\nu -
  U(0)| \leq  a_0\beta^{-1/2}$)  we obtain that 
\begin{displaymath}
  \|(\LL_\beta^{\Nf,\Df}-\beta\lambda)^{-1}(U-\nu)f\|_2\leq C\beta^{-3/4}[1+x_\nu\beta^{1/4}]^{-1} \|f\|_2\,,
\end{displaymath}
which is weaker than \eqref{eq:432}.

\begin{proof}~\\

Let $v \in D( \LL_\beta^{\Nf,\Df}) $ such that
  \begin{equation}
    \label{eq:433}
(\LL_\beta^{\Nf,\Df} -\beta\lambda)v=(U-\nu)f \,.
  \end{equation}
   Let $w\in D(\LL_\beta^{\Nf,\Df})$ satisfy
  \begin{displaymath}
    (\LL_\beta^{\Nf,\Df} -\beta\lambda)w=f \,.
  \end{displaymath}
It can be easily verified that
\begin{displaymath}
  (\LL_\beta^{\Nf,\Df} -\beta\lambda)([U-\nu]w)=(U-\nu)f-2U^\prime w^\prime-U^{\prime\prime}w\,.
\end{displaymath}
Hence,
\begin{equation}
\label{eq:434}
 v=(U-\nu)w+(\LL_\beta^{\Nf,\Df} -\beta\lambda)^{-1}(2U^\prime w^\prime+U^{\prime\prime}w)\,.
\end{equation} 
Let $ a_0>0$.  Consider first the case where
$ U(0)-a_0\beta^{-1/2}<\nu<U(0)+a_0\beta^{-1/2}$. By \eqref{eq:350} (with $v$
replaced by $w$) and since by \eqref{eq:10} we have $|U^\prime(x)|\leq x$,
it holds that
\begin{displaymath}
  \beta\|(U-\nu)w\|_2^2 \leq
  C\left(\beta^{-1}\|f\|_2^2+[\|(x-x_\nu)w\|_2+x_\nu\|w\|_2]\|w^\prime\|_2\right)\,.
\end{displaymath} 
We can now use \eqref{eq:399}, given that $x_\nu\leq C\beta^{-1/4}$, to obtain
\begin{displaymath}
    \beta\|(U-\nu)w\|_2^2 \leq
  C\left(\beta^{-1}\|f\|_2^2+[\beta^{-3/4}\|f\|_2+\beta^{-1/4}\|w\|_2]\|w^\prime\|_2\right)\,.
\end{displaymath}
We may now apply (\ref{eq:365}a) to the pair $(w,f)$ 
  to  conclude that
\begin{equation}
  \label{eq:435}
\|(U-\nu)w\|_2 \leq  C\beta^{-1}\|f\|_2\,.
\end{equation}
 In \eqref{eq:354} we have established that there exists $a>0$ such
that \eqref{eq:435} holds also whenever $\nu<U(0)-a\beta^{-1/2}$ under the
conditions of Proposition \ref{prop5.3}. 

Next, we use once again either (\ref{eq:365}a) (in the case when $|\nu
- U(0)| < a \beta^{-1/2}$) or (\ref{eq:287}a) in the case when
$\nu<U(0)-a\beta^{-1/2}$), with $f$ replaced by $U^{\prime\prime}w$, to obtain that
\begin{multline}
\label{eq:436}
  \|(\LL_\beta^{\Nf,\Df} -\beta\lambda)^{-1}(U^{\prime\prime}w)\|_2  \leq
  C\beta^{-1/2}[1+x_\nu\beta^{1/4}]^{-2/3}\|w\|_2 \\ \leq \widetilde C
    \beta^{-1}[1+x_\nu\beta^{1/4}]^{-4/3}\|f\|_2\leq \widehat C
  \beta^{-1}\|f\|_2\,. 
\end{multline} 

Finally, we write
\begin{displaymath}
  \|(\LL_\beta^{\Nf,\Df} -\beta\lambda)^{-1}(U^\prime w^\prime)\|_2\leq \|(\LL_\beta^{\Nf,\Df}
  -\beta\lambda)^{-1}([U^\prime-U^\prime(x_\nu)]w^\prime)\|_2+\|(\LL_\beta^{\Nf,\Df}
  -\beta\lambda)^{-1}(U^\prime(x_\nu)w^\prime)\|_2
\end{displaymath}
For the second term on the right-hand-side we have by (\ref{eq:365}a) 
 and \eqref{eq:287}
\begin{displaymath}
 \| (\LL_\beta^{\Nf,\Df}
  -\beta\lambda)^{-1}(U^\prime(x_\nu)w^\prime)\|_2 \leq C \beta^{-1/2}x_\nu[1+x_\nu\beta^{1/4}]^{-2/3}\|w^\prime\|_2\leq
  \widehat C\beta^{-1}\|f\|_2\,.
\end{displaymath}
For the first term we use instead either (\ref{eq:365}b) or
\eqref{eq:412} with $f=[U^\prime-U^\prime(x_\nu)]w^\prime$ and then (\ref{eq:365}a)
and \eqref{eq:287} for the second one to obtain
\begin{displaymath}
  \|(\LL_\beta^{\Nf,\Df}-\beta\lambda)^{-1}([U^\prime-U^\prime(x_\nu)]w^\prime)\|_2\leq C\beta^{-3/4}
  \Big\|\frac{U^\prime-U^\prime(x_\nu)}{x-x_\nu}w^\prime \Big\|_2\leq   \widehat C \beta^{-1}\|f\|_2\,.
\end{displaymath} 
Hence,
\begin{equation}\label{eq:437}
  \|(\LL_\beta^{\Nf,\Df} -\beta\lambda)^{-1}(U^\prime w^\prime)\|_2\leq C\beta^{-1}\|f\|_2\,.
\end{equation}
By  \eqref{eq:434},  \eqref{eq:435},  \eqref{eq:436}, \eqref{eq:436},
and \eqref{eq:437},  we 
then conclude
\begin{equation}\label{eq:438}
  \|v\|_2\leq C \beta^{-1} \|f\|_2\,.
\end{equation}
The estimate of $v^\prime$ in \eqref{eq:432} follows immediately from the
identity
\begin{displaymath}
  \|v^\prime\|_2^2=\mu\beta\|v\|_2^2 +\Re\langle v,(U-\nu)f\rangle\,, 
\end{displaymath}
together with \eqref{eq:438} and the fact that
  $\mu\leq \Upsilon \beta^{-1/2}$. 
\end{proof}

\section{No-Slip Schr\"odinger operators}
\label{sec:4}
\subsection{Preliminaries}  Given the fact that
  $-\phi^{\prime\prime}+\alpha^2\phi$ does not necessarily belong to $D(\LL_\beta^{\Nf,\Df})$, we
  establish, in this section, resolvent estimates for the same
  differential operator but with one boundary condition replaced by an
  integral condition which will be satisfied by $-\phi^{\prime\prime}+\alpha^2\phi$ (cf. also \cite[Section 6]{almog2019stability}
  and the discussion around \eqref{eq:heuristiczeta} in the
  introduction.)

This section follows the same path as in \cite[Section
6]{almog2019stability} but this time for symmetric flows in $(-1,+1)$
(so that $U^\prime(0)=0$), to which end we consider the interval $(0,1)$
and a Neumann condition at $x=0$.

Let
\begin{displaymath}
  \LL_\beta^\zeta =-\frac{d^2}{dx^2}+i\beta U \,,
\end{displaymath}
be defined on
  \begin{equation}
\label{eq:439}
    D(\LL_\beta^\zeta)= \{ u\in H^2(0,1)\,| \, \langle\zeta,u\rangle=0\,,\;u^\prime(0)=0\} \,,
  \end{equation}
  where $\zeta\in H^2(0,1)$. \\
  We will later (see \eqref{eq:602}) confine the discussion to the
    case where $\zeta_\alpha (x) =\cosh(\alpha x)/\cosh \alpha$.

  More precisely, we introduce
\begin{equation}\label{eq:440}
\mathfrak U_0 := \{ \zeta \in H^2(0,1)\,|\, \zeta^\prime(0)=0,\; \zeta(1)=1\}
\end{equation}
and 
for $\beta \geq 0$, $\gamma>0$, $\theta > 0$  and $\lambda \in \mathbb C$,  the subset
\begin{equation}
  \label{eq:441}
  {\mathfrak U}_1(\beta,\lambda,\gamma,\theta) =\{  \zeta \in \mathfrak U_0\;, \;\|\zeta\|_\infty\leq \beta^\gamma,\;
  \|\zeta^\prime\|_{L^2(1-\beta^{-\gamma},1)}\leq\theta\beta^{1/6}\lambda_\beta^{1/4}\}\,,
\end{equation}
where
\begin{equation}\label{eq:lambdabeta}
   \lambda_\beta=1+|\lambda|\beta^{1/3}\,.
\end{equation}
Let $\Ai$ denote Airy's function (See \cite[Section 10.4]{abst72})
and $A_0$ denotes the generalized Airy function 
\begin{equation}\label{eq:defA0}
\C\ni z \mapsto A_0(z) = e^{i\pi/6} \int_z^{+\infty} {\rm Ai}(e^{i\pi/6} t)\,dt\,.
\end{equation}
(See \cite[10.4]{abst72} or in \cite[appendix
A.2]{almog2019stability}). We then set
\begin{displaymath}
  \Sg  =\{\,z\,|\,A_0 (i z)=0\,\}\,.
\end{displaymath}
It is established in \cite[appendix A.2]{almog2019stability} that
$\Sg$ (which is denoted there by $\Sg_\lambda$) is non empty.  In \cite{Wa2} (cf. also \cite[Appendix A]{almog2019stability})
  it is shown that 
\begin{equation}\label{deftheta1r}
\vartheta_1^r:=  \inf_{z\in\Sg} \Re z >0\,.
\end{equation}

\subsection{Resolvent estimates for $  |U(0)-\nu|\gg\beta^{-1/2}$} 
\label{sec:resolvent-estimates-linear}
We can now state 
\begin{proposition}
\label{lem:integral-conditions}   
Let $U\in C^2([0,1])$  satisfy \eqref{eq:10}.  Let further $\gamma<1/4$.
 Then, there exist $\Upsilon >0$, $\beta_0>0$, $a>0$, and
$\theta_0>0$ such that, for all $\beta\geq \beta_0$, $\theta \in (0,\theta_0]$, and $\lambda\in \C$ 
satisfying
\begin{equation}
\label{eq:442} 
 \max(|U(0)-\nu|^{-1/3},1)\beta^{1/3}\Re\lambda \leq  \Upsilon \,,
\end{equation}
and 
\begin{equation}
\label{eq:443}
  | U(0)-\nu | >a\beta^{-1/2}\, , 
\end{equation}
there exists a constant $C>0$ such that, for any $f\in H^1(0,1)$, $\zeta
\in {\mathfrak U}_1(\beta,\lambda,\gamma,\theta)$ and $v\in D(\mathcal L_\zeta^\beta)$
satisfying
\begin{equation}
  \label{eq:444}
(\mathcal L^\beta_\zeta-\beta\lambda)v=f \,,
\end{equation} 
it holds that 
\begin{equation}
  \label{eq:445}
  \begin{array}{l}
|v(1)| \leq C\,\beta^{1/3} \,  \lambda_\beta^{1/2}  \, \|\zeta\|_\infty \, \times  \\ 
 \times 
 \min \left([x_\nu\beta]^{-5/6}\|f\|_2,[|U(0)-\nu|^{1/2}\beta]^{-1}[\|f\|_{1,2} +
     |f(x_\nu)|\log(1+x_\nu\beta^{1/4})]\right)\,,
  \end{array}
\end{equation}
where $x_\nu$ is defined by \eqref{eq:defxnu},
and that
\begin{equation}
  \label{eq:446}
|v(1)| \leq C\, \beta^{1/3}\,  \lambda_\beta^{1/2} \, \|\zeta(\LL^\Nf_\beta-\beta\lambda)^{-1}f\|_1\,.
\end{equation}
Furthermore, for any  $p>1$ and $\nu<U(0)-a\beta^{-1/2}$ it holds that
\begin{subequations}
 \label{eq:447}
  \begin{equation}
|v(1)| \leq C\, \lambda_\beta^{1/2}\beta^{1/3}[x_\nu\beta]^{-1}\big[\|\zeta\|_\infty\|f\|_{1,2}+ \|\zeta\|_{1,p}(1+|\log[\nu+ i\mf]|)\,|f(x_\nu)| \big]\,,
\end{equation}
where
\begin{equation}
\mf:= - \max(-\mu, x_\nu^{2/3} \beta^{-1/3})\,.
\end{equation}
\end{subequations}
Finally, if in addition $(x-x_\nu)^{-1}f\in L^2(0,1)$  and
  $\nu<U(0)$, then we have 
\begin{equation}
  \label{eq:448} 
|v(1)| \leq C \beta^{-2/3}\,  \lambda_\beta^{1/2} \, \|\zeta\|_\infty \,|U(0)-\nu|^{-1/4}\Big\|\frac{f}{x-x_\nu}\Big\|_2\,. 
\end{equation}
\end{proposition}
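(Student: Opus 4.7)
\bigskip

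\noindent\textbf{Proof sketch.} The plan is to follow the strategy of \cite[Section 6]{almog2019stability}, reducing the problem with integral boundary condition to the Dirichlet problem of Section \ref{sec:3} by a rank-one correction built from an Airy-type boundary layer solution concentrated near $x=1$.

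First, let $u = (\LL_\beta^\Nf - \beta\lambda)^{-1} f \in D(\LL_\beta^\Nf)$, so that $u^\prime(0)=u(1)=0$. Let $w_0$ denote a nontrivial solution of the homogeneous equation $(\LL_\beta - \beta\lambda) w_0 = 0$ satisfying $w_0^\prime(0)=0$ and normalized so that $w_0(1)\neq 0$. Writing $v = u + c\, w_0$, the equation and the Neumann condition at $0$ are automatic, while the integral condition $\langle \zeta, v\rangle = 0$ determines
\begin{equation*}
c = -\frac{\langle \zeta, u\rangle}{\langle \zeta, w_0\rangle}, \qquad \text{so that} \qquad v(1) = c\, w_0(1) = -\frac{w_0(1)\,\langle \zeta, u\rangle}{\langle \zeta, w_0\rangle}\,.
\end{equation*}
All of \eqref{eq:445}--\eqref{eq:448} will follow from a single lower bound on the denominator $|\langle \zeta,w_0\rangle|$ (combined with an upper bound on $|w_0(1)|$) and upper bounds on the numerator $|\langle \zeta,u\rangle|$ obtained from the $L^1$ and weighted $L^2$ estimates already proved in Section \ref{sec:3}.

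The heart of the argument is the analysis of $w_0$ near $x=1$. Using the normalization \eqref{eq:10} we have $U(x)\sim -(x-1)$ and $|U^{\prime\prime}(x)|$ is uniformly bounded there, so after the Airy rescaling $x-1 = \beta^{-1/3} t$ the equation for $w_0$ becomes a perturbation of Airy's equation on $(0,\infty)$, and the solution decaying into the bulk is, up to lower-order corrections, the generalized Airy function $A_0\bigl(e^{-i\pi/2}\beta^{1/3}[\,U(x)-\nu+i\mu\,]\bigr)$ (cf.\ \eqref{eq:defA0}). The condition \eqref{eq:442} together with $\vartheta_1^r>0$ in \eqref{deftheta1r} keeps us away from the zeros of $A_0(i\,\cdot)$, so a WKB/matching argument yields
\begin{equation*}
|w_0(1)| \leq C\,\lambda_\beta^{1/4}\,, \qquad
\bigl|\langle \zeta,w_0\rangle\bigr| \geq \frac{1}{C}\,\beta^{-1/3}\lambda_\beta^{-1/4}\,,
\end{equation*}
where the lower bound uses $\zeta(1)=1$, $\zeta^\prime(0)=0$, integration by parts, and the assumption $\|\zeta^\prime\|_{L^2(1-\beta^{-\gamma},1)} \leq \theta \beta^{1/6}\lambda_\beta^{1/4}$ with small $\theta$ to absorb the contribution of $\zeta-1$ in the boundary layer (here $\gamma<1/4$ ensures the layer $\beta^{-1/3}$ lies well inside $(1-\beta^{-\gamma},1)$). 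These two bounds combined give
\begin{equation*}
|v(1)| \leq C\,\beta^{1/3}\lambda_\beta^{1/2}\,\bigl|\langle \zeta,u\rangle\bigr|\,,
\end{equation*}
which is exactly the prefactor appearing in \eqref{eq:445}--\eqref{eq:448}.

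It then remains to estimate $|\langle \zeta,u\rangle|$. For \eqref{eq:446} we use $|\langle\zeta,u\rangle|\leq \|\zeta u\|_1$ directly. For \eqref{eq:445} we bound $|\langle\zeta,u\rangle|\leq \|\zeta\|_\infty\|u\|_1$ and insert the two estimates of Proposition \ref{lem:Dirichlet-L1} together with Corollary \ref{cor:l1}. For \eqref{eq:447} we use the refined decomposition of Proposition \ref{Dirichlet-L1-H1-0}, writing
\begin{equation*}
u = -\frac{i\,f(x_\nu)}{\beta(U-\nu+i\mf)} + r\,,\quad \|r\|_1\leq C[\Jf_\nu\beta]^{-1}\|f\|_{1,2}\,,
\end{equation*}
bounding $\|\zeta r\|_1\leq \|\zeta\|_\infty\|r\|_1$ while for the singular leading term we integrate by parts once (using $\zeta^\prime(0)=0$) to pick up the logarithmic factor $1+|\log(\nu+i\mf)|$ and the $\|\zeta\|_{1,p}$ norm. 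Finally \eqref{eq:448} uses \eqref{eq:412} of Proposition \ref{H1-estimate-0} to bound $\|u\|_2$ by $[\Jf_\nu\beta]^{-1}\|f/(x-x_\nu)\|_2$, combined with $\|\zeta\|_2\leq \|\zeta\|_\infty$ and $\Jf_\nu\sim |U(0)-\nu|^{1/2}$ from \eqref{eq:286}.

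\textbf{Main obstacle.} The technically delicate step is the lower bound $|\langle \zeta,w_0\rangle|\gtrsim \beta^{-1/3}\lambda_\beta^{-1/4}$. It requires both a quantitative non-vanishing statement for $A_0(i\,\cdot)$ in the parameter region delineated by \eqref{eq:442} (which forces the smallness constant $\Upsilon$) and control of the error introduced by replacing $U$ by its affine approximation near $x=1$; the smallness parameter $\theta$ in ${\mathfrak U}_1(\beta,\lambda,\gamma,\theta)$ is what allows the perturbation of $\zeta$ from its boundary value to be absorbed without destroying the bound.
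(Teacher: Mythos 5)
Your architecture is essentially the one the paper follows: write $v=A\,w_0+u$ with $u=(\LL_\beta^\Nf-\beta\lambda)^{-1}f$ and $w_0$ a homogeneous solution (the paper constructs $w_0=\psi_{\lambda,\beta}-\tilde v$ explicitly, with $\psi_{\lambda,\beta}$ a cut-off Airy layer and $\tilde v$ its Dirichlet-reflection correction so that $w_0(1)=\psi_{\lambda,\beta}(1)$), then extract $A$ from $\langle\zeta,v\rangle=0$ and estimate the numerator $\langle\zeta,u\rangle$. However, two points deserve a flag.

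First, a small inconsistency in your intermediate constants: you claim $|w_0(1)|\leq C\lambda_\beta^{1/4}$ and $|\langle\zeta,w_0\rangle|\geq C^{-1}\beta^{-1/3}\lambda_\beta^{-1/4}$. The paper's estimate \eqref{eq:451} gives $|w_0(1)|=|\psi_{\lambda,\beta}(1)|\sim\lambda_\beta^{1/2}$, and the denominator lower bound \eqref{eq:487} is $\gtrsim\beta^{-1/3}$ with no $\lambda_\beta$ factor (coming from $\langle 1,\psi\rangle\sim-\beta^{-1/3}$ plus the smallness of the corrections). Your two exponents happen to cancel so that the product gives the correct prefactor $\beta^{1/3}\lambda_\beta^{1/2}$, but the individual statements as written are not the ones that can actually be proven.

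Second, and more seriously, your derivation of \eqref{eq:448} has a genuine gap. You propose $|\langle\zeta,u\rangle|\leq\|\zeta\|_2\|u\|_2$ together with $\|u\|_2\leq C[\Jf_\nu\beta]^{-1}\|f/(x-x_\nu)\|_2$ from \eqref{eq:412}. Since $\Jf_\nu\sim x_\nu\sim|U(0)-\nu|^{1/2}$, this yields $|v(1)|\lesssim\beta^{-2/3}\lambda_\beta^{1/2}\|\zeta\|_\infty\,|U(0)-\nu|^{-1/2}\|f/(x-x_\nu)\|_2$, which is off from the target by the factor $|U(0)-\nu|^{1/4}$. The issue is that $u$ is concentrated in a thin layer, so Cauchy--Schwarz applied to $\langle\zeta,u\rangle$ is lossy; the paper instead estimates $\|u\|_1$ directly by splitting with cutoff functions $\chi_\nu^\pm$, $\tilde\chi_\nu$ around $x_\nu$, writing $\|(\chi_\nu^\pm)^2u\|_1\leq\||U-\nu|^{-1/2}\chi_\nu^\pm\|_2\,\||U-\nu|^{1/2}\chi_\nu^\pm u\|_2\lesssim x_\nu^{-1/2}\||U-\nu|^{1/2}u\|_2$ and using \eqref{eq:420} together with \eqref{eq:413}, and bounding $\|\tilde\chi_\nu^2 u\|_1\leq x_\nu^{1/2}\|u\|_2$ on the support of $\tilde\chi_\nu$. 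The result is $\|u\|_1\leq C\beta^{-1}x_\nu^{-1/2}\|f/(x-x_\nu)\|_2$, which recovers the extra $x_\nu^{1/2}$ that the plain $L^2$-bound forfeits; this is precisely the $|U(0)-\nu|^{1/4}$ discrepancy.
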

 Note that since ${\mathfrak
    J}_\nu=|U^\prime(x_\nu)|\sim x_\nu\sim[U(0)-\nu]^{1/2}$ for $\nu<U(0)$, \eqref{eq:442} is
  similar to the condition set on $\Re\lambda$ in \eqref{eq:302}.  We use here a different
  notation since we need to address the case $\nu>U(0)$ as well. 

  \begin{proof}
As in \cite[Eq. (6.20)]{almog2019stability} we begin by a
decomposition of $v$ into a boundary term associated with  $x=1$ and a solution of the same
equation satisfying a Dirichlet condition at $x=1$. We estimate the
boundary term by using a linear approximation of $U$ near $x=1$
(recall that by \eqref{eq:10} $U(1)=0$ and $U^\prime(1)=-1$).  Let for $\beta >0$ and $\lambda \in \mathbb C\,$, 
   \begin{equation}
\label{eq:449}
  \tilde{\psi}_{\lambda,\beta} (x)= e^{- i \pi/6} \frac{{\rm Ai}\big(\beta^{1/3}e^{ -i\pi/6}[(1- x)- i\lambda]\big)}
{\overline{A_0(i \beta^{1/3}\bar{\lambda})}}\,,
  \end{equation}
  which is (note that $\tilde{\psi}=\psi_+$, with $J_+=1$, in \cite[Eq.
  (6.8b)]{almog2019stability}) the decaying solution as $x\to - \infty$ of
      \begin{equation}\label{eq:450}
    \begin{cases}
      \Big(-\frac{d^2}{dx^2} + i \beta [(x-1) +i\lambda]\Big) \tilde{\psi} =0 &
      \text{in } (-\infty,1)\,,  \\
             \int_{-\infty}^1 \tilde \psi (x) \,dx =  \beta^{-1/3} \,. 
    \end{cases}
  \end{equation}
  Note further that by \eqref{deftheta1r} $\tilde \psi_{\lambda,\beta}$ is well-defined
    whenever $\Re\lambda<\vartheta_1^r$.  For later reference we recall from
  \cite[{Eq.  (8.87)}]{almog2019stability} that  for any $\hat
    \delta_1$,  there exists $C>0$ and $\beta_0$  such that,  for $\Re \lambda \leq
    (\vartheta_1^r -\hat \delta_1) \beta^{-1/3} $ and  $\beta \geq \beta_0\,$,
 \begin{equation}
   \label{eq:451} 
    \frac{1}{C} \lambda_\beta^{1/2}  \leq  |\tilde \psi_{\lambda,\beta}(1)| \leq C\,  \lambda_\beta^{1/2} \,.
\end{equation}
To guarantee that the Neumann condition at $x=0$ is satisfied we
further set
\begin{equation}
  \label{eq:452}
\psi_{\lambda,\beta}(x)=\tilde{\psi}_{\lambda,\beta} (x)\chi(1-x)\,,
\end{equation}
where $\chi$ is given by \eqref{eq:103}, which we recall here for the
convenience of the reader
\begin{displaymath}
    \chi(t)=
  \begin{cases}
    1 & t<1/2 \\
    0 & t >3/4 \,.
  \end{cases}
\end{displaymath} 
Consequently, $ \psi_{\lambda,\beta}$ is supported on $[1/4,+1]$ and $
\psi_{\lambda,\beta} = \tilde \psi_{\lambda,\beta}$ on $[1/2,1]$.  We omit the subscript
$(\lambda,\beta)$ when no ambiguity is expected.  Consider a pair
$(\tilde v ,h)\in L^2(0,1) \times D(\LL_\beta^\Nf)$ such that
 \begin{equation}\label{eq:453}
  h  = \Big(-\frac{d^2}{dx^2} +i\beta(U+i\lambda)\Big)\psi\,,
\end{equation}
and 
\begin{equation}
\label{eq:454}
   (\LL_\beta^\Nf-\beta\lambda)\,\tilde{v} = h\,.
\end{equation}
 We note that the assumptions of the proposition, and in particular
\eqref{eq:442} and \eqref{eq:443} allow us to apply Propositions
\ref{prop5.3}, \ref{lem:Dirichlet-L1}, \ref{H1-estimate-0} and
\ref{Dirichlet-L1-H1-0} throughout the proof.\\

 {\em Step 1: We prove that 
\begin{equation}
\label{eq:455}
 \beta^{-1/6}\|\tilde{v}\|_2 +  \|\tilde{v}\|_1 \leq
 C\max(\beta^{-2/3}\lambda_\beta^{-3/4},\beta^{-5/3})\,.
\end{equation}
}
By \eqref{eq:450} it holds that
\begin{equation}
\label{eq:456}
  h= i\beta\, [U-(1-x)]\,
  \psi+\chi^{\prime\prime}(1-x)\tilde{\psi}-2\chi^\prime(1-x)\tilde{\psi}^\prime \mbox{ in } (0,1)\,.
\end{equation}
We note that by \cite[Eq. (6.17)]{almog2019stability} 
 there exists $\Upsilon >0$ (in the statement of the proposition) such that
 whenever $\beta^{1/3}\Re\lambda \leq\Upsilon$  
\begin{equation}
\label{eq:457}
   \|(1-x)^k\, \psi_{\lambda,\beta}\|_2 \leq \|(1-x)^k\, \tilde{\psi}_{\lambda,\beta}\|_2 \leq C\, \lambda_\beta^{\frac{1-2k}{4}} \beta^{-(1+2k) /6}\,,
\end{equation}
for $k \in [0,4]$\,. \\
Furthermore, since by \cite[Proposition A.1]{almog2019stability} 
  (or more precisely by  \cite[Eq. (A.4), (A.6), (A.19), (A.20)]{almog2019stability} and the
  display below \cite[Eq. (A.29)]{almog2019stability})
it holds that
 \begin{displaymath}
   \Psi(x,\lambda):=\frac{\Ai(e^{i\pi/6}[x+i\lambda])}{\Ai(e^{i2\pi/3}\lambda)}= \Big[1+
   \frac{i}{4}  \left((-\lambda)^{- 1/2}x^2 -\lambda^{-1} x
   \right)\Big]e^{-(-\lambda)^{1/2}x} + w_1 \,,
 \end{displaymath}
where:
\begin{itemize}
\item  For $\mu_0>0$
\begin{displaymath}
  \lambda \in \mathcal V(\mu_0):=  \{ \Re \lambda \leq \mu_0\} \cap \{|\lambda|>3\mu_0\}\,,
\end{displaymath}
\item the square root of $-\lambda$ is chosen such that
\begin{displaymath}
  \, \Re(-\lambda)^{1/2}>0\,,
\end{displaymath}
\item and the remainder $w_1\in H^1(\mathbb R_+)$ satisfies 
\begin{displaymath}
 \|x^4w_1\|_{L^2(\R_+)} +   \|x^4w_1^\prime\|_{L^2(\R_+)} \leq C\, |\lambda|^{-9/4} \,.
\end{displaymath}
\end{itemize}
Consequently,
for all $\lambda\in\mathcal V(\mu_0)$ it holds by
\cite[Eq. (A.20)]{almog2019stability} that
\begin{displaymath}
\|x^4\Psi\|_2 +  |\lambda|^{-1/2} \|x^4\Psi^\prime\|_2 \leq C\, |\lambda|^{-9/4} \,.
\end{displaymath}

Let $0<\mu_0<\hat{\kappa}_1$ (given by \eqref{eq:defkappa1}).   Then, there
exists $C(\mu_0)>0$ such that
\begin{displaymath}
 \sup_{|\lambda|\leq3\mu_0} \|x^4\Ai(e^{i\pi/6}[x+i\lambda])\|_{1,2} \leq C \,,
\end{displaymath}
and, since all the zeroes of $\Ai(e^{i2\pi/3}\lambda)$ are located in the 
half-plane $\Re\lambda\geq\hat{\kappa}_1$,   we have, since $\mu_0<\hat{\kappa}_1$, that
\begin{displaymath}
  \sup_{  \begin{subarray} \quad |\lambda| \leq 3 \mu_0 \\
      \Re\lambda<\mu_0
    \end{subarray}}\Big| \frac{1}{\Ai(e^{i2\pi/3}\lambda)}\Big| \leq C\,.
\end{displaymath}
Consequently, by the above inequalities, relying on the sole condition
that $\Re \lambda \leq \mu_0$, there exists $C >0$ such that
  \begin{displaymath}
     \|x^4\Psi\|_2 +   [1+|\lambda|^2]^{-1/4}\|x^4\Psi^\prime\|_2 \leq C\, [1+|\lambda|^2]^{-9/8} \,.
  \end{displaymath}
Note that
\begin{displaymath}
  \frac{\tilde{\psi}_{\lambda,\beta}(x)}{\tilde{\psi}_{\lambda,\beta}(1)}=\Psi(\beta^{1/3}(1-x),\beta^{1/3}\lambda) \,. 
\end{displaymath}
Using dilation, translation and \eqref{eq:451} the above yields 
\begin{equation}
\label{eq:458}
  \|(1-x)^k\, \tilde{\psi}_{\lambda,\beta}^\prime\|_{L^2(-\infty,1)}\leq C\,
  \lambda_\beta^{\frac{3-2k}{4}} \beta^{(1-2k) /6}\,.
\end{equation}
Hence, rewriting \eqref{eq:456} in the form
\begin{equation*}
  h= i\beta\, [U-(1-x)]\,
  \psi+ ((1-x)^{-2}\chi^{\prime\prime}(1-x) ) (1-x)^2 \tilde{\psi}-2((1-x)^{-3} \chi^\prime(1-x))\, (1-x)^3 \tilde{\psi}^\prime \,,
\end{equation*}
we obtain
\begin{equation}
\label{eq:459}
  |h(x)|\leq C[\beta(1-x)^2|\psi(x)|+(1-x)^3 |\tilde{\psi}^\prime(x)|]\,,
\end{equation}
and hence by \eqref{eq:457} with $k=2$  and \eqref{eq:458} with
$k=3$, 
\begin{equation}
\label{eq:460}
  \|h\|_2\leq C\, \beta^{1/6} \lambda_\beta^{-3/4}\,. 
\end{equation}

Recall from \eqref{eq:defxnu} the definition of $x_\nu\,$:
\begin{equation*}
  U(x_\nu)=\nu \mbox{ for } 0 < \nu < U(0)\,,\, x_\nu=1 \mbox{ if } \nu\leq 0 \mbox{ and } x_\nu =0 \mbox{ if } \nu > U(0)\,.
  \end{equation*}
We split the proof of \eqref{eq:455} into two steps, depending on the
value of $x_\nu$. \\

{\em Step  1.1. $x_\nu>1/4$ . }\\

In this case we have by \eqref{eq:454}, (\ref{eq:287}a),
\eqref{eq:354}, and \eqref{eq:403}, (note that $\Jf_\nu \geq \Jf_{U^{-1}
  (1/4)}>0$ in this case) that
\begin{displaymath} 
\beta^{-1/3}\|\tilde{v}\|_2 +   \|(U-\nu)\tilde{v}\|_2 +\beta^{-1/6}\|\tilde{v}\|_1\leq C\, \beta^{-1}\, \|h\|_2 \,.
\end{displaymath}
By \eqref{eq:460} we then obtain
\begin{equation}
\label{eq:461}
 \beta^{-1/3}\|\tilde{v}\|_2 +  \|(U-\nu)\tilde{v}\|_2 +\beta^{-1/6}\|\tilde{v}\|_1\leq C\beta^{-5/6}\lambda_\beta^{-3/4}\,,
\end{equation}
 readily yielding \eqref{eq:455}. \\

{\em Step  1.2:  $0<x_\nu\leq1/4\,$.} \\
We recall  that  $x_\nu\geq Ca^{1/2}\beta^{-1/4}$.
We write
\begin{displaymath}
   (\LL_\beta^\Nf-\beta\lambda)(\chi\tilde{v})=\chi h-2\chi^\prime\tilde{v}^\prime- \tilde{\chi}^{\prime\prime}\tilde{v} \,,
\end{displaymath}
to obtain by (\ref{eq:287}a) and  \eqref{eq:403}
\begin{equation}
\label{eq:462}
  \|\chi\tilde{v}\|_2+ [\beta x_\nu]^{1/6} \|\chi\tilde{v}\|_1\leq
  \frac{C}{[\beta x_\nu]^{2/3}}(\|\chi h\|_2 +\|\tilde{v}^\prime\|_2+
  \|\tilde{v}\|_2)\,.
\end{equation}
Let $\tilde{\chi}=\sqrt{1-\chi^2}$ where $\chi$ is chosen such
 that $\tilde \chi \in C^\infty(\R)$.  We note that the support of $\tilde
 \chi$ belongs to $[1/2,+\infty)$.    Then, an integration by parts yields, as
 in \eqref{eq:307}-\eqref{eq:308}
\begin{equation}
\label{eq:463}
  \|(\tilde{\chi}\tilde{v})^\prime\|_2^2=\|\tilde{\chi}^\prime \tilde{v}\|_2^2+\mu\beta\|\tilde{\chi} \tilde{v}\|_2^2 +
  \Re\langle\tilde{\chi} \tilde{v},\tilde{\chi} h\rangle \,, 
\end{equation}
and, observing that  the sign of $(U-\nu)$ is constant on the support of $\tilde \chi$, 
\begin{equation}
\label{eq:464}
 - \beta\||U-\nu|^{1/2}\tilde{\chi}\tilde{v}\|_2^2+2\Im\langle\tilde{\chi}^\prime \tilde{v},(\tilde{\chi}
  \tilde{v})^\prime\rangle =  \Im \langle\tilde{\chi} \tilde{v},\tilde{\chi} h\rangle \,. 
\end{equation}

Combining \eqref{eq:463}, \eqref{eq:464},  given the support of $\tilde
\chi$ and the fact that $x_\nu\leq1/4$, yields
\begin{multline*}
  \beta\|\tilde{\chi}\tilde{v}\|_2^2 \leq  C ( \| \tilde{v}\|_2\, \|(\tilde{\chi}
  \tilde{v})^\prime\|_2+ \| \tilde{\chi} \tilde{v}\|_2 \| h \|_2) \\ \leq  C( \| \tilde{v}\|_2^2+\|(\tilde{\chi}
  \tilde{v})^\prime\|_2^2+  \| \tilde{\chi} \tilde{v}\|_2 \| h \|_2)\\  \leq 2 C( \|
  \tilde{v}\|_2^2+  \mu\beta\|\tilde{\chi}\tilde{v}\|_2^2 +\| \tilde{\chi} \tilde{v}\|_2 \| h \|_2)\,.
\end{multline*}
Observing that $ \mu \leq \Upsilon \mathfrak J_\nu \beta^{-\frac 13} \leq C
\beta^{-\frac 13}$, we may conclude the existence of  $\beta_0>0$ and $\hat
C>0$ such that for all $\beta>\beta_0$ 
\begin{equation}\label{eq:465}
   \|\tilde{\chi} \tilde{v}\|_2 \leq C \beta^{-1}\, \left(\|h\|_2+\beta^{1/2} \|\tilde{v}\|_2\right)  \,.
 \end{equation} 
 Combining \eqref{eq:465} with the control of $\|\chi \tilde v\|_2$
 given in \eqref{eq:462} yields for sufficiently large $\beta_0$
\begin{equation}
\label{eq:466}
   \|\tilde{v}\|_2 \leq C\, \left(\beta^{-1}\|h\|_2 +[\beta x_\nu]^{-2/3}(\|\chi h\|_2 +\|\tilde{v}^\prime\|_2)\right)\,.
\end{equation}
By \eqref{eq:398} (with $v=\tilde{v}$ and $f=h$) and the fact that
$\mu\leq C\beta^{-1/3}x_\nu^{2/3}$ we can conclude that
\begin{equation}
\label{eq:467}
  \|\tilde{v}^\prime\|_2\leq C\, ([\beta x_\nu]^{1/3}\|\tilde{v}\|_2+\|\tilde{v}\|_2^{1/2}\|h\|_2^{1/2})\,.
\end{equation}
Since by \eqref{eq:443} $\beta x_\nu$ is large for sufficiently large
$\beta_0$, substituting \eqref{eq:467} into \eqref{eq:466} yields, ,
\begin{equation*}
   \|\tilde{v}\|_2 \leq C[(\beta^{-1}+[\beta x_\nu]^{-4/3})\|h\|_2 +[\beta x_\nu]^{-2/3}\|\chi h\|_2]\,.
\end{equation*} 
Hence, using the fact that $x_\nu\geq\beta^{-1/4}$, 
\begin{equation}
\label{eq:468}
   \|\tilde{v}\|_2 \leq C(\beta^{-1} \|h\|_2 + \beta^{-1/2} \|\chi h\|_2]\,.
\end{equation} 
By \eqref{eq:456}, \eqref{eq:457} and  \eqref{eq:458}, we
obtain, since $\chi$ is supported on $[0,3/4]$,  
\begin{multline}
\label{eq:469} 
  \|\chi h\|_2 \leq C\,\big[\beta\|(1-x)^4\psi\|_2+  \|(1-x)^4\psi^\prime\|_2\big]
   \leq
   \widehat C
   \beta^{-1/2}\lambda_\beta^{-\frac{5}{4}}\big(\lambda_\beta^{-\frac{1}{2}}+\beta^{-2/3}\big) \,.
\end{multline}
Substituting \eqref{eq:469}  together with \eqref{eq:460}  into \eqref{eq:468} gives
\begin{equation}\label{eq:470} 
  \|\tilde{v}\|_1 \leq    \|\tilde{v}\|_2 \leq C \beta^{-5/6}\lambda_\beta^{-3/4} \,,
\end{equation}
completing the proof of \eqref{eq:455}, for $ 0<x_\nu\leq1/4$ as
well.\\

 {\em Step  1.3:  $U(0)-\nu\leq-a\beta^{-1/2}$.}\\
Since
\begin{equation}
\label{eq:471}
  \Im\langle\chi^2\tilde{v},
  (\LL_\beta^\Nf-\beta\lambda)\tilde{v}\rangle=-\beta\|(\nu-U)^{1/2}\chi\tilde{v}\|_2^2+2\Im\langle\chi^\prime\tilde{v},(\chi\tilde{v})^\prime\rangle\,,
\end{equation}
we may conclude that
\begin{equation}
\label{eq:472}
  \beta\|(\nu-U)^{1/2}\chi\tilde{v}\|_2^2 \leq \|\chi\tilde{v}\|_2\|\chi h\|_2+
  C\|\tilde{v}\|_2\|(\chi\tilde{v})^\prime\|_2\,. 
\end{equation}
As
\begin{displaymath}
   \Re\langle\chi^2\tilde{v},
  (\LL_\beta^\Nf-\beta\lambda)\tilde{v}\rangle=\|(\chi\tilde{v})^\prime\|_2^2-\mu\beta\|\chi\tilde{v}\|_2^2-\|\chi^\prime\tilde{v}\|_2^2\,,
\end{displaymath}
we can conclude that
\begin{displaymath}
  \|(\chi\tilde{v})^\prime\|_2\leq C\left(\mu_{\beta,+}^{1/2}\|\chi\tilde{v}\|_2+\|\tilde{v}\|_2 + \|\chi\tilde{v}\|_2^{1/2}\|\chi h\|_2^{1/2}\right) \,,
\end{displaymath}
where $\mu_{\beta,+}$ is defined in \eqref{eq:284}.
Substituting the above into \eqref{eq:472} yields
\begin{displaymath}
  \beta\|(\nu-U)^{1/2}\chi\tilde{v}\|_2^2 \leq\|\chi\tilde{v}\|_2\|\chi h\|_2 +C
  \left(\mu_{\beta,+}^{1/2}\|\chi\tilde{v}\|_2\|\tilde{v}\|_2 +
  \|\tilde{v}\|_2^2+\|\chi\tilde{v}\|_2\|\chi h\|_2\right) \,.
\end{displaymath}
Since $\nu-U\geq\nu-U(0)$ in $[0,1]$ we may now conclude that
\begin{equation}
\label{eq:473}
  \|\chi\tilde{v}\|_2 \leq
  \frac{C}{\beta(\nu-U(0))}(\|\chi h\|_2+(\mu_{\beta,+}^{1/2}+[\beta(\nu-U(0))]^{1/2})\|v\|_2 \,.
\end{equation}
Given that \eqref{eq:465} remains valid for $\nu>U(0)$ it holds that
\begin{displaymath}
  \|\tilde{v}\|_2\leq   \|\chi\tilde{v}\|_2 +   \|\tilde{\chi}\tilde{v}\|_2
  \leq   \|\chi\tilde{v}\|_2 +C \beta^{-1} \,\left(\|h\|_2+\beta^{1/2} \|\tilde{v}\|_2\right)  \,.
\end{displaymath}
Consequently, there exists $\beta_0$ such that for  $\beta\geq \beta_0$ we may write
\begin{equation}
\label{eq:474}
  \|\tilde{v}\|_2\leq   \|\chi\tilde{v}\|_2 + C \beta^{-1} \|h\|_2\,.
\end{equation}
Substituting the above into \eqref{eq:473} yields, for some
sufficiently large $\beta_0$ and $\beta\geq \beta_0$
\begin{equation}\label{eq:475}
    \|\chi\tilde{v}\|_2 \leq
    C\, \beta^{-1} (\nu-U(0))^{-1}\, (\|\chi h\|_2+[\nu-U(0)]^{1/2})\beta^{-1/2}\|h\|_2) \,.
\end{equation}
By \eqref{eq:469} 
and \eqref{eq:460} we then obtain (note that
$|\lambda|>U(0)$ as $\nu -U(0) \geq a \beta^{-1/2}$)  
\begin{equation}\label{eq:476}
    \|\chi\tilde{v}\|_2 \leq C \, \beta^{-4/3} \,.
\end{equation}
Substituting the above into \eqref{eq:474} yields
\begin{displaymath}
   \|\tilde{v}\|_1 \leq    \|\tilde{v}\|_2 \leq C\beta^{-5/6}\lambda_\beta^{-3/4}\,.
\end{displaymath}

{\em Step 2: We prove (\ref{eq:445})  and \eqref{eq:447}.\\}

\noindent {\em Step  2.1: We prove  (\ref{eq:445}) in the case $\nu<U(0)$.}

Consider the pairs $(v,f)\in D(\LL_\beta^\zeta) \times L^2(0,1)$ satisfying
\eqref{eq:444} and $(\psi, \tilde v)$ satisfying 
\eqref{eq:449}-\eqref{eq:454}. As in \cite[Eq.
(6.20)]{almog2019stability} 
there exists $(A,u) \in \mathbb C\times 
D(\LL_\beta^\Nf)$ such that
\begin{equation}
\label{eq:477}
  v=A(\psi-\tilde{v})+u \,,
\end{equation}
where 
\begin{equation}
\label{eq:478}
  u=(\LL_\beta^\Nf-\beta\lambda)^{-1}f\,.
\end{equation}
Taking the inner product with $\zeta$ yields in view of \eqref{eq:439}  
\begin{equation}
\label{eq:479}
  A\langle\zeta,(\psi-\tilde{v})\rangle =- \langle\zeta,u\rangle \,.
\end{equation}
By \eqref{eq:441} and \eqref{eq:461} it holds that for any
  $0<\gamma<1/4$  there exist positive $C$ and $\beta_0$ such that
\begin{equation}
\label{eq:480}
  |\langle\zeta,\tilde{v}\rangle|\leq C\beta^\gamma\, \|\tilde{v}\|_1\leq \hat C \beta^{-(2/3-\gamma)} \,,
\end{equation}
for all $\beta>\beta_0$.\\
Then, we write
\begin{equation}\label{eq:481}
  \langle\zeta,\psi\rangle=\langle1,\psi\rangle+\langle\zeta-1,\psi\rangle\,.
\end{equation}
For the first term in the r.h.s of \eqref{eq:481}, we may rely on  \cite[Eq.
(6.28)]{almog2019stability} to obtain
\begin{equation}
\label{eq:482}
  \langle1,\tilde \psi\rangle=-\beta^{-1/3}+\OO(\beta^{-4/3})\,.
\end{equation}
Observing that
\begin{displaymath}
|\langle1,\psi -\tilde  \psi\rangle|\leq C\|(1-x)^4\tilde{\psi}\|_2\leq C\lambda_\beta^{-7/4}\beta^{-3/2}\,,
\end{displaymath}
 we obtain 
\begin{equation}
\label{eq:483}
  \langle1, \psi\rangle=-\beta^{-1/3}+\OO(\beta^{-4/3})\,.
\end{equation}
For the second term  in the r.h.s of  \eqref{eq:481} we have  
\begin{equation}
\label{eq:484}
  |\langle\zeta-1,\psi\rangle|\leq
 \|\zeta^\prime\|_{L^2(1-\beta^{-\gamma},1)}\|
  \|(1-x)^{1/2}\psi\|_1 + (1+ \|\zeta\|_\infty)\|\psi\|_{L^1(0,1-\beta^{-\gamma})}\,.
\end{equation} 
By  \cite[Eq. (6.27)]{almog2019stability} it holds that
\begin{displaymath}
  \|\psi\|_{L^1(0,1-\beta^{-\gamma})}\leq\beta^{3\gamma}\|(1-x)^3\psi\|_1\leq C\beta^{3\gamma-4/3} \,,
\end{displaymath}
and that
\begin{equation}
\label{eq:485} 
    \|(1-x)^{1/2}\psi\|_1 \leq C\beta^{-1/2} \lambda_\beta^{-1/4} \,.
\end{equation}
 Consequently, we obtain from \eqref{eq:484}  that (recall that $\zeta \in  {\mathfrak U}_1(\beta,\lambda,\gamma,\theta)$) 
\begin{equation}
\label{eq:486}
   |\langle\zeta-1,\psi\rangle|\leq C_0 \, (\theta\beta^{-1/3}+\beta^{4\gamma-4/3})\,.
\end{equation}
As
\begin{displaymath}
\langle\zeta,(\psi-\tilde{v})\rangle=  \langle1,\psi\rangle +  \langle (\zeta- 1) ,\psi\rangle -\langle\zeta,\tilde{v})\rangle\,, 
\end{displaymath}
we can conclude from \eqref{eq:486}, \eqref{eq:483}, and
\eqref{eq:480} that
\begin{equation}\label{eq:487}
 | \langle\zeta,(\psi-\tilde{v})\rangle| \geq \beta^{-1/3} (1- C_0 \theta - \hat C \beta^{4\gamma -1} -  \hat C \beta^{-1}  -\hat C \beta^{\gamma -1/3})\,.
\end{equation}
We choose $\theta_0 =1/2 C_0$, and since $\gamma<1/4$, there exists $\beta_0$
such that under the assumptions of the proposition we can conclude
from \eqref{eq:479} and \eqref{eq:487} that
\begin{equation}
\label{eq:488}
  |A|\leq C\beta^{1/3} |\langle\zeta,u\rangle|\leq C\beta^{1/3}\|\zeta\|_\infty\|u\|_1 \,. 
\end{equation}
For $\nu<U(0)$ we may use \eqref{eq:431}, the fact that $x_\nu \geq C \beta^{-1/4}$,
and  the $\LL(L^1,L^2)$ estimate in \eqref{eq:403} to obtain that
\begin{equation}
\label{eq:489}
   |A|\leq C\, \|\zeta\|_\infty \, \min
  \left(x_\nu^{-5/6}\beta^{-1/2}\|f\|_2,x_\nu^{-1}\beta^{-2/3}[\|f\|_{1,2} +|f(x_\nu)|\log (x_\nu\beta^{1/4})]\right)\,.
 \end{equation}
 We can now conclude \eqref{eq:445} from \eqref{eq:489},
 \eqref{eq:451},  \eqref{eq:452}, and the fact (see
   \eqref{eq:477}) that $ v(1)=A\psi(1)$.\\  
 
{\em  Step  2.2. We prove  \eqref{eq:447}.  \\}

 To prove \eqref{eq:447} we now write
\begin{displaymath}
  \Big\langle\zeta,\frac{1}{U-\nu+i\mf}\Big\rangle= \zeta(x_\nu)\,
  \int_0^1\frac{ dx}{U-\nu+i\mf} +
  \int_0^1\frac{[\zeta- \zeta(x_\nu) ]\, dx}{U-\nu+i\mf}  \,,
\end{displaymath}
where $\mf$ is defined in (\ref{eq:447}b).  For the coefficient of
$\zeta(x_\nu)$ in the first term on the right hand side we write
\begin{equation}
\label{eq:490}
   \int_0^1\frac{dx}{U-\nu+i\mf}
   = 
   \frac{1}{2\tilde{x}_\nu}\int_0^1\Big[\frac{1}{[U(0)-U]^{1/2}-\tilde{x}_\nu}
   -\frac{1}{[U(0)-U]^{1/2}+\tilde{x}_\nu} \Big]\,dx\,,
   \end{equation}
where
\begin{displaymath}
  \tilde{x}_\nu=[U(0)-\nu+i\mf]^{1/2}\,.
\end{displaymath}
An integration by parts yields
\begin{multline}
\label{eq:491}
  \int_0^1\frac{dx}{[U(0)-U]^{1/2}\pm \tilde{x}_\nu}=
  \frac{2[U(0)-U]^{1/2}}{U^\prime}\log\big([U(0)-U]^{1/2}\pm \tilde{x}_\nu\big)\Big|_0^1\\
  -\int_0^1\Big(\frac{2[U(0)-U]^{1/2}}{U^\prime}\Big)^\prime\log\big([U(0)-U]^{1/2}\pm
  \tilde{x}_\nu\big)\,dx\,.
\end{multline} 
Given that
\begin{displaymath}
  \Big\|\Big(\frac{2[U(0)-U]^{1/2}}{U^\prime}\Big)^\prime\|_\infty \leq C \,,
\end{displaymath}
it holds,  for $\mu\geq-1$ that
\begin{equation}
\label{eq:492}
  \Big|\int_0^1\Big(\frac{2[U(0)-U]^{1/2}}{U^\prime}\Big)^\prime\log\big([U(0)-U]^{1/2}\pm \tilde{x}_\nu\big)\,dx\Big|\leq \widehat C\,. 
\end{equation}
We now observe that
\begin{multline}
\label{eq:493}
  \frac{2[U(0)-U]^{1/2}}{U^\prime}\log\big([U(0)-U]^{1/2}\pm\tilde{x}_\nu\big)\Big|_0^1=\\ -\Big[\frac{2}{U^{\prime\prime}(0)}\Big] \log\big(\pm\tilde{x}_\nu\big)+2\sqrt{U(0)}\log\big([U(0)]^{1/2}\pm\tilde{x}_\nu\big)\,.
\end{multline}
Given that
\begin{displaymath}
  \log\big(-\tilde{x}_\nu\big)-\log\big(+\tilde{x}_\nu\big)=i\pi \,,
\end{displaymath}
and 
\begin{displaymath}
  \big|[U(0)]^{1/2}-\tilde{x}_\nu\big|\geq \frac 1C \, |\nu+i\mf| \,,
\end{displaymath} 
we obtain, by substituting \eqref{eq:493} together with
\eqref{eq:492}
into \eqref{eq:491},  given that \break $\frac 1C x_\nu\leq\tilde{x}_\nu$,
\begin{equation}
\label{eq:494}
  \Big|\int_0^1\frac{\zeta(x_\nu) dx}{U-\nu+i\mf}\Big|\leq C \, x_\nu^{-1} [1+|\log|\nu+i\mf|^{-1}|] \|\zeta\|_\infty \,.
\end{equation}
 For $\mu\leq-1$ it holds that
  \begin{displaymath}
    \Big|\int_0^1\frac{\zeta(x_\nu) dx}{U-\nu+i\mf}\Big|\leq \frac{C}{|\mu|} \, \|\zeta\|_\infty 
  \end{displaymath}
in accordance with \eqref{eq:494}.

For the second term we have by  \eqref{eq:425}, for any $p>1$
\begin{displaymath}
\Big|  \int_0^1\frac{\zeta-\zeta(x_\nu) dx}{U-\nu+i\mf}\leq C  \|\zeta^\prime\|_p
  \int_0^1 \frac{|x-x_\nu|^{\frac{p-1}{p}}
    dx} {x_\nu |x-x_\nu|}\leq\frac{\widehat C}{x_\nu}\|\zeta^\prime\|_p\,.
\end{displaymath}
Combining the above with \eqref{eq:494} yields
\begin{equation}
\label{eq:495}
  \Big|\Big\langle\zeta,\frac{1}{U-\nu+i\mf}\Big\rangle\Big|\leq  \frac{C}{x_\nu}{ [1+|\log|\nu+i\mf|^{-1}|]}\|\zeta\|_{1,p}\,.
\end{equation}
By the first inequality of  \eqref{eq:488} it holds that
 \begin{equation}
\label{eq:488bis}
  |A|\leq C\beta^{1/3} \Big[\Big\|\zeta
  \Big[u+i\beta^{-1}\frac{f(x_\nu)}{U-\nu+i\mf}\Big]\Big\|_1 
+ \beta^{-1}|f(x_\nu)|\Big|\Big\langle\zeta,\frac{1}{U-\nu+i\mf}
\Big\rangle\Big|\Big]\,.
\end{equation} 
Substituting \eqref{eq:495} into \eqref{eq:488bis} yields with the aid of
\eqref{eq:421}
\begin{equation}\label{eq:495aa}
  |A|\leq C\beta^{-2/3}x_\nu^{-1}\big[\|\zeta\|_\infty \|f\|_{1,2}+ (1+|\log|\nu+i\mf|^{-1}|)\|\zeta\|_{1,p}|f(x_\nu)|\big] \,.  
\end{equation}
We can now conclude \eqref{eq:447}   from \eqref{eq:495aa}, 
 \eqref{eq:451}, \eqref{eq:452},  and the fact that \break $ v(1)=A\psi(1)$.\\ 

{\em Step  2.3: We prove \eqref{eq:445} in the case $\nu>U(0)$.}\\
In this case we write
  \begin{displaymath} 
    \|u\|_1\leq \|(\nu-U)^{-1/2}\|_2\, \|(\nu-U)^{1/2}u\|_2 \,,
  \end{displaymath}
and as
\begin{displaymath}
  \|(\nu-U)^{-1/2}\|_2^2 \leq C \int_0^1 \frac{dx}{x^2 + \nu - U(0)}\leq \hat C \, |\nu-U(0)|^{-1/2} \,,
\end{displaymath}
we may conclude that
  \begin{equation}
\label{eq:496}
    \|u\|_1\leq \check C \,   [\nu-U(0)]^{-1/4}\, \|(\nu-U)^{1/2}u\|_2 \,.
  \end{equation}
We now use \eqref{eq:471} with $\chi\equiv1$, $\tilde{v}=u$ and $h=f$ to
obtain that
\begin{equation}
\label{eq:497}
  \beta\|(\nu-U)^{1/2}u\|_2^2 = - \Im\langle u,f\rangle\,.
\end{equation}
Consequently,
\begin{displaymath}
  \|(\nu-U)^{1/2}u\|_2\leq \frac{1}{\beta^{1/2}}\|u\|_1^{1/2}\|f\|_\infty^{1/2}\,,
\end{displaymath}
and hence, by \eqref{eq:496} we obtain that
\begin{equation}
\label{eq:498}
   \|u\|_1\leq \frac{C}{\beta[\nu-U(0)]^{1/2}} \|f\|_\infty \leq \frac{\widehat C}{\beta[\nu-U(0)]^{1/2}} \|f\|_{1,2}  \,.
\end{equation}
Next, we use the fact that 
\begin{equation}\label{eq:499}
\nu-U(0)\leq\nu-U(x) \mbox{ for } x \in [0,1]\,,
\end{equation}
to obtain from
\eqref{eq:497} that
\begin{equation}
\label{eq:500}
  \|u\|_2 \leq\frac{1}{\beta[\nu-U(0)]}\|f\|_2 \,.
\end{equation}
As 
\begin{displaymath}
  \|(\nu-U)^{-1}\|_2^2\leq C \, \int_0^1\frac{dx}{[x^2+\nu-U(0)]^2}\leq
  \frac{\hat C}{|\nu-U(0)|^{3/2}} \,,
\end{displaymath}
we may write 
  \begin{equation}
\label{eq:501}
    \|u\|_1 \leq \|(U-\nu)^{-1}\|_2
    \|(U-\nu)u\|_2\leq C[\nu-U(0)]^{-3/4}\|(U-\nu)u\|_2\,.
  \end{equation}
  By \eqref{eq:350} (applied with $v=u$) and \eqref{eq:348} combined
  with \eqref{eq:499}, it holds that
\begin{equation}
\label{eq:502}
\begin{array}{ll}
  \beta\|(U-\nu)u\|_2^2&\leq
  \|(U-\nu)u\|_2\|f\|_2+C\|(U-\nu)^{1/2}u\|_2\|u^\prime\|_2 \\
  & 
  \leq\|(U-\nu)u\|_2\|f\|_2+C\|(U-\nu)u\|_2^{1/2}\|u\|_2^{1/2}\|u^\prime\|_2 \,.
  \end{array}
\end{equation}
 Furthermore by \eqref{eq:398}  and \eqref{eq:442}
 \begin{displaymath}
   \|u^\prime\|_2\leq  \mu_{\beta,+}^{1/2}\|u\|_2+
   \|u\|_2^{1/2}\|f\|_2^{1/2}\leq C (\nu-U(0))^{1/6} \beta^{1/3}  \|u\|_2+ \|u\|_2^{1/2}\|f\|_2^{1/2}\,.
\end{displaymath}
We now use \eqref{eq:500}  to deduce from above
\begin{equation*}
   \|u^\prime\|_2 \leq  C\, \big[(\nu-U(0))^{-5/6} \beta^{-2/3}+  (\nu-U(0))^{-1/2}
   \beta^{-1/2}\big]\|f\|_2\,,
   \end{equation*}
which implies  using \eqref{eq:443} 
\begin{equation}
\label{eq:503}
   \|u^\prime\|_2 \leq  \hat C\, (\nu-U(0))^{-1/2}\beta^{-1/2}\|f\|_2\,.
\end{equation}
Substituting \eqref{eq:500} and \eqref{eq:503}  into \eqref{eq:502}
yields 
\begin{displaymath}
   \beta\|(U-\nu)u\|_2^2
   \leq\|(U-\nu)u\|_2\|f\|_2+\frac{C}{\beta(\nu-U(0))}\|(U-\nu)u\|_2^{1/2}\|f\|_2^{3/2}  \,,
\end{displaymath}
from which we conclude using \eqref{eq:443} that
\begin{displaymath}
  \|(U-\nu)u\|_2\leq C \beta^{-1} [1+\beta^{-1/3}[\nu-U(0)]^{2/3}]\|f\|_2 \leq \frac{\hat C}{\beta}\|f\|_2 \,.
\end{displaymath}
Substituting the above  into
\eqref{eq:501} yields together with \eqref{eq:443}
\begin{displaymath}
    \|u\|_1 \leq
    \frac{C}{\beta[\nu-U(0)]^{3/4}}\|f\|_2 \leq \frac{\hat C}{(\beta[\nu-U(0)]^{1/2})^{5/6}}\|f\|_2 \,,
\end{displaymath}
which, together with \eqref{eq:498} proves that
\begin{displaymath}
      \|u\|_1 \leq
      C\min\left((\beta[\nu-U(0)]^{1/2})^{-5/6}\|f\|_2,(\beta[\nu-U(0)]^{1/2})^{-1}\|f\|_{1,2}\right)\,. 
\end{displaymath}
As $\psi_{\lambda,\beta}(1)=\tilde \psi_{\lambda,\beta}(1)$ we may infer from \eqref{eq:451}
 \begin{equation}
   \label{eq:504} 
    \frac{1}{C} \lambda_\beta^{1/2}  \leq  |\psi_{\lambda,\beta}(1)| \leq C\,  \lambda_\beta^{1/2} \,,
\end{equation}
and hence we can conclude by \eqref{eq:477} and \eqref{eq:488} (which remains
valid for $\nu>U(0)$) that
\begin{displaymath}
  |v(1)|=|A\psi(1)|\leq  C\, \lambda_\beta^{1/2} \min((\beta[\nu-U(0)]^{1/2})^{-5/6}\|f\|_2,x_\nu^{-1}\beta^{-2/3}\|f\|_{1,2})\,.
\end{displaymath}
which verifies \eqref{eq:445} for $\nu>U(0)$.  \\

{\em Step 3:  We prove \eqref{eq:446}. }\\ 
The proof of \eqref{eq:446} which reads
 \begin{equation*}
|v(1)| \leq C\,  \lambda_\beta^{1/2} \, \beta^{1/3}\, \|\zeta u \|_1\,,
\end{equation*}
follows immediately from the first inequality in \eqref{eq:488}, from
\eqref{eq:451}, and again from the fact that $ v(1)=A\psi(1)$.\\

{\it Step 4:  We prove   \eqref{eq:448}.}\\

\noindent To prove it for $x_\nu<1/4$ we set
\begin{displaymath}
  f=(x-x_\nu)g \,,
\end{displaymath}
and assume that $g\in L^2(0,1)$. 
 Recall the definition of $\chi_\nu^\pm$ and
$\tilde{\chi}_\nu$ from \eqref{eq:292}.  Since by
\eqref{eq:357}-\eqref{eq:358} 
\begin{displaymath}
  \||U-\nu|^{-1/2}\chi_\nu^\pm\|_2^2\leq
  C\int_{\frac{5x_\nu}{4}}^1\frac{dx}{x^2-x_\nu^2}+C
  \int_0^{\frac{3x_\nu}{4}}\frac{dx}{x_\nu^2-x^2}\leq\frac{\hat C}{x_\nu}\,,
\end{displaymath}
we can conclude that
\begin{displaymath}
 \|(\chi_\nu^\pm)^2  u\|_1\leq\||U-\nu|^{-1/2}\chi_\nu^\pm\|_2\,
 \||U-\nu|^{1/2}\chi_\nu^\pm u\|_2\leq Cx_\nu^{-1/2}  \||U-\nu|^{1/2}\chi_\nu^\pm u\|_2\,.
\end{displaymath}
By \eqref{eq:420} and \eqref{eq:413}  we then obtain
\begin{displaymath}
  \|(\chi_\nu^\pm )^2u\|_1\leq
  C([\beta x_\nu]^{-1/6}\|u\|_2+\beta^{-1}x_\nu^{-1/2}\|g\|_2)\,.
\end{displaymath}
Hence, by \eqref{eq:412} (which reads $
\|u\|_2\leq C\, [\mathfrak J_\nu \beta]^{-1}\, \|g\|_2\,,$) we can conclude that
\begin{displaymath}
    \|(\chi_\nu^\pm )^2u\|_1\leq C([\beta x_\nu]^{-7/6}+\beta^{-1}x_\nu^{-1/2}])\|g\|_2 \,. 
\end{displaymath}
Given that $x_\nu\geq\frac 1C \beta^{-1/4}$ we obtain that
\begin{equation}
\label{eq:505}
   \|(\chi_\nu^\pm)^2 u\|_1\leq C\beta^{-1}x_\nu^{-1/2}\|g\|_2 \,. 
\end{equation}
Employing again \eqref{eq:412} we write
\begin{displaymath}
    \|\tilde{\chi}_\nu u\|_2\leq\|u\|_2\leq C[\beta x_\nu]^{-1}\|g\|_2 \,. 
\end{displaymath}
Consequently, since $\tilde \chi_v$ is supported on $[x_\nu/2,3x_\nu/2]$
\begin{equation}
\label{eq:506}
  \|\tilde{\chi}_\nu^2u\|_1\leq x_\nu^{1/2} \|\tilde{\chi}_\nu u\|_2\leq
  C\beta^{-1}x_\nu^{-1/2}\|g\|_2 \,. 
\end{equation}
 
Combining \eqref{eq:506} with \eqref{eq:505}, \eqref{eq:446}, and \eqref{eq:488} yields
\eqref{eq:448} for the case \break $x_\nu<1/4$.\\

 In the case $x_\nu\geq1/4$, 
\eqref{eq:448} immediately follows from \eqref{eq:412} and the fact
that $\|u\|_1\leq\|u\|_2$. 
\end{proof}

\subsection{Resolvent estimates for $| U(0)-\Im \lambda| =\OO
  (\beta^{-1/2})\,.$}
\label{sec:resolvent-estimates-quadratic}
Here we introduce, for $\beta >0$, $\lambda \in \mathbb C$,  and $\theta >0$,
\begin{equation}
\label{eq:507}
\mathfrak U_2(\beta,\theta,\lambda) =\{ \zeta \in \mathfrak U_0\,|\,
\|\zeta^\prime\|_2\leq\theta\beta^{1/6}\lambda_\beta^{1/4} \}\,,
\end{equation}
where $\mathfrak U_0$ is introduced in \eqref{eq:440}.  In the present
context $\lambda$ lies in a bounded set and hence
\begin{displaymath}
  \|\zeta^\prime\|_2\leq C \theta\beta^{1/4} \,.
\end{displaymath}

\begin{proposition}
  \label{lem:integral-conditions-quadratic}
  Let $ U\in C^3([0,1])$ satisfy \eqref{eq:10}, $ \Upsilon
  <\sqrt{-U^{\prime\prime}(0)}/2$,  $\mu_1>0$, and $a>0$.  Then, there exist
  $\beta_0>0$ and $\theta_0>0$ such that for all $\beta\geq \beta_0$, $\theta \in (0,\theta_0]$,
  $\lambda\in \C$ satisfying
\begin{subequations}
\label{eq:508}
  \begin{equation}
  U(0)-a\beta^{-1/2}< \nu<U(0)+a\beta^{-1/2}
  \end{equation}
  and 
  \begin{equation} 
  -\mu_1 \leq\mu<\Upsilon\beta^{-1/2}\,, 
\end{equation}
\end{subequations}
for any $\zeta$ in $\mathfrak U_2(\beta,\theta,\lambda)$, and   $(f,v) \in H^1(0,1) \times  D(\mathcal
L^\beta_\zeta)$ satisfying \eqref{eq:444}, 
it holds that
\begin{equation}
 \label{eq:509}
|v(1)| \leq C\|\zeta\|_\infty  \,
  \min(\beta^{-1/8}\|f\|_2,\beta^{-1/4}\|f\|_\infty)\,.
\end{equation}
Furthermore, for $f$ satisfying $(x-x_\nu)^{-1}f\in
L^2(0,1)$ we have
\begin{equation}
  \label{eq:510}
|v(1)| \leq C \beta^{-3/8}\, \|\zeta\|_\infty \Big\|\frac{f}{x-x_\nu}\Big\|_2\,.
\end{equation}
\end{proposition}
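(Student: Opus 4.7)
The plan is to mirror the decomposition strategy used in the proof of Proposition \ref{lem:integral-conditions}, substituting the resolvent bounds valid for $|U(0)-\nu|\gg\beta^{-1/2}$ by those of Proposition \ref{lem:schrod-quad} which apply in the quadratic regime \eqref{eq:508}. Specifically, I would set $\psi=\psi_{\lambda,\beta}$ as in \eqref{eq:452}, $h=(\LL_\beta^\Nf-\beta\lambda)\psi$, $\tilde v=(\LL_\beta^\Nf-\beta\lambda)^{-1}h$, and $u=(\LL_\beta^\Nf-\beta\lambda)^{-1}f$, and decompose
\begin{displaymath}
v=A(\psi-\tilde v)+u\,,
\end{displaymath}
so that the integral constraint $\langle\zeta,v\rangle=0$ enforces $A\langle\zeta,\psi-\tilde v\rangle=-\langle\zeta,u\rangle$. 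Since $v(1)=A\psi(1)$, the proof reduces to a lower bound on the denominator and an upper bound on the numerator.

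For the denominator, I would first write $\langle\zeta,\psi\rangle=\langle 1,\psi\rangle+\langle\zeta-1,\psi\rangle$ and invoke \eqref{eq:483} to identify $\langle 1,\psi\rangle=-\beta^{-1/3}+\OO(\beta^{-4/3})$. Using $|\zeta(x)-1|\leq\|\zeta^\prime\|_2(1-x)^{1/2}$ (valid since $\zeta(1)=1$) together with \eqref{eq:485} and the definition of $\mathfrak U_2$, I obtain
\begin{displaymath}
|\langle\zeta-1,\psi\rangle|\leq\|\zeta^\prime\|_2\,\|(1-x)^{1/2}\psi\|_1\leq C\theta\beta^{-1/3}\,,
\end{displaymath}
with the $\lambda_\beta^{\pm 1/4}$ factors cancelling exactly. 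Next, I need $|\langle\zeta,\tilde v\rangle|$ small. The bound $\|h\|_2\leq C\beta^{1/6}\lambda_\beta^{-3/4}$ carries over from the derivation of \eqref{eq:460} since that estimate depends only on $\psi$ and not on $\nu$; combining with (\ref{eq:365}a) I would obtain $\|\tilde v\|_2\leq C\beta^{-1/3}\lambda_\beta^{-3/4}$ and $\|\tilde v\|_1\leq C\beta^{-11/24}\lambda_\beta^{-3/4}$. Since the constraint $|\nu-U(0)|\leq a\beta^{-1/2}$ and $U(0)>0$ force $\lambda_\beta\sim\beta^{1/3}$ for $\beta$ large, splitting $\langle\zeta,\tilde v\rangle=\langle 1,\tilde v\rangle+\langle\zeta-1,\tilde v\rangle$ and using $\|\zeta-1\|_2\leq C\theta\beta^{1/4}$ yields $|\langle\zeta,\tilde v\rangle|\leq C\beta^{-17/24}+C\theta\beta^{-1/3}$. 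Choosing $\theta_0$ small and $\beta_0$ large secures
\begin{displaymath}
|\langle\zeta,\psi-\tilde v\rangle|\geq \frac{1}{4}\beta^{-1/3}\,.
\end{displaymath}

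With the denominator controlled, I would close the estimate by writing $|A|\leq 4\beta^{1/3}\|\zeta\|_\infty\|u\|_1$, invoking (\ref{eq:365}a) in the form
\begin{displaymath}
\|u\|_1\leq C\beta^{-1/8}\min(\beta^{-1/2}\|f\|_2,\beta^{-5/8}\|f\|_\infty)\,,
\end{displaymath}
and exploiting $|\psi(1)|\leq C\lambda_\beta^{1/2}\leq C\beta^{1/6}$ from \eqref{eq:451}. The total exponent becomes $\beta^{1/3+1/6-1/8}=\beta^{3/8}$, which converts the minimum to $\min(\beta^{-1/8}\|f\|_2,\beta^{-1/4}\|f\|_\infty)$ and produces \eqref{eq:509}. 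Estimate \eqref{eq:510} follows identically but using (\ref{eq:365}b) in place of (\ref{eq:365}a), giving $\|u\|_1\leq C\beta^{-7/8}\|(x-x_\nu)^{-1}f\|_2$; the total exponent this time is $\beta^{1/3+1/6-7/8}=\beta^{-3/8}$.

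The main obstacle I anticipate is securing the lower bound on $|\langle\zeta,\psi-\tilde v\rangle|$. The naive control $|\langle\zeta,\tilde v\rangle|\leq\|\zeta\|_\infty\|\tilde v\|_1$ yields at best $\beta^{1/4-17/24}=\beta^{-11/24}$, which is barely comparable to (not clearly smaller than) the leading term $\beta^{-1/3}$. The resolution requires trading the $L^\infty$ control of $\zeta$ for the superior $L^2$ control $\|\zeta-1\|_2\leq C\theta\beta^{1/4}$ available in $\mathfrak U_2$, paired with the sharp $L^2$ bound $\|\tilde v\|_2\leq C\beta^{-7/12}$ from (\ref{eq:365}a). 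The $\beta^{-1/8}$ gain on $\|u\|_1$ in (\ref{eq:365}a), coming from the spectral concentration in a $\beta^{-1/4}$-neighbourhood of $x=x_\nu$, is likewise indispensable for matching the exponent in \eqref{eq:509}.
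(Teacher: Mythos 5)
Your proof is correct and follows essentially the same decomposition and argument as the paper's proof of Proposition \ref{lem:integral-conditions-quadratic}: the same splitting $v=A(\psi-\tilde v)+u$, the same use of \eqref{eq:483}, the bound $|\langle\zeta-1,\psi\rangle|\leq C\theta\beta^{-1/3}$ from \eqref{eq:485}, the controls on $\|\tilde v\|_1$ and $\|\tilde v\|_2$ via \eqref{eq:460} and (\ref{eq:365}a), the resulting $|A|\leq C\beta^{1/3}\|\zeta\|_\infty\|u\|_1$, and the final exponent bookkeeping using $|\psi(1)|\leq C\lambda_\beta^{1/2}\leq C\beta^{1/6}$. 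The arithmetic is correct and reproduces \eqref{eq:509} and \eqref{eq:510}.

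One remark on your closing paragraph. You assert that the naive control $|\langle\zeta,\tilde v\rangle|\leq\|\zeta\|_\infty\|\tilde v\|_1\lesssim\beta^{1/4}\beta^{-17/24}=\beta^{-11/24}$ is ``barely comparable to (not clearly smaller than) the leading term $\beta^{-1/3}$,'' and that this forces you to use the $L^2$ control of $\zeta-1$. This is a misconception: since $11/24>1/3=8/24$, one has $\beta^{-11/24}=\beta^{-1/8}\cdot\beta^{-1/3}$, which is \emph{strictly} smaller by a factor $\beta^{-1/8}\to 0$, and this is exactly what the paper uses (cf.\ \eqref{eq:515} and the argument leading to \eqref{eq:517}). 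Your alternative split $\langle\zeta,\tilde v\rangle=\langle 1,\tilde v\rangle+\langle\zeta-1,\tilde v\rangle$ is harmless and also works, but it is not needed; the ``gain'' you attribute to it comes from the $\lambda_\beta^{-3/4}\sim\beta^{-1/4}$ factor that was already present in the $L^\infty$-$L^1$ pairing.
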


\begin{proof}
  By \eqref{eq:453}, \eqref{eq:454}, \eqref{eq:460}, and
  (\ref{eq:365}a) in Proposition \ref{lem:schrod-quad} it holds that
  \begin{equation}\label{eq:511}
   \| \tilde v\|_2  + \beta^{1/8}  \| \tilde v\|_1 \leq C \beta^{-1/2} \| h \|_2 \leq \hat C \beta^{-1/3}\, \lambda_\beta^{-3/4}\,.
 \end{equation}
 Since $|\lambda|>U(0)/2$ we obtain for $\beta\geq \beta_0$ with $\beta_0$  large enough
  \begin{equation}
\label{eq:512}
   \| \tilde v\|_2 + \beta^{1/8}  \| \tilde v\|_1 \leq C \beta^{-1/3} [\beta^{1/3}]^{-3/4}= C \beta^{- 7/12}\,.
 \end{equation}
Given that  for $\zeta\in\mathfrak U_2(\beta,\theta,\lambda)$ it holds that
  \begin{equation}\label{eq:513}
    \|\zeta\|_\infty\leq (1+ C\|\zeta^\prime\|_2)\leq C (1 + \theta\beta^{1/6}  \lambda_\beta^{1/4})\,,
  \end{equation}
and hence  we can conclude, from \eqref{eq:509} and \eqref{eq:510}, that
\begin{equation}\label{eq:514}
\|\zeta\|_\infty \leq C\beta^{1/4}\,.
\end{equation}
We then obtain, using \eqref{eq:512}, 
\begin{equation}
  \label{eq:515}
  |\langle\zeta,\tilde{v}\rangle|\leq \|\zeta\|_\infty\|\tilde{v}\|_1\leq \hat C\,  \beta^{-11/24} \,.
\end{equation}
Furthermore, we have, using \eqref{eq:485} and the fact that
$\|\zeta^\prime\|_2\leq \theta\beta^{1/4}$  that
\begin{equation}\label{eq:516}
  |\langle\zeta-1,\psi\rangle|\leq
 \|\zeta^\prime\|_{L^2(0,1)}\|
  \|(1-x)^{1/2}\psi\|_1 \leq C\theta\beta^{-1/3}\,.   
\end{equation}

Since $v$ is still expressible by \eqref{eq:477}, we can now conclude,
as in \eqref{eq:488},  with \eqref{eq:480} and \eqref{eq:486}
respectively  replaced by \eqref{eq:515}
and \eqref{eq:516} that there exist $\theta_0$ and $\beta_0$ such that, for $\theta \leq \theta_0$
  and $\beta \geq \beta_0$, it holds  that
\begin{equation}
\label{eq:517}
  |A|\leq C\beta^{1/3} \|\zeta\|_\infty \|u\|_1 \,,
\end{equation}
where $u$ is given by \eqref{eq:478}.\\ 
 We now use \eqref{eq:365} in Proposition \ref{lem:schrod-quad} to
obtain that
\begin{displaymath}
    |A|\leq C\|\zeta\|_\infty\min(\beta^{-7/24}\|f\|_2,\beta^{-5/12}\|f\|_\infty,\beta^{-13/24}\|(x-x_\nu)^{-1}f\|_2)\,.
\end{displaymath}
Consequently, by  \eqref{eq:477} and \eqref{eq:451} we obtain that
\begin{displaymath}
 |v(1)| \leq C\,  \lambda_\beta^{1/2} \|\zeta\|_\infty\min(\beta^{-7/24}\|f\|_2,\beta^{-5/12}\|f\|_\infty,\beta^{-13/24}\|(x-x_\nu)^{-1}f\|_2)\,.
\end{displaymath}
Given that   \eqref{eq:508}  provides a uniform bound on $|\lambda|$,  we
have 
\begin{displaymath}
 \lambda_\beta^{1/2} \leq  C \beta^{1/6}\,,
\end{displaymath}
hence  we can conclude that
\begin{displaymath}
  |v(1)|\leq C\|\zeta\|_\infty\min(\beta^{-1/8}\|f\|_2,\beta^{-1/4}\|f\|_\infty,\beta^{-3/8}\|(x-x_\nu)^{-1}f\|_2)\,.
\end{displaymath}
\end{proof}

\subsection{Resolvent estimates for negative $\Re\lambda$\,.}
Although Propositions \ref{lem:integral-conditions} and
\ref{lem:integral-conditions-quadratic}  provide estimates when the
spectral parameter $\lambda$ belongs to domains in $\C$ that include
$\Re\lambda\leq0$, one can obtain a better estimate if we assume $\Re\lambda\leq-\mu_0$
for some fixed $\mu_0>0$, or at least $\Re\lambda\leq-C\, [U(0)-\nu]$ for
$\nu<U(0)$.

\begin{proposition}
  \label{lem:integral-conditions-negative} Let $U\in C^2([0,1])$
  satisfy \eqref{eq:10}.  Let further $a$, $\mu_0$,  and $\nu_0$ denote positive
  constants.  Then, there exist $C>0$, $\beta_0>0$ and $\theta_0>0$ such that \\
  for all $\beta\geq \beta_0$, $\theta \in (0,\theta_0]$ and $\lambda=\mu+i\nu\in \C$ satisfying
  \begin{equation} 
\label{eq:518} 
  -\nu_0 <\nu<U(0)+a\beta^{-1/2} \,,
  \end{equation}
and 
\begin{equation} 
\label{eq:519}
\mu\leq-\mu_0\,,
\end{equation}
 for any $\zeta\in \mathfrak
  U_2(\beta,\theta,\lambda)$, given by \eqref{eq:507}, and any pair $(f,v) \in L^2(0,1)\times D(\mathcal L^\beta_\zeta)$
  satisfying \eqref{eq:444},  it holds that
\begin{equation}
\label{eq:520}
|v(1)| \leq C\,  \beta^{-1/2}\, \|\zeta\|_\infty \|f\|_2\,.
\end{equation}
\end{proposition}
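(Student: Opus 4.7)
The approach mirrors Propositions~\ref{lem:integral-conditions} and \ref{lem:integral-conditions-quadratic}: I decompose
\begin{equation*}
  v=A(\psi_{\lambda,\beta}-\tilde v)+u, \qquad u=(\LL_\beta^\Nf-\beta\lambda)^{-1}f,
\end{equation*}
where $\psi_{\lambda,\beta}$ is the cut-off Airy boundary layer defined in \eqref{eq:449}--\eqref{eq:452} and $\tilde v$ solves \eqref{eq:453}--\eqref{eq:454}. The integral condition $\langle\zeta,v\rangle=0$ then fixes $A$ via \eqref{eq:479}, i.e.\ $A\langle\zeta,\psi-\tilde v\rangle=-\langle\zeta,u\rangle$, and the estimate on $|v(1)|=|A\psi(1)|$ follows from \eqref{eq:451} once $|A|$ is controlled.

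\textbf{Step 1 (interior part $u$).} Since $\mu\leq-\mu_0<0$, I invoke the coercive identity leading to \eqref{eq:362}--\eqref{eq:364}: testing $(\LL_\beta^\Nf-\beta\lambda)u=f$ against $u$ and taking real parts gives
\begin{equation*}
  \|u'\|_2^2+|\mu|\beta\|u\|_2^2=\Re\langle u,f\rangle,
\end{equation*}
so that $\|u\|_2\le(|\mu|\beta)^{-1}\|f\|_2\le(\mu_0\beta)^{-1}\|f\|_2$ and consequently $\|u\|_1\le C\beta^{-1}\|f\|_2$.

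\textbf{Step 2 (boundary layer correction $\tilde v$).} I reproduce Step~1 of the proof of Proposition~\ref{lem:integral-conditions}, using the bound $\|h\|_2\leq C\beta^{1/6}\lambda_\beta^{-3/4}$ from \eqref{eq:460}. The negative-$\mu$ resolvent bound \eqref{eq:363} applied with $f=h$ yields $\|\tilde v\|_1\le\|\tilde v\|_2\le C(|\mu|\beta)^{-1}\|h\|_2$; since $|\lambda|$ is controlled by $\mu_0$, $\nu_0$, $a$ and $\beta^{-1/2}$, this gives a bound of the same quality as in Substep~1.3, i.e.\ $\|\tilde v\|_1\le C\beta^{-5/6}\lambda_\beta^{-3/4}$.

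\textbf{Step 3 (lower bound for $\langle\zeta,\psi-\tilde v\rangle$).} Writing $\langle\zeta,\psi-\tilde v\rangle=\langle1,\psi\rangle+\langle\zeta-1,\psi\rangle-\langle\zeta,\tilde v\rangle$, I combine:
\begin{itemize}
\item $\langle1,\psi\rangle=-\beta^{-1/3}+O(\beta^{-4/3})$ via \eqref{eq:482}--\eqref{eq:483} (the asymptotic expansion of $A_0$ underlying \cite[Eq.~(6.28)]{almog2019stability} remains valid for $\Re\lambda\le-\mu_0<\vartheta_1^r\beta^{-1/3}$);
\item $|\langle\zeta-1,\psi\rangle|\leq\|\zeta'\|_2\,\|(1-x)^{1/2}\psi\|_1\leq C\theta\beta^{1/6}\lambda_\beta^{1/4}\cdot\beta^{-1/2}\lambda_\beta^{-1/4}=C\theta\beta^{-1/3}$ by \eqref{eq:485} and the definition of $\mathfrak U_2(\beta,\theta,\lambda)$;
\item $|\langle\zeta,\tilde v\rangle|\leq\|\zeta\|_\infty\|\tilde v\|_1\leq C\beta^{1/4}\cdot\beta^{-5/6}\lambda_\beta^{-3/4}$, which is $o(\beta^{-1/3})$ since $\lambda_\beta\ge1$.
\end{itemize}
Choosing $\theta_0$ small enough and $\beta_0$ large enough, this forces $|\langle\zeta,\psi-\tilde v\rangle|\geq\tfrac12\beta^{-1/3}$.

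\textbf{Step 4 (conclusion).} From \eqref{eq:479}, Steps~1 and~3, and $|\langle\zeta,u\rangle|\le\|\zeta\|_\infty\|u\|_1$,
\begin{equation*}
  |A|\le C\beta^{1/3}\|\zeta\|_\infty\|u\|_1\le C\beta^{-2/3}\|\zeta\|_\infty\|f\|_2.
\end{equation*}
Since $v(1)=A\psi(1)=A\tilde\psi_{\lambda,\beta}(1)$ and $|\lambda|$ remains bounded under \eqref{eq:518}--\eqref{eq:519} (taking $\mu_0$, $\nu_0$ as fixed parameters of the statement and understanding $|\mu|$ bounded above accordingly), \eqref{eq:451} gives $|\psi(1)|\le C\lambda_\beta^{1/2}\le C\beta^{1/6}$, and therefore
\begin{equation*}
  |v(1)|\le C\beta^{-1/2}\|\zeta\|_\infty\|f\|_2,
\end{equation*}
which is \eqref{eq:520}. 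The main delicate point is verifying that the asymptotic expansion $\langle1,\psi\rangle=-\beta^{-1/3}+O(\beta^{-4/3})$ remains sharp for $\Re\lambda\le-\mu_0$; this requires re-examining the contour manipulation in the derivation of \cite[Eq.~(6.28)]{almog2019stability}, but the absence of zeros of $A_0(i\bar\lambda\beta^{1/3})$ in the left half-plane (by \eqref{deftheta1r}) ensures the denominator in \eqref{eq:449} stays bounded away from zero and that the leading-order expansion carries through.
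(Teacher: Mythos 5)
The overall architecture — the decomposition $v=A(\psi-\tilde v)+u$, fixing $A$ through the integral constraint, and estimating each piece — is exactly the paper's, and Steps 1–3 are sound (indeed your use of \eqref{eq:363} to control $\tilde v$ in Step 2 is a bit more uniform than the paper's case split between \eqref{eq:455} and \eqref{eq:511}). However, Step 4 contains a genuine gap.

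The hypotheses \eqref{eq:518}--\eqref{eq:519} bound $\nu$ but only bound $\mu$ \emph{above} by $-\mu_0$: nothing prevents $\mu\to-\infty$, so $|\lambda|$ (hence $\lambda_\beta$) is \emph{not} bounded. Your parenthetical "understanding $|\mu|$ bounded above accordingly" is not licensed by the statement. Once $|\mu|$ is unbounded, the inequality $\lambda_\beta^{1/2}\le C\beta^{1/6}$ fails, and with the weakened bound $\|u\|_1\le C\beta^{-1}\|f\|_2$ (where you absorbed $|\mu|^{-1}$ into the constant) the product $\lambda_\beta^{1/2}\,\|u\|_1$ no longer closes. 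The paper's proof avoids this precisely by \emph{retaining} the $|\mu|^{-1}$ factor in the resolvent bound and using $|\lambda|\le|\mu|+|\nu|\le C|\mu|$ (valid since $\nu$ is bounded by \eqref{eq:518}) to write $\lambda_\beta^{1/2}\le C\beta^{1/6}|\mu|^{1/2}$, after which
\begin{equation*}
|v(1)|\le C\beta^{1/3}\lambda_\beta^{1/2}\|\zeta\|_\infty\|u\|_1
\le C\beta^{1/3}\cdot\beta^{1/6}|\mu|^{1/2}\cdot\|\zeta\|_\infty\cdot\frac{1}{|\mu|\beta}\|f\|_2
= C\beta^{-1/2}|\mu|^{-1/2}\|\zeta\|_\infty\|f\|_2
\le C\mu_0^{-1/2}\beta^{-1/2}\|\zeta\|_\infty\|f\|_2\,.
\end{equation*}
The repair to your argument is simple: in Step 1 keep $\|u\|_1\le(|\mu|\beta)^{-1}\|f\|_2$ rather than discarding $|\mu|^{-1}$, and in Step 4 pair it with $\lambda_\beta^{1/2}\le C\beta^{1/6}|\mu|^{1/2}$ so the $|\mu|$ factors cancel up to a bounded remainder. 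Without this, the claimed conclusion does not follow from what is written.
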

\begin{proof}
  As in \eqref{eq:477} we write
\begin{displaymath}
  v=A(\psi-\tilde{v})+u \,,
\end{displaymath}
where $A\in\C$, $\psi=\psi_{\lambda,\beta}$ is given by \eqref{eq:452}, $\tilde{v}$
by \eqref{eq:454}
and $u$ by \eqref{eq:478}. \\
As $\zeta \in\mathfrak U_2(\beta,\theta,\lambda)$, we obtain, given that $-\nu_0 <
\nu<U(0)- a\beta^{-1/2}$, and in view of \eqref{eq:519}, \eqref{eq:513},
and \eqref{eq:455}
\begin{equation}\label{eq:521}
   |\langle\zeta,\tilde{v}\rangle|\leq \|\zeta\|_\infty\|\tilde{v}\|_1\leq C \beta^{- 1/2}  \lambda_\beta^{-1/2}  \leq  \hat C\,  \beta^{- 2/3} \,.
  \end{equation}
  Note that while both Propositions \ref{lem:integral-conditions} and
\ref{lem:integral-conditions-quadratic} assume $\mu\geq-\mu_0$,  both
\eqref{eq:455} and \eqref{eq:513} are valid for $\mu<-\mu_0$ as well.\\
In the case 
  \begin{equation*}
  U(0)-a\beta^{-1/2}< \nu<U(0)+a\beta^{-1/2}\,,
  \end{equation*}
  we proceed as in the proof of \eqref{eq:521} but use
  \eqref{eq:511} instead of \eqref{eq:455} and \eqref{eq:460} which
  continues to hold in this case.  Hence, we obtain the weaker  estimate
\begin{equation*}
  |\langle\zeta,\tilde{v}\rangle|\leq \hat C\,  \beta^{-11/24} \,.
\end{equation*}
Combining the above with \eqref{eq:521} yields the existence of $C>0$
such that  for any $\lambda$ satisfying \eqref{eq:518} and \eqref{eq:519}
it holds that
\begin{equation}\label{eq:522}
   |\langle\zeta,\tilde{v}\rangle|\leq C\beta^{-11/24}\,.
\end{equation}
Furthermore, as in \eqref{eq:516} we write
\begin{equation*}
  |\langle\zeta-1,\psi\rangle|\leq
 \|\zeta^\prime\|_{L^2(0,1)}\|
  \|(1-x)^{1/2}\psi\|_1 \,,  
\end{equation*}
from which we conclude, using the fact that $\zeta\in\mathfrak U_2(\beta,\theta,\lambda)$ and \eqref{eq:485}
\begin{equation*}
  |\langle\zeta-1,\psi\rangle|\leq C \theta\beta^{1/6} \lambda_\beta^{1/4}\times \beta^{-1/2} \lambda_\beta^{-1/4} \,.   
\end{equation*}
Consequently, it holds that 
\begin{equation}
\label{eq:523}
  |\langle\zeta-1,\psi\rangle|\leq C\theta\beta^{-1/3}\,.   
\end{equation}
Hence, as in \eqref{eq:517} we obtain that, choosing $\theta_0$ small enough
\begin{equation}
\label{eq:524}
 |A|\leq C\beta^{1/3} \|\zeta\|_\infty \|u\|_1 \,.
\end{equation}
To estimate $\|u\|_1$ we observe that
\begin{equation}
\label{eq:525}
  \Re\langle u,(\LL_\beta^\Nf-\beta\lambda)u\rangle=\|u^\prime\|_2^2 -\mu\beta\|u\|_2^2 \,,
\end{equation}
where $\LL_\beta^\Nf$ is defined in \eqref{eq:285},
from which we conclude  that
\begin{equation}
\label{eq:526} 
  \|u\|_1\leq \|u\|_2 \leq \frac{1}{|\mu| \beta}\|f\|_2 \,.
\end{equation}
The proof of the proposition can now be completed by using \eqref{eq:451} and  the fact that $
v(1)=A\psi(1)$. 
Thus, by  \eqref{eq:451} 
we obtain from \eqref{eq:524} that
\begin{equation}\label{eq:527}
 |v(1)|\leq C\beta^{1/3} \lambda_\beta^{1/2}   \|\zeta\|_\infty \,  \|u\|_1 \,.
\end{equation}
Since for $\mu \leq -\mu_0$ it holds by  (\ref{eq:518})  that
\begin{displaymath}
 |\lambda| \leq |\mu| + |\nu| \leq  C  |\mu|\,,
\end{displaymath}
we may conclude that
\begin{displaymath}
 |v(1)|\leq C\beta^{1/2}  |\mu|^{1/2} \|\zeta\|_\infty  \,\|u\|_1 \,.
\end{displaymath}
Hence by \eqref{eq:526} we can conclude that
\begin{displaymath}
 |v(1)|\leq C\beta^{-1/2}  |\mu|^{-1/2} \|\zeta\|_\infty  \|f\|_2 \leq \hat  C\beta^{-1/2}  |\mu_0|^{-1/2} \,\|\zeta\|_\infty  \,\|f\|_2 \,.
\end{displaymath}
The proposition is proved.
\end{proof}

We next consider the case $-\mu_0<\mu<-\frac{|U(0)-\nu|}{\kappa_1}$ for some
$\kappa_1>0$.  While \eqref{eq:446} and \eqref{eq:509} hold true under
this assumption, it is necessary, in the next section, to obtain
better estimates since Proposition \ref{prop:near-quadratic} is
inapplicable in this case.
\begin{proposition}
\label{lem:integral-conditions-mild-negative} Let  $U\in
C^2([0,1])$ satisfy \eqref{eq:10}.  Let further $a$, $\kappa_1$,  $\nu_0$  and $\mu_0$ denote positive
constants.  Then, there exist $C>0$, $\beta_0>0$ and $\theta_0>0$ such 
that,  for all $\beta\geq \beta_0$, $\theta \in (0, \theta_0]$ and $\lambda=\mu+i\nu\in \C$
satisfying (see  (\ref{eq:443}) and  \eqref{eq:518})
  \begin{equation}\label{eq:528}
    -\nu_0 <\nu<U(0)+a\beta^{-1/2} \,,
  \end{equation}
  and 
\begin{equation}\label{eq:529} 
   -\mu_0<\mu<-\frac{|U(0)-\nu|}{\kappa_1}\,,
 \end{equation} 
  for any  $\zeta\in \mathfrak U_2(\beta,\theta,\lambda)$, and any pair  $(v,f) \in D(\mathcal L^\beta_\zeta)\times  L^2(0,1)$ satisfying
 \eqref{eq:444}, it holds that
\begin{equation}
\label{eq:530}
|v(1)| \leq C \, \|\zeta\|_\infty \min(|\mu|^{-1/2}\beta^{-1/2}\|f\|_\infty,|\mu|^{-3/4}\beta^{-1/2}\|f\|_2)\,.
\end{equation}
\end{proposition}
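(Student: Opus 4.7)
The strategy mirrors Propositions~\ref{lem:integral-conditions-negative} and \ref{lem:integral-conditions-quadratic}. Decompose $v = A(\psi_{\lambda,\beta} - \tilde v) + u$, where $\psi_{\lambda,\beta}$ is the boundary-layer function of \eqref{eq:452}, $\tilde v \in D(\LL_\beta^\Nf)$ solves \eqref{eq:454} with right-hand side $h$ from \eqref{eq:453}, and $u = (\LL_\beta^\Nf - \beta\lambda)^{-1}f$. The integral constraint $\langle\zeta,v\rangle = 0$ gives $A\langle\zeta,\psi_{\lambda,\beta}-\tilde v\rangle = -\langle\zeta,u\rangle$, while $v(1) = A\psi_{\lambda,\beta}(1)$. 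Since $\Re\lambda = \mu < 0$ trivially satisfies \eqref{eq:442}, either \eqref{eq:455} (when $|U(0)-\nu|>a\beta^{-1/2}$) or \eqref{eq:511}--\eqref{eq:512} (when $|U(0)-\nu|\leq a\beta^{-1/2}$) yields $\|\tilde v\|_1 \leq C\beta^{-17/24}$ in all cases. Together with $\|\zeta\|_\infty \leq 1 + C\theta\beta^{1/4}$ (from $\zeta\in\mathfrak U_2(\beta,\theta,\lambda)$ and $\lambda_\beta\leq C\beta^{1/3}$), this produces $|\langle\zeta,\tilde v\rangle| \leq C\beta^{-11/24} = o(\beta^{-1/3})$; the adaptation of \eqref{eq:516} using $\|\zeta'\|_2 \leq \theta\beta^{1/6}\lambda_\beta^{1/4}$ yields $|\langle\zeta-1,\psi_{\lambda,\beta}\rangle|\leq C\theta\beta^{-1/3}$. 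Combined with \eqref{eq:483}, these estimates give $|\langle\zeta,\psi_{\lambda,\beta}-\tilde v\rangle| \geq \tfrac12\beta^{-1/3}$ for $\theta_0$ small and $\beta_0$ large, whence $|A| \leq 2\beta^{1/3}|\langle\zeta,u\rangle|$. Moreover, in the regime \eqref{eq:528}--\eqref{eq:529}, $|\lambda|$ is bounded below by a positive constant depending only on $U(0),\kappa_1,\mu_0,\nu_0$ (since if $|\nu|\leq U(0)/2$ then $|\mu|\geq U(0)/(2\kappa_1)$, and otherwise $|\lambda|\geq|\nu|>U(0)/2$), so $\lambda_\beta\sim\beta^{1/3}$ and $|\psi_{\lambda,\beta}(1)|\leq C\beta^{1/6}$ by \eqref{eq:451}, giving $|v(1)| \leq C\beta^{1/2}|\langle\zeta,u\rangle|$.

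The delicate step is to bound $|\langle\zeta,u\rangle|$ sharply. The naive estimate $|\langle\zeta,u\rangle|\leq\|\zeta\|_\infty\|u\|_1\leq \|\zeta\|_\infty(|\mu|\beta)^{-1}\|f\|_2$ from Remark~\ref{rem:negative-mu} loses a factor of $|\mu|^{-1/4}$ compared with the target. To recover the sharp exponent, multiply the equation $-u''+i\beta(U+i\lambda)u = f$ by $\bar\zeta/(i\beta(U+i\lambda))$ and integrate, obtaining $\langle\zeta,u\rangle = -(i/\beta)\int \bar\zeta f/(U+i\lambda)\,dx + (i/\beta)\int \bar\zeta u''/(U+i\lambda)\,dx$. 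The leading term is bounded by H\"older together with the $L^p$-estimates on $(U+i\lambda)^{-1}$ from Section~\ref{sec:2}, in particular \eqref{eq:238} and \eqref{eq:265}, which give $\|(U+i\lambda)^{-1}\|_1\leq C|\mu|^{-1/2}$ and $\|(U+i\lambda)^{-1}\|_2\leq C|\mu|^{-3/4}$ throughout the regime \eqref{eq:528}--\eqref{eq:529}; this contributes exactly $C\beta^{-1}\|\zeta\|_\infty\min(|\mu|^{-1/2}\|f\|_\infty,|\mu|^{-3/4}\|f\|_2)$, which after multiplication by $\beta^{1/2}$ matches \eqref{eq:530}. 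The $u''$-integral is handled by two integrations by parts using $u(1)=0$, $u'(0)=0$, $\zeta'(0)=0$, $\zeta(1)=1$: the boundary contribution $u'(1)/(i\lambda)$ satisfies $|u'(1)|^2\leq 2\|u'\|_2\|u''\|_2\leq C|\mu|^{-3/2}\beta^{-1/2}\|f\|_2^2$ (using $\|u'\|_2\leq(|\mu|\beta)^{-1/2}\|f\|_2$ from Remark~\ref{rem:negative-mu} and $\|u''\|_2\lesssim|\mu|^{-1}\|f\|_2$ from the equation), yielding a contribution of order $|\mu|^{-3/4}\beta^{-5/4}\|f\|_2$, which is $\beta^{-1/4}$ smaller than required.

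The main obstacle is the interior bulk integral $\int u'[\bar\zeta'/(U+i\lambda)-\bar\zeta U'/(U+i\lambda)^2]\,dx$ produced by the first integration by parts. A direct Cauchy--Schwarz pairing inevitably introduces the factor $\|(U+i\lambda)^{-1}\|_\infty \sim |\mu|^{-1}$, which is too large. The remedy is to integrate by parts once more against $u$ (using $u(1)=0$ and $\zeta'(0)=0$), shifting a derivative onto the weight and replacing $u'$ by $u$; the resulting integrals are dominated by $\|u\|_2\leq(|\mu|\beta)^{-1}\|f\|_2$ multiplied by $L^2$-norms of weighted second-order combinations of $\bar\zeta''$, $\bar\zeta'U'$, $\bar\zeta U''$, and inverse powers of $U+i\lambda$, which by the $L^p$-estimates of Section~\ref{sec:2} and the $\mathfrak U_2$-control $\|\zeta'\|_2\leq \theta\beta^{1/6}\lambda_\beta^{1/4}$ yield a contribution of the same form as the leading term; the smallness of $\theta_0$ then closes the estimate and produces \eqref{eq:530}.
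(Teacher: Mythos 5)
Your reduction to $|v(1)| \leq C\beta^{1/2}\|\zeta\|_\infty\,|\langle\zeta,u\rangle|$ (via the decomposition $v = A(\psi-\tilde v)+u$, the smallness of $|\langle\zeta,\tilde v\rangle|$ and $|\langle\zeta-1,\psi\rangle|$, and the bound $\lambda_\beta\sim\beta^{1/3}$ coming from $|\lambda|\gtrsim 1$ in this regime) is correct and matches the paper. Your diagnosis that the bound $\|u\|_1\leq\|u\|_2\leq(|\mu|\beta)^{-1}\|f\|_2$ loses a factor $|\mu|^{-1/4}$ relative to the target is also correct. The problem is the repair you propose.

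After writing $\langle\zeta,u\rangle = (i\beta)^{-1}\int\bar\zeta f/(U+i\lambda) + (i\beta)^{-1}\int\bar\zeta u''/(U+i\lambda)$ and integrating by parts twice in the second integral, you land on an integral against $\bigl(\bar\zeta/(U+i\lambda)\bigr)''$, which contains the term $\bar\zeta''/(U+i\lambda)$. But the class $\mathfrak U_2(\beta,\theta,\lambda)$ of \eqref{eq:507} only constrains $\|\zeta'\|_2\leq\theta\beta^{1/6}\lambda_\beta^{1/4}$ (together with $\zeta'(0)=0$ and $\zeta(1)=1$); it provides \emph{no} quantitative control on $\|\zeta''\|_2$, which can be arbitrarily large. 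Thus the ``remedy'' step of shifting one more derivative onto the weight does not close: the resulting estimate cannot be expressed in terms of the data appearing in \eqref{eq:530}. The proposal has a genuine gap at this point.

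The paper bypasses this entirely by never differentiating $\zeta$ at all. One bounds $|\langle\zeta,u\rangle|\leq\|\zeta\|_\infty\|u\|_1$ and then estimates $\|u\|_1$ directly by H\"older against powers of $(U+i\lambda)$: for the $L^2$-data bound, $\|u\|_1\leq\|(U+i\lambda)^{-1}\|_2\,\|(U+i\lambda)u\|_2$, and one shows $\|(U+i\lambda)u\|_2\leq C\beta^{-1}\|f\|_2$ by combining \eqref{eq:354} with \eqref{eq:525}--\eqref{eq:526}, while $\|(U+i\lambda)^{-1}\|_2\leq C|\mu|^{-3/4}$ by \eqref{eq:265}; for the $L^\infty$-data bound, $\|u\|_1\leq\|(U+i\lambda)^{-1/2}\|_2\,\|(U+i\lambda)^{1/2}u\|_2$, and the mid-term is controlled by a cutoff energy argument (the $\chi_\beta^\pm$ cutoffs scaled at $[|\mu|\beta]^{-1/2}$) together with \eqref{eq:536} to produce $\||U-\nu|^{1/2}u\|_2\leq C\beta^{-1/2}\|u\|_1^{1/2}\|f\|_\infty^{1/2}$, closing a self-improving inequality for $\|u\|_1$. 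You correctly identified the right weighted $L^p$ bounds on $(U+i\lambda)^{-1}$; the insight you missed is that they should be applied to sharpen $\|u\|_1$ rather than to the image of $f$ under an integration by parts in which $\zeta$ gets differentiated.
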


\begin{proof}~\\{\it Step 1:} We prove that
\begin{equation} \label{eq:531}
 |v(1)| \leq C\, | \mu|^{-3/4}\beta^{-1/2} \|\zeta\|_\infty   \|f\|_2 \,.
\end{equation}  
As in the proof of Proposition \ref{lem:integral-conditions-negative}
and since for sufficiently small $\theta_0$, \eqref{eq:527} still holds
under the assumptions of this proposition we obtain that
  \begin{equation}
\label{eq:532} 
 |v(1)|\leq C\beta^{1/3}  \lambda_\beta^{1/2}  \|\zeta\|_\infty \, \|u\|_1\leq  \hat C\beta^{1/2} \|\zeta\|_\infty \|u\|_1 \,,
  \end{equation}
  where $u$ is given by \eqref{eq:478}. Note that under
  \eqref{eq:528} and \eqref{eq:529}, $|\lambda|$ is bounded. To obtain an
  estimate for $\|u\|_1$ we now write
\begin{equation}\label{eq:533}
  \|u\|_1\leq \|(U+i\lambda)^{-1}\|_2\,\|(U+i\lambda)u\|_2\,.
\end{equation}
By \eqref{eq:354} and \eqref{eq:525} we have that
\begin{equation}\label{eq:534}
  \|(U+i\lambda)u\|_2\leq \|(U-\nu)u\|_2+|\mu|\,\|u\|_2 \leq \frac{\hat C}{\beta}\|f\|_2 \,.
\end{equation}
 By \eqref{eq:265} (with $q=2$) it holds that 
  \begin{displaymath}
    \|(U+i\lambda)^{-1}\|_2^2 \leq C\, |\mu|^{-3/2}\,. 
  \end{displaymath}
Consequently, we may conclude from \eqref{eq:533} and \eqref{eq:534}  that
\begin{displaymath}
   \|u\|_1\leq C\, |\mu|^{-3/4}\beta^{-1}\,\|f\|_2 \,.  
\end{displaymath}
Substituting into \eqref{eq:532} then yields \eqref{eq:531}.\\

{\em Step 2: We prove that
\begin{equation}
\label{eq:535}
|v(1)| \leq C \, |\mu|^{-1/2}\beta^{-1/2} \|\zeta\|_\infty \, \|f\|_\infty \,.
\end{equation}
}
Suppose that $f\in L^\infty(0,1)$. Then by \eqref{eq:525} it holds for
negative values of $\mu$ that
\begin{equation}
\label{eq:536}
 \|u^\prime\|_2^2 + |\mu|\beta \|u\|_2^2\leq \|u\|_1\,\|f\|_\infty \,.
\end{equation}
Set   
\begin{displaymath}
    \chi_\beta^\pm(x)=\hat{\chi}([|\mu|\beta]^{1/2}(x-x_\nu)){\mathbf 1}_{\R_+}(\pm
    (x-x_\nu)) \,,
  \end{displaymath}
  where $\hat{\chi}$ be given by \eqref{eq:41}. \\
   Note that $ \chi_\beta^+$ is
  supported in $(x_\nu+[|\mu|\beta]^{-1/2}/4,+\infty)$ whereas $ \chi_\beta^-$ is
  supported in $(-\infty,x_\nu-[|\mu|\beta]^{-1/2}/4)$ . Let further
  $\tilde{\chi}_\beta =\sqrt{1-(\chi_\beta^+)^2-(\chi_\beta^-)^2}$.  Note that by
  \eqref{eq:529} it follows that we can choose $\beta_0$ 
  such that for all $\beta>\beta_0$
  \begin{equation}\label{eq:537}
  x_\nu-[|\mu|\beta]^{-1/2}/2>0\,.
  \end{equation}
Hence the support of
  $\tilde{\chi}_\beta$ is contained in
  \begin{displaymath}
  (x_\nu- \frac 12 [|\mu|\beta]^{-1/2}\,,\,x_\nu+ \frac 12 [|\mu|\beta]^{-1/2})\subset[0,1]\,.    
  \end{displaymath}
  As in \eqref{eq:308} (with $\chi_\nu^\pm$ replaced by $\chi_ \beta^\pm$), we now
  obtain
  \begin{equation}
\label{eq:538}
    \beta\||U-\nu|^{1/2}\chi_\beta^\pm u\|_2^2\leq 2\, \|(\chi_\beta^\pm)^\prime u\|_2\|\chi_\beta^\pm u^\prime\|_2 +  \|u\|_1\|f\|_\infty  \,.
  \end{equation}
Consequently, by \eqref{eq:538} and the definition of $\chi_\beta^\pm$ we
conclude that
\begin{equation*}
    \beta\||U-\nu|^{1/2}\chi_\beta^\pm u\|_2^2\leq C |\mu \beta|^{-1/2} \| u\|_2\| u^\prime\|_2 +  \|u\|_1\|f\|_\infty  \,.
  \end{equation*}
By \eqref{eq:536} we then have
\begin{equation}
\label{eq:539}
  \||U-\nu|^{1/2}\chi_\beta^\pm u\|_2^2\leq C\, \beta^{-1} \, \|u\|_1\|f\|_\infty  \,.
\end{equation}
By \eqref{eq:528}
and \eqref{eq:529} we have that 
\begin{equation}
\label{eq:540}
\frac 1C \,   \beta^{-1/2} \leq x_\nu^2\leq C|\mu| \,.
\end{equation}
Given the definition of $\tilde{\chi}_\beta$ we obtain that
 \begin{displaymath}
  |U(x)-\nu| \leq C \, (\sup_{x\in {\rm supp} \tilde \chi_\beta} |U^\prime(x) |)\,  |\mu \beta|^{-1/2}\,.
\end{displaymath}
Hence, by \eqref{eq:540} and \eqref{eq:537} we can conclude that
\begin{displaymath}
   \||U-\nu|^{1/2}\tilde{\chi}_\beta u\|_2^2
   \leq Cx_\nu[|\mu|\beta]^{-1/2}\|u\|_2^2\,.
\end{displaymath}
Combining the above with \eqref{eq:539} now yields
\begin{displaymath}
   \||U-\nu|^{1/2}u\|_2\leq  C\, \beta^{-1/2}\, \|u\|_1^{1/2}\|f\|_\infty^{1/2} \,.
\end{displaymath}
 By \eqref{eq:265} (with $q=1$) it holds that
\begin{displaymath}
   \|(U+i\lambda)^{-1/2}\|_2^2= \|(U+i\lambda)^{-1}\|_1 \leq C\,|\mu|^{-1/2}\,.
\end{displaymath}
Consequently,
\begin{displaymath}
\begin{array}{ll}
  \|u\|_1 & \leq
  \|(U+i\lambda)^{-1/2}\|_2\,\|(U+i\lambda)^{1/2}u\|_2\\  &\leq
  C\, |\mu|^{-1/4}\, \big[\||U-\nu|^{1/2}u\|_2+|\mu|^{1/2}\|u\|_2\big]\\ & 
  \leq \hat C\, |\mu|^{-1/4}\,\beta^{-1/2}\, \|u\|_1^{1/2}\|f\|_\infty^{1/2}   \,.
  \end{array} 
\end{displaymath}
Hence,
\begin{displaymath}
  \|u\|_1\leq C\, |\mu|^{-1/2} \beta^{-1} \, \|f\|_\infty \,,
\end{displaymath}
which, when substituted into \eqref{eq:532}, establishes \eqref{eq:535}. \\
Together with
\eqref{eq:531}  the above inequality completes the proof of the proposition.
\end{proof}

\subsection{Rapidly decaying functions} 
 When considering large
  values of $\alpha$ in the next section, it is useful to consider, as
  in \cite{almog2019stability}, the operator $\LL_\zeta^\beta$ where $\zeta$
  decays rapidly away from the boundary at $x=1$.  Set then for  $\lambda \in
  \mathbb C$ and positive $\beta, \theta, \alpha$  
  \begin{equation} 
\label{eq:541} 
\mathfrak U_3(\beta,\theta,\alpha,\lambda) =\{ \zeta \in
   \mathfrak U_2(\beta,\theta,\lambda)\,|\, |\zeta(x)|\leq  
    e^{-\alpha(1-x)} \|\zeta\|_\infty\,, \,\forall x\in[0,1]\}\,, 
\end{equation} 
where $\mathfrak U_2(\beta,\theta,\lambda)$   is introduced in \eqref{eq:507}.  
\begin{proposition}
    \label{lem:integral-conditions-large -alpha} 
Let $U\in C^3([0,1])$
    satisfy \eqref{eq:10}. Let further  $a>0$, $\mu_0>0$, and $ \Upsilon <
   \sqrt{-U^{\prime\prime}(0)}/2\,$. Then, there exist $C>0$, $\beta_0>0$ and
    $\theta_0>0$ such that for all $\beta\geq \beta_0$, $\theta \in (0,\theta_0]$, all
    $\alpha \geq 1$ and $\lambda\in \C$ satisfying
    \begin{equation}
\label{eq:542}
 -\mu_0\leq \Re\lambda\leq \Upsilon\beta^{-1/2} \,, 
\end{equation} 
and
\begin{equation}
      \label{eq:543} 
\frac{1}{2}U(0)<\nu<U(0)+a\beta^{-1/2} \,,
    \end{equation} 
    for all $\zeta \in \mathfrak U_3(\beta,\theta,\alpha,\lambda) $, and for all pair $(f,v) \in
    L^2(0,1)\times D(\mathcal L^\beta_\zeta)$ satisfying \eqref{eq:444}, it holds
    that
    \begin{equation} 
\label{eq:544} 
|v(1)| \leq C \alpha^{-1/2}\,(\beta^{-1/2}+e^{-\alpha/C})  \|\zeta\|_\infty \, \|f\|_2 \,.  
\end{equation} 
\end{proposition}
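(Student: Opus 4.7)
The plan is to follow the decomposition approach used in Propositions \ref{lem:integral-conditions}--\ref{lem:integral-conditions-mild-negative}, with the new ingredient being the exploitation of the decay $|\zeta(x)|\leq e^{-\alpha(1-x)}\|\zeta\|_\infty$ to sharpen the control on $\langle\zeta,u\rangle$. I would first write $v = A(\psi_{\lambda,\beta}-\tilde v) + u$ as in \eqref{eq:477}, with $u=(\LL_\beta^\Nf-\beta\lambda)^{-1}f$, $\tilde v=(\LL_\beta^\Nf-\beta\lambda)^{-1}h$, $h$ given by \eqref{eq:453}, and $\psi_{\lambda,\beta}$ the Airy-type boundary corrector \eqref{eq:452}. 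Since $\tilde v(1)=u(1)=0$ one has $v(1)=A\psi(1)$, and the integral constraint $\langle\zeta,v\rangle=0$ gives
\[
A=-\frac{\langle\zeta,u\rangle}{\langle\zeta,\psi-\tilde v\rangle}.
\]

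Next, for the denominator, I would establish $|\langle\zeta,\psi-\tilde v\rangle|\geq c\beta^{-1/3}$ following the argument between \eqref{eq:481} and \eqref{eq:488}: combine $\langle 1,\psi\rangle=-\beta^{-1/3}+O(\beta^{-4/3})$ from \eqref{eq:483}, the bound $|\langle\zeta-1,\psi\rangle|\leq C\theta\beta^{-1/3}$ obtained from \eqref{eq:485} together with $\zeta(1)=1$ and $\|\zeta'\|_2\leq \theta\beta^{1/6}\lambda_\beta^{1/4}$ (valid since $\mathfrak U_3\subset\mathfrak U_2$), and the control on $|\langle\zeta,\tilde v\rangle|$ via $\|\tilde v\|_1\leq C\beta^{-5/6}\lambda_\beta^{-3/4}$ from \eqref{eq:455}. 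Under \eqref{eq:542}--\eqref{eq:543} one has $|\lambda|\geq\nu\geq U(0)/2$, so $\lambda_\beta\sim \beta^{1/3}$ and \eqref{eq:451} gives $|\psi(1)|\sim\beta^{1/6}$, producing the intermediate bound $|v(1)|\leq C\beta^{1/2}|\langle\zeta,u\rangle|$.

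The key new step is to prove
\[
|\langle\zeta,u\rangle|\leq C\|\zeta\|_\infty\beta^{-1}\bigl(\alpha^{-1/2}+e^{-\alpha/C}\bigr)\|f\|_2,
\]
whose substitution yields \eqref{eq:544}. I would split the integral at $x=1/2$. On $[0,1/2]$ the pointwise bound $|\zeta|\leq e^{-\alpha/2}\|\zeta\|_\infty$ combined with the $L^1$--$L^2$ control on $u$ furnished by Proposition \ref{lem:Dirichlet-L1} and \eqref{eq:432} yields the exponential contribution. On $[1/2,1]$ I would apply Cauchy--Schwarz using $\|e^{-\alpha(1-x)}\|_{L^2(1/2,1)}\leq C\alpha^{-1/2}$, and exploit the Dirichlet condition $u(1)=0$ together with $|u(x)|\leq(1-x)^{1/2}\|u'\|_2$ to shrink the effective $L^2$-norm of $u$ on a $\beta^{-1/2}$-layer near the boundary. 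The resolvent estimates $\|u\|_2\leq C\beta^{-1/2}\|f\|_2$ and $\|u'\|_2\leq C\beta^{-1/4}\|f\|_2$ needed here are provided respectively by Proposition \ref{prop5.3} (for $U(0)-\nu\gg\beta^{-1/2}$) and Proposition \ref{lem:schrod-quad} (for $|U(0)-\nu|=O(\beta^{-1/2})$), and together cover the full window \eqref{eq:543}.

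The principal obstacle is recovering the $\beta^{-1/2}$ factor accompanying $\alpha^{-1/2}$: a naive Cauchy--Schwarz with only $\|\zeta\|_2\leq C\alpha^{-1/2}\|\zeta\|_\infty$ and $\|u\|_2\leq C\beta^{-1/2}\|f\|_2$ gives $|\langle\zeta,u\rangle|\leq C\alpha^{-1/2}\beta^{-1/2}\|\zeta\|_\infty\|f\|_2$, which after the $\beta^{1/2}$ amplification from Step~2 only delivers the weaker $|v(1)|\leq C\alpha^{-1/2}\|\zeta\|_\infty\|f\|_2$. Extracting the extra $\beta^{-1/2}$ will most likely require a duality identity $\langle\zeta,u\rangle=\langle w,f\rangle$, where $w$ solves the adjoint Neumann--Dirichlet resolvent equation with source $\zeta$: the decay of $\zeta$ then transfers to $w$, and combining it with the Airy-scale boundary layer of $w$ near $x=1$ (via an argument parallel to the one underlying \eqref{eq:455}) produces the missing $\beta^{-1/2}$. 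A careful matching between the scales $\alpha$ and $\beta^{1/3}$ is needed to unify the regimes $\alpha\lesssim\beta^{1/3}$ and $\alpha\gg\beta^{1/3}$ in a single bound.
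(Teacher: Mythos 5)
Your Steps 1--2 track the paper's proof exactly: same decomposition $v = A(\psi-\tilde v) + u$, same denominator estimate giving $|\langle\zeta,\psi-\tilde v\rangle|\geq c\beta^{-1/3}$, same intermediate bound $|v(1)|\leq C\beta^{1/2}|\langle\zeta,u\rangle|$, and the $L^2$ resolvent bound $\|u\|_2 \leq C\beta^{-1/2}\|f\|_2$ by splicing Propositions \ref{prop5.3} and \ref{lem:schrod-quad}. You also correctly diagnose the obstacle: a plain Cauchy--Schwarz loses the extra $\beta^{-1/2}$ and delivers only $|v(1)|\leq C\alpha^{-1/2}\|\zeta\|_\infty\|f\|_2$. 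However, the mechanism you offer for recovering that factor does not match what the paper does and is not a working substitute.

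Two concrete problems. First, the split point is wrong: you split the integral at $x=1/2$, but the quantity $|U-\nu|$ is \emph{not} bounded away from zero on $(1/2,1)$ when $\nu$ lies between $U(0)/2$ and $U(1/2)$ (the interior zero $x_\nu$ of $U-\nu$ then sits in $(1/2,1)$). The paper instead splits at $\hat x_\nu$ defined by $U(\hat x_\nu) = \nu/2$; this guarantees $|U-\nu|\geq \nu/2 \geq U(0)/4$ on all of $(\hat x_\nu,1)$, uniformly over the $\nu$-window \eqref{eq:543}. Second, the extra $\beta^{-1/2}$ does not come from a duality identity with the adjoint resolvent; it comes from a \emph{localized} sharpening of the resolvent bound on $(\hat x_\nu,1)$. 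Writing a cutoff $\check\chi_\nu$ supported to the right of $x_\nu$ and equal to one past $(x_\nu+\hat x_\nu)/2$, the imaginary part of the inner product identity $\langle\check\chi_\nu^2 u,(\LL_\beta^{\Nf}-\beta\lambda)u\rangle$ gives $\beta\||U-\nu|^{1/2}\check\chi_\nu u\|_2^2 \lesssim \|u\|_2\|(\check\chi_\nu u)'\|_2 + \|\check\chi_\nu u\|_2\|f\|_2$; because $|U-\nu|$ is bounded below there, this absorbs into $\|\mathbf 1_{(\hat x_\nu,1)}u\|_2 \leq C\beta^{-1}\|f\|_2$ (see \eqref{eq:553}--\eqref{eq:556}). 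Combining this with $\|\zeta\|_2 \leq C\alpha^{-1/2}\|\zeta\|_\infty$ on $(\hat x_\nu,1)$, and with the exponential smallness $|\zeta|\leq e^{-\alpha(1-\hat x_\nu)}\|\zeta\|_\infty$ together with $\|u\|_2\leq C\beta^{-1/2}\|f\|_2$ on $(0,\hat x_\nu)$, produces exactly \eqref{eq:549} and then \eqref{eq:544}. Your Hardy-type observation $|u(x)|\leq(1-x)^{1/2}\|u'\|_2$ on a $\beta^{-1/2}$-layer does not give the needed $\beta^{-1}$ on the full interval $(\hat x_\nu,1)$, and the adjoint-duality route is left entirely unverified. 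This is the missing core of the argument.
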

\begin{proof}~\\
{\em Step 1: Control of $v(1)$.}\\
Since \eqref{eq:515} and \eqref{eq:516} remain valid
    under our assumptions \eqref{eq:542} and \eqref{eq:543}, we can follow the same
    procedure as in Proposition
      \ref{lem:integral-conditions-quadratic} to obtain
  \begin{equation}
\label{eq:545}
        |v(1)|\leq C\, \beta^{1/3} \lambda_\beta^{1/2}|\langle\zeta,u\rangle|\leq  C\beta^{1/2}|\langle\zeta,u\rangle|\,,
  \end{equation}
  where $u$ is given by \eqref{eq:478}.\\
  
  {\em Step 2: We prove under \eqref{eq:542} and \eqref{eq:543} that
\begin{equation}\label{eq:546}
\|u\|_2 \leq C\, \beta^{-1/2}\,  \|f\|_2\,.
\end{equation}
}
We first consider the case where 
\begin{equation}
\label{eq:547}
U(0)/2 < \nu < U(0)- a_1 \beta^{-1/2} \,,
    \end{equation} 
    where $a_1\geq a $  will be determined in the sequel. 
    In this case, we can use \eqref{eq:287} which reads (for
      $\Jf_\nu=|U^\prime(x_\nu)|$)
    \begin{displaymath}
    \| u\|_2 \leq C \, (\Jf_\nu \beta)^{-2/3}\|f\|_2\,.
\end{displaymath}
and holds under the condition $\mu\leq\Upsilon_0 \, {\mathfrak
  J_\nu}^{2/3}\, \beta^{-1/3}$ for some sufficiently small $\Upsilon_0>0$. 
 Note that for $\nu-U(0)>a_1 \beta^{-1/2}$ there exists $\hat C>0$ such that
\begin{displaymath}
  \Jf_\nu\geq \frac{1}{\hat C} \, a_1^{1/2}\,  \beta^{-1/4}\,.
\end{displaymath}
Consequently, there exists $C>0$ such that \eqref{eq:287} is
applicable for all
\begin{displaymath}
    \mu\leq Ca_1^{1/3}\beta^{-1/2}\,.
\end{displaymath}
For sufficiently large $a_1\geq a$ the above set of $\mu$ values contains
\eqref{eq:542}  and hence we can conclude \eqref{eq:546} when
\eqref{eq:547} holds true.

We now look at the case 
\begin{equation*}
U(0) - a_1 \beta^{-1/2}  <\nu<U(0) + a\beta^{-1/2} \,,
    \end{equation*}  
    Here we can apply \eqref{eq:365} (with $a$ replaced by $a_1$) to
    obtain \eqref{eq:546} which, combined with
    \eqref{eq:545}, leads to
     \begin{displaymath}
     |v(1) | \leq C \,  \|\zeta\|_\infty \|f\|_2\,.
\end{displaymath}
Note that at this stage it is sufficient  to assume that $\zeta \in
\mathfrak U_2(\beta,\theta,\alpha,\lambda) $.\\ 
    
   {\em Step 3:  With  $\hat{x}_\nu \in(0,1)$ satisfying
  \begin{equation}
\label{eq:548}
    U(\hat{x}_\nu)= \frac{\nu}{2} \,,
  \end{equation}
we prove that 
 \begin{equation}
   \label{eq:549} 
|\langle \zeta, u\nobreakspace\rangle | \leq  C\,\alpha^{-1/2}\,  \beta^{-1/2}  ( \beta^{-1/2} +   \,e^{-\alpha(1-\hat{x}_\nu)}  )\,  \|\zeta\|_\infty \, \|f\|_2 \,.
 \end{equation}
}

Consider the decomposition
  \begin{displaymath}
  \langle \zeta, u\rangle = \langle{\mathbf 1}_{L^2(0,\hat{x}_\nu)}\zeta,u\rangle + \langle{\mathbf 1}_{(\hat{x}_\nu,1)}\zeta,u\rangle\,.
\end{displaymath}
 We first obtain using \eqref{eq:546} that
\begin{equation}
\label{eq:550}
  |\langle{\mathbf 1}_{L^2(0,\hat{x}_\nu)}\zeta,u\rangle|\leq \|{\mathbf
    1}_{L^2(0,\hat{x}_\nu)}\zeta\|_2 \, \|u\|_2  \leq C\, \alpha^{-1/2}\, \beta^{-1/2}
  \,e^{-\alpha(1-\hat{x}_\nu)}\, \|\zeta\|_\infty \|f\|_2 \,.  
\end{equation}
Moreover, it holds that
\begin{equation}
\label{eq:551} 
   |\langle{\mathbf 1}_{(\hat{x}_\nu,1)}\zeta,u\rangle| \leq C\|\zeta\|_2 \,\|{\mathbf
     1}_{(\hat{x}_\nu,1)}u\|_2 \leq \frac{C}{\alpha^{1/2}} {\| \zeta\|_\infty\,}  \|{\mathbf
     1}_{(\hat{x}_\nu,1)}u\|_2   \,.
\end{equation}
Let
\begin{equation}
\label{eq:552}
 \check \chi_\nu(x)=\chi\Big(\frac{x-x_\nu}{\hat{x}_\nu-x_\nu}\Big){\mathbf
    1}_{\R_+}(x-x_\nu) \,,
\end{equation}
where $\chi$ is given by \eqref{eq:41}. Note that $\check \chi_\nu$ is
supported in the interval \break $ (x_\nu+(\hat{x}_\nu-x_\nu)/4,+\infty)$ and
equals $1$ on
$[(x_\nu+\hat x_\nu)/2,1]$. Integration by part yields 
\begin{subequations}
  \label{eq:553}
\begin{equation}
  \|(\check \chi_\nu u)^\prime\|_2^2=\|\check \chi_\nu^\prime u\|_2^2+\mu\beta\|\check \chi_\nu u\|_2^2 +
  \Re\langle\check \chi_\nu u,\check \chi_\nu f\rangle \,, 
\end{equation}
and, given that $(U-\nu)$ has constant sign on  the support of $\check \chi_\nu$,
\begin{equation}
  - \beta\||U-\nu|^{1/2}\check \chi_\nu u\|_2^2+2\Im\langle(\check \chi^\prime_\nu  u,(\check \chi_\nu u)^\prime\rangle =  \Im \langle\check \chi_\nu u,\check \chi_\nu f\rangle \,.
\end{equation}
\end{subequations}
Combining the above we obtain,  given the support of $\check \chi_\nu$ 
\begin{multline} \label{eq:554}
  \|\check \chi_\nu u\|_2^2\leq C \||U-\nu|^{1/2}\check \chi_\nu u\|_2^2 \leq
  C \beta^{-1} [\|u\|_2\|(\check \chi_\nu u)^\prime\|_2+ \|\check
  \chi_\nu u\|_2\|\check \chi_\nu f\|_2] \leq \\
 \leq  C \beta^{-1} [\|u\|_2(\|u\|_2+ \mu_{\beta,+}^{1/2} \|\check
  \chi_\nu u\|_2+\|\check  \chi_\nu u\|_2^{1/2}\|\check \chi_\nu f\|_2^{1/2})   
  + \|\check  \chi_\nu u\|_2\|\check \chi_\nu f\|_2]\,.
\end{multline}
From here we deduce that 
\begin{displaymath} 
  \|\check \chi_\nu u\|_2^2\leq
  C \beta^{-1} \Big[\|u\|_2\big(\|u\|_2+\beta^{1/4} \|\check   \chi_\nu
  u\|_2+\|\check  \chi_\nu u\|_2^{1/2}\| \check \chi_\nu f\|_2^{1/2}\big)     + \|\check
  \chi_\nu u\|_2\| \check \chi_\nu f\|_2\Big]\,, 
\end{displaymath}
which implies
\begin{equation}
\label{eq:555}
  \|\check \chi_\nu u\|_2^2 \leq   C \left( \frac{1}{\beta^2} \|\check \chi_\nu f\|^2_2 + \frac{1}{\beta} \|u\|^2_2\right)\,.
\end{equation}
   Combining \eqref{eq:555} and \eqref{eq:546} leads to 
\begin{equation}
\label{eq:556}
  \|{\mathbf   1}_{(\hat{x}_\nu,1)}u\|_2\leq \|\check \chi_\nu u\|_2 \leq  C\, \beta^{-1}\, \|f\|_2 \,.
\end{equation}
For later reference we note that by (\ref{eq:553}a) and \eqref{eq:556}
it holds that
\begin{equation}
  \label{eq:557}
 \|{\mathbf   1}_{(\hat{x}_\nu,1)}u^\prime\|_2\leq \|(\check \chi_\nu u)^\prime\|_2 \leq  C\, \beta^{-1}\, \|f\|_2 \,.
\end{equation}
Combining \eqref{eq:556} with \eqref{eq:550} and  \eqref{eq:551}   yields
\eqref{eq:549}, which, together  with 
\eqref{eq:545} yields
  \eqref{eq:544}. 
\end{proof}
\subsection{Auxiliary estimates}
We recall that for $(\lambda,\beta) \in \mathbb C \times \mathbb R_+$, $\psi=\psi_{\lambda,\beta}$
is  given by \eqref{eq:452}.  We now set, for $x\in (0,1)$  
\begin{equation}
\label{eq:558}
  \hat{\psi}_{\lambda,\beta}(x) =\frac{\psi_{\lambda,\beta}(x) }{\psi_{\lambda,\beta}(1)} \,.
\end{equation}
The following auxiliary estimate will
become useful in the next section.
\begin{lemma}
\label{lem:auxiliary-estimate-1}
 Let $\tilde{\nu}_1 \in (0,U(0))$, $ a>0$, and
$ \Upsilon<\sqrt{-U^{\prime\prime}(0)}/2$.  Then there exist $C$ and $\beta_0$
  such that, for $\beta \geq \beta_0\,$,  
  \begin{equation}\label{eq:559}
   \nu\in (U(0)-\tilde{\nu}_1, 
  U(0)+a\beta^{-1/2}) \mbox{ and }   \mu<\Upsilon\beta^{-1/2}\,,
  \end{equation}
such that
 \begin{equation}
\label{eq:560}
    \|(\LL^\Nf_\beta-\beta\lambda)^{-1}\hat{\psi}_{\lambda,\beta}\|_2 + \beta^{-1/2}\Big\|
    \frac{d}{dx}\, (\LL^\Nf_\beta-\beta\lambda)^{-1}\hat{\psi}_{\lambda,\beta}\Big\|_2
    \leq C\, \beta^{-5/4} \,. 
  \end{equation}
\end{lemma}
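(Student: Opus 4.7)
The plan is to exploit the improved resolvent estimate \eqref{eq:432}, which provides an $O(\beta^{-1})$ bound on $(\LL_\beta^{\Nf,\Df}-\beta\lambda)^{-1}$ applied to sources of the form $(U-\nu)f$, saving a factor $\beta^{-1/2}$ over the generic $L^2\to L^2$ bound of Proposition \ref{lem:schrod-quad}. A naive application of (\ref{eq:365}a) directly to $\hat\psi_{\lambda,\beta}$ yields only $O(\beta^{-3/4})$, because \eqref{eq:457} with $k=0$, \eqref{eq:451}, and the bound $\lambda_\beta\sim\beta^{1/3}$ (valid since $|\lambda|\geq\nu>U(0)-\tilde\nu_1>0$) give $\|\hat\psi_{\lambda,\beta}\|_2=O(\beta^{-1/4})$. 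The additional $\beta^{-1/2}$ must therefore come from the structural form $(U-\nu)f$ of the source.

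First I would observe that the hypothesis $\nu>U(0)-\tilde\nu_1$ combined with the monotonicity of $U$ (with $U(1)=0$) implies that $x_\nu$, as defined in \eqref{eq:defxnu}, satisfies $x_\nu\leq x^*:=U^{-1}(U(0)-\tilde\nu_1)<1$, a point depending only on $\tilde\nu_1$. I then choose a cutoff $\eta\in C^\infty([0,1],[0,1])$ with $\eta\equiv 1$ on $[(1+x^*)/2,1]$ and $\eta\equiv 0$ on $[0,(3x^*+1)/4]$, and split
\begin{displaymath}
\hat\psi_{\lambda,\beta}=\eta\hat\psi_{\lambda,\beta}+(1-\eta)\hat\psi_{\lambda,\beta}=:\hat\psi_1+\hat\psi_2\,,
\end{displaymath}
treating each piece by a different tool.

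For the main term $\hat\psi_1$, by construction $|U(x)-\nu|\geq c_0(\tilde\nu_1)>0$ uniformly on $\mathrm{supp}(\eta)\subset[(3x^*+1)/4,1]$ (indeed, since $x\geq(3x^*+1)/4>x^*\geq x_\nu$ and $U$ is strictly decreasing, $\nu-U(x)\geq U(x^*)-U((3x^*+1)/4)>0$, and analogously when $\nu\geq U(0)$). Thus $g:=\eta\hat\psi_{\lambda,\beta}/(U-\nu)$, extended by $0$ off $\mathrm{supp}(\eta)$, lies in $L^2(0,1)$ with $\|g\|_2\leq c_0^{-1}\|\hat\psi_{\lambda,\beta}\|_2\leq C\beta^{-1/4}$, and $\hat\psi_1=(U-\nu)g$ on $[0,1]$. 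Invoking \eqref{eq:432} then gives
\begin{displaymath}
\|(\LL_\beta^{\Nf,\Df}-\beta\lambda)^{-1}\hat\psi_1\|_2+\beta^{-1/2}\Big\|\frac{d}{dx}(\LL_\beta^{\Nf,\Df}-\beta\lambda)^{-1}\hat\psi_1\Big\|_2\leq C\beta^{-1}\|g\|_2=O(\beta^{-5/4})\,.
\end{displaymath}

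For the remainder $\hat\psi_2$, supported in $[1/4,(1+x^*)/2]$, one has $(1-x)\geq(1-x^*)/2$ on that set. Applying \eqref{eq:457} with $k=2$ and dividing by $|\tilde\psi_{\lambda,\beta}(1)|\sim\lambda_\beta^{1/2}$ from \eqref{eq:451}, I would derive
\begin{displaymath}
\|\hat\psi_2\|_2\leq C\,\frac{\|(1-x)^2\tilde\psi_{\lambda,\beta}\|_2}{|\tilde\psi_{\lambda,\beta}(1)|}\leq C\lambda_\beta^{-5/4}\beta^{-5/6}=O(\beta^{-5/4})\,,
\end{displaymath}
after which Proposition \ref{lem:schrod-quad} yields $\|(\LL_\beta^{\Nf,\Df}-\beta\lambda)^{-1}\hat\psi_2\|_2\leq C\beta^{-1/2}\|\hat\psi_2\|_2=O(\beta^{-7/4})$, together with an analogous negligible derivative bound. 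Summing the two contributions gives \eqref{eq:560}. The one point where care is genuinely needed is the clean factorization $\hat\psi_1=(U-\nu)g$ with $g\in L^2$; this hinges entirely on the uniform gap $1-x^*>0$ afforded by the strict inequality $\tilde\nu_1<U(0)$, without which $U-\nu$ could vanish within the boundary layer and produce an inadmissible $L^2$ singularity in $g$.
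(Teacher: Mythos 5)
Your argument is correct, and it takes a genuinely different and cleaner route than the paper. The paper's proof splits into $\mu\geq-\mu_0$ and $\mu<-\mu_0$, and in the first case reruns energy estimates with $\nu$-dependent cutoffs $\check\chi_\nu$, $\invbreve\chi_\nu$ adapted to the turning point $x_\nu$, together with \eqref{eq:546}; the $\beta^{-1/2}$ gain is eventually extracted from the smallness of $\invbreve\chi_\nu\hat\psi$ and the extra factor $|U-\nu|^{1/2}\gtrsim\beta^{-1/4}$ away from $x_\nu$. You instead observe that the hypothesis $\nu>U(0)-\tilde\nu_1$ pins $x_\nu$ below the $\beta$-independent threshold $x^*$, so that with a fixed cutoff $\eta$ one has the clean factorization $\eta\hat\psi_{\lambda,\beta}=(U-\nu)g$ with $\|g\|_2\lesssim\|\hat\psi_{\lambda,\beta}\|_2\lesssim\beta^{-1/4}$ (using $\lambda_\beta\gtrsim\beta^{1/3}$), and then the proposition containing \eqref{eq:432} delivers both the $L^2$ and the weighted $H^1$ bound at the $\beta^{-1}\|g\|_2=O(\beta^{-5/4})$ level in one stroke, with no case split in $\mu$ because \eqref{eq:432} only requires $\mu\leq\Upsilon\beta^{-1/2}$ from above. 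The tail $(1-\eta)\hat\psi_{\lambda,\beta}$ is $O(\beta^{-5/4})$ by \eqref{eq:808} with $k=2$ and is thus negligible under any generic $\LL(L^2,L^2)$ bound. This is a good simplification: it uses the $(U-\nu)$-weighted resolvent estimate as a black box rather than redoing comparable work inline, and it handles all $\mu\leq\Upsilon\beta^{-1/2}$ uniformly. Two small cleanups you should make: (i) for the tail term, citing only Proposition \ref{lem:schrod-quad} is not quite right when $\nu<U(0)-a\beta^{-1/2}$ (which the lemma allows once $\tilde\nu_1$ is not tiny) --- you should cite \eqref{eq:287} in that range, or simply \eqref{eq:546}; (ii) you should explicitly record the derivative bound for the tail term (it follows from (\ref{eq:287}b) or (\ref{eq:365}a) and is $O(\beta^{-2})$), rather than leaving it as "analogous".
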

 For convenience of omit the subscript
  $(\lambda,\beta)$ from $\hat{\psi}_{\lambda,\beta}$ in the sequel.
\begin{proof}
  Let $v\in D(\LL^\Nf_\beta)$ satisfy
  \begin{equation}
\label{eq:561}
    (\LL^\Nf_\beta-\beta\lambda)v=\hat{\psi} \,.
  \end{equation}
   Let $\mu_0>0$. We begin by considering the case $\mu\geq-\mu_0$. Let
  further $\check \chi_\nu$ be given by \eqref{eq:552}. As in \eqref{eq:555}
  we obtain
\begin{displaymath}
  \|\check \chi_\nu v\|_2\leq C\, [\beta^{-1/2}\|v\|_2 +
  \beta^{-1}\|\hat{\psi}\|_2]\,.
\end{displaymath}
By  \eqref{eq:546} it holds that
\begin{displaymath}
  \|v\|_2 \leq C\beta^{-1/2} \|\hat{\psi}\|_2 \,.
\end{displaymath}
Furthermore, using \eqref{eq:457} (with $k=0$) and \eqref{eq:451} 
 we have for $|\lambda| \geq |\nu| \geq U(0)/2$
\begin{equation}
\label{eq:562} 
  \|\hat{\psi}\|_2 \leq C \, \lambda_\beta^{-1/4} \beta^{-1/6} \leq \widehat C\beta^{-1/4}\,.
\end{equation}
Hence we obtain that 
\begin{equation}
\label{eq:563}
   \|\check \chi_\nu v\|_2\leq C\beta^{-5/4} \,.
\end{equation}
Furthermore, since by (\ref{eq:553}a)
\begin{equation}\label{eq:564}
  \|(\check \chi_\nu v)^\prime\|_2\leq[\mu_{\beta,+}]^{1/2}\|\check \chi_\nu v\|_2+C(\|v\|_2+
  \|\check \chi_\nu v\|_2^{1/2} \|\hat{\psi}\|_2^{1/2})\,,
\end{equation}
where $\mu_{\beta,+}$ is given by \eqref{eq:284},
we can conclude from \eqref{eq:563} that
\begin{equation}
\label{eq:565}
   \|(\check \chi_\nu v)^\prime\|_2\leq C(\beta^{-3/4}+\|v\|_2) \,.
\end{equation}
Let 
\begin{equation}\label{eq:566}
\invbreve{\chi}_\nu=\sqrt{1-\check{\chi}_\nu^2}\,.
\end{equation} 
Clearly
\begin{equation}
\label{eq:567}
  (\LL^\Nf_\beta-\beta\lambda)(\invbreve{\chi}_\nu v)=-\invbreve{\chi}_\nu^{\prime\prime}v-2\invbreve{\chi}_\nu^\prime v^\prime+\invbreve{\chi}_\nu\hat{\psi}\,.
\end{equation}
By   \eqref{eq:563} it holds that
\begin{displaymath}
   \|v\|_2 \leq    C\,\left(\|\invbreve{\chi}_\nu v\|_2+\beta^{-5/4}\right) \,.
\end{displaymath}
Furthermore we have, by \eqref{eq:565} together with \eqref{eq:563} for the last line, that
\begin{displaymath}
\begin{array}{ll}
  \|v^\prime\|_2 & \leq   \|(\check \chi_\nu v)^\prime\|_2+ \|(\invbreve{\chi}_\nu v)^\prime\| \\ & \leq
  C\left(\beta^{-3/4}+\|v\|_2+ \|(\invbreve{\chi}_\nu v)^\prime\|\right) \\
  & \leq \hat C\left(\beta^{-3/4}+\|\invbreve{\chi}_\nu v\|_2 + \|(\invbreve{\chi}_\nu v)^\prime\|\right)    \,.
  \end{array}
\end{displaymath}
We now apply  either \eqref{eq:365} or \eqref{eq:287} (see the proof of Proposition \ref{lem:integral-conditions-large -alpha}, Step 2)    to \eqref{eq:567} to obtain
\begin{multline*}
  \|\invbreve{\chi}_\nu v\|_2+ \beta^{-1/4}\|(\invbreve{\chi}_\nu v)^\prime\|_2\leq
  C\beta^{-1/2}\big(\|v\|_2 +\|v^\prime\|_2+\|\invbreve{\chi}_\nu\hat{\psi}\|_2\big) \\ \leq
  \hat C\beta^{-1/2}\big(\beta^{-3/4}+\|(\invbreve{\chi}_\nu v)^\prime\|_2+\|(\invbreve{\chi}_\nu v)^\prime\|_2
  +\|\invbreve{\chi}_\nu\hat{\psi}\|_2\big)\,. 
\end{multline*}
Hence,
\begin{displaymath}
   \|\invbreve{\chi}_\nu v\|_2+ \beta^{-1/4}\|(\invbreve{\chi}_\nu v)^\prime\|_2\leq
   C\beta^{-1/2}(\beta^{-3/4}+\|\invbreve{\chi}_\nu\hat{\psi}\|_2) \,.
\end{displaymath}
By \eqref{eq:457} and \eqref{eq:451} we obtain that
\begin{displaymath}
  \|\invbreve{\chi}_\nu\hat{\psi}\|_2\leq C\beta^{-3/4} \,,
\end{displaymath}
and hence
\begin{displaymath}
   \|\invbreve{\chi}_\nu v\|_2+ \beta^{-1/4}\|(\invbreve{\chi}_\nu v)^\prime\|_2\leq C\beta^{-5/4} \,.
\end{displaymath}
Combining the above with \eqref{eq:563} and \eqref{eq:567} yields
\eqref{eq:560}. 

Consider now the case where $\mu<-\mu_0$. Here we use \eqref{eq:363} and
\eqref{eq:364}, applied to the pair $(v,\hat \psi)$, and then
\eqref{eq:562} to obtain that
  \begin{displaymath}
    \|v\|_2 +\beta^{-1/2}\|v^\prime\|_2\leq C\beta^{-1}\|\hat{\psi}\|_2\leq \widehat C\, \beta^{-5/4} \,,
  \end{displaymath}
establishing, thereby, \eqref{eq:560} for the case $\mu<-\mu_0$.
\end{proof}
\begin{remark}
  Let  for $(\lambda,\beta) \in \mathbb C \times \mathbb R_+$,
  $\hat{g}(x)=h(x)\chi(1-x)/\psi_{\lambda,\beta}(1)$ where $h$ is given by 
  \eqref{eq:456}.  By the same arguments used to establish
  \eqref{eq:560} we may conclude under the assumptions of Lemma
  \ref{lem:auxiliary-estimate-1} that there exists $C$ and $\beta_0$ such
  that for $\beta \geq \beta_0$,  
\begin{displaymath}
\nu\in (U(0)-\tilde{\nu}_1,   U(0)+a\beta^{-1/2}) \mbox{  and } -\mu_0\leq\mu<\Upsilon\beta^{-1/2}
\end{displaymath}
  we have 
  \begin{equation}
    \label{eq:568}
    \|(\LL^\Nf_\beta-\beta\lambda)^{-1}\hat{g}\|_2 + \beta^{-1/2}\Big\|
    \frac{d}{dx}\,(\LL^\Nf_\beta-\beta\lambda)^{-1}\hat{g}\Big\|_2 \leq C\, \beta^{-5/4}
    \,. 
  \end{equation}
\end{remark}

\subsection{Resolvent estimates for large $\alpha$}
\label{sec:resolv-estim-large}
In this subsection, we adapt the results of Section 6.3 in
\cite{almog2019stability} to the present setting, involving a Neumann
condition at $x=0$. For $\alpha >0$, we consider $\mathfrak
z_\alpha $ to be the solution of
  \begin{equation}
\label{eq:602}
  \begin{cases}
      -  \mathfrak z^{\prime\prime}+\alpha^2  \mathfrak z = 0  \mbox{ for } x\in(0,1) \\
        \mathfrak z (1)=1 \mbox{ and }   \mathfrak z^\prime (0)=0 \,.
  \end{cases}
\end{equation}
The solution of \eqref{eq:602} is given by
\begin{equation}\label{eq:603}
\mathfrak z_\alpha(x) =  \cosh (\alpha x)/\cosh(\alpha)\,,
 \end{equation}
 and hence, for large $\alpha$ decays exponentially fast away from $x=1$.

 \begin{proposition}
\label{lem:integral-conditions-large-alpha} 
Let $\theta_1 >0$, $U\in C^2([0,1])$ satisfy \eqref{eq:10}, and
$\Upsilon<\sqrt{-U^{\prime\prime}(0)}/2$. { Let  further $\hat{\mu}_m>0$ be given by
  \cite[Eq. (6.57)]{almog2019stability}, Then, for any   $\hat{\Upsilon}>0$, } there exist $\beta_0>0$ and $C>0$ such that, for $\beta\geq \beta_0$ and $\alpha\geq\theta_1\beta^{1/3}
$\,,
   \begin{equation}
\label{eq:604}
\sup_{ \Re\lambda\leq \min(\Upsilon\beta^{-1/2},\beta^{-1/3}[\hat{\mu}_m-\hat \Upsilon-\alpha^2\beta^{-2/3}/2])}\|(\LL_\beta^{\mathfrak z_\alpha}-\beta\lambda)^{-1}\| 
\leq \frac{C}{\beta^{1/2}[1+\beta^{1/6}|U(0)-\nu|^{1/3}]} \,.
  \end{equation}
\end{proposition}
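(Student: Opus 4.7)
The plan is to separate the parameter range by the value of $\nu$. For $\nu$ outside a fixed neighborhood of $[0,U(0)]$, an elementary energy estimate already suffices: taking the imaginary part of $\langle v,(\LL_\beta^{\mathfrak z_\alpha}-\beta\lambda)v\rangle=\langle v,f\rangle$ yields $\beta\langle v,(U-\nu)v\rangle=\Im\langle v,f\rangle$, and since $U-\nu$ has constant sign when $\nu\notin[0,U(0)]$, this produces $\|v\|_2\leq C(\beta|U(0)-\nu|)^{-1}\|f\|_2$. This bound dominates the right-hand side of \eqref{eq:604} whenever $|U(0)-\nu|\geq\beta^{-1/2}$, so I would focus the detailed analysis on $\nu\in(-\nu_1,U(0)+\nu_1)$ for some fixed $\nu_1$.

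In this range, the strategy is to apply the decomposition from \eqref{eq:477}, writing $v=u+A(\psi_{\lambda,\beta}-\tilde v)$, where $u=(\LL_\beta^{\Nf,\Df}-\beta\lambda)^{-1}f$, the profile $\psi_{\lambda,\beta}$ is defined in \eqref{eq:452}, and $\tilde v$ solves \eqref{eq:454}. Imposing $\langle\mathfrak z_\alpha,v\rangle=0$ determines
\[
A=-\frac{\langle\mathfrak z_\alpha,u\rangle}{\langle\mathfrak z_\alpha,\psi_{\lambda,\beta}-\tilde v\rangle}.
\]
For the denominator I would use the split
\[
\langle\mathfrak z_\alpha,\psi_{\lambda,\beta}\rangle=\psi_{\lambda,\beta}(1)\int_0^1\mathfrak z_\alpha\,dx+\int_0^1\mathfrak z_\alpha(x)\bigl[\psi_{\lambda,\beta}(x)-\psi_{\lambda,\beta}(1)\bigr]dx.
\]
The first term equals $\psi_{\lambda,\beta}(1)\tanh(\alpha)/\alpha$, of modulus $\sim\lambda_\beta^{1/2}/\alpha$ by \eqref{eq:451}. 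For the second, writing $\psi_{\lambda,\beta}(x)-\psi_{\lambda,\beta}(1)=-\int_x^1\psi_{\lambda,\beta}'(y)\,dy$, swapping the order of integration, and using the primitive $\int_0^y\mathfrak z_\alpha\,dx=\sinh(\alpha y)/(\alpha\cosh\alpha)$ (whose $L^2$-norm is $O(\alpha^{-3/2})$), the correction is controlled by $C\alpha^{-3/2}\|\psi_{\lambda,\beta}'\|_2\leq C\alpha^{-3/2}\lambda_\beta^{3/4}\beta^{1/6}$ via \eqref{eq:458}; an analogous bound using \eqref{eq:455} shows $|\langle\mathfrak z_\alpha,\tilde v\rangle|$ is even smaller. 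For $\alpha\geq\theta_1\beta^{1/3}$ and moderate $\lambda_\beta$, both corrections lie below the main term, so $|\langle\mathfrak z_\alpha,\psi_{\lambda,\beta}-\tilde v\rangle|\geq c\lambda_\beta^{1/2}/\alpha$.

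For the numerator, $u(1)=0$ implies $|u(x)|\leq(1-x)^{1/2}\|u'\|_2$, and together with $\int_0^1\mathfrak z_\alpha(1-x)^{1/2}\,dx\leq C\alpha^{-3/2}$ this yields $|\langle\mathfrak z_\alpha,u\rangle|\leq C\alpha^{-3/2}\|u'\|_2$. Hence $|A|\leq C\alpha^{-1/2}\lambda_\beta^{-1/2}\|u'\|_2$, and from \eqref{eq:457}
\[
|A|\bigl(\|\psi_{\lambda,\beta}\|_2+\|\tilde v\|_2\bigr)\leq C\beta^{-1/3}\lambda_\beta^{-1/4}\|u'\|_2.
\]
I would then bound $\|u\|_2$ and $\|u'\|_2$ via Proposition~\ref{prop5.3} when $\nu<U(0)-a\beta^{-1/2}$ and via Proposition~\ref{lem:schrod-quad} when $|\nu-U(0)|\leq a\beta^{-1/2}$, checking in each regime that the correction $|A|(\|\psi_{\lambda,\beta}\|_2+\|\tilde v\|_2)$ is dominated by $\|u\|_2$. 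The resulting bound $\|v\|_2\leq C\|u\|_2$ matches the right-hand side of \eqref{eq:604}, given that $\mathfrak J_\nu\sim|U(0)-\nu|^{1/2}$ for $\nu<U(0)$ and that the Schr\"odinger estimate delivers $\|u\|_2\leq C\beta^{-1/2}\|f\|_2$ in the near-$U(0)$ regime.

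The hardest part will be securing the lower bound on $|\langle\mathfrak z_\alpha,\psi_{\lambda,\beta}-\tilde v\rangle|$ uniformly in the full range $\alpha\geq\theta_1\beta^{1/3}$ and $\lambda$ with $\Re\lambda\leq\Upsilon\beta^{-1/2}$: when $\lambda_\beta$ grows, the correction $\alpha^{-3/2}\|\psi_{\lambda,\beta}'\|_2$ can compete with the leading term $\lambda_\beta^{1/2}/\alpha$. A clean treatment will likely require either a sharper Airy-function identification of the leading behavior of $\psi_{\lambda,\beta}$ near $x=1$ (in the spirit of Appendix~A of \cite{almog2019stability}), or isolating the subregime of large $|\Im\lambda|$ and treating it by the elementary energy bound noted at the outset.
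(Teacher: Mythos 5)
You correctly identify the crux of the difficulty yourself at the end: the denominator lower bound $|\langle\mathfrak z_\alpha,\psi_{\lambda,\beta}-\tilde v\rangle|\geq c\lambda_\beta^{1/2}/\alpha$ does not hold uniformly. Your argument requires the correction $\alpha^{-3/2}\|\psi'_{\lambda,\beta}\|_2\lesssim\alpha^{-3/2}\lambda_\beta^{3/4}\beta^{1/6}$ to be dominated by the main term $\lambda_\beta^{1/2}/\alpha$, which needs $\alpha\gg\lambda_\beta^{1/2}\beta^{1/3}$. For $\nu$ near $U(0)$ one has $\lambda_\beta\sim\beta^{1/3}$, so you would need $\alpha\gg\beta^{1/2}$, while the proposition assumes only $\alpha\geq\theta_1\beta^{1/3}$; the entire range $\theta_1\beta^{1/3}\leq\alpha\lesssim\beta^{1/2}$ is then uncovered. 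Your two suggested fallbacks do not close this gap: the elementary energy bound $\|v\|_2\leq C(\beta|U(0)-\nu|)^{-1}\|f\|_2$ degenerates precisely as $\nu\to U(0)$ (and fails entirely when $\nu\in[0,U(0)]$), and "a sharper Airy-function identification" is not a fix so much as a pointer to one — the correction is genuinely of the same order as the main term, not a lower-order remainder, so no amount of sharpening the remainder estimate will save the naive factorization $\langle\mathfrak z_\alpha,\psi_{\lambda,\beta}\rangle\approx\psi_{\lambda,\beta}(1)\int_0^1\mathfrak z_\alpha\,dx$.

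What the paper actually does is precisely the "sharper identification" you conjecture is needed, but through a change of normalization rather than an expansion. It introduces $\theta=\alpha\beta^{-1/3}$, the Airy integral $F(\lambda,\theta)=\int_{\R_+}e^{-\theta x}\Ai(e^{i\pi/6}(x+i\lambda))\,dx$, and the ratio $\omega(\beta,\lambda,\theta)=F(\beta^{1/3}\lambda,0)/F(\beta^{1/3}\lambda,\theta)$, then works with the rescaled profile $\psi_\theta=\omega\psi$ and the corresponding $\tilde v_\theta$. With this choice, the approximation $\mathfrak z_\alpha\approx e^{-\alpha(1-x)}$ and the substitution $y=\beta^{1/3}(1-x)$ turn the overlap $\langle\mathfrak z_\alpha,\psi_\theta\rangle$ into exactly $\beta^{-1/3}[1+\OO(\beta^{-1/3})]$, \emph{uniformly} in $\theta\geq\theta_1$ and in the spectral parameter — the $\theta$-dependence of the convolution is cancelled by $\omega$ rather than estimated as a perturbation. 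The price is a factor $|\omega|\leq C\theta$ (cf.~\eqref{eq:605}) that must be tracked in $\|\psi_\theta\|_2$ and $\|\tilde v_\theta\|_2$; the gain is that $|A|\leq C\beta^{1/3}|\langle\mathfrak z_\alpha,u\rangle|$ is obtained without any lower bound on $\alpha$ beyond $\theta\geq\theta_1$. This is the essential missing ingredient in your proposal, and without it the argument does not close in the regime $\alpha\sim\beta^{1/3}$ when $\Im\lambda$ is not small.

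A secondary, less serious difference: your numerator bound $|\langle\mathfrak z_\alpha,u\rangle|\leq C\alpha^{-3/2}\|u'\|_2$ is weaker than the paper's split at $\hat x_\nu$ (where $U(\hat x_\nu)=\nu/2$), which isolates an exponentially small tail and then bounds $\|\mathfrak z_\alpha\|_1\|{\mathbf 1}_{[\hat x_\nu,1]}u\|_\infty$ using \eqref{eq:556}--\eqref{eq:557} to obtain $|\langle\mathfrak z_\alpha,u\rangle|\leq C\theta^{-1}\beta^{-4/3}\|g\|_2$. In the regime where both estimates apply your bound happens to be adequate for matching the right-hand side of \eqref{eq:604}, but it would not combine cleanly with the paper's $\omega$-normalized denominator, so you would in any case need to redo this step once the denominator fix is in place.
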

\begin{proof}
  The proof follows the same lines of the proof of \cite[Proposition
  6.11]{almog2019stability}, and hence we bring only its main
  ingredients.  

Let $\theta=\alpha\beta^{-1/3}$ and 
    \begin{displaymath}
      F(\lambda,\theta)=\int_{\R_+}e^{-\theta x}\, \Ai (e^{i\pi/6}(x+i\lambda))\,dx\,.
     \end{displaymath}
Let further 
\begin{displaymath}
 \omega( \beta,\lambda,\theta):= \frac{F(\beta^{1/3}\lambda,0)}{F(\beta^{1/3}\lambda,\theta)}\,.
\end{displaymath}
We then define
\begin{displaymath}
     \psi_\theta= \omega ( \beta,\lambda,\theta)  \psi\,,
\end{displaymath}
where $\psi =\psi_{\lambda,\beta}$ is defined in \eqref{eq:452}, and
\begin{displaymath}
  h_\theta=\omega ( \beta,\lambda,\theta) h \,,
\end{displaymath}
where $h$ is defined by \eqref{eq:453}. Set
\begin{displaymath}
  \tilde{v}_\theta=   (\LL_\beta^\Nf-\beta\lambda)^{-1}h_\theta\,.
\end{displaymath}
By  \cite[Eq. (6.79)]{almog2019stability} { for any $\hat \Upsilon>0$ 
there exists  $C>0$}  such that 
\begin{equation}\label{eq:605} 
\sup_{ \Re\lambda\leq\beta^{-1/3}[\hat{\mu}_m-\hat \Upsilon-\alpha^2\beta^{-2/3}/2]}|\omega ( \beta,\lambda,\theta)| \leq C \, \theta\,,\, \forall \theta \geq \theta_1  \,,
\end{equation}
and hence by \eqref{eq:455} and
\eqref{eq:512} we obtain that 
\begin{equation}
\label{eq:606}
  \|\tilde{v}_\theta\|_1 \leq C\theta\beta^{-2/3}\,.
\end{equation}
Note that for any $\Upsilon>0$, there exist $\beta_0$ and $a$ such that for $\beta
\geq \beta_0$  and \break $\nu<U(0)-a \beta^{-1/2}$, we have
  \begin{displaymath}
    \frac{\sqrt{-U^{\prime\prime}(0)}}{2}\beta^{-1/2}\leq \Upsilon\min(1,|U(0)-\nu|^{1/3})\beta^{-1/3}   \,,
  \end{displaymath}
which allows for the application of \eqref{eq:455}.

Suppose now that $(v,g)\in D(\LL_\beta^{\mathfrak z_\alpha})\times L^2(0,1)$ satisfy
\begin{displaymath}
  (\LL_\beta^{\mathfrak z_\alpha}-\beta\lambda)v=g \,. 
\end{displaymath}
Then as in \cite[Eq.
(6.20)]{almog2019stability} or \eqref{eq:477}-\eqref{eq:478}  we may write $v$ in the form
\begin{equation}
\label{eq:607}
  v=A(\psi_\theta-\tilde{v}_\theta)+u \,,
\end{equation}
where
\begin{displaymath}
   u=(\LL_\beta^\Nf-\beta\lambda)^{-1}g\,.
\end{displaymath}
Taking the inner product of \eqref{eq:607} with $\mathfrak z_\alpha$ yields
\begin{displaymath}
  A\langle\mathfrak z_\alpha\,,\,(\psi_\theta-\tilde{v}_\theta)\rangle=\langle\mathfrak z_\alpha\,,\,u\rangle \,.
\end{displaymath}
As in \cite{almog2019stability}, using the approximation ${\mathfrak
  z}_\alpha\approx e^{-\alpha(1-x)}$ (see the display below \cite[Eq.
(6.95)]{almog2019stability} and its proof with minor changes), 
we then show that
\begin{displaymath}
  \langle\mathfrak z_\alpha\,,\,(\psi_\theta-\tilde{v}_\theta)\rangle=\beta^{-1/3}[1+\OO(\beta^{-1/3}]\,. 
\end{displaymath}
Consequently,
\begin{equation}
  \label{eq:608}
|A|\leq C\beta^{1/3}|\langle\mathfrak z_\alpha\,,\,u\rangle|\,.
\end{equation}

To complete the proof we need an estimate for $|\langle\mathfrak z_\alpha ,u\rangle|$.
The proof in the
case $\nu\leq U(1/2)$ is identical with the derivation of
\cite[Eq. (6.93)]{almog2019stability}, given that \eqref{eq:287} 
together with \eqref{eq:363} give  \cite[Eq. (5.4)]{almog2019stability} in this case.
Hence we consider here only the case where $\nu>U(1/2)$. 

We thus write, with the aid of either \eqref{eq:287} or \eqref{eq:369} 
\begin{equation}
\label{eq:609}
  |\langle\mathfrak z_\alpha ,{\mathbf
    1}_{[0,\hat{x}_\nu]}u\rangle|\leq Ce^{-\alpha/2}\|{\mathbf
    1}_{[0,\hat{x}_\nu]}u\|_2\leq Ce^{-\theta\beta^{1/3}/2}\|g\|_2\,,
\end{equation}
where $\hat{x}_\nu$ is given by \eqref{eq:548} so that
$U(\hat{x}_\nu)=\nu/2$. Furthermore,
\begin{equation}
\label{eq:610}
   |\langle\mathfrak z_\alpha\,,\,{\mathbf
    1}_{[\hat{x}_\nu,1]}u\rangle|\leq \|\mathfrak z_\alpha \|_1\|{\mathbf
    1}_{[\hat{x}_\nu,1]}u\|_\infty\leq\frac{C}{\theta\beta^{1/3}}\|{\mathbf
    1}_{[\hat{x}_\nu,1]}u\|_\infty
\end{equation}
Since $u(1)=0$ it holds that
\begin{displaymath}
  \|{\mathbf     1}_{[\hat{x}_\nu,1]}u\|_\infty\leq  \|{\mathbf
    1}_{[\hat{x}_\nu,1]}u\|_2 \|{\mathbf
    1}_{[\hat{x}_\nu,1]}u^\prime\|_2\,. 
\end{displaymath}
By \eqref{eq:556} and \eqref{eq:557},  and \eqref{eq:610} we then have
\begin{displaymath}
   |\langle\mathfrak z_\alpha \,,\,{\mathbf 1}_{[\hat{x}_\nu,1]}u\rangle| \leq
   \frac{C}{\theta\beta^{4/3}}\|g\|_2 \,.
\end{displaymath}
Substituting the above, together with \eqref{eq:609} into
\eqref{eq:608} yields
\begin{displaymath}
|A|\leq   \frac{C}{\theta\beta}\|g\|_2 \,.
\end{displaymath}
By \eqref{eq:607}, \eqref{eq:457},  \eqref{eq:455}, and \eqref{eq:512} we then have
\begin{equation}\label{eq:611}
  \|v\|_2 \leq \frac{C}{\theta\beta}(\|\psi_\theta\|_2+\|\tilde{v}_\theta\|_2) \|g\|_2+ \|u\|_2 
\end{equation}
 By \eqref{eq:457} with $k=0$ and \eqref{eq:605} it holds that
\begin{displaymath}
  \frac{1}{ \theta \beta} \|\psi_{\theta}\|_2  \leq C  \, \lambda_\beta^{\frac{1}{4}} \beta^{-7 /6} \,.
\end{displaymath}
By \eqref{eq:512} and \eqref{eq:605} it holds that
 \begin{displaymath}
  \frac{1}{ \theta \beta} \|\tilde v_{\theta}\|_2  \leq C  \, \beta^{-19 /12}\,.
\end{displaymath}
Finally,  \eqref{eq:287} and \eqref{eq:365} establish the existence of $\Upsilon >0$ such that
\begin{equation}\label{eq:612}
  \|u\|_2\leq   \frac{C}{\beta^{1/2}[1+\beta^{1/6}|U(0)-\nu|^{1/3}]} \|g\|_2\,,
\end{equation}
for all $U(0)/2<\nu < U (0) + a \beta^{-1/2}$. \\

For $\nu \geq  U (0) + a \beta^{-1/2}$
we can use (\ref{eq:553}b) with $\breve{\chi}_\nu\equiv1$ and $f=g$ to obtain 
\begin{displaymath}
   \|u\|_2\leq   \frac{C}{\beta|U(0)-\nu|} \|g\|_2\,,
\end{displaymath}
and hence \eqref{eq:612} 
holds true for all $\nu>U(0)/2\,$.\\
Combining the above yields
\begin{displaymath}
  \|v\|_2\leq   \frac{C}{\beta^{1/2}[1+\beta^{1/6}|U(0)-\nu|^{1/3}]} \|g\|_2\,,
\end{displaymath}
verifying thereby \eqref{eq:604}. 
\end{proof}

\section{The Orr-Sommerfeld operator}
\label{sec:5}
\subsection{ Introduction}
In this section we prove Theorems \ref{thm:small-alpha} and
\ref{thm:large-alpha} by obtaining inverse estimates for the
Orr-Sommerfeld operator \eqref{eq:6}.  As in
\cite{almog2019stability}, we use the estimates for the inviscid
operator $\mathcal A_{\lambda,\alpha}$ from Section \ref{sec:2} together with
the resolvent estimates for the Schr\"odinger operators $\mathcal
L_\beta^{\mathfrak N,\mathfrak D}$ and $\mathcal L_\zeta^\beta$ from Sections
\ref{sec:3} and \ref{sec:4}. In contrast with
\cite{almog2019stability} we need to consider here many different
cases depending on the values of $\Im \lambda$ and $\alpha$.

 \begin{figure}[htbp]
 \begin{center}
\input{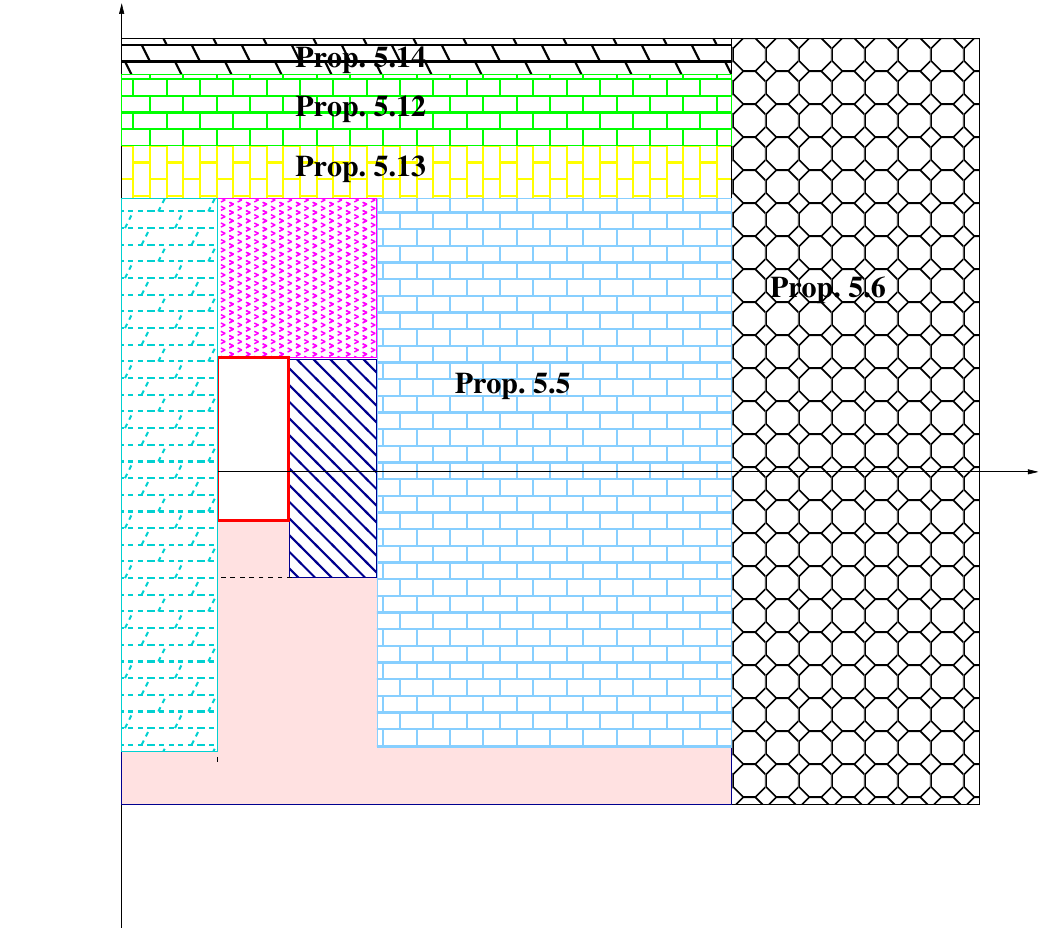_t}
 \caption{Summary of the results in Section \ref{sec:5}.  }
 \label{fig:fig2}
 \end{center}
 \end{figure}

Figure 2 presents a rough sketch of the various domains where each
estimate is valid in the $(\alpha,\nu)$ plane ($\nu=\Im\lambda$). The blank domain
denotes the domain in the $(\alpha,\nu)$ plane where resolvent estimates
have not been obtained. We refer the reader to Subsection
\ref{sec:proof-strategy} for a brief explanation of the methods of the
proof.  

In the following we explain why the division of the $(\alpha,\nu)$ plane
into 10 subdomains is necessary. Propositions~\ref{lem:large-nu} and
\ref{large-nu-negative} deal with the case where $\nu\not\in[0,U(0)]$
making use of the invertibility of $\A_{i\nu,\alpha}$ in these cases. The
necessity of Proposition\ref{lem:large-alpha} which deals with the
case $\alpha\gtrsim\beta^{1/3}$, and Proposition~\ref{lem:zero-alpha} (and
\ref{cor:small-alpha}) which deals with the case $\alpha\ll\beta^{-1/6}$ is
explained in Section \ref{sec:proof-strategy}.
Proposition~\ref{lem:intermediate-alpha} deals with the case
$|\nu|<\nu_0<U(0)$ and $1\ll\alpha\ll\beta^{1/3}$. In this range of $\alpha$ values
we may effectively use the fact that $\|(-d^2/dx^2+\alpha^2)^{-1}\|$ is
small at the conclusion of the proof.
Proposition~\ref{lem:right-curve} deals with the case
$\nu\geq\beta^{-1/5+\delta}$ for any $0<\delta<1/5$ and $\alpha\lesssim1$. In the proof we use
the same methods as in \cite{almog2019stability}, till the value of
$\nu$ becomes to small due to the non invertibility of $\A_{0,0}$. For
$|\nu|\leq\beta^{-1/5+\delta}$ and $\beta^{-1/10+\delta/2}\ll\alpha\lesssim1$ we use
Proposition~\ref{lem:small-lambda-intermediate-alpha}. This range of
$\alpha$ values allows the application of Proposition~\ref{prop:edge}
towards the end of the proof. Proposition~\ref{lem:quadratic} deals
with the case where $|U(0)-\nu|\lesssim\beta^{-1/2}$. Here we can approximate
$U$ by a quadratic potential near $x=0$ and use the estimates in
Subsections \ref{sec:case--quadratic}, \ref{sec:schrod-quad}, and
\ref{sec:resolvent-estimates-quadratic}.  Finally,
Proposition~\ref{lem:nearly-quadratic} deals with the transition from
a linear behavior of $U-U(x_\nu)$ ($x_{\nu}$ is defined in
\eqref{eq:defxnu}) to a quadratic behavior near $x_\nu$.

\subsection{Preliminaries}
\label{sec:5.1}
We begin by recalling  from \eqref{eq:558} the definition of
the boundary terms  
\begin{equation}
\label{eq:613}
\hat{\psi}_{\lambda,\beta} (x)= \frac{{\rm Ai}\big(
  \beta^{1/3}e^{ -i\pi/6}[(1- x)- i\lambda]\big)} 
{{\rm Ai}\big( e^{ -i2\pi/3}\beta^{1/3}\lambda\big)}\chi(1-x)\,,
\end{equation}
where we recall that $\chi$ is given by \eqref{eq:103}\,.  We also
recall from \cite[Section 8.3.2, Eq: (8.91)]{almog2019stability} 
  that there exists $\Upsilon>0$ such that, for all  $\beta\geq 1$ and  $\Re\lambda<\Upsilon\beta^{-1/3}$, it
  holds that
\begin{equation} 
\label{eq:614}
\|(1-x)^s \hat{\psi}_{\lambda,\beta}\|_1 \leq C\,  \lambda_\beta^{-(s+1)/2} \,
\beta^{-(s+1)/3} \,,\, s\in [0,3]\,.
\end{equation}
Similarly, from \cite[Proposition A.8 and Eq.(A.43c,d)]{almog2019stability},  we can conclude the
existence of $C>0$ such that
\begin{equation}
  \label{eq:615}
\| (1-x)^s \hat{\psi}_{\lambda,\beta}\|_\infty\leq C \, \lambda_\beta^{- s/2} \,
\beta^{-s/3},  s\in [0,3]   \,.
\end{equation}
We further recall the definition of the inviscid operator in
\eqref{eq:2.1aa}, which is the  Neumann-Dirichlet realization in
$(0,1)$ of
\begin{equation*}
  \A_{\lambda,\alpha} \overset{def}{=} (U+i\lambda)\Big(-\frac{d^2}{dx^2}+ \alpha^2\Big)+U^{\prime\prime}\,,
\end{equation*}
for $\lambda\in \mathbb C$ and $\alpha \in \mathbb R$.  
We note that $\A_{\lambda,\alpha}$  is invertible when either  $\nu\not\in[0,U(0)]$
or  $|\mu|>0$, either by Proposition  \ref{prop:linear} or by Proposition 4.13 in
  \cite{almog2019stability} which holds true since
  $|U^{\prime\prime}|>0$.   We introduce in addition
  \begin{equation}\label{eq:616}
  \phi_{\lambda,\beta,\alpha}:=\A_{\lambda,\alpha}^{-1}(U+i\lambda)\hat{\psi}_{\lambda,\beta}\,.
  \end{equation}
  We dedicate this subsection to two extensions  of \cite[Lemma
8.1]{almog2019stability}. These are useful in order to establish
the contribution of the boundary terms in \eqref{eq:929}. The reader
is referred to Subsection \ref{sec:proof-strategy} for more details on
the necessity of these estimates. The first of them is the following
lemma which considers the case $\nu>\beta^{-1/5}$. The proof is
significantly more complex than the proof of  \cite[Lemma
8.1]{almog2019stability} in view of the non-injectivity of $\A_{0,0}$. 
 \begin{lemma}
   \label{lem:inviscid-decay} 
   Let $U\in C^3([0,1])$ satisfy \eqref{eq:10}.  There exist positive
   constants $\Upsilon$, $C$, $\widehat C$, $\nu_0$, and $\beta_0$ such that, for all $\alpha\geq0$,
   $\beta\geq \beta_0$, $\lambda\in\C$ for which
   $ \mu<\Upsilon\beta^{-1/3}$, 
   $\mu\neq0$, and $ \beta^{-1/5}<\nu<\nu_0$ it holds that
   \begin{equation}
\label{eq:617}
 \| \phi_{\lambda,\beta,\alpha} \|_{1,2} \leq  C \nu^{-1} \,[ |\lambda|\beta]^{-3/4} \,,
   \end{equation}
and
   \begin{equation}
\label{eq:618}
  |\phi_{\lambda,\beta,\alpha}(x_\nu)| \leq \widehat  C\, [|\lambda|\beta]^{-3/4}\,.
\end{equation}
 \end{lemma}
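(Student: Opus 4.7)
The plan is to apply Proposition \ref{prop:linear} to the equation $\mathcal{A}_{\lambda,\alpha}\phi = v$ with source $v := (U+i\lambda)\hat{\psi}_{\lambda,\beta}$. The hypotheses $\mu<\Upsilon\beta^{-1/3}$, $\mu\ne 0$, and $\beta^{-1/5}<\nu<\nu_0$ place us in the range covered by that proposition, provided $\Upsilon$ is chosen small enough and $\nu_0<\nu_1$. The decisive structural feature is the cancellation $v/(U+i\lambda)=\hat{\psi}_{\lambda,\beta}$, which feeds cleanly into the first branch of the quantity $N(v,\lambda)$ defined in \eqref{eq:86} and bypasses any need to estimate $v'$ or $v(1)$ directly.

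The first computation is to bound
\begin{equation*}
N(v,\lambda)\le \|(1-x)^{1/2}\hat{\psi}_{\lambda,\beta}\|_1+\nu^{-1/2}\|(1-x)\hat{\psi}_{\lambda,\beta}\|_1.
\end{equation*}
The lower bound $|\lambda|\ge\nu>\beta^{-1/5}$ forces $\lambda_\beta\sim|\lambda|\beta^{1/3}$ for $\beta$ large, so \eqref{eq:614} specialized at $s=\tfrac12$ and $s=1$ gives $\|(1-x)^{1/2}\hat{\psi}_{\lambda,\beta}\|_1\le C[|\lambda|\beta]^{-3/4}$ and $\|(1-x)\hat{\psi}_{\lambda,\beta}\|_1\le C[|\lambda|\beta]^{-1}$. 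The ratio $\nu^{-1/2}[|\lambda|\beta]^{-1/4}\le\nu^{-3/4}\beta^{-1/4}\le\beta^{-1/10}$ (using first $|\lambda|\ge\nu$ and then $\nu>\beta^{-1/5}$) shows the second term is absorbed into the first for $\beta\ge\beta_0$ large, giving $N(v,\lambda)\le C[|\lambda|\beta]^{-3/4}$. Proposition \ref{prop:linear} then provides
\begin{equation*}
\|\phi-c_\parallel^\nu(U-\nu)\|_{1,2}+\nu^{1/2}|c_\parallel^\nu|\le C\nu^{-1}[|\lambda|\beta]^{-3/4}.
\end{equation*}

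Passing from this output to \eqref{eq:617} needs more than a triangle inequality, which would yield only $\|\phi\|_{1,2}\le C\nu^{-3/2}[|\lambda|\beta]^{-3/4}$ since $\|U-\nu\|_{1,2}=O(1)$. To recover the sharp $\nu^{-1}$ power I would go back to the intermediate estimates \eqref{eq:119} and \eqref{eq:102} from the proof of Proposition \ref{prop:linear}, which control $|c_\parallel^\nu|+\|\phi\|_{H^1(0,x_\nu)}$ and $\|\phi\|_{H^1(x_\nu,1)}$ separately in terms of $|\langle\phi,\hat{\psi}_{\lambda,\beta}\rangle|^{1/2}$ and $N(v,\lambda)$, and bootstrap with a refined estimate of the inner product. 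The concentration of $\hat{\psi}_{\lambda,\beta}$ within an $O(\beta^{-1/3})$-neighborhood of $x=1$ (well inside $(x_\nu,1)$ since $1-x_\nu\sim\nu\gg\beta^{-1/3}$) is the key, giving that $\langle\phi,\hat{\psi}_{\lambda,\beta}\rangle$ is much smaller than the crude $\|\phi\|_\infty\|\hat{\psi}\|_1$ prediction and furnishing the missing $\nu^{1/2}$ gain on the parallel coefficient. For the pointwise bound \eqref{eq:618} the identity $\phi(x_\nu)=(\phi-c_\parallel^\nu(U-\nu))(x_\nu)$ is combined with \eqref{eq:89}, which reads $|\phi(x_\nu)|^2\le C|\langle\phi,\hat{\psi}_{\lambda,\beta}\rangle|$, and the same refined inner-product estimate.

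The main obstacle is this bootstrap step controlling $\langle\phi,\hat{\psi}_{\lambda,\beta}\rangle$: naive Cauchy--Schwarz or $L^\infty\times L^1$ estimates fall short and cannot close the loop, so one must exploit the approximate Airy equation satisfied by $\hat{\psi}_{\lambda,\beta}$ together with the boundary conditions $\phi(1)=\phi'(0)=0$ to extract the additional decay that converts $\nu^{-3/2}$ into the sharp $\nu^{-1}$ appearing in \eqref{eq:617}, and then carry the same refinement through to \eqref{eq:618}.
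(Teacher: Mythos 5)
Your outline correctly identifies the right source $v=(U+i\lambda)\hat\psi_{\lambda,\beta}$, the cancellation $v/(U+i\lambda)=\hat\psi$, the loss of a $\nu^{-1/2}$ factor in a naive triangle inequality on Proposition~\ref{prop:linear}, the role of \eqref{eq:89} for the pointwise bound, and the fact that everything hinges on a sharpened estimate of $\langle\phi,\hat\psi\rangle$. But the argument stops precisely where the real work begins, and the direction you gesture at is not the one that closes the gap. The sharpened inner-product estimate is \emph{not} obtained from an approximate Airy equation for $\hat\psi$; the paper uses nothing about $\hat\psi$ beyond the $L^1$ weight bounds \eqref{eq:614}. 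The missing ingredient is an $L^\infty$ bound on $\phi'$ near $x=1$: integrating the Rayleigh balance $(U+i\lambda)^{-1}\A_{\lambda,\alpha}\phi=\hat\psi$ over $(\tilde x_\nu,1)$ with $\tilde x_\nu=(1+x_\nu)/2$ gives $\|\phi''\|_{L^1(\tilde x_\nu,1)}\lesssim\nu\|\phi'\|_{L^\infty(\tilde x_\nu,1)}+[|\lambda|\beta]^{-1/2}$ (using $\alpha^2\lesssim\nu^{-1}$), whence $\|\phi'\|_{L^\infty(\tilde x_\nu,1)}\lesssim\nu^{-1/2}\|\phi'\|_{L^2(\tilde x_\nu,1)}+[|\lambda|\beta]^{-1/2}$. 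Splitting $\langle\phi,\hat\psi\rangle$ at $\tilde x_\nu$ and pairing the near piece with the $(1-x)$-weighted norm (since $\phi(1)=0$ gives $|\phi(x)|\leq(1-x)\|\phi'\|_{L^\infty}$ there) while absorbing the far piece using $\|(1-x)^{1/2}\hat\psi\|_{L^1(0,\tilde x_\nu)}\lesssim\nu^{-5/2}\|(1-x)^3\hat\psi\|_1$ is what produces, together with \eqref{eq:102} and the lower bound $\nu>\beta^{-1/5}$, the bootstrap inequality $|\langle\phi,\hat\psi\rangle|\lesssim(\nu^{-1}|\langle\phi,\hat\psi\rangle|^{1/2}+[|\lambda|\beta]^{-1/2})[|\lambda|\beta]^{-1}$ and hence $|\langle\phi,\hat\psi\rangle|\lesssim[|\lambda|\beta]^{-3/2}$. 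Your claim that "$L^\infty\times L^1$ estimates fall short" is in fact wrong: a carefully split $L^\infty\times L^1$ estimate is exactly what works, once the $L^\infty$ norm of $\phi'$ near $x=1$ has been controlled by direct integration of the equation.

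Two further gaps: you handle only the regime $\alpha^2\lesssim\nu^{-1}$ with $|\mu|<1$. When $\alpha^2>\nu^{-1}$ one must instead invoke \eqref{eq:162}, and when $|\mu|\geq1$ one uses \eqref{eq:100}; both cases are needed to cover the full parameter range $\alpha\geq0$, $\mu\neq0$ stated in the lemma. Also, once $|\langle\phi,\hat\psi\rangle|$ is bounded, passing to $\|\phi'\|_2$ is not an application of Proposition~\ref{prop:linear} directly but of the intermediate estimate \eqref{eq:131} from its proof (which combines the $\|\phi'\|_{L^2(x_\nu,1)}$ bound from \eqref{eq:631} with the $(0,x_\nu)$ control), so you do need to open the proof of Proposition~\ref{prop:linear} as you suspected, but the relevant estimates are \eqref{eq:631} and \eqref{eq:131}, not \eqref{eq:119}.
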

 \begin{proof}~\\
 
   {\em Step 1:
   We prove \eqref{eq:617} and \eqref{eq:618}   for the case $
   \alpha^2<\nu^{-1}$  and $0<|\mu|<1$. \\}

\noindent  By \eqref{eq:89} applied to the pair $(\phi,v)$ with $v= (U+i \lambda) \hat
 \psi$ (see \eqref{eq:616}), it holds that
\begin{equation}
\label{eq:619}
    |\phi(x_\nu)|^2 \leq C \,  |\langle\phi,\hat{\psi}\rangle| \,. 
\end{equation}
Let $\tilde{x}_\nu=(1+x_\nu)/2$.   To estimate the
right hand side of \eqref{eq:619}, we first obtain a 
bound for $\|\phi^\prime\|_{L^\infty(\tilde{x}_\nu,1)}$. To this end, we integrate the balance
$(U+i\lambda)^{-1}\A_{\lambda,\alpha}\phi=\hat{\psi}$ over $(\tilde{x}_\nu,1)$ to obtain
\begin{equation}\label{eq:620}
  \|\phi^{\prime\prime}\|_{L^1(\tilde{x}_\nu,1)}\leq\Big(\alpha^2+\frac{C}{\nu}\Big) \|\phi\|_{L^1(\tilde{x}_\nu,1)}+
  \|\hat{\psi}\|_{L^1(\tilde{x}_\nu,1)}\,.
\end{equation}
Since $\phi(1)=0$ it holds that
\begin{equation}\label{eq:621}
  \|\phi\|_{L^1(\tilde{x}_\nu,1)}\leq
  \|\phi^\prime\|_{L^\infty(\tilde{x}_\nu,1)}\|1-x\|_{L^1(\tilde{x}_\nu,1)}\leq C \, 
  \|\phi^\prime\|_{L^\infty(\tilde{x}_\nu,1)}\nu^2 \,.
\end{equation}
Using \eqref{eq:620}, \eqref{eq:621}, \eqref{eq:614} for $s =0$, and
the fact that (in this step) $\alpha^2\leq |\nu|^{-1}$, we obtain that
\begin{equation}\label{eq:622}
   \|\phi^{\prime\prime}\|_{L^1(\tilde{x}_\nu,1)}\leq
   C\,\big(\nu\|\phi^\prime\|_{L^\infty(\tilde{x}_\nu,1)} +[|\lambda|\beta]^{-1/2}\big) \,.
\end{equation}
Clearly, there exists $z_\nu\in [\tilde{x}_\nu,1]$ such that
\begin{displaymath}
  |\phi^\prime(z_\nu)|\leq|1-\tilde{x}_\nu|^{-1/2}\|\phi^\prime\|_{L^2(\tilde{x}_\nu,1)}\leq \widehat C\, |\nu|^{-1/2}\|\phi^\prime\|_{L^2(\tilde{x}_\nu,1)}\,.
\end{displaymath}
Since for any $x\in (\tilde{x}_\nu,1)$ it holds that
\begin{displaymath}
   |\phi^\prime(x)-\phi^\prime(z_\nu)|\leq  \|\phi^{\prime\prime}\|_{L^1(\tilde{x}_\nu,1)} \,,
\end{displaymath}
we can deduce that
\begin{equation}
\label{eq:623}
   \|\phi^\prime\|_{L^\infty(\tilde{x}_\nu,1)}\leq C\nu^{-1/2}\|\phi^\prime\|_{L^2(\tilde{x}_\nu,1)}+  \|\phi^{\prime\prime}\|_{L^1(\tilde{x}_\nu,1)} \,.
\end{equation}
We can then conclude, using \eqref{eq:622},  that we can choose
$\nu_0>0$ and $C>0$  such  that for all $0<\nu<\nu_0$
\begin{equation}
\label{eq:624}
   \|\phi^\prime\|_{L^\infty(\tilde{x}_\nu,1)}\leq C\big( \nu^{-1/2} \|\phi^\prime\|_{L^2(\tilde{x}_\nu,1)}+[|\lambda|\beta]^{-1/2}\big)\,.
\end{equation}
We now write 
\begin{multline}
\label{eq:625}
   |\langle\phi, \hat \psi\rangle|\leq |\langle\phi, \hat \psi\rangle_{L^2(\tilde{x}_\nu,1)}|+ |\langle\phi, \hat
   \psi\rangle_{L^2(0,\tilde{x}_\nu)}| \\ \leq 
   \|\phi^\prime\|_{L^\infty(\tilde{x}_\nu,1)}\|(1-x)\hat{\psi}\|_{L^1(\tilde{x}_\nu,1)} +
   \|\phi^\prime\|_2\|(1-x)^{1/2}\hat{\psi}\|_{L^1(0,\tilde{x}_\nu)}\,,  
\end{multline}
and then observe that
\begin{equation}\label{eq:626}
\begin{array}{ll}
  \|(1-x)^{1/2}\hat{\psi}\|_{L^1(0,\tilde{x}_\nu)}&  =   \|(1-x)^{-5/2} (1-x)^3\hat{\psi}\|_{L^1(0,\tilde{x}_\nu)} \\
   &  \leq (1-\tilde x_\nu)^{-5/2}  \|(1-x)^3\hat{\psi}\|_{L^1(0,\tilde{x}_\nu)} \\ & \leq 
  C\,  \nu^{-5/2} \, \|(1-x)^3\hat{\psi}\|_1\,.
  \end{array}
\end{equation}
By  \eqref{eq:87} with $v=(U+i\lambda)\hat{\psi}$ together with \eqref{eq:86} it holds that
  \begin{equation}\label{eq:627}
    \|\phi^\prime\|_2\leq \frac{C}{\nu^{3/2}}\|[(1-x)^{1/2}+\nu^{-1/2}(1-x)]\hat{\psi}\|_1 \,.
  \end{equation}
 Hence, by \eqref{eq:614} with $s=1/2,1,3$, \eqref{eq:626} and
\eqref{eq:627}  we obtain that 
\begin{multline*}
   \|\phi^\prime\|_2\|(1-x)^{1/2}\hat{\psi}\|_{L^1(0,\tilde{x}_\nu)}\\ \leq
  \frac{C}{\nu^4} \|[(1-x)^{1/2}+\nu^{-1/2}(1-x)]\hat{\psi}\|_1\|(1-x)^3\hat{\psi}\|_1 \\ \leq  
  \frac{\widehat C}{\nu^4}([|\lambda|\beta]^{-11/4}+\nu^{-1/2}[|\lambda|\beta]^{-3})\,.
\end{multline*}
Then, since by assumption $\nu>\beta^{-1/5}$, we obtain that
\begin{equation}
\label{eq:628}
   \|\phi^\prime\|_2\|(1-x)^{1/2}\hat{\psi}\|_{L^1(0,\tilde{x}_\nu)}\leq
   \widetilde C \beta^{-1/5}\, [|\lambda|\beta]^{-3/2}\,.   
\end{equation}
 By \eqref{eq:614} with $s=1$  and
\eqref{eq:624}  we have that
\begin{equation}\label{eq:629}
  \|\phi^\prime\|_{L^\infty(\tilde{x}_\nu,1)}
  \|(1-x)\hat{\psi}\|_{L^1(\tilde{x}_\nu,1)} \leq C\, \big( \nu^{-1/2}
  \|\phi^\prime\|_{L^2(\tilde{x}_\nu,1)}+[|\lambda|\beta]^{-1/2}\big) \, [|\lambda|\beta]^{-1}\,.
\end{equation}
Substituting \eqref{eq:629}, together with \eqref{eq:628} into
\eqref{eq:625} then yields
\begin{equation}
\label{eq:630}
 |\langle\phi, \hat \psi\rangle|   \leq 
 C(|\nu|^{-1/2}\|\phi^\prime\|_{L^2(x_\nu,1)}+[|\lambda|\beta]^{-1/2})[|\lambda|\beta]^{-1} \,.
\end{equation}
We now use \eqref{eq:102} and \eqref{eq:86} to
obtain that
\begin{equation}
\label{eq:631}
  \|\phi^\prime\|_{L^2(x_\nu,1)}\leq C\Big[ \nu^{-1/2}  |\langle\phi, \hat \psi\rangle|^{1/2} +
  \|[(1-x)^{1/2}+\nu^{-1/2}(1-x)]\hat{\psi}\|_1 \Big]
\end{equation}
Substituting \eqref{eq:631} into \eqref{eq:630} yields, with the aid of
\eqref{eq:614} and the fact that \break $\nu>\beta^{-1/5}$ 
\begin{displaymath} 
 |\langle\phi, \hat \psi\rangle|   \leq
 C(\nu^{-1}|\langle\phi, \hat \psi\rangle|^{1/2}+[|\lambda|\beta]^{-1/2})[|\lambda|\beta]^{-1} \,,
\end{displaymath}
which immediately implies
\begin{equation} \label{eq:632}
 |\langle\phi, \hat \psi\rangle|   \leq
 C\, (\nu^{-2} [|\lambda|\beta]^{-1}+[|\lambda|\beta]^{-1/2})[|\lambda|\beta]^{-1} \leq \widetilde C\,  [|\lambda|\beta]^{-3/2}  \,.
\end{equation}
From the above inequality, with the aid of \eqref{eq:619}, we
conclude \eqref{eq:618}.  Hence, by \eqref{eq:631}, we also get that
\begin{equation}
\label{eq:633}
  \|\phi^\prime\|_{L^2(x_\nu,1)}\leq C \, \nu^{-1/2}  [|\lambda|\beta]^{-3/4}\ \,.
\end{equation}
To obtain an effective bound for $\|\phi^\prime\|_2$ we use
\eqref{eq:131} and  \eqref{eq:86}   to
obtain, with the aid of \eqref{eq:631}, \eqref{eq:633}, and \eqref{eq:614}
\begin{multline}
\label{eq:634}
  \|\phi^\prime\|_2\leq  C \left(  \|[(1-x)^{1/2}+  \nu^{-1/2}(1-x)]\hat{\psi }\|_1  +
  \nu^{-1/2}\|\phi^\prime\|_{L^2(x_\nu,1)}\right)  \\ \leq
  \widetilde C\, \nu^{-1}[|\lambda|\beta]^{-3/4}  \,,
\end{multline}
from which we  conclude \eqref{eq:617} by using Poincar\'e's inequality.\\

{\it Step 2. We prove \eqref{eq:617} and \eqref{eq:618} for the case
$ \alpha^2 >\nu^{-1}$  and $|\mu|\leq 1$. \\}

\noindent To obtain \eqref{eq:617} for $\alpha^2>|\nu|^{-1}$ and $\nu < \nu_0$, we observe that for any
$A_0>0$ we can choose $\nu_0$ such that $\alpha^2 \geq A_0$ for $\nu <\nu_0$. and
consequently use \eqref{eq:162} in the form (with $v = (U+i\lambda) \hat
\psi$)
\begin{equation*}
\|\phi\|_{1,2}\leq  C   \Big\|[(1-x)^{1/2}+\nu^{-1/2}(1-x)]\frac{v}{U+i\lambda}\Big\|_1=  C  \, 
 \|[(1-x)^{1/2}+\nu^{-1/2}(1-x)]\hat \psi  \|_1\,.
\end{equation*} 
Using  \eqref{eq:614} and  the fact that $\nu>\beta^{-1/5}$, we obtain, 
\begin{equation*}
\|\phi\|_{1,2}\leq  \widehat C \,  \Big([|\lambda|\beta]^{-3/4}+\nu^{-1/2}
[|\lambda|\beta]^{-1}\Big) \leq \widetilde C \,  [|\lambda|\beta]^{-3/4}\,,
\end{equation*}
which implies \eqref{eq:617} for $\alpha^2 > \nu^{-1}$.  We now use
\eqref{eq:619} to obtain
\begin{equation}\label{eq:635}
   |\phi(x_\nu)|^2 \leq C\|\phi^\prime\|_2 \|(1-x)^{1/2}\hat \psi  \|_1\,.
\end{equation}
We may then conclude \eqref{eq:618} as well by using \eqref{eq:617}
and \eqref{eq:614}.  \\

{\it Step 3. We prove \eqref{eq:617} and \eqref{eq:618} for  $|\mu|\geq 1$. }\\[1.5ex] 
 The proof of \eqref{eq:617} in this case follows from \eqref{eq:100}
which yields
\begin{displaymath}
   \|\phi^\prime\|_2^2\leq C\, |\langle\phi,\hat{\psi}\rangle|\,.
\end{displaymath}
Consequently, by \eqref{eq:614} we obtain that
\begin{displaymath}
  \|\phi^\prime\|_2^2\leq C \|\phi^\prime\|_2 \|(1-x)^{1/2}\hat{\psi}\|_1  \leq  \widehat{C} \, [\|\lambda|\beta]^{-3/4}\|\phi^\prime\|_2 \,,
\end{displaymath}
yielding, thereby,
\begin{displaymath}
  \|\phi^\prime\|_2 \leq \widehat C \, [\|\lambda|\beta]^{-3/4} \,.
\end{displaymath}
  We can then complete the proof of \eqref{eq:617} by using 
   Poincar\'e's  inequality.   The proof of \eqref{eq:618} follows from
   \eqref{eq:617}  and Sobolev embeddings.
\end{proof}

 We next consider (as in Proposition \ref{prop:edge})   the case
$|\lambda|\ll1$ and $\alpha$ large enough, which will be sufficient to guarantee a satisfactory
estimate of $\|\A_{\lambda,\alpha}^{-1}\|$ despite the fact that $\A_{0,0}$ is
not injective.
 \begin{lemma}
   \label{lem:other} 
   Let $\delta>0$, and $U\in C^3([0,1])$ satisfy \eqref{eq:10}.  There exist
   positive constants $\Upsilon_0$, $\lambda_0$, $C$ and $\beta_0$ such that, for
   all $\beta\geq \beta_0$, all $ \lambda\in\C\setminus\{0\}$ for which $ -\lambda_0
   <\Re\lambda\leq\beta^{-1/3}\Upsilon_0$, $|\Im\lambda|<\lambda_0$, and $\alpha\geq\alpha_{\lambda,\delta} $, it
   holds that
 \begin{subequations}
 \label{eq:636}
      \begin{equation}
 |c_\parallel(\lambda,\beta,\alpha) |\leq \frac{C}{ |\alpha^2\|U\|_2^2+i\lambda|}(\lambda_\beta^{-1}\beta^{-2/3}+\lambda_\beta^{-1/2}|\lambda|\beta^{-1/3}) \,,
    \end{equation}
  and
 \begin{multline}
  \Big\|\phi_{\lambda,\beta,\alpha} - c_\parallel(\lambda,\beta,\alpha)  U\Big\|_{1,2}\leq
  C\Big[\lambda_\beta^{-3/4}\beta^{-1/2}+ \\ +
  \frac{|\lambda|}{ |\alpha^2\|U\|_2^2+i\lambda|}(\lambda_\beta^{-1}\beta^{-2/3}+\lambda_\beta^{-1/2}|\lambda|\beta^{-1/3})\Big] \,, 
 \end{multline}
 \end{subequations}
where, as in \eqref{eq:19},
 \begin{equation*}
 c_{\parallel}(\lambda,\beta,\alpha)   = \frac{\langle U,\phi_{\lambda,\beta,\alpha} \rangle }{\|U\|_2^2}\,,
\end{equation*}	 
  and   as  in 
  \eqref{eq:51} 
\begin{displaymath}
\alpha_{\lambda,\delta}=\|U\|_2^{-1}\left(|\Im\lambda|(1+2\delta)\right)^{1/2}\,.
\end{displaymath}
  \end{lemma}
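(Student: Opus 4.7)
The plan is to apply Proposition~\ref{prop:edge} directly to the pair $(\phi_{\lambda,\beta,\alpha}, v)$ with $v=(U+i\lambda)\hat\psi_{\lambda,\beta}$, since by the definition \eqref{eq:616} we have $\A_{\lambda,\alpha}\phi_{\lambda,\beta,\alpha}=v$. The crucial observation is that with this particular choice of right-hand side, the quotient $v/(U+i\lambda)=\hat\psi_{\lambda,\beta}$, so that the quantities $N_0(v,\lambda)$, $N_1(v,\lambda)$ defined in \eqref{eq:49}--\eqref{eq:50} collapse to weighted $L^1$ norms of $\hat\psi_{\lambda,\beta}$ that are already controlled by \eqref{eq:614}. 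Choosing $\lambda_0$ small enough (and $\beta_0$ large so that $\Upsilon_0\beta_0^{-1/3}\leq\lambda_0$), the pair $(\phi_{\lambda,\beta,\alpha},v)$ meets the hypotheses $0<|\lambda|<\lambda_0$ and $\alpha\geq\alpha_{\lambda,\delta}$ required by Proposition~\ref{prop:edge}.

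First I would bound the three quantities entering (\ref{eq:52}a,b). Using $|U(x)|\leq C(1-x)$ (from (\ref{eq:10}a,b,d)) and applying \eqref{eq:614} with $s=1$ and $s=0$,
\begin{equation*}
\|v\|_1\leq C\|(1-x)\hat\psi_{\lambda,\beta}\|_1+|\lambda|\,\|\hat\psi_{\lambda,\beta}\|_1\leq C\bigl(\lambda_\beta^{-1}\beta^{-2/3}+|\lambda|\lambda_\beta^{-1/2}\beta^{-1/3}\bigr).
\end{equation*}
For $N_1(v,\lambda)=|\langle1,\hat\psi_{\lambda,\beta}\rangle|$ the crude bound $N_1\leq\|\hat\psi_{\lambda,\beta}\|_1\leq C\lambda_\beta^{-1/2}\beta^{-1/3}$ (again from \eqref{eq:614} with $s=0$) is sufficient, and for $N_0(v,\lambda)$ we use the first entry of the minimum in \eqref{eq:49} together with \eqref{eq:614} at $s=1/2$ to get $N_0\leq C\lambda_\beta^{-3/4}\beta^{-1/2}$. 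Adding the $N_1$-contribution to $\|v\|_1$ is harmless:
\begin{equation*}
\|v\|_1+|\lambda|N_1(v,\lambda)\leq C\bigl(\lambda_\beta^{-1}\beta^{-2/3}+|\lambda|\lambda_\beta^{-1/2}\beta^{-1/3}\bigr).
\end{equation*}

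Since $1+C|\lambda|^2\log|\lambda|^{-1}\leq\widehat C$ uniformly for $|\lambda|\leq\lambda_0$ with $\lambda_0$ small, substituting these bounds into (\ref{eq:52}a) yields (\ref{eq:636}a) immediately. Substituting the bound on $N_0$ together with the bracket above into (\ref{eq:52}b), and recalling from \eqref{eq:19} that $\phi_\perp=\phi-c_\parallel U$, gives (\ref{eq:636}b).

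The main technical point to verify is the applicability of Proposition~\ref{prop:edge}; the conditions $|\lambda|<\lambda_0$ and $\alpha\geq\alpha_{\lambda,\delta}$ are ensured by the hypotheses of the lemma once $\lambda_0$ and $\Upsilon_0$ are chosen sufficiently small. The delicate case $\mu=0$ with $0<\nu<U(0)$ is handled by the weighted domain $D(\A_{0,\alpha}^{\mathfrak N,\Df})$ introduced at the start of Section~\ref{sec:2}, on which Proposition~\ref{prop:edge} remains valid since its proof never uses $\mu\neq 0$. Beyond this definitional check, the rest of the argument is a mechanical concatenation of \eqref{eq:614}, (\ref{eq:52}a,b), and the factorization $v=(U+i\lambda)\hat\psi_{\lambda,\beta}$.
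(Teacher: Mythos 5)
Your proof is correct and takes essentially the same approach as the paper: both apply Proposition~\ref{prop:edge} to the pair $(\phi_{\lambda,\beta,\alpha}, (U+i\lambda)\hat\psi_{\lambda,\beta})$, exploit the factorization $v/(U+i\lambda)=\hat\psi_{\lambda,\beta}$ to reduce $\|v\|_1$, $N_0$, and $N_1$ to weighted $L^1$ norms of $\hat\psi_{\lambda,\beta}$, and then invoke \eqref{eq:614} at $s=0,1/2,1$ before absorbing the $1+C|\lambda|^2\log|\lambda|^{-1}$ prefactor into the constant for small $\lambda_0$. Your extra remark about the weighted domain for $\mu=0$, $\nu\in(0,U(0))$ is a point the paper passes over silently but does not change the argument.
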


  \begin{proof}~\\
  We write as in \eqref{eq:19} with $\phi= \phi_{\lambda,\beta,\alpha}$
  \begin{displaymath}
    \phi=c_\parallel  \, U+\phi_\perp \,.
  \end{displaymath}
 Then by (\ref{eq:52}a) and (\ref{eq:50}) , there exists $\lambda_1$ such that for $0 < |\lambda|< \lambda_1$, 
     \begin{equation}
 \label{eq:637}
 |c_\parallel |\leq \frac{1+C|\lambda|^2\log
  |\lambda|^{-1}}{ |\alpha^2\|U\|_2^2+i\lambda|}\big(\|(U+i\lambda)\hat{\psi}\|_1+C|\lambda| \|\hat{\psi}\|_1\big) \,.
     \end{equation}
It follows from (\ref{eq:614}) (with $s=0$) that for some
  positive $C$
  \begin{equation}
  \label{eq:638}
    \|\hat{\psi}\|_1 \leq C\, \lambda_\beta^{-1/2} \,\beta^{-1/3}\,.
  \end{equation}
 Furthermore, by \eqref{eq:638} and \eqref{eq:614} (with $s=1$)
 \begin{equation}
 \label{eq:639} 
    \|(U+i\lambda)\hat{\psi}\|_1 \leq|\lambda|\, \|\hat \psi\|_1 + C  \|(1-x)\hat \psi\|_1 
  \leq  \widehat C \, \big( |\lambda|\lambda_\beta^{-1/2}\beta^{-1/3} + \lambda_\beta^{-1}\beta^{-2/3}\big )\,.
 \end{equation}
 Consequently, for $\beta_0$ large enough, there exists $\lambda_0 <
 \lambda_1/\sqrt{2} $ such that for any $ |\lambda| \leq \sqrt{2} \lambda_0$  it holds
 by \eqref{eq:637}, that 
 \begin{displaymath} 
  |c_\parallel |\leq \frac{C}{|\alpha^2\,\|U\|_2^2 +i\lambda|}\big(\lambda_\beta^{-1}\beta^{-2/3}+|\lambda|\lambda_\beta^{-1/2}\beta^{-1/3}\big) \,,
 \end{displaymath}
 readily verifying (\ref{eq:636}a).\\
  We now apply (\ref{eq:52}b) to
 obtain  
  \begin{displaymath} 
    \|\phi_\perp\|_{1,2} \leq  C\Big[ \|(1-x)^{1/2}\hat{\psi} \|_1+\frac{|\lambda|}{|\alpha^2\|U\|_2^2 +i\lambda|}(\|U\hat \psi\|_1+C|\lambda| \|\hat \psi\|_1)\Big] \,.
  \end{displaymath}
  which, combined with \eqref{eq:614} (with $s=1/2$ and $s=1$), \eqref{eq:638},
 yields
  \begin{equation}\label{eq:640}
     \|\phi_\perp\|_{1,2} \leq     C\Big[\lambda_\beta^{-3/4}\beta^{-1/2} 
   +  \frac{|\lambda|}{|\alpha^2 \|U\|_2^2 +i\lambda|}(\lambda_\beta^{-1}\beta^{-2/3}+\lambda_\beta^{-1/2}|\lambda|\beta^{-1/3})\Big] \,,  
  \end{equation}
 establishing, thereby,  (\ref{eq:636}b).
   \end{proof}

  \subsection{Resolvent estimates and Fredholm property}
  \label{sec:5.2}
We recall from the introduction that 
  \begin{equation}\label{eq:641}
     \B_{\lambda,\alpha,\beta}^{\mathfrak N, \mathfrak D} =(\LL_\beta -\beta\lambda)\Big(\frac{d^2}{dx^2}-\alpha^2\Big)  -i\beta U^{\prime\prime} \,.
   \end{equation}
   on $(0,1)$ with domain (see \eqref{eq:p29ab})
  \begin{equation}\label{eq:642}
 D(\B_{\lambda,\alpha,\beta}^{\mathfrak N, \mathfrak D})=\{u\in H^4(0,1)\,,\,  u^\prime(0)=u^{(3)}(0)=0  \mbox{ and }\,  u(1)=u^\prime(1)=0\},
 \end{equation}
and 
  \begin{equation}
  \label{eq:p29a}
    \LL_\beta  = -\frac{d^2}{dx^2}+i\beta U\,.
  \end{equation}
  Note that this domain is independent of the parameters $(\lambda,\alpha,\beta)$,
  i.e., \break $D(\B_{\lambda,\alpha,\beta}^{\mathfrak N, \mathfrak
    D})=D(\B_{0,0,0}^{\mathfrak N, \mathfrak D})$. 

It can be easily verified that $\B_{0,0,0}^{\mathfrak N, \mathfrak D}$ is invertible.
Next we observe that
   \begin{displaymath}
\B_{\lambda,\alpha,\beta}^{\mathfrak N, \mathfrak D} \big(\B_{0,0,0}^{\mathfrak N, \mathfrak D}\big)^{-1}  = I + K_{\lambda,\alpha,\beta}\,,
   \end{displaymath}
   where $ K_{\lambda,\alpha,\beta} $ is a compact operator from $L^2(0,1)$ to
   $L^2(0,1)$. Hence, $ I + K_{\lambda,\alpha,\beta}$ is a Fredholm operator. 
   Considering again the family $\B_{\lambda,\alpha,\beta}^{\mathfrak N, \mathfrak
     D} =  ( I + K_{\lambda,\alpha,\beta})\,\B_{0,0,0}^{\mathfrak N, \mathfrak D}$, we can conclude that it is a Fredholm family from
   $D(\B_{0,0,0}^{\mathfrak N, \mathfrak D})$ into $L^2(0,1)$.  \\
Since
   its index depends continuously on $(\alpha,\beta,\lambda)$ and vanishes
   for $(\alpha,\beta,\lambda)=(0,0,0)$, it must be zero for all $(\lambda,\alpha,\beta)\in\C\times\R^2$.
    
   The rest of Section \ref{sec:5} is dedicated to the estimation of
   $(\B_{\lambda,\alpha,\beta}^{\Df,\Nf})^{-1}$. In practice, we first show in
   each subsection that for some subset of parameters $(\lambda,\alpha,\beta)$,
  \begin{equation}
\label{eq:921}
    \phi\in  D(\B_{\lambda,\alpha,\beta}^{\Df,\Nf})\Rightarrow \|\phi\|_{1,2}\leq
  C(\lambda,\alpha,\beta)\|f\|_2\,.
  \end{equation}
 where $f=\B_{\lambda,\alpha,\beta}^{\Df,\Nf}\phi$. It follows
  that $\B_{\lambda,\alpha,\beta}^{\Df,\Nf}$ is injective and since its index
  vanishes, we can conclude the existence of
  $(\B_{\lambda,\alpha,\beta}^{\Df,\Nf})^{-1}:L^2(0,1)\to D(\B_{0,0,0}^{\mathfrak N,
    \mathfrak D})$ together with the estimate  
    \begin{equation}
\label{eq:921a}
\|(\B_{\lambda,\alpha,\beta}^{\Df,\Nf})^{-1}\|+\Big\|\frac{d}{dx}(\B_{\lambda,\alpha,\beta}^{\Df,\Nf})^{-1}\Big\|\leq
  C(\lambda,\alpha,\beta)\,.
  \end{equation}

  \subsection{Resolvent estimates for $\beta^{-1/5}\ll|\Im\lambda|<U(0)$}

The next proposition is somewhat similar to \cite[Lemma
    8.8]{almog2019stability} albeit with a significant difference: the fact
    that $\A_{0,0}$ is not invertible, which makes the  estimates
    become significantly more complex. 
\begin{proposition}
  \label{lem:right-curve} Let $U\in C^4([0,1])$ satisfy
  \eqref{eq:10} and the assumption $U^{\prime\prime\prime}(0)=0$. Let $0<\delta<1/5$, 
$\nu_0<U(0)$ and $\alpha_0$
  denote positive constants. There exist $C>0$, and $\beta_0>0$ such that
  for all $\beta\geq \beta_0$, it holds that
  \begin{equation}
\label{eq:643}
      \sup_{ \begin{subarray}{c}
          0\leq \alpha\leq\alpha_0\\
    \Re\lambda \leq   \beta^{-2/5-\delta}\\
          \beta^{-1/5+\delta}\leq\Im\lambda<\nu_0
           \end{subarray}} \Im \lambda \Big(
\big\|(\B_{\lambda,\alpha,\beta}^{\mathfrak N,\Df})^{-1}\big\|+
      \big\|\frac{d}{dx}\, (\B_{\lambda,\alpha,\beta}^{\mathfrak N,\Df})^{-1}\big\|\Big)\leq
      C \beta^{-1/2+ \delta}\,.
  \end{equation}
\end{proposition}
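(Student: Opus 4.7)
The plan is to establish an a priori bound on any $\phi\in D(\B_{\lambda,\alpha,\beta}^{\Nf,\Df})$ satisfying $\B_{\lambda,\alpha,\beta}^{\Nf,\Df}\phi=f$ with $f\in L^2(0,1)$, namely $\nu\|\phi\|_{1,2}\leq C\beta^{-1/2+\delta}\|f\|_2$; combined with the Fredholm property from Subsection~\ref{sec:5.2}, this yields invertibility and \eqref{eq:643}. The method will combine Proposition~\ref{prop:linear} (the $1/\nu$ inviscid estimate on $\A_{\lambda,\alpha}^{-1}$ in $0<\nu<\nu_1$) with the Schr\"odinger resolvent estimates of Sections~\ref{sec:3}--\ref{sec:4}, together with the boundary-layer analysis centred on the Airy function $\hat\psi_{\lambda,\beta}$ from \eqref{eq:613}.

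First I would introduce the decomposition $\phi=A\,\phi_{\lambda,\beta,\alpha}+\tilde\phi$, where $\phi_{\lambda,\beta,\alpha}$ is the function defined in \eqref{eq:616} and the scalar $A\in\C$ is tuned so that $w:=\phi''-\alpha^2\phi$ satisfies $\langle\mathfrak z_\alpha,w\rangle=0$ (equivalent to $\phi'(1)=0$). Setting $\tilde w:=\tilde\phi''-\alpha^2\tilde\phi$, a direct computation shows $\tilde w\in D(\LL_\beta^{\mathfrak z_\alpha})$ and
\begin{equation*}
  (\LL_\beta^{\mathfrak z_\alpha}-\beta\lambda)\tilde w = f + i\beta U''\phi - A\,\B_{\lambda,\alpha,\beta}^{\Nf,\Df}\phi_{\lambda,\beta,\alpha}.
\end{equation*}
Using $\A_{\lambda,\alpha}\phi_{\lambda,\beta,\alpha}=(U+i\lambda)\hat\psi_{\lambda,\beta}$, the residual $\B\phi_{\lambda,\beta,\alpha}$ reduces to terms supported where $\hat\psi_{\lambda,\beta}$ is nontrivial, so its $L^2$-norm is controlled by \eqref{eq:460} plus lower-order contributions of the form $U''\phi_{\lambda,\beta,\alpha}/(U+i\lambda)$, which are estimated via Lemmas~\ref{lem:inviscid-decay} and~\ref{lem:other}.

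The heart of the proof is to invoke Proposition~\ref{prop:linear} through the identity
\begin{equation*}
  \A_{\lambda,\alpha}\phi = \frac{i}{\beta}(w''+f),
\end{equation*}
obtained by dividing $\B\phi=f$ by $i\beta$ and rearranging. This yields $\|\phi-c_\parallel^\nu(U-\nu)\|_{1,2}+\nu^{1/2}|c_\parallel^\nu|\leq C\nu^{-1}N(v,\lambda)$ with $v=\frac{i}{\beta}(w''+f)$. Estimating $N(v,\lambda)$ requires transferring the second derivative off $w$ through two integrations by parts against the weight $(1-x)^{1/2}+\nu^{-1/2}(1-x)$ from \eqref{eq:86}; the resulting norms of $\tilde w$, $\tilde w'$ and boundary values $\tilde w(1)$, $\tilde w'(1)$ are controlled via Propositions~\ref{prop5.3}, \ref{H1-estimate-0}, and~\ref{lem:integral-conditions} applied to the Schr\"odinger equation for $\tilde w$. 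The constant $A$ itself is recovered from the Dirichlet matching $A\,\phi_{\lambda,\beta,\alpha}(1)+\tilde\phi(1)=0$ or, more uniformly, from the integral condition via Proposition~\ref{lem:integral-conditions} (with $\zeta=\mathfrak z_\alpha$), the denominator being of order $\lambda_\beta^{1/2}$ by \eqref{eq:451}.

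The principal obstacle is closing the estimate. After assembling all the pieces, one obtains an inequality of the form $\nu\|\phi\|_{1,2}\leq C\beta^{-1/2+\delta}\|f\|_2+\sum_j C_j\,\nu^{-a_j}\beta^{-b_j}\|\phi\|_{1,2}$, and each correction term must be strictly smaller than $\nu$ for absorption into the left-hand side. The lower bound $\Im\lambda\geq\beta^{-1/5+\delta}$ is precisely the threshold at which the dominant correction --- produced by the coupling $i\beta U''\phi$ in the Schr\"odinger equation for $\tilde w$, combined with the $1/\nu$ factor of Proposition~\ref{prop:linear} and the boundary-layer scale $\beta^{-1/3}\lambda_\beta^{-1/2}$ --- becomes absorbable, while the restriction $\Re\lambda\leq\beta^{-2/5-\delta}$ plays the analogous role for the error generated at the critical layer $x=x_\nu$. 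Verifying this balance, in particular cleanly matching the inviscid singular layer at $x_\nu$ with the viscous Airy layer at $x=1$ inside the integrals defining $N(v,\lambda)$, is the most delicate technical step, and it is precisely what forces the $\beta^{-1/5+\delta}$ lower bound on $\Im\lambda$.
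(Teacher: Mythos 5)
Your proposal correctly identifies the two key ingredients — the $1/\nu$ inviscid estimate from Proposition~\ref{prop:linear} and the Airy boundary layer $\hat\psi_{\lambda,\beta}$ together with Proposition~\ref{lem:integral-conditions} — and the observation $\A_{\lambda,\alpha}\phi=\tfrac{i}{\beta}(w''+f)$ with $w:=\phi''-\alpha^2\phi$ is correct. However, the plan for assembling them has several concrete gaps, and it diverges from the paper's argument in a way that does not close.

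\textbf{The decomposition does not place $\tilde w$ in $D(\LL_\beta^{\mathfrak z_\alpha})$.} You write $\phi=A\,\phi_{\lambda,\beta,\alpha}+\tilde\phi$ and ``tune'' $A$ so that $\langle\mathfrak z_\alpha,w\rangle=0$. But $\langle\mathfrak z_\alpha,\phi''-\alpha^2\phi\rangle=0$ is automatic for every $\phi\in D(\B_{\lambda,\alpha,\beta}^{\Nf,\Df})$: integration by parts uses only $\mathfrak z_\alpha''=\alpha^2\mathfrak z_\alpha$, $\phi(1)=\phi'(1)=0$, $\phi'(0)=\mathfrak z_\alpha'(0)=0$. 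So no choice of $A$ is being made. For $\tilde w=\tilde\phi''-\alpha^2\tilde\phi$ to lie in $D(\LL_\beta^{\mathfrak z_\alpha})$ one needs $\tilde w'(0)=0$ and $\langle\mathfrak z_\alpha,\tilde w\rangle=0$. The latter reduces to $\tilde\phi'(1)=0$, i.e.\ $A\,\phi_{\lambda,\beta,\alpha}'(1)=0$, and $\phi_{\lambda,\beta,\alpha}'(1)\neq0$ in general ($\phi_{\lambda,\beta,\alpha}$ satisfies only the Dirichlet condition at $x=1$, not Neumann). The former needs $\phi'''_{\lambda,\beta,\alpha}(0)=0$, also false generically. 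So $\tilde w\notin D(\LL_\beta^{\mathfrak z_\alpha})$ unless $A=0$, and the Schr\"odinger equation you wish to invert is posed on the wrong domain. Moreover, the ``Dirichlet matching'' $A\phi_{\lambda,\beta,\alpha}(1)+\tilde\phi(1)=0$ cannot recover $A$ because $\phi_{\lambda,\beta,\alpha}(1)=0$ (it is in the range of $\A^{-1}_{\lambda,\alpha}$): the equation is vacuously satisfied for all $A$. In the paper, $A$ is not a free parameter at all — the decomposition is $\phi=\phi_\Df+\check\phi$ with $\check\phi=-\phi''(1)\phi_{\lambda,\beta,\alpha}$, and $\phi''(1)$ is then estimated via Proposition~\ref{lem:integral-conditions} applied to $\phi''-\alpha^2\phi$ (which does satisfy the integral and Neumann conditions).

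\textbf{The Rayleigh estimate cannot be applied directly to $v=\tfrac{i}{\beta}(w''+f)$.} Proposition~\ref{prop:linear} requires $v\in W^{1,p}(0,1)$, while $w''=\phi^{(4)}-\alpha^2\phi''$ is only in $L^2$ since $\phi\in H^4$; $v'$ would need $\phi\in H^5$. If instead you substitute back the Schr\"odinger equation for $w$ to rewrite $v$ algebraically (which gives $v=U''\phi-(U+i\lambda)w=\A_{\lambda,\alpha}\phi$, a tautology), the norm $N(v,\lambda)$ still contains $w=\phi''-\alpha^2\phi$ uncontrolled, and its singular layer near $x=x_\nu$ is precisely the thing needing control — the ``two integrations by parts against the weight'' does not by itself produce a closed inequality.

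\textbf{The proposed supporting estimates are posed on the wrong operator.} Propositions~\ref{prop5.3} and~\ref{H1-estimate-0} concern $\LL_\beta^{\Nf,\Df}$ (Dirichlet at $x=1$), not $\LL_\beta^{\mathfrak z_\alpha}$; and Proposition~\ref{lem:integral-conditions} for the latter only controls the boundary value $v(1)$, not the full $H^1$ norms of $\tilde w$. The paper's route avoids this by shifting the Airy boundary layer to the \emph{viscous} side: it defines $v_\Df=\A_{\lambda,\alpha}\phi+(U+i\lambda)\phi''(1)\hat\psi$, which, by the choice of the Airy profile, satisfies $v_\Df(1)=v_\Df'(0)=0$ and so sits in $D(\LL_\beta^{\Nf,\Df})$, where the strong resolvent estimates of Section~\ref{sec:3} apply. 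Then $\phi_\Df=\A_{\lambda,\alpha}^{-1}v_\Df$ is fed into \eqref{eq:87} using the second form in the minimum defining $N$, and $n_\Df$ is bounded from the Schr\"odinger estimates \eqref{eq:682}--\eqref{eq:684}. This separation — Dirichlet operator for the main Schr\"odinger unknown, integral-condition operator only to pin down the scalar $\phi''(1)$, inviscid estimate on $\phi_\Df$ — is what makes the bootstrap close; your proposal conflates the roles of the two Schr\"odinger operators and leaves the scalar undetermined.
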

\begin{proof}~\\
We assume throughout the proof, without any loss of generality, that $0 < \delta \leq 1/30$.

{\it Step 1: Preliminaries.}\\
  Let $\phi\in D(\B_{\lambda,\alpha,\beta}^{\mathfrak N,\Df})$ and $f= \B_{\lambda,\alpha,\beta}^{\mathfrak N,\Df}\, \phi$. Let
  further $v_\Df\in H^2(0,1)$ be defined by
  \begin{equation}
\label{eq:644}
    v_\Df =\A_{\lambda,\alpha}\phi + (U+i\lambda)\phi^{\prime\prime}(1)\hat{\psi} \,,
  \end{equation}
where $\hat{\psi}=\hat \psi_{\lambda,\beta}$ is given by \eqref{eq:613}.  Note that by
  \eqref{eq:p29ab}, \eqref{eq:10} and the fact that $U^{\prime\prime\prime}(0)=0$, we
  have
\begin{equation}\label{eq:645}
v_\Df(1)=v_\Df^\prime(0)=0\,,
\end{equation}
and
hence  $v_\Df\in D(\LL_\beta^{\Nf,\Df})$ and  we may introduce, as in
\cite[Lemma 8.8]{almog2019stability}, 
  \begin{subequations}
\label{eq:646} 
\begin{equation}
g_\Df:=  (\LL_\beta^{\Nf,\Df}-\beta\lambda)v_{\Df} \,,
\end{equation}
which is expressible in the alternative form (using the fact that $f=
\B_{\lambda,\alpha,\beta}^{\mathfrak N,\Df}\, \phi$) 
\begin{equation}
\label{eq:647}
  g_\Df=(U+i\lambda)( -f + \phi^{\prime\prime}(1)\hat{g}) -   (U^{\prime\prime}\phi)^{\prime\prime} 
  -2U^\prime \tilde{v}_\Df^\prime-U^{\prime\prime} \tilde{v}_\Df\,,
\end{equation}
  \end{subequations}
wherein
\begin{equation}
\label{eq:648}
  \hat{g}: =  \big( \LL_\beta -\beta \lambda \big)\hat{\psi}  \,,
\end{equation}
and
\begin{equation}
\label{eq:649}
   \tilde{v}_\Df : = \frac{v_{\Df}-U^{\prime\prime}\phi}{U+i\lambda}=  - \phi^{\prime\prime} + \alpha^2\phi + \phi^{\prime\prime}(1)\hat{\psi}\,. 
\end{equation}
We note that 
\begin{equation}
\label{eq:650} 
   (\LL_\beta^\Nf-\beta\lambda)\tilde{v}_{\Df}= i\beta U^{\prime\prime}(\phi-\phi(x_\nu))+ i\beta U^{\prime\prime}\phi(x_\nu)- f + 
   \phi^{\prime\prime}(1)\hat{g} \,,
\end{equation}
where $x_\nu$ is given by \eqref{eq:defxnu}.
Recall that $\LL_\beta^\Nf$ stands for    $\LL_\beta^{\Nf,\Df}$, which is
defined in \eqref{eq:285}. 

As in the proof of \cite[Lemma 8.8]{almog2019stability} (see Eq.
  (8.90) there) we can
integrate by parts to obtain 
\begin{multline}
\label{eq:651}
  \Re\langle(U^{\prime\prime})^{-1}\tilde{v}_{\Df},(\LL_\beta^\Nf-\beta\lambda)\tilde{v}_{\Df}-i\beta U^{\prime\prime}\phi\rangle=
  \|(U^{\prime\prime})^{-1/2}\tilde{v}_\Df^\prime\|_2^2+ \\
  +\Re\langle\big((U^{\prime\prime})^{-1}\big)^\prime\tilde{v}_\Df,\tilde{v}_\Df^\prime\rangle
 -\beta \mu\, \|(U^{\prime\prime})^{-1/2}\tilde{v}_\Df\|_2^2  +\beta\, \Re\langle\phi^{\prime\prime}(1)\hat{\psi},i\phi\rangle
\end{multline}
We begin the estimation  of $\tilde{v}_\Df^\prime$ by obtaining a
bound for the last term on the
right-hand-side of \eqref{eq:651}. \\[1.5ex]

{\em Step 2:  Estimate of $ \beta
  \Re\langle\phi^{\prime\prime}(1)\hat{\psi}, i \phi\rangle\,$.\\}

\noindent We begin by writing $\phi$  as the sum  $$\phi = \hat w + \phi^{\prime\prime}(1)w $$ with 
\begin{displaymath}
  w(x)=\int_x^1(\xi-x)\hat{\psi}(\xi)\,d\xi\,,
\end{displaymath} 
 and the remainder
\begin{displaymath}
\hat w(x) := \int_x^1(\xi-x)[\phi^{\prime\prime}(\xi)-\phi^{\prime\prime}(1)\hat{\psi}(\xi)]\,d\xi\,.
\end{displaymath}
Then, we separately
  estimate the contribution of $ \beta
  \Re\langle\phi^{\prime\prime}(1)\hat{\psi}, i \phi^{\prime\prime}(1) w \rangle\,$ and $ \beta
  \Re\langle\phi^{\prime\prime}(1)\hat{\psi}, i \hat w \rangle\,$.
 By \eqref{eq:615} it holds that
\begin{displaymath}
 |w(x)|\leq C\, |1-x|^2\,.
\end{displaymath}
Consequently,
\begin{displaymath}
  |\Re\langle\phi^{\prime\prime}(1)\hat{\psi},i\phi^{\prime\prime}(1)w\rangle|\leq C \, |\phi^{\prime\prime}(1)|^2 \|(1-x)^2\hat{\psi}\|_1 \,,
\end{displaymath}
and hence, by \eqref{eq:614} with $s=2$, we then obtain that
\begin{equation}
\label{eq:652}
 \beta  |\Re\langle\phi^{\prime\prime}(1)\hat{\psi},i\phi^{\prime\prime}(1)w\rangle|\leq C \, [1+
    |\lambda|\beta^{1/3}]^{-3/2}\,  |\phi^{\prime\prime}(1)|^2 
\end{equation}
(see  \cite[Eq. (8.90)]{almog2019stability}).\\
To estimate $ \beta
  \Re\langle\phi^{\prime\prime}(1)\hat{\psi}, i \hat w \rangle\,$, 
we first obtain by \eqref{eq:649}, for $x\in (0,1)$,
\begin{equation}\label{eq:abcd}
\begin{array}{ll}
  \Big|\overline{\hat{\psi}(x)} \hat w  (x) \Big|&=
  \Big|\overline{\hat{\psi}(x)}\int_x^1(\xi-x)[-{\tilde
    v}_\Df(\xi)+\alpha^2\phi(\xi)]\,d\xi\Big|\\[1.5ex] & \leq C \, (1-x)^{5/2}\,
  |\hat{\psi}(x)|(\|\tilde{v}_\Df^\prime\|_2+\alpha^2\|\phi^\prime\|_2)\,, 
  \end{array}
\end{equation}
where  to obtain the last inequality we used the fact that
  \begin{displaymath}
|\phi(x)|\leq(1-x)^{1/2}\|\phi^\prime\|_2 \mbox{ and }|\tilde{v}_\Df(x)|\leq(1-x)^{1/2}\|\tilde{v}_\Df^\prime\|_2\,.
\end{displaymath}
  Using \eqref{eq:614} with $s=5/2$, we obtain
  \begin{equation}\label{eq:653}
   \beta
  |\Re\langle\phi^{\prime\prime}(1)\hat{\psi}, i \hat w \rangle\,| \leq  C |\phi^{\prime\prime}(1)|  \lambda_\beta^{-7/4} \,
    \beta^{-1/6} \big(\|\tilde{v}_\Df^\prime\|_2+\alpha^2\|\phi^\prime\|_2\big)\,.
  \end{equation}
Consequently, from \eqref{eq:652} and \eqref{eq:653}, we thus get, as
$\alpha\leq \alpha_0\,$, 
\begin{multline}
\label{eq:654} 
  \beta|\, \Re\langle\phi^{\prime\prime}(1)\hat{\psi},i\phi\rangle|\leq C \Big( \big[1+
    |\lambda|\beta^{1/3}]^{-3/2} |\phi^{\prime\prime}(1)|^2 
    \\ \quad +  \lambda_\beta^{-7/4} \,
    \beta^{-1/6}|\phi^{\prime\prime}(1)|(\|\tilde{v}_\Df^\prime\|_2+ \|\phi^\prime\|_2)\Big)\,.
  \end{multline}
 (See  \cite[Eq. (8.93)]{almog2019stability}.)\\

{\em Step 3: Estimate $ |\phi^{\prime\prime}(1)|$.}\\[1.5ex]
Let $\mathfrak z=\mathfrak z_\alpha$ be given by
\eqref{eq:603}  and recall that $\phi \in  D(\B_{\lambda,\alpha,\beta}^{\mathfrak
  N,\Df})$.  As $\langle{\mathfrak z},\phi^{\prime\prime}-\alpha^2\phi\rangle=0\,$,
  $\phi^{\prime\prime}-\alpha^2\phi$  belongs  
to the domain of $\LL_\beta^{\mathfrak z}-\beta\lambda$. Hence we may write
\begin{equation}
\label{eq:655}
   (\LL_\beta^{\mathfrak z}-\beta\lambda)(\phi^{\prime\prime}-\alpha^2\phi)=i\beta U^{\prime\prime}\phi+f\,.
\end{equation}
We separately solve in $D(\LL_\beta^{\mathfrak z})$ the equations
$(\LL_\beta^{\mathfrak z}-\beta\lambda)v=f$ and \break $(\LL_\beta^{\mathfrak z}-\beta\lambda)v_1=i
\beta U^{\prime\prime}\phi:=f_1 $, so that $(\phi^{\prime\prime}-\alpha^2\phi)=v_1+v_2$, and then
apply  \eqref{eq:447}  to the pair
$(v_1,f_1)$ (note that the assumptions of Proposition
\ref{lem:integral-conditions} are satisfied) and \eqref{eq:445} to the
pair $(v,f)$  to obtain 
\begin{equation}
  \label{eq:656}
|\phi^{\prime\prime}(1)|
\leq C \,
\lambda_\beta^{1/2}\, \big(\beta^{1/3}[\|\phi\|_{1,2}+ |\phi(x_\nu)|\,\big|\log|\nu+i\mf|^{-1}\big|]+\beta^{-1/2}\|f\|_2\big)\,,
\end{equation}
where $\mf$ is defined in (\ref{eq:447}b).
Note that since $\alpha\leq\alpha_0$ it holds that $\|{\mathfrak z}\|_{1,p}\leq C(\alpha_0)$.
Note further that
\begin{displaymath}
   |\phi(x_\nu)|\,\big|\log|\nu+i\mf|^{-1}\big| \leq C\, \nu^{1/2} \big
   |\log|\nu+i\mf|^{-1}\big|\, \|\phi^\prime\|_2\leq \widehat C\, \|\phi^\prime\|_2\,. 
\end{displaymath}
Then, for any $\hat \nu_0 >0$, there exists a constant $C>0$ such that
for  $|\lambda|\geq \hat  \nu_0\beta^{-1/3}$, (in particular it holds for $\nu >
\beta^{-1/5}$ for sufficiently large $\beta_0$).
\begin{equation}
  \label{eq:657}
|\phi^{\prime\prime}(1)|
\leq 
C \, |\lambda|^{1/2}\big(\beta^{1/2}[\|\phi\|_{1,2}+\beta^{-1/3}\|f\|_2\big)\,.
\end{equation}

Substituting  \eqref{eq:656} into \eqref{eq:654} yields
\begin{multline*}
  \beta|\, \Re\langle\phi^{\prime\prime}(1)\hat{\psi},i\phi\rangle|\leq
  C\Big[ \lambda_\beta^{-1/2} \, \big(\beta^{2/3}\|\phi\|_{1,2}^2 +\beta^{-1}\|f\|_2^2\big) + \\ +
  \lambda_\beta^{-5/4}\big(\beta^{1/3}\|\phi\|_{1,2}+\beta^{-1/2}\|f\|_2\big)   
  \big(\|\tilde{v}_\Df^\prime\|_2+ \|\phi^\prime\|_2\big)\Big] \,, 
\end{multline*}
from which,  we conclude that for
any $\hat \delta>0$ there exists $C_{\hat \delta}>0$ such that 
\begin{equation}
  \label{eq:658}
  \beta|\, \Re\langle\phi^{\prime\prime}(1)\hat{\psi},i\phi\rangle|\leq
  C_{\hat \delta} \big(\lambda_\beta^{-1/2} (\beta^{2/3}\|\phi\|_{1,2}^2+\beta^{-1}\|f\|_2^2\big)+ \hat \delta \, \|\tilde{v}_\Df^\prime\|_2^2\,.
\end{equation}
 Since, as in the proof of \eqref{eq:459} (see also \cite[Eq.
(6.18)]{almog2019stability}), we have for $\hat g$, introduced in
\eqref{eq:648}, 
\begin{equation}
\label{eq:659}
   |\hat{g}(x)|\leq C\,  \big(\beta\,(1-x)^2 |\hat{\psi}(x)|+ (1-x)^3  |\hat{\psi}^\prime(x)|\big) \,,
\end{equation}
we obtain from   \eqref{eq:451},  \eqref{eq:457}, \eqref{eq:458},
and \eqref{eq:558} that  
\begin{equation}
\label{eq:660} 
\beta^{1/3} \lambda_\beta^{-1}  \|(U+i\lambda)\hat{g}\|_2+ \|\hat{g}\|_2\leq
 C\beta^{1/6}\lambda_\beta^{-5/4} \,,
\end{equation}
Using the fact that $|\lambda|>\beta^{-1/5}$ we can then conclude
\begin{equation}
\label{eq:661}
 |\lambda|^{-1}\|(U+i\lambda)\hat{g}\|_2+ \|\hat{g}\|_2 \leq C\, \beta^{-1/4}|\lambda|^{-5/4}\,.
\end{equation}

{\it Step 4: Estimate of $\|\tilde v^\prime_\Df\|$.}\\[1.5ex]
From \eqref{eq:651}, we obtain, using \eqref{eq:650} and the fact that
$U^{\prime\prime}\neq 0$  
\begin{multline}
\label{eq:662}
 \frac 1C   \|\tilde{v}_\Df^\prime\|_2^2 \leq  \Re\langle(U^{\prime\prime})^{-1}\tilde{v}_{\Df}, - f + \phi^{\prime\prime}(1) \hat g \rangle
  \\
  - \Re\langle\big((U^{\prime\prime})^{-1}\big)^\prime\tilde{v}_\Df,\tilde{v}_\Df^\prime\rangle
 + \beta \mu\, \|\tilde{v}_\Df\|_2^2  - \beta\, \Re\langle\phi^{\prime\prime}(1)\hat{\psi},i\phi\rangle
\end{multline}
Next we obtain from \eqref{eq:662} and \eqref{eq:658}
(for sufficiently small $\hat \delta$)  that
\begin{multline}
\label{eq:663}
   \|\tilde{v}_\Df^\prime\|_2^2 \leq C\, \Big[\|\tilde{v}_\Df\|_2\big(\|f\|_2
   +|\phi^{\prime\prime}(1)|\,\|\hat{g}\|_2\big) +\\
   + (\mu_{\beta,+}  +1)
   \|\tilde{v}_\Df\|_2^2+ \lambda_\beta^{-1/2} \big( \beta^{2/3}\|\phi\|_{1,2}^2+\beta^{-1}\|f\|_2^2\big)\Big]
   \,,
\end{multline}
where
\begin{displaymath}
\mu_{\beta,+} := \max(\mu \beta,0)\,.
\end{displaymath}
We now substitute \eqref{eq:656} and \eqref{eq:660}  into
\eqref{eq:663}  to obtain 
 \begin{multline}
\label{eq:664}
  \|\tilde{v}_\Df^\prime\|_2^2\leq C\,\Big(\|\tilde{v}_\Df\|_2\big[\|f\|_2 +  \lambda_\beta^{-3/4}\beta^{1/2}\|\phi\|_{1,2}\big]+
\\+ \lambda_\beta^{-1/2} (\beta^{2/3}\|\phi\|_{1,2}^2+\beta^{-1}\|f\|_2^2)
+  (\mu_{\beta,+} +1)  \, \|\tilde{v}_\Df\|_2^2 \Big)\,.
 \end{multline} 
Hence,  for $|\lambda|>\beta^{-1/5}$ we can conclude that
 \begin{multline}
  \label{eq:665}
  \|\tilde{v}_\Df^\prime\|_2^2 \leq C\,\Big[ \|\tilde{v}_\Df\|_2(\|f\|_2
  +\beta^{1/4}|\lambda|^{-3/4}\|\phi\|_{1,2}) \\ + (\mu_{\beta,+} +1) 
\|\tilde{v}_\Df\|_2^2 +|\lambda|^{-1/2}\beta^{1/2}\|\phi\|_{1,2}^2+|\lambda|^{-1/2}\beta^{ -7/6}\|f\|_2^2\Big]\,.
\end{multline}

 {\it Step 5:  Estimate of $\|\tilde v_\Df\|$.}\\[1.5ex]
Given that $\nu<\nu_0$,  we may apply \eqref{eq:412}, (\ref{eq:287}a),   and Hardy's
inequality \eqref{eq:96}, to \eqref{eq:650} to obtain
\begin{equation}
\label{eq:666}
  \|\tilde{v}_{\Df}\|_2
  \leq C\big(\|\phi^\prime\|_2+\beta^{1/6}|\phi(x_\nu)|+\beta^{-2/3}[\|f\|_2+|\phi^{\prime\prime}(1)|\,\|\hat{g}\|_2]\big)\,.
\end{equation}
Using again  \eqref{eq:660} and \eqref{eq:656} we obtain
\begin{equation*}
  \|\tilde{v}_{\Df}\|_2
  \leq C\Big(\|\phi^\prime\|_2+\beta^{1/6}|\phi(x_\nu)|+\beta^{-2/3}\big[\|f\|_2+ 
    \lambda_\beta^{-3/4}\big(\beta^{1/2}\|\phi\|_{1,2}+\beta^{-1/3}\|f\|_2\big)\big]\Big)\,,
\end{equation*}
which implies  for any $\nu<\nu_0$
\begin{equation}
\label{eq:667}
  \|\tilde{v}_{\Df}\|_2
  \leq C\big(\|\phi\|_{1,2} +\beta^{1/6}|\phi(x_\nu)|+\beta^{-2/3} \|f\|_2 \big)\,.
\end{equation}

{\it Step 6: Estimate of $\|g_\Df\|_2$.}\\[1.5ex]
Substituting \eqref{eq:667}  into \eqref{eq:664}
yields  that 
\begin{multline}
  \label{eq:668}
\|\tilde{v}_\Df^\prime\|_2 \leq C\,
\big[  (\beta^{1/3}\lambda_\beta^{-1/4}+1) \|\phi\|_{1,2}+\beta^{1/6} |\phi(x_\nu)| \\
+(1 + \mu_{\beta,+}^{1/2}  \beta^{-2/3}  ) \|f\|_2
+\mu_{\beta,+}^{1/2}(\beta^{1/6}|\phi(x_\nu|+ \|\phi^\prime\|_2)\big]\,.
\end{multline}
For $|\lambda| \geq \beta^{-1/5}$ and $\mu \leq \beta^{-2/5} $ we conclude from
\eqref{eq:668} that
 \begin{multline*}
\|\tilde{v}_\Df^\prime\|_2 \leq C\,
\big[ (\beta^{1/4}|\lambda|^{-1/4}+1) \|\phi\|_{1,2}+\beta^{1/6}|\phi(x_\nu)|
+ \|f\|_2\\
+\mu_{\beta,+}^{1/2}(\beta^{1/6}|\phi(x_\nu|)+ \|\phi^\prime\|_2)\big]\,.
\end{multline*}
Furthermore, as
\begin{displaymath}
  (U^{\prime\prime}\phi)^{\prime\prime}=
  -U^{\prime\prime}\big(\tilde{v}_\Df+\alpha^2\phi+|\phi^{\prime\prime}(1)|\,\hat{\psi}\big)+2U^{(3)}\phi^\prime+U^{(4)}\phi \,,
\end{displaymath}
we obtain from \eqref{eq:667}, the boundedness of $\alpha$,
\eqref{eq:656}, and \eqref{eq:562}, that 
 \begin{equation}
\label{eq:669}
  \|(U^{\prime\prime}\phi)^{\prime\prime}\|_2\leq
  C\lambda_\beta^{\frac{1}{4}}(\beta^{1/6}\|\phi\|_{1,2}+\beta^{-2/3}\|f\|_2)\,.
\end{equation}
For $|\lambda| \geq \beta^{-1/5}$ the above inequality implies
 \begin{equation}
\label{eq:670}
  \|(U^{\prime\prime}\phi)^{\prime\prime}\|_2\leq
  C \, |\lambda|^{1/4}  \beta^{1/4} \, (\|\phi\|_{1,2}+  \beta^{-5/6}\|f\|_2\big)\,.
\end{equation}

By (\ref{eq:646}b) it holds that 
 \begin{equation}\label{eq:671}
  \| g_\Df\| \leq C \Big( \| (U+i\lambda)f\|_2 + |\phi^{\prime\prime}(1)|\,  \| (U+i\lambda) \hat{g}\|  + \|  (U^{\prime\prime}\phi)^{\prime\prime}) \| + 
  \|  \tilde{v}_\Df^\prime\| + \| \tilde{v}_\Df \|\Big)\,.
\end{equation}
We now substitute into \eqref{eq:671} the estimates \eqref{eq:657}
and \eqref{eq:661}, \eqref{eq:670}, \eqref{eq:668}, and
\eqref{eq:665}, to obtain that
\begin{multline}
\label{eq:672}
  \|g_\Df\|_2\leq C\big[ (1+|\lambda|) \, \|f\|_2
  +\mu_{\beta,+}^{1/2}(\beta^{1/6}|\phi(x_\nu)|+\|\phi\|_{1,2}) \\
  +  \beta^{1/4}(|\lambda|^{-1/4}+|\lambda|^{1/4})\,\|\phi\|_{1,2} \big] \,. 
\end{multline}
{\it Step 7: Estimate the contribution of the boundary term
$\A_{\lambda,\alpha}^{-1}\big([U+i\lambda]\phi^{\prime\prime}(1)\hat{\psi} \big)$. }\\[1.5ex] 

We continue as in the proof of \cite[Lemma 8.8]{almog2019stability}. We
first write, in view of \eqref{eq:644}
\begin{subequations}\label{eq:673}
\begin{equation}
  \phi = \phi_\Df + \check{\phi} \,,
\end{equation}
where 
\begin{equation}
  \phi_\Df =\A_{\lambda,\alpha}^{-1}v_\Df \quad ; \quad
 \check{\phi}=-\A_{\lambda,\alpha}^{-1}\big([U+i\lambda]\phi^{\prime\prime}(1)\hat{\psi} \big) \,. 
\end{equation}
 Note here that 
\begin{equation}
\check{\phi}=-\phi^{\prime\prime}(1)\phi_{\lambda,\beta,\alpha}\,.
\end{equation}
\end{subequations}
By  \eqref{eq:617} and \eqref{eq:657} we have
\begin{equation}\label{eq:674}
  \|\check{\phi}\|_{1,2}\leq
  C \,  |\nu|^{-1}\beta^{-1/4} |\lambda|^{-1/4}\,\big(\|\phi\|_{1,2}+\beta^{-5/6}\|f\|_2\big)\,.
\end{equation}
Consequently,  as $|\nu|>\beta^{-1/5+\delta}$,  we obtain for sufficiently large  $\beta_0$
\begin{equation}
  \label{eq:675}
 \|\check{\phi}\|_{1,2}\leq   C \,  |\nu|^{-1}\beta^{-1/4} |\lambda|^{-1/4}\, \big(\beta^{-5/6}\|f\|_2+\|\phi_\Df\|_{1,2}\big)\,.
\end{equation}
 Furthermore, by \eqref{eq:618} and (\ref{eq:673}c), we have
\begin{displaymath}
  |\check{\phi}(x_\nu)|\leq C[|\lambda|\beta]^{-3/4}|\phi^{\prime\prime}(1)| \,.
\end{displaymath}
Hence, by \eqref{eq:657} we obtain
 \begin{displaymath}
  |\check{\phi}(x_\nu)|\leq C[|\lambda|\beta]^{-1/4} \, \big(\|\phi\|_{1,2} +\beta^{-5/6}\|f\|_2\big)   \,,
\end{displaymath}
 and, therefore,  by \eqref{eq:675} and Poincar\'e's inequality we may
 conclude for $\nu\geq \beta^{-1/5+\delta}$ and $\beta_0$ large enough 
\begin{equation}
  \label{eq:676}
  |\check{\phi}(x_\nu)|\leq C[|\lambda|\beta]^{-1/4} \, \big(\|\phi_\Df^\prime\|_{2} +\beta^{-5/6}\|f\|_2\big)  \,. 
\end{equation}
\vspace{4ex}

{\em Step 8: Estimate $\phi_\Df$. }\\ 

Substituting the above
into \eqref{eq:672} yields,  with the aid of
\eqref{eq:673}, \eqref{eq:675},  and \eqref{eq:676} 
 \begin{multline}
\label{eq:677}
   \|g_\Df\|_2\leq C\Big[(1+|\lambda|)\|f\|_2
  +\mu_{\beta,+}^{1/2}(\beta^{1/6}|\phi_\Df(x_\nu)|+ \|\phi_\Df\|_{1,2})  \\
  +\beta^{1/4}(|\lambda|^{-1/4}+|\lambda|^{1/4})\|\phi_\Df\|_{1,2}\Big] \,.    
\end{multline}

By either \eqref{eq:157} or \eqref{eq:167} (for $|\mu|$ large) applied
to the pair $(v_\Df,\phi_\Df)$ together with \eqref{eq:645}, it holds
for any $1<q<2$ that
\begin{equation}\label{eq:678}
  |\phi_\Df(x_\nu)|\leq C\, \big[\|v_\Df\|_{1,q}+\nu^{-1/2}\|v_\Df\|_2+\nu^{-1}\|v_\Df\|_1\big] \,.
\end{equation}
We now estimate $\|\phi_\Df\|_{1,2}$ by applying \eqref{eq:86} and
\eqref{eq:87} to the pair $(v_\Df,\phi_\Df)$. We then conclude that
there exists $\mu_0>0$ such that for all $|\mu| \leq \mu_0$ and $0< \nu \leq
\nu_0$ it holds that
\begin{equation}\label{eq:679}
\|\phi_\Df\|_{1,2} \leq C \, \nu^{-1} n_\Df  \,,
\end{equation}
where
\begin{displaymath}
 n_D:=(\|v_\Df^\prime\|_q +  \nu^{-1/2}\|v_\Df\|_2+\nu^{-1}\|v_\Df\|_1) \,.
\end{displaymath}
For $|\mu|>\mu_0$ we use \eqref{eq:167} to obtain  for $0< \nu \leq
\nu_0$
\begin{equation}\label{eq:680}
  \|\phi_\Df\|_{1,2} \leq C\|(1-x)^{1/2}v_\Df \|_1\leq  C \, \nu^{-1} n_\Df  \,.
\end{equation}
We can then substitute \eqref{eq:678} and \eqref{eq:679} or
\eqref{eq:680} into \eqref{eq:677} to obtain  for any
$1<q<2$
 \begin{multline}
  \label{eq:681}
  \|g_\Df\|_2\leq C\, \Big([1+|\lambda|]\|f\|_2 +
 \big[\mu_{\beta,+}^{1/2}[\beta^{1/6}+  \nu^{-1}] \\  +\beta^{1/4}\nu^{-1}
 (|\lambda|^{-1/4}+|\lambda|^{1/4})\big]\, n_\Df\Big) 
\end{multline}

{\it Step 9: Estimate $n_\Df$.} \\

By either (\ref{eq:287}a) for $ -\beta^{-1/3}\leq\mu<\beta^{-2/5 -\delta }$ or \eqref{eq:363} for
$\mu<-\beta^{-1/3}$ we have, 
\begin{equation}
  \label{eq:682}
\|v_\Df\|_2 \leq \frac{C}{\beta^{\frac{2}{3}}+|\mu|\beta}\|g_\Df\|_2 \,,
\end{equation}
whereas by (\ref{eq:287}b),  which holds for $\mu\leq C\beta^{-1/3}$  and
$\beta_0$ large,  we have for all $\beta>\beta_0$
\begin{equation}
\label{eq:683}
  \|v_\Df^\prime\|_q \leq C_q\, \beta^{-\frac{2+q}{6q}}\|g_\Df\|_2 \,.
\end{equation}
 Furthermore by \eqref{eq:403} it holds that
\begin{equation}
\label{eq:684}
  \|v_\Df\|_1 \leq C_q\, \beta^{-5/6}\|g_\Df\|_2 \,.
\end{equation}
Substituting \eqref{eq:682}, \eqref{eq:683}, and \eqref{eq:684} into \eqref{eq:681}
yields, for $\delta \leq 1/30$, $\beta_0$ large enough, and $q$ satisfying
\begin{equation}
\label{eq:685}
1<q<\frac{4}{4-15\delta}\,,
\end{equation}
we obtain  the existence of $C>0$ such that for all $\beta \geq \beta_0$
\begin{equation}\label{eq:686}
   \|g_\Df\|_2\leq C\, (1+|\lambda|)\|f\|_2\,. 
\end{equation}
 We now use \eqref{eq:682} to obtain
\begin{equation}
\label{eq:687}
  \|v_\Df\|_2 \leq
  C\frac{1+|\lambda|}{\beta^{\frac{2}{3}}+|\mu|\beta}\|f\|_2\leq \widehat C \beta^{-2/3}\|f\|_2 \,,
\end{equation}
which is valid for all $\mu<\beta^{-\frac{2}{5}-\delta}$.

We next use \eqref{eq:683} and \eqref{eq:681}  to obtain
\begin{equation}\label{eq:688} 
 \|v^\prime_\Df\|_q \leq  C \beta^{-\frac{2+q}{6q}} (1+|\lambda|) \|f\|_2\,.
\end{equation}
Finally, use of \eqref{eq:684} and \eqref{eq:681} yields
\begin{equation}\label{eq:689} 
 \|v_\Df\|_1 \leq  C \beta^{-5/6}  (1+|\lambda|) \|f\|_2\,.
\end{equation}
 For
$-\mu_0\leq\mu<\beta^{-\frac{2}{5}-\delta}$ we have by \eqref{eq:687},
\eqref{eq:688} and \eqref{eq:689} that  the dominant term is the
one involving $ \|v_\Df^\prime\|_q $ and hence
\begin{equation} \label{eq:690}
   n_\Df\leq C \, \beta^{-\frac{2+q}{6q}}\|f\|_2 \,.
\end{equation}

{\it Step 10: Prove \eqref{eq:643}.} \\[1.5ex]

By \eqref{eq:679} we obtain, for
$-\mu_0\leq\mu\leq\beta^{-\frac{2}{5}-\delta}$\,, 
\begin{equation}
\label{eq:691}
  \|\phi_\Df\|_{1,2}\leq C \nu^{-1} \, \beta^{-\frac{2+q}{6q}}\|f\|_2 \,.
\end{equation}
For $\mu<-\mu_0$ we use \eqref{eq:167} and the first inequality in
\eqref{eq:687} to obtain
\begin{equation}\label{eq:692}
   \|\phi_\Df\|_{1,2}\leq C\, \|v_\Df\|_2\leq \widehat C\beta^{-1}\|f\|_2 \,.
\end{equation}
Combining \eqref{eq:675} with  \eqref{eq:691} and \eqref{eq:692} yields
\begin{equation}\label{eq:693}
   \|\phi\|_{1,2}\leq  C \nu^{-1} \,\beta^{-\frac{2+q}{6q}}\|f\|_2 \,.
\end{equation}
\end{proof}

\subsection{Resolvent estimates for $|\Im\lambda|=\OO (\beta^{-1/5})$
  and large $\beta^{1/10}\alpha$} 
\label{sec:5.3}
In the previous subsection we considered the case where $\Im \lambda \gg
\beta^{-1/5}$, where the inverse estimates derived in Subsection \ref{ss2.6} for the
Rayleigh operator $\A_{\lambda,\alpha}$ become effective for all $\alpha\geq0$. Here, we
use the estimates obtained in Subsection
\ref{sec:case-lambda-small-alpha-large} assuming $\alpha^2\gg|\Im\lambda|$. Since we
consider here $|\Im\lambda|=\OO (\beta^{-1/5})$ we need to consider
$\alpha^2\gg\beta^{-1/5}$.

\begin{proposition}
\label{lem:small-lambda-intermediate-alpha}
Let $ U\in C^4([0,1])$ satisfy \eqref{eq:10} and $U^{\prime\prime\prime}(0)=0$. Let
further $ 0 < \delta< 1/15$ and $\alpha_0$ denote positive constants. There
exist $C>0$, $\beta_0>0$, and $a_0 >0$ such that for all $\beta\geq \beta_0$, it
holds that
  \begin{equation}
\label{eq:694}
      \sup_{
        \begin{subarray}{c}
          a_0 \beta^{-1/10+\delta/2}\leq \alpha\leq\alpha_0\\
   \Re\lambda \leq   \beta^{-1/3-\delta} \\
          |\Im\lambda|\leq \beta^{-1/5+\delta}
           \end{subarray}}
\Big[\big\|(\B_{\lambda,\alpha,\beta}^{\mathfrak N,\Df})^{-1}\big\|+
      \Big\|\frac{d}{dx}\, (\B_{\lambda,\alpha,\beta}^{\mathfrak
        N,\Df})^{-1}\Big\|\Big]\leq C\, \beta^{-1/2+\delta}\,.
  \end{equation}
\end{proposition}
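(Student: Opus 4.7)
The plan is to adapt the scheme used in the proof of Proposition \ref{lem:right-curve} to the regime where $|\Im\lambda|$ is too small for the inviscid bound of Proposition \ref{prop:linear} (which carries a factor $\nu^{-1}$) to be of use, and to exploit instead the hypothesis $\alpha\geq a_0\beta^{-1/10+\delta/2}$. This lower bound on $\alpha$ gives $\alpha^2\gtrsim\beta^{-1/5+\delta}\geq\|U\|_2^{-2}(1+2\delta)(\Im\lambda)_+$ provided $a_0$ is taken sufficiently large, so the $\nu$-free inviscid estimates of Proposition \ref{prop:edge}, and consequently Lemma \ref{lem:other}, become available.

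First, I would set, as in \eqref{eq:644}--\eqref{eq:650}, for $\phi\in D(\B_{\lambda,\alpha,\beta}^{\Nf,\Df})$ and $f=\B_{\lambda,\alpha,\beta}^{\Nf,\Df}\phi$,
\begin{displaymath}
 v_\Df=\A_{\lambda,\alpha}\phi+(U+i\lambda)\phi^{\prime\prime}(1)\hat\psi_{\lambda,\beta},\qquad g_\Df=(\LL_\beta^{\Nf,\Df}-\beta\lambda)v_\Df,
\end{displaymath}
and use the decomposition $\phi=\phi_\Df+\check\phi$ from \eqref{eq:673}, with $\phi_\Df=\A_{\lambda,\alpha}^{-1}v_\Df$ and $\check\phi=-\phi^{\prime\prime}(1)\phi_{\lambda,\beta,\alpha}$. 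To control the boundary contribution $\check\phi$, I would invoke Lemma \ref{lem:other}: since $|\alpha^2\|U\|_2^2+i\lambda|\geq\alpha^2\|U\|_2^2\gtrsim\beta^{-1/5+\delta}$ and $\lambda_\beta\leq1+\beta^{2/15+\delta}$, \eqref{eq:636} gives, uniformly for $\lambda\neq0$, $\|\phi_{\lambda,\beta,\alpha}\|_{1,2}\leq C\beta^{-1/2}$ (the point $\lambda=0$ is handled by continuity). For $|\phi^{\prime\prime}(1)|$ I would apply Proposition \ref{lem:integral-conditions} to the equation $(\LL_\beta^{\mathfrak z_\alpha}-\beta\lambda)(\phi^{\prime\prime}-\alpha^2\phi)=i\beta U^{\prime\prime}\phi+f$ with $\zeta=\mathfrak z_\alpha$ from \eqref{eq:603} (which lies in $\mathfrak U_1(\beta,\lambda,\gamma,\theta)$ as $\alpha$ is bounded). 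Since $x_\nu$ is close to $1$ when $|\nu|$ is small, $|\phi(x_\nu)|\leq(1-x_\nu)^{1/2}\|\phi^\prime\|_2\lesssim\beta^{-1/10+\delta/2}\|\phi^\prime\|_2$, which absorbs the $\log|\nu+i\mf|^{-1}$ correction in \eqref{eq:447} and yields, as in \eqref{eq:656},
\begin{displaymath}
 |\phi^{\prime\prime}(1)|\leq C\lambda_\beta^{1/2}\bigl(\beta^{1/3}\|\phi\|_{1,2}+\beta^{-1/2}\|f\|_2\bigr).
\end{displaymath}
Combining with the bound on $\|\phi_{\lambda,\beta,\alpha}\|_{1,2}$ gives $\|\check\phi\|_{1,2}\leq C\beta^{-1/6+\delta/2}\|\phi\|_{1,2}+C\beta^{-1+\delta/2}\|f\|_2$.

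Second, I would bound $\phi_\Df$ by applying Proposition \ref{prop:edge} to the pair $(\phi_\Df,v_\Df)$, which is legitimate because $\alpha\geq\alpha_{\lambda,\delta}$, to obtain $\|\phi_\Df\|_{1,2}\leq C(N_0(v_\Df,\lambda)+\alpha^{-2}|\lambda|(\|v_\Df\|_1+|\lambda|N_1(v_\Df,\lambda)))$. The norms of $v_\Df$ are then estimated through Proposition \ref{prop5.3}: since $|\nu|\leq\beta^{-1/5+\delta}$ forces $x_\nu$ close to $1$ and hence $\mathfrak J_\nu=|U^\prime(x_\nu)|$ bounded from below by a positive constant, (\ref{eq:287}a,b) and \eqref{eq:403} produce $\|v_\Df\|_2\lesssim\beta^{-2/3}\|g_\Df\|_2$, $\|v_\Df^\prime\|_q\lesssim\beta^{-(2+q)/(6q)}\|g_\Df\|_2$ and $\|v_\Df\|_1\lesssim\beta^{-5/6}\|g_\Df\|_2$ for $q\in(1,2)$. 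Inserting these bounds, along with the control $\|(U^{\prime\prime}\phi)^{\prime\prime}\|_2\leq C\beta^{1/4}|\lambda|^{1/4}(\|\phi\|_{1,2}+\beta^{-5/6}\|f\|_2)$ as in \eqref{eq:670} and the Airy estimates \eqref{eq:660}--\eqref{eq:661} on $\hat g=(\LL_\beta-\beta\lambda)\hat\psi$, into the identity \eqref{eq:647} for $g_\Df$ and following the bookkeeping of Steps 6--10 of the proof of Proposition \ref{lem:right-curve}, one obtains $\|g_\Df\|_2\leq C(1+|\lambda|)\|f\|_2$, and hence a bound on $\|\phi_\Df\|_{1,2}$ which, for $q$ sufficiently close to $1$, is dominated by $C\beta^{-1/2+\delta}\|f\|_2$.

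The main obstacle will be the closing of the bootstrap: one must ensure that the prefactor $C\beta^{-1/6+\delta/2}$ in front of $\|\phi\|_{1,2}$ arising from $\|\check\phi\|_{1,2}$ is small enough to be absorbed into the left-hand side, and simultaneously that the inviscid loss $\alpha^{-2}\simeq\beta^{1/5-\delta}$ for $\phi_\Df$ is compensated by the gain $\beta^{-(2+q)/(6q)}$ from the Schr\"odinger resolvent. This imposes $q$ close to $1$ (analogous to \eqref{eq:685}) and is responsible for the restriction $\delta<1/15$ and the threshold $\alpha\geq a_0\beta^{-1/10+\delta/2}$ with $a_0$ large.
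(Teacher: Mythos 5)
Your overall scheme is the same as the paper's: decompose $\phi=\phi_\Df+\check\phi$ via \eqref{eq:644}--\eqref{eq:673}, apply Lemma~\ref{lem:other} to $\check\phi=-\phi''(1)\phi_{\lambda,\beta,\alpha}$, apply Proposition~\ref{prop:edge} to $\phi_\Df$, control $\phi''(1)$ via Proposition~\ref{lem:integral-conditions} through the integral-condition operator with $\zeta=\mathfrak z_\alpha$, and then close a bootstrap. However, there is a genuine gap at the crucial absorption step, and the restriction placed on $a_0$ is attributed to the wrong part of the argument.

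The claim that Lemma~\ref{lem:other} yields $\|\phi_{\lambda,\beta,\alpha}\|_{1,2}\leq C\beta^{-1/2}$ uniformly is false. From \eqref{eq:636}a the $c_\parallel U$ component of $\phi_{\lambda,\beta,\alpha}$ contributes $|c_\parallel|\lesssim \frac{\lambda_\beta^{-1/2}|\lambda|\beta^{-1/3}}{|\alpha^2\|U\|_2^2+i\lambda|}$, and whenever $|\lambda|$ is not small compared to $|\alpha^2\|U\|_2^2+i\lambda|$ (for instance $\mu$ negative with $|\mu|\gtrsim a_0^2\beta^{-1/5+\delta}$, a range inside the allowed $\mu<\beta^{-1/3-\delta}$) the quotient $\frac{|\lambda|}{|\alpha^2\|U\|_2^2+i\lambda|}$ is of order one and one only gets $|c_\parallel|\lesssim\lambda_\beta^{-1/2}\beta^{-1/3}$, which exceeds $\beta^{-1/2}$ as soon as $\lambda_\beta\lesssim\beta^{1/3}$. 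Plugging this into $\|\check\phi\|_{1,2}=|\phi''(1)|\,\|\phi_{\lambda,\beta,\alpha}\|_{1,2}$ together with the $|\phi''(1)|\lesssim\lambda_\beta^{1/2}\beta^{1/3}\|\phi\|_{1,2}+\ldots$ estimate gives
\begin{equation*}
\|\check\phi\|_{1,2}\lesssim\lambda_\beta^{1/2}\beta^{1/3}\cdot\lambda_\beta^{-1/2}\beta^{-1/3}\,\|\phi\|_{1,2}=\|\phi\|_{1,2}\,,
\end{equation*}
so the prefactor in front of $\|\phi\|_{1,2}$ is $O(1)$, not $\beta^{-1/6+\delta/2}$, and the bootstrap cannot close by smallness of a negative power of $\beta$. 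What rescues the argument is precisely that, using $\alpha\geq a_0\beta^{-1/10+\delta/2}$ in the denominator $|\alpha^2\|U\|_2^2+i\lambda|$ of \eqref{eq:636}, one extracts an explicit factor $a_0^{-2}$ -- this is the content of the paper's key intermediate estimate $\|\check\phi\|_{1,2}\leq C a_0^{-2}\beta^{-1/3}\lambda_\beta^{-1/2}|\phi''(1)|$ (equation \eqref{eq:703}, Step 3a), leading to $\|\check\phi\|_{1,2}\leq C(\beta^{-5/6}\|f\|_2+a_0^{-2}\|\phi_\Df\|_{1,2})$, which is then absorbed by taking $a_0$ large. Your write-up puts the $a_0$-largeness mechanism on the $\phi_\Df$ side (through the factor $\alpha^{-2}\simeq a_0^{-2}\beta^{1/5-\delta}$ multiplying $\|v_\Df\|_1$), where it is in fact unnecessary -- $\alpha^{-2}\|v_\Df\|_1\lesssim a_0^{-2}\beta^{-19/30-\delta}\|g_\Df\|_2$ is already acceptably small regardless of $a_0$ -- while the $\check\phi$ estimate, which is where $a_0$ is truly needed, is stated without it. Relatedly, the paper does not use the crude bound $|\phi(x_\nu)|\lesssim\beta^{-1/10+\delta/2}\|\phi'\|_2$ for all occurrences of $\phi(x_\nu)$: it carefully separates $|\check\phi(x_\nu)|$ (estimated in Step~3b using the decomposition $\phi_{\lambda,\beta,\alpha}=(\phi_{\lambda,\beta,\alpha}-c_\parallel U)+c_\parallel U$ and giving a gain $\beta^{-1/5+\delta}$) from $|\phi_\Df(x_\nu)|$ (estimated via \eqref{eq:52}a and \eqref{eq:85} with a further gain $\beta^{-\delta}$). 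These refinements, together with the $a_0^{-2}$ factor in \eqref{eq:703}, are what pin down the threshold $\delta<1/15$, and neither is present in your proposal.
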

\begin{proof}~\\
{\em Step 1: With $g_\Df$ given by \eqref{eq:647}, we prove that
\begin{multline}
\label{eq:695}
   \|g_\Df\|_2\leq C\big[ (1+|\lambda|) \, \|f\|_2\\
  +(\lambda_\beta^{-1/4}\beta^{1/3}+\beta^{1/3-\delta/2} + \lambda_\beta^{1/4}
  \beta^{1/6})\|\phi\|_{1,2}+ \beta^{1/2-\delta/2}|\phi(x_\nu)|) \big] \,. 
\end{multline}
}

    Let $\phi\in D(\B_{\lambda,\alpha,\beta}^{\mathfrak N,\Df})$, $f= \B_{\lambda,\alpha,\beta}^{\mathfrak N,\Df}\, \phi$ and
    $v_\Df\in H^2(0,1)$ defined by \eqref{eq:644}. 
As before we write
\begin{displaymath}
(\LL_\beta^\Nf-\beta\lambda)v_{\Df}=g_\Df \,,
\end{displaymath}
  Let $\tilde{v}_\Df$ be given by \eqref{eq:649}. For
the convenience of the reader we repeat here \eqref{eq:667}
\begin{displaymath}
   \|\tilde{v}_{\Df}\|_2
  \leq C\big(\|\phi\|_{1,2} +\beta^{1/6}|\phi(x_\nu)|+\beta^{-2/3} \|f\|_2 \big)\,,
\end{displaymath}
and \eqref{eq:668}
\begin{multline*}
\|\tilde{v}_\Df^\prime\|_2 \leq C\,
\big[  (\beta^{1/3}\lambda_\beta^{-1/4}+1) \|\phi\|_{1,2}+\beta^{1/6}|\phi(x_\nu)| \\
+ (1 + \mu_{\beta,+}^{1/2}  \beta^{-2/3}  ) \|f\|_2
+\mu_{\beta,+}^{1/2}(\beta^{1/6}|\phi(x_\nu|+ \|\phi\|_{1,2})\big]\,.
\end{multline*}
 For $\mu<\beta^{-1/3-\delta}$ with $\delta < 1/10$, we then obtain using Poincar\'e's inequality
\begin{equation}
\label{eq:696}
  \|\tilde{v}_\Df^\prime\|_2 \leq C\,
\big[  (\beta^{1/3}\lambda_\beta^{-1/4}+\beta^{1/3-\delta/2}) \|\phi\|_{1,2}
+ \|f\|_2
+ \beta^{1/2-\delta/2}|\phi(x_\nu)|\big]\,.
\end{equation}
We next estimate $\|g_\Df\|_2$, beginning by repeating \eqref{eq:671}
 \begin{displaymath}
    \| g_\Df\| \leq C ( \| (U+i\lambda)f\|_2 + |\phi^{\prime\prime}(1)|\,  \| (U+i\lambda) \hat{g}\|  + \|  (U^{\prime\prime}\phi)^{\prime\prime} \| + 
  \|  \tilde{v}_\Df^\prime\| + \| \tilde{v}_\Df \|)\,.
  \end{displaymath}
By \eqref{eq:656}, \eqref{eq:660},  \eqref{eq:669}, \eqref{eq:696}, and \eqref{eq:667}
 it holds,  that
\begin{multline*}
   \|g_\Df\|_2\leq C\big[ (1+|\lambda|) \, \|f\|_2\\
  +(\lambda_\beta^{-1/4}\beta^{1/3}+\beta^{1/3-\delta/2} + \lambda_\beta^{1/4}
  \beta^{1/6})\|\phi\|_{1,2}+\beta^{1/2-\delta/2}|\phi(x_\nu)|) \big] \,,
\end{multline*}
which is precisely \eqref{eq:695}.\\

{\em Step 2: We estimate $\|v_\Df\|_{1,q}$ and $\|v_{\Df}\|_1$.}\\

\noindent By \eqref{eq:695} it holds  for $\mu \geq -1$ that
\begin{equation}
\label{eq:697}
   \|g_\Df\|_2\leq C\, \big[  \, \|f\|_2\\
  +\beta^{1/3} \|\phi\|_{1,2}+ \beta^{1/2-\delta/2}|\phi(x_\nu)|) \big] \,. 
\end{equation}
By (\ref{eq:287}b)  we have, for any $1<q<2$ and given that
$|\nu|<\beta^{-1/5+\delta}$ and  $\delta<1/5$, 
\begin{equation*}
  \|v_\Df\|_{1,q} \leq
  C\beta^{-\frac{2+q}{6q}}\|g_\Df\|_2\,.
\end{equation*}
Hence by \eqref{eq:697} we obtain  for $\mu \geq -1$ \,
\begin{equation}
\label{eq:698}
  \|v_\Df\|_{1,q} \leq
  C(\beta^{-\frac{2+q}{6q}}\|f\|_2+\beta^{-\frac{2-q}{6q}}\|\phi\|_{1,2}+
    \beta^{\delta_1(q)} |\phi(x_\nu)|]) \,,
\end{equation}
where $\delta_1(q)=(q-1)/3q-\delta/2 \,.$ \\
Furthermore, we have by   \eqref{eq:403} and \eqref{eq:697},  for
  $\mu\geq -1$, 
\begin{equation}\label{eq:699}
    \|v_\Df\|_1 \leq C\beta^{-5/6}\|g_\Df\|_2\leq \widehat
    C\big(\beta^{-5/6}\|f\|_2+\beta^{-1/2}\|\phi\|_{1,2} + \beta^{-1/3 -\delta/2}|\phi(x_\nu)|]\big) \,.
\end{equation} 
For $\mu\leq-1$ we use \eqref{eq:363}
 to obtain
  \begin{displaymath}
    \|v_\Df\|_1 \leq \|v_\Df\|_2\leq \frac{C}{|\lambda|\beta}\|g_\Df\|_2\,.
  \end{displaymath}
 We then employ \eqref{eq:695} which implies,  for $\delta \leq 1/10$,   
  \begin{equation*}
   \|g_\Df\|_2\leq C\big[ (1+|\lambda|) \, \|f\|_2
  +(\beta^{1/3-\delta/2} + |\lambda|^{1/4} 
  \beta^{1/4})\|\phi\|_{1,2}+ \beta^{1/2-\delta/2}|\phi(x_\nu)|) \big] \,. 
\end{equation*}
and hence,
   \begin{equation}
\label{eq:700}
    \|v_\Df\|_1 \leq  C\big[
    \, \beta^{-1} \|f\|_2
  +\beta^{-2/3 -\delta/2}\|\phi\|_{1,2}+\beta^{-1/2-\delta/2}|\phi(x_\nu)|\big] \,. 
  \end{equation}
 Combining \eqref{eq:699} and \eqref{eq:700}  yields 
\begin{equation}\label{eq:701}
    \|v_\Df\|_1 \leq C \, \big(\beta^{-5/6}\|f\|_2+\beta^{-1/2}\|\phi\|_{1,2} + \beta^{-1/3 -\delta/2}|\phi(x_\nu)|]\big) \,.
\end{equation} 

{\em Step 3: We prove \eqref{eq:694}.} \\
 
 \noindent We continue as in the proof of Proposition \ref{lem:right-curve}. Recall from \eqref{eq:644} that
 \begin{equation*}
    v_\Df =\A_{\lambda,\alpha}\phi + (U+i\lambda)\phi^{\prime\prime}(1)\hat{\psi} \,.
  \end{equation*}
Set 
\begin{equation}\label{eq:decabcd}
  \phi = \phi_\Df + \check{\phi} \,,
\end{equation}
where 
\begin{displaymath}
  \phi_\Df =\A_{\lambda,\alpha}^{-1}v_\Df \quad \mbox{ and } \quad
 \check{\phi}=-\A_{\lambda,\alpha}^{-1}\big([U+i\lambda]\phi^{\prime\prime}(1)\hat{\psi} \big) \,. 
\end{displaymath}

{\em Step 3a: We estimate  $\|\check{\phi}\|_{1,2}$.}\\

\noindent Note that by \eqref{eq:616}, we have
\begin{equation}\label{eq:702}
\check \phi =-\phi^{\prime\prime}(1) \phi_{\lambda,\beta,\alpha}\,.
\end{equation}
By \eqref{eq:636}  
there exist $C>0$ and $\lambda_0>0$, and for any  $a_0  >\|U\|_2^{-1} (1+\delta)^{1/2} $, $\beta(a_0 )>0$
such that, for $-\lambda_0<\mu$,   $ |\nu|\leq \beta^{-1/5+\delta}$,  $\alpha \geq a_0 \beta^{-1/10+ \delta/2}$ and $\beta \geq
\beta(a_0 )$, we have
\begin{equation}\label{eq:703}
  \|\check{\phi}\|_{1,2}\leq
  C a_0^{-2}   \beta^{-1/3} \lambda_\beta^{-1/2}|\phi^{\prime\prime}(1)| \,.
\end{equation}
 For $\mu\leq-\lambda_0<0$ we obtain from \eqref{eq:169} applied to the
   pair $(\check \phi,  (U+i\lambda)\phi^{\prime\prime}(1)\hat{\psi}) $ 
  \begin{displaymath}
      \|\check{\phi}^\prime\|_2^2\leq C|\langle\check{\phi},\hat{\psi}\rangle\,\phi^{\prime\prime}(1)| \leq  C \,  \|\check{\phi}^\prime\|_2\|(1-x)^{1/2}\hat{\psi}\|_1\,|\phi^{\prime\prime}(1)|\,,
  \end{displaymath}
and hence by \eqref{eq:614} (with $s=1/2$) for the first inequality,
we conclude for the second inequality that there exists $\beta(a_0 )>0$
such that for $\beta \geq \beta(a_0 )$ 
\begin{displaymath}
      \|\check{\phi}^\prime\|_2\leq C\,  \lambda_\beta^{-3/4} \beta^{-1/2} |\phi^{\prime\prime}(1)| \leq
 \widehat  C \,  a_0^{-2}   \beta^{-1/3} \lambda_\beta^{-1/2}|\phi^{\prime\prime}(1)| \,.
\end{displaymath}
Hence, \eqref{eq:703} holds true for any $ \mu<\beta^{-1/3+\delta}$
and $|\nu| \leq \beta^{-1/5+\delta}$.  From \eqref{eq:656} we then deduce
\begin{displaymath}
  \|\check{\phi}\|_{1,2}\leq
  C a_0^{-2}   \beta^{-1/3} ( \beta^{1/3} \|\phi\|_{1,2} + \beta^{-1/2} \|f\|_2 )\leq
  \widehat C (\beta^{-5/6}\|f\|_2+a_0^{-2}\|\phi\|_{1,2})
\end{displaymath}
Hence, for sufficiently large $a_0 $ we conclude with the aid of
\eqref{eq:decabcd} 
\begin{equation}
\label{eq:704}
   \|\check{\phi}\|_{1,2} \leq C\,\big(\beta^{-5/6}\|f\|_2+a_0^{-2}\|\phi_\Df\|_{1,2}\big)\,.
\end{equation}

{\em Step 3b: We estimate  $|\check{\phi}(x_\nu)|$.}\\

\noindent Using \eqref{eq:702}, the decomposition 
\begin{displaymath}
\phi_{\lambda,\beta,\alpha} =(\phi_{\lambda,\beta,\alpha} - c_\parallel(\lambda,\beta,\alpha)U) + c_\parallel(\lambda,\beta,\alpha)U\,,
\end{displaymath}
where
 \begin{equation*}
 c_{\parallel}(\lambda,\beta,\alpha)   = \frac{\langle U,\phi_{\lambda,\beta,\alpha} \rangle }{\|U\|_2^2}\,,
\end{equation*} 
 and H\"older
inequality, we may write
\begin{equation}
\label{eq:705}
  |\check{\phi}(x_\nu)|\leq C\Big(|\nu|\, |c_\parallel(\lambda,\beta,\alpha) |+ |\nu|^{1/2}
  \Big\|\phi_{\lambda,\beta,\alpha} - c_\parallel(\lambda,\beta,\alpha)  U\Big\|_{1,2}\Big) \, |\phi^{\prime\prime}(1)| \,.
\end{equation}
Then, we obtain  by \eqref{eq:636}  and the fact that
$\alpha\geq a_0 \beta^{-1/10+\delta/2}$ 
 \begin{equation}\label{eq:706}
  |\check{\phi}(x_\nu)|\leq 
  C\Big[|\nu|^{1/2}\lambda_\beta^{-3/4}\beta^{-1/2} +
  \frac{|\mu|\nu^{1/2} +|\nu|}{  a_0^2 \beta^{-1/5+\delta}+|\mu|}\big(\lambda_\beta^{-1}\beta^{-2/3}+\lambda_\beta^{-1/2}|\lambda|\beta^{-1/3}\big) \Big] |\phi^{\prime\prime}(1)|\,. 
\end{equation}
Using  \eqref{eq:656} and the fact that $ |\lambda| \leq \beta^{-1/3} \lambda_\beta$ we obtain
\begin{equation}\label{eq:707}
 |\check{\phi}(x_\nu)|\leq C  \big(a_0^{-2} \beta^{-1/3} \lambda_\beta +|\nu|^{1/2}\lambda_\beta^{-1/4}\beta^{-1/6}\big) \big(\|\phi\|_{1,2}+\beta^{-5/6}\|f\|_2\big) \,. 
\end{equation}

 We now consider three different cases.
\begin{itemize}
\item For $-\beta^{-1/5+\delta} < \mu <\beta^{-1/3+\delta}$, we have $\lambda_\beta\lesssim
  \beta^{\delta}$ and since $|\nu| \leq |\lambda|$, we deduce from \eqref{eq:707}
\begin{equation} \label{eq:708}
 |\check{\phi}(x_\nu)|
     \leq C    \beta^{-1/5 + \delta} \big(\|\phi\|_{1,2}+\beta^{-5/6}\|f\|_2\big) \,.
\end{equation}
\item 
 For $-\beta^{-1/10+\delta/2}<\mu<-\beta^{-1/5+\delta}$ we have, since $|\nu| \leq |\mu|$,
\begin{displaymath} 
   \frac{|\mu|\,|\nu|^{1/2} +|\nu|}{  a_0^2 \beta^{-1/5+\delta}+|\mu|}|\lambda|\leq
   C(|\mu|\,|\nu|^{1/2} +|\nu|) \leq \widehat C\beta^{-1/5+\delta}
\end{displaymath}
and hence, as $\lambda_\beta \sim |\lambda| \beta^{1/3}$, we can conclude \eqref{eq:708}
in this case as well.

 \item 
 Finally, for $\mu\leq-\beta^{-1/10+\delta/2}$,  we use \eqref{eq:168} with $v=
 (U+i\lambda) \hat \psi$, (\ref{eq:673}c), and \eqref{eq:656} to obtain
 that 
 \begin{displaymath}
\|\check{\phi}^\prime\|_2\leq
C|\mu|^{-1}\|(1-x)^{1/2}\hat{\psi}\|_1|\phi^{\prime\prime}(1)| \leq \widehat C|\lambda|^{-5/4}\beta^{-1/4}\big(\|\phi\|_{1,2}+\beta^{-5/6}\|f\|_2\big)\,,
\end{displaymath}
which  implies
\begin{equation}
\label{eq:709}
    |\check{\phi}(x_\nu)|\leq C|\nu|^{1/2}\|\check{\phi}^\prime\|_2\leq \widehat C\beta^{-9/40-\delta/8}\big(\|\phi\|_{1,2}+\beta^{-5/6}\|f\|_2\big)\,.
\end{equation}
\end{itemize}
Combining \eqref{eq:709} with \eqref{eq:708} which holds in the two
first cases  yields for all \break $\mu<\beta^{-1/3-\delta}$
\begin{equation} \label{eq:710}
 |\check{\phi}(x_\nu)|
     \leq C    \beta^{-1/5 + \delta} \big(\|\phi\|_{1,2}+\beta^{-5/6}\|f\|_2\big) \,.
\end{equation}
Then, by \eqref{eq:decabcd} and  \eqref{eq:710}   it holds that
\begin{equation}
\label{eq:711}
   |\check{\phi}(x_\nu)|\leq C\beta^{-1/5+\delta}\big(\beta^{-5/6}\|f\|_2+\|\phi_\Df\|_{1,2}\big)\,.   
\end{equation}

 {\em Step 3c: We prove \eqref{eq:694} for $\mu > -\beta^{-\delta}$.}\\

\noindent From \eqref{eq:698} and \eqref{eq:711} we get, for $\delta \leq 1/15$ and
$\mu > -1$, that
\begin{equation*}
  \|v_\Df\|_{1,q} \leq C \,  \big(  \beta^{-\frac{2+q}{6q}}\|f\|_2+\beta^{-\frac{2-q}{6q}}\|\phi\|_{1,2}+ \beta^{\delta_1(q)} |\phi_\Df(x_\nu)|\big) \,.
\end{equation*}
Using \eqref{eq:704} we then obtain that
\begin{equation*}
  \|v_\Df\|_{1,q} \leq C \big( \beta^{-\frac{2+q}{6q}}\|f\|_2+\beta^{-\frac{2-q}{6q}}\|\phi_\Df\|_{1,2}+ \beta^{\delta_1(q)} |\phi_\Df(x_\nu)|\big) \,.
\end{equation*}
As
 \begin{displaymath}
    -\frac{2-q}{6q}+\frac{1}{6}=\frac{1}{3}-\frac{1}{3q}=\delta_1(q)+\delta/2 \,,
  \end{displaymath}
  we can finally  conclude, for $\mu>-1$, that
\begin{equation}
\label{eq:712}  
\|v_\Df\|_{1,q} 
  \leq C\, \big(\beta^{-\frac{2+q}{6q}}\|f\|_2+ \beta^{-\frac{2-q}{6q}} [\|\phi_\Df\|_{1,2} +\beta^{1/6}|\phi_\Df(x_\nu)|]\big) \,.
\end{equation}
From   \eqref{eq:701}, \eqref{eq:704}, and \eqref{eq:711}  we
obtain 
\begin{equation}
\label{eq:713}
    \|v_\Df\|_1 \leq
    C\,\big(\beta^{-5/6}\|f\|_2+\beta^{-1/2}\|\phi_\Df\|_{1,2}+ \beta^{-1/3-\delta/2} |\phi_\Df(x_\nu)|\big) \,.
\end{equation}
As in the proof of  \eqref{eq:705} we then write 
\begin{equation}\label{eq:714}
   |\phi_\Df (x_\nu)|\leq C\,\big(|\nu|\, |c_\parallel^\Df|+ |\nu|^{1/2} \|\phi_\Df- c_\parallel^\Df U\|_{1,2} \big)\,,
\end{equation}
where
\begin{displaymath}
  c_\parallel^\Df = \langle U,\phi_\Df\rangle /\|U\|_2^2\,.
\end{displaymath}
We may then conclude from (\ref{eq:52}a) applied to the pair $(\phi_\Df,v_\Df)$  that for all
  $-\beta^{-\delta}<\mu$ it holds that
\begin{equation}\label{eq:715}
   |\phi_\Df(x_\nu)|\leq C\,\big(\|v_\Df\|_1+ |\lambda| N_1(v_\Df,\lambda)+ |\nu|^{1/2}\|v_\Df\|_{1,q} \big) \,.
\end{equation}
where $ N_1(v_\Df,\lambda)$ is defined in \eqref{eq:50}.\
By \eqref{eq:85}
(and the fact that $v_\Df(1)=0$) we then obtain that
\begin{equation}\label{eq:716} 
   |\phi_\Df(x_\nu)|\leq C\,\big( \|v_\Df\|_1+ \beta^{-\delta} \|v_\Df\|_{1,q} \big)\,.
\end{equation}
Substituting \eqref{eq:712} and \eqref{eq:713} 
into \eqref{eq:716} then yields, for $\mu>-\beta^{-\delta}$, sufficiently
large $\beta_0$,  and $1<q<(1-3\delta)^{-1}$  
\begin{equation}
\label{eq:717}
   |\phi_\Df(x_\nu)|\leq    C\beta^{-\delta}\, \big(\beta^{-\frac{2+q}{6q}}\|f\|_2+
   \beta^{-\frac{2-q}{6q}} \|\phi_\Df\|_{1,2}\big) \,. 
\end{equation}

Substituting \eqref{eq:717} into \eqref{eq:712} then leads to
\begin{equation}
\label{eq:718}
  \|v_\Df\|_{1,q} 
  \leq C\, \big(\beta^{-\frac{2+q}{6q}}\|f\|_2+
  \beta^{-\frac{2-q}{6q}}\|\phi_\Df\|_{1,2} \big) \,.
\end{equation}
Similarly by substituting \eqref{eq:717} into \eqref{eq:713} we obtain
that for $\mu>-\beta^{-\delta}$ and $1<q<(1-3\delta)^{-1}$
 \begin{equation}
 \label{eq:719}
    \|v_\Df\|_1 \leq
     C(\beta^{-5/6}\|f\|_2+\beta^{-1/2}\|\phi_\Df\|_{1,2}) \,.
 \end{equation}
For sufficiently large $a_0$ and $|\nu|\leq\beta^{-1/5+\delta}$ we have 
\begin{displaymath}
  \frac{1+C|\lambda|^2\log
 |\lambda|^{-1}}{|\alpha^2\|U\|_2^2+i\lambda|}\leq \widetilde C\frac{1}{|\alpha^2\|U\|_2^2-\nu|}\leq \widehat C\beta^{1/5-\delta} \,,
\end{displaymath}
and (recall that $|\nu|<|\alpha^2\|U\|_2^2-\nu|$)
\begin{displaymath}
   \frac{|\lambda|(1+C|\lambda|^2\log
 |\lambda|^{-1}}{|\alpha^2\|U\|_2^2+i\lambda|}\leq \widetilde C\frac{|\nu|+|\mu|}{|\alpha^2\|U\|_2^2-\nu|+|\mu|}\leq \widehat C \,.
\end{displaymath}
 Hence, we obtain from  (\ref{eq:52}a) and \eqref{eq:85} that
 \begin{displaymath}
    |c_\parallel^\Df |\leq C\, \big[\beta^{1/5-\delta}\|v_\Df\|_1+\|v_\Df\|_{1,q} \big] \,,
 \end{displaymath}
and from (\ref{eq:52}b) we obtain that 
\begin{displaymath}
   \|\phi_\Df- c_\parallel^\Df U\|_{1,2} \leq C[\|v_\Df\|_1+\|v_\Df\|_{1,q} ] \,. 
   \end{displaymath}
Consequently, by \eqref{eq:718}   and  \eqref{eq:719} it holds that
\begin{displaymath}
  \|\phi_\Df\|_{1,2}\leq C(\|v_\Df\|_{1,q}+\beta^{1/5}\|v_\Df\|_1) \leq \widehat C\, \big(\beta^{-\frac{2+q}{6q}}\|f\|_2+\beta^{-\frac{2-q}{6q}}\|\phi_\Df\|_{1,2}\big) \,.
\end{displaymath}
It follows that, for sufficiently large $\beta_0$,
\begin{equation}\label{eq:720}
 \|\phi_\Df\|_{1,2}\leq C \beta^{-\frac{2+q}{6q}}\|f\|_2\,.
\end{equation}
Combining \eqref{eq:720}  with \eqref{eq:704} 
gives (note that $5/6>(2+q)/(6q)$) yields
\begin{equation}\label{eq:721}
   \|\check{\phi}\|_{1,2} \leq C\, \beta^{-\frac{2+q}{6q}} \|f\|_2 \,.
\end{equation}
As $\phi=\phi_\Df + \check \phi$, we can deduce from \eqref{eq:720} and
\eqref{eq:721} that for any $\delta \in (0,1/15)$ and $q \in (1,
(1-3\delta)^{-1})$ and $\mu>-\beta^{-\delta}$ we have
\begin{equation}\label{eq:722}
   \|\phi \|_{1,2} \leq C\, \beta^{-\frac{2+q}{6q}}\|f\|_2\leq C\beta^{1/2-\delta}\,.
\end{equation}

{\em Step 3d. We prove   \eqref{eq:694}  for  $\mu\leq-\beta^{-\delta}$.}\\

\noindent For $\mu\leq-\beta^{-\delta}$ we use \eqref{eq:168}, Sobolev embeddings, and
Poincar\'e's inequality to obtain that
\begin{displaymath}  
    \|\phi_\Df\|_{1,2}\leq \frac{C}{|\mu|}\|\phi_\Df\|_\infty^{1/2}\|v_\Df\|_1^{1/2}\leq
   \widehat  C \, \beta^{\delta}\|\phi^\prime_\Df \|_2^{1/2}\|v_\Df\|_1^{1/2} \,,
\end{displaymath}
which implies
\begin{equation}\label{eq:723}
    \|\phi_\Df\|_{1,2} \leq C\beta^{2\delta}\|v_\Df\|_1\,.
\end{equation}
Consequently, by \eqref{eq:701}, \eqref{eq:723},  and Sobolev embeddings we obtain
that  
\begin{displaymath}
    \|\phi_\Df\|_{1,2} \leq    C \beta^{2\delta}\,
    \big(\beta^{-5/6}\|f\|_2+ \beta^{-1/3 -\delta/2} \|\phi\|_{1,2} \big) \,.
\end{displaymath}
 Making use of \eqref{eq:704} we establish that for sufficiently large
 $a_0$ and $\beta_0$
\begin{displaymath}
    \|\phi\|_{1,2} \leq    C \, \beta^{2\delta-5/6}\|f\|_2 \,,
\end{displaymath}
which, together with \eqref{eq:722}, yields   \eqref{eq:694} for $\delta<1/15$.
\end{proof}

\subsection{Resolvent estimates for intermediate $\alpha$}
\label{sec:5.3a}

In this subsection we provide inverse estimates for $\B_{\lambda,\alpha,\beta}$ for
$1\ll\alpha\ll\beta^{1/3}$. Let ${\mathfrak z}_\alpha$ be given by \eqref{eq:603}.
Since $\|{\mathfrak z}_\alpha^\prime\|_2\ll\beta^{1/6}$ in this subsection we may
conclude by \eqref{eq:441} that ${\mathfrak z}_\alpha\in{\mathfrak U}_1$.
Consequently, we may still use \eqref{eq:445} in this subsection to
estimate $\phi^{\prime\prime}(1)$.  Furthermore, we can use the fact that
$\alpha\gg1$ to obtain a much simpler proof than in the previous subsection
(which is valid only for bounded values of $\alpha$).
\begin{proposition}
  \label{lem:intermediate-alpha}
  Let $U\in
C^4([0,1])$ satisfy \eqref{eq:10}  and  $\nu_2 < U(0)$  denote
  a positive constant.  There
exist 
  $C>0$, $\Upsilon >0$, $\beta_0>0$, $\alpha_0>0$, and $\alpha_1>0$ such that for all $\beta\geq
  \beta_0$, it holds that
  \begin{equation}
\label{eq:724}
      \sup_{
        \begin{subarray}{c}
          \alpha_0\leq \alpha\leq\alpha_1\beta^{1/3}\\
     \Re\lambda \leq  \Upsilon  \beta^{-1/3}\\
         |\Im\lambda|\leq \nu_2
           \end{subarray}}
\big\|(\B_{\lambda,\alpha,\beta}^{\mathfrak N,\Df})^{-1}\big\|+
      \Big\|\frac{d}{dx}\, (\B_{\lambda,\alpha,\beta}^{\mathfrak
        N,\Df})^{-1}\Big\|\leq  C \beta^{-5/6}\,.
      \end{equation}
\end{proposition}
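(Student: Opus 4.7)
The plan is to follow the same general architecture used in the proofs of Propositions~\ref{lem:right-curve} and \ref{lem:small-lambda-intermediate-alpha}, but to exploit the lower bound $\alpha\geq\alpha_0$ to gain a factor of $\alpha^{-2}$ in the boundary correction. Given $\phi\in D(\B_{\lambda,\alpha,\beta}^{\Nf,\Df})$ with $f=\B_{\lambda,\alpha,\beta}^{\Nf,\Df}\phi$, I would define $v_\Df$ as in \eqref{eq:644} and consider $g_\Df=(\LL_\beta^{\Nf,\Df}-\beta\lambda)v_\Df$ via the identity \eqref{eq:647}. Then I would decompose $\phi=\phi_\Df+\check\phi$ with $\phi_\Df=\A_{\lambda,\alpha}^{-1}v_\Df$ and $\check\phi=-\phi^{\prime\prime}(1)\phi_{\lambda,\beta,\alpha}$ as in \eqref{eq:673}, \eqref{eq:702}. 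The whole estimate is driven by three controls: a bound for $\phi^{\prime\prime}(1)$ via Proposition~\ref{lem:integral-conditions} applied to $\mathfrak z_\alpha$; an $L^2$-bound for $g_\Df$ relying on $\tilde v_\Df$ estimates from \eqref{eq:651}; and the resolvent bound \eqref{eq:403} giving $\|v_\Df\|_2\lesssim\beta^{-2/3}\|g_\Df\|_2$.

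For Step~1 I need to verify that $\mathfrak z_\alpha=\cosh(\alpha x)/\cosh\alpha$ lies in $\mathfrak U_1(\beta,\lambda,\gamma,\theta)$. A direct computation gives $\|\mathfrak z_\alpha^\prime\|_{L^2(1-\beta^{-\gamma},1)}\lesssim \alpha^{1/2}$, so for any fixed $\gamma<1/4$ and any prescribed $\theta$ I may choose $\alpha_1$ small enough that $\alpha^{1/2}\leq\theta\beta^{1/6}\lambda_\beta^{1/4}$ for all $\alpha\leq\alpha_1\beta^{1/3}$. Writing $(\LL_\beta^{\mathfrak z_\alpha}-\beta\lambda)(\phi^{\prime\prime}-\alpha^2\phi)=i\beta U^{\prime\prime}\phi+f$ as in \eqref{eq:655} and applying \eqref{eq:445} (or \eqref{eq:446}), I obtain an estimate of the shape $|\phi^{\prime\prime}(1)|\leq C\beta^{1/3}\lambda_\beta^{1/2}[\beta^{-1/2}\|f\|_2+\beta^{1/3}\|\phi\|_{1,2}+\text{lower order}]$ that is uniform in the parameter range. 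Feeding this into \eqref{eq:658}-\eqref{eq:672} exactly as in the proof of Proposition~\ref{lem:right-curve} produces $\|g_\Df\|_2\leq C[(1+|\lambda|)\|f\|_2+\beta^{1/3}\lambda_\beta^{1/4}\|\phi\|_{1,2}]$ up to terms controlled by the $\phi(x_\nu)$ contribution. Since $|\lambda|\leq \nu_2+\Upsilon\beta^{-1/3}$ is uniformly bounded, \eqref{eq:403} then yields $\|v_\Df\|_1\lesssim\beta^{-5/6}\|g_\Df\|_2$, which is the ``target'' $\beta^{-5/6}$ rate.

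The gain from $\alpha\geq\alpha_0$ is used in Step~5 to handle $\check\phi$. Since $|\lambda|$ is bounded, one may pick $\alpha_0$ so large that $\alpha^2\|U\|_2^2\geq 2|\lambda|$, placing us in the regime $\alpha\geq\alpha_{\lambda,\delta}$ of Lemma~\ref{lem:other}. Then \eqref{eq:636} gives
\[
\|\phi_{\lambda,\beta,\alpha}\|_{1,2}\leq C\Big(\lambda_\beta^{-3/4}\beta^{-1/2}+\alpha^{-2}\big(\lambda_\beta^{-1}\beta^{-2/3}+\lambda_\beta^{-1/2}|\lambda|\beta^{-1/3}\big)\Big),
\]
whence $\|\check\phi\|_{1,2}=|\phi^{\prime\prime}(1)|\cdot\|\phi_{\lambda,\beta,\alpha}\|_{1,2}\lesssim\beta^{-5/6}\|f\|_2+\alpha_0^{-1}\|\phi_\Df\|_{1,2}$ after inserting \eqref{eq:657}. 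For $\phi_\Df=\A_{\lambda,\alpha}^{-1}v_\Df$ I would use the inviscid estimates of Section~\ref{sec:2} in one of two ways: when $x_\nu$ is bounded away from $0$ and $1$ one applies Proposition~\ref{prop:linear} together with \eqref{eq:683}-\eqref{eq:684} to convert $\|v_\Df\|_1$ into $\|\phi_\Df\|_{1,2}$; when $|\lambda|$ is small one uses the $\alpha$-dependent estimate in Proposition~\ref{prop:edge} (which is valid because $\alpha\geq\alpha_{\lambda,\delta}$). In both cases the resulting bound is $\|\phi_\Df\|_{1,2}\lesssim\beta^{-5/6}\|f\|_2$.

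The main obstacle I foresee is the bookkeeping in the regime where $\nu$ is near $0$ or near $U(0)$, i.e.\ where $x_\nu$ approaches an endpoint and the clean $[x_\nu\beta]^{-5/6}$ or $[\Jf_\nu\beta]^{-5/6}$ factors in Section~\ref{sec:3} degenerate. Near $x_\nu=1$ (that is, $\nu$ small or negative) this is handled by Proposition~\ref{lem:Dirichlet-L1} together with the coercive real part \eqref{eq:362}; near $x_\nu=0$ (that is, $\nu$ close to $U(0)$) one must switch to Proposition~\ref{lem:schrod-quad} and Proposition~\ref{lem:integral-conditions-quadratic}, and verify that these still yield $\beta^{-5/6}$. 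Once these case distinctions are in place, absorbing the $\|\phi_\Df\|_{1,2}$ terms through a smallness argument (using $\alpha_0^{-1}$ small or $\beta_0$ large), one obtains $\|\phi\|_{1,2}\leq C\beta^{-5/6}\|f\|_2$; invertibility of $\B_{\lambda,\alpha,\beta}^{\Nf,\Df}$ and the matching bound on the derivative then follow from the Fredholm index argument of Subsection~\ref{sec:5.2} and from \eqref{eq:921}-\eqref{eq:921a}.
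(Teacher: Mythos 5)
Your proposal takes a genuinely different route from the paper, and there is a concrete gap in it.

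The paper's proof does \emph{not} decompose $\phi$ into $\phi_\Df+\check\phi$ and does not invert the Rayleigh operator at all. Instead it exploits the large-$\alpha$ regime in a much more direct way: it tests the equation $\tilde v_\Df=-\phi^{\prime\prime}+\alpha^2\phi+\phi^{\prime\prime}(1)\hat\psi$ against $\phi$ and integrates by parts, yielding
\begin{equation*}
\langle\phi,\tilde v_\Df\rangle=\|\phi^\prime\|_2^2+\alpha^2\|\phi\|_2^2+\phi^{\prime\prime}(1)\langle\phi,\hat\psi\rangle\,.
\end{equation*}
The coercive quantity $\|\phi^\prime\|_2^2+\alpha^2\|\phi\|_2^2$ now appears on the \emph{left}, so there is no need to inject $v_\Df$ into $\A_{\lambda,\alpha}^{-1}$ and carry the $c_\parallel$ bookkeeping of Sections~\ref{sec:2} and \ref{sec:5.1}; the only remaining work is to bound the $L^1$ norm of $\tilde v_\Df$ (after subtracting the singular part $U^{\prime\prime}(x_\nu)\phi(x_\nu)/(U-\nu+i\mf)$, whose contribution is handled separately by an integration by parts) using \eqref{eq:403} and \eqref{eq:421}, and to absorb the Airy boundary term via \eqref{eq:445}. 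This refinement --- testing against $\phi$ and peeling off the singular part via Proposition~\ref{Dirichlet-L1-H1-0} --- is the crucial idea that you do not use. It makes the proof short and avoids any case distinction in $\nu$.

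Your approach, by contrast, routes everything through the Rayleigh inversion exactly as in Propositions~\ref{lem:right-curve} and \ref{lem:small-lambda-intermediate-alpha}. Quite apart from the heavy bookkeeping this entails (which you acknowledge), there is a specific gap: to control $\check\phi=-\phi^{\prime\prime}(1)\phi_{\lambda,\beta,\alpha}$ you invoke Lemma~\ref{lem:other} (Eq.~\eqref{eq:636}), but that lemma is stated only for $|\Im\lambda|<\lambda_0$, where $\lambda_0$ is a small constant determined by the lemma (it feeds into the condition $|\lambda|<\lambda_0$ in Proposition~\ref{prop:edge} on which Lemma~\ref{lem:other} is built). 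Proposition~\ref{lem:intermediate-alpha} must hold for all $|\Im\lambda|\leq\nu_2$ where $\nu_2<U(0)$ may be close to $U(0)$; for $\lambda_0\leq|\Im\lambda|\leq\nu_2$ your bound on $\|\phi_{\lambda,\beta,\alpha}\|_{1,2}$ has no justification. One could fall back on Lemma~\ref{lem:inviscid-decay} or on the estimates of Subsection~\ref{ss2.6}, but you would then have to deal with the $\nu^{-1}$ factors appearing there (and with $\nu$ near $U(0)$, the $x_\nu^{-1/2}$-type degeneration of Proposition~\ref{prop:near-quadratic}), which would require further arguments that you have not supplied. In short, your plan is plausible in outline but incomplete, and the paper's coercivity trick bypasses the difficulty altogether.
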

\begin{proof}
   Let $f\in L^2(0,1)$, $\phi\in D(\B_{\lambda,\alpha,\beta}^{\Nf,\Df})$ satisfy
\begin{displaymath}
\B_{\lambda,\alpha,\beta}^{\Nf,\Df} \phi=f \,.
\end{displaymath} 
We first recall the definition of $\tilde v_\Df$  from \eqref{eq:649}
\begin{equation}\label{eq:649rem}
  \tilde{v}_\Df=-\phi^{\prime\prime}+\alpha^2\phi +\phi^{\prime\prime}(1)\hat{\psi}\,,
\end{equation}
and rewrite \eqref{eq:650} in the form 
\begin{equation}
\label{eq:725}
  (\LL_\beta^{\Nf,\Df}-\beta\lambda)\tilde{v}_\Df= f+ i\beta U^{\prime\prime}\phi+\phi^{\prime\prime}(1)\hat{g} \,,
\end{equation}
where $\hat{g}$ is given by \eqref{eq:648}.\\
 By \eqref{eq:403} (which is applicable for $|\nu|\leq \nu_2<U(0)$) it holds that 
\begin{equation*}
  \| (\LL_\beta^{\Nf,\Df} -\beta\lambda)^{-1}(f +\phi^{\prime\prime}(1)\hat{g})    \|_1 \leq C \beta^{-5/6} \| (f +\phi^{\prime\prime}(1)\hat{g}) \|_2 \,.
\end{equation*}
Furthermore, by \eqref{eq:421} applied with $f$ replaced by $U^{\prime\prime}\phi$ 
it holds that
\begin{displaymath}
   \Big\| (\LL_\beta^{\Nf,\Df} -\beta\lambda)^{-1}  ( U^{\prime\prime}
   \phi)  {
     +i\frac{U^{\prime\prime}(x_\nu)\phi(x_\nu)}{\beta(U-\nu+i\mf) }}\Big\|_1 \leq  C \beta^{-1}\| U^{\prime\prime} \phi\|_{1,2} \leq \widehat C 
   \beta^{-1}\|  \phi\|_{1,2} \,,
\end{displaymath}
where
\begin{displaymath}
  \mf = -\max(-\mu,x_\nu^{2/3}\beta^{-1/3})\,.
\end{displaymath}
 Hence, 
 \begin{equation}\label{eq:726-a}
   \Big\|\tilde{v}_\Df {-
     \frac{U^{\prime\prime}(x_\nu)\phi(x_\nu)}{(U-\nu+i\mf)}}\Big\|_1\leq 
   C\, \big(\beta^{-5/6}\|f\|_2+\|\phi\|_{1,2}+\beta^{-5/6}|\phi^{\prime\prime}(1)|\,\|\hat{g}\|_2\big)\,.
 \end{equation}

 We next use \eqref{eq:445} together with \eqref{eq:655} to obtain, as
 in \eqref{eq:656}, that
\begin{equation}
\label{eq:919}
  |\phi^{\prime\prime}(1)|
\leq C \,
\lambda_\beta^{1/2}\, \big(\beta^{1/3}\big[\|\phi\|_{1,2}+|\phi(x_\nu)|\log \beta \big] +\beta^{-1/2}\|f\|_2\big)\,.
\end{equation}
For $\alpha \leq \alpha_1\beta^{1/3}$ we may then use  \eqref{eq:919},
   which, combined with \eqref{eq:660}, yields, as 
   \begin{equation}\label{eq:726cba}
   |\phi(x_\nu)|\leq|\nu|^{1/2}\|\phi^\prime\|_2\,,
   \end{equation}
\begin{equation}
\label{eq:726}
|\phi^{\prime\prime}(1)|\,\|\hat{g}\|_2\leq C\lambda_\beta^{-3/4}
\big(\beta^{1/2}[1+|\nu|^{1/2}\log\beta]\|\phi\|_{1,2}+\beta^{-1/3}\|f\|_2\big) \,.
\end{equation}
Consequently, by substituting \eqref{eq:726} into \eqref{eq:726-a},
we obtain
\begin{equation}
\label{eq:727}
  \Big\|\tilde{v}_\Df  -
     \frac{U^{\prime\prime}(x_\nu)\phi(x_\nu)}{U-\nu+i\mf}\Big\|_1\leq 
   C\, \big(\beta^{-5/6}\|f\|_2+\|\phi\|_{1,2}\big)\,.
\end{equation}
 Taking the scalar product of \eqref{eq:649rem} with $\phi$, and
 integrating by parts gives
\begin{equation}
\label{eq:728}
  \Big\langle\phi,\tilde{v}_\Df
  -\frac{U^{\prime\prime}(x_\nu)\phi(x_\nu)}{U-\nu+i\mf}\Big\rangle 
     =\|\phi^\prime\|_2^2 +\alpha^2\|\phi\|_2^2 +\phi^{\prime\prime}(1)\langle\phi,\hat{\psi}\rangle 
  - \Big\langle\phi, \frac{U^{\prime\prime}(x_\nu)\phi(x_\nu)}{U-\nu +i \mf}\Big\rangle 
\end{equation}
Using \eqref{eq:727} we then conclude
\begin{equation}
\label{eq:729} 
   \Big|\Big\langle\phi,\tilde{v}_\Df - 
     \frac{U^{\prime\prime}(x_\nu)\phi(x_\nu)}{U-\nu+i\mf}\Big\rangle\Big|\leq C\, \big(\beta^{-5/6}\|f\|_2+\|\phi\|_{1,2}\big)\,\|\phi\|_\infty  
\end{equation}

We next use \eqref{eq:614}, with $s=1/2$, together with \eqref{eq:919}
and a Sobolev inequality to obtain that
\begin{displaymath}
 |\phi^{\prime\prime}(1)\langle\phi,\hat{\psi}\rangle| \leq C \,
\lambda_\beta^{-1/4}\, \big(\beta^{-1/6}[\|\phi\|_{1,2}+
|\phi(x_\nu)|\log\beta]+\beta^{-1}\|f\|_2\big)\|\phi^\prime\|_2\,.
\end{displaymath}
Using \eqref{eq:726cba}, as $\lambda_\beta^{-1/4} \nu^{1/2} \log \beta \leq 1$ for
$\beta \geq \beta_0$ with sufficiently large $\beta_0$,
  we can conclude that
\begin{equation}
  \label{eq:730}
|\phi^{\prime\prime}(1)\langle\phi,\hat{\psi}\rangle| \leq C \,\big(\beta^{-1/6}\|\phi\|_{1,2}^2+\beta^{-11/6}\|f\|_2^2\big)\,.
\end{equation}
For the last term on the right-hand side of \eqref{eq:728} we write 
\begin{displaymath}
    \Big\langle\phi,\frac{U^{\prime\prime}(x_\nu)\phi(x_\nu)}{U-\nu+i\mf}\Big\rangle  =  
\Big\langle\phi,\frac{U^{\prime\prime}(x_\nu)\phi(x_\nu)}{U-\nu+i\mf}\Big\rangle_{L^2(0,x_\nu/2)}
+ \Big\langle\phi,\frac{U^{\prime\prime}(x_\nu)\phi(x_\nu)}{U-\nu+i\mf}\Big\rangle_{L^2(x_\nu/2,1)}   
\end{displaymath}
For the first term on the right-hand side we have, since $|\nu| \leq \nu_2$, 
\begin{equation}
\label{eq:731}
  \Big|\Big\langle\phi,\frac{U^{\prime\prime}(x_\nu)\phi(x_\nu)}{U-\nu+i\mf}\Big\rangle_{L^2(0,x_\nu/2)}\Big|\leq  C\,  \|\phi\|_2\|\phi\|_\infty \,.
\end{equation}
For the second term we use integration by parts to obtain
\begin{multline*}
  \Big\langle\phi,\frac{U^{\prime\prime}(x_\nu)\phi(x_\nu)}{U-\nu+i\mf}\Big\rangle_{L^2(x_\nu/2,1)}   \\ =
U^{\prime\prime}(x_\nu)\phi(x_\nu)\frac{\phi}{U^\prime}\log(U-\nu+i\mf)\Big|_{x=x_\nu/2}\\
-U^{\prime\prime}(x_\nu)\phi(x_\nu)\Big\langle\Big(\frac{\phi}{U^\prime}\Big)^\prime, \log(U-\nu+i\mf)\Big\rangle_{L^2(x_\nu/2,1)}   \,,
\end{multline*}
from which we readily obtain
\begin{displaymath}
  \Big|\Big\langle\phi,\frac{U^{\prime\prime}(x_\nu)\phi(x_\nu)}{U-\nu+i\mf}\Big\rangle_{L^2(x_\nu/2,1)}\Big| \leq C \|\phi\|_{1,2} |\phi(x_\nu)| \leq C \|\phi\|_{1,2}\|\phi\|_\infty  \,. 
\end{displaymath}
In conjunction with \eqref{eq:731} the above inequality yields 
\begin{displaymath}
   \Big|\Big\langle\phi,\frac{U^{\prime\prime}(x_\nu)\phi(x_\nu)}{U-\nu+i\mf}\Big\rangle\Big| \leq C \|\phi\|_{1,2}\|\phi\|_\infty \,. 
\end{displaymath}
Substituting the above, together with \eqref{eq:730} and
\eqref{eq:729} into \eqref{eq:728} yields 
\begin{displaymath}
   \|\phi^\prime\|_2^2 +\alpha^2\|\phi\|_2^2 \leq C\, \big( \|\phi\|_{1,2} +
   \beta^{-5/6}\|f\|_2\big)\|\phi\|_\infty+\beta^{-1/6}\|\phi\|_{1,2}^2\,.
\end{displaymath}
Since $\phi(1)=0$ we have
$\|\phi\|_\infty^2\leq2\|\phi^\prime\|_2\|\phi\|_2$ and hence, for any $\epsilon>0$ there
exists $C_\epsilon>0$ such that
\begin{displaymath}
  \| \phi\|_\infty^2 \leq \epsilon \| \phi^\prime\|_2^2 + C_\epsilon \| \phi\|^2\,.
\end{displaymath}
By choosing sufficiently small $\epsilon$ and sufficiently large $\beta_0$ we can
then conclude, with the aid of Poincar\'e's inequality,
\begin{displaymath}
  \|\phi^\prime\|_2^2 +\alpha^2\|\phi\|_2^2 \leq 
 C\,\big(\beta^{-5/3}\|f\|_2^2+ \| \phi\|_2^2\big)\,.
\end{displaymath}
We then obtain for sufficiently large $\alpha_0$ and $\beta_0$ the existence
of $C>0$ such that, under the conditions of the proposition,
\begin{displaymath}
  \|\phi^\prime\|_2^2 +\alpha^2\|\phi\|_2^2\leq C \beta^{-5/3}\, \|f\|_2^2\,,
\end{displaymath}
from which \eqref{eq:724} readily follows. 
\end{proof}
\subsection{Resolvent estimates for large $\alpha$}
\label{sec:resolv-estim-large-1}
For $\alpha\gtrsim\beta^{1/3}$ we can no longer use the estimates of the
previous subsection, relying on \eqref{eq:445}. In the following we
thus establish  estimates for  the inverse of  the Orr-Sommerfeld operator,
relying on \eqref{eq:604}, which is valid for $\alpha\gtrsim\beta^{1/3}$. 
\begin{proposition}
  \label{lem:large-alpha}
  Let $U\in C^4([0,1])$ satisfying \eqref{eq:10}, and $\alpha_2$ denote a
  positive constant. For any $\Upsilon< \sqrt{-U^{\prime\prime}(0)}/2$ { and any $\hat \Upsilon >0$} there exist $C>0$
  and $\beta_0>0$ such that for all $\beta\geq \beta_0$ it holds that
  \begin{equation}
\label{eq:732}
      \sup_{
        \begin{subarray}{c}
          \alpha_2 \beta^{1/3}\leq\alpha \\
  { \Re\lambda\leq\beta^{-1/3}[\hat{\mu}_m-\hat \Upsilon-\alpha^2\beta^{-2/3}/2]}
           \end{subarray}}
\Big(\big\|(\B_{\lambda,\alpha,\beta}^{\mathfrak N,\Df})^{-1}\big\|+
      \Big\|\frac{d}{dx}\, (\B_{\lambda,\alpha,\beta}^{\mathfrak N,\Df})^{-1}\Big\| \Big)\leq C \beta^{ -5/6} \,.
  \end{equation}
\end{proposition}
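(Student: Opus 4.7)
The plan is to exploit Proposition \ref{lem:integral-conditions-large-alpha} applied to the function $\phi''-\alpha^2\phi$, which I will show belongs to $D(\LL_\beta^{\mathfrak z_\alpha})$, and then recover $\|\phi\|_{1,2}$ by a standard coercivity argument that benefits from the largeness of $\alpha$. Concretely, given $\phi\in D(\B_{\lambda,\alpha,\beta}^{\Nf,\Df})$ with $f=\B_{\lambda,\alpha,\beta}^{\Nf,\Df}\phi$, the Neumann condition $(\phi''-\alpha^2\phi)'(0)=\phi'''(0)-\alpha^2\phi'(0)=0$ is immediate from \eqref{eq:642}. For the integral condition encoding $D(\LL_\beta^{\mathfrak z_\alpha})$, two integrations by parts, together with $\mathfrak z_\alpha''=\alpha^2\mathfrak z_\alpha$, $\mathfrak z_\alpha(1)=1$, $\mathfrak z_\alpha'(0)=0$ and $\phi(1)=\phi'(1)=\phi'(0)=0$, give $\langle\mathfrak z_\alpha,\phi''-\alpha^2\phi\rangle=0$. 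Identity \eqref{eq:655} then reads $(\LL_\beta^{\mathfrak z_\alpha}-\beta\lambda)(\phi''-\alpha^2\phi)=i\beta U''\phi+f$, and \eqref{eq:604} yields
\begin{equation*}
\|\phi''-\alpha^2\phi\|_2\leq \frac{C}{\beta^{1/2}}\bigl(\beta\|\phi\|_2+\|f\|_2\bigr).
\end{equation*}

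Next, taking the $L^2$ inner product of $\phi''-\alpha^2\phi$ with $\phi$ and integrating by parts, all boundary terms vanish thanks to the conditions at $x=0$ and $x=1$, so
\begin{equation*}
\|\phi'\|_2^2+\alpha^2\|\phi\|_2^2 \;=\; -\,\Re\langle\phi,\phi''-\alpha^2\phi\rangle\;\leq\;\|\phi\|_2\,\|\phi''-\alpha^2\phi\|_2.
\end{equation*}
Dropping the first term on the left and dividing by $\|\phi\|_2$ gives $\alpha^2\|\phi\|_2\leq\|\phi''-\alpha^2\phi\|_2$, hence combined with the previous inequality
\begin{equation*}
(\alpha^2-C\beta^{1/2})\|\phi\|_2\;\leq\; C\beta^{-1/2}\|f\|_2.
\end{equation*}
Since $\alpha\geq\alpha_2\beta^{1/3}$, one has $\alpha^2\geq\alpha_2^2\beta^{2/3}\gg\beta^{1/2}$, so for $\beta_0$ large enough the left factor is at least $\alpha^2/2$, yielding $\|\phi\|_2\leq C\alpha^{-2}\beta^{-1/2}\|f\|_2\leq C\beta^{-7/6}\|f\|_2$. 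Re-inserting this into the previous display shows $\|\phi''-\alpha^2\phi\|_2\leq C\beta^{-1/2}\|f\|_2$, and then
\begin{equation*}
\|\phi'\|_2^2\;\leq\;\|\phi\|_2\,\|\phi''-\alpha^2\phi\|_2\;\leq\; \frac{C}{\alpha^2\beta}\|f\|_2^2,
\end{equation*}
giving $\|\phi'\|_2\leq C\alpha^{-1}\beta^{-1/2}\|f\|_2\leq C\beta^{-5/6}\|f\|_2$. This is the desired bound \eqref{eq:732} in the injective sense \eqref{eq:921}; the existence of $(\B_{\lambda,\alpha,\beta}^{\Nf,\Df})^{-1}$ and the operator-norm statement then follow from the Fredholm argument of Subsection \ref{sec:5.2}.

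I do not expect a serious obstacle here: the essential verification is that $\phi''-\alpha^2\phi\in D(\LL_\beta^{\mathfrak z_\alpha})$, after which the problem reduces to the resolvent estimate \eqref{eq:604} combined with elementary $H^1$-coercivity of $-d^2/dx^2+\alpha^2$ with the boundary conditions $\phi(1)=0$, $\phi'(0)=0$. The regime $\alpha\gtrsim\beta^{1/3}$ is well above the threshold $\alpha\gtrsim\beta^{1/4}$ needed to absorb the lower-order term $\beta\|\phi\|_2$ produced by the vorticity forcing $i\beta U''\phi$, so no delicate splitting or cutoff analysis is required, and the sharper $|U(0)-\nu|$-dependence in \eqref{eq:604} is not needed for the uniform bound stated in \eqref{eq:732}.
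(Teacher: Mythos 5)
Your proof is correct and follows essentially the same route as the paper's: you verify that $\phi''-\alpha^2\phi$ satisfies the integral condition of $D(\LL_\beta^{\mathfrak z_\alpha})$, invoke \eqref{eq:604}, and close via the $H^1$-coercivity of $-d^2/dx^2+\alpha^2$ (the paper's \eqref{eq:733}), absorbing the $\beta^{1/2}\|\phi\|_2^2$ term using $\alpha^2\gtrsim\beta^{2/3}\gg\beta^{1/2}$. The only cosmetic difference is that the paper absorbs in one step while you extract $\|\phi\|_2$ first and re-insert; the estimates obtained are identical.
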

\begin{proof}
 Let $ {\mathfrak z} =\mathfrak z_\alpha $ be given by
  \eqref{eq:602}-\eqref{eq:603}. Let $f\in L^2(0,1)$, $\phi\in D(\mathcal B_{\lambda,\alpha}^{\Nf,\Df})$ satisfy
  \begin{displaymath}
      \mathcal B_{\lambda,\alpha}^{\Nf,\Df} \phi = f\,.
  \end{displaymath}
  An integration by parts yields $\langle \mathfrak
  z_\alpha,-\phi^{\prime\prime}+\alpha^2\phi\rangle=0$, and hence we may conclude that
  $-\phi^{\prime\prime}+\alpha^2 \phi \in D (\LL_\beta^{\mathfrak z_\alpha})$. Furthermore, it
  holds that
\begin{displaymath}
  (\LL_\beta^{\mathfrak z_\alpha}-\beta\lambda)(-\phi^{\prime\prime}+\alpha^2\phi)=f+i\beta U^{\prime\prime}\phi \,.
\end{displaymath}
By \eqref{eq:604} we then have  
 \begin{displaymath} 
\|-\phi^{\prime\prime}+\theta^2\beta^{2/3}\phi\|_2\leq
C\,\big(\beta^{1/2}\|\phi\|_2+\beta^{-1/2}\|f\|_2\big)\,,
 \end{displaymath}
where $\theta=\alpha\beta^{-1/3}$. \\
Hence, 
\begin{equation}\label{eq:733}
  \|\phi^\prime\|_2^2 +\theta^2\beta^{2/3}\|\phi\|_2^2\, = \langle -\phi^{\prime\prime}+\theta^2\beta^{2/3}\phi,\phi\rangle  \leq
  C\, \big(\beta^{1/2}\|\phi\|_2^2+ \beta^{-1/2}\|f\|_2\|\phi\|_2 \big)\,.
\end{equation}
As $\theta\geq \alpha_2$, we obtain that for sufficiently large $\beta_0$,
\begin{displaymath} 
\|\phi^\prime\|_2\leq  C \beta^{-5/6}  \|f\|_2   \,.
\end{displaymath}
With the aid of Poincar\'e's inequality we then obtain \eqref{eq:732}. 
\end{proof}
\begin{remark}
  An improved version of \eqref{eq:732} can be obtained by introducing the effect
  of $|U(0)-\nu|$ from \eqref{eq:604}
  \begin{equation}
\label{eq:734}
      \sup_{
        \begin{subarray}{c}
          \alpha_2 \beta^{1/3}\leq\alpha \\
     \Re\lambda \leq   \Upsilon\beta^{-1/2}
           \end{subarray}}
\big\|(\B_{\lambda,\alpha,\beta}^{\mathfrak N,\Df})^{-1}\big\|+
      \Big\|\frac{d}{dx}\, (\B_{\lambda,\alpha,\beta}^{\mathfrak
        N,\Df})^{-1}\Big\|\leq  C \frac{\beta^{-5/6}}{1+|U(0)-\nu|\,\beta^{1/6}} \,.
  \end{equation}
\end{remark}

  \subsection{Resolvent estimates for small $\alpha$}
 \label{sec:5.5}

 We continue by considering for some positive $ \hat \alpha_0$ and $0<\hat
 \nu_0<U(0)$, the zone
 \begin{equation}
\label{eq:hyp5.5}
 0\leq \alpha\leq\hat \alpha_0\beta^{-1/6}\,, \,  |\Im
 \lambda|\leq \hat{\nu}_0\,,\, \Re \lambda \leq \beta^{-1/2}\,.
 \end{equation}
  We begin
 by considering the case $\alpha=0$ and then obtain estimates of
 $(\B_{\lambda,\alpha,\beta}^{\Nf,\Df})^{-1}$ for $0<\alpha\leq \hat \alpha_0\beta^{-1/6}$ by treating it
 as a perturbation of $(\B_{\lambda,0,\beta}^{\Nf,\Df})^{-1}$ . More precisely, we introduce
 the set
 \begin{displaymath}
\mathfrak W(\beta,\hat{\nu}_0) :=\{ \lambda \in \mathbb C\,:\, |\Im\lambda| \leq \hat{\nu}_0, \;
\mbox{ and } \mu \leq \beta^{-1/2}\}\,. 
\end{displaymath}
and prove the following:
\begin{proposition}
\label{lem:zero-alpha}
Let $U\in C^4([0,1])$ satisfy \eqref{eq:10}.   There exist $C>0$,
  $\hat \nu_0>0$ and $\beta_0>0$ such that 
for all $\beta\geq \beta_0$ and $ \lambda\in\mathfrak W(\beta,\hat{\nu}_0)$  it holds that
  \begin{equation}
\label{eq:735}
\Big\|(\B_{\lambda,0,\beta}^{\Nf,\Df})^{-1}\Big\|+
      \Big\|\frac{d}{dx}\, (\B_{\lambda,0,\beta}^{\Nf,\Df})^{-1}\Big\|+ 
      \lambda_\beta^{-1/2}\Big\|\frac{d^2}{dx^2}\,
      (\B_{\lambda,0,\beta}^{\Nf,\Df})^{-1}\Big\| \leq C\, \beta^{-2/3} \,. 
  \end{equation}
\end{proposition}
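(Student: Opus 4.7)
The plan is to follow the template of Propositions~\ref{lem:right-curve}, \ref{lem:small-lambda-intermediate-alpha}, and~\ref{lem:intermediate-alpha}, with a substantial simplification specific to $\alpha=0$. Setting $\phi\in D(\B_{\lambda,0,\beta}^{\Nf,\Df})$ with $\B\phi=f$, I introduce as in \eqref{eq:644}--\eqref{eq:649} the auxiliary functions $v_\Df=\A_{\lambda,0}\phi+(U+i\lambda)\phi''(1)\hat\psi_{\lambda,\beta}$ and $\tilde v_\Df=(v_\Df-U^{\prime\prime}\phi)/(U+i\lambda)=-\phi''+\phi''(1)\hat\psi_{\lambda,\beta}$. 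Both belong to $D(\LL_\beta^{\Nf,\Df})$ thanks to $\phi'''(0)=0$, $U^{\prime\prime\prime}(0)=0$ from the symmetry, and $\hat\psi_{\lambda,\beta}'(0)=0$ from \eqref{eq:452}. The crucial observation is that for $\alpha=0$, $\psi:=\phi''$ automatically satisfies $\int_0^1\psi\,dx=\phi'(1)-\phi'(0)=0$, so $\psi\in D(\LL_\beta^\zeta)$ with the constant function $\zeta\equiv1$; moreover $\zeta=1\in\mathfrak{U}_0\cap\mathfrak{U}_1(\beta,\lambda,\gamma,\theta)\cap\mathfrak{U}_2(\beta,\theta,\lambda)$ trivially with $\|\zeta\|_\infty=1$. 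This grants direct access to the integral-condition machinery of Section~\ref{sec:4}, and one has $(\LL_\beta-\beta\lambda)\tilde v_\Df=-f-i\beta U^{\prime\prime}\phi+\phi''(1)\hat g_{\lambda,\beta}$ with $\hat g$ controlled by \eqref{eq:660}.

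The proof then proceeds in four stages. \emph{Stage 1:} bound $|\phi''(1)|$ via Proposition~\ref{lem:integral-conditions}. Since $|\nu|\leq\hat\nu_0<U(0)$, for $\beta_0$ large enough $U(0)-\nu\geq a\beta^{-1/2}$, so \eqref{eq:446} with $\zeta=1$ yields
\[
|\phi''(1)|\leq C\beta^{1/3}\lambda_\beta^{1/2}\|(\LL_\beta^{\Nf,\Df}-\beta\lambda)^{-1}(f+i\beta U^{\prime\prime}\phi)\|_1,
\]
with the $f$-piece bounded by \eqref{eq:403} and the $i\beta U^{\prime\prime}\phi$-piece by \eqref{eq:421} after extracting the singular principal part $\propto\phi(x_\nu)/(U-\nu+i\mathfrak f)$, whose $L^1$ norm is controlled via \eqref{eq:427}. \emph{Stage 2:} bound $\tilde v_\Df$ and $\tilde v_\Df'$ via Proposition~\ref{prop5.3}, and then $g_\Df$ from \eqref{eq:647}, combining with \eqref{eq:669} for $(U^{\prime\prime}\phi)''$. \emph{Stage 3:} apply the $L^2$, $W^{1,q}$ and $L^1$ Schr\"odinger bounds from Section~\ref{sec:3} to $(\LL_\beta^{\Nf,\Df}-\beta\lambda)v_\Df=g_\Df$. \emph{Stage 4:} recover $\phi$ through the decomposition $\phi=\phi_\Df+\check\phi$, with $\phi_\Df=\A_{\lambda,0}^{-1}v_\Df$ controlled by the inviscid estimates of Section~\ref{sec:2} (handling the kernel $U$ of $\A_{0,0}$ carefully via Lemma~\ref{lem:lambda-alpha-zero} for $\lambda\approx0$, and Propositions~\ref{lem:lambda-zero}, \ref{prop:linear} for $\lambda$ away from zero), and $\check\phi=-\phi''(1)\phi_{\lambda,\beta,0}$ controlled by Lemmas~\ref{lem:inviscid-decay} and~\ref{lem:other}. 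The bulk contribution $\tilde v_\Df$ furnishes the $\beta^{-2/3}$ scaling of $\|\phi\|_{1,2}$ after double integration (using $\phi(1)=\phi'(1)=0$), while the boundary-layer contribution of $\hat\psi_{\lambda,\beta}$ (thickness $\beta^{-1/3}\lambda_\beta^{-1/2}$) is responsible for the $\lambda_\beta^{1/2}$ factor in the $\|\phi''\|_2$ bound.

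The principal obstacle is closing the bootstrap. The equation for $\tilde v_\Df$ contains $\beta U^{\prime\prime}\phi$ on the right, and the bare $L^2\to L^2$ Schr\"odinger bound $\|(\LL_\beta^{\Nf,\Df}-\beta\lambda)^{-1}\|\lesssim\beta^{-2/3}$ only gives $\|\tilde v_\Df\|_2\lesssim\beta^{-2/3}(\|f\|_2+\beta\|\phi\|_2)\sim\beta^{1/3}\|\phi\|_2$, which is insufficient to absorb against the target $\|\phi\|_{1,2}\lesssim\beta^{-2/3}\|f\|_2$. The missing $\beta^{-1/3}$ gain must be extracted from the sharpened estimate \eqref{eq:432}, $\|(\LL_\beta^{\Nf,\Df}-\beta\lambda)^{-1}(U-\nu)g\|_2\leq C\beta^{-1}\|g\|_2$, or equivalently from Proposition~\ref{H1-estimate-0} for data divided by $(x-x_\nu)$; a controlled decomposition $U^{\prime\prime}\phi=U^{\prime\prime}(x_\nu)\phi+(U^{\prime\prime}-U^{\prime\prime}(x_\nu))\phi$ reduces to these sharper bounds after absorbing the leftover terms into the left-hand side. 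Secondary difficulties are matching the subranges of $\lambda$: large negative $\mu$ requires \eqref{eq:363}--\eqref{eq:364} and Propositions~\ref{lem:integral-conditions-negative}--\ref{lem:integral-conditions-mild-negative}, small $|\nu|$ requires the inviscid estimates of Subsection~\ref{sec:case-lambda-small-alpha-large}, and $|\nu|$ close to $\hat\nu_0<U(0)$ is covered by Proposition~\ref{prop:near-quadratic}. Uniformity in $\lambda\in\mathfrak{W}(\beta,\hat\nu_0)$ is achieved by patching the subcases together; taking $\hat\nu_0<U(0)$ keeps us away from the critical layer $\nu=U(0)$, which is what allows the clean $\beta^{-2/3}$ scaling rather than the worse $\beta^{-1/2}\log\beta$ of Theorem~\ref{thm:small-alpha}.
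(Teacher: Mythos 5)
The decomposition into Stages 2--4 mirrors the paper's Steps 3--5, but your Stage~1 has a genuine gap: bounding $|\phi''(1)|$ via Proposition~\ref{lem:integral-conditions} with $\zeta\equiv 1$ produces an estimate too weak for the bootstrap. Applying \eqref{eq:446} (equivalently \eqref{eq:447}) with $\zeta=1$ to $(\mathcal L_\beta^{1}-\beta\lambda)\phi''=f+i\beta U''\phi$, then \eqref{eq:403} for the $f$-piece and \eqref{eq:421}, \eqref{eq:427} for $i\beta U''\phi$ (using $\mathfrak J_\nu\gtrsim 1$ for $|\nu|\leq\hat\nu_0<U(0)$ and $|\phi(x_\nu)|\lesssim|\nu|^{1/2}\|\phi'\|_2$) gives at best
\begin{equation*}
|\phi''(1)|\lesssim\beta^{1/3}\lambda_\beta^{1/2}\log\beta\,\|\phi'\|_2 + \beta^{-1/2}\lambda_\beta^{1/2}\|f\|_2\,.
\end{equation*}
Since $\|\hat\psi_{\lambda,\beta}\|_1\approx\lambda_\beta^{-1/2}\beta^{-1/3}$ by \eqref{eq:638}, the boundary-layer contribution $|\phi''(1)|\,\|\hat\psi\|_1$ to the inviscid estimate in Stage~4 is then of order $\log\beta\,\|\phi'\|_2$, which \emph{cannot} be absorbed into the left-hand side. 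The paper's Step~2 derives the sharper bound \eqref{eq:750}, $|\phi''(1)|\lesssim\beta^{1/6}\lambda_\beta^{1/2}\|\phi'\|_2 + \beta^{-1/3}\|f\|_2$, for which $|\phi''(1)|\,\|\hat\psi\|_1\lesssim\beta^{-1/6}\|\phi'\|_2$ absorbs.

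The $\beta^{-1/6}\log^{-1}\beta$ gain that you are missing comes from an entirely different and more elementary route, special to $\alpha=0$: integrating $\B_{\lambda,0,\beta}\phi=f$ over $(0,1)$ and using $\phi'(0)=\phi'(1)=0$, $\phi^{(3)}(0)=0$, $\phi(1)=0$, $U'(0)=0$ (which makes $\int_0^1 U\phi''\,dx=\int_0^1 U''\phi\,dx$) yields the identity $\phi^{(3)}(1)=-\int_0^1 f\,dx$ (\eqref{eq:741}). This controls the boundary term in the elementary energy identity $\Re\langle(U'')^{-1}\phi'',\B_{\lambda,0,\beta}\phi\rangle$, and with a Sobolev interpolation gives \eqref{eq:744} and then \eqref{eq:750}. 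Your observation that $\zeta\equiv1\in\mathfrak U_0\cap\mathfrak U_1\cap\mathfrak U_2$ is admissible when $\alpha=0$ is correct --- it is precisely the $\alpha\to 0$ specialization of the boundary treatment in Propositions~\ref{lem:right-curve}, \ref{lem:small-lambda-intermediate-alpha}, \ref{lem:intermediate-alpha} --- but it only ever gives the generic $\beta^{1/3}\lambda_\beta^{1/2}$ prefactor built into Proposition~\ref{lem:integral-conditions}. Those propositions conclude with the weaker scalings $\beta^{-1/2+\delta}$ or $\beta^{-5/6}$ under restrictions on $\alpha$ and $\nu$; the clean $\beta^{-2/3}$ of the present proposition genuinely requires exploiting the vanishing of the total flux $\int_0^1 f\,dx$ of the source, which the Section~4 machinery alone cannot see.
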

\begin{proof}~\\

{\it Step 1: Preliminaries.}\\

 Let $(\phi,f)\in D(\B_{\lambda,0,\beta}^{\Nf,\Df})\times L^2(0,1)$ satisfy
$\B_{\lambda,0,\beta}^{\Nf,\Df}\phi=f$. Setting $\alpha=0$ in \eqref{eq:644} yields
\begin{equation}\label{eq:736}
  v_\Df =-(U+i\lambda)\phi^{\prime\prime}+U^{\prime\prime}\phi +  (U+i\lambda)\phi^{\prime\prime}(1)\hat{\psi}\,.
\end{equation}

Note that $v_\Df(1)=0$ and hence
\begin{equation}
\label{eq:737}
  (\LL_\beta^\Nf-\beta\lambda)v_\Df= g_\Df \,,
\end{equation}
 where  $g_\Df$ is given by   \eqref{eq:647}, which we recall
  here for the benefit of the reader in the equivalent form 
  \begin{equation}\label{eq:738}
     g_\Df + (U+i\lambda)f =(U+i\lambda)  \phi^{\prime\prime}(1)\hat{g}  -   (U^{\prime\prime}\phi)^{\prime\prime} 
  -2U^\prime \tilde{v}_\Df^\prime-U^{\prime\prime} \tilde{v}_\Df\,.
  \end{equation}
In \eqref{eq:738}, $\tilde{v}_\Df$ is given by setting $\alpha=0$ in 
\eqref{eq:649}, i.e., 
\begin{equation}\label{eq:739} 
  \tilde{v}_\Df=  - \phi^{\prime\prime} + \phi^{\prime\prime}(1)\hat{\psi}\,. 
\end{equation}

   {\it Step 2: We estimate $|\phi^{\prime\prime}(1)|$.} \\
   
   \noindent 
    Let $(\phi,f)\in D(\B_{\lambda,0,\beta}^{\mathfrak N,\Df})\times L^2(0,1)$ satisfy
    $\B_{\lambda,0,\beta}^{\mathfrak N,\Df}\phi=f$. An integration by parts yields
    \begin{multline}\label{eq:740}
      \|(U^{\prime\prime})^{-1/2}\phi^{(3)}\|_2^2 = -  \Re\langle(U^{\prime\prime})^{-1} \phi^{\prime\prime},\B_{\lambda,0,\beta}\phi\rangle-
      \frac{1}{U^{\prime\prime}(1)}\Re (\bar{\phi}^{\prime\prime}(1)\phi^{(3)}(1))  \\
      - \Re\langle[(U^{\prime\prime})^{-1}]^\prime\phi^{\prime\prime},\phi^{(3)}\rangle +
      \mu \beta \|(U^{\prime\prime})^{-1/2}\phi^{\prime\prime}\|_2^2 \,.
    \end{multline}
    To estimate the second term on the right hand side of
    \eqref{eq:740} we use the identity (which is obtained via an
    integration by parts of the balance $\B_{0,0,\beta} \phi=f$)
\begin{equation}
\label{eq:741}
  \phi^{(3)}(1)=-\int_0^1f(x)\,dx \,.
\end{equation}    
Hence,
    \begin{displaymath}
      \|\phi^{(3)}\|_2^2\leq
      C\,\big(|\phi^{\prime\prime}(1)|\,\|f\|_2+\beta^{1/2}\|\phi^{\prime\prime}\|_2^2+\|f\|_2\|\phi^{\prime\prime}\|_2+\|\phi^{(3)}\|_2\|\phi^{\prime\prime}\|_2\big)\,,
    \end{displaymath}
    which implies
    \begin{equation}\label{eq:742}
      \|\phi^{(3)}\|_2^2\leq
      \widehat C\,\big(|\phi^{\prime\prime}(1)|\,\|f\|_2+\beta^{1/2}\|\phi^{\prime\prime}\|_2^2+\|f\|_2\|\phi^{\prime\prime}\|_2\big)\,.
    \end{equation}
    Sobolev embeddings yield
    \begin{equation}\label{eq:743}
      |\phi^{\prime\prime}(1)|^2\leq \big(\|\phi^{(3)}\|_2+\|\phi^{\prime\prime}\|_2\big)\, \|\phi^{\prime\prime}\|_2\,.
    \end{equation}
     Combining \eqref{eq:742} and \eqref{eq:743} leads to
    \begin{equation}
      \label{eq:744}
      |\phi^{\prime\prime}(1)|\leq C(\beta^{1/8}\|\phi^{\prime\prime}\|_2+\|f\|_2^{1/4}\|\phi^{\prime\prime}\|_2^{3/4}+\|f\|_2^{1/3}\|\phi^{\prime\prime}\|_2^{2/3})\,.
    \end{equation} 
 By \eqref{eq:649} and the left inequality of \eqref{eq:562} it holds
    that 
    \begin{equation}
      \label{eq:745}
      \|\phi^{\prime\prime}\|_2\leq C\lambda_\beta^{-1/4}\beta^{-1/6}|\phi^{\prime\prime}(1)|+\|\tilde{v}_\Df\|_2 \,.
    \end{equation}  
 Using \eqref{eq:744} we then obtain for $\beta_0$ large enough 
    \begin{equation}\label{eq:746}
      \|\phi^{\prime\prime}\|_2\leq 2 \|\tilde{v}_\Df\|_2 +C\lambda_\beta^{-3/4}\beta^{-1/2}\|f\|_2\,.
    \end{equation}
By \eqref{eq:660} and \eqref{eq:666} (note that \eqref{eq:666} results from a straightforward
      application of \eqref{eq:412} and \eqref{eq:287} to \eqref{eq:650})
    it holds that
    \begin{equation}
      \label{eq:747}
      \|\tilde{v}_\Df\|_2\leq
      C\, \big(\|\phi^\prime\|_2+\beta^{1/6}|\phi(x_\nu)|+\beta^{-2/3}\|f\|_2+\beta^{ -1/2}|\phi^{\prime\prime}(1)|\,\big)\,.
    \end{equation}
 Since 
\begin{equation}
\label{eq:748}
|\phi(x_\nu)|= |\phi(x_\nu)-\phi(1)| \leq |\nu|^{1/2} \|\phi^\prime\|_2\leq|\lambda|^{1/2}\|\phi^\prime\|_2\,,
\end{equation}
    we obtain from \eqref{eq:746}, \eqref{eq:747},  
    with the aid of \eqref{eq:744} that 
 \begin{equation}
   \label{eq:749}
 \|\phi^{\prime\prime}\|_2\leq
   C\big(\lambda_\beta^{1/2}\|\phi^\prime\|_2+[\lambda_\beta^{-3/4}\beta^{-1/2}+\beta^{-2/3}]\|f\|_2\,\big)\,.
 \end{equation}  
We now substitute \eqref{eq:749} into \eqref{eq:744} to
obtain
\begin{equation}
\label{eq:750}
  |\phi^{\prime\prime}(1)|\leq 
   C\big(\beta^{1/6}\lambda_\beta^{1/2}\|\phi^\prime\|_2+\beta^{-1/3}\|f\|_2\,\big)\,.
\end{equation}

{\it Step 3: Given some $\mu_0>0$, we estimate $\tilde v_\Df$ and
$\tilde v_\Df^\prime$ under the additional assumption $\mu \geq -\mu_0$.}\\

 \noindent We begin by recalling \eqref{eq:664}, which is still valid in the present case and reads
\begin{multline*}
  \|\tilde{v}_\Df^\prime\|_2^2\leq C\,\Big(\|\tilde{v}_\Df\|_2\big[\|f\|_2 +  \lambda_\beta^{-3/4}\beta^{1/2}\|\phi\|_{1,2}\big]+
\\+ \lambda_\beta^{-1/2} (\beta^{2/3}\|\phi\|_{1,2}^2+\beta^{-1}\|f\|_2^2)
+  (\mu_{\beta,+} +1)  \, \|\tilde{v}_\Df\|_2^2 \Big)\,.
\end{multline*}
 Observing that $\mu_{\beta,+} \leq \beta^{1/2}$, where $\mu_{\beta,+}$ is given by
 \eqref{eq:284},  we conclude that
\begin{multline}
\label{eq:751} 
\|\tilde{v}_\Df^\prime\|_2^2\leq 
C\,\Big(\|\tilde{v}_\Df\|_2\big[\|f\|_2 +  \lambda_\beta^{-3/4}\beta^{1/2}\|\phi\|_{1,2}\big]+
\\+ \lambda_\beta^{-1/2} (\beta^{2/3}\|\phi\|_{1,2}^2+\beta^{-1}\|f\|_2^2)
+  \beta^{1/2}  \, \|\tilde{v}_\Df\|_2^2 \Big)\,. 
\end{multline}
By \eqref{eq:747}, \eqref{eq:748}, and \eqref{eq:750} it holds that 
    \begin{equation}
\label{eq:752}
  \|\tilde{v}_{\Df}\|_2 \leq C\, \big(\beta^{-2/3}\|f\|_2 + \lambda_\beta^{1/2}\|\phi^\prime\|_2\big)\,.
\end{equation}
Substituting \eqref{eq:752}  into \eqref{eq:751}
then yields that for given $\mu_0>0$ there exists $C>0$ such that for 
  $-\mu_0 \leq   \mu \leq \beta^{-1/2}$ it holds that    
\begin{equation}\label{eq:753}
  \|\tilde{v}_\Df^\prime\|_2\leq C\,\big(  \beta^{-1/4} \|f\|_2+
    \beta^{5/12}\|\phi\|_{1,2}\big)\,.
\end{equation}

{\it Step 4: We estimate  $\|v_\Df\|_\infty$ under the assumptions of Step 3.}\\

\noindent We begin by estimating the $L^2$-norm of $ g_\Df + (U+i\lambda)f $ using
\eqref{eq:738}. By \eqref{eq:656},
\eqref{eq:660}, and \eqref{eq:669}, it holds  that
 \begin{equation}\label{eq:754}
   \|(U+i\lambda)  \phi^{\prime\prime}(1)\hat{g} \| +   \|(U^{\prime\prime}\phi)^{\prime\prime} \| \leq
   C\, \big( \beta^{1/4}\|\phi\|_{1,2}+  \beta^{-7/12}\|f\|_2\big)\,.
 \end{equation}
  
 Substituting \eqref{eq:754} together with \eqref{eq:749},  \eqref{eq:752} and
 \eqref{eq:753} into \eqref{eq:738} yields 
 \begin{equation}
 \label{eq:755}  
      \|g_\Df+(U+i\lambda)f\|_2\leq C\,\big(  \beta^{-1/4} \|f\|_2+ 
    \beta^{5/12}\|\phi\|_{1,2}\big)\,.
 \end{equation}
Since, by a Sobolev embedding, we have 
\begin{displaymath}
  \Big\|(\LL_\beta^\Nf-\beta\lambda)^{-1} (U-\nu) f \|_\infty\leq
\Big\|\frac{d}{dx}(\LL_\beta^\Nf-\beta\lambda)^{-1} (U-\nu) f \|_2^{1/2}
\|(\LL_\beta^\Nf-\beta\lambda)^{-1} (U-\nu) f \|_2^{1/2}\,,
\end{displaymath}
we can conclude from \eqref{eq:412}  and  \eqref{eq:413}   that
\begin{equation}
  \label{eq:756}
\Big\|(\LL_\beta^\Nf-\beta\lambda)^{-1} (U-\nu) f \|_\infty \leq C
\beta^{-3/4} \, \|f\|_2\,. 
\end{equation}
Furthermore, by \eqref{eq:287} (for $-\Upsilon \beta^{-1/3} \leq \mu \leq \beta^{-1/2}$) or
\eqref{eq:363} and \eqref{eq:364} (for \break $-\mu_0\leq \mu\leq-\Upsilon\beta^{-1/3}$) we obtain
\begin{equation}\label{eq:757}
  \Big\|(\LL_\beta^\Nf-\beta\lambda)^{-1} i\mu f \|_\infty\leq C\beta^{-3/4}\|f\|_2\,.
 \end{equation}
Consequently, by \eqref{eq:756}  and \eqref{eq:757} we may infer that for
$\mu\geq -\mu_0$
\begin{equation}
\label{eq:758}
  \Big\|(\LL_\beta^\Nf-\beta\lambda)^{-1} (U+i\lambda) f \|_\infty \leq C
\beta^{-3/4} \, \|f\|_2\,. 
\end{equation}
By \eqref{eq:755} and (\ref{eq:287}a) it holds that
\begin{displaymath}
  \|(\LL_\beta^\Nf-\beta\lambda)^{-1} (g_\Df +[U+i\lambda] f) \|_2 \leq C\, \big(
    \beta^{-11/12}\|f\|_2+  \beta^{-1/4}\|\phi\|_{1,2}\big)\,.
\end{displaymath}
Furthermore, \eqref{eq:755} and (\ref{eq:287}b) with $p=2$ yield 
\begin{displaymath}
  \Big\|\frac{d}{dx}(\LL_\beta^\Nf-\beta\lambda)^{-1} (g_\Df +[U+i\lambda] f)\Big \|_2
  \leq   C\, \big( \beta^{-7/12}\|f\|_2+  \beta^{1/12}\|\phi\|_{1,2}\big)\,.
\end{displaymath}
Hence by Sobolev's embeddings
\begin{equation}\label{eq:759}
    \|(\LL_\beta^\Nf-\beta\lambda)^{-1} (g_\Df +[U+i\lambda] f) \|_\infty \leq C\, \big(
      \beta^{-3/4}\|f\|_2+  \beta^{-1/12}\|\phi\|_{1,2}\big)\,.
\end{equation}
In view of \eqref{eq:737} we may combine \eqref{eq:759} with
\eqref{eq:758} to obtain, for $\mu>-\mu_0$ that
\begin{equation}
\label{eq:760}
  \|v_\Df\|_\infty  \leq C \, \big( \beta^{-3/4} \|f\|_2+ \beta^{-1/12}\|\phi\|_{1,2}\big) \,.
\end{equation}

 {\it Step 5: We prove \eqref{eq:735}.}\\

 Recall from
 \noindent \eqref{eq:hyp5.5} that   $ \mu\leq\beta^{-1/2}$ and $|\nu|\leq \hat \nu_0 < U(0)$. \\

 {\em Step 5a: With  $\mu_0>0$, we prove
  \eqref{eq:735} for $|\nu|\leq\hat{\nu}_0$ and for $\mu$ satisfying
 \begin{equation}
 \label{eq:761}
   \mu \in (-\mu_0, - e^{-\beta^{1/24}}) \cup (e^{ -\beta^{1/24}},  \beta^{-1/2})\,.
 \end{equation}
 }

\noindent Set 
\begin{displaymath}
  v= \A_{\lambda,0}\phi=-(U+i\lambda)\phi^{\prime\prime}+U^{\prime\prime}\phi \,,
\end{displaymath} 
and note from \eqref{eq:644} (with $\alpha=0$) 
that
 \begin{displaymath}
   v=v_\Df - \phi^{\prime\prime}(1)(U+i\lambda)\hat{\psi}\,.
 \end{displaymath}
 An integration by parts yields
 \begin{displaymath}
   \int_0^1v\,dx=0 \,,
 \end{displaymath}
 and hence by \eqref{eq:29} and \eqref{eq:48}
 it holds that
 \begin{displaymath}
   \|\phi^\prime\|_2 \leq C\,\big(\|v_\Df\|_\infty[1+\log |\mu|^{-1}]
   +|\phi^{\prime\prime}(1)|\|\hat{\psi}\|_1\big)\,.
 \end{displaymath}
By \eqref{eq:614},  
\eqref{eq:750}, and   \eqref{eq:760},
we obtain for $|\mu|<\mu_0$  that 
 \begin{displaymath}
    \|\phi^\prime\|_2 \leq C(\beta^{-1/24}\|\phi^\prime\|_2
   + \beta^{-2/3}\|f\|_2)\,.
 \end{displaymath}
 
 For sufficiently large $\beta_0$ we obtain for $\beta \geq \beta_0$
 \begin{equation}
 \label{eq:762}
   \|\phi\|_{1,2}\leq  C\,\beta^{-2/3} \|f\|_2 \,.
 \end{equation}
 To
 obtain an estimate for $\|\phi^{\prime\prime}\|_2$ we use \eqref{eq:749}
  to obtain
 \begin{equation}
   \label{eq:763}
 \|\phi^{\prime\prime}\|_2\leq C\lambda_\beta^{1/2}\beta^{-2/3}\|f\|_2
 \end{equation}

  {\em Step 5b:    With  $0<\hat{\nu}_0<U(0)$, we prove  \eqref{eq:735}  for  $|\nu|\leq\hat{\nu}_0$ 
  and $|\mu| \leq e^{-\beta^{1/24}}$ \,.} \\
  
  \noindent Here we write for some $0 < \tilde{\mu} \leq 1/2$
  \begin{equation}
  \label{eq:764}
    \B^{\Nf,\Df}_{\lambda+\tilde{\mu}\beta^{-1/2},0,\beta}\phi= -\beta^{1/2}\tilde{\mu}\phi^{\prime\prime} +f \,.
  \end{equation}
  Note that $\lambda+\tilde{\mu}\beta^{-1/2}$ meets the assumptions of Step 5a,
  and hence, we can use \eqref{eq:763} to obtain
  \begin{displaymath}
    \|\phi^{\prime\prime}\|_2 \leq C(\tilde{\mu}\lambda_\beta^{1/2}\beta^{-1/6}\|\phi^{\prime\prime}\|_2 +\beta^{-2/3}\|f\|_2) \,.
  \end{displaymath}
  For sufficiently small  $\tilde{\mu}$ and sufficiently large $\beta_0$ we obtain \eqref{eq:763} once
  again. Consequently,
  \begin{displaymath}
    \|-\beta^{1/2}\tilde{\mu}\phi^{\prime\prime}+f\|_2 \leq C\|f\|_2\,.
  \end{displaymath}
  Hence we can apply \eqref{eq:762} once again to \eqref{eq:764}
  to establish \eqref{eq:762} for $|\mu| \leq  e^{-\beta^{1/24}}$\,.\\

 {\em Step 5c:    With  $0<\hat{\nu}_0<U(0)$, we prove  that there
    exists $\mu_0>0$ such that \eqref{eq:735}  holds for
  $\mu\leq-\mu_0$ and $|\nu|  < \hat{\nu}_0$.  }\\
  
  \noindent Since $\mu < 0$, we have, after two integrations by parts, 
\begin{equation}
\label{eq:765}
\Re\langle\phi,\B_{\lambda,0,\beta}\phi\rangle=\|\phi^{\prime\prime}\|_2^2+|\mu|\beta
\,\|\phi^\prime\|_2^2+\beta\Im\langle U^\prime\phi,\phi^\prime\rangle \,.
  \end{equation}
   Consequently, using Poincar\'e's  inequality, we obtain
\begin{displaymath}
  \|\phi^\prime\|_2 \leq \frac{C}{|\mu|}(\beta^{-1}\|f\|_2+\|\phi^\prime\|_2)\,.
\end{displaymath}
For sufficiently large $\mu_0$ and $\beta_0$ we can then conclude
\begin{displaymath}
  \|\phi^\prime\|_2 \leq \frac{C}{|\lambda|\beta}\|f\|_2 \,.
\end{displaymath}
 Using \eqref{eq:749} completes the proof of \eqref{eq:735}.
  \end{proof}
Using a perturbation argument we now obtain: 
\begin{proposition}
  \label{cor:small-alpha}
  Let $0<\hat{\nu}_0<U(0)$. 
Under the conditions of Proposition \ref{lem:zero-alpha} there exist
  $C>0$, $\hat \alpha_0>0$, and $\beta_0>0$ such that for all $\beta\geq \beta_0$ it holds
  that
  \begin{equation}
\label{eq:766}
      \sup_{
        \begin{subarray}{c}
          \alpha<\hat \alpha_0\beta^{-1/6} \\
   |\nu|< \hat \nu_0 \\
     \mu<\beta^{-1/2} 
           \end{subarray}}
\big\|(\B_{\lambda,\alpha,\beta}^{\Nf,\Df})^{-1}\big\|+
      \big\|\frac{d}{dx}\, (\B_{\lambda,\alpha,\beta}^{\Nf,\Df})^{-1}\big\|\leq
      C \beta^{-2/3}\,.
  \end{equation}
\end{proposition}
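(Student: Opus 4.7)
The plan is to derive the result from Proposition \ref{lem:zero-alpha} by a simple perturbation argument, exploiting the smallness of $\alpha^2$ and the fact that $|\lambda|$ is bounded throughout the parameter range, so that $\lambda_\beta \leq C\beta^{1/3}$.

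First I would observe that for $\phi \in D(\B_{\lambda,\alpha,\beta}^{\Nf,\Df}) = D(\B_{\lambda,0,\beta}^{\Nf,\Df})$ with $f = \B_{\lambda,\alpha,\beta}^{\Nf,\Df}\phi$, the definition \eqref{eq:641} gives
\begin{equation*}
  \B_{\lambda,0,\beta}^{\Nf,\Df}\phi = f + \alpha^2 (\LL_\beta-\beta\lambda)\phi\,,
\end{equation*}
with $\|(\LL_\beta-\beta\lambda)\phi\|_2 \leq \|\phi^{\prime\prime}\|_2 + C\beta\|\phi\|_2$ since $|\lambda| \leq |\nu| + |\mu| \leq \hat{\nu}_0 + \beta^{-1/2}$ is bounded. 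Applying \eqref{eq:735} then yields
\begin{equation*}
  \|\phi\|_2 + \|\phi^\prime\|_2 + \lambda_\beta^{-1/2}\|\phi^{\prime\prime}\|_2 \leq C\beta^{-2/3}\|f\|_2 + C\alpha^2\beta^{-2/3}\|\phi^{\prime\prime}\|_2 + C\alpha^2\beta^{1/3}\|\phi\|_2\,.
\end{equation*}

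Next, using the upper bound $\alpha \leq \hat{\alpha}_0 \beta^{-1/6}$, the two perturbation terms become
\begin{equation*}
  C\alpha^2\beta^{-2/3}\|\phi^{\prime\prime}\|_2 \leq C\hat{\alpha}_0^2 \beta^{-1}\|\phi^{\prime\prime}\|_2 \quad \text{and} \quad C\alpha^2\beta^{1/3}\|\phi\|_2 \leq C\hat{\alpha}_0^2 \|\phi\|_2\,.
\end{equation*}
Since $\lambda_\beta^{-1/2} \geq C\beta^{-1/6}$, the ratio of the $\|\phi^{\prime\prime}\|_2$ perturbation term to the LHS coefficient is bounded by $C\hat{\alpha}_0^2 \beta^{-5/6}$, which is absorbable for $\beta\geq \beta_0$ with $\beta_0$ large. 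The $\|\phi\|_2$ perturbation is absorbable once $\hat{\alpha}_0$ is chosen small enough so that $C\hat{\alpha}_0^2 \leq 1/2$. After absorbing both terms on the LHS, I obtain
\begin{equation*}
  \|\phi\|_2 + \|\phi^\prime\|_2 + \lambda_\beta^{-1/2}\|\phi^{\prime\prime}\|_2 \leq C\beta^{-2/3}\|f\|_2\,,
\end{equation*}
which is the desired a priori estimate.

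Finally I would invoke the Fredholm framework from Subsection \ref{sec:5.2}: since $\B_{\lambda,\alpha,\beta}^{\Nf,\Df}$ has index zero and the a priori estimate above shows it is injective on its domain, it is surjective with bounded inverse, and the stated operator norms \eqref{eq:766} follow. No single step is a serious obstacle here — the calculation is completely mechanical once Proposition \ref{lem:zero-alpha} is in hand. The only subtle point is verifying that the chosen threshold $\hat{\alpha}_0 \beta^{-1/6}$ is the sharpest at which this direct perturbation absorbs, because the critical scaling $\alpha^2 \beta^{1/3} = O(1)$ is exactly what couples to the norm $\|\phi\|_2$ through the $\beta U$ term in $\LL_\beta$; pushing $\alpha$ beyond this scale would require the finer analysis of the preceding subsections rather than a perturbation from $\alpha = 0$.
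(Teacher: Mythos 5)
Your proposal is correct and takes essentially the same approach as the paper: you write $\B_{\lambda,0,\beta}\phi = f \pm \alpha^2(\LL_\beta-\beta\lambda)\phi$, apply Proposition \ref{lem:zero-alpha}, and absorb the two perturbation terms using $\alpha \le \hat\alpha_0\beta^{-1/6}$ (the $\|\phi''\|_2$ term for $\beta$ large, the $\|\phi\|_2$ term for $\hat\alpha_0$ small). The paper organizes the absorption in two passes (first closing on $\|\phi''\|_2$, then using that to bound $\|(\LL_\beta-\beta\lambda)\phi\|_2$ and applying \eqref{eq:735} again) whereas you do it in a single pass, but the underlying estimates and the Fredholm closing step are identical.
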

\begin{proof} 
 Let $(\phi,f)\in D(\B_{\lambda,\alpha,\beta}^{\mathfrak N,\Df})\times  L^2(0,1)$ satisfy
$\B_{\lambda,\alpha,\beta}^{\mathfrak N,\Df}\phi=f$. We then write
\begin{equation}
\label{eq:767}
  \B_{\lambda,0,\beta}^{\Nf,\Df}\phi=-\alpha^2\Big(-\frac{d^2}{dx^2}+i\beta(U+i\lambda)\Big)\phi+f \,.
\end{equation}
By \eqref{eq:735} we obtain that
\begin{displaymath}
 \|\phi^{\prime\prime}\|_2\leq C\, [\hat \alpha_0^2\lambda_\beta^{1/2}(\beta^{-1}\|\phi^{\prime\prime}\|_2+\|\phi\|_2)
 +\lambda_\beta^{1/2}\beta^{-2/3}\|f\|_2]\,.  
\end{displaymath}
Hence, for sufficiently large $\beta$ we obtain that
\begin{displaymath}
  \|\phi^{\prime\prime}\|_2\leq C\, \big[\hat \alpha^2_0\beta^{1/6}\|\phi\|_2
 +\beta^{-1/2}\|f\|_2\big]\,.  
\end{displaymath}
We can thus conclude that
\begin{displaymath}
  \Big\|\Big(-\frac{d^2}{dx^2}+i\beta(U+i\lambda)\Big)\phi\Big\|_2 \leq
  C\, \big(\beta\|\phi\|_2 +\beta^{-1/2}\|f\|_2\big) \,.
\end{displaymath}
By \eqref{eq:735} and \eqref{eq:767} we then obtain 
\begin{displaymath}
  \|\phi\|_{1,2} \leq C\,\big(\hat \alpha_0^2\|\phi\|_2+\beta^{-2/3}\|f\|_2\big) \,.
\end{displaymath}
For sufficiently small $\hat \alpha_0$ we may now conclude \eqref{eq:766}. 
\end{proof}

\subsection{Some auxiliary results}
This section is devoted to the proof of two auxiliary results which
will become useful 
in the next two subsections.\\

\begin{lemma}\label{newtechnlemma}
Let $U\in C^4(0,1)$ satisfy \eqref{eq:10}. Let further $\kappa_0$ and $\nu_1$
denote positive constants. There exist positive $\beta_0$, $\Upsilon$, $\alpha_1$, and $C$
such that, for $\lambda=\mu+i\nu$ where $\nu$ and $\mu$ satisfy
$\nu_1<\nu<U(0)+\kappa_0\beta^{-1/2}$ and $\mu<\Upsilon\beta^{-1/2}$, $\beta>\beta_0$,
$0\leq\alpha\leq\alpha_1\beta^{1/3}$,   and any $(\phi,f)\in D(\B_{\lambda,\alpha,\beta}^{\Nf,\Df})\times L^2(0,1)$ satisfying
$\B_{\lambda,\alpha,\beta}^{\Nf,\Df}\phi=f$, it holds that
\begin{multline}
\label{eq:768}
|\phi^{\prime\prime}(1)|\leq C\,\big(\beta^{-1/3}[\beta^{-1/4}+x_\nu]^{-5/6}\|f\|_2 \\
+\beta^{1/2}\bigg[\beta^{-1/4}+\frac{x_\nu}{ \log(1+x_\nu\beta^{1/4})}\bigg]^{-1}|\phi(x_\nu)|+\beta^{1/2}[\beta^{-1/4}+x_\nu]^{-1/2}\|\phi^\prime\|_2\Big)\,,
\end{multline}
where $x_\nu$ is defined by \eqref{eq:defxnu}.
\end{lemma}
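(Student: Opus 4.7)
The plan is to reduce the estimate on $|\phi^{\prime\prime}(1)|$ to the boundary resolvent bounds for the Schr\"odinger operators $\LL^\beta_\zeta$ developed in Section~\ref{sec:4}. Setting $v:=\phi^{\prime\prime}-\alpha^2\phi$, the boundary conditions $\phi(1)=\phi^\prime(1)=\phi^\prime(0)=\phi^{(3)}(0)=0$ together with $\mathfrak z_\alpha(1)=1$, $\mathfrak z_\alpha^\prime(0)=0$ and $\mathfrak z_\alpha^{\prime\prime}=\alpha^2\mathfrak z_\alpha$ yield, after integration by parts, $\langle \mathfrak z_\alpha,v\rangle=0$, so that $v\in D(\LL^\beta_{\mathfrak z_\alpha})$ with $v(1)=\phi^{\prime\prime}(1)$. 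The Orr-Sommerfeld equation rewrites as $(\LL^\beta_{\mathfrak z_\alpha}-\beta\lambda)v=f+i\beta U^{\prime\prime}\phi$. From \eqref{eq:603} a direct computation gives $\|\mathfrak z_\alpha\|_\infty=1$ and $\|\mathfrak z_\alpha^\prime\|_2\leq C(\sqrt{\alpha}+1)\leq C(\sqrt{\alpha_1}\,\beta^{1/6}+1)$, and similarly for the truncated $L^2$ norm appearing in the definition of $\mathfrak U_1$. Choosing $\alpha_1$ small enough relative to the threshold $\theta_0$ of Propositions~\ref{lem:integral-conditions} and \ref{lem:integral-conditions-quadratic}, we obtain $\mathfrak z_\alpha\in \mathfrak U_1(\beta,\lambda,\gamma,\theta)\cap\mathfrak U_2(\beta,\theta,\lambda)$ for some admissible $\gamma<1/4$ and $\theta$.

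The key step is the three-piece decomposition
\begin{equation*}
f+i\beta U^{\prime\prime}\phi \;=\; f \;+\; i\beta U^{\prime\prime}[\phi-\phi(x_\nu)] \;+\; i\beta U^{\prime\prime}\phi(x_\nu)\,,
\end{equation*}
and the matching splitting $v=v_f+v_\perp+v_\parallel$ with each $v_i\in D(\LL^\beta_{\mathfrak z_\alpha})$ solving the Schr\"odinger equation with the associated right-hand side. Existence of such a splitting follows from invertibility of $\LL^\beta_{\mathfrak z_\alpha}-\beta\lambda$: the Fredholm index is zero by a continuous deformation argument analogous to the one in Subsection~\ref{sec:5.2}, and injectivity is supplied by the a priori bounds of Propositions~\ref{lem:integral-conditions}--\ref{lem:integral-conditions-mild-negative}. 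The middle piece vanishes at $x=x_\nu$ and, via Hardy's inequality \eqref{eq:96}, $\|(x-x_\nu)^{-1}\,i\beta U^{\prime\prime}[\phi-\phi(x_\nu)]\|_2\leq C\beta\|\phi^\prime\|_2$; the last piece is a smooth function of $x$ with both its pointwise value at $x_\nu$ and its $H^1$ norm controlled by $C\beta|\phi(x_\nu)|$.

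The estimates then split according to the size of $|U(0)-\nu|$ relative to $\beta^{-1/2}$. For $\nu_1<\nu\leq U(0)-a_0\beta^{-1/2}$, so $x_\nu\gtrsim\beta^{-1/4}$ and $\lambda_\beta^{1/2}\sim\beta^{1/6}$, I would apply Proposition~\ref{lem:integral-conditions}: the first branch of the minimum in \eqref{eq:445} applied to $v_f$ gives $|v_f(1)|\leq C\beta^{-1/3}x_\nu^{-5/6}\|f\|_2$; estimate \eqref{eq:448} applied to $v_\perp$ combined with the Hardy bound and $|U(0)-\nu|^{-1/4}\sim x_\nu^{-1/2}$ gives $|v_\perp(1)|\leq C\beta^{1/2}x_\nu^{-1/2}\|\phi^\prime\|_2$; and estimate \eqref{eq:447} applied to $v_\parallel$, using that $|\log|\nu+i\mf|^{-1}|$ is bounded for $\nu>\nu_1$ and $\|\mathfrak z_\alpha\|_{1,p}\leq C$ for $p$ close to $1$, gives $|v_\parallel(1)|\leq C\beta^{1/2}x_\nu^{-1}|\phi(x_\nu)|$. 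For $U(0)-a_0\beta^{-1/2}\leq \nu\leq U(0)+\kappa_0\beta^{-1/2}$ (so $x_\nu\lesssim \beta^{-1/4}$) I would instead apply Proposition~\ref{lem:integral-conditions-quadratic}: the first minimum of \eqref{eq:509} on $v_f$, estimate \eqref{eq:510} on $v_\perp$, and the second minimum of \eqref{eq:509} on $v_\parallel$ (using $\|i\beta U^{\prime\prime}\phi(x_\nu)\|_\infty\leq C\beta|\phi(x_\nu)|$) give respectively $C\beta^{-1/8}\|f\|_2$, $C\beta^{5/8}\|\phi^\prime\|_2$, and $C\beta^{3/4}|\phi(x_\nu)|$. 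For very negative $\mu<-\mu_1$, Proposition~\ref{lem:integral-conditions-negative} applied directly with $\tilde f=f+i\beta U^{\prime\prime}\phi$, combined with Poincar\'e's inequality, yields $|v(1)|\leq C\beta^{-1/2}(\|f\|_2+\beta\|\phi^\prime\|_2)$, strictly stronger than every term of the target. Matching the two regimes across the transition $x_\nu\sim\beta^{-1/4}$ produces the interpolating factors $[\beta^{-1/4}+x_\nu]^{-5/6}$, $[\beta^{-1/4}+x_\nu/\log(1+x_\nu\beta^{1/4})]^{-1}$, and $[\beta^{-1/4}+x_\nu]^{-1/2}$ appearing in \eqref{eq:768}.

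The main obstacle I expect is the matching near the transition $x_\nu\sim\beta^{-1/4}$, particularly for the $|\phi(x_\nu)|$ term: Case~A yields the sharper $\beta^{1/2}x_\nu^{-1}|\phi(x_\nu)|$, so one must verify that the logarithmic form $\beta^{1/2}\log(1+x_\nu\beta^{1/4})x_\nu^{-1}|\phi(x_\nu)|$ in the statement is indeed an upper bound throughout $x_\nu\geq \beta^{-1/4}$ (immediate since $\log(1+t)\geq\log 2$ for $t\geq 1$) and that the Case~B bound $\beta^{3/4}|\phi(x_\nu)|$ interlocks with it without gap near $x_\nu\sim\beta^{-1/4}$. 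A secondary issue, which pins down the upper limit $\alpha_1\beta^{1/3}$ on $\alpha$, is verifying the class memberships $\mathfrak z_\alpha\in\mathfrak U_1\cap\mathfrak U_2$ uniformly over $0\leq\alpha\leq\alpha_1\beta^{1/3}$ with a single small constant $\alpha_1$.
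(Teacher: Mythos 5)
Your overall strategy matches the paper: set $v=\phi^{\prime\prime}-\alpha^2\phi\in D(\mathcal L_\beta^{\mathfrak z_\alpha})$, decompose the source as $f + i\beta U^{\prime\prime}(\phi-\phi(x_\nu)) + i\beta U^{\prime\prime}\phi(x_\nu)$, apply the boundary estimates of Section~4 separately in the two regimes $x_\nu\gtrsim\beta^{-1/4}$ and $x_\nu\lesssim\beta^{-1/4}$, and combine. Your Case~B (via \eqref{eq:509}, \eqref{eq:510}) and the $\|f\|_2$ and $\|\phi^\prime\|_2$ terms of Case~A (via \eqref{eq:445} first branch and \eqref{eq:448}) all reproduce the paper's intermediate bounds.

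There is a gap in your treatment of the $|\phi(x_\nu)|$ piece in Case~A. You invoke \eqref{eq:447} and assert $\|\mathfrak z_\alpha\|_{1,p}\le C$ for $p$ near $1$. That bound does \emph{not} hold uniformly over $0\le\alpha\le\alpha_1\beta^{1/3}$: from \eqref{eq:603} one computes $\|\mathfrak z_\alpha^\prime\|_p\sim\alpha^{1-1/p}$ as $\alpha\to\infty$, which diverges for every $p>1$ (and \eqref{eq:447} requires $p>1$). So \eqref{eq:447} does not yield the claimed $C\beta^{1/2}x_\nu^{-1}|\phi(x_\nu)|$ when $\alpha$ is of size $\beta^{1/3}$. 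The paper instead applies the \emph{second} branch of the minimum in \eqref{eq:445}, which only uses $\|\zeta\|_\infty=1$ and the $|f(x_\nu)|\log(1+x_\nu\beta^{1/4})$ term, giving $\beta^{1/2}x_\nu^{-1}\log(1+x_\nu\beta^{1/4})|\phi(x_\nu)|$ — this is exactly the source of the logarithmic weight in \eqref{eq:768}. The log-free bound you state, had it been correct, would still imply \eqref{eq:768} (since $\log(1+x_\nu\beta^{1/4})\ge\log 2$ when $x_\nu\ge\beta^{-1/4}$), but your route through \eqref{eq:447} does not deliver it for the full admissible range of $\alpha$; you should replace that application by the second branch of \eqref{eq:445}.
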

\begin{proof}
  Consider first the case $U(0)-\kappa_1\beta^{-1/2}<\nu<U(0)+\kappa_0\beta^{-1/2}$
  for some $\kappa_1>0$. In this case we have $x_\nu \leq C \beta^{-1/4}$.
  As in the proof of \eqref{eq:656} we use the $\LL(L^2,L^\infty)$
  estimate of \eqref{eq:509} applied to $f=i \beta U^{\prime\prime} \phi(x_\nu)$ and
  \eqref{eq:510} applied to $i \beta U^{\prime\prime} (\phi-\phi(x_\nu)$). We obtain
  \begin{displaymath}
  |\phi^{\prime\prime}(1)|\leq C\,\big(\beta^{-1/8}\|f\|_2+\beta^{3/4}|\phi(x_\nu)|
+\beta^{5/8}\|\phi^\prime\|_2\Big)\,.  
  \end{displaymath}
 For $\nu_1<\nu<U(0)-\kappa_1\beta^{-1/2}$ with sufficiently large $\kappa_1$ and
  $\beta_0$,  we
  use \eqref{eq:445} 
   and \eqref{eq:448}, applied to the pair $( \phi^{\prime\prime} -
  \alpha^2 \phi, i\beta U^{\prime\prime}\phi+f)$ using the decomposition 
  \begin{displaymath}
   i \beta U^{\prime\prime}\phi
  =i\beta U^{\prime\prime}(\phi -\phi(x_\nu)) + i\beta U^{\prime\prime}\phi(x_\nu)
  \end{displaymath} 
and Hardy's
  inequality, to obtain
\begin{displaymath}
   |\phi^{\prime\prime}(1)|\leq
    C\Big(\beta^{-1/3}x_\nu^{-5/6}\|f\|_2+\beta^{1/2}x_\nu^{-1} { \log(1+x_\nu\beta^{1/4})}|\phi(x_\nu)|+\beta^{1/2}x_\nu^{-1/2}\|\phi^\prime\|_2\Big)\,.
\end{displaymath}
Combining the above pair of inequalities yields \eqref{eq:768}.
\end{proof}

\begin{lemma}
\label{sec:aux-vdf}  Let $U\in C^4(0,1)$ satisfy \eqref{eq:10} and
$\kappa_0$, $\Upsilon$ and $\nu_1$
denote positive constants. Let further
\begin{equation}
  \label{eq:769}
 \check x_\mu(\Upsilon,\beta)=
   \begin{cases}
     \min(\Upsilon \mu_{\beta,+}^{-1/2} \,,\,\beta^{-1/8}) & \mu>0 \\
      \beta^{-1/8} & \mu<0 \,,
    \end{cases}
\end{equation}
where $\mu_{\beta,+}$ is defined by \eqref{eq:284}.  Suppose that $\lambda=\mu+i\nu$ where $\beta^{-1}<|\mu|$, 
  $\mu<\Upsilon\beta^{-1/2}$, $\nu_1<\nu<U(0)+\kappa_0\beta^{-1/2}$   and  
  \begin{equation}
\label{eq:770}
    x_\nu<\check x_\mu(\Upsilon,\beta)\,,
  \end{equation}
  where $x_\nu$ is defined by \eqref{eq:defxnu}.  Then, there exist
  positive $\Upsilon_0$, $\beta_0$, and $C$ such that for all $\beta>\beta_0$
  and $\Upsilon<\Upsilon_0$ it holds that
\begin{multline}
  \label{eq:771}
\|\tilde{v}_\Df\|_2\leq C\, \Big[ \Upsilon^{-5/2}\beta^{-1/4}\|f\|_2\\
  +  ( \Upsilon^{-3/8}\mu_{+,\beta}^{ 3/4}+\Upsilon^{3/2}\beta^{3/16})|\phi(x_\nu)| +  (\mu_{+,\beta}^{1/2}+\Upsilon^{2}\beta^{1/8}) \|\phi^\prime\|_2 \Big]\,.
\end{multline}
in which $\tilde{v}_\Df$ is given by  \eqref{eq:649}.  
\end{lemma}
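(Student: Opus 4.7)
The equation for $\tilde v_\Df$ is \eqref{eq:650}, which I rewrite in the form
\begin{displaymath}
(\mathcal L_\beta^\Nf - \beta\lambda)\tilde v_\Df = -f + i\beta U''(\phi - \phi(x_\nu)) + i\beta U''\phi(x_\nu) + \phi''(1)\hat g \,.
\end{displaymath}
My plan is to decompose $\tilde v_\Df = u_1 + u_2 + u_3 + u_4$ by inverting the resolvent on each of the four right-hand-side terms and to bound each summand separately using the Schr\"odinger resolvent estimates developed in Sections~\ref{sec:3} and~\ref{sec:4}. Because $x_\nu < \check x_\mu(\Upsilon,\beta) \leq \beta^{-1/8}$, I always have $|U(0)-\nu|\leq C\beta^{-1/4}$, so the analysis splits naturally into two sub-cases: $x_\nu \lesssim \beta^{-1/4}$ (where Proposition~\ref{lem:schrod-quad} is the relevant tool) and $\beta^{-1/4}\lesssim x_\nu\leq\beta^{-1/8}$ (where Proposition~\ref{prop5.3} applies). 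The coefficients $\Upsilon^{-5/2}$, $\Upsilon^{-3/8}$, $\Upsilon^{3/2}$, $\Upsilon^2$ in the asserted bound reflect the optimization between these two sub-cases across the threshold.

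For $u_1$, (\ref{eq:365}a) in the first sub-case and (\ref{eq:287}a) in the second yield a contribution dominated by $C\,\Upsilon^{-5/2}\beta^{-1/4}\|f\|_2$, after absorbing the $x_\nu$ dependence through the constraint $x_\nu < \check x_\mu$. For $u_2$, I factor $U''(\phi-\phi(x_\nu))=(x-x_\nu)\cdot U''\cdot\frac{\phi-\phi(x_\nu)}{x-x_\nu}$ and combine Hardy's inequality \eqref{hardyw1}--\eqref{hardyw1a} (giving $\|(x-x_\nu)^{-1}(\phi-\phi(x_\nu))\|_2\leq C\|\phi'\|_2$) with (\ref{eq:365}b) or \eqref{eq:413}; this produces the $\Upsilon^2\beta^{1/8}\|\phi'\|_2$ contribution, while the $\mu_{\beta,+}^{1/2}\|\phi'\|_2$ piece emerges from the $\beta\mu\|\tilde v\|_2^2$ term that appears in the underlying energy identity when $\mu>0$. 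Finally, $u_4$ is handled by invoking Lemma~\ref{newtechnlemma} for $|\phi''(1)|$ together with \eqref{eq:568}, and the resulting contribution is absorbed into the three preceding bounds.

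The main obstacle is $u_3 = i\beta\phi(x_\nu)\,(\mathcal L_\beta^\Nf-\beta\lambda)^{-1}(U'')$. A direct application of (\ref{eq:365}a) yields only $\|u_3\|_2 \leq C\beta^{3/8}|\phi(x_\nu)|$, which is off by a factor of roughly $\beta^{3/16}$. To recover the sharper bound, I would split $U''$ into a term proportional to $(U-\nu)$ plus a constant-like remainder, applying \eqref{eq:432} to the $(U-\nu)$-part for a $\beta^{-1}$ gain, while handling the remainder through the near-cancellation structure of $(\mathcal L_\beta^\Nf-\beta\lambda)^{-1}(1)$, analogous to the treatment of $\hat K_2$ in Proposition~\ref{lem:lambda-zero} or to the subtraction of $f(x_\nu)/(\beta(U-\nu+i\mf))$ in Proposition~\ref{Dirichlet-L1-H1-0}. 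The precise exponents $\Upsilon^{-3/8}\mu_{\beta,+}^{3/4}$ and $\Upsilon^{3/2}\beta^{3/16}$ correspond to an interpolation between two regimes: large $\mu>0$, where the bound $x_\nu<\Upsilon\mu_{\beta,+}^{-1/2}$ governs the estimate, and small or negative $\mu$, where only $x_\nu\leq\beta^{-1/8}$ is available. Optimally balancing these two inputs through a weighted energy argument, analogous to the derivation of \eqref{eq:244} in Proposition~\ref{prop:quadratic}, is the core technical difficulty.
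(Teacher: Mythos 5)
Your plan is a genuinely different route from the paper's and, as written, it does not close. The paper does not invert the resolvent on each right-hand-side piece separately. It introduces a spatial scale $\delta=\delta(\beta,\Upsilon)$ (equal to $\Upsilon^{-1}\beta^{-1/8}$ for small or negative $\mu$, and $\Upsilon^{1/4}\mu_{\beta,+}^{-1/2}$ for large $\mu>0$), sets $\chi_\delta(x)=\chi(x/\delta)$, and estimates $\tilde v_\Df$ in two coupled pieces: on $\operatorname{supp}\tilde\chi_\delta$ the imaginary-part energy identity is coercive because $|U-\nu|^{1/2}\tilde\chi_\delta\gtrsim\delta\,\tilde\chi_\delta$, while on $\operatorname{supp}\chi_{2\delta}$ the real-part identity is combined with Poincar\'e at scale $\delta$, $\|\chi_{2\delta}\tilde v_\Df\|_2^2\leq C\delta^2\|[\chi_{2\delta}\tilde v_\Df]'\|_2^2$. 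The final exponents $\Upsilon^{3/2}\beta^{3/16}$ on $|\phi(x_\nu)|$ and $\Upsilon^2\beta^{1/8}$ on $\|\phi'\|_2$ are produced mechanically by $\delta^{-3/2}$ and $\delta^{-1}$ after this optimization. No such gain is available in a linear decomposition, because no summand $u_j$ is localized near $x=0$ and Poincar\'e at scale $\delta$ never enters.

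Concretely, the gaps in your argument are these. For $u_2=(\LL_\beta^\Nf-\beta\lambda)^{-1}\bigl(i\beta U''(\phi-\phi(x_\nu))\bigr)$, the estimates you cite do not give $(\mu_{+,\beta}^{1/2}+\Upsilon^2\beta^{1/8})\|\phi'\|_2$. Applying (\ref{eq:365}b) with Hardy gives $\|u_2\|_2\leq C\beta\cdot\beta^{-3/4}\|\phi'\|_2=C\beta^{1/4}\|\phi'\|_2$; applying \eqref{eq:413} gives $\|u_2\|_2\leq C\,x_\nu^{-1}\|\phi'\|_2$, which is as bad as $\beta^{1/4}\|\phi'\|_2$ near the threshold $x_\nu\sim\beta^{-1/4}$ and degenerates further as $x_\nu\to 0$. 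Both are weaker than the target by a factor of roughly $\Upsilon^{-2}\beta^{1/8}$ in the small-$\mu$ regime. Your suggestion that the $\mu_{+,\beta}^{1/2}\|\phi'\|_2$ piece ``emerges from the $\beta\mu\|\tilde v\|_2^2$ term ... in the underlying energy identity'' mixes the energy-identity framework into the linear-decomposition framework; those are two different proofs and you cannot invoke an energy identity for $\tilde v_\Df$ while simultaneously estimating each $u_j$ via the resolvent. For $u_3$ you correctly flag the obstacle, but the proposed remedy (split off a $(U-\nu)$-proportional piece, subtract $\phi(x_\nu)/(\beta(U-\nu+i\mf))$ as in Proposition~\ref{Dirichlet-L1-H1-0}) is not carried out and would face the same difficulty: \eqref{eq:421} controls the $L^1$ norm, not $L^2$, and the $L^2$ norm of $\phi(x_\nu)/(\beta(U-\nu+i\mf))$ is of order $\beta^{-5/6}x_\nu^{-5/6}|\phi(x_\nu)|$ by \eqref{eq:225}, which is not small enough uniformly in $x_\nu\in(0,\beta^{-1/8})$. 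The paper circumvents all of this by never separating $u_2$, $u_3$, and the $\mu\beta\|\tilde v_\Df\|_2^2$ term --- they are handled in a single weighted energy inequality, \eqref{eq:776}--\eqref{eq:786}, where the $\delta$-cutoff does the work.
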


\begin{proof}
  Let $\delta:= \delta(\beta,\Upsilon)\in (0,1/4)$ be much greater than $\beta^{-1/4}$. More precisely, we introduce 
  for sufficiently small $\Upsilon$ and $\beta \geq \beta_0(\Upsilon)$ with $\beta_0(\Upsilon)$ large enough
\begin{equation}
\label{eq:772}
\delta (\beta,\Upsilon) :=
\begin{cases}
  \min(\Upsilon^{1/4} \mu_{\beta,+}^{-1/2},\Upsilon^{-1}\beta^{-1/8}) & \mu>0 \\
  \Upsilon^{-1}\beta^{-1/8} & \mu<0 \,.
\end{cases}
\end{equation}
Recall the definition of $\chi\in C_0^\infty(\R,[0,1])$ from \eqref{eq:103}
\begin{displaymath}
  \chi(x)=
  \begin{cases}
    1 & x<1/2 \\
    0 & x>3/4 \,.
  \end{cases}
\end{displaymath}
We
further set $\chi_\delta(x)=\chi(x/\delta)$, and $\tilde{\chi}_\delta=1-\chi_\delta$. 
Note that $\chi_\delta$ is supported in $(0,3\delta/4)$ and that
  $\tilde{\chi}_\delta$  is supported in $ [\delta/2,+\infty)$.\\

{\em Step 1: We estimate  $\tilde{\chi}_\delta\tilde{v}_\Df$.}\\

\noindent Using \eqref{eq:650} we now write
\begin{subequations}\label{eq:773}
  \begin{equation}
 (\LL_\beta^{\Nf,\Df}-\beta\lambda)(\tilde{\chi}_\delta\tilde{v}_\Df) =f_\delta  +
 i\beta U^{\prime\prime}\tilde{\chi}_\delta \, \phi \,,
  \end{equation}
where
\begin{equation}
 f_\delta =     2\delta^{-1}\tilde{\chi}^\prime(\cdot/\delta)  \tilde{v}_\Df^\prime+\delta^{-2} \tilde \chi^{\prime\prime}(\cdot/\delta)  \tilde{v}_\Df
    +  \tilde{\chi}_\delta\, [- f+\phi^{\prime\prime}(1)\hat{g}]\,.
\end{equation}
\end{subequations}
Setting $\gamma=2^{-1} \beta^{-1/4}\delta^{-1}$ and $v=\tilde{\chi}_\delta\tilde{v}_\Df$ in
 \eqref{eq:375} yields
\begin{multline} \label{eq:774}
 \beta\|\,|U-\nu|^{1/2}\tilde{\chi}_\delta\tilde{v}_\Df\|_2^2\leq 
  \|\tilde{\chi}_\delta\tilde{v}_\Df\|_2 \,  \|f_\delta\|_2\\  +
  C\beta\, \|\,|U-\nu|^{1/2}\tilde{\chi}_\delta\tilde{v}_\Df\|_2\, (\|\phi^\prime\|_2+\|(U-\nu)^{-1/2}\tilde{\chi}_\delta\|\,|\phi(x_\nu)|)\,.
\end{multline}
where we have used the identities $\tilde{\chi}_\delta\chi_\gamma= \tilde{\chi}_\delta$
and $\tilde{\chi}_\delta\chi_\gamma^\prime=0$, and the inequality (relying on Hardy's inequality)
\begin{displaymath}
  |\langle\tilde{\chi}_\delta\tilde{v}_\Df,\tilde{\chi}_\delta[(\phi-\phi(x_\nu))+
  \phi(x_\nu)]\rangle| \leq C\||U-\nu|^{1/2}\tilde{\chi}_\delta\tilde{v}_\Df\|_2(\|\phi^\prime\|_2+\|(U-\nu)^{-1/2}\tilde{\chi}_\delta\|\,|\phi(x_\nu)|)\,.
\end{displaymath}
  Since by \eqref{eq:770} and \eqref{eq:772} there exist positive $C$
  and $\widehat  C$ such
 that 
  \begin{displaymath}
   (U(0)-\nu)_+ \leq Cx_\nu^2\leq \widehat C\Upsilon^{3/2}\delta^2\,,
 \end{displaymath}
 we have, for sufficiently small $\Upsilon$, the existence of $\widetilde C
 >0$ such that 
 \begin{equation}\label{eq:775}
  |U-\nu|^{1/2}\tilde{\chi}_\delta\geq \frac{ 1}{\widetilde C} \delta\, \tilde{\chi}_\delta\,.
 \end{equation}
Hence, by \eqref{eq:774} and \eqref{eq:775}, 
\begin{equation}
\label{eq:776}
  \|\,|U-\nu|^{1/2}\tilde{\chi}_\delta\tilde{v}_\Df\|_2^2\leq 
  \beta^{-1}\|\tilde{\chi}_\delta\tilde{v}_\Df\|_2 \,  \|f_\delta\|_2  +
  C(\|\phi^\prime\|_2^2+\delta^{-1}\,|\phi(x_\nu)|^2)\,,
\end{equation}
which implies, using again \eqref{eq:775},
\begin{equation}
\label{eq:777}
   \|\tilde{\chi}_\delta\tilde{v}_\Df\|_2\leq C\, \Big((\delta^2\beta)^{-1}\,
   \|f_\delta\|_2  +\delta^{-1}\|\phi^\prime\|_2+\delta^{-3/2}|\phi(x_\nu)| \Big)\,.
\end{equation}
Consequently,  by (\ref{eq:773}b) and \eqref{eq:661} 
\begin{multline}
\label{eq:778}
  \|\tilde{\chi}_\delta\tilde{v}_\Df\|_2\leq
  C\Big( [\delta^2\beta]^{-1} \big(\delta^{-1}\|{\mathbf 1}_{[\delta/2,\delta]}\tilde{v}_\Df^\prime\|_2  \\ + 
  \delta^{-2}\|{\mathbf 1}_{[\delta/2,\delta]}\tilde{v}_\Df\|_2 +\|f\|_2 \big)+
   \delta^{-3/2}|\phi(x_\nu)|+  \delta^{-1}\|\phi^\prime\|_2
  +\delta^{-2}\beta^{-5/4}|\phi^{\prime\prime}(1)| \Big)\,.
\end{multline}
Substituting \eqref{eq:799} into \eqref{eq:778} yields in view of
\eqref{eq:772} (note that $\delta^2 \mu_{\beta,+}\leq \Upsilon^{1/2}$)
  \begin{multline}
\label{eq:779}
  \|\tilde{\chi}_\delta\tilde{v}_\Df\|_2^2 \leq C \Big(  [\delta^2 \beta]^{-2} \big(\delta^{-2}\|{\mathbf 1}_{[\delta/2,\delta]}\tilde{v}_\Df^\prime\|_2^2   + 
  \delta^{-4}\|{\mathbf 1}_{[\delta/2,\delta]}\tilde{v}_\Df\|_2^2\big) \\  +   [\delta^2 \beta]^{-2}  \|f\|_2^2 +
   \delta^{-3}|\phi(x_\nu)|^2+  \delta^{-2} \|\phi^\prime\|_2^2\Big)\,.
\end{multline}

{\em Step 2: We estimate $\chi_{2\delta}\tilde{v}_\Df$. }\\

\noindent Taking the inner product of \eqref{eq:650} with $\chi_{2\delta}^2 (
U^{\prime\prime})^{-1} \tilde v_\Df$  we obtain (see also  \eqref{eq:651}) that 
\begin{multline}
\label{eq:780}
  \Re\langle\chi_{2\delta}^2(U^{\prime\prime})^{-1}\tilde{v}_{\Df},(\LL_\beta^\Nf-\beta\lambda)\tilde{v}_{\Df}-i\beta U^{\prime\prime}\phi\rangle=
  \|[\chi_{2\delta}(U^{\prime\prime})^{-1/2}\tilde{v}_\Df]^\prime\|_2^2 \\
 -  \|[\chi_{2\delta}(U^{\prime\prime})^{-1/2}]^\prime\tilde{v}_\Df\|_2^2
 -\beta \mu\, \|\chi_{2\delta}(U^{\prime\prime})^{-1/2}\tilde{v}_\Df\|_2^2 + \beta
\Re\langle\chi_{2\delta}^2\phi^{\prime\prime}(1)\hat{\psi},i\phi\rangle\,.
\end{multline}
As $|U^{(3)}\chi_{2\delta}|\leq C\delta$ (given that $U^{(3)}(0)=0$) we can conclude that
\begin{equation}\label{eq:781}
  \|[\chi_{2\delta}(U^{\prime\prime})^{-1/2}\tilde{v}_\Df]^\prime\|_2^2 \geq \frac 1C 
  \|[\chi_{2\delta}\tilde{v}_\Df]^\prime\|_2^2- C \delta^2\|\chi_{2\delta}\tilde{v}_\Df\|_2^2\,.
\end{equation}
 Furthermore, as $|(\chi_{2\delta})^\prime| \leq C\delta^{-1}\tilde{\chi}_\delta$, we
 obtain, using again the fact that $U^{(3)}(0)=0\,$, 
\begin{displaymath}
  \|[\chi_{2\delta}(U^{\prime\prime})^{-1/2}]^\prime\tilde{v}_\Df\|_2^2\leq C(\delta^2
  \|\chi_{2\delta}\tilde{v}_\Df\|_2^2 + \delta^{-2}\|\tilde{\chi}_\delta\tilde{v}_\Df\|_2^2)\,.
\end{displaymath}
Substituting the above, together with \eqref{eq:781} into
\eqref{eq:780}, recalling that by \eqref{eq:614}
\begin{displaymath}
  \beta|\langle\chi_{2\delta}^2\phi^{\prime\prime}(1)\hat{\psi},i\phi\rangle|\leq\beta
  |\phi^{\prime\prime}(1)|\,\|\phi^\prime\|_2 \|(1-x)^3\hat{\psi}\|_1\leq C\beta^{-1}|\phi^{\prime\prime}(1)|\,\|\phi^\prime\|_2 \,,
\end{displaymath}
and that by \eqref{eq:659},  \eqref{eq:614},  \eqref{eq:615},
  \eqref{eq:451}, and \eqref{eq:458}
\begin{displaymath}
  |\langle\chi_{2\delta}(U^{\prime\prime})^{-1}\tilde{v}_{\Df},\phi^{\prime\prime}(1)\tilde \chi_{2\delta} \hat{g}\rangle|\leq C\beta^{-3/4} \|\chi_{2\delta}\tilde{v}_\Df\|_2|\phi^{\prime\prime}(1)|
\end{displaymath}
yields 
\begin{multline*}
  \|[\chi_{2\delta}\tilde{v}_\Df]^\prime\|_2^2\leq C\,\Big[  \Upsilon^{-1}  \delta^{2}\|f\|_2^2
  + \beta^{-3/4} |\phi^{\prime\prime}(1)|\,\|\chi_{2\delta}\tilde{v}_\Df\|_2+  \\ + (\Upsilon \delta^{-2}+ \mu_{\beta,+})\|\chi_{2\delta}\tilde{v}_\Df\|_2^2+
  \delta^{-2}\|\tilde{\chi}_\delta\tilde{v}_\Df\|_2^2+
  \beta^{-1}|\phi^{\prime\prime}(1)|\,\|\phi^\prime\|_2\Big]\,.
\end{multline*}
By Poincar\'e's inequality we have
\begin{equation}
  \label{eq:782}
\| \chi_{2\delta}\tilde{v}_\Df\|_2^2 \leq C \delta^2  \|[\chi_{2\delta}\tilde{v}_\Df]^\prime\|_2^2\,.
\end{equation}
Hence, in view of \eqref{eq:772} and  \eqref{eq:782},  we obtain for
sufficiently small $\Upsilon$
\begin{multline}\label{eq:783}
\| \chi_{2\delta}\tilde{v}_\Df\|_2^2 + \delta^2
\|[\chi_{2\delta}\tilde{v}_\Df]^\prime\|_2^2\leq C\,\Big[  \Upsilon^{-1} \delta^4 \|f\|_2^2
  \\ + \|\tilde{\chi}_\delta\tilde{v}_\Df\|_2^2+
  \beta^{-1}\delta^2 |\phi^{\prime\prime}(1)|\,(\|\phi^\prime\|_2 + \delta^2  \beta^{-1/2}  |\phi^{\prime\prime}(1) |) \Big]\,.
\end{multline}
 Substituting \eqref{eq:768}
into \eqref{eq:783}  yields, in view of
\eqref{eq:772} 
\begin{equation}
\label{eq:784}
  \| \chi_{2\delta}\tilde{v}_\Df \|_2^2  +  \delta^2
  \|[\chi_{2\delta}\tilde{v}_\Df]^\prime\|_2^2\leq C \Big( \Upsilon^{-1} \delta^4  \|f\|_2^2
  + (\beta^{-1/4} \delta^2 +\delta^4)  \|\phi^\prime\|_2^2 
 +   \|\tilde{\chi}_\delta\tilde{v}_\Df\|_2^2\Big) \,.
\end{equation}
Combining \eqref{eq:779} with \eqref{eq:784}, and
\eqref{eq:772} we obtain
\begin{multline}
\label{eq:785} 
\|\chi_{2\delta}\tilde{v}_\Df\|_2^2
    +\delta^2
    \|[\chi_{2\delta}\tilde{v}_\Df]^\prime\|_2^2\leq C\Big[  \Upsilon^{-1} \delta^4\|f\|_2^2
   +
    \delta^{-6}\beta^{-2}\|{\mathbf 1}_{[\delta/2,\delta]}\tilde{v}_\Df^\prime\|_2^2\\
    +\delta^{-8}\beta^{-2}\|{\mathbf 1}_{[\delta/2,\delta]}\tilde{v}_\Df\|_2^2
    +\delta^{-3}|\phi(x_\nu)|^2+ \delta^{-2}\|\phi^\prime\|_2^2 \Big]\,.
\end{multline}
  As ${\mathbf
  1}_{[\delta/2,\delta]}\leq\chi_{2\delta}$, and $\delta^{-8}\beta^{-2}\leq \Upsilon^{-2} \beta^2
\mu_+^4+\Upsilon^{8}\beta^{-1}$, where $\mu_+=\max(\mu,0)$,
  we obtain
\begin{multline*}
  \|\chi_{2\delta}\tilde{v}_\Df\|_2^2
    +\delta^2
    \|[\chi_{2\delta}\tilde{v}_\Df]^\prime\|_2^2\leq C\Big[ \Upsilon^{-1} \delta^4\|f\|_2^2\\
  +
     (\Upsilon^{-2} \beta^2 \mu_+^4+\Upsilon^8\beta^{-1})\delta^2\|[\chi_{2\delta}\tilde{v}_\Df]^\prime\|_2^2\\
    + (\Upsilon^{-2} \beta^2 \mu_+^4+\Upsilon^8\beta^{-1})\|\chi_{2\delta}\tilde{v}_\Df\|_2^2
    +\delta^{-3}|\phi(x_\nu)|^2+ \delta^{-2}\|\phi^\prime\|_2^2 \Big]\,.
\end{multline*}
For sufficiently small $\Upsilon$  and $\beta_0^{-1}$ we then conclude (as
$\Upsilon^{-2} \beta^2 \mu_+^4\leq \Upsilon^2$)  that  
\begin{equation}
\label{eq:786}
  \|\chi_{2\delta}\tilde{v}_\Df\|_2^2+
 \delta^2\|[\chi_{2\delta}\tilde{v}_\Df]^\prime\|_2^2\leq C\big[  \Upsilon^{-1} \delta^4\|f\|_2^2
+  \delta^{-3}|\phi(x_\nu)|^2+  \delta^{-2}\|\phi^\prime\|_2^2 \big]\,.
\end{equation}
Similarly, by \eqref{eq:779} 
and \eqref{eq:772} (note that $\delta^{-4}\beta^{-2}\leq (\Upsilon^{-2} \beta^2 \mu_+^4+\Upsilon^{8}\beta^{-1})\delta^4$), it holds that
\begin{multline}
\label{eq:787}
   \|\tilde{\chi}_\delta\tilde{v}_\Df\|_2^2  \leq C \Big(
   (\Upsilon^{-2} \beta^2 \mu_+^4+\Upsilon^{8}\beta^{-1}) \delta^2\|[\chi_{2\delta}\tilde{v}_\Df]^\prime\|_2^2 +
(\Upsilon^{-2} \beta^{-2} \mu_+^4+\Upsilon^{8}\beta^{-1})  \|\chi_{2\delta}\tilde{v}_\Df\|_2^2 \\ +  (\Upsilon^{-2} \beta^2 \mu_+^4+\Upsilon^{8}\beta^{-1})\delta^4 \|f\|_2^2 +
   \delta^{-3}|\phi(x_\nu)|^2+  \delta^{-2} \|\phi^\prime\|_2^2\Big)\,.
\end{multline}
 Since  $\chi_{2\delta} + \tilde \chi_\delta > \frac 1C$ we obtain from
\eqref{eq:786} and   \eqref{eq:787} (as  $\Upsilon^{-1} \delta^4  \leq C \Upsilon^{-5} \beta^{-1/2}$ by
\eqref{eq:772} and since $|\mu|>\beta^{-1}$ ) that for $\Upsilon$ and $\beta_0^{-1}$ small enough
\begin{displaymath}
\|\tilde{v}_\Df\|_2\leq C\big[ \Upsilon^{-5/2}\beta^{-1/4}\|f\|_2
  +  ( \Upsilon^{-3/8} \mu_{+,\beta}^{3/4}+\Upsilon^{3/2}\beta^{3/16})|\phi(x_\nu)| +  (\mu_{+,\beta}^{1/2}+\Upsilon^{2}\beta^{1/8}) \|\phi^\prime\|_2 \big]\,,
\end{displaymath}
 which is precisely \eqref{eq:771}.\\
\end{proof}

\subsection{Resolvent estimates for $|\Im\lambda-U(0)|=\OO (\beta^{-1/2})$.}
\label{sec:5.6}

 We  consider,  for given positive $ \kappa_0,\,\alpha_1$, $\lambda=\mu+i\nu$,  and
 some positive $\Upsilon$, the zone  
 \begin{displaymath}
\label{eq:hyp5.6}
\mathcal E(\alpha_1,\beta_0, \Upsilon,\kappa_0):=
\begin{Bmatrix}
 (\lambda,\alpha,\beta)\in \mathbb C\times \mathbb
R_+^2\,,\,  0 \leq \alpha \leq \alpha_1 \beta^{1/3}\,, \beta \geq \beta_0  \\  \mu<\Upsilon\beta^{-1/2} \,,\, 
      U(0)-\kappa_0\beta^{-1/2}\leq\nu\leq   U(0)+\kappa_0\beta^{-1/2}
\end{Bmatrix}
\,.
 \end{displaymath}

\begin{proposition}
\label{lem:quadratic}
Let $U\in C^4([0,1])$ satisfy \eqref{eq:10} and $U^{(3)}(0)=0$. Let
further $\alpha_1>0$ and $\kappa_0>0$. Then, there exist positive $\Upsilon$, $\beta_0$
and $C$, such that for \break $(\lambda,\alpha,\beta)\in \mathcal E(\alpha_1,\beta_0, \Upsilon,\kappa_0)$, it holds
   \begin{equation}
\label{eq:788}
 \max(1,|\mu\beta|^{1/4}) \Big(\big\|(\B_{\lambda,\alpha,\beta}^{\mathfrak N,\Df})^{-1}\big\|+
       \Big\|\frac{d}{dx}\, (\B_{\lambda,\alpha,\beta}^{\mathfrak
         N,\Df})^{-1}\Big\|\Big)\leq C \beta^{-3/8}\,.
   \end{equation}
 \end{proposition}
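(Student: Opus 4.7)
The plan is to adapt the framework developed in the preceding subsections (Propositions \ref{lem:right-curve}, \ref{lem:small-lambda-intermediate-alpha}, \ref{lem:intermediate-alpha}) to the ``quadratic'' regime $|\nu-U(0)|\leq\kappa_0\beta^{-1/2}$, using the sharper inviscid estimates of Propositions \ref{prop:quadratic} and \ref{prop:non-factorable} in place of Propositions \ref{prop:linear}/\ref{prop:near-quadratic}, and the quadratic Schr\"odinger estimate of Proposition \ref{lem:schrod-quad} in place of Proposition \ref{prop5.3}. For $\phi\in D(\B^{\Nf,\Df}_{\lambda,\alpha,\beta})$ with $\B^{\Nf,\Df}_{\lambda,\alpha,\beta}\phi=f$, I introduce $v_\Df$ and $\tilde v_\Df$ exactly as in \eqref{eq:644}, \eqref{eq:649}, use the decomposition $\phi=\phi_\Df+\check\phi$ from \eqref{eq:673}, and note that under our assumptions $v_\Df\in D(\LL_\beta^{\Nf,\Df})$, so \eqref{eq:737} holds with $g_\Df$ given by \eqref{eq:647}.

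First, I estimate $|\phi^{\prime\prime}(1)|$ by Lemma \ref{newtechnlemma} (applicable since, for $\beta$ large enough, $\nu>\nu_1$ for any fixed $\nu_1<U(0)$, and $\alpha\leq\alpha_1\beta^{1/3}$); in the present regime $x_\nu\leq C\beta^{-1/4}$ so Lemma \ref{newtechnlemma} reduces, after absorbing the logarithmic factor, to
\begin{displaymath}
|\phi^{\prime\prime}(1)|\leq C\bigl(\beta^{-1/8}\|f\|_2+\beta^{3/4}|\phi(x_\nu)|+\beta^{5/8}\|\phi^\prime\|_2\bigr).
\end{displaymath}
Second, I bound $\|\tilde v_\Df\|_2$ via Lemma \ref{sec:aux-vdf}, whose hypothesis $x_\nu<\check x_\mu(\Upsilon,\beta)$ holds for $\Upsilon$ small enough since $x_\nu\leq C\beta^{-1/4}\ll\beta^{-1/8}$. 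This yields
\begin{displaymath}
\|\tilde v_\Df\|_2\leq C\bigl(\beta^{-1/4}\|f\|_2+(\mu_{\beta,+}^{3/4}+\beta^{3/16})|\phi(x_\nu)|+(\mu_{\beta,+}^{1/2}+\beta^{1/8})\|\phi^\prime\|_2\bigr).
\end{displaymath}
Substituting these together with \eqref{eq:660} and \eqref{eq:669} into \eqref{eq:647} gives a control of the form $\|g_\Df\|_2\leq C\beta^{5/8}(\|\phi\|_{1,2}+\beta^{-1}\|f\|_2)+\mathrm{l.o.t.}$

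Third comes the inviscid step, the heart of the argument. For $\phi_\Df$, I apply Proposition \ref{prop:quadratic} (for $\nu\leq U(0)$) or Proposition \ref{prop:non-factorable} (for $\nu>U(0)$) with $p=\infty$, giving
\begin{displaymath}
|\mu\beta|^{1/4}\|\phi_\Df\|_{1,2}\leq C\beta^{1/4}\|v_\Df\|_\infty\leq C\beta^{1/4}\cdot\beta^{-3/8}\|g_\Df\|_2,
\end{displaymath}
where the $\|v_\Df\|_\infty$ bound comes from (\ref{eq:365}a) via Sobolev embedding $\|v\|_\infty\lesssim\|v\|_2^{1/2}\|v^\prime\|_2^{1/2}$. (When $|\mu\beta|\leq 1$, use the plain $L^2$ bound from the same proposition.) For the boundary contribution $\check\phi=-\phi^{\prime\prime}(1)\phi_{\lambda,\beta,\alpha}$, the same Proposition \ref{prop:quadratic}/\ref{prop:non-factorable} applied to $\phi_{\lambda,\beta,\alpha}$ with $v=(U+i\lambda)\hat\psi$, combined with the decay estimates \eqref{eq:614}--\eqref{eq:615} on $\hat\psi$ in the quadratic regime, yields $\|\check\phi\|_{1,2}\leq C\beta^{-1/4}[\max(1,|\mu\beta|^{1/4})]^{-1}|\phi^{\prime\prime}(1)|$; plugging in the bound on $|\phi^{\prime\prime}(1)|$ from Step 1, the prefactor on $\|\phi\|_{1,2}$ can be made small by choosing $\Upsilon$ and $\beta_0^{-1}$ small.

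The main obstacle will be the bootstrap of Step 3: combining the inviscid bounds for $\phi_\Df$ and $\check\phi$ with the Schr\"odinger bound on $g_\Df$, which itself depends on $\|\phi\|_{1,2}$ and $|\phi(x_\nu)|$, and closing the loop to absorb the $\|\phi\|_{1,2}$ terms on the right-hand side while preserving the sharp $\max(1,|\mu\beta|^{1/4})$ gain coming from the inviscid estimate. The bound $|\phi(x_\nu)|\leq|\nu|^{1/2}\|\phi^\prime\|_2\leq C\beta^{-1/4}\|\phi^\prime\|_2$ is crucial for this: it converts the $\beta^{3/4}|\phi(x_\nu)|$ term in Lemma \ref{newtechnlemma} into $\beta^{1/2}\|\phi^\prime\|_2$, keeping all powers of $\beta$ consistent. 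The treatment of $\mu<-\mu_0$ separately, via \eqref{eq:765} as in Step 5c of Proposition \ref{lem:zero-alpha}, handles the large negative $\mu$ case cleanly, while for $-\mu_0\leq\mu<\Upsilon\beta^{-1/2}$ the quadratic resolvent bounds described above apply directly, giving the full statement \eqref{eq:788} after combining the two regimes.
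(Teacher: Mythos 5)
Your outline correctly identifies several of the paper's ingredients (Lemma \ref{newtechnlemma} for $|\phi^{\prime\prime}(1)|$, Lemma \ref{sec:aux-vdf} for $\|\tilde v_\Df\|_2$, the decomposition $\phi=\phi_\Df+\check\phi$, and the role of Propositions \ref{prop:quadratic}/\ref{prop:non-factorable}), but the core of the argument is missing and, as written, the bootstrap does not close.

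\textbf{The Sobolev-interpolation step is fatally lossy.} You propose $\|v_\Df\|_\infty\lesssim\beta^{-3/8}\|g_\Df\|_2$, combined with a control $\|g_\Df\|_2\lesssim\beta^{5/8}\|\phi\|_{1,2}+\ldots$ (this power cannot be avoided: the dominant term in $g_\Df$ is $U'\tilde v_\Df'$, and the best available bound for $\|\tilde v_\Df'\|_2$ in this regime is of order $\beta^{5/8}\|\phi'\|_2$, cf.\ \eqref{eq:802}). Feeding this into Proposition \ref{prop:quadratic} with $p=\infty$ gives $|\mu|^{1/4}\|\phi_\Df\|_{1,2}\lesssim\|v_\Df\|_\infty\lesssim\beta^{1/4}\|\phi\|_{1,2}$, so with $|\mu|^{1/4}\lesssim\beta^{-1/8}$ you arrive at $\|\phi_\Df\|_{1,2}\lesssim\beta^{3/8}\|\phi\|_{1,2}+\ldots$ The coefficient in front of $\|\phi\|_{1,2}$ diverges as $\beta\to\infty$, so the absorption step fails. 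The paper avoids this by never going through a single $\|g_\Df\|_2$ bound: in Step 5 it decomposes $g_\Df$ into the six terms of \eqref{eq:852} and applies a \emph{different} Schr\"odinger estimate to each piece — in particular \eqref{eq:432} for the $(U+i\lambda)f$ piece, (\ref{eq:365}b) with the $(x-x_\nu)^{-1}$ weight for the centered part of $U'\tilde v_\Df'$, and Lemma \ref{lem:auxiliary-estimate-1} for the $U^{\prime\prime}\hat\psi$ piece. This structure-dependent treatment produces \eqref{eq:810}, whose coefficient of $\|\phi\|_{1,2}$ is $\OO(\beta^{-1/4}+\mu_+^{1/2}\beta^{1/8})$, which is genuinely small and lets the loop close in Step 6.

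\textbf{You also omit the positivity argument of Step 2.} To estimate $\|\tilde v_\Df'\|_2$ (which you do not address, but which is needed even in your plan since $g_\Df$ contains $U'\tilde v_\Df'$), one takes the energy balance \eqref{eq:789}, which contains the boundary term $\beta\Re\langle\phi^{\prime\prime}(1)\hat\psi,i\phi\rangle$. In the regime $|\nu-U(0)|\lesssim\beta^{-1/2}$ one has $\lambda_\beta\sim\beta^{1/3}$, so the crude upper bound from \eqref{eq:652} is of size $\lambda_\beta^{-3/2}|\phi^{\prime\prime}(1)|^2\sim\beta^{-1/2}|\phi^{\prime\prime}(1)|^2$, which cannot be absorbed. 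The paper's Step 2 shows instead, via the asymptotics \eqref{eq:792}--\eqref{eq:796} of the Airy boundary layer, that $\beta\Im\langle\hat\psi,w\rangle$ is \emph{positive} for small $|\mu|$ and large $\beta$, so the $|\phi^{\prime\prime}(1)|^2$ term has a favorable sign and can simply be dropped (\eqref{eq:797}). Without this observation the energy estimate for $\tilde v_\Df'$ does not give a usable bound.

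Finally, as a minor point: the paper also needs Step 4 (the improved bound \eqref{eq:803} on $\|\phi^{\prime\prime}-\phi^{\prime\prime}(1)\hat\psi\|_2$, valid only for $|\mu|>\beta^{-1}$), and Step 6c (a shift-in-$\mu$ argument) to cover $|\mu|\leq\Upsilon^{-1}\beta^{-1}$; your proposal does not provide an analogue for the very small $|\mu|$ regime where the inviscid estimate of Proposition \ref{prop:quadratic} degenerates.
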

 \begin{proof}~\\
{\em Step 1: Preliminaries.}\\

 \noindent We follow the same outlines as in the proof of Proposition
 \ref{lem:right-curve}.  Nevertheless, given that
 $|\nu-U(0)|\sim\OO(\beta^{-1/2})$, we need to address here the
 quadratic behavior of $U(x)-U(0)$ in the vicinity of $x=0$ (see
 Subsection 3.2 for instance). 
   
Let $\tilde{v}_\Df$ be given by \eqref{eq:649}.  For 
   convenience of the reader we repeat here \eqref{eq:651}
\begin{multline}\label{eq:789}
  \Re\langle(U^{\prime\prime})^{-1}\tilde{v}_{\Df},(\LL_\beta^\Nf-\beta\lambda)\tilde{v}_{\Df}+i\beta U^{\prime\prime}\phi\rangle=
  \|(U^{\prime\prime})^{-1/2}\tilde{v}_\Df^\prime\|_2^2+ \\
  +\Re\langle\big((U^{\prime\prime})^{-1}\big)^\prime\tilde{v}_\Df,\tilde{v}_\Df^\prime\rangle
 -\beta \mu\, \|(U^{\prime\prime})^{-1/2}\tilde{v}_\Df\|_2^2 + \beta
\Re\langle\phi^{\prime\prime}(1)\hat{\psi},i\phi\rangle\,,
\end{multline}
 where $\hat \psi= \hat \psi _{\lambda,\beta}$ is introduced in \eqref{eq:613} and
$\phi\in D(\B_{\lambda,\alpha,\beta}^{\mathfrak N,\Df})$ satisfies for  $f\in L^2(0,1)$
\begin{displaymath}
\B_{\lambda,\alpha,\beta}^{\mathfrak N,\Df}\phi=f\,.
\end{displaymath}  
We begin by estimating the last term on the right-hand-side of
\eqref{eq:789}. For technical reasons we distinguish between the case
$\mu < - \mu_0 $ (for some sufficiently small $\mu_0 >0$) and
$\mu>-\mu_0$.\\

{\em Step 2: We estimate
$\Re\langle\phi^{\prime\prime}(1)\hat{\psi},i\phi\rangle$
for $\mu\geq-\mu_0$. }\\

\noindent As in step 2 of the proof of Proposition \ref{lem:right-curve} we
write
\begin{displaymath}
\phi(x) = \int_x^1(\xi-x)\phi^{\prime\prime}(\xi)\,d\xi=\phi^{\prime\prime}(1)w+
\int_x^1(\xi-x)[\phi^{\prime\prime}(\xi)-\phi^{\prime\prime}(1)\hat{\psi}(\xi)]\,d\xi\,.
\end{displaymath}
where
\begin{displaymath}
  w(x)=\int_x^1(\xi-x)\hat{\psi}(\xi)\,d\xi\,.
\end{displaymath}
Then we write
\begin{equation}
\label{eq:790}
  \Re\langle\phi^{\prime\prime}(1)\hat{\psi},i\phi\rangle=-|\phi^{\prime\prime}(1)|^2\Im\langle\hat{\psi},w\rangle+\Re\langle\phi^{\prime\prime}(1)\hat{\psi},i(\phi-\phi^{\prime\prime}(1)w)\rangle \,.
\end{equation}
For the first term on the right-hand-side  we write, using the fact
that $w^{\prime\prime} = \hat \psi$\,, and integration by parts,
\begin{equation}
\label{eq:791}
  \Im\langle\hat{\psi},w\rangle=\Im\langle w^{\prime\prime},w\rangle=-\Im \{\bar{w}^\prime(0)w(0)\} \,. 
\end{equation}
We now use \cite[Proposition A.1]{almog2019stability}  to obtain the following improvement of \eqref{eq:614}:
\begin{equation}\label{eq:792}
  \hat{\psi}(x)=e^{-\beta^{1/2}(-\lambda)^{1/2}(1-x)}+\hat{\psi}_1(x) \,,
\end{equation}
where
\begin{equation}\label{eq:793}
  \|\hat{\psi}_1\|_1+\beta^{1/2}\|(1-x)\hat{\psi}_1\|_1\leq C\beta^{-1} \,.
\end{equation}
Next, we write, using  \eqref{eq:hyp5.6}, \eqref{eq:792}, \eqref{eq:793}   
\begin{multline}
\label{eq:794}
\overline{w}^\prime(0)=-\int_0^1\overline{\hat \psi}(\xi)\,d\xi
=-\int_0^1[e^{-\beta^{1/2}(-\bar \lambda)^{1/2}(1-\xi)}+\overline{\hat
  \psi}_1(\xi)]\,d\xi \\ =-\frac{1}{(-\beta \bar \lambda)^{1/2}}+\OO(\beta^{-1})=
-\frac{e^{-i\pi/4}\beta^{-1/2}}{[U(0)]^{1/2}}[1+\OO(|\mu|^{1/2})]
+\OO(\beta^{-1}) \,.
\end{multline}
To obtain \eqref{eq:794} we used the identities
\begin{displaymath}
\int_0^1 e^{-\beta^{1/2}(- \bar \lambda)^{1/2}(1-x)} dx =
\beta^{-1/2}(-\bar{\lambda})^{-1/2} (1 - e^{-\beta^{1/2}(-\lambda)^{1/2}}) \,,
\end{displaymath}
and 
  $$\beta^{-1/2}(-[\mu-i\nu])^{-1/2}=e^{-i\pi/4}\beta^{-1/2}(U(0)+[\nu-U(0)+i\mu])^{-1/2}\,,$$
 which shows the exponentially small behavior of $e^{-\beta^{1/2}(-\lambda)^{1/2}}$ if $\mu_0 >0$ is chosen small enough.\\
Furthermore,  it
holds that
\begin{equation*}
  w(0)=\int_0^1\xi\hat{\psi}(\xi)\,d\xi=
  -w^\prime(0)-\int_0^1(1-\xi)\hat{\psi}(\xi)\,d\xi 
\end{equation*}
which implies by \eqref{eq:614}, \eqref{eq:792}, and \eqref{eq:793} 
\begin{equation}
\label{eq:795}
  w(0)= -w^\prime(0) -\frac{i\beta^{-1}}{U(0)}[1+\OO(|\mu|^{1/2})]   
  +\OO(\beta^{-3/2}) \,. 
\end{equation}
Combining \eqref{eq:794} and \eqref{eq:795}  yields
\begin{equation}\label{eq:796}
  \Im
  \{\bar{w}^\prime(0)w(0)\}=\frac{\beta^{-3/2}}{[U(0)]^{3/2}\sqrt{2}}[1+\OO(|\mu|^{1/2})]+\OO(\beta^{-2}) \,.
\end{equation}
Substituting \eqref{eq:796}  into \eqref{eq:791} yields
\begin{displaymath}
   \beta\Re\langle\hat{\psi},iw\rangle=\frac{\beta^{-1/2}}{[U(0)]^{3/2}\sqrt{2}}[1+\OO(|\mu|^{1/2})]+\OO(\beta^{-1}) \,.
\end{displaymath}
For sufficiently large $\beta_0$ and sufficiently small
$\mu_0$,  $ \beta\Re\langle\hat{\psi},iw\rangle$ is positive and hence, by \eqref{eq:790}
we can conclude that 
\begin{equation}
  \label{eq:797}
\Re\langle\phi^{\prime\prime}(1)\hat{\psi},i\phi\rangle \geq \Re\langle\phi^{\prime\prime}(1)\hat{\psi},i (\phi-\phi^{\prime\prime}(1)w)\rangle  \,. 
\end{equation}
As in the proof of Proposition \ref{lem:right-curve} (step 2) we now
apply \eqref{eq:653},  recalling that $\alpha\leq\alpha_1\beta^{1/3}$, to obtain  (note
that $\lambda_\beta\geq\frac 12 |U(0)|\beta^{1/3}$ by \eqref{eq:hyp5.6})
\begin{multline}\label{eq:798}
  \beta\, \Re\langle\phi^{\prime\prime}(1)\hat{\psi},i\phi\rangle\geq -C \,\beta^{-1/6} \lambda_\beta^{-7/4}\,
  | \phi^{\prime\prime}(1)|(\|\tilde{v}_\Df^\prime\|_2+   \beta^{2/3}  \|\phi^\prime\|_2)\\
    \geq-\widehat C\beta^{-3/4}|\phi^{\prime\prime}(1)|(\|\tilde{v}_\Df^\prime\|_2+ 
      \beta^{2/3}\|\phi^\prime\|_2)\,.
\end{multline}

Let $x_\nu$ be defined by \eqref{eq:defxnu}.  By the assumption
on $\nu$ it holds that $x_\nu\leq C\beta^{-1/4}$. Hence,  by
\eqref{eq:768} 
\begin{equation}
\label{eq:799}
    |\phi^{\prime\prime}(1)|\leq
    C \,\big(\beta^{-1/8}\|f\|_2+\beta^{3/4}|\phi(x_\nu)|+\beta^{5/8}\|\phi^\prime\|_2\big)\,,
\end{equation}
from which we conclude
\begin{equation}\label{eq:800}
 \beta\, \Re\langle\phi^{\prime\prime}(1)\hat{\psi}_,i\phi\rangle\geq
 -C\,\big(\|\phi\|_{1,2}+\beta^{-7/8}\|f\|_2)(\|\tilde{v}_\Df^\prime\|_2+
   \beta^{2/3}\|\phi^\prime\|_2\big)\,.
\end{equation}

{\em Step 3: We estimate $\tilde{v}_\Df$ and  $\tilde{v}_\Df^\prime$.} \\

\noindent Here we follow  the Steps 4 and 5 in the proof of
Proposition \ref{lem:right-curve} with \eqref{eq:657} replaced by
\eqref{eq:799}.\\ 

By  \eqref{eq:365}, \eqref{eq:650}, \eqref{eq:661}, and \eqref{eq:799}, 
we obtain (compare with \eqref{eq:666}) that
\begin{equation}
\label{eq:801}
  \|\tilde{v}_\Df\|_2\leq
  C\, (\beta^{1/4}\|\phi^\prime\|_2+\beta^{3/8}|\phi(x_\nu)|+\beta^{-1/2}\|f\|_2)\,.
\end{equation}
 Substituting \eqref{eq:801}, together with \eqref{eq:799},  \eqref{eq:800} and \eqref{eq:661},
  into \eqref{eq:662} yields 
\begin{multline*}
    \frac 1C  \|\tilde{v}_\Df^\prime\|_2^2 \leq 
     (\beta^{1/4}\|\phi^\prime\|_2+\beta^{3/8}|\phi(x_\nu)|+\beta^{-1/2}\|f\|_2)(\|f\|_2
   +\beta^{3/8}\|\phi^\prime\|_2+\beta^{1/2}|\phi(x_\nu)|)\\ \qquad 
   +( 1+ \mu_{\beta,+} )  
   (\beta^{1/4}\|\phi^\prime\|_2+\beta^{3/8}|\phi(x_\nu)|+\beta^{-1/2}\|f\|_2)^2\\
   +(\|\phi\|_{1,2}+\beta^{-7/8}\|f\|_2)(\|\tilde{v}_\Df^\prime\|_2+      \beta^{2/3}\|\phi^\prime\|_2)\,.
\end{multline*}
Hence,
\begin{equation}
\label{eq:802}
  \|\tilde{v}_\Df^\prime\|_2 \leq C\, \Big(\big( \beta^{1/8}+
   \mu_{\beta,+}^{1/2}\big)\,\big(\beta^{1/4}\|\phi^\prime\|_2+\beta^{3/8}|\phi(x_\nu)|\big)
  +\beta^{-1/8}\|f\|_2\Big) \,.
\end{equation}
Recall that
\begin{displaymath}
   \mu_{\beta,+}=\beta\mu_+=\beta \max(\mu,0) \,.
\end{displaymath}

 Since \eqref{eq:802} is unsatisfactory, given that the coefficient of
 $\beta^{1/4}\|\phi^\prime\|_2+\beta^{3/8}|\phi(x_\nu)|$ is not necessarily small,  as will become clear 
  in the sequel, we obtain an improved estimate in the next step.\\

 {\em Step 4:  For $|\mu|>\beta^{-1}$ we prove under the assumptions of
 the proposition that 
\begin{multline}
  \label{eq:803}
\|\phi^{\prime\prime}-\phi^{\prime\prime}(1)\hat{\psi}\|_2\leq C\big[
\Upsilon^{-5/2}\beta^{-1/4}\|f\|_2 \\
  +  (\Upsilon^{-3/8}\mu_{+,\beta}^{3/4}+\beta^{1/3})|\phi(x_\nu)| +  (\mu_{+,\beta}^{1/2}+\Upsilon^{2}\beta^{1/8}) \|\phi^\prime\|_2 \big]\,.
\end{multline}
}

Using the definition of $\tilde v_\Df$ given in \eqref{eq:649}, an
integration by parts yields
\begin{equation}
\label{eq:804}
  \langle-\phi^{\prime\prime}+\phi^{\prime\prime}(1)\hat{\psi},\tilde{v}_\Df\rangle=\|\phi^{\prime\prime}-\phi^{\prime\prime}(1)\hat{\psi}\|_2^2+\alpha^2\|\phi^\prime\|_2^2 +\langle \phi^{\prime\prime}(1)\hat{\psi},\alpha^2\phi\rangle\,.
\end{equation}
By \eqref{eq:614} for $s=1/2$ and \eqref{eq:799}  it holds that
\begin{multline*}
  |\langle \phi^{\prime\prime}(1)\hat{\psi},\alpha^2\phi\rangle|\leq
  \alpha^2|\phi^{\prime\prime}(1)|\,\|(1-x)^{1/2}\hat{\psi}\|_1\,\|\phi^\prime\|_2\\ \leq C\alpha^2\,\big(\beta^{-7/8}\|f\|_2+|\phi(x_\nu)|+\beta^{-1/8}\|\phi^\prime\|_2\big)\, \|\phi^\prime\|_2\,.
\end{multline*}
Substituting the above into \eqref{eq:804} yields for sufficiently
large $\beta_0$
\begin{multline*}
  \|\phi^{\prime\prime}-\phi^{\prime\prime}(1)\hat{\psi}\|_2^2+\alpha^2\|\phi^\prime\|_2^2
  \\ \leq\|-\phi^{\prime\prime}+\phi^{\prime\prime}(1)\hat{\psi}\|_2\,\|\tilde{v}_\Df\|_2+C\alpha^2\big(\beta^{-7/8}\|f\|_2+|\phi(x_\nu)|+\beta^{-1/8}\|\phi^\prime\|_2\big)\|\phi^\prime\|_2\,.
\end{multline*}
For sufficiently large $\beta_0$ we then obtain that
\begin{equation}
\label{eq:805}
   \|\phi^{\prime\prime}-\phi^{\prime\prime}(1)\hat{\psi}\|_2^2
  \leq C\, \big[\|\tilde{v}_\Df\|_2^2+ \alpha^2(\beta^{-7/4}\|f\|_2^2+|\phi(x_\nu)|^2)\big]\,.
\end{equation}
We now obtain \eqref{eq:803} from \eqref{eq:771} and the fact that
$\alpha\leq\alpha_1\beta^{1/3}$.\\

{\em Step 5: We estimate $\|v_\Df\|_\infty$ under the assumption of the
proposition and the additional conditions $|\mu|>\beta^{-1}$ and $\mu\geq-\mu_0$.}\\

\noindent Let $v_\Df$ be given by \eqref{eq:644}. For the convenience of the
reader we recall here \eqref{eq:646} 
\begin{subequations}
\label{eq:806}
  \begin{equation}
(\LL_\beta^{\Nf,\Df} -\beta\lambda)v_{\Df}=g_\Df \,,
\end{equation}
where  (after reordering)
\begin{multline}
  g_\Df= (U+i\lambda)( -f + \phi^{\prime\prime}(1)\hat{g}) 
  -2U^\prime \tilde{v}_\Df^\prime  -  U^{\prime\prime}\phi^{\prime\prime}(1)\hat{\psi}  + \\ U^{\prime\prime}([\phi^{\prime\prime}(1)\hat{\psi}
  -\phi^{\prime\prime}]-\tilde{v}_\Df) 
 -  2U^{(3)}\phi^\prime -   U^{(4)}\phi\,. 
\end{multline}
\end{subequations}

Next, we obtain a bound for $\|(\LL_\beta^{\Nf,\Df} -\beta\lambda)^{-1}g_\Df\|_\infty$ by
separately estimating the contribution of each of the six terms on the
right-hand-side  of (\ref{eq:806}b).\\
To  obtain the $L^\infty$ estimates we repeatedly use the following Sobolev
embedding inequality 
\begin{equation}\label{eq:inter} 
  \|v_\Df\|_\infty\leq  \|v_\Df\|_{2}^{1/2}\, \|v_\Df^\prime \|_{2}^{1/2}\,.
\end{equation}

Writing $(U +i\lambda)=(U-\nu) + i \mu$ we obtain by (\ref{eq:363})-(\ref{eq:364}) (for $\mu <0$),
  (\ref{eq:365}) (for $\mu >0$), and \eqref{eq:432}, that 
\begin{multline*}
    \|(\LL_\beta^{\Nf,\Df} -\beta\lambda)^{-1}(U+i\lambda)( -f + \phi^{\prime\prime}(1)\hat{g})
    \|_\infty\\ \leq C\big( \beta^{-3/4} + |\mu|^{1/4} \beta^{-3/4} + \mu_+ \beta^{-3/8} \big)\big(\|f\|_2 +
    |\phi^{\prime\prime}(1)|\,\|\hat{g}\|_2\big)\,.
\end{multline*}
Since $-\mu_0 < \mu< \Upsilon \beta^{-1/2}$, we obtain
\begin{subequations} \label{eq:807}
\begin{equation}
    \|(\LL_\beta^{\Nf,\Df} -\beta\lambda)^{-1}(U+i\lambda)( -f + \phi^{\prime\prime}(1)\hat{g})
    \|_\infty \leq C \beta^{-3/4} (\|f\|_2 +
    |\phi^{\prime\prime}(1)|\,\|\hat{g}\|_2)\,.
\end{equation}
 Using (\ref{eq:365}a) and
(\ref{eq:365}b) we obtain, recalling that, for $|\nu-U(0)|\leq \kappa_0 \beta^{-1/2}$,
we have $x_\nu\leq C\beta^{-1/4}$,
\begin{multline}
  \|(\LL_\beta^{\Nf,\Df} -\beta\lambda)^{-1} (U^\prime \tilde{v}_\Df^\prime)\|_\infty=
  \|(\LL_\beta^{\Nf,\Df} -\beta\lambda)^{-1} (U^\prime(x_\nu)+
  (U^\prime(x)-U^\prime(x_\nu))\tilde{v}_\Df^\prime\|_\infty \\ \leq
  C\, \big(x_\nu\beta^{-3/8}+\beta^{-5/8}\big)\|\tilde{v}_\Df^\prime\|_2  \leq \widehat C\beta^{-5/8} \|\tilde{v}_\Df^\prime\|_2\,.
\end{multline}
By   \eqref{eq:560} and (\ref{eq:365}a) we have
\begin{multline*}
    \|(\LL_\beta^{\Nf,\Df} -\beta\lambda)^{-1} (\phi^{\prime\prime}(1)U^{\prime\prime}\hat{\psi})\|_\infty=
    \|(\LL_\beta^{\Nf,\Df}
    -\beta\lambda)^{-1} \phi^{\prime\prime}(1)[U^{\prime\prime}(1)+(U^{\prime\prime}(x)-U^{\prime\prime}(1))]\hat{\psi}\|_\infty\\
    \leq C\,\big(\beta^{-1}+\beta^{-3/8}\|(1-x)\hat{\psi}\|_2\big) \, |\phi^{\prime\prime}(1) |\,.
\end{multline*}
By  \eqref{eq:451}, \eqref{eq:457}, and \eqref{eq:558} it holds that
\begin{equation}
\label{eq:808}
   \|(1-x)^k\, \hat{\psi}\|_2 \leq 
   C\,  \lambda_\beta^{- (1+2k)/4} \beta^{-(1+2k) /6}\,.
\end{equation}
   Using \eqref{eq:808} with $k=1$ yields 
   \begin{equation}
     \|(\LL_\beta^{\Nf,\Df} -\beta\lambda)^{-1} (\phi^{\prime\prime}(1)U^{\prime\prime}\hat{\psi})\|_\infty \leq C \beta^{-1}|\phi^{\prime\prime}(1)|\,.
   \end{equation}
   For the next term we use   (\ref{eq:365}a) and  (\ref{eq:365}b) to obtain that
\begin{displaymath}
 \|(\LL_\beta^{\Nf,\Df} -\beta\lambda)^{-1} U^{\prime\prime}([\phi^{\prime\prime}(1)\hat{\psi}
  -\phi^{\prime\prime}]-\tilde{v}_\Df)\|_\infty \leq C\beta^{-3/8}\,\big(\|\phi^{\prime\prime}(1)\hat{\psi}
  -\phi^{\prime\prime}\|_2+\|\tilde{v}_\Df\|_2\big)\,.
\end{displaymath}
 We then use \eqref{eq:771} 
and \eqref{eq:803}
to obtain that 
\begin{multline}
\|(\LL_\beta^{\Nf,\Df} -\beta\lambda)^{-1} U^{\prime\prime}([\phi^{\prime\prime}(1)\hat{\psi}
  -\phi^{\prime\prime}]-\tilde{v}_\Df)\|_\infty \leq   C\beta^{-3/8}\, \Big[
  \Upsilon^{-5/2}\beta^{-1/4}\|f\|_2 \\
  +  ( \Upsilon^{-3/8}\mu_{+,\beta}^{ 3/4}+ \beta^{1/3})|\phi(x_\nu)| +  (\mu_{+,\beta}^{1/2}+\Upsilon^{2}\beta^{1/8}) \|\phi^\prime\|_2 \Big]\,.
\end{multline}
 As by \eqref{eq:649} 
\begin{displaymath}
  \phi^{\prime\prime}(1)\hat{\psi}
  -\phi^{\prime\prime}-\tilde{v}_\Df=\alpha^2\phi \,,
\end{displaymath}
we may also write
  \begin{multline*}
\|(\LL_\beta^{\Nf,\Df} -\beta\lambda)^{-1} U^{\prime\prime}([\phi^{\prime\prime}(1)\hat{\psi}
  -\phi^{\prime\prime}]-\tilde{v}_\Df)\|_\infty =  \|(\LL_\beta^{\Nf,\Df} -\beta\lambda)^{-1}
  U^{\prime\prime}\alpha^2\phi\|_\infty \leq   \\ \leq  \|(\LL_\beta^{\Nf,\Df}
  -\beta\lambda)^{-1} U^{\prime\prime}\alpha^2[\phi(x_\nu) ]\|_\infty +  \|(\LL_\beta^{\Nf,\Df}
  -\beta\lambda)^{-1} U^{\prime\prime}\alpha^2[\phi-\phi(x_\nu)]\|_\infty  \,.
\end{multline*}
Then we use   (\ref{eq:365}a) and  (\ref{eq:365}b) together with
Hardy's inequality for the second term to obtain, 
\begin{equation}
  \|(\LL_\beta^{\Nf,\Df} -\beta\lambda)^{-1} U^{\prime\prime}\alpha^2\phi\|_\infty \leq C\alpha^2\beta^{-1/2}\big(
  |\phi(x_\nu)| + \beta^{-1/8}\|\phi^\prime\|_2\big)\,. 
\end{equation}
 In the sequel we use (\ref{eq:807}e) for $\alpha\geq\beta^{1/8}$ and
(\ref{eq:807}f) for $\alpha<\beta^{1/8}$. \\
We estimate the next term as in (\ref{eq:807}a) (as
  $|U^{(3)}(x)|\leq Cx$)
\begin{equation}
   \|(\LL_\beta^{\Nf,\Df} -\beta\lambda)^{-1} (2U^{(3)}\phi^\prime) \|_\infty \leq
   C \, \beta^{-5/8} \|\phi^\prime\|_2\,.
\end{equation}
Finally, we estimate the last term as in (\ref{eq:807}e)
\begin{equation}
   \|(\LL_\beta^{\Nf,\Df} -\beta\lambda)^{-1} (2U^{(4)}\phi)\|_\infty \leq C\beta^{-1/2}(|\phi(x_\nu)|+\beta^{-1/8}\|\phi^\prime\|_2)\,.
\end{equation}
\end{subequations} 
 Combining (\ref{eq:807}a-h) then yields
\begin{multline}\label{eq:809}
  \|v_\Df\|_\infty\leq
  C \Big[ \beta^{-3/4}\gamma(\alpha,\beta)\|f\|_2+ 
    \beta^{-3/4} |\phi^{\prime\prime}(1)|\,\{\|\hat{g}\|_2+\beta^{-1/4}\}
  + \beta^{-5/8} \|\tilde{v}_\Df^\prime\|_2 \\ + (
  [\beta\Upsilon]^{-3/8}\mu_{+,\beta}^{3/4}+ \beta^{-1/24} )|\phi(x_\nu)|  +
  (\beta^{-3/8}\mu_{+,\beta}^{1/2}+ \beta^{-1/4} ) \|\phi^\prime\|_2 \Big]\,, 
\end{multline}
where
\begin{displaymath}
  \gamma(\alpha,\beta)=
  \begin{cases}
    1 & \alpha<\beta^{1/8} \\
   \Upsilon^{-5/2} \beta^{1/8} & \alpha\geq\beta^{1/8} \,.
  \end{cases}
\end{displaymath}
Substituting \eqref{eq:799},
\eqref{eq:802}, 
\eqref{eq:661}, and \eqref{eq:771} into
\eqref{eq:809}, yields, with the aid of \eqref{eq:562}
\begin{multline*}
  \|v_\Df\|_\infty\lesssim  \beta^{-3/4}\gamma(\alpha,\beta)\|f\|_2
    \\ + (
  [\beta\Upsilon]^{-3/8}\mu_{+,\beta}^{3/4} + \beta^{-1/24} )|\phi(x_\nu)|  
 +  (\beta^{-3/8}\mu_{+,\beta}^{1/2}+ \beta^{-1/4} ) \|\phi^\prime\|_2 \,, 
\end{multline*}
By the assumption on $\mu_+$, we may finally conclude
\begin{multline}
  \label{eq:810}
 \|v_\Df\|_\infty\leq C\, \Big(\gamma(\alpha,\beta)\beta^{-3/4} \|f\|_2+
 [\beta^{-1/4} + \mu_+^{1/2}\beta^{1/8}] \|\phi\|_{1,2}+ [\beta^{-1/24} +
   \mu_{+}^{3/8}\beta^{3/16}] |\phi(x_\nu)|\Big) \,.  
\end{multline}

{\em Step 6: We prove \eqref{eq:788} in the case $\mu >-\mu_0$ and
$\alpha\leq\alpha_0\beta^{1/8}$.}\\

\noindent {\em Step 6a: Preliminaries.}\\

 We continue as in the proof of \cite[Lemma 8.8]{almog2019stability}. We
first write, as in \eqref{eq:673}
\begin{subequations}
\label{eq:811}
\begin{equation}
  \phi = \phi_\Df + \check{\phi} \,,
\end{equation}
where 
\begin{equation}
  \phi_\Df =\A_{\lambda,\alpha}^{-1}v_\Df \quad , \quad
 \check{\phi}=-\A_{\lambda,\alpha}^{-1}\big([U+i\lambda]\phi^{\prime\prime}(1)\hat{\psi} \big)= - \phi^{\prime\prime}(1) \phi_{\lambda,\alpha,\beta}\,. 
\end{equation}
\end{subequations}
 By Propositions \ref{prop:near-quadratic} and \ref{prop:quadratic},
there exists (sufficiently small) $C>0$ so that we can use
\eqref{eq:223}, for $|\mu|\leq C\, x_\nu^2\,$, 
and \eqref{eq:255} for
$|\mu|\geq C x_\nu^2$, both holding for sufficiently small $\mu_0$. Hence, we can
conclude that for any pair $(\tilde v, \tilde \phi)\in W^{1,p}(0,1)\times
D(\A_{\lambda,\alpha})$ satisfying $\tilde v=\A_{\lambda,\alpha}\tilde \phi$,
 \begin{equation}
\label{eq:812} 
  |\tilde  \phi(x_\nu)|^2\leq C\Big(|\mu|^{1/2}\Big|\Big\langle\tilde \phi,\frac{\tilde  v}{U+i\lambda} \Big\rangle\Big| +
  x_\nu\Big\|(1-x)^{1/2}\frac{\tilde  v}{U+i\lambda}\Big\|_1^2\Big)\,.
 \end{equation}
 We apply the above inequality to the pair $( -(U+i\lambda)\phi^{\prime\prime}(1)\hat
 \psi,\check \phi)$ to obtain
\begin{equation*}
  |\check \phi(x_\nu)|^2\leq C |\phi^{\prime\prime}(1)|\, \Big(|\mu|^{1/2}\Big|\Big\langle\check \phi,  \hat \psi  \Big\rangle\Big| +
  x_\nu|\phi^{\prime\prime}(1)| \Big\|(1-x)^{1/2}  \hat \psi |_1^2\Big)\,.
 \end{equation*}
 Given that $\check \phi(1)=0$ we write
 \begin{equation}
\label{eq:813}
   |\langle\phi,\hat{\psi}\rangle|\leq\|\phi^\prime\|_2\|(1-x)^{1/2}\hat{\psi}\|_1\leq C\, [|\lambda|\beta]^{-3/4} \|\phi^\prime\|_2 \,,
 \end{equation}to obtain, with the aid of \eqref{eq:614} and
 \eqref{eq:673},
\begin{equation}
\label{eq:814}
  |\langle\check{\phi},\hat{\psi}\rangle|\leq\|\check{\phi}^\prime\|_2\|(1-x)^{1/2}\hat{\psi}\|_1\leq C\, [|\lambda|\beta]^{-3/4} \|\check{\phi}^\prime\|_2 \,,
\end{equation}
and consequently it holds that
\begin{equation}\label{eq:815}
  |\check{\phi}(x_\nu)|^2 \leq 
  \widehat
  C\,(|\mu|^{1/2}\beta^{-3/4}\|\check{\phi}^\prime\|_2|\phi^{\prime\prime}(1)|+x_\nu\beta^{-3/2}|\phi^{\prime\prime}(1)|^2) \,.
\end{equation}
Combining \eqref{eq:815} with \eqref{eq:799} yields 
 \begin{multline*}
  |\check{\phi}(x_\nu)|^2 \leq 
  \widehat C\,\Big(|\mu|^{1/2}\|\check{\phi}^\prime\|_2+
  x_\nu\big(\beta^{-7/8}\|f\|_2+|\phi(x_\nu)|+\beta^{-1/8}\|\phi^\prime\|_2\big)\Big) \\ \Big(\beta^{-7/8}\|f\|_2+|\phi(x_\nu)|+\beta^{-1/8}\|\phi^\prime\|_2\Big)\,. 
\end{multline*}
Since $|\mu|^{1/2}\|\check{\phi}^\prime\|_2|\phi(x_\nu)|\leq
C\,\big(\delta^{-2}|\mu|\|\check{\phi}^\prime\|_2^2+\delta^2|\phi(x_\nu)|^2\big)$ for any $\delta>0$ and
\break $x_\nu<C\beta^{-1/4}$, we can conclude that there exists $C>0$ such that for
any $\delta>0$
\begin{multline}
  \label{eq:816}
 |\check{\phi}(x_\nu)| \leq 
  C\,\Big(\delta^{-1}(|\mu|^{1/2}+\beta^{-1/8})[\|\check{\phi}^\prime\|_2+\|\phi_\Df^\prime\|_2] \\+
  (\delta+\beta^{-1/8})[|\check{\phi}(x_\nu)|+|\phi_\Df(x_\nu)|]+\delta^{-1}\beta^{-7/8}\|f\|_2 
  \Big)\,.  
\end{multline}

By  applying   to the pair $(\phi_\Df,v_\Df)$ \eqref{eq:241}  for
$U(0)-\kappa_0\beta^{-1/2}\leq\nu\leq U(0)-\kappa_0|\mu|$, \eqref{eq:270}  for
$U(0)-\kappa_0\min(|\mu|,\beta^{-1/2})\leq\nu\leq U(0)+\kappa_0\min(|\mu|,\beta^{-1/2})$,
and \eqref{eq:283} for $U(0)+\kappa_0|\mu|\leq\nu\leq
  U(0)+\kappa_0\beta^{-1/2}$,  we
obtain that
\begin{equation*} 
   |\phi_\Df(x_\nu)| \leq C 
     \Big(1+\log\frac{\max(x_\nu,|\mu|^{1/2})}{|\mu|^{1/2}}\Big)
   \|v_\Df\|_\infty   \,.
\end{equation*}
With the aid of \eqref{eq:810} we then obtain 
\begin{multline*}
   |\phi_\Df(x_\nu)| \leq C \,
     \Big(1+\log\frac{\max(x_\nu,|\mu|^{1/2})}{|\mu|^{1/2}}\Big) \\ \Big(
       \gamma(\alpha,\beta) \beta^{-3/4}\|f\|_2 +  [\beta^{-1/4} +
      \mu_+^{1/2}\beta^{1/8}]\|\phi\|_{1,2}+ [\beta^{-1/24} + \mu_+^{3/8}\beta^{3/16}]|\phi(x_\nu)|\Big) \,,
\end{multline*}
(where $\mu_+$ is given by \eqref{eq:914}),
 from which we conclude by (\ref{eq:811}) that
\begin{multline}
  \label{eq:817}\mu_+
  |\phi_\Df(x_\nu)| \leq C 
     \Big(1+\log\frac{\max(x_\nu,|\mu|^{1/2})}{|\mu|^{1/2}}\Big) \Big(
       \gamma(\alpha,\beta) \beta^{-3/4}\|f\|_2    \\ +
 (\beta^{-1/4}+\mu_+^{1/2}\beta^{1/8}) [\|\check{\phi}^\prime\|_2+\|\phi_\Df^\prime\|_2] \\ +
 ( \beta^{-1/24}+ \mu_+^{3/8}\beta^{3/16})[|\check{\phi}(x_\nu)|+|\phi_\Df(x_\nu)|] 
 \Big) \,.
\end{multline} 
Combining \eqref{eq:816}  and \eqref{eq:817} yields, 
\begin{multline}\label{eq:818}
    |\check{\phi}(x_\nu)|+  |\phi_\Df(x_\nu)| \leq C \, 
     \Big(1+\log\frac{\max(x_\nu,|\mu|^{1/2})}{|\mu|^{1/2}}\Big)\\ \Big(
 [ \beta^{-1/24}+  \mu_+^{3/8}\beta^{3/16} +\delta] [|\check{\phi}(x_\nu)|+
 |\phi_\Df(x_\nu)| ] \\
 +[\beta^{-1/4}+\mu_+^{1/2}\beta^{1/8}+\delta^{-1}|\mu|^{1/2}]
 [\|\check{\phi}^\prime\|_2+\|\phi_\Df^\prime\|_2]+  \gamma(\alpha,\beta)\beta^{-3/4} \|f\|_2\Big) \,.  
\end{multline}

{\em Step 6b: We prove the existence of $\Upsilon >0$, $\mu_0>0$, and $\beta_0 >0$ such that
\eqref{eq:788} holds for $\Upsilon^{-1} \beta^{-1}<\mu<\Upsilon\beta^{-1/2}$ or $
-\mu_0<\mu<- \Upsilon^{-1} \beta^{-1}$, with $\beta \geq \beta_0$.}\\ 

\noindent Let
\begin{displaymath}
 \delta = \hat \delta /
\Big(1+\log \frac{\max(x_\nu,|\mu|^{1/2})}{|\mu|^{1/2}}\Big)\,,
\end{displaymath}
where $\hat{\delta}>0$ is independent of $\beta$.\\
Note that for $\beta^{-1}<|\mu|<x_\nu^2$, we have, for any $s>0$,  
  \begin{equation}\label{eq:819}
  \big( \frac{|\mu|^{1/2}}{x_\nu}\big)^s \Big(1+\log\frac{\max(x_\nu,|\mu|^{1/2})}{|\mu|^{1/2}}\Big)\leq C_s\,.
  \end{equation}
  The above inequality implies (with $s=1$), since $x_\nu\leq
  C\beta^{-1/4}$, the existence of $\beta_0>0$ such that for all $\beta>\beta_0$,
  \begin{subequations}
\label{eq:820}
\begin{equation} 
  \delta^{-1}|\mu|^{1/2}\leq C\beta^{-1/4}\,.
\end{equation} 
Furthermore, \eqref{eq:820} with $s=1/2$ leads to 
\begin{equation}
 \Big(1+\log\frac{\max(x_\nu,|\mu|^{1/2})}{|\mu|^{1/2}}\Big)\,  \mu_+^{3/8}
  \beta^{3/16}  \leq C \mu_+^{1/8} x_\nu^{1/2} \beta^{3/16} \leq \widehat C \Upsilon^{1/8} \,,
\end{equation}
and with $s=1/12$ to
\begin{equation}
 \Big(1+\log\frac{\max(x_\nu,|\mu|^{1/2})}{|\mu|^{1/2}}\Big)\, \beta^{-1/24}
 \leq C_s  \beta^{-1/24} (x_\nu/|\mu|^{1/2})^{s}  \leq \widehat C \beta^{-1/48}
 \,. 
\end{equation}
\end{subequations}
Hence, we obtain from \eqref{eq:818} and \eqref{eq:820}
that, for sufficiently small $\Upsilon$, $\hat \delta$  and $\mu_0$ and sufficiently large
$\beta_0$, 
\begin{multline}
\label{eq:821}
    |\check{\phi}(x_\nu)|+  |\phi_\Df(x_\nu)|
    \leq C \Big(1+\log\frac{\max(x_\nu,|\mu|^{1/2})}{|\mu|^{1/2}}\Big) 
    \Big(  \gamma(\alpha,\beta) \beta^{-3/4} \|f\|_2 + \\+  [\beta^{-1/4}+\mu_+^{1/2}\beta^{1/8} +
     |\mu|^{1/2}]  
    [\|\check{\phi}^\prime\|_2+\|\phi_\Df^\prime\|_2]\Big) \,.
\end{multline}
Next, we apply  (\ref{eq:178}b), for
$U(0)-\kappa_0\beta^{-1/2}\leq\nu\leq U(0)-\kappa_0|\mu|$,  \eqref{eq:242}  for
$U(0)-\kappa_0\min(|\mu|,\beta^{-1/2})\leq\nu\leq U(0)+\kappa_0\min(|\mu|,\beta^{-1/2})$,
and \eqref{eq:271} for $ U(0)+\kappa_0|\mu|\leq\nu\leq
  U(0)+\kappa_0\beta^{-1/2}$, for $p=+\infty$, to the pair $(\phi_\Df, v_\Df)$
to obtain that, for $|\mu|>\Upsilon^{-1}\beta^{-1}$,
\begin{equation} \label{eq:822}
 \Big(1+\log\frac{\max(x_\nu,|\mu|^{1/2})}{|\mu|^{1/2}}\Big) \|\phi_\Df^\prime\|_2 \leq C|\mu|^{-1/4}\|v_\Df\|_\infty \,.
\end{equation}
Note that while applying (\ref{eq:178}b) we have, since $x_\nu^2>\frac 1C
|\mu|$ in this case, that
\begin{equation*}
\frac{\Big[\log\frac{x_\nu}{|\mu|^{1/2}}\Big]^2}{x_\nu^{1/2}}\leq \widehat C\, |\mu|^{-1/4}\,.
\end{equation*}
Then \eqref{eq:810} and \eqref{eq:822} yield with the aid of
(\ref{eq:811}a),
 \begin{multline}
\label{eq:823}
   \|\phi_\Df^\prime\|_2 \leq  C
   \Big(1+\log\frac{\max(x_\nu,|\mu|^{1/2})}{|\mu|^{1/2}}\Big)^{-1}
   \Big(|\mu|^{-1/4}  \gamma(\alpha,\beta) \beta^{-3/4}\|f\|_2 \\ 
+
(|\mu|^{-1/4}\beta^{-1/4}+\mu_+^{1/4}\beta^{1/8})[\|\check{\phi}^\prime\|_2+\|\phi_\Df^\prime\|_2]\\
+ (\beta^{-1/24}|\mu|^{-1/4}+\mu_+^{1/8}\beta^{3/16})
 [|\check{\phi}(x_\nu)|+  |\phi_\Df(x_\nu)|] \Big) \,.  
\end{multline}
Substituting \eqref{eq:821} into \eqref{eq:823} yields
\begin{multline}
  \label{eq:824}
\|\phi_\Df^\prime\|_2 \leq C\, \Big(|\mu|^{-1/4}  \gamma(\alpha,\beta) \beta^{-3/4} \|f\|_2\\ +
\big[|\mu|^{-1/4}\beta^{-1/4}+\mu_+^{1/4}\beta^{1/8}+ |\mu|^{1/4}\big] \big[\|\check{\phi}^\prime\|_2+\|\phi_\Df^\prime\|_2\big]\Big)\,.
\end{multline}

We now use \eqref{eq:254} for the pair
$(\check{\phi},\phi^{\prime\prime}(1)(U+i\lambda)\hat{\psi})$, together with \eqref{eq:814}   to
obtain that
\begin{equation}\label{eq:825}
\|\check{\phi}^\prime\|_2 \leq C \,[\lambda|\beta|]^{-3/4}  |\phi^{\prime\prime}(1)|\,.
\end{equation}
Using \eqref{eq:799}, we deduce from \eqref{eq:825} for sufficiently
large $\beta_0$
\begin{equation}
\label{eq:826}
\|\check{\phi}^\prime\|_2\leq C([|\check{\phi}(x_\nu)|+
|\phi_\Df(x_\nu)|]+\beta^{-1/8}[\|\check{\phi}^\prime\|_2+\|\phi_\Df^\prime\|_2] +\beta^{-7/8}\|f\|_2) \,.
\end{equation}
We then obtain from \eqref{eq:821}
\begin{multline} \label{eq:827}
   \|\check{\phi}^\prime\|_2\leq C
   \Big(1+\log\frac{\max(x_\nu,|\mu|^{1/2})}{|\mu|^{1/2}}\Big)
   \\ \big(\gamma(\alpha,\beta) \beta^{-3/4}\|f\|_2+  [\beta^{-1/4}+\mu_+^{1/2}\beta^{1/8} +
     |\mu|^{1/2}]   [\|\check{\phi}^\prime\|_2+\|\phi_\Df^\prime\|_2]\big)
   \,. 
\end{multline}
To obtain the coefficient of $\|f\|_2$, we set $s=1/2$ in
\eqref{eq:819} to conclude for $|\mu| < x_\nu^2$
\begin{displaymath}
  \big(1+\log\frac{\max(x_\nu,|\mu|^{1/2})}{|\mu|^{1/2}}\big) \leq C
  x_\nu^{1/2} |\mu|^{-1/4} \leq \widetilde C  \beta^{-1/4}  |\mu|^{-1/4} \leq
  \widehat C |\mu|^{-1/4}\,. 
\end{displaymath}
Hence, combining \eqref{eq:827} with \eqref{eq:824} yields
\begin{multline*}
\|\check{\phi}^\prime\|_2+\|\phi_\Df^\prime\|_2  \leq C \Big(|\mu|^{-1/4} 
    \gamma(\alpha,\beta) \beta^{-3/4}\|f\|_2 \\ +  [\beta^{-1/4}+\mu_+^{1/2}\beta^{1/8} +
     |\mu|^{1/2}]  \big(1+\log\frac{\max(x_\nu,|\mu|^{1/2})}{|\mu|^{1/2}}\big)   [\|\check{\phi}^\prime\|_2+\|\phi_\Df^\prime\|_2] \Big)\,.
   \end{multline*} 
   Hence, with the aid of \eqref{eq:820}, we obtain that there
   exist $\Upsilon>0$ and $\beta_0>0$ (so that $\Upsilon +\beta_0^{-1}$ is small enough)
   such that for either $\Upsilon^{-1}\beta^{-1}\leq\mu\leq\Upsilon\beta^{-1/2}$ or $-\mu_0\leq
   \mu\leq-\Upsilon^{-1}\beta^{-1}$ we have
\begin{equation}
\label{eq:828}
  \|\phi^\prime\|_2 \leq
  C\, |\mu|^{-1/4}\gamma(\alpha,\beta)\beta^{-3/4}\|f\|_2 \leq \widehat C
  \gamma(\alpha,\beta)\beta^{-1/2} \|f\|_2 \,. 
\end{equation}
Combined with Poincar\'e's inequality \eqref{eq:828} yields
\eqref{eq:788}. \\

 In the next step we use a shifting argument and hence
it is necessary to obtain first an estimate for $
\|\phi^{\prime\prime}-\alpha^2\phi\|_2$.  By \eqref{eq:821} and the first
  inequality of \eqref{eq:828}, we obtain that 
\begin{equation}
\label{eq:829}
   |\phi(x_\nu)| \leq|\check{\phi}(x_\nu)|+  |\phi_\Df(x_\nu)| \leq  C\gamma(\alpha,\beta)
     \beta^{-3/4} \log\beta\, \|f\|_2\,.
\end{equation}
Substituting the first inequality of  \eqref{eq:828} and \eqref{eq:829}  into \eqref{eq:801}  
yields
\begin{equation}
\label{eq:830}
  \|\tilde{v}_\Df\|_2\leq C\,\gamma(\alpha,\beta)\big( \beta^{-3/8}\log\beta +|\mu|^{-1/4} \beta^{-1/2}\big)\, \|f\|_2\,.   
\end{equation}
Consequently, by  \eqref{eq:649}, \eqref{eq:799}, and
  \eqref{eq:562}
we obtain, from \eqref{eq:828}, \eqref{eq:829} and \eqref{eq:830}, 
\begin{equation}\label{eq:831}
 \|\phi^{\prime\prime}-\alpha^2\phi\|_2\leq C \gamma(\alpha,\beta)\big( \beta^{-1/4}\log\beta +|\mu|^{-1/4} \beta^{-3/8}\big)\    \|f\|_2\,.
 \end{equation}
Hence, we have proven,  under the additional condition that  $\alpha \leq \beta^{1/8}$,  that there exist
$\Upsilon>0$ and $\beta_0>0$   for either  $\Upsilon^{-1}\beta^{-1}\leq\mu \leq\Upsilon\beta^{-1/2}$ or $-\mu_0\leq\mu\leq
-\Upsilon^{-1}\beta^{-1}$
that
\begin{equation}
\label{eq:832}
   \|\phi^{\prime\prime}-\alpha^2\phi\|_2\leq C[\beta^{-1/4}\log \beta +|\mu|^{-1/4}\beta^{-3/8}]\|f\|_2 \,.
\end{equation}

For $\alpha\geq\beta^{1/8}$ \eqref{eq:831} is deficient (in this case
$\gamma(\alpha,\beta)=\Upsilon^{-5/2} \beta^{1/8}$), hence we use \eqref{eq:544} (with $v=\phi^{\prime\prime}-\alpha^2
  \phi$ and $f$ replaced by $f + i \beta U^{\prime\prime} \phi$) instead of
\eqref{eq:799},  to obtain
that 
\begin{displaymath}
  |\phi^{\prime\prime}(1)| \leq C \beta^{ 7/16}(\|\phi\|_2+\beta^{-1}\|f\|_2)  \,.  
\end{displaymath}
Then, with the aid of \eqref{eq:830} and  \eqref{eq:562}  we
establish \eqref{eq:832} for $\alpha\geq\beta^{1/8}$ as well.\\

{\em Step 6c: We prove \eqref{eq:788} for $|\mu|<\Upsilon^{-1}\beta^{-1}$,
where $\Upsilon>0$ has been determined in the previous step.}\\

\noindent Here we use a shifting argument. We begin by writing 
\begin{equation}
\label{eq:833}
  \B_{\lambda+2\Upsilon^{-1}\beta^{-1},\alpha}\phi=f+2\Upsilon^{-1}(\phi^{\prime\prime}-\alpha^2\phi)\,,
\end{equation}
and observe that $\hat \lambda:=\lambda+2\Upsilon^{-1}\beta^{-1}$ satisfies the assumptions of Step 6b. 
We then have by \eqref{eq:832}, with $\lambda$ replaced by $\hat \lambda$, 
\begin{displaymath}  
   \|\phi^{\prime\prime}-\alpha^2\phi\|_2\leq
   C(\beta^{-1/8}+ \hat \mu^{-1/4}\beta^{-3/8})[\|\phi^{\prime\prime}-\alpha^2\phi\|_2+\|f\|_2])\,.
\end{displaymath}
Consequently,
\begin{displaymath}
   \|\phi^{\prime\prime}-\alpha^2\phi\|_2\leq C\beta^{-1/8} \|f\|_2\,.
\end{displaymath}
We now apply \eqref{eq:828} to \eqref{eq:833} to obtain, with the aid
of the above inequality,
\begin{displaymath}
   \|\phi^\prime\|_2 \leq C \gamma (\alpha,\beta) \beta^{-1/2}
   (\|f\|_2+ \|\phi^{\prime\prime}-\alpha^2\phi\|_2)  \leq \widehat C \gamma(\alpha,\beta) \beta^{-1/2}\|f\|_2
\end{displaymath}
Combining the above with Poincar\'e's inequality  yields
\eqref{eq:788}.\\

{\em Step 7:  The case $\mu \leq -\mu_0$\,.}\\ 

\noindent Here we use \eqref{eq:520}, applied to the pair $(\phi^{\prime\prime} -\alpha^2 \phi ,f + i
\beta U^{\prime\prime} \phi)$, and \eqref{eq:6} to obtain that
\begin{equation}\label{eq:834}
  |\phi^{\prime\prime}(1)|\leq C(\beta^{1/2}\|\phi\|_2 + \beta^{-1/2}\|f\|_2)\,.
\end{equation}
We then use \eqref{eq:167} for the pair
$(\check{\phi},\phi^{\prime\prime}(1)(U+i\lambda)\hat{\psi})$, together with
\eqref{eq:614} for $s=1/2$ to conclude
\begin{equation}\label{eq:835}
\|\check{\phi}^\prime\|_2 \leq C \,|\lambda\beta|^{-3/4}  |\phi^{\prime\prime}(1)|\,.
\end{equation}
From \eqref{eq:834} and \eqref{eq:835},  we obtain, with the aid of
Poincar\'e's inequality, for sufficiently large $\beta_0$ 
\begin{equation}
\label{eq:836}
      \|\check{\phi}^\prime\|_2\leq
      C\, \big(\beta^{-1/4} \|\phi^\prime_\Df \|_2+\beta^{-5/4}\|f\|_2\big) \,.
\end{equation} 
To estimate $\|\phi^\prime_\Df \|_2$ we apply \eqref{eq:167} 
to the pair $(\phi_\Df,v_\Df)$ 
to obtain 
\begin{equation}
\label{eq:837}
   \|\phi_\Df^\prime\|_2 \leq C \, \|v_\Df\|_2  \,.
\end{equation}

We now use  \eqref{eq:650}, \eqref{eq:834},
\eqref{eq:661},  and \eqref{eq:364}, applied to the pair \break
$(\tilde{v}_{\Df}, i\beta U^{\prime\prime} \phi - f + \phi^{\prime\prime}(1)\hat{g})$, to
obtain,   that
\begin{equation}
\label{eq:838}
  \|\tilde{v}_\Df^\prime\|_2\leq
  C\, |\mu|^{-1/2}\big(\beta^{1/2}\|\phi^\prime\|_2+\beta^{-1/2}\|f\|_2\big)\,.
\end{equation}
By \eqref{eq:363} applied to the pair $(v_\Df,g_\Df)$ 
and   \eqref{eq:646}  it holds that
\begin{multline}\label{eq:928-aa}
  \|v_\Df\|_2\leq\frac{C}{\beta|\mu|}\Big( (1+|\mu|)\big(\|f\|_2 +
  |\phi^{\prime\prime}(1)|\,\|\hat{g}\|_2\big)  +\|\tilde{v}_\Df^\prime\|_2  +    \|\tilde{v}_\Df\|_2+ \|\phi^{\prime\prime}\|_2 + \|\phi\|_{1,2}
 \Big)\,. 
\end{multline}
 To bound $\|\phi^{\prime\prime}\|_2$ we use the identity
\begin{equation}\label{eq:839}
  \|\phi^{\prime\prime}\|_2^2+\alpha^2\|\phi^\prime\|_2^2 = \langle\phi^{\prime\prime},\phi^{\prime\prime}-\alpha^2\phi\rangle
  \,,
\end{equation}
to obtain,  with the aid of  \eqref{eq:649}, 
\begin{equation}
\label{eq:840}
 \|\phi^{\prime\prime}\|_2\leq   \|\tilde{v}_\Df\|_2+ |\phi^{\prime\prime}(1)|\,\|\hat{\psi}\|_2\,.
\end{equation} 
Consequently, we obtain from the substitution of \eqref{eq:661},
\eqref{eq:838}, \eqref{eq:840}, and \eqref{eq:562}, into
\eqref{eq:928-aa}
\begin{equation}
\label{eq:841}
    \|v_\Df\|_2  \leq  C\big(\beta^{-1/2}\|\phi^\prime\|_2+ \beta^{-1}\|f\|_2\big) \,. 
\end{equation}
Next we apply \eqref{eq:837} and \eqref{eq:841} to obtain  
\begin{displaymath}
   \|\phi_\Df^\prime\|_2 \leq C\|v_\Df\|_2\leq \widehat C\big(\beta^{-1/2}\|\phi^\prime\|_2+ \beta^{-1}\|f\|_2\big) \,. 
\end{displaymath}
Combining the above inequality with \eqref{eq:836} yields
\begin{equation}\label{eq:842}
\|\phi^\prime\|_2\leq C\, \beta^{-1}\|f\|_2\,. 
\end{equation}
The above inequality, combined with Poincar\'e's inequality,  completes the proof of 
\eqref{eq:844}. 
\end{proof}

\subsection{Resolvent estimates  for $ \beta^{-1/2} \ll
  U(0)-\nu<U(0)-U(1/2)$} 
\label{sec:5.7}
 We now consider the case where $ \beta^{-1/4}\ll x_\nu<1/2$.  More precisely, given some  positive
 $\Upsilon$ and $ \nu_1 < U(1/2)$, we consider for suitable $\nu_2>0$ and $\beta_0$ the zone
  \begin{multline}
  \label{eq:843}
\mathcal C_1(\nu_1,\nu_2, \Upsilon, \alpha_1, \beta_0)=
\begin{Bmatrix}
  (\lambda, \alpha, \beta) \in \mathbb
C \times \mathbb R_+^2 \,, \beta \geq \beta_0\,,\, \mu < \Upsilon \beta^{-1/2} \\\nu_1 \leq
\nu \leq U(0) -\nu_2 \beta^{-1/2}\,,\, 0\leq \alpha \leq \alpha_1 \beta^{1/3}
\end{Bmatrix}\,, 
 \end{multline}
 for some sufficiently small $\alpha_1>0$.

\begin{proposition}
\label{lem:nearly-quadratic}
Let $U\in C^4([0,1])$ satisfy \eqref{eq:10}. Let further  $\nu_1
< U(1/)2$ denote a positive constant. Then, there exist  $\Upsilon>0$,
$\alpha_1>0$, $\beta_0>0$, $\nu_2>0$ and $C>0$, such that for $ (\lambda, \alpha, \beta)
\in  \mathcal C_1(\nu_1,\nu_2, \Upsilon, \alpha_1, \beta_0)$ it holds that 
   \begin{equation}
\label{eq:844}
 \big\|(\B_{\lambda,\alpha,\beta}^{\mathfrak N,\Df})^{-1}\big\|+
       \big\|\frac{d}{dx}\, (\B_{\lambda,\alpha,\beta}^{\mathfrak
         N,\Df})^{-1}\big\|\leq C\,  \beta^{-1/2}\log \beta \,.
   \end{equation}
 \end{proposition}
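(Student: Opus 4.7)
The plan is to adapt the strategy of Proposition \ref{lem:right-curve} to the present regime, in which $\beta^{-1/4} \ll x_\nu \leq x_{\nu_1}$ (a fixed constant) and $\mathfrak J_\nu$ is therefore bounded from below by a positive constant. As in \eqref{eq:644}, I would introduce $v_\Df = \A_{\lambda,\alpha}\phi + (U+i\lambda)\phi^{\prime\prime}(1)\hat\psi$, which belongs to $D(\LL^{\Nf,\Df}_\beta)$ by \eqref{eq:645}, together with $\tilde v_\Df$ from \eqref{eq:649}. Then $(\LL_\beta^\Nf-\beta\lambda)v_\Df = g_\Df$ is given by \eqref{eq:647}, and $\tilde v_\Df$ satisfies \eqref{eq:650}. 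The role of $\nu_2$ in the upper bound on $\nu$ is to guarantee $x_\nu \geq c_0 \beta^{-1/4}$ with a sufficiently large constant, allowing Propositions \ref{prop5.3}, \ref{lem:Dirichlet-L1}, \ref{H1-estimate-0}, \ref{Dirichlet-L1-H1-0}, Corollary \ref{cor:l1}, and Proposition \ref{lem:integral-conditions} to apply in the sharp form.

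Next I would bound $|\phi^{\prime\prime}(1)|$ through Lemma \ref{newtechnlemma}, giving
\begin{displaymath}
|\phi^{\prime\prime}(1)| \leq C \Big(\beta^{-1/3}x_\nu^{-5/6}\|f\|_2 + \beta^{1/2} x_\nu^{-1}\log(1+x_\nu\beta^{1/4})|\phi(x_\nu)| + \beta^{1/2}x_\nu^{-1/2}\|\phi^\prime\|_2\Big)\,.
\end{displaymath}
Applying \eqref{eq:287} and \eqref{eq:412} to \eqref{eq:650} controls $\|\tilde v_\Df\|_2$ and $\|\tilde v_\Df^\prime\|_2$ in terms of $\|\phi\|_{1,2}$, $|\phi(x_\nu)|$, $|\phi^{\prime\prime}(1)|\,\|\hat g\|_2$ and $\|f\|_2$, using \eqref{eq:660} for $\|\hat g\|_2$. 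The resulting estimates on $g_\Df$ through \eqref{eq:671} and on $v_\Df$ in $L^2$, $W^{1,q}$, and $L^1$ come from Proposition \ref{prop5.3} and Corollary \ref{cor:l1}, where the latter is responsible for introducing a logarithmic factor of the form $\log(x_\nu^{4/3}\beta^{1/3}) \lesssim \log\beta$.

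I would then decompose $\phi = \phi_\Df + \check\phi$ exactly as in \eqref{eq:673}, with $\phi_\Df = \A_{\lambda,\alpha}^{-1}v_\Df$ and $\check\phi = -\phi^{\prime\prime}(1)\phi_{\lambda,\beta,\alpha}$. The boundary contribution $\check\phi$ is handled by Lemma \ref{lem:inviscid-decay}, which gives $\|\check\phi\|_{1,2} \leq C\nu^{-1}[|\lambda|\beta]^{-3/4}|\phi^{\prime\prime}(1)|$ and $|\check\phi(x_\nu)| \leq C[|\lambda|\beta]^{-3/4}|\phi^{\prime\prime}(1)|$; since $\nu \geq \nu_1$, the factor $\nu^{-1}$ is harmless. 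For $\phi_\Df$, I would apply Proposition \ref{prop:linear} to the pair $(\phi_\Df, v_\Df)$, yielding control of $\|\phi_\Df\|_{1,2}$ and $|\phi_\Df(x_\nu)|$ in terms of $N(v_\Df,\lambda)$; Lemma \ref{rem:integral-over-0} and Sobolev embedding then reduce these to bounds involving $\|v_\Df\|_1$, $\|v_\Df^\prime\|_q$, and $\|v_\Df\|_2$, which are already controlled in the previous step. For $\mu<-\mu_0$ one instead uses \eqref{eq:167}–\eqref{eq:169}, and for intermediate $|\mu|$ one combines the two regimes exactly as in Steps 5c–7 of the proof of Proposition \ref{lem:quadratic}.

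The key obstacle is the bootstrap closure: $|\phi^{\prime\prime}(1)|$ depends on $|\phi(x_\nu)|$ and $\|\phi^\prime\|_2$, which themselves decompose into $\phi_\Df$ and $\check\phi$ contributions and thus again involve $|\phi^{\prime\prime}(1)|$ via $\check\phi$. As in the earlier propositions, this requires isolating the $\phi_\Df$ part (where the Schrödinger estimates with the sharp factor $[\mathfrak J_\nu\beta]^{-5/6}$ apply) and absorbing the feedback from $\check\phi$ using the smallness of $\nu^{-1}[|\lambda|\beta]^{-3/4}$ when $\beta$ is large. The $\log\beta$ loss in \eqref{eq:844}, as opposed to the $\beta^\delta$ loss of \eqref{eq:643}, arises because here we no longer need to choose $q>1$ strictly to handle small $\nu$; instead, one can take $q=1$ (i.e.\ use the $L^1$ estimate of Corollary \ref{cor:l1} directly) at the cost of a single factor of $\log(x_\nu^{4/3}\beta^{1/3}) \leq C\log\beta$, matching the claimed bound.
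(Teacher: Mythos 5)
Your proposal correctly identifies the overall architecture ($v_\Df$, $\tilde v_\Df$, $g_\Df$, the decomposition $\phi=\phi_\Df+\check\phi$, the bound \eqref{eq:768} for $\phi^{\prime\prime}(1)$), but it applies the wrong inviscid estimates and this is a genuine gap.

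In the regime of $\CC_1$ the imaginary part $\nu$ ranges from $\nu_1$ up to $U(0)-\nu_2\beta^{-1/2}$, so $x_\nu$ can shrink down to $\sim\beta^{-1/4}$ and $U^\prime(x_\nu)\to 0$. That is, the critical layer merges with the Neumann boundary $x=0$ and the singularity of $U-\nu$ becomes quadratic rather than linear. You propose to close the argument by applying Proposition \ref{prop:linear} to the pair $(\phi_\Df,v_\Df)$ and Lemma \ref{lem:inviscid-decay} to $\check\phi$. Both are out of range here: Proposition \ref{prop:linear} is stated for $0<\nu<\nu_1<U(0)$ with constants that degenerate as $\nu_1\to U(0)$ (e.g.\ the lower bound \eqref{eq:93} is proved using $x_\nu\in[x_{\nu_1},1]$, and \eqref{eq:95} relies on $|U-\nu|\gtrsim|x-x_\nu|$, which fails when $x_\nu\to 0$), while Lemma \ref{lem:inviscid-decay} requires $\beta^{-1/5}<\nu<\nu_0$ with a fixed $\nu_0<U(0)$. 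Neither gives bounds that are uniform as $\nu\to U(0)$, nor do they produce the $x_\nu$-dependent decay rates that must ultimately cancel against the $x_\nu$-dependent factors coming from Lemma \ref{newtechnlemma}. The paper instead invokes the ``near-quadratic'' estimates of Propositions \ref{prop:near-quadratic} and \ref{prop:quadratic} (specifically \eqref{eq:178}b, \eqref{eq:241}, \eqref{eq:242}, \eqref{eq:270}), which were built precisely for $\nu$ close to $U(0)$ and carry the correct $x_\nu$- and $\mu$-weighted constants.

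Those near-quadratic estimates, however, take $\|v_\Df\|_\infty$ (or $\|v_\Df\|_p$ with $p\geq 2$) as input, not $N(v_\Df,\lambda)$ reduced to $\|v_\Df\|_1$, $\|v_\Df^\prime\|_q$, $\|v_\Df\|_2$ as you propose. Consequently the paper's proof contains an entire step (Step 3, using the five-term split \eqref{eq:852} together with \eqref{eq:432}, \eqref{eq:287}, \eqref{eq:412}--\eqref{eq:413}, \eqref{eq:560}, the interpolation \eqref{eq:inter}, and the auxiliary Lemma \ref{sec:aux-vdf}) whose sole purpose is to obtain the $L^\infty$ bound \eqref{eq:854} on $v_\Df$. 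Your plan has no analogue of this step, and without it the bootstrap for $|\phi(x_\nu)|$ and $\|\phi^\prime\|_2$ cannot be closed. The source of the $\log\beta$ loss is also slightly off: it comes not only from the $L^1$ estimate of Corollary \ref{cor:l1} (which enters through Lemma \ref{newtechnlemma} as $\log(\beta^{1/4}x_\nu)$) but mainly from the $\log(|\mu|^{-1/2}x_\nu)$ factors in \eqref{eq:178}b and \eqref{eq:241}, whose absorption requires the delicate estimates \eqref{eq:819}--\eqref{eq:820} on the coefficients.
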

 \begin{proof}~
  We refer to the notation introduced in \eqref{eq:644}-\eqref{eq:650}
  for $v_\Df$, $\tilde v_\Df$ and $g_\Df$.\\ 

{\em Step 1: We estimate $\tilde v_\Df$ and $\tilde v^\prime_\Df$ in $L^2$   for $\mu>-\mu_0$, for some,
sufficiently small, $\mu_0>0$.}\\  

\noindent For the
   convenience of the reader we repeat here once again \eqref{eq:651}
\begin{multline*}
  \Re\langle(U^{\prime\prime})^{-1}\tilde{v}_{\Df},(\LL_\beta^\Nf-\beta\lambda)\tilde{v}_{\Df}+i\beta U^{\prime\prime}\phi\rangle\\ =
  \|(U^{\prime\prime})^{-1/2}\tilde{v}_\Df^\prime\|_2^2
  +\Re\langle\big((U^{\prime\prime})^{-1}\big)^\prime\tilde{v}_\Df,\tilde{v}_\Df^\prime\rangle
 -\beta \mu\, \|\tilde{v}_\Df\|_2^2 + \beta
\Re\langle\phi^{\prime\prime}(1)\hat{\psi},i\phi\rangle\,.
\end{multline*}
As in \eqref{eq:798} we obtain that
\begin{equation}\label{eq:845}
  \beta\,
  \Re\langle\phi^{\prime\prime}(1)\hat{\psi},i\phi\rangle\geq-C\beta^{-3/4}|\phi^{\prime\prime}(1)|(\|\tilde{v}_\Df^\prime\|_2+  \beta^{2/3} \|\phi^\prime\|_2)\,.
\end{equation}
 Let $x_\nu$ be defined by \eqref{eq:defxnu}.   Since assumption
we have $ x_\nu\geq C\nu_2^{1/2}\beta^{-1/4}$ it holds by
\eqref{eq:768} that,  for sufficiently large $\nu_2$
\begin{multline}
  \label{eq:846}
   |\phi^{\prime\prime}(1)|\leq
    C\Big(\beta^{-1/3}x_\nu^{-5/6}\|f\|_2+  \\ \beta^{1/2}x_\nu^{-1}
      \log(\beta^{1/4}x_\nu) |\phi(x_\nu)|+\beta^{1/2}x_\nu^{-1/2}\|\phi^\prime\|_2\Big)\,. 
\end{multline}
 For $\alpha>\beta^{1/6}$ we use \eqref{eq:544} for the pair $(\phi^{\prime\prime} -\alpha^2 \phi ,f + i
\beta U^{\prime\prime} \phi)$ to obtain, with the aid of Poincar\'e's inequality, that
\begin{equation}
\label{eq:847}
   |\phi^{\prime\prime}(1)|\leq C\alpha^{-1/2}(\beta^{1/2}\|\phi^\prime\|_2+\beta^{-1/2}\|f\|_2)
\end{equation}
By substituting \eqref{eq:846} into \eqref{eq:845}, we get
\begin{multline}
\label{eq:848}
 \beta\, \Re\langle\phi^{\prime\prime}(1)\hat{\psi},i\phi\rangle\geq
 -C\,\Big(\beta^{-1/4}[x_\nu^{-1/2}\|\phi\|_{1,2}+ 
 x_\nu^{-1}  \log(\beta^{1/4}x_\nu) |\phi(x_\nu)| \\
 +\beta^{-13/12}x_\nu^{-5/6}\|f\|_2\Big)\,\Big(\|\tilde{v}_\Df^\prime\|_2+
  \beta^{2/3} \|\phi^\prime\|_2\Big)\,.
\end{multline}
Next, by \eqref{eq:650}, \eqref{eq:661}, \eqref{eq:846},
(\ref{eq:287}a), and \eqref{eq:412}, we obtain,  for the
  parameter range set in \eqref{eq:843}, 
 that (compare with
\eqref{eq:801})
\begin{equation}
\label{eq:849}
  \|\tilde{v}_\Df\|_2\leq
  C\,\big(x_\nu^{-1}\|\phi^\prime\|_2+\beta^{1/6}x_\nu^{-5/6}|\phi(x_\nu)|+[\beta x_\nu]^{-2/3}\|f\|_2\big)\,.
\end{equation}
Substituting \eqref{eq:849}  together with  \eqref{eq:848}, \eqref{eq:661},
and \eqref{eq:846}  into \eqref{eq:662} 
yields, 
with the aid of \eqref{eq:650}
\begin{multline*}
  \frac 1C    \|\tilde{v}_\Df^\prime\|_2^2 \leq 
     \big(x_\nu^{-1}\|\phi^\prime\|_2+\beta^{1/6}x_\nu^{-5/6}|\phi(x_\nu)|+[\beta x_\nu]^{-2/3}\|f\|_2\big)\\\quad  \qquad \big(\|f\|_2
   +\beta^{1/4}x_\nu^{-1/2}\|\phi^\prime\|_2+\beta^{1/4}x_\nu^{-1} \log(\beta^{1/4}x_\nu)|\phi(x_\nu)|\big)\\
  \qquad  +  \max(1,\mu \beta)
   \big(x_\nu^{-1}\|\phi^\prime\|_2+\beta^{1/6}x_\nu^{-5/6}|\phi(x_\nu)|+
   [\beta x_\nu]^{-2/3}\|f\|_2)^2\\ +
   (\beta^{-1/4}[ x_\nu^{-1/2}\|\phi\|_{1,2}+
     x_\nu^{-1}  \log(\beta^{1/4}x_\nu) |\phi(x_\nu)|]+\beta^{-13/12}x_\nu^{-5/6}\|f\|_2\big)\\ \big(\|\tilde{v}_\Df^\prime\|_2+  \beta^{2/3}\|\phi^\prime\|_2\big)\,.   
\end{multline*}
 Hence, 
\begin{multline}
\label{eq:850}
  \|\tilde{v}_\Df^\prime\|_2\leq C\, \Big[\big(  \beta^{1/8}
 +  \mu_+^{1/2}\beta^{1/2} \big)\big(x_\nu^{-1} \|\phi^\prime\|_2+\beta^{1/6}x_\nu^{-5/6}|\phi(x_\nu)|\big)\\
  +\beta^{5/24}x_\nu^{-1/4}\|\phi^\prime\|_2 +  \beta^{-1/8}\|f\|_2 \Big]\,.
\end{multline}

{\em Step 2:  We  prove that
\begin{multline}
  \label{eq:851}
 \|\phi^{\prime\prime}-\phi^{\prime\prime}(1)\hat{\psi}\|_2 \leq
   C \, \Big(\|\tilde{v}_\Df\|_2+ \\ \beta^{-11/12}x_\nu^{-5/6}\|f\|_2 +\beta^{-1/12}x_\nu^{-1}\log(\beta^{1/4}x_\nu)|\phi(x_\nu)|\big)\Big) \,.
\end{multline}
}

{\em Step  2.1:  Prove \eqref{eq:851} in the case $0 \leq
\alpha\leq\beta^{1/6}$.}\\

\noindent We write, as in
Step 5 of the proof of Proposition \ref{lem:quadratic} and with the
aid of \eqref{eq:846}  and \eqref{eq:614} (with $s=1/2$)
\begin{multline}\label{eq:850+}
  |\langle \phi^{\prime\prime}(1)\hat{\psi},\alpha^2\phi\rangle|\leq
  \alpha^2|\phi^{\prime\prime}(1)|\,\|(1-x)^{1/2}\hat{\psi}\|_1\,\|\phi^\prime\|_2\leq\\
  C\alpha^2\,\big(\beta^{-13/12}x_\nu^{-5/6}\|f\|_2+\beta^{-1/4}[x_\nu^{-1}{
    \log(\beta^{1/4}x_\nu)}|\phi(x_\nu)|\\ +x_\nu^{-1/2}\|\phi^\prime\|_2]\big)\|\phi^\prime\|_2\,.
\end{multline}
Substituting \eqref{eq:850+} into \eqref{eq:804} 
yields 
\begin{multline*}
  \|\phi^{\prime\prime}-\phi^{\prime\prime}(1)\hat{\psi}\|_2^2+\alpha^2\|\phi^\prime\|_2^2
  \\ \leq\|-\phi^{\prime\prime}+\phi^{\prime\prime}(1)\hat{\psi}\|_2\,\|\tilde{v}_\Df\|_2 \qquad\qquad\qquad\qquad\qquad\qquad\qquad \\
  +C\alpha^2\big(\beta^{-13/12}x_\nu^{-5/6}\|f\|_2+\beta^{-1/4}[x_\nu^{-1}{ \log(\beta^{1/4}x_\nu)}|\phi(x_\nu)|+x_\nu^{-1/2}\|\phi^\prime\|_2]\big)\|\phi^\prime\|_2\,.
\end{multline*}
 For sufficiently large $\beta_0$ we then obtain,  as $x_\nu>\beta^{-1/4}$,
\begin{displaymath}
   \|\phi^{\prime\prime}-\phi^{\prime\prime}(1)\hat{\psi}\|_2 \leq
   C \, \Big(\|\tilde{v}_\Df\|_2+\alpha\big(\beta^{-13/12}x_\nu^{-5/6}\|f\|_2 +\beta^{-1/4}x_\nu^{-1} \log(\beta^{1/4}x_\nu) |\phi(x_\nu)|\big)\Big) \,.
\end{displaymath}
For $\alpha<\beta^{1/6}$  \eqref{eq:851} is readily verified.\\

{\em Step  2.2: We prove \eqref{eq:851} in the case  $\beta^{1/6}\leq\alpha\leq\alpha_1\beta^{1/3}$.}\\

\noindent In this case, we obtain, instead of \eqref{eq:850+},  with the aid of
\eqref{eq:847} and  again \eqref{eq:614} (with $s=1/2$) that 
\begin{displaymath}
   |\langle \phi^{\prime\prime}(1)\hat{\psi},\alpha^2\phi\rangle|\leq
   C\alpha^{3/2}\,\big(\beta^{-5/4}\|f\|_2+\beta^{-1/4}\|\phi^\prime\|_2\big)\|\phi^\prime\|_2\,. 
\end{displaymath}
Substituting the above into  \eqref{eq:804} 
yields, as above
\begin{displaymath}
  \|\phi^{\prime\prime}-\phi^{\prime\prime}(1)\hat{\psi}\|_2 \leq
   C \, \Big(\|\tilde{v}_\Df\|_2+ \beta^{-7/6}\|f\|_2 )\,.
\end{displaymath}
Consequently, \eqref{eq:851} is valid also for
$\beta^{1/6}\leq\alpha<\alpha_1\beta^{1/3}$. \\

{\em Step 3. We estimate $v_\Df$ in $L^\infty$ for $\mu \geq -\mu_0$.}\\

Let $v_\Df$ be given by \eqref{eq:644}. Recall that by (\ref{eq:806})
$(\mathcal L_\beta -\beta \lambda)v_\Df= g_\Df$.  To obtain an estimate for
$\|v_\Df\|_\infty$ we begin, as in Step 6 in Proposition
\ref{lem:quadratic}, by rewriting (\ref{eq:806}b) as the sum of five
terms
\begin{multline}\label{eq:852}
   g_\Df=\big[ (U-\nu ) ( -f + \phi^{\prime\prime}(1)\hat{g})\big]   + \big[ i \mu ( -f + \phi^{\prime\prime}(1)\hat{g}) \big] 
  \\ - \big[ 2(U^\prime -U^\prime(x_\nu)) \tilde{v}_\Df^\prime\big] + \big[  -2
  U^\prime(x_\nu)  \tilde{v}_\Df^\prime  -U^{\prime\prime}(
  [\phi^{\prime\prime}-\phi^{\prime\prime}(1)\hat{\psi}]+\tilde{v}_\Df)  \\
 -  2U^{(3)}\phi^\prime -   U^{(4)}\phi \big] - \big[ U^{\prime\prime}\phi^{\prime\prime}(1)\hat{\psi}\big]\,.
\end{multline}
We separately estimate the contribution of each term on the
right-hand-side of \eqref{eq:852}, using the interpolation inequality
\eqref{eq:inter}.  For the first term on the right-hand-side of
\eqref{eq:852}, we apply \eqref{eq:432} with $f$ replaced by $-f +
\phi^{\prime\prime}(1)\hat{g}$. For the second term, we use (\ref{eq:287}a), and
(\ref{eq:287}b) with $p=2$ (both valid for sufficiently large $\nu_2$)
for $- \beta^{-1/2} \leq \mu < \Upsilon \beta^{-1/2} $ and
\eqref{eq:363}-\eqref{eq:364} for the case $\mu < - \beta^{-1/2}$.  For
the third term we use \eqref{eq:412} and \eqref{eq:413}.  For the
fourth term we use \eqref{eq:287} to obtain
\begin{displaymath}
  \|(\LL_\beta^{\Nf,\Df} -\beta\lambda)^{-1} (U^\prime(x_\nu) \tilde{v}_\Df^\prime)\|_\infty  \leq
  C\, x_\nu[\beta x_\nu]^{-1/2} \|\tilde{v}_\Df^\prime\|_2  \,.
\end{displaymath}
Finally, for the fifth term we use \eqref{eq:560} as in the proof of
(\ref{eq:807}d).
Combining the above yields for $x_\nu\geq C\nu_2^{1/2}\beta^{-1/4}$  
\begin{multline*}  \|v_\Df\|_\infty\leq
  C\,  \Big[\beta^{-3/4}(\|f\|_2+|\phi^{\prime\prime}(1)|\,\|\hat{g}\|_2) \\
 +[\beta x_\nu]^{-1/2}(\|\phi\|_{1,2}+\|\phi^{\prime\prime}-\phi^{\prime\prime}(1)\hat{\psi}\|_2+\|\tilde{v}_\Df\|_2)
  +\beta^{-1/2}x_\nu^{1/2}\|\tilde{v}_\Df^\prime\|_2  + \beta^{-1} |\phi^{\prime\prime}(1)| \Big]\,.
\end{multline*}
 Using \eqref{eq:851} we then obtain
\begin{multline}\label{eq:853}
  \|v_\Df\|_\infty\leq
  C\,  \Big[\beta^{-3/4}\big(\|f\|_2+|\phi^{\prime\prime}(1)|\,\|\hat{g}\|_2\big) \\
 +[\beta x_\nu]^{-1/2}\big(\|\phi\|_{1,2}+\|\tilde{v}_\Df\|_2+  \beta^{-1/12} x_\nu^{-1} \log(\beta^{1/4}x_\nu)\, |\phi(x_\nu)|\big) 
  \\ +\beta^{-1/2}x_\nu^{1/2}\|\tilde{v}_\Df^\prime\|_2  + \beta^{-1} |\phi^{\prime\prime}(1)| \Big]\,.
\end{multline}
For the estimate of $\|\tilde{v}_\Df\|_2$ we use a combination of
\eqref{eq:771} and \eqref{eq:849}.\\ Let $\check x_\mu:=\check x_\mu(\Upsilon,\beta)$ be defined by \eqref{eq:769}. 
Then, there exists $\Upsilon>0$
such that
\begin{equation}\label{eq:822-829}
  \|\tilde{v}_\Df\|_2\leq C
  \begin{cases}
 \Big[ \Upsilon^{-5/2}\beta^{-1/4}\|f\|_2 
  + ( \Upsilon^{-3/8}\mu_{+,\beta}^{ 3/4}+\Upsilon^{3/2}\beta^{3/16})|\phi(x_\nu)|  & \\
  \hspace*{11em} +
  (\mu_{+,\beta}^{1/2}+\Upsilon^{2}\beta^{1/8}) \|\phi^\prime\|_2 \Big] &
  x_\nu<\check x_\mu  
   \\
\big(x_\nu^{-1}\|\phi^\prime\|_2+\beta^{1/6}x_\nu^{-5/6}|\phi(x_\nu)|+[\beta
x_\nu]^{-2/3}\|f\|_2\big) & \text{otherwise}\,.
  \end{cases}
\end{equation}
Thus, we set
\begin{displaymath}
  \gamma(x_\nu,\check x_\mu )=
  \begin{cases}
    1 & x_\nu\geq \check x_\mu  \\
    0 & x_\nu<\check x_\mu  
  \end{cases}
\end{displaymath}
Substituting  \eqref{eq:846}, \eqref{eq:850},  \eqref{eq:822-829}, 
\eqref{eq:562}, and \eqref{eq:660} into \eqref{eq:853} yields
\begin{multline}
\label{eq:854}
 \|v_\Df\|_\infty \leq
 C\, \Big( \beta^{-5/8}(x_\nu^{1/2}+ \Upsilon^{-5/2}\beta^{-1/8}x_\nu^{-1/2})\|f\|_2+ \\
 [ \gamma(x_\nu,\check x_\mu ) \beta^{-1/3}x_\nu^{-4/3}+ \beta^{-5/24}x_\nu^{-1/3}+
  \mu_+^{1/2}\beta^{1/6}x_\nu^{-1/3}]|\phi(x_\nu)| \\
+[  \gamma(x_\nu,\check x_\mu ) \beta^{-1/2}x_\nu^{-3/2} +\beta^{-3/8}x_\nu^{-1/2}+ \beta^{-7/24}x_\nu^{1/4}+ 
  \mu_+^{1/2}x_\nu^{-1/2}]\|\phi\|_{1,2}  \Big)
 \,.  
\end{multline}

We now obtain an estimate of $|\phi(x_\nu)|$ (see \eqref{eq:857} below)\\

{\em Step 4:
With,  as in \eqref{eq:811}
\begin{equation}
\label{eq:855}
  \phi = \phi_\Df + \check{\phi} \,,
\end{equation}
\begin{displaymath}
  \phi_\Df =\A_{\lambda,\alpha}^{-1}v_\Df \quad ; \quad
 \check{\phi}=-\A_{\lambda,\alpha}^{-1}\big([U+i\lambda]\phi^{\prime\prime}(1)\hat{\psi} \big) \,,
\end{displaymath}
and 
\begin{multline}\label{eq:856}
\CC_2(\nu_1,\nu_2, \Upsilon, \alpha_1, \beta_0,\kappa_0)
:= \\\{(\lambda, \alpha,\beta)\in \mathcal C_1 \,|\, \beta^{-2}\leq|\mu| \leq \Upsilon\beta^{-1/2} \mbox{ or }   -\frac{|U(0)-\nu|}{\kappa_0}<\mu<-
\Upsilon\beta^{-1/2}\}\,,
\end{multline} 
 we prove   that 
there exist $C>0$,  $\Upsilon_0$ and $\hat \kappa_0$ such
that, for $0 < \Upsilon \leq \Upsilon_0$ and  $\kappa_0\geq \hat \kappa_0$, there exist 
$\hat \nu_2=\hat \nu_2(\Upsilon,\kappa_0)$ and $\beta_0=\beta_0(\Upsilon,\kappa_0)$ so that for
$\nu_2 \geq \hat \nu_2$ and $(\lambda, \alpha,\beta)\in \CC_2(\nu_1,\nu_2, \Upsilon, \alpha_1, \beta_0,\kappa_0)$ 
we have 
\begin{multline}
\label{eq:857}
    |\check{\phi}(x_\nu)|+  |\phi_\Df(x_\nu)| \leq \\
    \leq C \,  \log(|\mu|^{-1/2}x_\nu)\, \big([\beta^{-1/4}+\gamma(x_\nu,\check x_\mu ) \beta^{-1/2}x_\nu^{-3/2} +\mu_+^{1/2}x_\nu^{-1/2}]\|\phi^\prime\|_2 \\ 
  + \beta^{-5/8}[x_\nu^{1/2} + \Upsilon^{-5/2}  \beta^{-1/8}x_\nu^{-1/2}]\|f\|_2\big) \,.
\end{multline}}
\vspace*{1ex}

 By \eqref{eq:223} applied to the pair
$(\check \phi, \big([U+i\lambda]\phi^{\prime\prime}(1)\hat{\psi} \big)$,  which holds for
for $(\lambda, \alpha,\beta)\in \CC_2(\nu_1,\nu_2, \Upsilon, \alpha_1, \beta_0,\kappa_0)$ when $\nu_2\geq \Upsilon \kappa_0$, 
  \eqref{eq:846}, and \eqref{eq:614}  for $s=1/2$\,, 
we obtain that
\begin{multline}
\label{eq:858}
 |\check{\phi}(x_\nu)| \leq C\, x_\nu^{1/2} |\phi^{\prime\prime}(1)|
  \|(1-x)^{1/2}\hat{\psi}\|_1 
\leq \widehat C \, \big(\beta^{-1/4}\|\phi^\prime\|_2 +\\
  \beta^{-1/4}x_\nu^{-1/2}  \log(\beta^{1/4}x_\nu) |\phi(x_\nu)|+\beta^{-13/12}x_\nu^{-1/3}\|f\|_2\big)   \,.
\end{multline}
By \eqref{eq:854}
and (\ref{eq:241}) 
applied to the pair $(\phi_\Df,v_\Df)$, we obtain  that 
\begin{multline*} 
   |\phi_\Df(x_\nu)| \leq C\,  \log\Big(\frac{x_\nu}{|\mu|^{1/2}}\Big) \Big( \beta^{-5/8}(x_\nu^{1/2}+ \Upsilon^{-5/2} \beta^{-1/8}x_\nu^{-1/2})\|f\|_2+ \\
 [ \gamma(x_\nu,\check x_\mu ) \beta^{-1/3}x_\nu^{-4/3}+ \beta^{-5/24}x_\nu^{-1/3}+ 
  \mu_+^{1/2}\beta^{1/6}x_\nu^{-1/3}]|\phi(x_\nu)| \\
+[  \gamma(x_\nu,\check x_\mu ) \beta^{-1/2}x_\nu^{-3/2} +\beta^{-3/8}x_\nu^{-1/2}+ \beta^{-7/24}x_\nu^{1/4}+ 
  \mu_+^{1/2}x_\nu^{-1/2}]\|\phi\|_{1,2}  \Big)
 \,.  
\end{multline*}
Combining the above yields, using the fact
that $\beta^{-1/4}<x_\nu<1$  and that $|\mu|<\mu_0$
\begin{multline}\label{eq:859}
    |\check{\phi}(x_\nu)|+  |\phi_\Df(x_\nu)| \leq\\
   \leq  C\,  \log(|\mu|^{-1/2}x_\nu) \Big([ \beta^{-1/4} +\gamma(x_\nu,\check x_\mu ) \beta^{-1/2}x_\nu^{-3/2} +\mu_+^{1/2}x_\nu^{-1/2}]\|\phi^\prime\|_2+\\ 
  + [ \beta^{-1/4}x_\nu^{-1/2}  \log(\beta^{1/4}x_\nu) + \gamma(x_\nu,\check x_\mu ) \beta^{-1/3}x_\nu^{-4/3}+ \beta^{-5/24}x_\nu^{-1/3}+ 
  \mu_+^{1/2}\beta^{1/6}x_\nu^{-1/3}]|\phi(x_\nu)| \\ + \beta^{-5/8}[x_\nu^{1/2}+\Upsilon^{-5/2}\beta^{-1/8}x_\nu^{-1/2}]\|f\|_2\Big)  \,.  
\end{multline}

We proceed by showing that the coefficient of $|\phi(x_\nu)|$ on the
right hand side of \eqref{eq:859} can be made arbitrarily small by
choosing $\nu_2$ sufficiently large. We separately bound each term in
the brackets. 

  For the second term 
  we first observe that for $  x_\nu>\Upsilon\mu_+^{-1/2}\beta^{-1/2}$, we have 
 \begin{multline*} 
   \log(|\mu|^{-1/2}x_\nu) \, \beta^{-1/3}x_\nu^{-4/3}\leq
   \log(\Upsilon^{-1}x_\nu^2\beta^{1/2})[\beta^{1/2}x_\nu^2]^{-2/3}\\
   \leq
   [ \log(x_\nu^2\beta^{1/2})+\log(\Upsilon^{-1})][\beta^{1/2}x_\nu^2]^{-2/3}
   \leq C[\nu_2^{-1/3}+\log \Upsilon^{-1}\nu_2^{-2/3}]  \,.     
\end{multline*}
Furthermore,  for $\beta_0(\kappa_0)$ large enough and $(\lambda,\alpha,\beta) \in \CC_2(\nu_1,\nu_2, \Upsilon, \alpha_1, \beta_0,\kappa_0)$,
it holds that
  \begin{equation}
\label{eq:860}
    |\mu|^{-1/2}x_\nu\leq\beta\,.
  \end{equation}
 Hence  for  $x_\nu>\beta^{-1/8}$, 
we have 
\begin{displaymath}
   \log(|\mu|^{-1/2}x_\nu)\beta^{-1/3}x_\nu^{-4/3}\leq\beta^{-1/6}
   \log(|\mu|^{-1/2}x_\nu)  \leq C \beta^{-1/6} \log \beta \,.
\end{displaymath}

Combining the pair of estimates, we obtain
\begin{equation}
\label{eq:861}
  \log(|\mu|^{-1/2}x_\nu) \gamma(x_\nu,\check x_\mu )
  \beta^{-1/3}x_\nu^{-4/3}\leq \widetilde C \big(\nu_2^{-1/3}+\log
  \Upsilon^{-1}\nu_2^{-2/3}+ \beta^{-1/6}\log\beta \big)\,. 
\end{equation}

For the third term, we write, assuming that
$\nu_2 \Upsilon \geq 1$ (which implies \break $ \mu_+^{-1/2}x_\nu \geq  \frac 1C
\nu_2^{1/2} \Upsilon^{1/2}$ for $(\lambda, \alpha,\beta)\in \CC_2(\nu_1,\nu_2, \Upsilon, \alpha_1, \beta_0,\kappa_0)$),  
  \begin{multline*}
     \log(|\mu|^{-1/2}x_\nu)  \mu_+^{1/2}\beta^{1/6}x_\nu^{-1/3}
    = \log(|\mu|^{-1/2}x_\nu) [\mu_+^{-1/2}x_\nu]^{-1/3}
    \mu_+^{1/3}\beta^{1/6}\\  \leq C  \mu_+^{1/3}\beta^{1/6} \leq \widehat C \Upsilon^{1/3}\,.
  \end{multline*}
  For the first  term we  obtain with the aid of \eqref{eq:860}
\begin{displaymath} 
  \log(|\mu|^{-1/2}x_\nu)\beta^{-1/4}x_\nu^{-1/2} \log(\beta^{1/4}x_\nu) \leq C\beta^{-1/8}\log^2\beta \,.
\end{displaymath}
 We can now conclude \eqref{eq:857}.

{\it Step 5:  We prove  that 
there exist $C>0$,  $\Upsilon_0$ and $\hat \kappa_0$ such
that, for $0 < \Upsilon \leq \Upsilon_0$ and  $\kappa_0\geq \hat \kappa_0$, we have for some
$\hat \nu_2=\hat \nu_2(\Upsilon,\kappa_0)$ and $\beta_0=\beta_0(\Upsilon,\kappa_0)$ that,  for  $\nu_2 \geq \hat \nu_2$
and  $(\lambda, \alpha,\beta)\in \CC_2(\nu_1,\nu_2, \Upsilon, \alpha_1, \beta_0,\kappa_0)$,
\eqref{eq:844} holds true. }\\[1.5ex]

We first use (\ref{eq:178}b) (which holds $(\lambda, \alpha,\beta)\in \CC_2 $)
applied to the pair $(\phi_\Df,v_\Df)$ and \eqref{eq:854} to 
obtain that
\begin{multline}\label{eq:862}
   \|\phi_\Df^\prime\|_2 \leq   C\, \log(|\mu|^{-1/2}x_\nu)\Big( \beta^{-5/8}(1+ \Upsilon^{-5/2}  \beta^{-1/8}x_\nu^{-1})\|f\|_2 \\
+  [ \gamma(x_\nu,\check x_\mu ) \beta^{-1/3}x_\nu^{-11/6}+ \beta^{-5/24}x_\nu^{-5/6}+ 
  \mu_+^{1/2}\beta^{1/6}x_\nu^{-5/6}]|\phi(x_\nu)| \\
+[  \gamma(x_\nu,\check x_\mu ) \beta^{-1/2}x_\nu^{-2} +\beta^{-3/8}x_\nu^{-1}+ \beta^{-7/24}x_\nu^{-1/4}+ 
  \mu_+^{1/2}x_\nu^{-1}]\|\phi\|_{1,2}  \Big)
 \,.  
\end{multline} 

Using \eqref{eq:861}, (which holds for $(\lambda, \alpha,\beta)\in \CC_2 $), and
the fact that $\beta^{-1/4}< x_\nu$, we write 
\begin{multline}\label{eq:863}
   \log(|\mu|^{-1/2}x_\nu) \gamma(x_\nu,\check x_\mu ) \beta^{-1/2}x_\nu^{-2}
    \\ \leq \widetilde C \beta^{-1/6}x_\nu^{-2/3} \big(\nu_2^{-1/3}+\log
    \Upsilon^{-1}\nu_2^{-2/3}+ \beta^{-1/6}\log\beta  \big)\\ \leq \widetilde C \big(\nu_2^{-2/3}+\log
    \Upsilon^{-1}\nu_2^{-1}+ \beta^{-1/6}\log\beta  \big) \,.
\end{multline}
Substituting \eqref{eq:857} and \eqref{eq:863} into
\eqref{eq:862}, we obtain   
the existence of 
 $C>0$ and $\beta_0>0$ such that for all $\beta>\beta_0$ we have  
\begin{multline}
\label{eq:864}
\|\phi_\Df^\prime\|_2 \leq C \, \Big(\log (|\mu|^{-1/2}x_\nu)\beta^{-1/2} \|f\|_2+  \\
 \big[ \beta^{-1/6}x_\nu^{-2/3}+\beta^{-1/8}\log\beta  + \nu_2^{ -1/3}+\log
    \Upsilon^{-1}\nu_2^{-1} +\Upsilon^{1/4} \big]\,[\|\check{\phi}^\prime\|_2+\|\phi_\Df^\prime\|_2]\Big)
 \,.
\end{multline}
Note that in the proof we repeatedly  use the inequalities 
\begin{equation}\label{eq:865}
(|\mu|^{-1/2}x_\nu)^{-1/2}\log(|\mu|^{-1/2}x_\nu)\leq1
\end{equation}
(which holds since $|\mu|^{-1/2}x_\nu>1$ for sufficiently large $\nu_2$),
$\frac 1{\widehat C} \nu_2^{1/2} \beta^{-1/4}<x_\nu<1$ (which holds for
some $\widehat C>0$), $ \mu_+\leq \Upsilon \beta^{-1/2}$, and \eqref{eq:860}.

We now apply \eqref{eq:222}, (which holds for $(\lambda, \alpha,\beta)\in \CC_2$
with sufficiently large $\nu_2$) to the pair $(\check \phi,
\big([U+i\lambda]\phi^{\prime\prime}(1)\hat{\psi} \big)$, \eqref{eq:614} for $s=1/2$\,,
and \eqref{eq:846} as in \eqref{eq:858} to obtain that
\begin{equation}\label{eq:866}
  \|\check{\phi}^\prime\|_2\leq 
C\,\big(\beta^{-1/4}x_\nu^{-1/2}\|\phi^\prime\|_2 +
  \beta^{-1/4} x_\nu^{-1}|\phi(x_\nu)|+\beta^{-13/12}x_\nu^{-5/6}\|f\|_2\big)   \,.
\end{equation}
By  \eqref{eq:860}, \eqref{eq:857}, \eqref{eq:863},  and
\eqref{eq:865} we have   
\begin{multline*}
   |\phi(x_\nu)|\leq C \big(\{\beta^{-1/4}\log\beta +x_\nu^{1/2}[\beta^{-1/6}\log\beta
   +\nu_2^{-2/3}+\nu_2^{-1}\log\Upsilon^{-1}] + \Upsilon^{1/4} \beta^{-1/8}  \}\|\phi^\prime\|_2 \\ 
  + \log(|\mu|^{-1/2}x_\nu)\beta^{-5/8}\Upsilon^{-5/2}\|f\|_2\big) \,.
\end{multline*}
Hence, for $s\geq 1/2$, we have 
\begin{multline}\label{eq:867}
  (\beta^{-1/4}
  x_\nu^{-1})^s |\phi(x_\nu)|\leq C\big[(\nu_2^{-2/3} + \Upsilon^{1/4})\beta^{-1/8}
  \|\phi^\prime\|_2 \\
  +\beta^{-5/8} \,\log\beta \, \Upsilon^{-5/2} \nu_2^{-1/2}  \|f\|_2 \big]
\end{multline}
By \eqref{eq:866} and \eqref{eq:867} for $s=1$, we obtain 
\begin{equation}\label{eq:868} 
   \|\check{\phi}^\prime\|_2\leq C( \beta^{-1/2}\log\beta\|f\|_2+\beta^{-1/8}\|\phi^\prime\|_2) \,.
\end{equation}

Combining \eqref{eq:868} with \eqref{eq:864} yields if $(\lambda,
\alpha,\beta)\in \CC_2(\nu_1,\nu_2, \alpha_1,\beta_0, \Upsilon,\kappa_0,) $ that there exist
positive $\beta_0$ and $C$ such that for all $\beta>\beta_0$ (the reader is
referred for a precise description of the parameter space to the
statement of step 4)
\begin{equation}
\label{eq:869}
  \|\phi^\prime\|_2 \leq   \|\check{\phi}^\prime\|_2+\|\phi_\Df^\prime\|_2 \leq C
   \beta^{-1/2}\log\beta\|f\|_2\,. 
\end{equation}
 Together with Poincar\'e's inequality, \eqref{eq:869} implies \eqref{eq:844}
  under the assumptions of Step~4.\\

  To prove \eqref{eq:844} in the next step for $|\mu|<\beta^{-2} $, we
  need to obtain an estimate of $\|\phi^{\prime\prime}-\alpha^2\phi\|_2$ under the
  present condition on $\mu$. By \eqref{eq:867} for $s=5/6$ and
  \eqref{eq:869} we obtain under \eqref{eq:856} that for some $C>0$
  (here and in the sequel the explicit dependence of $ C=C(\kappa_0, \Upsilon)$
  on $(\kappa_0, \Upsilon)$ is of little concern)
\begin{equation}
\label{eq:870}
 x_\nu^{-5/6}  |\phi(x_\nu)|\leq C  \beta^{-5/12} \, \log \beta \,\|f\|_2 \,.
\end{equation}
Substituting \eqref{eq:869} and \eqref{eq:870} into \eqref{eq:849}
yields  that
\begin{displaymath}
  \|\tilde{v}_\Df\|_2\leq
  C\beta^{-1/4}\log\beta\|f\|_2 \,.   
\end{displaymath}
Consequently, by \eqref{eq:846}, \eqref{eq:649}, \eqref{eq:869},
 \eqref{eq:870},  and \eqref{eq:562}, it holds  under the
  assumptions  and conclusions of Steps 4  and 5 that
\begin{multline}
\label{eq:871}
   \|\phi^{\prime\prime}-\alpha^2\phi\|_2  \leq  \|\tilde{v}_\Df\|_2+
   |\phi^{\prime\prime}(1)|\,\|\hat{\psi}\|_2 \leq \\ C\beta^{-1/8}
     [x_\nu\beta^{1/4}]^{-1/6} \log(x_\nu\beta^{1/4}) \log\beta\|f\|_2\leq  C\beta^{-1/8}\log\beta\|f\|_2\,.
\end{multline}

{\it Step 6: We prove \eqref{eq:844} for $|\mu|\leq\beta^{-2}$.} \\ 

\noindent We begin by writing  as in \eqref{eq:833}
\begin{equation}
\label{eq:872}
  \B_{\lambda+2\beta^{-2},\alpha}\phi=f+2\beta^{-1}(\phi^{\prime\prime}-\alpha^2\phi)\,,
\end{equation}
Hence, by \eqref{eq:871} applied 
to the pair $(\phi, f+2\beta^{-1}(\phi^{\prime\prime}-\alpha^2\phi))$ with $\lambda$ replaced by  $\lambda+2\beta^{-2}$, we have  
\begin{displaymath}
   \|\phi^{\prime\prime}-\alpha^2\phi\|_2\leq
   C(\beta^{-9/8}\log\beta\|\phi^{\prime\prime}-\alpha^2\phi\|_2+\beta^{-1/8}\log\beta\|f\|_2)\,.
\end{displaymath}
Consequently, we obtain that
\begin{displaymath}
   \|\phi^{\prime\prime}-\alpha^2\phi\|_2\leq C\beta^{-1/8}\log\beta \|f\|_2\,.
\end{displaymath}
We now apply \eqref{eq:844} to \eqref{eq:872} to obtain
\begin{displaymath}
   \|\phi^\prime\|_2 \leq C\beta^{-1/2}\log\beta(\|f\|_2+\beta^{-1}\|\phi^{\prime\prime}-\alpha^2\phi\|_2)\,. 
\end{displaymath}
Hence, 
\begin{displaymath}
     \|\phi^\prime\|_2 \leq C\beta^{-1/2}\log \beta \|f\|_2 \,.
\end{displaymath}

{\it Step 7: We prove \eqref{eq:844} in  the case $
   -\mu_0\leq\mu<-\frac{U(0)-\nu}{\kappa_0}\,.$}\\

\noindent We note for below that,  under the above assumption, 
\begin{equation} \label{eq:873}
|\mu|>\frac{1}{C(\kappa_0)}\nu_2^{1/2}\beta^{-1/2}\,.
\end{equation}
In this case, we can apply \eqref{eq:530}, by \eqref{eq:655}, to the
pair $(\phi^{\prime\prime}-\alpha^2 \phi, i\beta U^{\prime\prime}\phi+f)$, so that the $L^\infty$ estimate
is applied to $ i\beta U^{\prime\prime}\phi$ and the $L^2$ estimate to $f$. We obtain
\begin{displaymath}
     |\phi^{\prime\prime}(1)|\leq
    C(\beta^{-1/2}|\mu|^{-3/4}\|f\|_2+\beta^{1/2}|\mu|^{-1/2}\|\phi\|_\infty)\,.
\end{displaymath}
Hence, by (\ref{eq:254}) applied to the pair
$(\check{\phi},\phi^{\prime\prime}(1)(U+i\lambda)\hat{\psi})$ (see \eqref{eq:813})
together with \eqref{eq:614} for $s=1/2$ and \eqref{eq:873}, we
obtain
\begin{displaymath}
    \|\check{\phi}^\prime\|_2\leq
    C(|\mu|^{-3/4}\beta^{-5/4}\|f\|_2+\beta^{-1/4}|\mu|^{-1/2}\|\phi\|_\infty) \leq \widehat C (\beta^{-7/8}\|f\|_2+\nu_2^{-1/4}\|\phi^\prime\|_2)  \,.
\end{displaymath}
From the above we conclude, using Poincar\'e's inequality, that for
$\nu_2$ large enough 
\begin{equation}
\label{eq:874}
   \|\check{\phi}^\prime\|_2+  |\check{\phi}(x_\nu)|\leq C (\beta^{-7/8}\|f\|_2+\nu_2^{-1/4}\|\phi^\prime_\Df\|_2)\,.
\end{equation}
We next use  (\ref{eq:270})  for  the pair $(\phi_\Df,v_\Df)$
(see (\ref{eq:673}b) ) to obtain from \eqref{eq:854}
\begin{equation*} 
   |\phi_\Df(x_\nu)| \leq C \|v_\Df\|_\infty \leq  \widehat C\big(\beta^{-5/8}\|f\|_2+ 
\beta^{-1/4}\|\phi^\prime\|_2+\beta^{-1/8}|\phi(x_\nu)|\big)\,.
\end{equation*}
which implies for $\beta \geq \beta_0$ with $\beta_0$ large enough
\begin{equation} \label{eq:875} 
   |\phi_\Df(x_\nu)| \leq C \big(\beta^{-5/8}\|f\|_2+ 
\beta^{-1/4}\|\phi^\prime\|_2+\beta^{-1/8}|\check \phi(x_\nu)|\big)\,.
\end{equation}
In the sequel the explicit dependence on $(\kappa_0, \Upsilon)$ is of
little concern to us and is therefore omitted.\\  
Similarly, we obtain, using (\ref{eq:242}) with
$p=+\infty$ for the pair $(\phi_\Df,v_\Df)$ and \eqref{eq:854}
\begin{equation*} 
\|\phi^\prime_\Df\|_2\leq C|\mu|^{-1/4}\|v_\Df\|_\infty \leq \widehat C \nu_2^{-1/8} \beta^{1/8} \big(\beta^{-5/8}\|f\|_2+ 
\beta^{-1/4}\|\phi^\prime\|_2+\beta^{-1/8}|\phi(x_\nu)|\big)\,.
\end{equation*}
Using Sobolev embedding for $\phi_\Df$, we obtain for sufficiently large
$\nu_2$
\begin{equation}\label{eq:876}
\|\phi^\prime_\Df\|_2\leq C \nu_2^{-1/8}  \big(\beta^{-1/2}\|f\|_2+ 
\beta^{- 1/8}\|\phi^\prime\|_2+ |\check \phi(x_\nu)|\big)\,.
\end{equation}
We may now continue as in the derivation of \eqref{eq:869} 
to obtain  by \eqref{eq:874}, \eqref{eq:875}  and \eqref{eq:876}
for $\nu_2$ and $\beta_0$ large enough and $\beta \geq \beta_0$ 
\begin{equation}
\label{eq:877}
  \|\phi^\prime\|_2 \leq \|\check{\phi}^\prime\|_2+\|\phi_\Df^\prime\|_2 \leq
  C\beta^{-1/2}\|f\|_2\,. 
\end{equation}

{\em Step 8:  The case $\mu < -\mu_0$\,.}\\
The proof of Step 7 in Proposition \ref{lem:quadratic}
furnishes  \eqref{eq:842} without any modification. Making use of
Poincar\'e's inequality we then establish \eqref{eq:844}. 
 \end{proof}

\subsection{Large $d(\Im\lambda,[0,U(0)])$}
\label{sec:5.8}
 In the following we consider the case where $\nu$ lies outside the
interval $[0,U(0)]$. We consider two different regimes: 
\begin{itemize}
\item We begin with
case $\nu-U(0)\gg\beta^{-1/2}$.
\item 
We  then continue by assuming $\beta^{1/3}(-\nu)\gg1$.
\end{itemize}
We begin by introducing for some positive constants  $\alpha_0$,  
$\Upsilon$, $\beta_0$ and $\kappa_1$  the zone
\begin{displaymath}
\mathcal F_1(\alpha_0, \beta_0,\Upsilon,\kappa_1) :=
\begin{Bmatrix}
  (\lambda,\alpha,\beta)\in \mathbb C\times\mathbb R_+^2\,,\,\beta \geq \beta_0\,,\,
  0\leq \alpha\leq\alpha_0\beta^{1/3}\,,\\[1.5ex]  \mu<\Upsilon\beta^{-1/2} \,,\,  U(0)+\kappa_1\beta^{-1/2}\leq\nu 
\end{Bmatrix}
\end{displaymath}

\begin{proposition}
\label{lem:large-nu}
  Let $U\in C^4([0,1])$ satisfy \eqref{eq:10} and  $\alpha_0$ and 
$\Upsilon$  denote positive constants. Then, there
exist $\beta_0>0$, $\kappa_1>0$, and $C>0$, such that, for \break $(\lambda,
\alpha,\beta)\in \mathcal F_1(\alpha_0, \beta_0,\Upsilon,\kappa_1)$, it holds that  
   \begin{equation}
\label{eq:878} 
 \big\|(\B_{\lambda,\alpha,\beta}^{\mathfrak N,\Df})^{-1}\big\|+
       \big\|\frac{d}{dx}\, (\B_{\lambda,\alpha,\beta}^{\mathfrak
         N,\Df})^{-1}\big\|\leq  C\beta^{-1/2}\,.
   \end{equation}
\end{proposition}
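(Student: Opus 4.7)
The strategy is the same framework used in Propositions \ref{lem:right-curve}, \ref{lem:quadratic} and \ref{lem:nearly-quadratic}: introduce the boundary correction $v_\Df = \A_{\lambda,\alpha}\phi + (U+i\lambda)\phi^{\prime\prime}(1)\hat\psi$ and $\tilde v_\Df = -\phi^{\prime\prime} + \alpha^2\phi + \phi^{\prime\prime}(1)\hat\psi$, so that $(\LL_\beta^\Nf-\beta\lambda)v_\Df = g_\Df$ with $g_\Df$ given by \eqref{eq:647}. The decisive simplification in the present regime is that $\nu>U(0)+\kappa_1\beta^{-1/2}$ forces $x_\nu=0$ in the convention \eqref{eq:defxnu}, so the inviscid operator is non-singular on $[0,1]$ and \emph{all} inviscid estimates reduce to Proposition \ref{prop:non-factorable} and Remark \ref{rem:large-nu-bound}. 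In particular, \eqref{eq:271} yields the L^2 bound $\|\phi_\Df\|_{1,2}\le C[|\mu|+\nu-U(0)]^{-\frac{1}{2p}-\frac{1}{4}}\|v_\Df\|_p$, which is uniformly $\OO(\beta^{1/8})$ when $\nu-U(0)\gtrsim\beta^{-1/2}$.

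First I will bound $|\phi^{\prime\prime}(1)|$. Since $|U(0)-\nu|>\kappa_1\beta^{-1/2}$, Proposition \ref{lem:integral-conditions} applies to $-\phi^{\prime\prime}+\alpha^2\phi$ (as in \eqref{eq:655}--\eqref{eq:657}), giving
\begin{displaymath}
|\phi^{\prime\prime}(1)| \le C\lambda_\beta^{1/2}\bigl(\beta^{1/2}\|\phi\|_{1,2}+\beta^{-1/2}\|f\|_2\bigr),
\end{displaymath}
(the logarithmic term $|\phi(x_\nu)|\log|\nu+i\mf|^{-1}$ is dominated by $\|\phi\|_{1,2}$ here, since $\phi(0)$ is controlled by Poincar\'e after noting $x_\nu=0$). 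Next I will repeat the energy identity \eqref{eq:651} for $\tilde v_\Df$; the last term $\beta\Re\langle\phi^{\prime\prime}(1)\hat\psi,i\phi\rangle$ is handled as in step 2 of Proposition \ref{lem:right-curve} using \eqref{eq:614}--\eqref{eq:615}. Combined with \eqref{eq:661} and Proposition \ref{prop:non-factorable} applied to \eqref{eq:650}, this yields analogs of \eqref{eq:667} and \eqref{eq:668}:
\begin{displaymath}
\|\tilde v_\Df\|_2 + \beta^{-\delta}\|\tilde v_\Df^\prime\|_2 \le C\bigl(\|\phi\|_{1,2} + [\kappa_1\beta^{-1/2}+\nu-U(0)]^{-1/2}\beta^{-1/2}\|f\|_2\bigr).
\end{displaymath}

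The estimate of $\|v_\Df\|_\infty$ proceeds as in Step 5 of Proposition \ref{lem:quadratic}, decomposing $g_\Df$ as in \eqref{eq:852} and invoking \eqref{eq:432}, \eqref{eq:271}, and \eqref{eq:560} term by term; the factor $[|\mu|+\nu-U(0)]^{-1/4}$ furnished by \eqref{eq:271} at $p=\infty$ is the source of the improved $\beta^{-1/2}$ bound compared to Proposition \ref{lem:quadratic}. Then, using the decomposition $\phi=\phi_\Df+\check\phi$ from \eqref{eq:673}, Remark \ref{rem:large-nu-bound} controls $|\phi_\Df(0)|\le C\|v_\Df\|_\infty$, and \eqref{eq:271} controls $\|\phi_\Df\|_{1,2}$. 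The boundary correction $\check\phi=-\phi^{\prime\prime}(1)\phi_{\lambda,\beta,\alpha}$ is bounded by combining \eqref{eq:271} (applied to $(U+i\lambda)\hat\psi$) with \eqref{eq:614}: for $\nu-U(0)\ge\kappa_1\beta^{-1/2}$ this gives $\|\check\phi\|_{1,2}\le C\kappa_1^{-1/4}\beta^{-5/6}|\phi^{\prime\prime}(1)|$.

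Closing the loop, I would substitute the bound for $|\phi^{\prime\prime}(1)|$ back and absorb the self-referential $\|\phi\|_{1,2}$ contribution into the left-hand side by taking $\kappa_1$ large and $\beta_0$ large enough so that the coupling constants satisfy $C\kappa_1^{-1/4}\le 1/2$. The main obstacle I anticipate is ensuring that every intermediate estimate carries a genuine factor of $[\kappa_1\beta^{-1/2}+\nu-U(0)]$ to some positive power, so that the constants degrade harmlessly as $\nu-U(0)\downarrow\kappa_1\beta^{-1/2}$ while matching seamlessly with Proposition \ref{lem:quadratic} at the boundary $\nu=U(0)+\kappa_0\beta^{-1/2}$; this requires tracking carefully which of the two terms in $[|\mu|+\nu-U(0)]$ dominates in each of the regimes $\mu\lessgtr 0$ and $|\mu|\lessgtr\nu-U(0)$, as in the case analysis following \eqref{eq:257}--\eqref{eq:264}. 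Once these estimates are assembled, \eqref{eq:878} follows via Poincar\'e's inequality exactly as in the concluding steps of Propositions \ref{lem:quadratic} and \ref{lem:nearly-quadratic}.
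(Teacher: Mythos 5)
Your proposal follows the template of Propositions \ref{lem:quadratic} and \ref{lem:nearly-quadratic} (compute $\|v_\Df\|_\infty$, feed it into the inviscid estimates of Section 2, close the loop through the decomposition $\phi=\phi_\Df+\check\phi$), but the paper's proof takes a genuinely different and simpler route in this regime: it tests $\B_{\lambda,\alpha,\beta}\phi=f$ directly against $\phi/(U+i\lambda)$, producing the identity \eqref{eq:880}, whose key term $\beta\Re\langle U''\phi/(U+i\lambda),\phi\rangle$ is \emph{nonnegative} (bounded below by $\min|U''|\,\check\nu\,\|\phi/(U+i\lambda)\|_2^2$) precisely because $U''(U-\nu)>0$ on all of $[0,1]$ when $\nu>U(0)$. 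This coercivity lets the paper work entirely in $L^2$/$H^1$ and never touch an $L^\infty$ estimate for $v_\Df$. The contrast is worth noting: the paper's argument is the one step where the constraint $\nu>U(0)$ pays off globally, while your plan imports machinery built for the quadratic neighbourhood of $\nu=U(0)$.

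There is a concrete obstruction to your plan. To bound $\|v_\Df\|_\infty$ via the decomposition \eqref{eq:852} you invoke \eqref{eq:432} and \eqref{eq:560}, but both of those estimates carry the hypothesis $\nu<U(0)+a\beta^{-1/2}$ (resp.\ $\nu\in(U(0)-\tilde\nu_1,U(0)+a\beta^{-1/2})$). In the present regime $\nu\geq U(0)+\kappa_1\beta^{-1/2}$ with $\kappa_1$ required to be \emph{large} for the absorption, so those propositions are cited outside their range of validity. The paper does not supply $\mathcal L(L^2,L^\infty)$ resolvent bounds for $\mathcal L_\beta^{\mathfrak N,\mathfrak D}$ in the range $\nu-U(0)\gg\beta^{-1/2}$; you would have to establish these from scratch (roughly, via the $L^2$ bound $\|u\|_2\leq C/(\beta|U(0)-\nu|)\|f\|_2$, the imaginary-part identity for $\|u'\|_2$, and Sobolev interpolation), and check that the resulting scaling in $|U(0)-\nu|$ is still strong enough, after multiplication by $[|\mu|+\nu-U(0)]^{-1/4}$ from \eqref{eq:271}, to produce $\beta^{-1/2}$.

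A second gap: the energy argument (whether yours or the paper's) degrades when $\alpha$ is close to its upper bound $\alpha_0\beta^{1/3}$, because the $\alpha^2$ cross term \eqref{eq:884} only absorbs under a condition like $\alpha^2\lesssim\beta\check\nu$, i.e.\ $\alpha\lesssim\beta^{1/4}$ at the threshold $\check\nu\sim\beta^{-1/2}$. The paper's proof therefore splits into Step 1 ($|\lambda|$ bounded, $\alpha\lesssim\beta^{1/4}$), Step 2 ($|\lambda|$ large — a cheap imaginary/real energy identity on $\phi$ itself), and Step 3 ($\tilde\alpha_0\beta^{1/4}\leq\alpha\leq\alpha_0\beta^{1/3}$, exploiting largeness of $\alpha$ via $\langle\phi,\tilde v_\Df\rangle=\|\phi'\|_2^2+\alpha^2\|\phi\|_2^2-\phi''(1)\langle\phi,\hat\psi\rangle$). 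Your proposal does not address either of the complementary regimes, so the full range of parameters in $\mathcal F_1$ is not covered.
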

\begin{proof}~\\
  {\em Step 1:  For positive $\lambda_0$ and $\tilde{\alpha}_0$, we prove that
  for $\kappa_1$ and $\beta_0$ large enough, \eqref{eq:878} holds true under
  the additional conditions that $|\lambda| \leq \lambda_0$ and
  $\alpha\leq\tilde{\alpha}_0\beta^{1/4}$.}\\

 \noindent  Let $f\in L^2(0,1)$ and $\phi\in D(\B_{\lambda,\alpha,\beta}^{\mathfrak N,\Df})$ satisfy
  \begin{equation}
\label{eq:879}
    \B_{\lambda,\alpha,\beta}^{\mathfrak N,\Df}\phi=f
  \end{equation}
  Taking the scalar product of \eqref{eq:879} with $\frac{\phi}{U+i\lambda}$,
  and integrating by parts yields for the imaginary part
  \begin{multline}
\label{eq:880}
    -\Im\Big\langle\frac{\phi}{U+i\lambda},f\Big\rangle =
    \Im\Big\langle\Big(\frac{\phi}{U+i\lambda}\Big)^{\prime\prime},\phi^{\prime\prime}\Big\rangle\\
   +  \alpha^2\Im\Big\langle\Big(\frac{\phi}{U+i\lambda}\Big)^{\prime},\phi^{\prime}\Big\rangle +
    \beta\Big(\|\phi^\prime\|_2^2 +
    \alpha^2\|\phi\|_2^2+\Re\Big\langle\frac{U^{\prime\prime}\phi}{U+i\lambda},\phi\Big\rangle\Big) \,.
  \end{multline}
By \eqref{eq:10} we obtain the following auxiliary estimate, which we
repeatedly use in the sequel
\begin{equation}
\label{eq:881}
   \Big\|\frac{(U^\prime)^n}{(U+i\lambda)^m}\Big\|_\infty  \leq C
     \Big\|\frac{x^n}{(x^2+\check{\nu})^m}\Big\|_\infty  \leq C \,  \check \nu^{-(m-n/2)} \,, \; 
   \forall m\in\R_+,\;\forall  n\in[0,2m]\,,
\end{equation}
where
\begin{displaymath}
  \check \nu =\nu-U(0)>0\,.
\end{displaymath}
 As
\begin{equation}
\label{eq:882}
  \Big(\frac{\phi}{U+i\lambda}\Big)^{\prime\prime}= -
  2\frac{\phi^\prime U^\prime}{(U+i\lambda)^2}+ \frac{\phi^{\prime\prime}}{U+i\lambda}-
    \frac{\phi U^{\prime\prime}}{(U+i\lambda)^2} +2\frac{\phi(U^\prime)^2}{(U+i\lambda)^3} \,,
\end{equation}
we need to estimate four different terms to obtain a bound for the
first term on the right-hand-side of \eqref{eq:880}. We begin by
writing 
 \begin{displaymath}
  \Big|\Big\langle\frac{\phi^\prime U^\prime}{(U+i\lambda)^2},\phi^{\prime\prime}\Big\rangle\Big|  \leq
  \Big|\Big\langle\frac{\phi^\prime U^\prime}{(U+i\lambda)^2}, \phi^{\prime\prime}-\phi^{\prime\prime}(1)\hat{\psi}\Big\rangle\Big| +\Big|\Big\langle\frac{\phi^\prime U^\prime}{(U+i\lambda)^2},
\phi^{\prime\prime}(1)\hat{\psi}\Big\rangle\Big| \,,
\end{displaymath}
to obtain by \eqref{eq:562} and \eqref{eq:881} 
\begin{displaymath}
   \Big|\Big\langle\frac{\phi^\prime U^\prime}{(U+i\lambda)^2},\phi^{\prime\prime}\Big\rangle\Big|  \leq
   C{\check \nu^{-3/2}}\,\big(\|\phi^{\prime\prime}-\phi^{\prime\prime}(1)\hat{\psi}\|_2+ \beta^{-1/4}|\phi^{\prime\prime}(1)|\big)\,\|\phi^\prime\|_2\,.
\end{displaymath}

The contribution of the second term on the right-hand is obtained as
follows 
 \begin{displaymath}
   \Big|\Big\langle\frac{\phi^{\prime\prime}}{U+i\lambda},
\phi^{\prime\prime}\Big\rangle\Big| \leq \check \nu^{-1}\|\phi^{\prime\prime}\|_2^2\leq C\check \nu^{-1}\big(\|\phi^{\prime\prime}-\phi^{\prime\prime}(1)\hat{\psi}\|_2^2+\beta^{-1/2}|\phi^{\prime\prime}(1)|^2)
\end{displaymath}
To obtain an estimate for the contribution of the third term on the
right-hand-side of \eqref{eq:882} we write
\begin{multline*}
 \Big|\Big\langle\frac{\phi U^{\prime\prime}}{(U+i\lambda)^2},
\phi^{\prime\prime}\Big\rangle\Big| \leq C  \check \nu^{-1} \| (U+i\lambda)^{-1} \phi\|\, \|\phi^{\prime\prime} \| \\ \leq \widehat C  \check \nu^{-1} \| (U+i\lambda)^{-1} \phi\| [\|\phi^{\prime\prime}-\phi^{\prime\prime}(1)\hat{\psi}\|_2+\beta^{-1/4}|\phi^{\prime\prime}(1)|] 
\end{multline*}
A similar estimate is obtained for the last term on the
right-hand-side of \eqref{eq:882} by using \eqref{eq:881} with $m=2,
n=2$, i.e.,
\begin{multline*}
  \Big|\Im\Big\langle\Big(\frac{\phi}{U+i\lambda}\Big)^{\prime\prime},\phi^{\prime\prime}\Big\rangle\Big|\leq 
  C\check \nu^{-1}\Big(\|\phi^{\prime\prime}-\phi^{\prime\prime}(1)\hat{\psi}\|_2^2+\beta^{-1/2}|\phi^{\prime\prime}(1)|^2 \\ \Big[\Big\|\frac{\phi}{U+i\lambda}\Big\|_2+
  \check \nu^{-1/2}\|\phi^\prime\|_2\Big][\|\phi^{\prime\prime}-\phi^{\prime\prime}(1)\hat{\psi}\|_2+\beta^{-1/4}|\phi^{\prime\prime}(1)|] 
\Big)\,,
\end{multline*}
from which  we conclude that
\begin{multline} \label{eq:883}
  \Big|\Im\Big\langle\Big(\frac{\phi}{U+i\lambda}\Big)^{\prime\prime},\phi^{\prime\prime}\Big\rangle\Big|\leq
    C\check \nu^{-1}\Big(\|\phi^{\prime\prime}-\phi^{\prime\prime}(1)\hat{\psi}\|_2^2+\beta^{-1/2}|\phi^{\prime\prime}(1)|^2 \\ +\Big\|\frac{\phi}{U+i\lambda}\Big\|_2^2+
  \check \nu^{-1}\|\phi^\prime\|_2^2\Big)\,.
\end{multline}

For the second term on the right-hand-side \eqref{eq:880}, we use
\eqref{eq:881} to obtain 
\begin{equation}\label{eq:884}
  \alpha^2\Big|\Im\Big\langle\Big(\frac{\phi}{U+i\lambda}\Big)^{\prime},\phi^{\prime}\Big\rangle\Big|
  \leq C\, \alpha^2\check \nu^{-1/2}\Big(\Big\|\frac{\phi}{U+i\lambda}\Big\|_2\,+ 
\check \nu^{-1/2}  \|\phi^\prime\|_2\Big)\|\phi^\prime\|_2 \,. 
\end{equation}
Finally,  as $U^{\prime\prime}(x) (U(x)-\nu) >0$, we can deduce that 
\begin{equation}\label{eq:885}
  \Re\Big\langle\frac{U^{\prime\prime}\phi}{U+i\lambda},\phi\Big\rangle\geq\min_{x\in[0,1]}|U^{\prime\prime}(x)|
  \check \nu  |\Big\|\frac{\phi}{U+i\lambda}\Big\|_2^2 \,.
\end{equation}
Substituting \eqref{eq:883}-\eqref{eq:885} into \eqref{eq:880} yields 
\begin{multline*}
  \|\phi^\prime\|_2^2 + \alpha^2\|\phi\|_2^2 + \check \nu  \Big\|\frac{\phi}{U+i\lambda}\Big\|_2^2
  \leq  C \beta^{-1} \Big[\Big\|\frac{\phi}{U+i\lambda}\Big\|_2\,\|f\|_2\\  +
  \check \nu^{-1}\Big(\|\phi^{\prime\prime}-\phi^{\prime\prime}(1)\hat{\psi}\|_2^2+\beta^{-1/2}|\phi^{\prime\prime}(1)|^2  +
  [\check \nu^{-1}+\alpha^2]\|\phi^\prime\|_2^2\Big) \\ +  (\alpha^2+\check \nu^{-1})\Big\|\frac{\phi}{U+i\lambda}\Big\|_2^2\,\Big]\,.
\end{multline*}
From the above inequality we  conclude, for sufficiently large $\kappa_1$ and $\beta_0$,  
\begin{equation}
\label{eq:886}
  \|\phi^\prime\|_2 + \check \nu^{1/2}\Big\|\frac{\phi}{U+i\lambda}\Big\|_2\leq C
   \beta^{-1/2}\, \big[  \check \nu^{1/2} \|f\|_2
  + \check \nu^{-1/2} (\|\phi^{\prime\prime}-\phi^{\prime\prime}(1)\hat{\psi}\|_2+ \beta^{-1/4}|\phi^{\prime\prime}(1)|)\big] \,.
\end{equation}
Clearly
\begin{displaymath}
   (\LL_\beta^{\mathfrak z_\alpha}-\beta\lambda)(-\phi^{\prime\prime}+\alpha^2\phi)=f+i\beta U^{\prime\prime}\phi \,,
\end{displaymath}
where ${\mathfrak z_\alpha}$ is given by \eqref{eq:602}. For
sufficiently large $\kappa_1$, \eqref{eq:442} and \eqref{eq:443} hold and
since $\mu<\Upsilon\beta^{-1/2}$, we may use \eqref{eq:445} here to obtain
\begin{equation}
\label{eq:887}
  |\phi^{\prime\prime}(1)| \leq
  C(\beta^{-1/3}\check \nu^{-5/12}\|f\|_2+\beta^{1/2}\check \nu^{-1/2}\|\phi^\prime\|_2)\,.
\end{equation}
Substituting \eqref{eq:887} into \eqref{eq:886} yields for
sufficiently $\kappa_1$ and $\beta_0$
\begin{equation}
  \label{eq:888}
\|\phi^\prime\|_2 +\check \nu^{1/2}\Big\|\frac{\phi}{U+i\lambda}\Big\|_2\leq C  \beta^{-1/2}\,
\big[ \check \nu^{1/2} \|f\|_2
  +  \check \nu^{-1/2} \|\phi^{\prime\prime}-\phi^{\prime\prime}(1)\hat{\psi}\|_2\big] \,.
\end{equation}

To estimate $\|\phi^{\prime\prime}-\phi^{\prime\prime}(1)\hat{\psi}\|_2$ we set  (see
  also \eqref{eq:644}-\eqref{eq:650})
\begin{equation}
\label{eq:889}
  \hat{v}_\Df:=-\phi^{\prime\prime}+\alpha^2\phi
  +\frac{U^{\prime\prime}\phi}{U+i\lambda}+\phi^{\prime\prime}(1)\hat{\psi}=\frac{v_\Df}{U+i\lambda} \,.
\end{equation}
A simple computation yields
\begin{subequations}
\label{eq:890}
  \begin{equation} 
  (\LL_\beta^{\Nf,\Df} -\beta \lambda)\hat{v}_\Df= h\,,
\end{equation}
where
\begin{equation}
  h= -f- \left(\frac{U^{\prime\prime}\phi}{U+i
    \lambda}\right)^{\prime\prime}+\phi^{\prime\prime}(1)\hat{g}\,.
\end{equation}
\end{subequations}
By \cite[Theorem 1.3]{Hen2},  which can be applied to the even extension
of $\hat{v}_\Df$ and $h$ to $(-1,1)$,  it holds that
\begin{displaymath}
\|\hat{v}_\Df\|_2\leq C \beta^{-1/2} \|h\|_2\,.
\end{displaymath}
Hence, using again \eqref{eq:881} and \eqref{eq:562} yields, as 
$\|\phi^{\prime\prime}  \|_2 \leq   \|\phi^{\prime\prime} -\phi^{\prime\prime}(1) \hat \psi \|_2 + |\phi^{\prime\prime}(1)| \|\hat \psi\|_2\,,$
\begin{multline}
  \label{eq:891}
\|\hat{v}_\Df\|_2\leq C \beta^{-1/2} 
\Big(\|f\|_2+|\phi^{\prime\prime}(1)|\,\|\hat{g}\|_2+
\check \nu^{-1}\|\phi^{\prime\prime}-\phi^{\prime\prime}(1)\hat{\psi}\|_2 \\+ \beta^{-1/4}|\phi^{\prime\prime}(1)|+\check \nu^{-3/2}\|\phi^\prime\|_2+\check \nu^{-1}\Big\|\frac{\phi}{U+i\lambda}\Big\|_2\Big)\,.
\end{multline}
Consequently, by \eqref{eq:661} and  \eqref{eq:887} it holds that
\begin{multline}
  \label{eq:892}
\|\hat{v}_\Df\|_2\leq C \beta^{-1/2} \Big(\|f\|_2+
\check \nu^{-1}\|\phi^{\prime\prime}-\phi^{\prime\prime}(1)\hat{\psi}\|_2+ \\(\check
\nu^{-3/2}+\check \nu^{-1/2}\beta^{1/4})\|\phi^\prime\|_2+\check
\nu^{-1}\Big\|\frac{\phi}{U+i\lambda}\Big\|_2\Big)\,. 
\end{multline}
By \eqref{eq:804},
 it holds that
\begin{equation*}
\|\phi^{\prime\prime}-\phi^{\prime\prime}(1)\hat{\psi}\|_2^2+\alpha^2\|\phi^\prime\|_2^2 \leq
C(\|\tilde{v}_\Df\|_2^2
+\alpha^2|\phi^{\prime\prime}(1)|\,\|\phi^\prime\|_2\|(1-x)^{1/2}\hat{\psi}\|_1)\,.
\end{equation*}
Here we recall that
\begin{equation}\label{eq:893}
\tilde v_\Df =\hat v_\Df - \frac{U'' \phi}{U+i\lambda}\,.
\end{equation} 
Using \eqref{eq:614} and \eqref{eq:887}, we may conclude that for
sufficiently large $\kappa_1$, it holds that
\begin{equation*}
\|\phi^{\prime\prime}-\phi^{\prime\prime}(1)\hat{\psi}\|_2^2+\alpha^2\|\phi^\prime\|_2^2 \leq  \widehat
C\,\big(\|\tilde{v}_\Df\|_2^2+\alpha^2\beta^{-13/6}\check \nu
^{-5/6}\|f\|_2^2\big)\,. 
\end{equation*}
Then, we obtain from \eqref{eq:889} and  \eqref{eq:893} that
\begin{displaymath}
\|\phi^{\prime\prime}-\phi^{\prime\prime}(1)\hat{\psi}\|_2  \leq  C \big( \|\hat{v}_\Df\|_2
  + \Big\|\frac{\phi}{U+i\lambda}\Big\|_2+\beta^{-5/4}\|f\|_2\big)\,.
\end{displaymath}
By \eqref{eq:892} 
we then obtain for sufficiently large $\kappa_1$
\begin{displaymath}
\|\phi^{\prime\prime}-\phi^{\prime\prime}(1)\hat{\psi}\|_2  \leq   C\Big(\beta^{-1/2}\|f\|_2+
(\check \nu^{-3/2}\beta^{-1/2}+\check \nu^{-1/2}\beta^{-1/4})\|\phi^\prime\|_2+\Big\|\frac{\phi}{U+i\lambda}\Big\|_2\Big)\,.
\end{displaymath}
Substituting the above into \eqref{eq:888}
yields
\begin{displaymath}
  \|\phi^\prime\|_2 +\check \nu^{1/2}\Big\|\frac{\phi}{U+i\lambda}\Big\|_2\leq C \beta^{-1/2} \,
\Big[\check \nu^{1/2} \|f\|_2
  +  (\check \nu^{-2}\beta^{-1/2}+\check
  \nu^{-1}\beta^{-1/4})\|\phi^\prime\|_2+\check \nu^{-1/2} \Big\|\frac{\phi}{U+i\lambda}\Big\|_2\Big] \,.
\end{displaymath}
For sufficiently large $\kappa_1$ and $\beta_0$ we then obtain
 \eqref{eq:878}. \\

 {\em Step 2: We prove that there exists $\lambda_0>0$ and $\beta_0$ such that
 \eqref{eq:878} holds under the additional condition  $|\lambda|\geq\lambda_0$.}\\

\noindent Clearly, we must have either $\mu\leq-\lambda_0/2$ or $\nu>\lambda_0/2$. \\~\\
Consider
first the case where  $\mu\leq-\lambda_0/2$. As in \eqref{eq:748} we write
 \begin{equation}
\label{eq:894}
\Re\langle\phi,\B_{\lambda,\alpha,\beta}\phi\rangle=\|\phi^{\prime\prime}\|_2^2+|\mu|\beta
\,[\|\phi^\prime\|_2^2+\alpha^2\|\phi\|_2^2]+\beta\Im\langle U^\prime\phi,\phi^\prime\rangle \,.
  \end{equation}
Consequently, using Poincar\'e's  inequality, we obtain
\begin{equation}
\label{eq:895}
  \|\phi^\prime\|_2 \leq \frac{C}{\lambda_0}(\beta^{-1}\|f\|_2+\|\phi^\prime\|_2)\,.
\end{equation}
For sufficiently large $\lambda_0$ and $\beta_0$ we can then conclude
\eqref{eq:878}. 

For $\nu>\lambda_0/2$ we write 
\begin{equation}
\label{eq:896}
  \Im\langle\phi,\B_{\lambda,\alpha,\beta}\phi\rangle=\beta(-\langle (U-\nu)\phi^\prime,\phi^\prime\rangle+\alpha^2\langle
  (U-\nu)\phi^\prime,\phi^\prime\rangle-\Re \langle U^\prime\phi,\phi^\prime\rangle -\langle U^{\prime\prime}\phi,\phi\rangle)\,. 
\end{equation}
Using Poincar\'e's inequality we then obtain
\begin{displaymath}
   \|\phi^\prime\|_2 \leq \frac{C}{\lambda_0-U(0)}(\beta^{-1}\|f\|_2+\|\phi^\prime\|_2)\,,
\end{displaymath}
which validates \eqref{eq:878} for sufficiently large $\lambda_0$ and
$\beta_0$. \\ 

 {\em Step 3: We prove that there exist positive $\beta_0$,
$\tilde{\alpha}_0$, and $\alpha_0$  such that
 \eqref{eq:878} holds for
 $\tilde{\alpha}_0\beta^{1/4}\leq\alpha\leq\alpha_0\beta^{1/3}$. }\\ 

\noindent An integration by parts yields, in view of \eqref{eq:649} (see also step
2 of the proof of \ref{lem:nearly-quadratic}) 
\begin{displaymath}
  \langle\phi,\tilde{v}_\Df\rangle=\|\phi^\prime\|_2^2+\alpha^2\|\phi\|_2^2-\phi^{\prime\prime}(1)\langle\phi,\hat{\psi}\rangle\,.
\end{displaymath}
Since by \eqref{eq:614} 
\begin{displaymath}
  |\langle\phi,\hat{\psi}\rangle|\leq\|\phi\|_\infty\|\hat{\psi}\|_1\leq
  C\beta^{-1/2}\|\phi\|_\infty \,,
\end{displaymath}
we obtain 
\begin{displaymath}
  \|\phi^\prime\|_2^2+\alpha^2\|\phi\|_2^2\leq\|\phi\|_2\|\tilde{v}_\Df\|_2+
  C\beta^{-1/2}\|\phi\|_\infty|\phi^{\prime\prime}(1)|\,.
\end{displaymath} 
We can now conclude that
\begin{equation}
\label{eq:897}
  \|\phi^\prime\|_2^2+\alpha^2\|\phi\|_2^2\leq C(\alpha^{-2}\|\tilde{v}_\Df\|_2^2+\beta^{-1/2}|\phi^{\prime\prime}(1)|\,\|\phi\|_\infty)\,,
\end{equation}
yielding
\begin{displaymath}
  \|\phi\|_\infty^2\leq \|\phi^\prime\|_2 \|\phi\|_2\leq\frac{1}{2\alpha}(
  \|\phi^\prime\|_2^2+\alpha^2\|\phi\|_2^2)\leq C(\alpha^{-3}\|\tilde{v}_\Df\|_2^2+\alpha^{-1}\beta^{-1/2}|\phi^{\prime\prime}(1)|\,\|\phi\|_\infty)\,.
\end{displaymath} 
We may then infer that
\begin{displaymath}
  \|\phi\|_\infty\leq C(\alpha^{-3/2}\|\tilde{v}_\Df\|_2+\alpha^{-1}\beta^{-1/2}|\phi^{\prime\prime}(1)|)\,.
\end{displaymath}
By \eqref{eq:799} and \eqref{eq:801} (both remain valid in the present
case)
we obtain
\begin{displaymath}
   \|\phi\|_\infty\leq C\tilde{\alpha}_0^{-1}(\|\phi\|_\infty + \beta^{-1/8}\|\phi^\prime\|_2+\beta^{-7/8}\|f\|_2)\,.
\end{displaymath} 
For sufficiently large $\tilde \alpha_0$ it follows that
\begin{equation}\label{eq:892abc}
  \|\phi\|_\infty\leq C\tilde{\alpha}_0^{-1}(\beta^{-1/8}\|\phi^\prime\|_2+\beta^{-7/8}\|f\|_2)\,.
\end{equation}
Consequently, using \eqref{eq:799} and \eqref{eq:801} once again, it
holds that
\begin{equation}
  \label{eq:898}
 |\phi^{\prime\prime}(1)|+ \beta^{3/8}\|\tilde{v}_\Df\|_2\leq
    C \,\big(\beta^{-1/8}\|f\|_2+\beta^{5/8}\|\phi^\prime\|_2\big)\,.
\end{equation} 
Substituting \eqref{eq:892abc}  into \eqref{eq:897} then leads to 
\begin{displaymath}
 \|\phi^\prime\|_2^2\leq C(\alpha^{-2}\|\tilde{v}_\Df\|_2^2+\tilde\alpha_0^{-1}\beta^{-5/8}|\phi^{\prime\prime}(1)|\,[\|\phi^\prime\|_2+\beta^{-3/4}\|f\|_2])\,.
\end{displaymath}
 
Making use of \eqref{eq:898} we then obtain that
\begin{displaymath}
  \|\phi^\prime\|_2\leq C\tilde{\alpha}_0^{-1/2}(\|\phi^\prime\|_2+\beta^{-3/4}\|f\|_2)\,.
\end{displaymath}
For sufficiently large $\tilde \alpha_0$ we can then conclude \eqref{eq:878} under the conditions of this step. \\
The proposition is proved.
\end{proof}

We continue by introducing, for some positive constants  $\alpha_0$,  
$\Upsilon$, $\beta_0$ and $\kappa_2$,  the zone
\begin{multline}
\mathcal F_2(\alpha_0, \beta_0,\Upsilon,\kappa_2):=\\\big\{ (\lambda,\alpha,\beta)\in \mathbb C\times\mathbb R_+^2\,,\,\beta \geq \beta_0\,,\, 
  0\leq \alpha\leq\alpha_0\beta^{1/3}\,,\,  \mu<\Upsilon\beta^{-1/2} \,,\,  \nu \leq -\kappa_2 \beta^{-1/3} \big\}\,.
\end{multline}

\begin{proposition}
\label{large-nu-negative}
Let $U\in C^4([0,1])$ satisfy \eqref{eq:10}. Let further $\alpha_0$ and
$\Upsilon$ denote positive constants. Then, there exist $\beta_0>0$, $\kappa_2>0$,
and $C>0$, such that for all $(\lambda,\alpha,\beta) \in \mathcal F_2(\alpha_0,
\beta_0,\Upsilon,\kappa_2)$ it holds
   \begin{equation}
\label{eq:899}
 \big\|(\B_{\lambda,\alpha,\beta}^{\mathfrak N,\Df})^{-1}\big\|+
       \Big\|\frac{d}{dx}\, (\B_{\lambda,\alpha,\beta}^{\mathfrak
         N,\Df})^{-1}\Big\|\leq C  \beta^{-1} |\nu|^{-1}\log\beta \,.
   \end{equation}
\end{proposition}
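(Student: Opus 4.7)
The plan is to adapt the architecture of Proposition~\ref{lem:large-nu} to the regime $\nu \leq -\kappa_2\beta^{-1/3}$, where $x_\nu = 1$ and ${\mathfrak J}_\nu = 1$. The crucial structural fact is that for $\nu < 0$ the imaginary part of $\LL_\beta-\beta\lambda$ acts by multiplication by $\beta(U-\nu) \geq \beta|\nu|$, so
\begin{displaymath}
\Im\langle u,(\LL_\beta-\beta\lambda)u\rangle = \beta\int(U-\nu)|u|^2\,dx \geq \beta|\nu|\,\|u\|_2^2\,,
\end{displaymath}
yielding the diagonal resolvent bound $\|(\LL_\beta^{\Nf,\Df}-\beta\lambda)^{-1}\|_{L^2\to L^2}\leq (\beta|\nu|)^{-1}$. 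This is the source of the $\beta^{-1}|\nu|^{-1}$ factor in the target estimate. Combined with the inviscid bounds \eqref{eq:170}-\eqref{eq:171} for $\A_{\lambda,\alpha}^{-1}$ valid for $\nu < 0$, this sets the stage for the proof; the $\log\beta$ factor will emerge from $|\log(U+i\lambda)(1)| = |\log(-\nu+i\mu)| \leq C\log\beta$, available throughout the regime $|\nu|\geq\kappa_2\beta^{-1/3}$.

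First I would introduce, as in \eqref{eq:644}-\eqref{eq:649}, the functions $v_\Df = \A_{\lambda,\alpha}\phi + (U+i\lambda)\phi^{\prime\prime}(1)\hat{\psi}$ and $\tilde v_\Df$, yielding $(\LL_\beta^{\Nf,\Df}-\beta\lambda)v_\Df = g_\Df$ with $g_\Df$ given by \eqref{eq:647}. The boundary value $\phi^{\prime\prime}(1)$ would be controlled by applying \eqref{eq:445} to $(\phi^{\prime\prime}-\alpha^2\phi,\,f + i\beta U^{\prime\prime}\phi)$ with $\zeta = \mathfrak z_\alpha$; since $x_\nu = 1$, ${\mathfrak J}_\nu = 1$, and $\lambda_\beta \asymp |\lambda|\beta^{1/3}$ in this regime, this yields, in analogy with \eqref{eq:657},
\begin{displaymath}
|\phi^{\prime\prime}(1)| \leq C|\lambda|^{1/2}\big(\beta^{1/2}\|\phi^\prime\|_2 + \beta^{-1/3}\|f\|_2\big)\,.
\end{displaymath}
I would then decompose $\phi = \phi_\Df + \check\phi$ as in \eqref{eq:673}, with $\phi_\Df = \A_{\lambda,\alpha}^{-1}v_\Df$ and $\check\phi = -\phi^{\prime\prime}(1)\phi_{\lambda,\beta,\alpha}$. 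For $\check\phi$ I would apply \eqref{eq:170} to $(U+i\lambda)\hat{\psi}$ and use \eqref{eq:614} with $s=1/2$ to obtain $\|\check\phi\|_{1,2} \leq C(1+|\nu|^{-1})|\phi^{\prime\prime}(1)|\lambda_\beta^{-3/4}\beta^{-1/2}$. For $\phi_\Df$ I would use \eqref{eq:170} with $v = v_\Df$, combined with the $L^2$ resolvent bound $\|v_\Df\|_2 \leq (\beta|\nu|)^{-1}\|g_\Df\|_2$ and the expression \eqref{eq:647} for $g_\Df$ in terms of $f$, $\phi^{\prime\prime}(1)\hat g$, $(U^{\prime\prime}\phi)^{\prime\prime}$, and derivatives of $\tilde v_\Df$.

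The main obstacle I anticipate is careful tracking of the $|\nu|^{-1}$ powers: the inviscid estimate \eqref{eq:170} contributes a factor $|\nu|^{-1}$ on top of the $(\beta|\nu|)^{-1}$ from the Schr\"odinger resolvent, which, if composed naively, would yield a spurious $\beta^{-1}|\nu|^{-2}$. As in the proof of Proposition~\ref{lem:large-nu}, avoiding this loss requires working with the weighted function $\hat v_\Df = v_\Df/(U+i\lambda)$ and its equation analogous to \eqref{eq:890}, so that $(U+i\lambda)^{-1}$ factors are absorbed directly into the Schr\"odinger problem rather than handled a posteriori. The $\log\beta$ factor then appears at the integration-by-parts step where one encounters $\log(U+i\lambda)(1) = \log(-\nu+i\mu)$, contributing $|\log|\nu||\lesssim\log\beta$. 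For the sub-case $\mu\leq-\mu_0$, a direct energy argument via the real part of $\langle\phi,\B\phi\rangle$, as in Step~7 of Proposition~\ref{lem:nearly-quadratic}, delivers the estimate without recourse to the boundary-layer decomposition.
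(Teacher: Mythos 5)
Your overall blueprint --- decompose $\phi = \phi_\Df + \check\phi$, pass to $\hat v_\Df = v_\Df/(U+i\lambda)$ so the $(U+i\lambda)^{-1}$ weight is built into the Schr\"odinger problem, and control $\phi_\Df$ via the inviscid bounds \eqref{eq:170} combined with \eqref{eq:354} --- matches the structure of the paper's proof. However, there is a genuine gap in the treatment of the boundary term $\check\phi$, and it is fatal in precisely the borderline regime $|\nu|\sim\kappa_2\beta^{-1/3}$ that determines $\kappa_2$.

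You bound $\check\phi$ by feeding $v=(U+i\lambda)\phi^{\prime\prime}(1)\hat\psi$ directly into \eqref{eq:170}(a), obtaining $\|\check\phi\|_{1,2}\leq C(1+|\nu|^{-1})|\phi^{\prime\prime}(1)|\lambda_\beta^{-3/4}\beta^{-1/2}$. Substituting the $\phi^{\prime\prime}(1)$ bound $\leq C|\lambda|^{1/2}\beta^{1/2}\|\phi^\prime\|_2$ and $\lambda_\beta\sim|\lambda|\beta^{1/3}$, this yields $\|\check\phi\|_{1,2}\leq C|\nu|^{-1}|\lambda|^{-1/4}\beta^{-1/4}\|\phi^\prime\|_2\leq C|\nu|^{-5/4}\beta^{-1/4}\|\phi^\prime\|_2$. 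For $|\nu|\geq\kappa_2\beta^{-1/3}$ the coefficient is $\leq\kappa_2^{-5/4}\beta^{1/6}$, which \emph{grows} with $\beta$ no matter how large $\kappa_2$ is taken, so the self-consistent loop does not close. The paper avoids this loss by using the pointwise estimate \eqref{eq:171}, $|\check\phi(x)|\leq C(1-x)^{1/2}[1+|\nu|^{-1/2}(1-x)^{1/2}]|\phi^{\prime\prime}(1)\langle\check\phi,\hat\psi\rangle|^{1/2}$, integrating against $\hat\psi$ to solve self-consistently for $|\langle\check\phi,\hat\psi\rangle|$, and then passing through the quadratic-in-$\|\check\phi^\prime\|_2$ energy identities \eqref{eq:172} and \eqref{eq:175}. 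This yields $\|\check\phi^\prime\|_2\leq C|\nu|^{-3/4}\beta^{-1/4}(\|\phi^\prime_\Df\|_2+\beta^{-5/6}\|f\|_2)$, whose coefficient is $\leq\kappa_2^{-3/4}$ uniformly in $\beta$ --- a gain of $|\nu|^{1/2}$ over your route, which is exactly what is needed. You would also need the companion estimate \eqref{eq:170}(b) together with \eqref{eq:177} on $\|\check\phi^\prime\|_{L^2(1-|\nu|^{1/2},1)}$ to absorb the $|\lambda|^{-2}\|\phi^\prime\|_{L^2(1-|\nu|^{1/2},1)}$ term that arises in the bound for $\|h\|_2$; this iteration is not mentioned in your sketch. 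Finally, a smaller point: the $\log\beta$ in the final estimate does not come from a boundary term $\log(U+i\lambda)(1)$ in an integration by parts, but from the Cauchy--Schwarz step $\|(1-x)^{1/2}\hat v_\Df\|_1\leq\|(U-\nu)^{-1/2}\|_2\|(U-\nu)\hat v_\Df\|_2$, where $\|(U-\nu)^{-1/2}\|_2\leq C(\log(1+|\nu|^{-1}))^{1/2}\leq C(\log\beta)^{1/2}$.
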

\begin{proof}~\\
Taking the scalar product of \eqref{eq:879} with $w=(U-\nu)^{-1}\phi$,
  and integrating by parts yields for the imaginary part (see
  \eqref{eq:880})
  \begin{multline*}
    -\Im\langle w,f\rangle =
    \Im\langle w^{\prime\prime},\phi^{\prime\prime}\rangle
   +  \alpha^2\Im\langle w^\prime,\phi^\prime\rangle \\ +
    \beta\Big(\|(U-\nu)w^\prime\|_2^2 + \alpha^2\|\phi\|_2^2 
    \alpha^2\|\phi\|_2^2-|\mu|^2\Big\langle\frac{\phi}{U-\nu},\frac{U^{\prime\prime}\phi}{|U+i\lambda|^2}\Big\rangle\Big) \,.
  \end{multline*}
Hence, since $ (U-\nu)^{-1}U^{\prime\prime}<0$ we obtain that for any
$\delta>0$ there exists $C>0$ such that 
\begin{multline*}
  \frac{1}{2}(\|(U-\nu)w^\prime\|_2^2+\nu^2\|w^\prime\|_2^2) + \alpha^2\|\phi\|_2^2\\
  \leq C\big(\delta^{-1}\beta^{-2}\|f\|_2^2 +\delta\|w\|_2^2 +
  \alpha^2[\beta^{-4/3}\|w^\prime\|_2^2+\beta^{-2/3}\|\phi^\prime\|_2^2]+\beta^{-1}|\Im\langle w^{\prime\prime},\phi^{\prime\prime}\rangle|\big)
\end{multline*}

 {\em Step 1: With $\lambda_0>0$, we prove \eqref{eq:899} for $(\lambda,\alpha,\beta) \in \mathcal F_2(\alpha_0,
\beta_0,\Upsilon,\kappa_2)$ satisfying $|\lambda|\leq\lambda_0$.}\\

 \noindent  {\em Step 1a: We estimate $\hat{v}_\Df=\frac{v_\Df}{U+i\lambda}$ (as defined by
  \eqref{eq:889}).}\noindent \\

By \eqref{eq:890}
  and \eqref{eq:287} it holds that
\begin{equation}\label{eq:900}
  \|\hat{v}_\Df\|_2\leq C\, \beta^{-2/3}\, \|h\|_2\,,
\end{equation}
where $h$ is given by (\ref{eq:890}b).
 Hence
\begin{multline}\label{eq:901}
 \|h\|_2\leq C \Big(\|f\|_2+|\phi^{\prime\prime}(1)|\,\|\hat{g}\|_2+   \Big\|\frac{\phi}{(U+i\lambda)^3}\Big\|_2 + 
    \Big\|\frac{\phi^\prime}{(U+i\lambda)^2}\Big\|_2 + 
    \Big\|\frac{\phi^{\prime\prime}}{(U+i\lambda)}\Big\|_2 \Big)\\
    \leq C \Big(\|f\|_2+|\phi^{\prime\prime}(1)|\,\|\hat{g}\|_2 + |\lambda|^{-1} \|\phi''\|_2 +  \Big\|\frac{\phi}{(U+i\lambda)^3}\Big\|_2 + \Big\|\frac{\phi^\prime}{(U+i\lambda)^2}\Big\|_2   \Big) \,.
    \end{multline}
    To estimate the last two terms we use a decomposition of the
    interval of integration. 
    (Note that for $\nu\leq -1$ we have $\|\cdot
    \|_{L^2(0,1-|\nu|^{1/2})}=0$ and $\|\cdot\|_{L^2(1-|\nu|^{1/2},1)}=\|\cdot\|_2$.)
  In addition, we need the bounds
\begin{displaymath}
\| (U+ i\lambda)^{-m}\|_{L^\infty(0,1-|\nu|^{1/2})} \leq C |\nu|^{-m/2}\,,
\end{displaymath}
and 
\begin{displaymath}
\| (U+ i\lambda)^{-m}\|_{L^\infty(0,1)} \leq C |\lambda|^{-m}\,.
\end{displaymath}
We now write,
\begin{multline*}
 \|h\|_2\leq C \Big(\|f\|_2+|\phi^{\prime\prime}(1)|\,\|\hat{g}\|_2+
|\lambda|^{-1}\|\phi^{\prime\prime}\|_2+|\lambda|^{-2}\|\phi^\prime\|_{L^2(1-|\nu|^{1/2},1)}+|\nu|^{-1}\|\phi^\prime\|_{L^2(0,1-|\nu|^{1/2})}
\\  + |\lambda|^{-2}\Big\|\frac{\phi}{U+i\lambda}\Big\|_{L^2(1-|\nu|^{1/2},1)} +   
 |\nu|^{-1}\Big\|\frac{\phi}{U+i\lambda}\Big\|_{L^2(0,1-|\nu|^{1/2})}\Big)\,.
\end{multline*}
By Hardy's inequality \eqref{eq:18} it holds that
\begin{equation*}
\Big\|\frac{\phi}{U+i\lambda}\Big\|_{L^2(0,1-|\nu|^{1/2})} \leq 
  \Big\|\frac{\phi}{U}\Big\|_{L^2(0,1-|\nu|^{1/2})} \leq C \Big\|\frac{\phi}{1-x}\Big\|_{L^2(0,1-|\nu|^{1/2})} \leq \widehat C \, \|\phi^\prime\|_{L^2(0,1-|\nu|^{1/2})}\,.
\end{equation*}
On the interval $(1-|\nu|^{1/2},1)$, we have again by  \eqref{eq:18}
\begin{displaymath}
  \Big\|\frac{\phi}{U+i\lambda}\Big\|_{L^2(1-|\nu|^{1/2},1)} \leq  C\, \|\phi^\prime\|_2 \,.
\end{displaymath}
Combining the above yields
\begin{multline}\label{eq:902}
 \|h\|_2\leq C \Big(\|f\|_2+|\phi^{\prime\prime}(1)|\,\|\hat{g}\|_2+
|\lambda|^{-1}\|\phi^{\prime\prime}\|_2+|\lambda|^{-2}\|\phi^\prime\|_{L^2(1-|\nu|^{1/2},1)}+|\nu|^{-1}
  \|\phi^\prime\|_2
\Big)\,.
\end{multline}

By \eqref{eq:657} and  \eqref{eq:661}
it holds that
\begin{equation}\label{eq:903}
  |\phi^{\prime\prime}(1)|\,\|\hat{g}\|_2\leq C|\lambda|^{-3/4}(\beta^{1/4}\|\phi^\prime\|_2+\beta^{-7/12}\|f\|_2)\,.
\end{equation}
Substituting \eqref{eq:902} and \eqref{eq:903} into \eqref{eq:901} we obtain 
\begin{multline}
\label{eq:904}
    \|h\|_2\leq
C\big(\|f\|_2+ |\lambda|^{-1}\|\phi^{\prime\prime}\|_2 \\
+[|\nu|^{-1}+\beta^{1/4}|\lambda|^{-3/4}]\|\phi^\prime\|_2+|\lambda|^{-2}\|\phi^\prime\|_{L^2(1-|\nu|^{1/2},1)}\big)\,.
\end{multline}
To bound $\|\phi^{\prime\prime}\|_2$ we  use
  \eqref{eq:839}-\eqref{eq:840}  and \eqref{eq:893} to obtain
\begin{displaymath}
  \|\phi^{\prime\prime}\|_2\leq \|\phi^{\prime\prime}-\alpha^2\phi\|_2\leq
  \|\hat{v}_\Df\|_2+\|U^{\prime\prime}\|_\infty\Big\|\frac{\phi}{U+i\lambda}\Big\|_2+|\phi^{\prime\prime}(1)|\,\|\hat{\psi}\|_2\,.
\end{displaymath}
By \eqref{eq:562}, (\ref{eq:900}), \eqref{eq:904}, \eqref{eq:18}, and \eqref{eq:657}
(repeatedly  using the lower bound  $|\nu| \geq \kappa_2 \beta^{-1/3}$) it holds that
\begin{displaymath}
  \|\phi^{\prime\prime}\|_2\leq 
C\big(|\lambda|^{1/4}\beta^{-7/12}\|f\|_2+ \beta^{-2/3}|\lambda|^{-1}\|\phi^{\prime\prime}\|_2+|\lambda\beta|^{1/4}\|\phi^\prime\|_2)\,.
\end{displaymath}
For sufficiently large $\beta_0$ we then obtain
\begin{equation}\label{eq:905}
  \|\phi^{\prime\prime}\|_2\leq 
C\big(|\lambda|^{1/4}\beta^{-7/12}\|f\|_2+|\lambda\beta|^{1/4}\|\phi^\prime\|_2)\,.
\end{equation}
Substituting \eqref{eq:905}  into \eqref{eq:904} yields
\begin{equation}
  \label{eq:906}
    \|h\|_2\leq C \big(\|f\|_2
+[|\nu|^{-1}+\beta^{1/4}|\lambda|^{-3/4}]\|\phi^\prime\|_2+|\lambda|^{-2}\|\phi^\prime\|_{L^2(1-|\nu|^{1/2},1)}\big)\,.
\end{equation}
To use (\ref{eq:170}b) we must provide an estimate for 
  \begin{displaymath}
    N(v_\Df,\lambda)=\|(1-x)^{1/2}(U+i\lambda)^{-1}v_\Df\|_1=\|(1-x)^{1/2}\hat{v}_\Df\|_1\,.
  \end{displaymath}
Thus, we use \eqref{eq:354} and \eqref{eq:890} to obtain
\begin{multline*}
 \|(1-x)^{1/2}\hat{v}_\Df\|_1\leq  C\, \|(U-\nu)^{1/2}\hat{v}_\Df\|_1\\
 \leq C \,  \|(U-\nu)^{-1/2}\|_2 \,
  \|(U-\nu)\hat{v}_\Df\|_2\leq \widehat C\, \frac{\log \beta}{\beta}\|h\|_2\,.
\end{multline*}
Hence, we may conclude from \eqref{eq:906} that
\begin{multline}
\label{eq:907}
  \|(1-x)^{1/2}\hat{v}_\Df\|_1\leq   C \,\frac{\log \beta}{\beta} \big(\|f\|_2
+[|\nu|^{-1}+\beta^{1/4}|\lambda|^{-3/4}]\|\phi^\prime\|_2+|\lambda|^{-2}\|\phi^\prime\|_{L^2(1-|\nu|^{1/2},1)}\big)\,.
\end{multline}

 {\em Step 1b: We prove \eqref{eq:899}}\\

As in \eqref{eq:673},  we let $\phi=\phi_\Df+\check{\phi}$ where
\begin{displaymath}
\phi_\Df=\A_{\lambda,\alpha}^{-1}([U+i\lambda]\hat{v}_\Df)= \A_{\lambda,\alpha}^{-1}v_\Df 
\end{displaymath} 
and 
\begin{displaymath}
\check{\phi}=\A_{\lambda,\alpha}^{-1}(\phi^{\prime\prime}(1)[U+i\lambda]\hat{\psi})\,.
\end{displaymath}  
By
\eqref{eq:171} applied to the pair $(\check
  \phi,\phi^{\prime\prime}(1)[U+i\lambda]\hat{\psi})$ it holds that
\begin{displaymath}
  |\check{\phi}(x)|\leq C(1-x)^{1/2}[1+\nu^{-1/2}(1-x)^{1/2}] |\phi^{\prime\prime}(1)\langle\check{\phi},\hat{\psi}\rangle|^{1/2} \,.
\end{displaymath}
Hence,  integrating over $(0,1)$,
\begin{displaymath}
   |\langle\check{\phi},\hat{\psi}\rangle|\leq
   C(\|(1-x)^{1/2}\hat{\psi}\|_1+|\nu|^{-1/2}\|(1-x)\hat{\psi}\|_1)|\langle\check{\phi},\hat{\psi}\rangle|^{1/2}|\phi^{\prime\prime}(1)|^{1/2} \,,
\end{displaymath}
which implies
\begin{displaymath}
   |\langle\check{\phi},\hat{\psi}\rangle|\leq
   C(\|(1-x)^{1/2}\hat{\psi}\|_1+|\nu|^{-1/2}\|(1-x)\hat{\psi}\|_1)^2
   |\phi^{\prime\prime}(1)| \,. 
\end{displaymath}
By \eqref{eq:614} and \eqref{eq:657} 
we then obtain
\begin{equation}
\label{eq:908}
   |\langle\check{\phi},\hat{\psi}\rangle|\leq C\, |\lambda\beta|^{-1}\, (\|\phi^\prime\|_2+\beta^{-5/6}\|f\|_2)\,.
\end{equation}
Using (\ref{eq:172}), applied to the pair $(\check
\phi,\phi^{\prime\prime}(1)[U+i\lambda]\hat{\psi})$, together with \eqref{eq:175},
\eqref{eq:908}, and \eqref{eq:657} yields
\begin{displaymath}
  \|\check{\phi}^\prime\|_2^2 \leq C\, |\nu|^{-1}|\lambda|^{-1/2} \beta^{-1/2}\, (\|\phi^\prime\|_2+\beta^{-5/6}\|f\|_2)^2\,.
\end{displaymath}
For sufficiently large $\kappa_2$ (and $(\lambda,\alpha,\beta) \in \mathcal F_2(\alpha_0,
\beta_0,\Upsilon,\kappa_2)$) we then conclude that 
\begin{equation}
\label{eq:909}
   \|\check{\phi}^\prime\|_2 \leq C\, |\nu|^{-3/4}\beta^{-1/4}\, (\|\phi_\Df^\prime\|_2+\beta^{-5/6}\|f\|_2)\,.
\end{equation}
By \eqref{eq:177} and \eqref{eq:172}, applied again to the pair $(\check
\phi,\phi^{\prime\prime}(1)[U+i\lambda]\hat{\psi})$, we then obtain that
\begin{displaymath}
   \|\check{\phi}^\prime\|_{L^2(1-|\nu|^{1/2},1)}^2\leq C\, |\nu|^{-1/2}\, |\phi^{\prime\prime}(1)|\,   |\langle\check{\phi},\hat{\psi}\rangle| 
\end{displaymath}
Using \eqref{eq:908}, 
we then conclude that
\begin{equation}
\label{eq:910}
   \|\check{\phi}^\prime\|_{L^2(1-|\nu|^{1/2},1)}\leq C\, |\nu|^{-1/2}\beta^{-1/4}\, (\|\phi_\Df^\prime\|_2+\beta^{-5/6}\|f\|_2)\,.
\end{equation}

By
(\ref{eq:170}a) applied to the pair $(\phi_\Df, (U+i\lambda) \hat v_\Df)$,
\eqref{eq:907},
 \eqref{eq:909}, and  \eqref{eq:910}, 
it holds, as $\phi=\phi_\Df+\check{\phi}$,  that 
\begin{multline}\label{eq:911}
\|\phi_\Df\|_{1,2}\leq
C |\nu|^{-1} \|(1-x)^{1/2}\hat{v}_\Df\|_1
\leq 
\widehat C\frac{\log \beta }{\beta} |\nu|^{-1} \Big(\|f\|_2\\
+[|\nu|^{-1}+\beta^{1/4}|\nu|^{-3/4}+  |\nu|^{-5/2}\beta^{-1/4}]\|\phi^\prime_\Df \|_2+|\lambda|^{-2}\|\phi^\prime_\Df\|_{L^2(1-|\nu|^{1/2},1)}\Big)\,.
\end{multline}
For sufficiently large  $\beta_0$ we obtain from \eqref{eq:911} 
\begin{equation}
\label{eq:912}
  \|\phi_\Df\|_{1,2}\leq C
\frac{\log \beta}{\beta} |\nu|^{-1} \big(\|f\|_2+|\lambda|^{-2}\|\phi^\prime_\Df\|_{L^2(1-|\nu|^{1/2},1)}\big)\,.
\end{equation}
 For $\nu<-1/2$, we immediately obtain from
  \eqref{eq:912} that for sufficiently large  $\beta_0$ 
  \begin{equation}\label{eq:913}
      \|\phi_\Df\|_{1,2}\leq C
\frac{\log \beta}{\beta} |\nu|^{-1} \|f\|_2
  \end{equation}
For $\nu \geq -1/2$, we substitute \eqref{eq:912} into \eqref{eq:907} to conclude,
  with the  aid of \eqref{eq:909} and \eqref{eq:910}
\begin{displaymath} 
   \|(1-x)^{1/2}\hat{v}_\Df\|_1\leq C\frac{\log \beta}{\beta} \big(\|f\|_2
   +|\lambda|^{-2}\|\phi^\prime_\Df\|_{L^2(1-|\nu|^{1/2},1)}\big)\,.
\end{displaymath}
We can now use (\ref{eq:170}b) applied to the pair $(\phi_\Df, (U+i\lambda) \hat
v_\Df)$ to obtain 
\begin{displaymath}
  \|\phi^\prime_\Df\|_{L^2(1-|\nu|^{1/2},1)}\leq C
\frac{\log \beta}{\beta} |\nu|^{-3/4}\big(\|f\|_2+|\lambda|^{-2}\|\phi^\prime_\Df\|_{L^2(1-|\nu|^{1/2},1)}\big)\,.
\end{displaymath}
For sufficiently large $\beta_0$ we obtain that
\begin{displaymath}
  \|\phi^\prime_\Df\|_{L^2(1-|\nu|^{1/2},1)}\leq C
\frac{\log \beta}{\beta}|\nu|^{-3/4}\|f\|_2\,,
\end{displaymath}
which, when substituted into \eqref{eq:912}, yields in the case $-1/2 \leq \nu$
\begin{displaymath}
 \|\phi_\Df\|_{1,2}\leq C
\frac{\log \beta}{\beta} |\nu|^{-1} \,\|f\|_2\,.
\end{displaymath}
The above inequality, together with \eqref{eq:909} and \eqref{eq:913}
yields \eqref{eq:899} for $|\lambda|<\lambda_0$. \\

 {\em Step 2: We prove that there exists $\lambda_0>0$ such that
  \eqref{eq:899} holds true for any $(\lambda,\alpha,\beta) \in \mathcal F_2(\alpha_0,
\beta_0,\Upsilon,\kappa_2)$ satisfying $|\lambda|>\lambda_0$.}\\

\noindent The proof is almost identical with Step 2 of Proposition
\ref{lem:large-nu}. If $\mu<-\lambda_0/2$ we obtain \eqref{eq:895} from
\eqref{eq:894} and then \eqref{eq:899} for sufficiently large $\lambda_0$.
If $\nu<-\lambda_0/2$ we use \eqref{eq:896} to obtain \eqref{eq:895} once again.
\end{proof}

 \begin{remark}
  Note that there exists $\mu_1>0$ such that for all $\mu<-\mu_1\beta^{-1/3}$  \eqref{eq:909}
  remains valid even in the case where $\kappa_2$ is not necessarily
  large. Thus, we may conclude that under the conditions of
  Proposition \ref{large-nu-negative} for all $\kappa_2>0$, there exist $\beta_0>0$, $\mu_1>0$,
and $C>0$, such that for all $(\lambda,\alpha,\beta) \in \mathcal F_2(\alpha_0,
\beta_0,\Upsilon,\kappa_2)$ satisfying $\mu<-\mu_1\beta^{-1/3}$ \eqref{eq:899} holds
true. 
\end{remark}

\section{Proof of the main theorems}\label{sec:6}
The proofs of Theorem \ref{thm:small-alpha} and Theorem
\ref{thm:large-alpha} rely on a combination of the relevant results in Section
5. 

\begin{proof}[Proof of Theorem \ref{thm:small-alpha}]

We present the proof in  the following table, which gives the precise
range of parameters where each estimate is valid together with the
estimate itself 
\vspace{2ex}

\begin{tabular}{|c|c|c|c|c|}
\hline
  $\alpha$ & $\nu$ & $\mu$ & Stated in & Estimate \\
  \hline
  $\alpha\lesssim \beta^{1/3}$ & $-\nu\gg\beta^{-1/3}$ & $\mu\leq\beta^{-1/2}$ &Prop.
  \ref{large-nu-negative} &  $\beta^{-1} |\nu|^{-1}\log\beta$ \\
\hline
$\alpha\ll\beta^{-1/6}$ & $|\nu|\leq \nu_0$ & $\mu<\beta^{-1/2}$ &Prop.
\ref{cor:small-alpha}~&  $\beta^{-2/3}$ \\
\hline 
$\alpha\lesssim \beta^{1/3}$ &  
\(
\begin{array}{c}
 \beta^{-1/2}\ll U(0)-\nu \\
 \nu\geq\nu_0 
\end{array}
\)
& $\mu\ll\beta^{-1/2}$ &  Prop.
\ref{lem:nearly-quadratic} &  $\beta^{-1/2}\log \beta$\\
\hline 
$\alpha\lesssim \beta^{1/3}$ & $|U(0)-\nu|\lesssim\beta^{-1/2}$ & $\mu\ll\beta^{-1/2}$ & Prop.
\ref{lem:quadratic} &  $\min(\beta^{-5/8}|\mu|^{-1/4},\beta^{-3/8})$ \\
\hline
$\alpha\lesssim \beta^{1/3}$ & $\nu-U(0)\gg \beta^{-1/2}$ & $\mu\lesssim\beta^{-1/2}$ &Prop.
\ref{lem:large-nu} &  $\beta^{-1/2}$ \\
\hline
$\alpha\gtrsim\beta^{1/3}$ & $\nu\in\R$ & $ \mu\leq\mu^*$ & Prop.
\ref{lem:large-alpha}~ &  $\beta^{-1/2}$ \\
\hline
\end{tabular}
{ where $\mu^*=\min(\Upsilon\beta^{-1/2},[\mu_m-\hat{\Upsilon}]\beta^{-1/3}-\alpha^2\beta^{-1}/2)$
for some sufficiently small $\Upsilon>0$ and any $\hat{\Upsilon}>0$}
~\\

From the following table we learn that there exist positive $\alpha_L$,
$\beta_0$ and $\Upsilon$ such that for all $\beta>\beta_0$
   \begin{equation}
 \label{eq:915}
        \sup_{
          \begin{subarray}{c}
           0\leq \alpha \leq \alpha_L\beta^{-1/6}\\
       \Re \lambda \leq \Upsilon\beta^{-1/2} 
             \end{subarray}}
  \big\|(\B_{\lambda,\alpha,\beta}^{\Df,sym})^{-1}\big\|+
        \big\|\frac{d}{dx}\, (\B_{\lambda,\alpha,\beta}^{ \Df,sym})^{-1}\big\|\leq C\beta^{-3/8} \,.
   \end{equation}
Furthermore, for $\mu=\Upsilon\beta^{-1/2}$ it holds for all $\beta>\beta_0$ that 
   \begin{equation}
 \label{eq:916}
        \sup_{
          \begin{subarray}{c}
           0\leq \alpha \leq \alpha_L\beta^{-1/6}\\
       \Re \lambda =\Upsilon\beta^{-1/2} 
             \end{subarray}}
  \big\|(\B_{\lambda,\alpha,\beta}^{\Df,sym})^{-1}\big\|+
        \big\|\frac{d}{dx}\, (\B_{\lambda,\alpha,\beta}^{\Df,sym})^{-1}\big\|\leq
        C\beta^{-1/2}\log\beta  \,.
   \end{equation}
By \eqref{eq:915} $\B_{\lambda,\alpha}^{\Df,sym}$ depends holomorphically on $\lambda$ for all
$\mu\leq\Upsilon\beta^{-1/2}$, and hence we can use \eqref{eq:916} together with the
Phragm\'en-Lindel\"of Theorem  to obtain \eqref{eq:8}.
\end{proof}

\begin{proof}[Proof of Theorem \ref{thm:large-alpha}]
  As in the proof of Theorem \ref{thm:small-alpha}, we use the
  following table, which gives the precise range of parameters where
  each estimate is valid together with the estimate itself
  \vspace{2ex}

\begin{tabular}{|c|c|c|c|c|}
\hline
  $\alpha$ & $\nu$ & $\mu$ & Stated in & Estimate \\
  \hline
  $\alpha\lesssim \beta^{1/3}$ & $-\nu  \gg \beta^{-1/3}$ & $\mu\leq\beta^{-1/2}$ &Prop.
  \ref{large-nu-negative} &  $\beta^{-1} |\nu|^{-1}\log\beta$ \\
\hline
$\alpha\lesssim1$ & $\beta^{-1/5+\delta}\leq\nu<\nu_0$ & $\mu<\beta^{-2/5-\delta}$ &Prop. \ref{lem:right-curve}
 &  $\beta^{-1/2+\delta}$ \\
\hline
 $1\ll\alpha\ll\beta^{1/3}$ &  $|\nu|<\nu_0$ & $\mu\ll\beta^{-1/3}$ &
Prop. \ref{lem:intermediate-alpha}
 &   $\beta^{-5/6}$ \\
\hline
$\beta^{-1/10+\delta}\leq\alpha\lesssim1$ & $|\nu|\leq \beta^{-1/5+\delta}$ & $\mu<\beta^{-1/3-\delta}$ &Prop. \ref{lem:small-lambda-intermediate-alpha}
 &  $\beta^{-1/2+\delta}$ \\
\hline 
$\alpha\lesssim \beta^{1/3}$ & 
\(
\begin{array}{c}
 \beta^{-1/2}\ll U(0)-\nu \\
 \nu\geq\nu_0 
\end{array}
\)
& $\mu\ll\beta^{-1/2}$ & 
Prop. 
\ref{lem:nearly-quadratic} &  $\beta^{-1/2}\log \beta$\\
\hline 
$\alpha\lesssim \beta^{1/3}$ & $|U(0)-\nu|\lesssim\beta^{-1/2}$ & $\mu\ll\beta^{-1/2}$ & Prop.
\ref{lem:quadratic} &  $\min(\beta^{-5/8}|\mu|^{-1/4},\beta^{-3/8})$ \\
\hline
$\alpha\lesssim \beta^{1/3}$ & $\nu-U(0)\gg \beta^{-1/2}$ & $\mu\lesssim\beta^{-1/2}$ & Prop.
\ref{lem:large-nu} &  $\beta^{-1/2}$ \\
\hline
$\alpha\gtrsim\beta^{1/3}$ & $\nu\in\R$ & $\mu\lesssim\beta^{-1/2}$ & Prop.
\ref{lem:large-alpha} &  $\beta^{-1/2}$ \\
\hline
\end{tabular}
\vspace{2ex}
\\ 
From the following table we learn that there exist positive
$\beta_0$ and $\Upsilon$ such that for all $\beta>\beta_0$
   \begin{equation}
\label{eq:917}
        \sup_{
          \begin{subarray}{c}
           \beta^{-1/10+\delta }\leq\alpha \\
       \Re \lambda \leq \Upsilon\beta^{-1/2} 
             \end{subarray}}
  \big\|(\B_{\lambda,\alpha,\beta}^{\Df,sym})^{-1}\big\|+
        \big\|\frac{d}{dx}\, (\B_{\lambda,\alpha,\beta}^{ \Df,sym})^{-1}\big\|\leq C\beta^{-3/8} \,.
   \end{equation}
Furthermore, for $\mu=\Upsilon\beta^{-1/2}$ it holds for any  $\delta>0$ and
$\beta>\beta_0$ that  
   \begin{equation}
 \label{eq:918}
        \sup_{
          \begin{subarray}{c}
      \beta^{-1/10+\delta }\leq\alpha \\
       \Re \lambda =\Upsilon\beta^{-1/2} 
             \end{subarray}}
  \big\|(\B_{\lambda,\alpha,\beta}^{\Df,sym})^{-1}\big\|+
        \big\|\frac{d}{dx}\, (\B_{\lambda,\alpha,\beta}^{\Df,sym})^{-1}\big\|\leq
        C\beta^{-1/2+\delta} \,.
   \end{equation}
By \eqref{eq:917} $B_{\lambda,\alpha,\beta}^{\Df,sym}$ depends holomorphically on $\lambda$ for all
$\mu\leq\Upsilon\beta^{-1/2}$, and hence we can use \eqref{eq:918} together with the
Phragm\'en-Lindel\"of Theorem to obtain \eqref{eq:9}. 
\end{proof}

\end{document}